\documentclass[12pt]{amsart}
\usepackage{amsmath,amssymb,amsthm,amscd}
\usepackage[all]{xy}

\usepackage{tikz}
\usepackage{graphicx, comment, wrapfig, floatrow, fullpage}

\ifx\pdfoutput\undefined

\else
\usepackage[pdftex,colorlinks=true,linkcolor=blue,urlcolor=blue]{hyperref}
\fi

\title{Bratteli diagrams, translation flows and their $C^{*}$-algebras}
\author{Ian F. Putnam}
\address{Department of Mathematics and Statistics, University of Victoria}
\email{ifputnam@uvic.ca}
\author{Rodrigo Trevi\~{n}o}
\address{Department of Mathematics, The University of Maryland}
\email{rodrigo@trevino.cat}

\newcommand{\Z}{\mathbb{Z}}

\newcommand{\R}{\mathbb{R}}
\newcommand{\C}{\mathbb{C}}

\newtheorem{defn}{Definition}[section]
\newtheorem{thm}[defn]{Theorem}

\newtheorem{prop}[defn]{Proposition}
\newtheorem{cor}[defn]{Corollary}
\newtheorem{lemma}[defn]{Lemma}

\newtheorem{rmk}[defn]{Remark}
\newtheorem{hyp}[defn]{Hypothesis}

\def\tr{\mathrm{Tr}}

\setcounter{tocdepth}{1}

\begin{document}
\begin{abstract}
In \cite{LT}, Kathryn Lindsey and the second author constructed
a translation surface from a bi-infinite Bratteli diagram. We continue
an investigation into these surfaces. The construction 
given in \cite{LT} was essentially combinatorial. 
Here, we provide explicit links between the path space of
the Bratteli diagram and the surface, including various 
intermediate topological spaces. This allows us to relate 
the $C^{*}$-algebras associated with  tail equivalence
on the Bratteli diagram and the  
foliation of the surface, under some mild hypotheses. This also 
allows
us to relate the K-theory of the $C^{*}$-algebras involved. We
 also treat the case of finite genus surfaces in some detail, where the 
 process of Rauzy-Veech induction (and its inverse) provide 
 an explicit construction of the Bratteli diagrams involved.
 \end{abstract}
\maketitle

\tableofcontents


\section{Introduction}
\label{intro}
There has been considerable interest over many years in the dynamics of 
foliations and flows on translation surfaces or flat surfaces. We refer the reader to
\cite{viana:notes} for a broader discussion. 

In \cite{LT}, Kathryn Lindsey and the second author introduced a construction
of translation surfaces based on combinatorial data. The main point
of the construction was that, while giving an alternate view
of the finite genus case, it also provided  a very general method of 
construction of  surfaces of infinite genus. In addition, it was shown that 
the dynamical behavior on these infinite genus surfaces was much broader than
the finite genus case.

The combinatorial data needed for the construction is a variation 
of a Bratteli diagram. A Bratteli diagram is a locally finite, but infinite
directed graph. They first appeared in Ola Bratteli's seminal work 
on inductive limits of finite dimensional $C^{*}$-algebras, or AF-algebras 
\cite{Bra:AF}. 
Bratteli used the diagrams to encode combinatorial data  on maps between
 direct sums of matrix algebras. Later, Renault  \cite{Ren:gpd} showed that 
 the diagrams could also be used to construct topological groupoids
 (equivalence relations) and the $C^{*}$-algebras constructed from 
 such examples coincided with those considered by Bratteli. More
 specifically, one considers the topological space of infinite paths
 in the diagram along with the equivalence relation known as tail equivalence:
 two paths are tail equivalent if they are equal beyond some fixed point.
 
More recently, Bratteli diagrams also been used extensively in 
dynamical systems, initiated by the work of 
Vershik \cite{vershik:approx, vershik:adic} and subsequently, Herman, Putnam and Skau \cite{HPS:BV}. In particular, 
this involved introducing the notion of an ordered Bratteli diagram.

Bratteli diagrams were first used in the context of the dynamics
of translation surfaces by A. Bufetov \cite{bufetov:limit}.
This was expanded upon  by K. Lindsey and the second author \cite{LT}.
Their innovation was to 
consider a bi-infinite Bratteli diagram, where the vertex and edge sets 
are indexed by the integers, rather than the positive
 integers as is usually the case. They also assume 
 a pair of orders on the edge set
  the first compares edges having the 
 same range or terminus and the second compares edges having 
 the same source or origin.
 
 The construction of the surface was then given in \cite{LT} in 
 a combinatorial manner: 
 finite paths gave open rectangular components for the surface and the 
 terminus, origin and order data provides rules for attaching them.
 One also sees that the leaves of the horizontal and vertical folitations
 correspond to right and left tail equivalence in the diagram.
From a dynamical standpoint that is quite satisfactory, but it leaves
open the question: if we think about the  AF-algebra of the diagram
and the foliation $C^{*}$-algebra, how exactly are they related?
The main goal of this paper is to address this question.

On the one hand, we have a very satisfactory description of the AF-algebra 
as given by Renault, by looking at the path space of the diagram, tail 
equivalence on it, and the standard construction of a groupoid 
$C^{*}$-algebra.  What is missing on the other side is a description of the 
surface itself in terms of the infinite path space of the diagram. 
At first glance, this seems a tall order because the former
is a locally Euclidean space while the latter is totally disconnected.
A good clue that these are not so far apart is provided by a 
very familiar, but under-appreciated, notion: decimal expansion. 
This is already a familiar idea in dynamics through the use 
of Markov partitions to code hyperbolic systems.
Let us take some time to describe this simple idea more clearly as
it is essentially the basis for the remainder of the paper. 

Everyone is familiar with the fact that every real number
has a decimal expansion which is (almost) unique.
A more mathematically precise view of decimal expansion
is as a map from $\prod_{1}^{\infty} \{ 0, 1, 2, \ldots, 9 \}$ 
to $[0,1]$ (simply ignoring the integer part). It is surjective and 
each point in the image has  a unique pre-image, except a countable
subset: rational numbers of the form $k/10^{l}, k \in \Z, l \geq 1$.

This becomes more interesting if the first space is given the product
topology. The map is then continuous, but the two spaces are remarkably 
different: the first is totally disconnected, while the second is 
connected.

Another viewpoint is to realize that the first space can be endowed with 
lexicographic order. The order topology coincides with the 
product topology and the map is order preserving. In fact, more is true:
if we note, for example, that $.19999 \cdots$ and $.2000 \cdots$ are both 
decimal expansions of $\frac{1}{5}$, the latter is precisely
the successor of the former in the lexicographic order. In fact, two points
are identified by the map if and only if one is the successor of the other.

Bratteli diagrams offer a vast generalization of this idea.
A Bratteli diagram, $\mathcal{B}$, consists of a sequence  of finite non-empty 
vertex sets $V_{n}, n \geq 0$ (we assume $\#V_{0} =1$ for convenience)
and edge sets $E_{n}, n \geq 1$: each edge $e$ in $E_{n}$ has a 
source $s(e)$ in $V_{n-1}$ and a range $r(e)$ in $V_{n}$.
We can then consider the space of infinite paths, denoted $X_{\mathcal{B}}$. 
It has  natural topology making it compact and totally disconnected.
We add two pieces of data: a state $\nu$ (see Definition \ref{BD:75}) and a
partial  
order on the edge sets where two edges, $e, f$, are comparable if and
only if $s(e) = s(f)$. Such items always exist. 
The path space $X_{\mathcal{B}}$ then becomes linearly ordered
by lexicographic order. In addition, the state provides a measure 
on this space in a natural way (\ref{path:120}). We can then define explicitly a map
from $X_{\mathcal{B}}$ to a closed interval which is order-preserving and identifies
two points if and only if one is the successor  of the other. 
 We leave the details
to Lemma \ref{order:20}.
Usual decimal 
expansion can be seen in the case
 $\#V_{n}=1, \#E_{n}=10$, for all $ n \geq 1$.

This is appealing, though not terribly deep. The Lindsey-Trevi\~{n}o 
starting point is to consider a bi-infinite Bratteli diagram
where the vertex and edge sets are indexed by the integers rather than
the natural numbers. We drop the condition that $\#V_{0} =1$.
In addition, we require two orders on the edge sets, 
one based on $s$ (as before) and the other on $r$ and two states, 
$\nu_{s}, \nu_{r}$. Our path space $X_{\mathcal{B}}$ now consists 
of bi-infinite paths. Basically, our surface is now obtained as 
a quotient of $X_{\mathcal{B}}$ by identifying successor/predecessors
in both orders. That is overly simplistic and we need
to make some subtle alterations. But let us leave that aside for the moment
and describe this space, locally. If we fix a finite path $p$ in the diagram 
going from vertex $v$ in $V_{m}$ to $w$ in $V_{n}$, $m < n$, we can look
at the set of all bi-infinite paths which agree with $p$ between $m$ and $n$.
This is a clopen set. But it is clear that such a path consists 
of three parts, from $-\infty$ to $s(p)$, then $p$, then 
from $r(p)$ to $\infty$. The first and third parts are clearly 
independent and  lie in the path spaces of two subdiagrams (although the 
first is oriented the wrong direction). Applying the map we described 
earlier using the $r$-data to the first and the $s$-data to the third,
we obtain a map to a closed rectangle in the plane which descends to 
a local homeomorphism on our quotient space. These maps can be used to
define an atlas for the quotient space which satisfy the condition making it
a translation surface. Moreover, if two points are right-tail equivalent
then they lie on the same horizontal line, while two points that 
are left-tail equivalent lie on the same vertical line. 
So our quotient map from $X_{\mathcal{B}}$ to the surface
maps right-tail equivalence  to the horizontal foliation
and left-tail equivalence to the vertical foliation. This provides
the links between the AF-algebras and the foliation algebras which is our main goal.

In section \ref{BD}, we describe basics of Bratteli diagrams. 
In particular, we have the 
classic version, the 
bi-infinite version and ordered versions of both. This includes
some basic concepts such as a simple diagram (\ref{BD:25}) (some telescope 
has full edge connections) and finite rank (\ref{BD:29}), which means that
there is a uniform bound on the cardinality of the vertex sets.
The third section
describes the path space of a Bratteli diagram, both classic and bi-infinite versions.
In the fourth section, we describe the consequences for the infinite path space of orders
on a Bratteli diagram. This includes a complete description
of the analogues of decimal expansion as discussed above.

As we indicated above, our basic idea is to begin with a 
bi-infinite ordered Bratteli diagram, $\mathcal{B}$, and 
take a quotient of the path space
of a bi-infinite Bratteli diagram, $X_{\mathcal{B}}$.
However, there are some bad points in this space that
need to be dealt with, just as flat surfaces in genus greater than one
necessarily  have
singularities. These fall into two types.
The first are those in which every path is maximal in the $\leq_{s}$-order 
or  maximal in the $\leq_{r}$-order  or minimal in the 
$\leq_{s}$-order 
or  minimal in the $\leq_{r}$-order. We refer to these as
 extremal (\ref{singular:50})
and,
if the diagram is finite rank, it is a finite set. More subtly, there
is a second type of point, which we call singular. We have two 
(partially defined) operations: taking the successor in the $\leq_{s}$-order
and taking the successor in the $\leq_{r}$-order. There may be 
points where their compositions are defined, in either order, but fail
to yield the same result. This is our ordered Bratteli
 diagram's way of telling
us the common point they represent
 in the quotient space will fail to have a
flat neighourhood. These points, which we denote by $\Sigma_{\mathcal{B}}$, 
must be removed (\ref{singular:50}). The set is at worst countable and
its union with the extremal points is closed. We now restrict our attention to the 
open 
compliment of this, which we denote by $Y_{\mathcal{B}}$ (\ref{surface:10}). 

In section \ref{surface}, we introduce our surface, $S_{\mathcal{B}}$. This is done by
identifying points  of $Y_{\mathcal{B}}$ with their $\leq_{s}$-successors
and their $\leq_{r}$-successors. Of course, this means that 
there are two intermediary spaces where only one of the two
identifications is done. The main work of this section is to explicitly
describe an atlas for the space $S_{\mathcal{B}}$ whose transition maps
are translations. That is, we show $S_{\mathcal{B}}$ is a flat surface.
It is worth noting that $S_{\mathcal{B}}$ depends only on 
the ordered Bratteli diagram, but the atlas also depends on the given state.

In section \ref{groupoids}, we pass from the various spaces of the previous section, 
to groupoids associated with them. While we use the term groupoid, these are
really simply equivalence relations. For the bi-infinite path space
$X_{\mathcal{B}}$ or its open subset $Y_{\mathcal{B}}$, we have 
right and left tail equivalence. For the surface, $S_{\mathcal{B}}$, we have
horizontal and vertical foliations.
The process of constructing
a $C^{*}$-algebra from a groupoid is technical; in particular, the groupoid 
requires its own topology. We describe all of these in quite concrete terms.
Finally, our maps between the various spaces of section 6 all induce
maps at the level of equivalence relations and we describe their properties.
Indeed, one of the quotient maps from $Y_{\mathcal{B}}$ 
does not respect tail equivalence in general and
 we are forced to make a small modification
of it  in Definition \ref{gpds:25}.

We turn to the $C^{*}$-algebras in section \ref{Cstar}. We explicitly show how 
the $C^{*}$-algebras of tail equivalence can be written as inductive limits
of a nested sequence of finite-dimensional subalgebras. In the case of 
one of 
the intermediate subalgebras, we also have an inductive system
 \ref{IntCstar:70}
and \ref{IntCstar:90} of subalgebras which are 'subhomogeneous'. 
That is, they involve only continuous functions from 
certain spaces into matrices.  The same also holds for the foliation algebra.

In section \ref{Fred}, we construct a very natural Fredholm module for our 
AF-algebra. The notion of a Fredholm module for $C^{*}$-algebras had its origins
in the seminal work of Brown, Douglas and Fillmore on extensions
of $C^{*}$-algebras but also from index theory through ideas of 
Atiyah and Kasparov, among many others. There are many good references but we mention
the three books by Blackadar  \cite{Bla:K}, Higson
and Roe \cite{HR:KHom} and Connes \cite{Con:NCG}. The prototype here 
is the $C^{*}$-algebra of continuous functions on a 
smooth manifold together with an elliptic
differential operator (or a bounded version of it). The algebra and operator 
interact in a special way.
In our situation, 
our Fredholm module provides a purely algebraic way of describing 
our quotient spaces (see Theorem \ref{Fred:60}). This description, in turn, 
is critical to some K-theory computations of the next section.

We describe the K-theory of the various $C^{*}$-algebras involved
in section \ref{K}, beginning with the AF-algebra. The computation
of the K-theory of an AF-algebra from a Bratteli diagram goes
back to Elliott's seminal paper \cite{Ell:AF}, but 
we give a treatment in some detail
for those readers for whom this is new. We also 
compute the K-theory of one intermediate $C^{*}$-algebra in
generality in  
Theorem \ref{K:30}. In many specific situations of interest, this
$C^{*}$-algebra has $K_{1}$ equal to the integers, while 
its inclusion in the AF-algebra induces an order isomorphism 
on $K_{0}$ (see Theorem \ref{K:38}).
We go on to compute the K-theory of the foliation algebra 
in Theorems \ref{K:40} and \ref{K:50}. One interesting conclusion
of these computations is that, when the Bratteli diagram has finite rank, the
$K_{0}$ group of the AF-algebra does also, in the sense of group theory. 
However, if
that group is not finitely-generated, then our surface has 
infinite genus (Corollary \ref{K:60}).

We end the paper with two sections in which we apply the tools 
developed so far: in section \ref{Cha}, we work out the $K$-theory 
of the horizontal foliation of Chamanara's surface. Chamanara's 
surface is perhaps the best known flat surface of infinite genus
 and finite area. In particular, we show
 how one can explicitly construct representatives of certain
  $K_0$ classes coming from the surface. We also show that the set of singular points
  $\Sigma_\mathcal{B}$ is non-empty and give an explicit identification of this set.

Section \ref{finitegenus} deals with flat surfaces of finite genus. Starting
 with basic definitions of flat surfaces, we review Veech's construction
  of zippered rectangles and Rauzy-Veech (RV) induction, which is a 
  procedure used in the renormalization of the vertical foliation of
   a flat surface. We follow this by developing an analogous induction 
   procedure for the horizontal foliation, which we call RH induction. 
   This is formally the inverse of RV induction, but we motivate it 
   geometrically and develop it independently of RV induction. As far
    as we know a lot of this has not been published before, although 
    many items appear in the recent work of Berk \cite{berk2021backward}.
     The reason we focus on the induction for the horizontal foliation 
     is that it turns out to give an ordered bi-infinite Bratteli diagram 
     which is simpler to analyse. We show that the set $\Sigma_\mathcal{B}$ of singular points
     for the typical surface is empty, implying that the singularities which
     appear in the infinite genus case appear precisely because these the set of
     singular $\Sigma_\mathcal{B}$ is non-empty. We compute the $K$-theory of the foliation 
     algebra of the horizontal foliation of the typical flat surface of 
     finite genus. We also show that the order structure on the $K_0$ groups
    is defined by the Schwartzman asymptotic cycle.



\vspace{.2in}
\textbf{Acknowledgements:} R.T. was partially supported by NSF grant 1665100 and Simons Collaboration Grant 712227. I.F.P. was supported by a Discovery Grant from
NSERC (Canada).

\section{Bratteli diagrams: ordered and bi-infinite}
\label{BD}
In this section, we discuss the notion of a Bratteli diagram.
This is a fairly well-known combinatorial object, but we 
will need to discuss a bi-infinite variation and also the 
notion of orders on both types.

\begin{defn}
\label{BD:5}
A \emph{ Bratteli diagram} is two sequences 
$V_{n}, n \geq 0 , E_{n}, n \geq 1,$ 
of pairwise disjoint, finite, nonempty sets
along with surjective maps $r: E_{n} \rightarrow V_{n}$ and 
$s:E_{n} \rightarrow V_{n-1}$ for $n \geq 1$. 
We also assume that $V_{0}$ consists of a single element we write as $v_{0}$.
 We write $V$ for the union of the $V_{n}$ and 
$E$ for the union of the $E_{n}$. We also write
$\mathcal{B} = (V, E, r, s)$.
\end{defn}

\begin{defn}
\label{BD:10}
A \emph{bi-infinite Bratteli diagram} is two sequences 
$V_{n}, E_{n}, n \in \Z,$ of pairwise disjoint, finite, nonempty sets
(dropping the requirement that $\# V_{0} =1$)
along with surjective maps $r: E_{n} \rightarrow V_{n}$ and 
$s:E_{n} \rightarrow V_{n-1}$. We write $V$ for the union of the $V_{n}$ and 
$E$ for the union of the $E_{n}$. We also write
$\mathcal{B} = (V, E, r, s)$.
\end{defn}

A standard convention when drawing  in drawing Bratteli diagrams 
is to draw them vertically, with $v_{0}$ at the top of the diagram
and level $V_{n+1}$ drawn below $V_{n}$. Here, we prefer to
draw them horizontally. That is, $V_{n+1}$ lies to the right of 
$V_{n}$, as shown below. For ordinary Bratteli diagrams, this change
is rather minor, but it seems helpful when considering 
bi-infinite ones, not to have to imagine 
the diagram extending off the top of 
page.

$$
\begin{tikzpicture}
  \filldraw (1,0) circle (4pt);
    \filldraw (1,3) circle (4pt);
     \draw (1,4) node {$V_{-1}$};
    
  \filldraw (4,0) circle (4pt);
    \filldraw (4,3) circle (4pt);
      \draw (4,4) node {$V_{0}$};
  \filldraw (7,0) circle (4pt);
    \filldraw (7,3) circle (4pt);
      \draw (7,4) node {$V_{1}$};
    \filldraw (10,0) circle (4pt);
    \filldraw (10,3) circle (4pt);
      \draw (10,4) node {$V_{2}$};
       \draw (0,0) -- (1,0);
    \draw (1,0) -- (4,0);
   \draw (4,0) -- (7,0);
     \draw (7,0) -- (10,0);
   \draw (10,0) -- (11,0);
     \draw (0,3) -- (1,3);
    \draw (1,3) -- (4,3);
   \draw (4,3) -- (7,3);
     \draw (7,3) -- (10,3);
   \draw (10,3) -- (11,3);
     \draw (0,1) -- (1,0);
    \draw (1,0) -- (4,3);
   \draw (4,3) -- (7,0);
     \draw (7,0) -- (10,3);
   \draw (10,3) -- (11,2);
   
 \end{tikzpicture}
$$


\begin{defn}
\label{BD:20}
If $\mathcal{B}$ is a bi-infinite Bratteli diagram, for every
pair of integers $m < n$, we let $E_{m,n}$ be the set of all 
paths from $V_{m}$ to $V_{n}$: that is, it consists of 
$p = (p_{i})_{m < i \leq n}$ with $p_{i}$ in $E_{i}$, $ m < i \leq n$, 
and $r(p_{i}) = s(p_{i+1})$, for $m < i < n$. We define 
$r:E_{m,n} \rightarrow V_{n}$ by 
$r(p) = r(p_{n})$ and $s:E_{m,n} \rightarrow V_{m}$ by 
$s(p) = s(p_{m+1})$. We make the same definition if 
$\mathcal{B}$ is a Bratteli diagram, restricting  to 
$0 \leq m < n$.
\end{defn}

We note the fairly standard notion of simplicity of a Bratteli diagram and
its obvious extension to the bi-infinite case.

\begin{defn}
\label{BD:25}
\begin{enumerate}
\item
A Bratteli diagram $\mathcal{B}$ is \emph{simple} if and 
only if, for every $m \geq 1$, there is $n > m $ such that 
for every vertex $v$ in $V_{m}$ and $w$ in $V_{n}$, there is a path
$p$ in $E_{m,n}$ with $s(p)=v, r(p)=w$.
\item 
A bi-infinite  Bratteli diagram $\mathcal{B}$ is
 $\mathcal{B}$ is \emph{simple} if,  
for every integer $m$, there are integers $l < m< n$ such that
there is a path from every vertex of $V_{l}$ to every vertex of
$V_{m}$ and there is a path from 
every vertex of $V_{m}$ to every vertex of
$V_{n}$.
\end{enumerate}
\end{defn}

We also introduce the following notion which will be 
convenient for much of what follows.

\begin{defn}
\label{BD:29}
A Bratteli diagram (or bi-infinite Bratteli diagram) is
\emph{finite rank} if there is a constant $K$ such that 
$\# V_{n} \leq K$, for all $n \geq 0$ (or all 
$n \in \Z$, respectively).
\end{defn}

We next discuss the notion of a state on a Bratteli diagram, 
and its analogue for the bi-infinite case.
We add as a small remark that it is usual to begin with a Bratteli diagram
and consider the set of all possible states on it. For 
our applications later, we will usually think of a Bratteli
diagram, together with a fixed state, as our data.

\begin{defn}
\label{BD:75}
\begin{enumerate}
\item
Let $\mathcal{B}$ be  a  Bratteli diagram. A \emph{state} on 
$\mathcal{B}$ is a function 
$\nu_{s}: V \rightarrow [0, \infty)$ which is not identically zero,
satisfying
\[
\nu_{s}(v) = \sum_{s(e) =v} \nu_{s}(r(e)), 
\]
for all $v$ in $V$.
We say that the state is \emph{normalized} if 
$\nu_{s}(v_{0}) = 1$ and \emph{faithful} if $\nu_{s}(v) > 0$, 
for all $v$ in $V$.
We let $S(\mathcal{B})$ be the set of all states on $\mathcal{B}$ and 
$S_{1}(\mathcal{B})$ denote the set of normalized states.
\item
Let $\mathcal{B}$ be  a  bi-infinite Bratteli diagram. A \emph{state} on 
$\mathcal{B}$ is a pair of functions 
$\nu_{r}, \nu_{s}: V \rightarrow [0, \infty)$, neither identically zero,
 satisfying
\begin{eqnarray*}
\nu_{r}(v) & =  & \sum_{r(e) =v} \nu_{r}(s(e)), \\
\nu_{s}(v) & =  & \sum_{s(e) =v} \nu_{s}(r(e)), 
\end{eqnarray*}
for all $v$ in $V$.
We say that the state is \emph{normalized}
if 
\[
\sum_{v \in V_{0}} \nu_{r}(v)\nu_{s}(v) = 1
\]
and is \emph{faithful}
 if $\nu_{r}(v), \nu_{s}(v) > 0$, 
for all $v$ in $V$.
We let $S(\mathcal{B})$ be the set of all  
states on $\mathcal{B}$ and 
$S_{1}(\mathcal{B})$ denote the set of normalized states.
\end{enumerate}
\end{defn}

\begin{lemma}
\label{BD:90}
If $\nu_{r}, \nu_{s}$ is a state on bi-infinite Bratteli diagram, 
$\mathcal{B}$, then 
\[
\sum_{v \in V_{n}} \nu_{r}(v)\nu_{s}(v)  = 
 \sum_{v \in V_{0}} \nu_{r}(v)\nu_{s}(v)
\]
for every integer $n$.
\end{lemma}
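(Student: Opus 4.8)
The plan is to prove the single step $\sum_{v \in V_{n+1}} \nu_{r}(v)\nu_{s}(v) = \sum_{v \in V_{n}} \nu_{r}(v)\nu_{s}(v)$ for every integer $n$, and then obtain the full statement by induction in both directions from $n=0$. Since all vertex and edge sets are finite, every sum in sight is a finite sum, so we may freely rearrange and split sums according to the fibres of $r$ and $s$ on $E_{n+1}$.

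First I would introduce the auxiliary quantity
\[
A_{n+1} \;=\; \sum_{e \in E_{n+1}} \nu_{r}(s(e))\,\nu_{s}(r(e)),
\]
a sum over the (finite) edge set $E_{n+1}$ joining $V_{n}$ to $V_{n+1}$, and evaluate it two ways. Grouping the edges by their range, $A_{n+1} = \sum_{w \in V_{n+1}} \nu_{s}(w)\sum_{r(e)=w}\nu_{r}(s(e))$, and the inner sum is $\nu_{r}(w)$ by the $\nu_{r}$ defining relation in Definition \ref{BD:75}(2); hence $A_{n+1} = \sum_{w \in V_{n+1}} \nu_{r}(w)\nu_{s}(w)$. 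Grouping the same edges instead by their source, $A_{n+1} = \sum_{v \in V_{n}} \nu_{r}(v)\sum_{s(e)=v}\nu_{s}(r(e))$, and now the inner sum is $\nu_{s}(v)$ by the $\nu_{s}$ defining relation; hence $A_{n+1} = \sum_{v \in V_{n}} \nu_{r}(v)\nu_{s}(v)$. Comparing the two evaluations gives the desired one-step identity.

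Finally I would assemble the pieces: starting from $n=0$ and iterating the one-step identity upward shows $\sum_{v\in V_n}\nu_r(v)\nu_s(v)=\sum_{v\in V_0}\nu_r(v)\nu_s(v)$ for all $n\ge 0$, and iterating it downward (equivalently, reading the identity at level $n$ as expressing the level-$n$ sum in terms of the level-$(n+1)$ sum) gives the same for all $n\le 0$. There is no real obstacle here; the only point requiring any care is the bookkeeping in the two fibrewise regroupings of $A_{n+1}$, which is legitimate precisely because $E_{n+1}$ is finite and because the two state equations are exactly the relations needed to collapse the inner sums. I would present the argument essentially in the displayed form above, perhaps combined into a single chain of equalities rather than naming $A_{n+1}$.
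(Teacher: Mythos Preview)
Your proposal is correct and follows essentially the same approach as the paper: both establish the one-step identity $\sum_{v\in V_n}\nu_r(v)\nu_s(v)=\sum_{v\in V_{n+1}}\nu_r(v)\nu_s(v)$ by regrouping a sum over $E_{n+1}$ first by source (using the $\nu_s$ relation) and then by range (using the $\nu_r$ relation), and then conclude by iteration. The paper presents this as a single chain of equalities rather than naming the intermediate edge sum $A_{n+1}$, exactly as you anticipated in your final remark.
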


\begin{proof}
If $n$ is any integer, we have 
\begin{eqnarray*}
\sum_{v \in V_{n}} \nu_{r}(v)\nu_{s}(v) & = & 
\sum_{v \in V_{n}} \nu_{r}(v) \sum_{s(e) = v} \nu_{s}(r(e)) \\
 & = & 
\sum_{v \in V_{n}} \sum_{s(e) = v} \nu_{r}(v)  \nu_{s}(r(e)) \\
 & = & 
\sum_{e \in E_{n+1}}  \nu_{r}(s(e))  \nu_{s}(r(e)) \\
   & = & 
\sum_{v \in V_{n+1}} \sum_{r(e) = v} \nu_{r}(s(e))  \nu_{s}(v) \\
  &  =  &  \sum_{v \in V_{n+1}} \nu_{r}(v)\nu_{s}(v).
  \end{eqnarray*}
  The conclusion follows.
\end{proof}

The following result is not difficult
but quite useful in translating results from the standard case  to 
the bi-infinite case.
 The point is rather easy to state in words:
in a bi-infinite Bratteli diagram, 
for any fixed vertex $v$ in $V$, if we look at all vertices which can 
be reached from a path starting at $v$, and all the edges of such paths, 
this forms a Bratteli diagram in the usual sense. There is some re-indexing 
of vertex and edge sets. The same is true if we look at paths
\emph{ending} at $v$ instead, although the re-indexing is more complicated and 
we need to switch $s$ and $r$ maps.

\begin{prop}
\label{BD:100}
Let $\mathcal{B}$ be a bi-infinite 
Bratteli diagram, $(\nu_{r}, \nu_{s})$
a state on  $\mathcal{B}$ and 
$v$ be any vertex of $V$. 
\begin{enumerate}
\item 
Define $W_{0} = \{ v \}$ and then, inductively, 
for all $n \geq 1$,
$F_{n} = s^{-1}(W_{n-1})$, $ W_{n} = r(F_{n})$. Then 
$\mathcal{B}_{v}^{+} = (W, F, r, s)$ is a Bratteli 
diagram, the restriction of $\nu_{s}$ to $W_{n}$,  
which we denote $\nu_{s}^{v}$,  
is a state on it. 
\item 
Define $W_{0} = \{ v \}$ and then, inductively, 
for all $n \geq 1$,
$F_{n} = r^{-1}(W_{n-1})$, $ W_{n} = s(F_{n})$. Then 
$\mathcal{B}_{v}^{-} = (W, F, s, r)$ is a Bratteli 
diagram, the restriction of $\nu_{r}$ to $W_{n}$,  
which we denote $\nu_{r}^{v}$,  
 is a state on it. 
\end{enumerate}
\end{prop}


Let us remind the reader that the computation of the set of states 
for a one-sided Bratteli diagram is
a standard result, which can be easily adapted to the bi-infinite case.
 It is convenient to assume that 
$V_{n} = \{ n \} \times \{ 1, 2, \ldots, d_{n} \}$, 
for all integers $n$. Without causing 
confusion, we can interpret $E_{n}$ as a $d_{n} \times d_{n-1}$ non-negative
integer matrix whose $j,i$-entry is the number of edges in $E_{n}$ from 
$(n-1,i)$ in $V_{n-1}$ to $(n,j)$ in $V_{n}$. In the following, we let 
$\R^{+m}$ denote vectors in $\R^{m}, m \geq 1$ (written as row vectors),
 whose entries are all non-negative.

\begin{prop}
\label{BD:115}
Let $\mathcal{B}$ be a bi-infinite Bratteli diagram. 
If $\nu_{r}, \nu_{s}$ is a state
on $\mathcal{B}$, then for all integers $n$, we have 
\begin{eqnarray*}
\left( \nu_{r}(n,i)  \right)_{i=1}^{d_{n}} & \in &
 \displaystyle\bigcap_{m>n} \R^{+d_{m}} E_{m} E_{m-1} \cdots  E_{n+1},  \\
  \left( \nu_{s}(n,i) \right)_{i=1}^{d_{n}} & \in &
  \displaystyle\bigcap_{m < n} \R^{+d_{m}} E_{m+1}^{T} E_{m+2}^{T} \cdots  E_{n}^{T}.
\end{eqnarray*}
Conversely, letting $\Delta^{d-1}$ denote the standard $d-1$-simplex in $\R^{+d}$, 
the sets 
\begin{eqnarray*}
   \displaystyle\bigcap_{m>0} \R^{+d_{m}} E_{m} E_{m-1} \cdots 
    E_{1} 
   & \cap &    \Delta^{d_{0}-1} \\
   \displaystyle\bigcap_{m<0} \R^{+d_{m}} E_{m+1}^{T}  E_{m+2}^{T} 
   \cdots  E_{0}^{T}  
   & \cap &    \Delta^{d_{0}-1} 
 \end{eqnarray*}
 are non-empty. Let $x^{0}$ be in the former  and set 
 $\nu_{r}(0,i) = x_{i}^{0}, 1 \leq i \leq d_{0}$,
 $\nu_{r}(n,i) = (x E_{0} E_{-1} \cdots  E_{n+1})_{i}$, for $n < 0, 
 1 \leq i \leq d_{n}$. Finally, for each $n > 0$, inductively, there 
 is  $x^{n}$
 in $\R^{+d_{n}}$ with $x^{n-1}= x^{n}E_{n}$ and set 
  $\nu_{r}(n,i) = x_{i}^{n}, 1 \leq i \leq d_{n}$. We may also define
  $\nu_{s}$ in an analogous way and  $\nu_{s}, \nu_{r}$ is a state
  on $\mathcal{B}$.
\end{prop}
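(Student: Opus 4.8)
The plan is to recast both state equations as linear recursions between the ``level vectors'' and then run these recursions in the appropriate directions, using compactness to make the infinitely many choices that arise. Identify $V_n$ with $\{1,\dots,d_n\}$ and each $E_n$ with the $d_n\times d_{n-1}$ matrix described before the statement, and write $\nu_s^{(n)}=(\nu_s(n,i))_{i}$ and $\nu_r^{(n)}=(\nu_r(n,i))_{i}$ as row vectors. Unwinding the definitions, the equation $\nu_s(v)=\sum_{s(e)=v}\nu_s(r(e))$ is precisely $\nu_s^{(n-1)}=\nu_s^{(n)}E_n$, and $\nu_r(v)=\sum_{r(e)=v}\nu_r(s(e))$ is precisely $\nu_r^{(n)}=\nu_r^{(n-1)}E_n^{T}$. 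For the ``necessity'' half one simply iterates: for $m>n$ this expresses $\nu_s^{(n)}$ as the image of the nonnegative vector $\nu_s^{(m)}$ under right multiplication by a product of the matrices $E_m,\dots,E_{n+1}$, and dually $\nu_r^{(n)}=\nu_r^{(m)}E_{m+1}^{T}\cdots E_n^{T}$ for $m<n$. Since states take values in $[0,\infty)$, these are exactly the asserted memberships in $\bigcap_m \R^{+d_m}E_m\cdots E_{n+1}$ and $\bigcap_m\R^{+d_m}E_{m+1}^{T}\cdots E_n^{T}$.

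For the converse, the first step is non-emptiness of the two intersections (anchored at level $0$ and intersected with $\Delta^{d_0-1}$), which I would prove with nested compacta. Put $A_m=E_m\cdots E_1$. The cone $\R^{+d_m}A_m$ is the nonnegative span of the finitely many rows of $A_m$, hence closed and polyhedral, so $C_m:=\R^{+d_m}A_m\cap\Delta^{d_0-1}$ is compact. It is nonempty: surjectivity of $r$ makes every $E_n$ have no zero row and surjectivity of $s$ makes every $E_n$ have no zero column, and one checks these two properties pass to products, so $A_m\neq 0$ and $\R^{+d_m}A_m$ contains a nonzero nonnegative vector, which rescales into $\Delta^{d_0-1}$. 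Since $\R^{+d_{m+1}}E_{m+1}\subseteq\R^{+d_m}$ we have $C_{m+1}\subseteq C_m$, so $\bigcap_m C_m\neq\emptyset$. The transposed family is handled identically.

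Now fix $x^{0}\in\bigcap_m C_m$. On the nonpositive levels define $\nu_s$ by running the recursion forward: $\nu_s^{(0)}=x^{0}$ and $\nu_s^{(n)}=x^{0}E_0 E_{-1}\cdots E_{n+1}$ for $n<0$; these are nonnegative and satisfy the $\nu_s$-equation at every vertex with $n\le 0$. On the positive levels we must produce $x^{n}\in\R^{+d_n}$ with $x^{n-1}=x^{n}E_n$ for all $n\geq 1$. One cannot pick preimages greedily level by level, since compatibility across levels is not automatic; instead, for each $N$ let $P_N$ be the set of strings $(x^1,\dots,x^N)$ with $x^k\in\R^{+d_k}$ and $x^{k-1}=x^kE_k$ for $1\le k\le N$. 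Membership of $x^{0}$ in $C_N$ yields some $x^N$ with $x^{0}=x^NA_N$; reading off its partial products gives an element of $P_N$, so $P_N\neq\emptyset$. The no-zero-row property bounds every coordinate of every $x^k$ by $\max_i x^{0}_i$, so $P_N$ is compact, and truncation defines continuous maps $P_{N+1}\to P_N$. The inverse limit of this system of nonempty compacta is nonempty, and any element is the required coherent sequence $(x^n)_{n\ge 1}$; setting $\nu_s^{(n)}=x^{n}$ completes $\nu_s$, which now satisfies the $\nu_s$-equation at every vertex. Running the mirror construction with the transposed matrices --- anchor a point of the transposed intersection at level $0$, propagate forward by $E_n^{T}$, and extend to negative levels by the same inverse-limit argument --- produces $\nu_r$, and $(\nu_s,\nu_r)$ is then a state on $\mathcal{B}$ by construction.

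The hard part is this last step: turning the hypothesis ``$x^{0}$ lies in every cone $\R^{+d_m}A_m$'' into an honest coherent tower of preimages. Cone membership hands us a preimage at each finite depth, but these must be threaded together; what makes this possible is that the cones are polyhedral, hence closed, and that the no-zero-row condition keeps the finite-depth approximations bounded --- together these make each $P_N$ a nonempty compact set, so the usual inverse-limit argument applies. Everything else is routine nonnegative-matrix bookkeeping; note in passing that the constructed state need not be normalized or faithful.
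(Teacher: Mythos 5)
The paper states Proposition \ref{BD:115} without proof, remarking only that it is ``a standard result, which can be easily adapted to the bi-infinite case,'' so there is no paper argument to compare yours against. Your proof is correct and supplies the one genuinely non-routine step: on the side where the recursion $\nu_s^{(n-1)}=\nu_s^{(n)}E_n$ must be inverted (positive levels for $\nu_s$, negative for $\nu_r$), a greedy choice of preimage at each level need not assemble into a coherent tower, and you resolve this with a nonempty-nested-compacta (inverse limit) argument. The two ingredients making it work are correctly isolated: the no-zero-row bound $\max_j (x^k)_j\le\max_i(x^0)_i$ gives compactness of each $P_N$, and closedness of the finitely generated cones $\R^{+d_m}E_m\cdots E_1$ gives both the nonemptiness of the anchor set $\bigcap_m C_m$ and the fact that subsequential limits stay inside every cone. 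You also silently repair a few index typos in the printed statement (the product should run decreasingly as $E_mE_{m-1}\cdots E_{n+1}$ for $m>n$, and the negative-level formula for $\nu_s$ propagates by right multiplication by $E_0E_{-1}\cdots E_{n+1}$ rather than by transposes); your interpretation is the one that typechecks and matches the state equations, so this is the intended reading.
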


\begin{proof}
For any state $\nu_{r},\nu_{s}$ and any integer $n$, we
regard $ \left( \nu_{r}(n,i)  \right)_{i=1}^{d_{n}}$ and 
$\left( \nu_{s}(n,i)  \right)_{i=1}^{d_{n}} $ as  vectors
in $\R^{+d_{n}}$. The definition of state immediately implies
that 
\begin{eqnarray*}
\left( \nu_{r}(n,i)  \right)_{i=1}^{d_{n}} E_{n}& = &  
\left( \nu_{r}(n-1,i)  \right)_{i=1}^{d_{n-1}}, \\
\left( \nu_{s}(n-1,i)  \right)_{i=1}^{d_{n-1}} E_{n}^{T}& = &  
\left( \nu_{s}(n,i)  \right)_{i=1}^{d_{n}}.
\end{eqnarray*}
The first part of the conclusion follows immediately. (In fact, 
these equations are equivalent to the conditions on a state  given 
in part 2 of Definition  \ref{BD:75}.)

For the converse direction, it is easy to see from the fact that the 
matrices $E_{n}, n \in \Z$ are non-negative that the sets
$\R^{+d_{m}} E_{m} E_{m-1} \cdots    E_{1} , m > 1$ are closed and 
decreasing as $m $ increases. They are also invariant under
multiplication by positive scalars so their intersections with the simplex 
  $\Delta^{d_{0}-1}$ are compact, non-empty and decreasing 
  as $m$ increases. It follows that the intersection
  over all $m > 0$ is non-empty.
  
  It is a simple matter to check that, for any $n \geq 1 $, 
  the map sending $x$ in 
 $ \displaystyle\bigcap_{m>n} \R^{+d_{m}} E_{m} E_{m-1} \cdots  E_{n+1} $
 to $xE_{n}$ is a surjection to 
 $ \displaystyle\bigcap_{m>n} \R^{+d_{m}} E_{m} E_{m-1} \cdots  E_{n} $.
 It follows that the sequence $x^{n}, n \in \Z,$ 
 is well-defined and satisfies $x^{n}E_{n} = x^{n-1}$. This 
 implies that $\nu_{r}$ is a state. 
 
 The case for $\nu_{s}$ is done in  a similar way.
\end{proof}

\begin{prop}
\label{BD:110}
Let $\mathcal{B}$ be  a   Bratteli diagram 
 or a bi-infinite Bratteli diagram. 
\begin{enumerate}
\item $S(\mathcal{B})$ is non-empty.
\item If $\mathcal{B}$ is simple, then  every 
 state
 is faithful.
\end{enumerate}
\end{prop}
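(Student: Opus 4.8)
The plan is to prove the two parts essentially in sequence, using Proposition \ref{BD:115} for existence and a telescoping/positivity argument for faithfulness. For part (1), I would simply invoke Proposition \ref{BD:115}: it asserts that the intersections
\[
\bigcap_{m>0} \R^{+d_{m}} E_{m} E_{m+1} \cdots  E_{n+1} \cap \Delta^{d_{0}-1}
\qquad\text{and}\qquad
\bigcap_{m>0} \R^{+d_{m}} E_{m+1}^{T}  E_{m+2}^{T} \cdots E_{n}^{T} \cap \Delta^{d_{0}-1}
\]
are non-empty — this is a standard compactness argument (nested intersection of non-empty compact convex sets, each the image of a simplex under a non-negative matrix, hence non-empty and closed, with the finite intersection property) — and the construction in that proposition then produces $\nu_s$ and $\nu_r$ extending these vectors to all levels. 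So $S(\mathcal{B})\neq\varnothing$. If the referee wants more detail I would spell out that $\R^{+d_{m}} E_{m}\cdots E_{n+1}\cap\Delta^{d_0-1}$ is the convex hull of the columns of the product matrix normalized to the simplex, hence compact, and that these sets are nested decreasing in $m$, so Cantor's intersection theorem applies.

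For part (2), assume $\mathcal{B}$ is simple and let $\nu_r,\nu_s$ be a state; I must show $\nu_s(v)>0$ and $\nu_r(v)>0$ for every $v\in V$. I will argue for $\nu_s$; the argument for $\nu_r$ is symmetric (reverse all edge orientations, replacing $E_n$ by $E_n^T$, $s$ by $r$). First observe from the defining identity $\nu_s(v)=\sum_{s(e)=v}\nu_s(r(e))$ that, by induction on $n-m$, for any path segment one gets $\nu_s(v)=\sum_{p\in E_{m,n},\,s(p)=v}\nu_s(r(p))$ whenever $m<n$. Since all terms are non-negative, $\nu_s(v)=0$ would force $\nu_s(r(p))=0$ for every path $p$ with source $v$; conversely if some $\nu_s(w)>0$ with $w=r(p)$ for a path from $v$, then $\nu_s(v)>0$. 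Now I claim the set $Z=\{v\in V:\nu_s(v)=0\}$ is either empty or all of $V$: if $\nu_s$ vanishes somewhere, pick $v\in V_m$ with $\nu_s(v)=0$; by simplicity choose $n>m$ so that every vertex of $V_n$ is connected to $v$ by a path from $V_m$, hence (by the displayed sum) $\nu_s(w)=0$ for all $w\in V_n$; propagating forward by simplicity again, $\nu_s\equiv 0$ on $V_k$ for all $k\ge m$; and propagating the identity backward (for $v'\in V_{m-1}$, $\nu_s(v')=\sum_{s(e)=v'}\nu_s(r(e))=0$ since all ranges lie in $V_m\subseteq Z$), we get $\nu_s\equiv 0$ on all of $V$. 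But then $\sum_{v\in V_0}\nu_r(v)\nu_s(v)=0$, and the same analysis applied to $\nu_r$ shows $\nu_r$ is either everywhere positive or everywhere zero — in neither case can the normalization $\sum_{v\in V_0}\nu_r(v)\nu_s(v)=1$... wait, a state need not be normalized. So instead I rule out $\nu_s\equiv 0$ directly: a state is by definition a \emph{pair} $\nu_r,\nu_s$, and the zero pair is a state, so I must use something. The cleanest fix: the statement should be read as asserting that if $\nu_s(v_0)\neq 0$ for some base vertex — or rather, I should show each of $\nu_r,\nu_s$ is individually either identically zero or nowhere zero, and "faithful" in the simple case is then equivalent to not being the trivial component. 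I will therefore prove: \emph{for a simple bi-infinite diagram, the support of $\nu_s$ (resp.\ $\nu_r$) is either empty or all of $V$}, which is exactly the dichotomy above; "every state is faithful" should then be interpreted as "every state with $\nu_s\not\equiv 0$ and $\nu_r\not\equiv 0$ is faithful," or one excludes the zero state by convention.

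The main obstacle I anticipate is precisely this bookkeeping about the zero state and the exact reading of "faithful": the clean mathematical content is the support dichotomy (support is a "hereditary and co-hereditary" subset closed under the forward/backward propagation that simplicity makes total), and the propagation argument itself is routine once the iterated sum formula $\nu_s(v)=\sum_{p\in E_{m,n},s(p)=v}\nu_s(r(p))$ is established by an easy induction. A secondary point to be careful about is that simplicity in the bi-infinite case (Definition \ref{BD:25}(2)) gives, for each $m$, levels $l<m<n$ with full connections on both sides; iterating this gives, for any $m<m'$, a level $n\geq m'$ with full connection $V_m\to V_n$ and a level $l\leq m$ with full connection $V_l\to V_{m'}$, which is what the propagation needs, so I would record that telescoping observation first.
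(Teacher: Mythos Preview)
Your approach is correct and matches the paper's: part (1) is indeed handled by Proposition \ref{BD:115}, and part (2) is the same positivity-propagation argument via simplicity. The only cosmetic difference is that the paper starts the propagation by invoking Lemma \ref{BD:90} (the constancy of $\sum_{v\in V_n}\nu_r(v)\nu_s(v)$ across levels) to guarantee some $\nu_r(v)>0$ at each level, rather than phrasing it as your support dichotomy; the paper's version therefore shares exactly the same zero-state caveat you identified, so your bookkeeping remark is apt rather than a defect in your argument.
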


\begin{proof}
We prove the bi-infinite case. The other case is an 
easy consequence of that and Proposition \ref{BD:100}. 

The first part is a consequence  of  Proposition \ref{BD:115}.
For the second part, 
the following are easy consequences of the definition:

\noindent
(2a) If there is a vertex $v$ in $V_{n}$ such that 
$\nu_{r}(v) > 0$, then  there exists a vertex $w$ in $V_{n-1}$ 
such that $\nu_{r}(w) > 0$.

\noindent
(2b) If $m < n$ is such that there is a path 
from every vertex in $V_{m}$ to every vertex in $V_{n}$, and 
there is there is a vertex $v$ in $V_{m}$ such that 
$\nu_{r}(v) > 0$, then  for every  vertex $w$ in $V_{n-1}$, 
$\nu_{r}(w) > 0$.

Now let $n$ be an integer. By Lemma \ref{BD:90}, there is some 
$v$ in $V_{n}$ such that $\nu_{r}(v) > 0$. Next, choose
$m < n$ such that $E_{m,n}$ has full connections. 
It follows from the first point above that there exists 
$w$ in $V_{m}$ such that $\nu_{r}(w) > 0$. It 
then follows from the second point above that 
$\nu_{r}(v') > 0$, for all $v'$ in $V_{n}$. As 
$n$ was arbitrary, this completes the proof.
\end{proof}

We will ultimately be interested in \emph{ordered} 
bi-infinite Bratteli diagrams. We make the definition now, although
we will not make use of it until section \ref{order}.

\begin{defn}
\label{BD:40}
A \emph{bi-infinite, ordered Bratteli diagram} is a 
bi-infinite Bratteli diagram, $\mathcal{B}= (V, E, r, s)$, along 
with partial orders $\leq_{s}, \leq_{r}$ on $E$ such that, for 
any  
$e,f$ in $E$, 
they are $\leq_{s}$-comparable if and only if $s(e) = s(f)$, and 
 are $\leq_{r}$-comparable if and only if $r(e) = r(f)$.
 We write $\mathcal{B} = (V, E, r, s, \leq_{r}, \leq_{s})$.
\end{defn}

We adopt the following obvious notation: $e <_{r} f $ (respectively, 
$e <_{s} f $) if and only if 
$e \leq_{r} f$ (respectively, $e \leq_{s} f$) and $e \neq f$.

The definition of the orders
 can also be extended to $E_{m,n}$ using 
the lexicographic order carefully noting  that $\leq_{r}$ 
works right-to-left while $\leq_{s}$ 
works left-to-right. 

If $e$ is any edge in $E$, we let $S_{s}(e)$ be 
its $\leq_{s}$-successor, provided it exists. Similar, $P_{s}(e)$ 
denotes its $\leq_{s}$-predecessor. There
are analogous definitions of $S_{r}$ and $P_{r}$. 
These definitions also extend to $E_{m,n}, m < n$.

If $\mathcal{B}$
is a bi-infinite ordered Bratteli diagram, we say
an edge or finite path $e$ is $r$-maximal if it is 
maximal in the $\leq_{r}$ order. 
Analogous definitions exist for $r$-minimal, $s$-maximal and 
$s$-minimal.

\section{The path space}
\label{path}
In this section, we pass from combinatorics to
topology: to each Bratteli diagram we associate  a
 topological space, the path space along with 
 a topological equivalence relation, tail 
 equivalence. Of course, most of this is 
 well-known for standard Bratteli diagrams, so we 
 focus here
 on the bi-infinite case.

\begin{defn}
\label{path:10}
\begin{enumerate}
\item 
If $\mathcal{B}$ is a Bratteli diagram, 
we let  $X_{\mathcal{B}}$ be the space of 
infinite paths in $\mathcal{B}$: that is, an element 
of $X_{\mathcal{B}}$ is a sequence, $(x_{n})_{n \geq 1}$, where 
$x_{n}$ is in $E_{n}$ and $r(x_{n}) = s(x_{n+1})$,  for every 
positive  integer $n$.
\item 
If $\mathcal{B}$ is a bi-infinite Bratteli diagram, 
we let  $X_{\mathcal{B}}$ be the space of 
bi-infinite paths in $\mathcal{B}$: that is, an element 
of $X_{\mathcal{B}}$ is a sequence, $(x_{n})_{n \in \Z}$, where 
$x_{n}$ is in $E_{n}$ and $r(x_{n}) = s(x_{n+1})$,  for every 
integer $n$.
\end{enumerate} 
\end{defn}

We introduce some notation which is not
strictly necessary when dealing with one-sided Bratteli 
diagrams, but helps when dealing with bi-infinite 
ones.

First, if $v$ is any vertex in $V_{n}, n \in \Z$, we let 
$X^{+}_{v}$ be the set of all one-sided infinite 
paths $x= (x_{n+1}, x_{n+2}, \ldots )$ with $x_{i}$ in $E_{i}$,
for all $i  > n$,  and
 $s(x_{n+1})=v$. Observe that this coincides with the one-sided path
 space of $\mathcal{B}_{V}^{+}$, of Proposition \ref{BD:100}.
There is a similar definition for $X^{-}_{v}$  as 
one-sided infinite paths ending at $v$.

Secondly, if $x$ is any point in $X_{\mathcal{B}}$ and 
$m < n$, we let $x_{(m,n]} $ or $x_{[m+1,n]}$
denote $(x_{m+1}, \ldots, x_{n})$ which is in $E_{m,n}$. 
We also let $x_{(m, \infty)}$  or 
$x_{[m+1, \infty)}$  denote $(x_{m+1}, x_{m+2}, \ldots )$ and 
$x_{(-\infty, n]}$ or $x_{(-\infty,n+1)}$ denote 
$( \ldots, x_{n-1}, x_{n})$.
Observe that if $x$ is in $X_{\mathcal{B}}$, then $x_{[n, \infty)}$
is in $X_{s(x_{n})}^{+}$ while $x_{(- \infty, n]}$ is in 
$X_{r(x_{n})}^{-}$.

Thirdly, if $p$ is in $E_{l,m}$ and $q$ is in $E_{m,n}$ with
$r(p)=s(q)$, we let 
$pq$ denote their concatenation, which lies in $E_{l,n}$. 
In a similar way,  if $p$ is in $E_{m,n}$, $x$ is in $X_{r(p)}^{+}$
and $y$ is in $X_{s(p)}^{-}$, then $px$ is in $X_{s(p)}^{+}$, 
$yp$ is in $X_{r(p)}^{-}$ and $ypx$ is in $X_{\mathcal{B}}$.

Finally, we also use this concatenation notation
for sets of paths, rather than single elements. As an example, 
$pX_{r(p)}^{+}$ is the set of all $px$ with $x$ in $X_{r(p)}^{+}$.
Also, note that, for any vertex $v$ in $V_{n}$,  $X_{v}^{-}X_{v}^{+}$
is the set of all $x$ with $r(x_{n})= s(x_{n+1}) = v$.

We introduce the natural  topology on the path space,
 for both infinite and 
bi-infinite cases.

\begin{prop}
\label{path:20}
\begin{enumerate}
\item 
Let $\mathcal{B}$ be a Bratteli diagram. We  regard 
$X_{\mathcal{B}}$ as a subset of $\prod_{n=1}^{\infty} E_{n}$. 
Each $E_{n}$ is endowed with the discrete topology,
$\prod_{n=1}^{\infty} E_{n}$ with the product topology and 
$X_{\mathcal{B}}$ with the relative topology. In this, 
$X_{\mathcal{B}}$ is compact, metrizable and totally disconnected.
Moreover, if $p$ is any path in $E_{0,n}$, then the set 
\[
pX_{r(p)}^{+} = 
\{ x \in X_{\mathcal{B}} \mid x_{i} = p_{i}, 1 \leq i \leq n \}.
\]
is clopen and, as $p$ and $n$ vary, these form 
 a base for the topology of 
$X_{\mathcal{B}}$.
\item 
Let $\mathcal{B}$  be  a bi-infinite Bratteli diagram., 
 We  regard 
$X_{\mathcal{B}}$ as a subset of $\prod_{n \in \Z} E_{n}$. 
Each $E_{n}$ is endowed with the discrete topology,
$\prod_{n \in \Z} E_{n}$ with the product topology and 
$X_{\mathcal{B}}$ with the relative topology. In this, 
$X_{\mathcal{B}}$ is compact, metrizable and totally disconnected.
Moreover, if $p$ is any path in $E_{m,n}, m < n$,
 then the set
\[
X_{s(p)}^{-}pX_{r(p)}^{+} = \{ x \in X_{\mathcal{B}} \mid x_{i} = p_{i}, m < i \leq n \}.
\]
is  clopen and, as $m < n, p$ vary, these form a  base
 for the topology of 
$X_{\mathcal{B}}$.
\end{enumerate} 
\end{prop}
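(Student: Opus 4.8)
\textbf{Proof proposal for Proposition \ref{path:20}.}

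The plan is to treat both cases uniformly by viewing the path space as a closed subset of a countable product of finite discrete spaces, and then reading off each asserted property from standard facts about such products. For part (1), I would first observe that $X_{\mathcal{B}} \subseteq \prod_{n \geq 1} E_{n}$ is precisely the set of sequences $(x_{n})$ satisfying the countably many ``compatibility'' constraints $r(x_{n}) = s(x_{n+1})$. Each such constraint defines a closed (indeed clopen) subset of the product, since it depends on only two coordinates and those coordinates live in discrete spaces; hence $X_{\mathcal{B}}$, being the intersection of these, is closed in the product. Tychonoff's theorem gives compactness of $\prod_{n \geq 1} E_{n}$, so the closed subset $X_{\mathcal{B}}$ is compact. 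Metrizability follows because a countable product of metrizable (here, finite discrete) spaces is metrizable, e.g.\ via $d(x,y) = 2^{-\min\{ n \,:\, x_{n} \neq y_{n}\}}$, and subspaces of metrizable spaces are metrizable. Total disconnectedness follows because each $E_{n}$ is totally disconnected, a product of totally disconnected spaces is totally disconnected, and this passes to subspaces.

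Next I would establish the statement about the basic sets. Fix a path $p \in E_{0,n}$. The set $\{ x \in X_{\mathcal{B}} : x_{i} = p_{i}, \, 1 \leq i \leq n \}$ is the intersection of $X_{\mathcal{B}}$ with the cylinder set in the product determined by fixing the first $n$ coordinates; cylinder sets over discrete factors are clopen in the product, so this set is clopen in $X_{\mathcal{B}}$. To see it equals $p X_{r(p)}^{+}$, I would just unwind the concatenation notation: an element of $X_{\mathcal{B}}$ with $x_{i} = p_{i}$ for $i \leq n$ has $r(x_{n}) = r(p)$, so its tail $x_{(n,\infty)}$ lies in $X_{r(p)}^{+}$, and conversely any $y \in X_{r(p)}^{+}$ produces a legitimate infinite path $py$; the compatibility condition at the splice point $n$ holds exactly because $r(p) = s(y_{n+1})$ by definition of $X_{r(p)}^{+}$. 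Finally, to see these sets form a base, note that the cylinder sets fixing finitely many coordinates form a base for the product topology, and any cylinder fixing coordinates $x_{1}, \ldots, x_{n}$ intersected with $X_{\mathcal{B}}$ is either empty or of the stated form (if the prescribed initial segment is not a legitimate path, the intersection is empty; otherwise it is $pX_{r(p)}^{+}$ for that segment $p$). Since a nonempty intersection of two such basic sets is again of this form (take the longer of the two prescribed paths, provided they are consistent), this is genuinely a base.

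For part (2), the argument is essentially identical with $\prod_{n \geq 1}$ replaced by $\prod_{n \in \Z}$: the index set $\Z$ is still countable, so Tychonoff, metrizability, and total disconnectedness all go through verbatim, and $X_{\mathcal{B}}$ is again the closed subset cut out by the compatibility constraints $r(x_{n}) = s(x_{n+1})$, $n \in \Z$. The one point needing slightly more care is the description of the basic clopen set: for $p \in E_{m,n}$ one must check that $X_{s(p)}^{-} p X_{r(p)}^{+} = \{ x \in X_{\mathcal{B}} : x_{i} = p_{i}, \, m < i \leq n \}$, which again comes down to unwinding the concatenation notation established earlier in the section — a bi-infinite path agreeing with $p$ on $(m,n]$ splits as a left tail in $X_{s(p)}^{-}$, the finite piece $p$, and a right tail in $X_{r(p)}^{+}$, and conversely any such triple splices together compatibly. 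I do not anticipate a genuine obstacle here; the only mild subtlety is bookkeeping around which coordinate index the ``splice'' happens at and making sure the compatibility conditions at both ends of $p$ are exactly what the definitions of $X_{s(p)}^{-}$ and $X_{r(p)}^{+}$ supply. That, together with the observation that the empty set arises precisely when the prescribed segment fails to be a legitimate element of $E_{m,n}$, completes the verification that these sets form a base.
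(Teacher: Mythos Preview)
Your argument is correct and is exactly the standard one. The paper does not supply a proof of this proposition at all---it states the result and moves on, treating it as well known---so there is no alternative approach to compare against; your write-up is precisely the kind of routine verification the authors are implicitly assuming.
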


We remark that the path space $X_{\mathcal{B}}$ is a metric space 
(even an ultrametric space) with
the formula, for $x, y$ in $X_{\mathcal{B}}$,
\[
d(x,y) = \inf \{ 2^{-n} \mid n \geq 0, x_{i} = y_{i}, 1 \leq i \leq n \}
\]
in the one-sided case and 
\[
d(x,y) = \inf \{ 2^{-n} \mid n \geq 0, x_{i} = y_{i}, 1-n \leq i \leq n \}
\]
for the bi-infinite case.

Before going further, we want to look at the path spaces for simple
diagrams.
One of the difficulties of the definition of simplicity
is that it does not 
guarantee that the path space is infinite. This must be  allowed since the
$C^{*}$-algebra of $n \times n $-matrices is a simple AF-algebra, whose
associated Bratteli diagram has a finite path space. On the other hand, it
is often nice to rule out this case as not being terribly interesting.
This problem doubles for bi-infinite Bratteli diagrams. For the moment, 
we make a small useful observation.

\begin{thm}
\label{path:12}
Let $\mathcal{B}$ be a Bratteli diagram. It is simple and 
$X_{\mathcal{B}}$ is infinite if and only if,
 for every $m \geq 1$, there is $n > m $ such that 
for every vertex $v$ in $V_{m}$ and $w$ in $V_{n}$, there are at 
least two paths
$p, p'$ in $E_{m,n}$ with $s(p)=s(p')=v, r(p)=r(p')=w$.
\end{thm}

\begin{proof}
Let us first assume that $\mathcal{B}$ is simple and $X_{\mathcal{B}}$ 
is infinite. Fix $m  \geq 1$. From simplicity, we know there is $m' > m$ 
such that there is a path from every vertex in $V_{m}$ to every
 vertex in $V_{m'}$. If we consider all paths $p$ in $E_{0,m'}$, the sets
 $pX_{r(p)}$ form a finite cover of $X_{\mathcal{B}}$. As we assume this 
 space is infinite, there must exist $x \neq y$ which lie in the same 
 element. That is, there is $m'' > m'$ such that $x_{m''} \neq y_{m''}$.
 Using simplicity again, we find $n > m''$ such that there is a
  path from every
 vertex of $V_{m''}$ to $V_{n}$. It is now an easy matter to check that
 there are at least two paths from every vertex of $V_{m}$ to every vertex 
 of $V_{n}$, one that follows $x_{m'+1}, \ldots, x_{m''}$ and 
 one that follows
  $y_{m'+1}, \ldots, y_{m''}$.
  
  For the converse, the two-path condition
   obviously implies the diagram is simple. 
   It also implies that there are at least $2^{n}$ paths in $E_{0,n}$ and
    so $X_{\mathcal{B}}$ is infinite.
  \end{proof}

Let us also
note the following result for the bi-infinite case,
 which is an easy consequence
of the last result and Proposition \ref{BD:100}..

\begin{lemma}
\label{path:17}
Let $\mathcal{B}$ be a simple bi-infinite Bratteli diagram. The following are equivalent
\begin{enumerate}
\item Both  $X^{+}_{v}$ and
$X^{-}_{v}$ are infinite, for some $v$ in $V$. 
\item Both $X^{+}_{v}$ and
$X^{-}_{v}$   are  infinite, for all $v$ in $V$.
\item 
For every integer $m $, there are $l <  m < n $ such that 
for every vertex  $u$ in $V_{l}$, $v$ in $V_{m}$ and $w$ in $V_{n}$, there are 
at 
least two paths
$p, p'$ in $E_{l,m}$ with $s(p)=s(p')=u, r(p)=r(p')=v$ and 
at 
least two paths
$q, q'$ in $E_{m,n}$ with $s(q)=s(q')=v, r(q)=r(q')=w$.
\end{enumerate}
If any of these conditions hold, we say that $\mathcal{B}$ is
strongly simple.
\end{lemma}

\begin{defn}
\label{path:18}
We say that a bi-infinite Bratteli diagram $\mathcal{B}$  is 
\emph{strongly simple} if it is simple and  the conditions Lemma 
\ref{path:17} hold.
\end{defn}

We also need the notion of tail equivalence. As paths in 
the bi-infinite case have two tails, this becomes two 
equivalence relations.

\begin{defn}
\label{path:100}
\begin{enumerate}
\item
Let $\mathcal{B}$ be a Bratteli diagram. For each $x$ in 
$X_\mathcal{B}$, we let $T^{+}(x)$
be the set of paths which are right-tail equivalent to $x$.
 More precisely, for
$N \geq 0$, we define
\begin{eqnarray*}
T^{+}_{N}(x) & =  &  \{ px_{(N, \infty)} \mid p \in E_{0,N}, r(p) = r(x_{N}) \}
  \\
  & =   &
\{ y \in X_\mathcal{B} \mid y_{n} = x_{n}, \text{ for all } 
n > N \}
\end{eqnarray*}
and
$T^{+}(x) = \cup_{N \in \Z} T^{+}_{N}(x)$.

\item 
Let $\mathcal{B}$ be a bi-infinite 
Bratteli diagram. For each $x$ in $X_\mathcal{B}$,
 we define $T^{+}(x)$ ( $T^{-}(x)$ )
to be the set of all paths which are right-tail equivalent (left-tail 
equivalent, respectively) to $x$. More precisely, 
for $N$ in $\Z$, we define
\begin{eqnarray*}
T^{+}_{N}(x) & =  &  X_{r(x_{N})}^{-}x_{(N, \infty)} \\
  & =   &  \{ y \in X_\mathcal{B} \mid y_{n} = x_{n}, \text{ for all } 
n > N \} \\
 T^{+}(x) & = & \bigcup_{N \in \Z} T^{+}_{N}(x),
\end{eqnarray*}
and 
\begin{eqnarray*}
T^{-}_{N}(x) & =  & x_{(-\infty, N]}X^{+}_{r(x_{N})} \\
  &  =  &   \{ y \in X_\mathcal{B} \mid y_{n} = x_{n}, \text{ for all } 
n \leq N \} \\
T^{-}(x) & = & \bigcup_{N \in \Z} T^{-}_{N}(x).
\end{eqnarray*}
\end{enumerate}
Each set $T^{+}_{N}(x)$ is endowed with the relative topology from 
$X_{\mathcal{B}}$, while $T^{+}(x)$ is given the inductive limit topology.
We use $T^{+}(X_{\mathcal{B}})$ to denote the 
equivalence relation (or groupoid) on $X_{\mathcal{B}}$ whose equivalence 
classes are the sets $T^{+}(x), x \in X_{\mathcal{B}}$. 
There is an analogous relation
$T^{-}(X_{\mathcal{B}})$, but we will 
work mostly with $T^{+}(X_{\mathcal{B}})$.
\end{defn}

Let us recall that the inductive limit 
topology  on $T^{+}(x), x \in X_{\mathcal{B}}$, 
is the finest topology
which makes each inclusion $T^{+}_{N}(x) \subseteq T^{+}(x)$ continuous.
One can check quite easily that, for every $N$, $T^{+}_{N}(x)$ is an open 
subset of $T^{+}_{N+1}(x)$. In consequence, 
 a subset $U \subseteq T^{+}(x)$ is open in the inductive limit 
  topology if and only if 
$U \cap T^{+}_{N}(x)$ is open in $T^{+}_{N}(x)$, for every $N$.
We leave it as an instructive exercise for the reader to show
that a sequence $y_{n}, n \geq 1,$ in $T^{+}(x)$ converges to
$y$ in $T^{+}(x)$ in this topology if and only if it converges to $y$ 
in 
$X_{\mathcal{B}}$ and there exists some $N$ such that 
$y, y_{n}, n \geq 1,$ are all contained in $T^{+}_{N}(x)$.

In a standard Bratteli diagram, each tail
 equivalence class, $T_{N}^{+}(x)$, is finite
and each $T^{+}(x)$ is countable. This is not usually the case 
for bi-infinite diagrams. Instead, we must investigate the topology
on the tail equivalence classes.

\begin{prop}
\label{path:110}
Let $\mathcal{B}$ be a bi-infinite 
Bratteli diagram and  let $x$ be in $X_{\mathcal{B}}$. 
 For any path $p$ in $E_{m,n}$
with $r(p) = r(x_{n})$, the set
\[
X^{-}_{s(p)}px_{(n, \infty)} = \{ y \in X_{\mathcal{B}} \mid y_{i} = p_{i}, 
  m < i \leq n, y_{i} = x_{i}, i > n \}.
  \]
   is a compact open subset of 
  $T^{+}(x)$. Moreover, as $m, n, p$ vary, these sets form a base for
  the topology of $T^{+}(x)$. There is an analogous 
 statement for  $ x_{(-\infty, m]} p X_{r(p)}^{+}$ in 
 $T^{-}(x)$, for $p$ with
   $s(p) = s(x_{m+1})$.
\end{prop}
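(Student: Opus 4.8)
The plan is to establish the three assertions about the set
\[
A := X^{-}_{s(p)}\,p\,x_{(n,\infty)} = \{ y \in X_{\mathcal{B}} \mid y_{i} = p_{i},\ m < i \leq n,\ y_{i} = x_{i},\ i > n \},
\]
namely that it is open in $T^{+}(x)$, that it is compact in $T^{+}(x)$, and that such sets form a base for the topology of $T^{+}(x)$; the statement for $T^{-}(x)$ is verbatim dual and I would simply remark that it follows by reversing the roles of $s$ and $r$. Throughout I will use the description of the inductive limit topology recalled just after Definition \ref{path:100}: a subset $U \subseteq T^{+}(x)$ is open if and only if $U \cap T^{+}_{N}(x)$ is open in $T^{+}_{N}(x)$ for every $N$, and a compact subset of $T^{+}(x)$ is exactly a compact subset of some $T^{+}_{N}(x)$ (in the relative topology from $X_{\mathcal{B}}$).

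First I would observe that $A \subseteq T^{+}_{n}(x)$: indeed every $y \in A$ satisfies $y_{i} = x_{i}$ for $i > n$, and $y_{n} = p_{n}$ has range $r(p_{n}) = r(p) = r(x_{n})$, so $y$ is right-tail equivalent to $x$ with agreement beyond level $n$; concretely $A = X^{-}_{s(p)}\,p\,x_{(n,\infty)} \subseteq X^{-}_{r(x_{n})}\,x_{(n,\infty)} = T^{+}_{n}(x)$. Next, inside $X_{\mathcal{B}}$ the set $A$ is exactly the basic clopen cylinder $X^{-}_{s(p)}\,p\,X^{+}_{r(p)}$ intersected with the further constraint $y_{i}=x_{i}$ for $i>n$; more directly, $A = X^{-}_{s(p)}\,p\,X^{+}_{r(p)} \cap T^{+}_{n}(x)$, and since $X^{-}_{s(p)}\,p\,X^{+}_{r(p)}$ is clopen in $X_{\mathcal{B}}$ by Proposition \ref{path:20}(2), $A$ is clopen in the relative topology of $T^{+}_{n}(x)$. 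For openness in $T^{+}(x)$ I then need $A \cap T^{+}_{N}(x)$ open in $T^{+}_{N}(x)$ for every $N$; for $N \geq n$ this is immediate since $T^{+}_{n}(x) \subseteq T^{+}_{N}(x)$ with the relative topology and $A$ is relatively open there, while for $N < n$ one has $A \cap T^{+}_{N}(x) = A \cap \{ y \mid y_{i} = x_{i},\ i > N \}$, which is again the intersection of the clopen cylinder with a clopen set, hence open. For compactness: $A$ is a closed subset of the compact space $X_{\mathcal{B}}$, hence compact in $X_{\mathcal{B}}$; since $A \subseteq T^{+}_{n}(x)$ and the topology $A$ inherits from $T^{+}_{n}(x)$ is the same as the one it inherits from $X_{\mathcal{B}}$, $A$ is compact in $T^{+}_{n}(x)$, hence compact in $T^{+}(x)$.

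Finally, to see that these sets form a base, fix $y \in T^{+}(x)$ and an open neighbourhood $U$ of $y$ in $T^{+}(x)$. Then $y \in T^{+}_{N}(x)$ for some $N$, and $U \cap T^{+}_{N}(x)$ is open in $T^{+}_{N}(x) = X^{-}_{r(x_{N})}\,x_{(N,\infty)}$, which carries the relative topology from $X_{\mathcal{B}}$; by Proposition \ref{path:20}(2) the cylinder sets $X^{-}_{s(q)}\,q\,X^{+}_{r(q)}$ are a base for $X_{\mathcal{B}}$, so we can find $m < n'$ with $n' \geq N$ and a path $q \in E_{m,n'}$ through $y$ (i.e. $y_{i} = q_{i}$ for $m < i \leq n'$) with $X^{-}_{s(q)}\,q\,X^{+}_{r(q)} \cap T^{+}_{N}(x) \subseteq U$. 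Since $n' \geq N$ and $y \in T^{+}_{N}(x)$, we have $y_{i} = x_{i}$ for $i > N$, so in particular $x_{(n',\infty)} = y_{(n',\infty)}$ and $r(q) = r(q_{n'}) = r(x_{n'})$; thus $X^{-}_{s(q)}\,q\,X^{+}_{r(q)} \cap T^{+}_{N}(x) = X^{-}_{s(q)}\,q\,x_{(n',\infty)}$, which is one of our basic sets, contains $y$, and is contained in $U$.

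The main obstacle I anticipate is not any single calculation but the bookkeeping around the two distinct topologies — the relative topology on each $T^{+}_{N}(x)$ versus the inductive limit topology on $T^{+}(x)$ — and making sure that intersecting a basic $X_{\mathcal{B}}$-cylinder with a given level $T^{+}_{N}(x)$ really does produce a set of the advertised form $X^{-}_{s(q)}\,q\,x_{(n',\infty)}$; the key simplifying fact, used repeatedly, is that for $n' \geq N$ every point of $T^{+}_{N}(x)$ already agrees with $x$ beyond level $n'$, which collapses the forward tail $X^{+}_{r(q)}$ down to the single ray $x_{(n',\infty)}$ and forces the range condition $r(q) = r(x_{n'})$.
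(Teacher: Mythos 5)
The paper states Proposition~\ref{path:110} without proof (it is treated as standard, in the spirit of Proposition~\ref{path:20}), so there is no author-supplied argument to compare against. Your proof is correct and is exactly the argument one would expect to fill in: identify $A = X^{-}_{s(p)}\,p\,x_{(n,\infty)}$ as the intersection of the clopen cylinder $X^{-}_{s(p)}\,p\,X^{+}_{r(p)}$ with $T^{+}_{n}(x)$, check openness level-by-level using the stated characterization of the inductive-limit topology, get compactness from closedness in $X_{\mathcal{B}}$ plus the fact that $T^{+}_{n}(x)$ is open in $T^{+}(x)$ (so the subspace topology on $A$ is unambiguous), and derive the base property by intersecting a basic $X_{\mathcal{B}}$-cylinder through $y$ with an appropriate $T^{+}_{N}(x)$. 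The only place worth being slightly more explicit is the step ``compact in $T^{+}_{n}(x)$, hence compact in $T^{+}(x)$'': this uses that the inclusion $T^{+}_{n}(x)\hookrightarrow T^{+}(x)$ is an open embedding, which follows from the paper's remark that each $T^{+}_{N}(x)$ is open in $T^{+}_{N+1}(x)$ but which you invoke only implicitly.
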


To this point, our discussion of the path spaces has not involved
the states in any way.
We now  see how states on the Bratteli diagram give rise
to measures on the path space. There are some subtleties
 in the bi-infinite case, but the first case is well-known. 
 We provide a sketch of the proof
for convenience.
 
\begin{prop}
\label{path:120}
Let $\mathcal{B}$ be a  Bratteli diagram and
 $\nu$ be a state on $\mathcal{B}$. 
 There is a unique  
 measure, also denoted $\nu$, on 
$X_{\mathcal{B}}$  such that 
 \[
 \nu(pX_{r(p)}^{+}) = \nu(r(p)), 
 \]
 for each $p$ in $E_{0,n}, n \geq 1$.
\end{prop}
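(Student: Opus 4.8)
The plan is to construct the measure $\nu$ on $X_{\mathcal{B}}$ via the Carathéodory/Kolmogorov extension scheme, using the clopen cylinder sets $pX_{r(p)}^{+}$ as the building blocks. First I would observe that the collection
\[
\mathcal{A} = \{ pX_{r(p)}^{+} \mid n \geq 1,\ p \in E_{0,n} \} \cup \{ \emptyset, X_{\mathcal{B}} \}
\]
is a semiring of clopen sets (indeed an algebra, once we close under finite disjoint unions): two cylinders $pX_{r(p)}^{+}$ and $qX_{r(q)}^{+}$ with $p \in E_{0,m}$, $q \in E_{0,n}$, $m \le n$, are either disjoint or nested, and the complement of a cylinder is a finite disjoint union of cylinders of the same level. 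By Proposition \ref{path:20} these sets form a base for the topology and every clopen subset of $X_{\mathcal{B}}$ is a finite disjoint union of them. I would then define a finitely additive set function on $\mathcal{A}$ by declaring $\nu(pX_{r(p)}^{+}) = \nu_{s}(r(p))$ (writing the state as $\nu = \nu_s$), extended additively to finite disjoint unions.

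The key consistency check is that this assignment is well defined and additive, which is exactly where the state condition is used: if $p \in E_{0,n}$, then
\[
pX_{r(p)}^{+} = \bigsqcup_{e \in E_{n+1},\ s(e) = r(p)} (pe) X_{r(e)}^{+},
\]
a finite disjoint union, and the defining equation $\nu(r(p)) = \sum_{s(e) = r(p)} \nu(r(e))$ says precisely that $\nu(pX_{r(p)}^{+}) = \sum_{e} \nu((pe)X_{r(e)}^{+})$. Iterating, the value assigned to any cylinder is independent of the level at which we expand it, so the function is well defined and finitely additive on the algebra generated by $\mathcal{A}$; normalization $\nu(X_{\mathcal{B}}) = \nu(v_0) = 1$ follows from the state being normalized together with $X_{\mathcal{B}} = \bigsqcup_{e \in E_1} eX_{r(e)}^{+}$.

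Next I would upgrade finite additivity to countable additivity on the algebra. This is where compactness does all the work: each cylinder set $pX_{r(p)}^{+}$ is compact (Proposition \ref{path:20}), so if a cylinder is a countable disjoint union of cylinders, the union is an open cover of a compact set and hence already finite, reducing countable additivity to the finite additivity already established. Equivalently, any finitely additive premeasure on an algebra all of whose members are compact is automatically a premeasure (continuous from above at $\emptyset$ for a vacuous reason). By the Carathéodory extension theorem, $\nu$ extends to a genuine measure on the $\sigma$-algebra generated by $\mathcal{A}$, which is the Borel $\sigma$-algebra of $X_{\mathcal{B}}$ since $\mathcal{A}$ generates the topology; total mass $1$ makes it a probability measure. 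Finally, uniqueness: any two probability measures agreeing on the generating algebra $\mathcal{A}$ agree on the $\sigma$-algebra it generates, by the $\pi$-$\lambda$ theorem (the cylinders, together with $\emptyset$, form a $\pi$-system).

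I do not expect a serious obstacle here — the argument is the standard construction of a Markov/product measure from consistent cylinder weights, and the only place the hypotheses enter nontrivially is the single line identifying the state equation with additivity over a one-step refinement. If anything needs care, it is the bookkeeping that the value $\nu(r(p))$ assigned to a cylinder is genuinely level-independent, i.e. that expanding $p$ to level $n+k$ and summing gives back $\nu(r(p))$; this is an immediate induction on $k$ using the state relation, and it is the crux that makes the premeasure well defined.
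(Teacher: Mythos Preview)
Your argument is correct and is precisely the standard Carath\'eodory/Kolmogorov construction for measures on path spaces of Bratteli diagrams. The paper does not actually supply a proof of this proposition: it is stated as a known fact (the ``first case is well-known'') and immediately followed by the extension to the bi-infinite setting, so there is nothing to compare against beyond noting that your write-up fills in exactly what the paper omits.
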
 

\begin{proof}
For each $n \geq 1$, let $C_{n}$ be the linear span of characteristic 
functions of sets $pX^{+}_{r(p)}$, where $p$ is in $E_{0,n}$, which we
denote
$\chi_{pX_{r(p)}^{+}}$.
The function $\nu_{n}:C_{n} \rightarrow \C$ defined as follows. If 
$f =  \sum_{p \in E_{0,n}} a_{p} \chi_{pX^{+}_{r(p)}}$, 
where $a_{p}$ is a complex scalar for each $p$ in $E_{0,n}$, then 
we define
\[
\nu_{n}( f) = 
\sum_{p \in E_{0,n}} a_{p}  \nu(r(p)).
\]
This is clearly a linear map and it is a simple matter to see that, 
with $f$ as above, 
\[
\vert \nu_{n}(f) \vert 
\leq \max\{ \vert a_{p} \vert  \mid p \in E_{0,n} \} 
\sum_{p \in E_{0,n}} a_{p}  \nu(r(p)) = \Vert f \Vert_{\infty} \nu(v_{0}). 
\]

Moreover, $C_{n}$ is a linear subspace of $C_{n+1}$, for all $n \geq 1$, 
and it is a consequence of the definition of a state that $\nu_{n+1}$ agrees
with $\nu_{n}$ on $C_{n}$ so the union of the $\nu_{n}$, which we 
also denote
$\nu$, defines
a linear map on the union.  The analogous norm inequality above
holds for all $f$ in the union. Hence, $\nu$ extends to a bounded linear
functional on the completion of the functions in the supremum norm.
It is a simple consequence of the Stone-Weierstrass Theorem 
(see V.8.1 of \cite{Conway:book}) that this completion 
is $C(X_{\mathcal{B}})$.
Finally, the Riesz Representation Theorem (III.5.7 of \cite{Conway:book}).  
\end{proof}

We want to establish properties of this measure. The following technical
result will be of use later.

\begin{lemma}
\label{path:122}
\begin{enumerate}
\item
Let $\mathcal{B}$ be a simple Bratteli diagram with $X_{v}$ 
infinite and let $\nu$ be a state on $\mathcal{B}$. Then we have 
\[
\lim_{n \rightarrow \infty} \max\{ \nu(v) \mid v \in V_{n} \} = 0.
\]
\item
Let $\mathcal{B}$ be a strongly simple bi-infinite 
Bratteli diagram and  $\nu_{s}, \nu_{r}$
 be a state on $\mathcal{B}$. Then we have 
\[
\lim_{n \rightarrow \infty} \min\{ \nu_{r}(v) \mid v \in V_{n} \} = +\infty.
\]
\end{enumerate}
\end{lemma}

  \begin{proof}
  We begin with the first part.
 Let $n > 1$ and $v$ be any vertex of $V_{n}$. As we assumed the map
 $r$ is surjective, there is $e$ in $E_{n}$ with $r(e)=v$. Let $w= s(e)$
 so 
 \[
 \nu(v) = \nu(r(e)) \leq \sum_{s(f)=w}  \nu(r(f)) = \nu w) 
  \leq \max\{ \nu(v') \mid v' \in V_{n} \}.
  \]
  Taking the maximum over $v$ in $V_{n}$, we see the sequence
  we are considering is decreasing in $n$.
   Now fix an integer positive $m$. In view of  Theorem \ref{path:12},
   there is $n >m$ such that, for all $v$ in $V_{m}$ and $w$ in $V_{n}$, 
   there are at least two paths from $v$ to $w$. It follows from the 
   definition of state that $\nu(v) \geq 2 \nu(w)$, for all such
   $v,w$ and so  
   \[
   \max\{ \nu(v) \mid v \in V_{m} \} \geq 2 \max\{ \nu(v) \mid v \in V_{n} \}.
   \]
   The conclusion follows.
   
   For the second part, we first note that by Proposition \ref{BD:110},
   $\nu_{r}$ is faithful, so the minima are all strictly positive. A
    similar argument to the first case shows that 
   the sequence $\min \{ \nu_{r}(v) \mid v \in V_{n} \}$
    is increasing in $n$. Another minor variation of
    the remaining argument above shows that, for any $m \geq 1$, 
    there is $n >m$ such that    
     \[
  2 \min\{ \nu(v) \mid v \in V_{m} \} \leq  \min\{ \nu(v) \mid v \in V_{n} \}.
   \]
   The result follows.
  \end{proof}

\begin{prop}
\label{path:125}
Let $\mathcal{B}$ be a  Bratteli diagram and
 $\nu$ be a non-zero state on $\mathcal{B}$. 
 If $\mathcal{B}$ is simple, then the measure $\nu$ of Proposition
 \ref{path:120} has full support.
  If, in addition, $X_{\mathcal{B}}$ is infinite, 
 then   $\nu$ has no atoms.            
\end{prop} 

\begin{proof}
If $U$ is any non-empty open set, then there is $n \geq 1$ and a path $p$
in $E_{0,n}$ such that $p X_{r(p)}^{+} \subseteq U$ and 
$\nu( p X_{r(p)}^{+}) = \nu(r(p))$. As $\mathcal{B}$ is simple, 
$\nu$ is faithful (Proposition \ref{BD:110}), so $\nu(r(p)) > 0$.

For the second part, if $x$ in any point in $x$, for any $n \geq 1$, 
we have 
\[
\nu(\{ x \}) \leq \nu(x_{[1,n]}X_{r(x_{n})}^{+}) = \nu(r(x_{n})) 
\leq \max\{ \nu(v) \mid v \in V_{n} \}.
\]
The conclusion now follows from Lemma \ref{path:122}.
\end{proof}

If  $\mathcal{B} = (V, E, r, s)$ is a bi-infinite
Bratteli diagram, and $p$ is any finite path in 
$X_{\mathcal{B}}$, it is clear that
 $X_{s(p)}^{-} \times X_{r(p)}^{+}$ and $X_{s(p)}^{-} p X_{r(p)}^{+}$
are homeomorphic in an obvious way. We may apply 
Proposition \ref{path:120} to each 
of $ X_{\mathcal{B}_{r(p)}^{+}}$ and $X_{\mathcal{B}_{s(p)}^{-}}$
to obtain measures on $X_{s(p)}^{-}$ and $ X_{r(p)}^{+}$ and their
product can be regarded as a measure on  $X_{s(p)}^{-} p X_{r(p)}^{+}$
via the isomorphism above. It is an easy exercise to see that this collection
of measures agree where they overlap. This then proves the 
following analogue of Proposition \ref{path:120}
in the bi-infinite case. 

\begin{prop}
\label{path:130}
Let $\mathcal{B} = (V, E, r, s)$ be a bi-infinite
Bratteli diagram and 
suppose that $\nu_{s}, \nu_{r}: V \rightarrow \R$ is a state. 
There is a unique measure, which we denote by 
 $\nu_{r} \times \nu_{s}$ on $X_{\mathcal{B}}$
such that 
\[
\nu_{r} \times \nu_{s}(X_{s(p)}^{-}pX_{r(p)}^{+}) = 
\nu_{r}(s(p))\nu_{s}(r(p)), 
\]
for every $p$ in  $E_{m,n}$, with $m \leq n$.
If the state is faithful, then this measure has
 full support. If $\mathcal{B}$ is strongly simple, then
 this measure has no atoms.
\end{prop}

There remains one more class of measures to be defined
in the bi-infinite case:
on tail equivalence classes.

Let $\mathcal{B}$ be a bi-infinite Bratteli diagram and $x$ be any 
point in $X_{\mathcal{B}}$. For each $n$, we may consider the space
$T^{+}_{n}(x)= X_{r(x_{n})}^{-}x_{(n, \infty)}$ which is a compact 
open subset of $T^{+}(x)$. There is  
obvious homeomorphism  from  this space to 
$X_{s(x_{n})}^{-}$ and the measure  $\nu_{r}^{r(x_{n})}$ can be pulled
back to $T^{+}_{n}(x)$. 
It is a trivial computation to check that,
for any $m < n$,  the two measures obtained agree on 
$T^{+}_{m}(x) \subseteq T^{+}_{n}(x)$.
 The following is an immediate
  consequence of this and Proposition \ref{path:130}.

\begin{prop}
\label{path:140}
Let $\mathcal{B}$ be a bi-infinite Bratteli diagram and
 $\nu_{r}, \nu_{s}$ be a state on $\mathcal{B}$. For each 
 $x$ in $X_{\mathcal{B}}$, there is a measure $\nu^{x}_{r}$ on 
 $T^{+}(x)$ such that 
 \[
 \nu_{r}^{x}(X^{-}_{s(p)}px_{(n,\infty)}) = \nu_{r}(s(p_{m+1})), 
 \]
 for each $p$ in $E_{m,n}, m < n$ with $r(p)=r(x_{n})$.
 For $x,y$ in $\mathcal{B}$, if $T^{+}(x) = T^{+}(y)$,
 then $\nu_{r}^{x}= \nu_{r}^{y}$.
 There is also a measure $\nu^{x}_{s}$ on 
 $T^{-}(x)$ such that 
  \[
 \nu_{s}^{x}( x_{(-\infty, m]} pX^{+}_{r(p)}) = \nu_{s}(r(p_{n})), 
 \]
 for each $p$ in $E_{m,n}, m < n$ with $s(p)=s(x_{m})$.
  For $x,y$ in $\mathcal{B}$, if $T^{-}(x) = T^{-}(y)$,
 then $\nu_{s}^{x}= \nu_{s}^{y}$.
  If $\mathcal{B}$ is strongly simple, then
 these measures have
 full support and have no atoms.
\end{prop}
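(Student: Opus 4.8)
The plan is to build $\nu_r^x$ on $T^+(x)$ one level at a time and then pass to the inductive limit; the construction of $\nu_s^x$ on $T^-(x)$ is entirely symmetric (interchange the roles of $r$ and $s$, and of the two ends of a path), so I only describe $\nu_r^x$.

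First I would fix $x$ and an integer $N$ and use the identification $y\mapsto y_{(-\infty,N]}$ of $T^+_N(x)=X^-_{r(x_N)}x_{(N,\infty)}$ with the compact, totally disconnected space $X^-_{r(x_N)}$. On the clopen semiring of cylinders I set $\mu_N(X^-_{s(p)}\,p\,x_{(N,\infty)})=\nu_r(s(p_{m+1}))$ for $p\in E_{m,N}$ with $r(p)=r(x_N)$; finite additivity of $\mu_N$ on this semiring is precisely the iterated state relation $\nu_r(v)=\sum_{r(e)=v}\nu_r(s(e))$ (refining $p$ by one edge on the left partitions the cylinder accordingly), and compactness upgrades finite additivity on a clopen semiring to $\sigma$-additivity, so a standard Carathéodory extension gives a finite Borel measure $\mu_N$ on $T^+_N(x)$ of total mass $\nu_r(r(x_N))$. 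This is just Proposition~\ref{path:120} for the diagram read right-to-left with state $\nu_r$.

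Next, since $T^+_N(x)$ is open in $T^+_{N+1}(x)$ and, under these identifications, equals the cylinder $X^-_{s(x_{N+1})}\,x_{N+1}\,x_{(N+1,\infty)}$, a one-line check on cylinders shows $\mu_{N+1}$ restricted to $T^+_N(x)$ and $\mu_N$ agree on a generating semiring, hence (both finite) coincide. Because $U\subseteq T^+(x)$ is open iff each $U\cap T^+_N(x)$ is open in $T^+_N(x)$, every Borel $A\subseteq T^+(x)$ meets each $T^+_N(x)$ in a Borel set, and $N\mapsto\mu_N(A\cap T^+_N(x))$ is non-decreasing by compatibility; I would define $\nu_r^x(A)$ to be its limit in $[0,\infty]$. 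Monotone convergence makes $\nu_r^x$ countably additive, the defining formula is immediate (each cylinder $X^-_{s(p)}p\,x_{(n,\infty)}$ sits inside $T^+_N(x)$ for $N\ge n$ with $\mu_N$-mass $\nu_r(s(p_{m+1}))$), and uniqueness follows since those cylinders form a $\pi$-system generating the Borel sets and pin down each $\mu_N=\nu_r^x$ restricted to $T^+_N(x)$.

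For the last two claims: if $\nu_r,\nu_s$ is faithful, every nonempty open set contains a nonempty basic cylinder $X^-_{s(p)}p\,x_{(n,\infty)}$ — nonempty because $r$ and $s$ are surjective, so $X^-_{s(p)}\neq\emptyset$ — of positive mass $\nu_r(s(p_{m+1}))$, giving full support. For the absence of atoms, fix $y\in T^+_{n_0}(x)$ and consider the decreasing basic neighbourhoods $B_m=\{z:z_i=y_i,\ m<i\le n_0;\ z_i=x_i,\ i>n_0\}$, $m<n_0$, with $\bigcap_m B_m=\{y\}$ and $\nu_r^x(B_m)=\nu_r(s(y_{m+1}))$. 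Using strong simplicity (Lemma~\ref{path:17}) I would pick $n_0>m_1>m_2>\cdots\to-\infty$ with at least two paths in $E_{m_{k+1},m_k}$ between every prescribed pair of endpoints; iterating the state relation then gives $\nu_r(v)\ge 2\,\nu_r(u)$ whenever $u\in V_{m_{k+1}}$, $v\in V_{m_k}$, hence $\nu_r(s(y_{m_{k+1}+1}))\le\tfrac12\,\nu_r(s(y_{m_k+1}))$, so $\nu_r^x(B_{m_k})\to 0$ and therefore $\nu_r^x(\{y\})=\lim_k\nu_r^x(B_{m_k})=0$. The one place needing genuine care is the inductive-limit step: one must check that the inductive-limit topology is compatible enough with the Borel structure for the monotone-limit recipe to produce a bona fide countably additive measure on the non-compact space $T^+(x)$; everything else is the standard Carathéodory construction together with short computations using the state equation, with the no-atoms estimate being the only part I would write out in full.
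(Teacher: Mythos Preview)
Your proof is correct and follows essentially the same route as the paper: identify each $T^+_N(x)$ with the path space $X^-_{r(x_N)}$ of the reversed one-sided diagram $\mathcal{B}^-_{r(x_N)}$ (Definition~\ref{path:150}), invoke Proposition~\ref{path:120} to get the finite measure $\mu_N$, check compatibility under the inclusions $T^+_N(x)\subseteq T^+_{N+1}(x)$, and pass to the inductive limit. The paper records this as ``an immediate consequence of Proposition~\ref{path:120}'' via Definition~\ref{path:150} and omits the details you spell out; your no-atoms argument via Lemma~\ref{path:17} is exactly the intended one (cf.\ the analogous estimate in Lemma~\ref{path:12}).
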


\section{Orders on the path space}
\label{order}

We defined orders for a bi-infinite diagram in Definition \ref{BD:40}.
We now see what effect these orders have on the infinite path space
of the last section.

The first result is a fairly standard one, adapted to the bi-infinite setting.
We will not give a proof.

\begin{prop}
\label{order:5}
Every bi-infinite ordered Bratteli diagram, $\mathcal{B}$,
contains an infinite path such that every edge
is $r$-maximal ($r$-minimal, $s$-maximal or $s$-minimal).
We let $X^{r-max}_{\mathcal{B}} $
($X^{r-min}_{\mathcal{B}}, X^{s-max}_{\mathcal{B}}, X^{s-min}_{\mathcal{B}} $, 
respectively)
 denote the set of all such paths. We also let 
$X^{ext}_{\mathcal{B}} $ denote their union. Each of these sets is closed in
$X_{\mathcal{B}}$.

If $\mathcal{B}$ is finite rank
and 
 $K$ is a positive integer which bounds $\# V_{n}$, for every $n$ in $\Z$, then 
each of these sets has at most $K$ elements.
\end{prop}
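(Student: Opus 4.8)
The plan is to prove the two assertions separately, starting with existence. To produce an $s$-maximal infinite path, I would proceed by a compactness/diagonalization argument. Fix any vertex $w \in V_0$ (any level would do). For each $n \geq 1$, the set $E_{0,n}$ of finite paths with $r(p) \in V_n$ and $s(p) = w$ is finite and nonempty, and because $\leq_s$ is a total order on the edges sharing a given source, the lexicographic order $\leq_s$ (reading left-to-right, as noted after Definition \ref{BD:40}) restricts to a total order on the paths in $E_{0,n}$ with a fixed source. Among those with source $w$ there is a unique $\leq_s$-maximal one, call it $p^{(n)}$. These are compatible in the sense that $p^{(n)}$ is the truncation of $p^{(n+1)}$ to the first $n$ edges: the maximal extension of a path is obtained by maximally extending its maximal prefix. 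So the $p^{(n)}$ assemble into a one-sided path from $w$ out to $+\infty$, every edge of which is $s$-maximal. To get a genuine bi-infinite path I then extend backwards arbitrarily (say, always picking any edge with the required range), obtaining an element of $X^{s\text{-}max}_{\mathcal{B}}$; here I should note that being $s$-maximal is a condition only on each individual edge, so the backward extension is unconstrained. The three other cases are identical after reversing the order or swapping the roles of $s$ and $r$.

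Next, closedness: I claim $X^{s\text{-}max}_{\mathcal{B}}$ is closed, and likewise for the other three sets, whence $X^{ext}_{\mathcal{B}}$ is closed as a finite union. The point is that ``$x_n$ is $s$-maximal'' is a clopen condition on $X_{\mathcal{B}}$ for each fixed $n$ — it depends only on the coordinate $x_n$, which ranges over the discrete finite set $E_n$ — so $X^{s\text{-}max}_{\mathcal{B}} = \bigcap_{n \in \Z}\{x : x_n \text{ is } s\text{-maximal}\}$ is an intersection of clopen sets, hence closed. The same argument handles $X^{s\text{-}min}_{\mathcal{B}}$, $X^{r\text{-}max}_{\mathcal{B}}$, $X^{r\text{-}min}_{\mathcal{B}}$.

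Finally, the finite-rank bound. Suppose $\#V_n \leq K$ for all $n$. I want to show $X^{s\text{-}max}_{\mathcal{B}}$ has at most $K$ elements. The key observation is that an $s$-maximal path $x$ is completely determined by its ``backward half'' $x_{(-\infty,0]}$ together with the vertex $s(x_1) = r(x_0) \in V_0$: indeed, given any vertex $v \in V_0$, there is a unique one-sided $s$-maximal path in $X_v^+$, obtained by starting at $v$ and repeatedly taking the $s$-maximal edge out of the current range vertex (this uses that $r$ is surjective, so such an edge always exists, and that the $\leq_s$-maximal edge with a given source is unique). Wait — this shows the forward half is determined by $v$, but the backward half is still free, so this alone does not bound the count. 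The correct device is to run the same uniqueness argument in the forward direction from a \emph{level far to the left}: for each $m < 0$ and each vertex $u \in V_m$ there is exactly one $s$-maximal one-sided path in $X_u^+$, so the map $x \mapsto s(x_{m+1}) \in V_m$ is injective on $X^{s\text{-}max}_{\mathcal{B}}$ for every $m$ — two $s$-maximal bi-infinite paths agreeing at a vertex of $V_m$ agree on all edges indexed $> m$, and since $m$ is arbitrary they agree everywhere. Hence $\# X^{s\text{-}max}_{\mathcal{B}} \leq \# V_m \leq K$. The identical argument, with $r$ in place of $s$ and reading right-to-left, bounds $\# X^{r\text{-}max}_{\mathcal{B}}$, and the min versions follow by order reversal.

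The main obstacle, and the place to be careful in writing this up, is precisely the injectivity step in the finite-rank bound: one must resist the temptation to conclude that an extremal path is determined by a single vertex on a single level (it is not — the path has two independent-looking halves), and instead observe that $s$-maximality rigidifies the \emph{forward} direction from \emph{any} level, so that pinning down the vertex $s(x_{m+1})$ for one very negative $m$ forces the entire path. Once that is seen, the bound is immediate. I expect the existence and closedness parts to be routine; this is why the authors say they omit the proof.
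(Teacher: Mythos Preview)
There are two genuine gaps, both easily repaired but worth naming.

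\textbf{Existence.} Your forward construction is correct: from any $w\in V_0$ there is a unique one-sided path of $s$-maximal edges in $X_w^+$. But the backward extension is \emph{not} unconstrained. An element of $X^{s\text{-}max}_{\mathcal B}$ must have $x_n$ $s$-maximal for every $n\in\Z$, including $n\le 0$ --- your own formula $X^{s\text{-}max}_{\mathcal B}=\bigcap_{n\in\Z}\{x:x_n\text{ is }s\text{-maximal}\}$ says exactly this. Picking ``any edge with the required range'' going backward will not in general produce $s$-maximal edges, and a given vertex $v\in V_n$ need not be the range of any $s$-maximal edge in $E_n$ at all. The compactness you mention at the outset is what is actually needed: for each $m<0$ the closed set $C_m=\{x:x_i\text{ is }s\text{-maximal for all }i>m\}$ is nonempty (start at any vertex of $V_m$, follow the $s$-maximal edges forward, extend backward arbitrarily), the $C_m$ decrease as $m\to -\infty$, and by compactness of $X_{\mathcal B}$ their intersection $X^{s\text{-}max}_{\mathcal B}$ is nonempty.

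\textbf{Finite-rank bound.} The map $x\mapsto s(x_{m+1})\in V_m$ is \emph{not} injective on $X^{s\text{-}max}_{\mathcal B}$ for a single fixed $m$: two $s$-maximal bi-infinite paths can share a vertex of $V_m$ yet differ at indices $\le m$, since nothing forces a vertex to have a unique $s$-maximal edge mapping \emph{into} it. What is true is that if $x\ne y$ are both in $X^{s\text{-}max}_{\mathcal B}$ and $x_n\ne y_n$ for some $n$, then $s(x_{m+1})\ne s(y_{m+1})$ for every $m<n$, since equality there would force agreement on all indices $>m$ and in particular at $n$. So given $K+1$ distinct elements, pick for each pair an index $n_{ij}$ at which they differ and take $m<\min_{i<j} n_{ij}$; then the $K+1$ vertices $s(x^{(j)}_{m+1})\in V_m$ are pairwise distinct, contradicting $\#V_m\le K$. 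Your phrase ``since $m$ is arbitrary'' conflates this with the false single-$m$ injectivity.

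Your closedness argument is correct. The paper omits the proof as standard, so there is nothing further to compare against.
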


\begin{proof}
The set of $r$-maximal edges in each vertex set, which we denote 
by $F_{n}$ for the moment, is a finite subset
of $E_{n}$. For a given positive integer $n$, the set of paths 
$x$ in $X_{\mathcal{B}}$ such that $x_{m}$ is $r$-maximal
 for all $-n \leq m \leq n$ is clearly closed. Intersecting these sets 
 over all values of $n$ produces $X^{r-max}_{\mathcal{B}} $, so 
 this is also closed. The same argument applies to the other sets.
 
 For the last statement, if $v$ is any vertex in $V_{n}$, 
 there is a unique $r$-maximal element $e$ of $E_{n}$ 
 with $r(e)=v$. As a consequence, if $x,y$ are in $X^{r-max}_{\mathcal{B}} $
 and $r(x_{m})=r(y_{m})$, for some $m$, then $x_{n}=y_{n}$, for all 
 $n < m$. If $x \neq y$, we may find $m(x,y)$ such that 
  $r(x_{m})=r(y_{m})$, not only for $m = m(x,y)$,
   but all $m \geq m(x,y)$ as well. If $X^{r-max}_{\mathcal{B}} $ contains
   $K+1$ distinct elements, say $x^{1}, \ldots, x^{K+1}$, then letting 
   $m $ be the minimum of $m(x^{i}, x^{j})$, over all $i \neq j$, the 
   function sending $x^{i}$ to $r(x^{i}_{m})$ is injective, contradicting
   our hypothesis. The other sets are done in a similar way.
\end{proof}

We start with some  fairly easy observations regarding 
ordinary (one-sided) Bratteli diagrams.
To motivate this, it is probably worth consider the 
standard ternary Cantor set in the real line. 

We consider the usual 
order inherited from $\R$ which is, of course, linear. 
In any linearly ordered set $X$, we say $y$ is the successor of $x$ 
if  $x < y$ and there is no $z$ with 
$x < z < y$. In this case, we also say 
that $x$ is the predecessor of $y$. 
 In the integers, every element has a successor 
 while in the real numbers, none does.
 In the Cantor ternary set, most points have neither a 
successor nor predecessor. 
The
 points  having a successor are exactly the left endpoints
of any open interval which is removed in the construction. The right endpoints
of these intervals are precisely the points with a predecessor.

In fact, these facts extend rather easily to the path space of 
an ordinary Bratteli diagram, equipped with an order, $\leq_{s}$. 
Let $p$ be any finite path in an ordered Bratteli  diagram from $v_{0}$ 
to $V_{n-1}, n \geq 1$. Choose any edge $e_{n}$ with 
$s(e_{n}) = r(p)$ which is not maximal in the $\leq_{s}$ order. 
Let $f_{n}$ be its successor. 
Then, inductively for $i > n$, let $e_{i}$  be the greatest edge 
in the order $\leq_{s}$ with $s(e_{i}) = r(e_{i-1})$. Similarly,
inductively for $i > n$, let $f_{i}$  be the least edge 
in the order $\leq_{s}$ with $s(f_{i}) = r(f_{i-1})$.
Then the path $pf_{n}f_{n+1} \cdots$ is the successor
of $pe_{n}e_{n+1} \cdots$. In fact, all successor/predecessor 
pairs occur in this manner. We summarize the properties on the order
on the path space.

\begin{lemma}
\label{order:10}
Let $\mathcal{B}$ be a Bratteli diagram and assume that $\leq_{s}$ is 
an order on the edge set 
$E$ such that $e,f$ are comparable in $\leq_{s}$ if and only if $s(e) = s(f)$.
(Caution: the usual definition of an ordered Bratteli diagram 
uses $r(e) = r(f)$.)  We define  the (lexicographic) order
on $X_{\mathcal{B}}$ as follows: for $x, y$ in $X_{\mathcal{B}}$, we have 
$x <_{s} y$ if there is a positive integer $n$ such that
$x_{i}=y_{i}$, for all $ 1 \leq i < n$ and 
$x_{n} <_{s} y_{n}$.
\begin{enumerate}
\item 
 The relation $\leq_{s} $ on $X_{\mathcal{B}}$ is a linear order.
 \item For each $v$ in $V_{n}, n \geq 1$, there is a unique path, 
 denoted by $x_{v}^{s-max}$ in $X_{v}^{+}$ such that 
 $(x^{s-max}_{v})_{i}$ is maximal for every
 $i > n$. Moreover, if $p$ is in $E_{0n}$ with $r(p)=v$, then
 $px_{v}^{s-max}$ is the 
 greatest element of $pX_{v}^{+}$. 
 Similarly, there is a unique path, 
 denoted by $x_{v}^{s-min}$ in $X_{v}^{+}$ such that 
 $(x^{s-min}_{v})_{i}$ is minimal for every
 $i > n$. Moreover, if $p$ is in $E_{0n}$ with $r(p)=v$, then
  $px_{v}^{s-min}$ is the 
 least element of $pX_{v}^{+}$. 
 \item For $p$ in $E_{0,n}$ and $r(p)=v$, we have 
 $pX_{v}^{+} = \{ x \in X_{\mathcal{B}} \mid
  px_{v}^{s-min} \leq x \leq px_{v}^{s-max} \}$.
 \item 
 An element $x$ of $X_{\mathcal{B}}$ has a 
 successor in the order $\leq_{s}$ if and
 only if there is $n$ such that $x_{n}$  is not maximal and 
 $x_{(n,\infty)} = x_{r(x_{n})}^{s-max}$.
 Similarly, an element $x$ of $X_{\mathcal{B}}$ has a 
 predecessor in the order $\leq_{s}$ if and
 only if there is $n$ such that $x_{n}$  is not minimal and 
 $x_{(n,\infty)} = x_{r(x_{n})}^{s-min}$.
 \item The order topology from $\leq_{s}$ on $X_{\mathcal{B}}$ 
 coincides with the usual topology
 given in Proposition \ref{path:20}.
 \end{enumerate}
\end{lemma}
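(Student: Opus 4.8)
The plan is to verify the five assertions of Lemma \ref{order:10} in the order listed, since each builds naturally on the previous ones.

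\textbf{Linearity of $\leq_s$.} For $x \neq y$ in $X_{\mathcal B}$, let $n$ be the least index with $x_n \neq y_n$ (this exists since the paths differ at some positive coordinate and agree on all smaller ones). Since $x_i = y_i$ for $i < n$, we have $r(x_{n-1}) = r(y_{n-1}) = s(x_n) = s(y_n)$ when $n \geq 2$, and $s(x_1) = s(y_1) = v_0$ when $n = 1$; in either case $x_n$ and $y_n$ are $\leq_s$-comparable, so exactly one of $x <_s y$, $y <_s x$ holds. Transitivity and antisymmetry are routine from the definition, comparing first points of disagreement.

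\textbf{The extremal paths and parts (2), (3).} Given $v \in V_n$, define $x^{s\text{-}max}_v$ coordinate by coordinate: having chosen the edge landing at some vertex $w$, take the unique $\leq_s$-maximal edge with source $w$ (unique because all edges with a given source are totally ordered among themselves, being pairwise comparable). This is well-defined and clearly the only path in $X^+_v$ all of whose edges are maximal. That $px^{s\text{-}max}_v$ is the greatest element of $pX^+_v$ follows from the definition of the lexicographic order: any $x \in pX^+_v$ differs from $px^{s\text{-}max}_v$ first at some coordinate $i > n$, where by construction $(x^{s\text{-}max}_v)_i$ is maximal, forcing $x \leq px^{s\text{-}max}_v$. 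The minimal case is symmetric, and part (3) is then immediate: $pX^+_v$ is exactly the set of paths agreeing with $p$ up to level $n$, which by the order characterization is precisely the order-interval $[px^{s\text{-}min}_v, px^{s\text{-}max}_v]$ — here one uses that if $x$ agrees with $p$ up to level $n$ then it lies between these two, and conversely if $x$ lies between them then its first $n$ coordinates must equal those of $p$ (since any disagreement before level $n$ would push $x$ outside the interval).

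\textbf{Successors and predecessors, part (4).} Suppose $x$ has a successor $y$. Let $n$ be the first coordinate of disagreement; then $x_n <_s y_n$. I claim $x_n$ is the $\leq_s$-predecessor of $y_n$ among edges with that source: if there were an edge $g$ with $x_n <_s g <_s y_n$, then the path agreeing with $x$ up to level $n-1$, equal to $g$ at level $n$, and extended by $s$-minimal edges thereafter would lie strictly between $x$ and $y$ (using part (3) to see it exceeds $x$ and is below $y$), contradicting that $y$ is the successor. Similarly one shows $x_{(n,\infty)} = x^{s\text{-}max}_{r(x_n)}$: if $x_m$ were non-maximal for some $m > n$, replacing the tail of $x$ from level $m$ by maximal edges would yield a path strictly between $x$ and $y$. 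Hence $x_n$ is non-maximal (it has the successor $y_n = S_s(x_n)$) and the stated tail condition holds. Conversely, if $x_n$ is non-maximal and $x_{(n,\infty)} = x^{s\text{-}max}_{r(x_n)}$, set $y = x_{(0,n-1]}\,S_s(x_n)\,x^{s\text{-}min}_{r(S_s(x_n))}$; by part (3) this is the least element of the clopen set $x_{(0,n-1]}S_s(x_n)X^+_{r(S_s(x_n))}$, and $x$ is the greatest element of $x_{(0,n-1]}x_n X^+_{r(x_n)}$, and no path lies strictly between these two adjacent cylinder sets at level $n$, so $y$ is the successor of $x$. The predecessor statement is entirely symmetric.

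\textbf{The topologies agree, part (5).} The cylinder sets $pX^+_{r(p)}$ form a base for the product topology (Proposition \ref{path:20}). By part (3), each such set is the order-interval $[px^{s\text{-}min}_{r(p)}, px^{s\text{-}max}_{r(p)}]$, which is closed in the order topology; but it is also open, since its complement in $X_{\mathcal B}$ is the union of the cylinder sets at level $n$ corresponding to the other paths, each of which is again a closed order-interval, so the complement is closed. Hence every basic open set of the product topology is open in the order topology, giving one inclusion. For the reverse, a sub-basic order-open set has the form $\{x : x <_s z\}$ or $\{x : z <_s x\}$ for fixed $z$; I would show such a set is a union of cylinder sets. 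For instance $\{x : x <_s z\}$ consists of all $x$ whose first coordinate of disagreement with $z$, say at level $n$, satisfies $x_n <_s z_n$; this is $\bigcup_{n \geq 1}\bigcup_{e <_s z_n} z_{(0,n-1]}\,e\,X^+_{r(e)}$, a union of cylinders, hence product-open. The main obstacle in the whole argument is the careful bookkeeping in part (4) — matching "being a successor in the lexicographic order" with the combinatorial tail condition requires constructing explicit intermediate paths and invoking part (3) repeatedly to control where they sit; everything else is essentially formal.
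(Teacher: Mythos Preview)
Your proof is correct and follows the natural elementary approach; the paper in fact omits the proof entirely, remarking only that the linearity of the order relies on $V_0$ being a single vertex, which you handle correctly in your $n=1$ case of part (1).
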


\begin{proof}
The statement is quite easy and we omit it except to remark that 
to see the order on the path space is linear, we need the condition
$V_{0}$ is a single vertex.

In the second part, the existence of the infinite paths
 easily follows from the fact that
for any vertex $v$, $s^{-1}\{ v \}$ is linearly ordered so it 
contains  unique $s$-maximal and $s$-minimal elements and a simple
induction argument. The properties of the paths 
$px_{r(p)}^{s-min}, px_{r(p)}^{s-max}$ are obvious from the definitions.

For the third part, if $y$ is in $pX_{v}^{+}$ and $i$ is the least integer
such that $y_{i} \neq (px_{v}^{s-min})_{i}$, then $i > n$ and so
$y \geq px_{v}^{s-min}$. Similarly $y \leq px_{v}^{s-max}$. 
Conversely, suppose $ px_{v}^{s-min} \leq y \leq px_{v}^{s-max}$.
The first
inequality implies $y_{1} \leq p_{1}$ while the second implies 
$y_{1} \leq p_{1}$. Together, these show $y_{1}=p_{1}$. 
Continuing in this way shows that
 $y_{[1,n]}=p$ which implies $y$ is in  $pX_{v}^{+}$.

 We next prove the first statement of part 4:
  suppose $x$ has the property stated, for some $n$.
 Let $y_{n}$ be the successor of $x_{n}$ in $\leq_{s}$, $y_{[1,n)}= x_{[1,n)}$
 $y_{(m,\infty)}= x_{r(y_{n})}^{s-min}$. We claim that there is no $z$ with 
 $x <_{s} z <_{s} y$, so that $y$ is the successor of $x$. First, an argument
 similar to the one of part three shows that $z_{[1,n)}= x_{[1,n)}=y_{[1,n)}$.
 Our hypothesis on $z$ then implies that $ x_{n} \leq_{s} z_{n} \leq_{s} y_{n}$
 and the choice of $y_{n}$ implies there is no $z_{n}$
  with both inequalities strict. Suppose $z_{n}=x_{n}$. 
  As $x_{i}$ is $s$-maximal for all $i > n$, it follows that $x >_{s} z$, a 
  contradiction. A similar argument shows that if $x$ has
  the other property stated, it has  a predecessor.
   
   We now prove part 5. We use the fact that the product topology
 is generated by cylinder sets, that is, sets of the form
 $pX_{r(p)}^{+}$, for some path $p$ in $E_{0,n}$, while the order
 topology is generated by open intervals 
 of the form $(y,z)$. First, if we consider
 such an open set $pX_{r(p)}^{+}$,
  let $z$ be the successor of $px_{r(p)}^{s-max}$ and $y$
 be the predecessor of $px_{r(p)}^{s-min}$. It follows from part 3 and the 
 one direction of part 4
that $pX^{+}_{r(p)} = [px_{r(p)}^{s-min},  px_{r(p)}^{s-max}]=(y,z)$.
On the other hand suppose that $(y, z)$ is a non-empty
open interval. Choose $x$ in $(y,z)$. Let $i$ be the least
positive  integer such that
$x_{i} \neq y_{i}$ and $j$ be the least positive integer such that 
$x_{j} \neq z_{j}$. Let $p=x_{1, \max\{ i, j\}]}$. It follows that 
$x \in pX_{r(p)}^{+} \subseteq (y,z)$. This completes the proof.
  
  Finally, we consider the converse direction of part 4.  
  If the condition stated fails, then there is
   a strictly increasing of positive integers $n_{i}$ such that 
   $x_{n_{i}}$ is not $s$-maximal. For each $i$, choose $y^{i}$ such that 
   $y_{[1,n_{i})} = x_{[1,n_{i})} $ and $y_{n_{i}} >_{s} x_{n_{i}}$. So 
   $y^{i} >_{s} x$, for all $i$, but converges to $x$. It follows, using 
   part 5,  that the open set $(x,y)$ is
    non-empty, for any $y >_{s} x$ so $x$ has no successor.
\end{proof}

This structure, as an ordered space, has a nice interaction
with states, as summarized below, at least in the case that 
the diagram is simple and $X_{\mathcal{B}}$ is infinite.

\begin{lemma}
\label{order:20}
Let $\mathcal{B}$ be a simple
 Bratteli diagram  with $X_{\mathcal{B}}$ infinite 
 and with an order $\leq_{s}$ as in \ref{order:10} and 
 faithful state $\nu$. 
  Define 
 $\varphi: X_{\mathcal{B}} \rightarrow [0,\nu(v_{0})]$ by 
\[
\varphi(x) = \nu \{ y \in X_{\mathcal{B}} \mid y \leq_{s} x \},
\]
for $x$ in $X_{\mathcal{B}}$, where $\nu$ is the measure defined in 
Proposition \ref{path:120}. 
The following hold.
\begin{enumerate}
\item $\varphi$  preserves order in the sense that $x \leq_{s} y$
implies $\varphi(x) \leq \varphi(y)$, for all $x, y$ 
in $X_{\mathcal{B}}$.
\item $\varphi$  is continuous.

\item For $ x \neq y$ in $X$, $\varphi(x) = \varphi(y)$ 
if and only if
$x,y$ are predecessor/successors of each other.
\item 
$\varphi$  is surjective.
\item If $\lambda$ denotes Lebesgue measure on $[0,\nu(v_{0})]$, then 
$\varphi_{*}(\nu) = \lambda$.
\end{enumerate}
\end{lemma}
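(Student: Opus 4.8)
The plan is to establish the six items essentially in the order listed, since each feeds into the next, and to build everything on the single function $\varphi(x) = \nu\{y : y \leq_s x\}$ together with the structure theory of Lemma \ref{order:10}. For (1), monotonicity is immediate: if $x \leq_s y$ then $\{z : z \leq_s x\} \subseteq \{z : z \leq_s y\}$, so $\varphi(x) \leq \varphi(y)$ by monotonicity of the measure $\nu$. For (5), I would first derive the explicit series formula, because it is the computational engine for everything else. Fix $x$ and decompose the set $\{y : y <_s x\}$ according to the first coordinate where $y$ differs from $x$: a path $y$ lies strictly below $x$ iff there is some $n \geq 1$ with $y_i = x_i$ for $i < n$ and $y_n <_s x_n$. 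The set of such $y$ for a fixed $n$ is the disjoint union over edges $e <_s x_n$ (with $s(e) = s(x_n) = r(x_{n-1})$) of the cylinder sets $x_{[1,n-1]}\, e\, X^+_{r(e)}$, each of which has $\nu$-measure $\nu(r(e))$ by Proposition \ref{path:120} (using that $\nu$ is multiplicative along paths starting from $v_0$, and $\#V_0 = 1$). Summing over $n$ and over $e <_s x_n$ gives $\nu\{y : y <_s x\} = \sum_{n=1}^\infty \sum_{e <_s x_n} \nu(r(e))$; since $X_{\mathcal{B}}$ is infinite and simple, Lemma \ref{path:12} gives that every singleton has $\nu$-measure zero, so $\varphi(x) = \nu\{y : y \leq_s x\}$ equals the same series. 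This proves (5).

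For (2), continuity, I would use (5): if $x^{(k)} \to x$ in $X_{\mathcal{B}}$, then for any $N$ the coordinates $x^{(k)}_i$ agree with $x_i$ for $i \leq N$ once $k$ is large, so the partial sums $\sum_{n=1}^N \sum_{e <_s x^{(k)}_n} \nu(r(e))$ stabilize to the corresponding partial sum for $x$; the tails are controlled uniformly because $\sum_{n>N}\sum_{e<_s x_n}\nu(r(e)) \leq \sum_{n>N}\nu(r(x_{n-1})) = \sum_{n>N}\nu(s(x_n))$ and $\nu(s(x_n)) \to 0$ by the last assertion of Lemma \ref{path:12} (combined with faithfulness, so the relevant vertex weights along $x$ tend to $0$). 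Hence $\varphi(x^{(k)}) \to \varphi(x)$. For (4), surjectivity: $\varphi(X_{\mathcal{B}})$ is the continuous image of a compact connected-in-order... rather, $X_{\mathcal{B}}$ is compact so the image is compact, hence closed; and I claim it is also dense, equivalently that $\varphi$ has no "gaps." A gap would be an interval $(a,b)$ with $a,b \in \varphi(X_{\mathcal{B}})$ but $(a,b) \cap \varphi(X_{\mathcal{B}}) = \emptyset$; pulling back, this forces $x <_s y$ with nothing strictly between and $\varphi(x) < \varphi(y)$, i.e. $x$ and $y$ are a predecessor/successor pair with $\nu\{z : x \leq_s z \leq_s y\} > 0$. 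But that measure equals $\nu(\{x\}) + \nu(\{y\}) = 0$ by the atomlessness from Lemma \ref{path:12}, a contradiction. (One must also check the endpoints: $\varphi(x^{s\text{-min}}_{v_0}) = 0$ since nothing is strictly below the global minimum and it is an atom of measure zero, and $\varphi(x^{s\text{-max}}_{v_0}) = \nu(X_{\mathcal{B}}) = \nu(v_0)$.) Combined with closedness, $\varphi(X_{\mathcal{B}}) = [0,\nu(v_0)]$.

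For (3), the identification criterion: if $y$ is the successor of $x$, then $\{z : z \leq_s y\} \setminus \{z : z \leq_s x\} = \{y\}$ has $\nu$-measure zero (no atoms), so $\varphi(x) = \varphi(y)$. Conversely, suppose $x <_s y$ (WLOG) with $\varphi(x) = \varphi(y)$; then $\nu\{z : x <_s z \leq_s y\} = 0$, so $\nu\{z : x <_s z <_s y\} = 0$. If there were $w$ with $x <_s w <_s y$, then $\{z : x <_s z <_s y\}$ would contain a cylinder set $w_{[1,N-1]} e X^+_{r(e)}$ for suitable $N$ and $e$ strictly between the relevant edges (this uses that in a Bratteli diagram the order-interval between two distinct comparable paths, once it contains a third point, contains a whole clopen cylinder — one extracts it at the level $N$ where the "branching" occurs), and faithfulness of $\nu$ makes this cylinder have positive measure, a contradiction. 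Hence nothing lies strictly between $x$ and $y$, i.e. $y$ is the successor of $x$. Finally (6): $\varphi_*\nu = \lambda$ is checked on the generating half-open intervals. For $x \in X_{\mathcal{B}}$, $\varphi^{-1}([0,\varphi(x)])$ differs from $\{z : z \leq_s x\}$ only by the (measure-zero) predecessor/successor identifications of (3), so $\varphi_*\nu\big([0,\varphi(x)]\big) = \nu\{z : z\leq_s x\} = \varphi(x)$; since such $\varphi(x)$ are dense in $[0,\nu(v_0)]$ by (4) and both measures are finite Borel, they agree. The main obstacle is the converse direction of (3) — precisely, showing that an order-interval between two comparable paths which contains a third point must contain a clopen cylinder set — but this is a short combinatorial argument using Lemma \ref{order:10}(3) applied at the first level where the three paths separate, together with the existence of an edge strictly between two given comparable edges at that level.
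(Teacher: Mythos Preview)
Your approach is essentially the paper's, with a sensible reordering (you prove the series formula (5) first and use it for continuity). The decomposition for (5), the cylinder argument for the converse of (3), and the measure check for (6) all match the paper in substance, though your (6) verifies on intervals $[0,\varphi(x)]$ while the paper verifies on images of cylinders $pX^+_{r(p)}$; both families generate the Borel sets, so either works. For the converse of (3) the paper is slightly more direct: rather than positing a third point $w$ and extracting a cylinder near it, it invokes the successor characterization of Lemma~\ref{order:10}(4) to locate a level $m>n$ where $x_m$ is not $s$-maximal (or $y_m$ not $s$-minimal) and builds the cylinder there.

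There is one genuine slip, in your proof of (2). You bound the tail by $\sum_{n>N}\nu(s(x_n))$ and conclude it is small because $\nu(s(x_n))\to 0$; but a series whose terms tend to zero need not have small tails, and this bound is not uniform over nearby paths either. The fix is immediate: the tail $\sum_{n>N}\sum_{e<_s x_n}\nu(r(e))$ is precisely $\nu\{y<_s x : y_i=x_i\text{ for }i\le N\}$, which sits inside the \emph{single} cylinder $x_{(0,N]}X^+_{r(x_N)}$ of measure $\nu(r(x_N))$, and $\max_{v\in V_N}\nu(v)\to 0$ by the last assertion of Lemma~\ref{path:12}. This single-term bound is exactly how the paper proves continuity: it sandwiches $\varphi(x)$ between $\varphi(x_{(0,n]}x^{s-min}_{r(x_n)})$ and $\varphi(x_{(0,n]}x^{s-max}_{r(x_n)})$, whose difference is $\nu(r(x_n))$.
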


\begin{proof}
The first property is clear. For the second, we observe that, for
any $x$ in $X_{\mathcal{B}}$ and $n \geq 1$, we have 
\[
\nu(x_{(0,n]}X_{r(x_{n})}^{+}) = \nu(r(x_{n}) )
\]
which tends to zero as $n$ goes to infinity by Lemma \ref{path:12}. 
It follows that $\nu(\{ x\}) = 0$, so $\nu$ has no atoms.
We also see that 
\[
\varphi(x_{(0,n]}x_{r(x_{n})}^{s-min}) \leq \varphi(x) 
\leq \varphi(x_{(0,n]}x_{r(x_{n})}^{s-max}) 
\]
and 
\begin{eqnarray*}
\varphi(x_{(0,n]}x_{r(x_{n})}^{s-max})  & = & 
\varphi(x_{(0,n]}x_{r(x_{n})}^{s-min})   \\
  &  &  + \nu( \{ y \mid 
\varphi(x_{(0,n]}x_{r(x_{n})}^{s-min} < \varphi(y) 
\leq \varphi(x_{(0,n]}x_{r(x_{n})}^{s-max} \} ) \\
  &  = & \varphi(x_{(0,n]}x_{r(x_{n})}^{s-min}) +
   \nu(x_{(0,n]}X_{r(x_{n})}^{+}) \\
    &  =  & \varphi(x_{(0,n]}x_{r(x_{n})}^{s-min})  + \nu(r(x_{n}).
    \end{eqnarray*}
    The continuity of $\varphi$ follows from these two estimates
    and the observation that $\nu(r(x_{n}))$ tends to zero
    as $n$ tends to infinity.
    
  We next suppose that $y$ is the successor of $x$ and show 
  $\varphi(x) = \varphi(y)$.  We know from the first part that 
  $\varphi(x) \leq \varphi(y)$. It follows from the 
  definitions that 
  \begin{eqnarray*}
  \varphi(y) - \varphi(x) & = & \nu \{ z \mid x < z \leq_{s} y \} \\
     &  =  &  \nu(\{y \}) \\
       & = & 0
       \end{eqnarray*}
       as  $\nu$ has no atoms. 
       Now suppose that $x \leq_{s} y$, but is not 
       the successor. There is $n \geq 1$ such that 
       $x_{i} = y_{i}, 1 \leq i  < n$ and $x_{n} < y_{n}$.
       From part 4 of Lemma \ref{order:10}, we know that
       there is some $m > n$ such that either $x_{m}$ is 
        not maximal or $y_{m}$ is not minimal. Let 
        us assume the former (the other case is similar). 
        Let $z_{m} $ be any edge with $s(z_{m}) = s(x_{m})$ and 
        $z_{m} <_{s} x_{m}$. If we let $p = x_{1} \ldots x_{m-1}z_{m}$, 
        it follows that
        \[
        x <_{s} pX_{r(p)}^{+} <_{s} y
        \]
        and so 
        \[
        \varphi(y) - \varphi(x) \geq \nu(pX_{r(p)}^{+} ) = \nu(r(z_{m})) > 0,
        \]since $\nu$ is faithful by Proposition \ref{BD:110}.

For the last part, it is clear that, for any path $p$ in $E_{0,n}$, we have
     \begin{eqnarray*}
     \nu(pX^{+}_{r(p)}) & = &  \nu(r(p))  \\
       & = &  \varphi(px^{s-max}_{r(p)}) - 
     \varphi(px^{s-min}_{r(p)}) \\
        &  =  &  \lambda(\varphi(px^{s-min}_{r(p)}), \varphi(px^{s-max}_{r(p)})) \\
          &  =  &  \lambda( \varphi( pX^{+}_{r(p)}))
          \end{eqnarray*}
          so $\nu$ and $\varphi^{*}(\lambda)$ agree on all sets 
          of the form $pX^{+}_{r(p)}$ and as these are a base for 
          the topology, they are equal.
\end{proof}

Probably it is worth noting that in the standard Cantor ternary 
set (and the correct choice of measure $\nu$), the
function $\varphi$ is the Devil's staircase, or more precisely, its
restriction to the Cantor set.

We are going to extend this notion of order to  the bi-infinite case,
as follows.

\begin{defn}
\label{order:40}
Let $\mathcal{B}$ be a strongly simple bi-infinite ordered
Bratteli diagram.  We define   orders $\leq_{s}, \leq_{r}$
on $X_{\mathcal{B}}$ as follows.
\begin{enumerate}
\item 
 for $x, y$ in $X_{\mathcal{B}}$, we have 
$x <_{r} y$ if  there is an integer $n$ such that
$x_{i}=y_{i}$, for all $i > n$ and 
$x_{n} <_{r} y_{n}$. 
For any $x,y$ in $X_{\mathcal{B}}$, we define
\[
[x,y]_{r} = \{ z \in X_{\mathcal{B}} \mid x \leq_{r} z \leq_{r} y \}
\]
and $(x,y)_{r}$ similarly.
\item 
 for $x, y$ in $X_{\mathcal{B}}$, we have 
$x <_{s} y$ if there is an integer $n$ such that
$x_{i}=y_{i}$, for all $i < n$ and 
$x_{n} <_{s} y_{n}$. 
For any $x,y$ in $X_{\mathcal{B}}$, we define
\[
[x,y]_{s} = \{ z \in X_{\mathcal{B}} \mid x \leq_{s} z \leq_{s} y \}
\]
and $(x,y)_{s}$ similarly.
\end{enumerate}
\end{defn}

\begin{lemma}
\label{order:50}
The following properties hold.
\begin{enumerate}
\item For $x,y$ in $X_{\mathcal{B}}$, they are comparable
in $\leq_{r}$ if and only if $T^{+}(x) = T^{+}(y)$. In particular, 
$\leq_{r}$ is a linear order on each  tail equivalence
 class $T^{+}(x)$.
 \item For $x,y$ in $X_{\mathcal{B}}$, they are comparable
in $\leq_{s}$ if and only if $T^{-}(x) = T^{-}(y)$. In particular, 
$\leq_{s}$ is a linear order on each  tail equivalence
 class $T^{-}(x)$.
 \item For $x$ in $X_{\mathcal{B}}$,
  $T^{+}(x) \cap ( X_{\mathcal{B}}^{r-max} \cup X_{\mathcal{B}}^{r-min})$
  is at most a single point.
   \item For $x$ in $X_{\mathcal{B}}$,
  $T^{-}(x) \cap ( X_{\mathcal{B}}^{s-max} \cup X_{\mathcal{B}}^{s-min})$
  is at most a single point.
 \item For each $v$ in $V_{n}$, there is a unique path, 
 denoted by $x_{v}^{s-max}$ (and $x_{v}^{s-min}$) in $X_{v}^{+}$ such that 
 $(x^{s-max}_{v})_{i}$ is maximal (minimal, respectively) 
 for every
 $i > n$. 
   Moreover, if $x$ is 
  in $X_{\mathcal{B}}$ and 
  $p$ is in $E_{m,n}$ with $s(p)=s(x_{m})$, then 
 \[
 x_{(-\infty, m)} pX_{r(p)}^{+}= [x_{(- \infty, m)} px_{r(p)}^{s-min}, 
  x_{(- \infty, m)} p x_{r(p)}^{s-max} ]_{s}.
  \]
  \item For each $v$ in $V_{n}$, there is a unique path, 
 denoted by $x_{v}^{r-max}$ (and $x_{v}^{r-min}$) in $X_{v}^{-}$ such that 
 $(x^{r-max}_{v})_{i}$ is maximal (minimal, respectively) 
 for every
 $i \leq  n$. 
 Moreover, if $x$ is 
  in $X_{\mathcal{B}}$ and 
  $p$ is in $E_{m,n}$ with $r(p)=r(x_{n})$, then 
 \[
 X_{s(p)}^{-}px_{(n,\infty)} = [ x_{s(p)}^{r-min}px_{(n, \infty)},
   x_{s(p)}^{r-max}px_{(n, \infty)} ]_{r}.
  \] 
  \item 
 An element $x$ of $X_{\mathcal{B}}$ has a 
 successor in the order $\leq_{r}$ if and
 only if there is $m$ such that $x_{m}$  is not $r$-maximal and 
 $x_{(-\infty, m)} = x_{s(x_{m})}^{r-max}$.
 Similarly, an element $x$ of $X_{\mathcal{B}}$ has a 
 predecessor in the order $\leq_{r}$ if and
 only if there is $m$ such that $x_{m}$  is not $r$-minimal and 
 $x_{(-\infty, m)} = x_{s(x_{m})}^{r-min}$.
 \item 
 An element $x$ of $X_{\mathcal{B}}$ has a 
 successor in the order $\leq_{s}$ if and
 only if there is $n$ such that $x_{n}$  is not $s$-maximal and 
 $x_{(n, \infty)} = x_{r(x_{n})}^{s-max}$.
 Similarly, an element $x$ of $X_{\mathcal{B}}$ has a 
 predecessor in the order $\leq_{s}$ if and
 only if there is $n$ such that $x_{n}$  is not $s$-minimal and 
 $x_{(n, \infty)} = x_{r(x_{n})}^{s-min}$.
 \end{enumerate}
\end{lemma}

\begin{proof}
The first two parts follow at once from the definitions. 

For the third, as $\leq_{r}$ is linear on $T^{+}(x)$, it can contain
at most one element of $X_{\mathcal{B}}^{r-max}$ and one element
of  $X_{\mathcal{B}}^{r-min}$. It remains to prove it cannot 
contain one from each, say $y$ and $z$ respectively.
If so, there is some $n_{0}$ such that $y_{n} = z_{n}$, for all
$n \geq n_{0}$. For each $n \geq n_{0}$ the path
 $y_{[n_{0},n]}= z_{[n_{0},n]}$
is both $r$-maximal and $r$-minimal, implying that there is only one 
path from  $s(y_{n_{0}})$ to $r(y_{n})$. As this holds
for all such $n$, it contradicts the assumption 
that $\mathcal{B}$ is strongly simple.

The remaining parts of the 
proof follows from Proposition \ref{BD:100} and Lemma  
\ref{order:10}.
\end{proof}

The last two parts of this result regarding successors and predecessors
in the two orders are important enough to warrant
the following definition.

\begin{defn}
\label{order:52}
Let $\mathcal{B}$ be a strongly simple
 bi-infinite ordered Bratteli 
diagram.
\begin{enumerate}
\item Let $\partial_{r} X_{\mathcal{B}}$ be the set of all points $x$ 
  which have either a successor or predecessor in the order $\leq_{r}$.
    Part 5 of Lemma \ref{order:50} characterizes such points and
 obviously, the  $m$ involved is unique and we denote it by $m(x)$.
 If $x$ has a successor in $\leq_{r}$,
  we denote it by $S_{r}(x)$, while its 
  predecessor is denoted by $P_{r}(x)$, if it exists.
 For such an $x$, we
  denote by $\Delta_{r}(x)$ either the $\leq_{r}$-successor or 
  $\leq_{r}$-predecessor
 of $x$, noting that it cannot have both. We regard 
 $\Delta_{r}:\partial_{r} X_{\mathcal{B}}  \rightarrow 
 \partial_{r} X_{\mathcal{B}}$ such that
  $\Delta_{r} \circ \Delta_{r}$ is the identity.
  \item 
  Let $\partial_{s} X_{\mathcal{B}}$ be the set of all points $x$
  which have either a successor or predecessor in the order $\leq_{s}$. 
 Part 6 of Lemma \ref{order:50} characterizes such points and
 obviously, the  $n$ involved is unique and we denote it by $n(x)$.
  If $x$ has a successor in $\leq_{s}$,
  we denote it by $S_{s}(x)$, while its 
  predecessor is denoted by $P_{s}(x)$, if it exists. 
 For such an $x$, we
  denote by $\Delta_{s}(x)$ either the $\leq_{s}$-successor or 
  $\leq_{s}$-predecessor
 of $x$, noting that it cannot have both. We regard 
 $\Delta_{s}:\partial_{s} X_{\mathcal{B}}  \rightarrow 
 \partial_{s} X_{\mathcal{B}}$ such that
  $\Delta_{s} \circ \Delta_{s}$ is the identity. 
  \end{enumerate}
\end{defn}

Notice that $( X_{\mathcal{B}}^{r-max} \cup X_{\mathcal{B}}^{r-min} )
 \cap \partial_{r}X_{\mathcal{B}}$ is necessarily empty, as 
 is $( X_{\mathcal{B}}^{s-max} \cup X_{\mathcal{B}}^{s-min} ) 
 \cap \partial_{s}X_{\mathcal{B}}$.

The following result is rather trivial, but probably worth observing.

\begin{lemma}
\label{order:55}
Let $\mathcal{B}$ be a bi-infinite ordered
Bratteli diagram, $(\nu_{r}, \nu_{s})$
a state on  $\mathcal{B}$ and 
$v$ be any vertex of $V$. 
On the Bratteli diagram $\mathcal{B}^{+}_{v}$ (or $\mathcal{B}^{-}_{v}$)
of Proposition \ref{BD:100}, $\leq_{s}$ ($\leq_{r}$, respectively)
is an order satisfying the 
conditions of Lemma \ref{order:10}.
\end{lemma}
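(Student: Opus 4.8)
The statement is essentially a matter of unwinding Definition \ref{path:150}, so the plan is simply to verify the two hypotheses of Lemma \ref{order:10} for the restricted order. Write $\mathcal{B}^{+}_{v} = (W, F, r, s)$ as in Definition \ref{path:150}, so that $F = \bigcup_{n \ge 1} F_{n} \subseteq E$, the vertex set $W_{0} = \{v\}$ is a single vertex (which is part of why $\mathcal{B}^{+}_{v}$ is a Bratteli diagram in the sense of Definition \ref{BD:5}), and — crucially — the source map of $\mathcal{B}^{+}_{v}$ is nothing but the restriction of the original $s$ to $F$. First I would observe that $\leq_{s}$, being a partial order on $E$ by Definition \ref{BD:40}, restricts to a partial order on the subset $F$. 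Then, for $e, f \in F$, the pair $(e,f)$ lies in the restricted relation exactly when it lies in $\leq_{s}$ on $E$, which by Definition \ref{BD:40} happens exactly when $s(e) = s(f)$; and this common source is, by construction, the source in $\mathcal{B}^{+}_{v}$. Hence $e, f$ are $\leq_{s}$-comparable in $\mathcal{B}^{+}_{v}$ if and only if they have the same source there, which is precisely the hypothesis of Lemma \ref{order:10}. As a bonus observation (not needed for the statement), since $F_{n} = s^{-1}(W_{n-1})$ for every $n$, one in fact has $s^{-1}(\{w\}) \subseteq F$ for every $w \in W$, so the $\leq_{s}$-comparability classes of $\mathcal{B}^{+}_{v}$ are literally entire source-fibres of $\mathcal{B}$; this makes it transparent that $\mathcal{B}$ and $\mathcal{B}^{+}_{v}$ share the same maximal and minimal edges.

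For $\mathcal{B}^{-}_{v} = (W, F, s, r)$ the argument is word-for-word the same after interchanging the roles of $r$ and $s$ throughout. The one thing to keep straight — and the only place one can trip up — is that the backward construction sets $F_{n} = r^{-1}(W_{n-1})$, so the \emph{source} map of $\mathcal{B}^{-}_{v}$ is the original \emph{range} map $r$; correspondingly the order one must check against Lemma \ref{order:10} is $\leq_{r}$, whose comparability is governed (Definition \ref{BD:40}) by the fibres of $r$. With that dictionary in place, the same two points — $\leq_{r}$ restricts to a partial order on $F$, and $e, f \in F$ are $\leq_{r}$-comparable if and only if $r(e) = r(f)$, i.e.\ if and only if they have the same source in $\mathcal{B}^{-}_{v}$ — give exactly the hypothesis of Lemma \ref{order:10}.

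I do not expect any real obstacle: the content of the lemma is precisely that this bookkeeping (especially the $r \leftrightarrow s$ swap in the backward diagram) works out, and the two verifications above are immediate consequences of the definitions. It is stated only because it lets one transport the one-sided theory of Lemmas \ref{order:10} and \ref{order:20} to the forward and backward subdiagrams of a bi-infinite ordered Bratteli diagram without reproving anything.
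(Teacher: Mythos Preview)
Your proposal is correct and matches the paper's treatment: the paper introduces this lemma with the remark that it is ``rather trivial, but probably worth observing'' and gives no proof at all. Your careful unwinding of Definition~\ref{path:150} and Definition~\ref{BD:40}---in particular, keeping straight that the source map of $\mathcal{B}^{-}_{v}$ is the original range map $r$, so that the relevant order is $\leq_{r}$---is exactly the definitional check the authors had in mind.
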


Lemma \ref{order:20} considered a one-sided $\leq_{s}$-ordered 
Bratteli diagram and showed how a state, $\nu$ provided a natural
map from the path space to the real line. It had a number 
of good features,  but perhaps the nicest is part 3: it identifies
two points if and only if they are predecessor/successor
in the other.
Our next task is an analogue of this lemma
for bi-infinite ordered diagrams. In fact, there are two versions to
consider. Each defines its own function: they are closely related, but
the domains are different, so it is important to distinguish them.

\begin{defn}
\label{order:60}
Let $\mathcal{B}$ be a strongly simple bi-infinite ordered
Bratteli diagram with state $(\nu_{r}, \nu_{s})$.
 For any $v$ in $V_{n}$, 
 we  define
$\varphi^{v}_{r}: X^{-}_{v} \rightarrow [0, \nu_{r}(v)]$
by 
\[
\varphi^{v}_{r}(x) = \nu_{r}\{ y \in X_{v}^{-} \mid y \leq_{r} x  \},
\]
for $x$ in $X^{-}_{v}$. (By $\nu_{r}$ we mean the measure defined by
Proposition \ref{path:100} applied to the 
state $\nu_{r}$ of Proposition \ref{BD:100} which is the restriction
of $\nu_{r}$ to the diagram $\mathcal{B}_{v}^{-}$.)
Also, we define
$\varphi^{v}_{s}: X^{+}_{v} \rightarrow [0, \nu_{s}(v)]$
by 
\[
\varphi^{v}_{s}(x) = \nu_{s}\{ y \in X_{v}^{+} \mid y \leq_{s} x  \},
\]
for $x$ in $X^{+}_{v}$. 
\end{defn}

These two functions satisfy the conclusion of Lemma \ref{order:20}
with a few obvious adjustments. The one which is worth noting
is property 4 states that $\varphi^{v}_{s}(x) = \varphi^{v}_{s}(y)$ if 
and only if $x, y$ are predecessor/successors in the $\leq_{s}$ order
  while $\varphi^{v}_{r}(x) = \varphi^{v}_{r}(y)$ if 
and only if $x, y$ are predecessor/successors in the $\leq_{r}$ order.

It will be very useful for us to compare these functions, for
different vertices, in the following sense. 

\begin{lemma}
\label{order:70}
Let $p$ be in $E_{m,n}, m < n$.
\begin{enumerate}
\item For each $x$ in $X_{s(p)}^{-}$, we have
\[
 \varphi^{r(p)}_{r}(xp)  = \varphi^{s(p)}_{r}(x)  +
\varphi^{r(p)}_{r}(x^{r-min}_{s(p)}p)
\] 
\item For each $x$ in $X_{r(p)}^{+}$, we have
\[
 \varphi^{s(p)}_{s}(px) = \varphi^{r(p)}_{s}(x) +
\varphi^{s(p)}_{s}(px^{s-max}_{r(p)}).
\]\end{enumerate}
\end{lemma}

\begin{proof}
The first follows from the facts that $x$ in $X_{s(p)}^{-}$
$\{ y \in X^{-}_{r(p)} \mid y \leq_{r} xp \} $ is the disjoint union of
$\{ y \in X^{-}_{r(p)} \mid y \leq_{r} x_{s(p)}^{r-min}p \} $
and $\{ zp \mid z \in X^{-}_{s(p)}, z \leq_{r} x \}$ and the value of
$\nu_{r}^{r(p)}$  on the latter agrees with 
$\nu_{s(p)}^{r}\{  z \in X^{-}_{s(p)} \mid z \leq_{r} x \}$.
The second part is similar.
\end{proof}

Now we turn to the second, defining analogous
 maps to those of Lemma \ref{order:20} on entire tail-equivalence classes.
We restrict our attention to right-tail-equivalence.

\begin{lemma}
\label{order:200}
Let $\mathcal{B}$ be a strongly simple bi-infinite ordered
Bratteli diagram  with state
 let $\nu_{s}, \nu_{r}$. 
For each $x$ in $X_{\mathcal{B}}$, we define
$\varphi_{r}^{x}: T^{+}(x) \rightarrow \R$, by 
\[
\varphi_{r}^{x}(y) = \left\{ \begin{array}{cl} 
   \nu^{x}_{r}\{ z \in T^{+}(x) \mid x \leq_{r} z \leq_{r} y \}, & 
   x \leq_{r} y \\
   - \nu^{x}_{r}\{ z \in T^{+}(x) \mid y \leq_{r} z \leq_{r} x \}, & 
   y \leq_{r} x \end{array} \right.
\]
where $\nu_{r}^{x}$ is defined in Proposition \ref{path:140}.
There is an analogous definition of
$\varphi_{s}^{x}: T^{-}(x) \rightarrow \R$
The following hold.
\begin{enumerate}
\item 
  For any
   $y$ in $T^{+}(x)$, we have 
   $\varphi_{r}^{y} = \varphi_{r}^{x} - \varphi_{r}^{x}(y)$.
  \item 
  $\varphi_{r}^{x}$ preserves  order.
\item If $T^{+}(x)$ is given the topology of Definition \ref{path:100}, 
then 
$\varphi^{x}_{r}$ is continuous.
 \item For $y \neq z$ in $T^{+}(x)$, 
  $\varphi_{r}^{x}(y) = \varphi_{r}^{x}(z)$ 
  if and only if $y,z$ are predecessor/successors
  of each other in $\leq_{r}$.
\item If $T^{+}(x)$ is given the topology of Definition \ref{path:100}, 
then $\varphi_{r}^{x}$ is proper.
\item Exactly one of three possibilities hold:
\begin{enumerate}
\item $T^{+}(x) \cap X_{\mathcal{B}}^{r-max} = 
T^{+}(x) \cap X_{\mathcal{B}}^{r-min}  = \emptyset$ and in this case
$\varphi_{r}^{x}(T^{+}(x)) = \R$,

\item $T^{+}(x) \cap X_{\mathcal{B}}^{r-max} = \{ y \}, 
T^{+}(x) \cap X_{\mathcal{B}}^{r-min}  = \emptyset$ and in this case
$\varphi_{r}^{x}(T^{+}(x)) = (- \infty, \varphi_{r}^{x}(y)]$, 

\item $T^{+}(x) \cap X_{\mathcal{B}}^{r-max}  = \emptyset, 
T^{+}(x) \cap X_{\mathcal{B}}^{r-min}  =   \{ z \}$ and in this case
$\varphi_{r}^{x}(T^{+}(x)) = [ \varphi_{r}^{x}(z), \infty)$
\end{enumerate} 
   \item 
   If $\lambda$ denotes Lebesgue measure on 
   $\varphi_{r}^{x}(T^{+}(x))$, then 
   $(\varphi_{r}^{x})_{*}(\nu_{r}^{x}) = \lambda$.
\end{enumerate}
\end{lemma}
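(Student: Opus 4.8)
\textbf{Proof proposal for Lemma \ref{order:200}.}

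The plan is to reduce everything to the one-sided situation of Lemma \ref{order:20} via the decomposition of a tail equivalence class into the subdiagrams of Definition \ref{path:150}, and then exploit the cocycle relation already recorded in Lemma \ref{order:70}. Fix $x$ in $X_{\mathcal{B}}$. For each $N$, the set $T^{+}_{N}(x) = X^{-}_{r(x_{N})} x_{(N,\infty)}$ is naturally identified with $X^{-}_{r(x_{N})} = X_{\mathcal{B}^{-}_{r(x_{N})}}$, and under this identification the restriction of $\leq_{r}$ is exactly the lexicographic $\leq_{s}$-order on the one-sided diagram $\mathcal{B}^{-}_{r(x_{N})}$ (reading right-to-left on $\mathcal{B}$ becomes left-to-right on the reindexed diagram), while $\nu^{x}_{r}$ restricts to the measure $\nu_{r}$ of Proposition \ref{path:120} on that subdiagram, up to the constant $\nu_{s}(r(x_{N}))$; this is precisely the content of Lemma \ref{order:55}. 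So on each piece $T^{+}_{N}(x)$ the function $y \mapsto \nu^{x}_{r}\{z \in T^{+}_{N}(x) : \text{$z$ between $x$ and $y$}\}$ is, up to an additive constant, the Devil's-staircase map $\varphi$ of Lemma \ref{order:20} applied to $\mathcal{B}^{-}_{r(x_{N})}$. Strong simplicity (Lemma \ref{path:17}) guarantees that each such one-sided diagram is simple with infinite path space, which is exactly the hypothesis needed to invoke Lemma \ref{order:20}.

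With that dictionary in hand I would proceed as follows. Property 1 (the cocycle identity $\varphi^{y}_{r} = \varphi^{x}_{r} - \varphi^{x}_{r}(y)$) is immediate from the definition by splitting the interval $\{z : x \leq_{r} z \leq_{r} w\}$ at $y$ and using that $\nu^{x}_{r}$ and $\nu^{y}_{r}$ agree on $T^{+}(x)=T^{+}(y)$ (Proposition \ref{path:140}, together with the fact that both are determined by the same values $\nu_{r}(s(\cdot))$ on a common base). Property 2 (order preservation) is clear from the definition since $\nu^{x}_{r}$ is a positive measure. Properties 3, 4, 8 transfer directly from parts 2, 3, 5 of Lemma \ref{order:20} applied on each $T^{+}_{N}(x)$: continuity on each clopen piece $T^{+}_{N}(x)$ plus the characterization of the inductive-limit topology (a map is continuous iff its restriction to each $T^{+}_{N}(x)$ is) gives 3; the identification-of-points statement 4 is local, i.e. if $\varphi^{x}_{r}(y)=\varphi^{x}_{r}(z)$ then $y,z$ lie in a common $T^{+}_{N}(x)$ and one applies Lemma \ref{order:20}(3) there, using the compatibility formula of Lemma \ref{order:70} to see that "successor in $T^{+}_{N}(x)$" and "successor in $T^{+}(x)$" coincide for non-extremal edges. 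Formula 8 is the explicit series from Lemma \ref{order:20}(5), reorganized: on $T^{+}_{N}(x)$ the measure of $\{z : x \leq_{r} z \leq_{r} y\}$ is a telescoping sum over $n \leq N$ of the $\nu_{r}(s(e))$ for edges $e$ lying strictly between $x_{n}$ and $y_{n}$ in $\leq_{r}$, which rearranges into the stated double sum once one separates the coordinates below and above the first place of disagreement. Property 9 then follows because $(\varphi^{x}_{r})_{*}\nu^{x}_{r}$ and Lebesgue measure agree on every basic interval $\varphi^{x}_{r}(X^{-}_{s(p)}px_{(n,\infty)})$, exactly as in Lemma \ref{order:20}(6), and these intervals generate.

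The two genuinely new points beyond a routine transfer are properties 5 (properness) and 6 (the trichotomy and the computation of the image). For 6, the key observation is that $X^{r-max}_{\mathcal{B}}$ and $X^{r-min}_{\mathcal{B}}$ each meet a given tail class $T^{+}(x)$ in at most one point: a point of $T^{+}(x)$ is $r$-maximal (resp. $r$-minimal) iff it is the greatest (resp. least) element of $T^{+}(x)$ in the linear order $\leq_{r}$ (Lemma \ref{order:50}(1),(4)), and by Proposition \ref{order:5} this forces its left tail to be the unique $r$-maximal (resp. $r$-minimal) path, so it is unique if it exists. One then checks that $\varphi^{x}_{r}$ is unbounded above on $T^{+}(x)$ precisely when $T^{+}(x)$ has no greatest element: given any $w \in T^{+}(x)$ choose, using part 4 of Lemma \ref{order:10} on the relevant subdiagram, an $N$ and an edge $e >_{r} (x_N)$ with $s(e)$ connected appropriately — strong simplicity ensures we can, and the $\nu_{r}$-mass of the corresponding cylinder is bounded below away from $0$ in terms of a fixed vertex, using that the max-vertex-weights do not vanish in the $\nu_r$ direction on the \emph{left} tails — so the values grow without bound; the same argument works below. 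When a greatest element $y$ exists, $\varphi^{x}_{r}$ is continuous and order-preserving on the compact-in-the-appropriate-sense class with supremum attained at $y$, and by the no-atoms part of Proposition \ref{path:140} there are no gaps in the image, so $\varphi^{x}_{r}(T^{+}(x)) = (-\infty, \varphi^{x}_{r}(y)]$; symmetrically for a least element. I expect property 6 — specifically controlling the image precisely and ruling out a spurious bounded-but-non-closed range — to be the main obstacle, since it requires combining the no-atoms statement, the uniqueness of extremal paths in a tail class from Proposition \ref{order:5}, and a uniform lower bound on cylinder masses coming from strong simplicity; everything else is bookkeeping on top of Lemmas \ref{order:10}, \ref{order:20}, \ref{order:55} and \ref{order:70}. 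Properness (property 5) is then a corollary: preimages of compact intervals are closed and, by 6 together with the explicit formula 8, contained in finitely many pieces $T^{+}_{N}(x)$ on each of which $\varphi^{x}_{r}$ has relatively compact sublevel sets.
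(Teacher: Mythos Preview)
Your overall strategy is exactly the paper's: identify $T^{+}_{N}(x)$ with the path space of the one-sided diagram $\mathcal{B}^{-}_{r(x_{N})}$ and pull back Lemma \ref{order:20}. Your handling of parts 1--4, 7, 8 (which you label 8 and 9; the statement has only eight parts) is correct and essentially matches the paper. One small slip: the restriction of $\nu^{x}_{r}$ to $T^{+}_{N}(x)$ is the $\nu_{r}$-measure on $X^{-}_{r(x_{N})}$ with no $\nu_{s}$ scaling.

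There are two genuine gaps in your treatment of part 6. First, you never rule out the possibility that $T^{+}(x)$ meets \emph{both} $X^{r\text{-}max}_{\mathcal{B}}$ and $X^{r\text{-}min}_{\mathcal{B}}$; uniqueness of each intersection (which follows from linearity of $\leq_{r}$ on $T^{+}(x)$, not from Proposition \ref{order:5} as you cite) does not exclude this fourth case. The paper does this directly: if an all-$r$-maximal path and an all-$r$-minimal path were right-tail equivalent, then for all large $n$ the $r$-maximal and $r$-minimal paths between $r(x_{N})$ and $r(x_{n})$ would coincide, forcing a unique path between those vertices and contradicting Lemma \ref{path:17}. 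Second, your unboundedness argument has the wrong mechanism. You look for cylinders with $\nu_{r}$-mass ``bounded below away from $0$'', but a uniform positive lower bound is not what is at stake here; what is needed is that $\nu^{x}_{r}(T^{+}_{N}(x)) = \nu_{r}(r(x_{N}))$ tends to $\infty$. The paper obtains this from a reversed-direction variant of Lemma \ref{path:12}: strong simplicity gives $\min\{\nu_{r}(v): v \in V_{N}\} \to \infty$ as $N \to +\infty$. Since $\varphi^{x}_{r}(T^{+}_{N}(x))$ is an interval of length $\nu_{r}(r(x_{N}))$ containing $0$, this single observation yields both properness (part 5) and the image computation (part 6) at once, and is considerably cleaner than deducing 5 from 6 and 7 afterwards.
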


\begin{proof}
 This first property follows from  the definition.
 
 The definition of $\nu^{x}_{r}$ is given in terms 
 of its restriction to the sets $T_{N}^{+}$, 
 for various values of $N$. Furthermore,  Lemma \ref{order:50} applies
 to these restrictions, so  
 the second, third and  fourth   parts follow immediately.
 It follows from the first part and the fourth part of \ref{order:20} that 
 \[
 \varphi_{r}^{x}(T^{+}_{N}(x)) = 
 [ \varphi_{r}^{x}(x_{r(x_{N})}^{r-min}x_{(N, \infty)}), 
 \varphi_{r}^{x}(x_{r(x_{N})}^{r-max}x_{(N, \infty)})) ]
 \]
 which is an interval of length $\nu_{r}(r(x_{N})$.
 
 In part 6, the fact that these are the only 
 three possibilities follows from 
 part 3 of Lemma \ref{order:50}. We must
  prove the range of $\varphi^{x}_{r}$
 is a claimed.
 If $x_{N}$ is not r-maximal, it is an easy exercise to check that
 \[
 \varphi_{r}^{x}(x_{r(x_{N})}^{r-max}x_{(N, \infty)})
  - \varphi_{r}^{x}(x_{r(x_{N-1})}^{r-max}x_{(N-1, \infty)}) \geq 
   \nu_{r}(r(x_{N})).
   \]
   Similarly, if $x_{n}$ is not r-minimal, then
    \[
 \varphi_{r}^{x}(x_{r(x_{N-1})}^{r-min}x_{(N, \infty)})
  - \varphi_{r}^{x}(x_{r(x_{N})}^{r-min}x_{(N-1, \infty)}) \leq 
 -   \nu_{r}(r(x_{N})).
   \]
   
Our hypotheses and Proposition \ref{path:122} shows that
\[
\lim_{N \rightarrow \infty } \min \{ \nu_{r}(v) \mid v \in V_{N} \}
 = \infty.
\]
Conclusions five and six follow easily from these observations and 
results from \ref{path:20}.

The last statement follows from  the last part of
Lemma \ref{order:20}.
\end{proof}

\section{Singular points}
\label{singular}
We are now ready to begin the journey from the 
infinite path space of an ordered bi-infinite Bratteli 
diagram, $\mathcal{B}$, together with a state, $\nu_{s}, \nu_{r}$,
to the surface $S_{\mathcal{B}}$. 

The basic idea is an extremely simple one: to make  a 
quotient space from the path space
$X_{\mathcal{B}}$ by identifying $x$ with $\Delta_{s}(x)$, for
all $x$ in $\partial_{s}X_{\mathcal{B}}$ and 
$y$ with $\Delta_{r}(y)$, for
all $y$ in $\partial_{r}X_{\mathcal{B}}$. We can already see in Lemma \ref{order:60}
that this works quite well, at least locally, and that our functions
$\varphi^{v}_{s}, \varphi^{v}_{r}$ provide an explicit homeomorphism
between the quotient space and a Euclidean one.
But there are a number of subtleties to deal with.
Ultimately, it is necessary pass to 
a distinguished subset, $Y_{\mathcal{B}}$, of $X_{\mathcal{B}}$.
This can already be seen to be necessary since $X_{\mathcal{B}}$
is compact, while our surface will not be. In fact, there two types
of points which need to be removed. The first, which might be 
called \emph{extremal} with respect to the ordering are fairly obvious
and we have seen these already in Proposition \ref{order:5}.
The second type, which we call \emph{singular}, are more subtle.
The main objective of this section is to identify these points
precisely and discuss some of their properties.

For this section, we assume that 
$\mathcal{B}$ is a strongly simple
 ordered bi-infinite Bratteli diagram with 
state $\nu_{r}, \nu_{s}$.

Recall the definitions of $X^{ext}_{\mathcal{B}}, X^{s-max}_{\mathcal{B}}, 
 X^{s-min}_{\mathcal{B}}, X^{r-max}_{\mathcal{B}}, X^{r-min}_{\mathcal{B}}$
 given in
Proposition \ref{order:5}. These will be removed from $X_{\mathcal{B}}$ 
simply because our maps $\Delta_{s}, \Delta_{r}$ are not defined 
on them (in general).

Also recall that in Definition \ref{order:52}, the domains
of $\Delta_{s}, \Delta_{r}$, 
$\partial_{s}X_{\mathcal{B}}, \partial_{r}X_{\mathcal{B}}$, respectively, 
are defined to exclude $X^{ext}_{\mathcal{B}}$.

As we are going to take a quotient by identifying points under \emph{both}
$\Delta_{s}$ and $\Delta_{r}$,  we need some compatibility between these maps.
In short, we require that they commute when both are defined.

As we have seen above, $\Delta_{s}(x)$ will be left-tail equivalent to $x$
and we have even given a name to the least integer where they differ: $n(x)$.
Similarly, the greatest integer where $x$ and $\Delta_{r}(x)$ differ is 
called $m(x)$. If we are to compute $\Delta_{r}\circ \Delta_{s}(x)$ 
(assuming for the moment it is defined), one of two 
rather distinct things happens. If $m(x)< n(x)$, the computation
of $\Delta_{s}(x)$ changes no entry, $x_{n}$, with $ n < n(x)$. It follows that 
$n(\Delta_{s}(x)) = n(x)$. Moreover,
 the computation of $\Delta_{r}(\Delta_{s}(x))$
is pretty much the same as that of $\Delta_{r}(x)$.

The following picture should prove helpful:

$$\begin{tikzpicture}
  \filldraw (3,4) circle (2pt);
    \filldraw (3,2) circle (2pt);
    \filldraw (4,3) circle (2pt);
      \filldraw (7,3) circle (2pt);
      \filldraw (8,4) circle (2pt);
      \filldraw (8,2) circle (2pt);
 
  \draw (3.5,1) node {$m(x)$};
   \draw (7.5,1) node {$n(x)$};

     \draw (1,4) -- (3,4);
       \draw (1,2) -- (3,2);
         \draw (3,4) -- (4,3);
         \draw (3,2) -- (4,3); 
          \draw (4,3) -- (7,3);
          \draw (7,3) -- (8,4);
          \draw (7,3) -- (8,2);      
                \draw (8,4) -- (10,4);
                \draw (8,2) -- (10,2);
  \end{tikzpicture}$$

One can actually see four different paths here: $x, \Delta_{s}(x),
\Delta_{r}(x)$ and $\Delta_{r}(\Delta_{s}(x))$.
  The important conclusion one draws is that 
$  \Delta_{r}\circ \Delta_{s}(x) = \Delta_{s}\circ \Delta_{r}(x)$.

Of course, there is a second possibility when $n (x) \leq m(x)$, summarized by 
the following picture:

$$\begin{tikzpicture}
  \filldraw (3,4) circle (2pt);
    \filldraw (3,2) circle (2pt);
   
      \filldraw (8,4) circle (2pt);
      \filldraw (8,2) circle (2pt);
 
  \draw (3.5,1) node {$n(x)$};
   \draw (7.5,1) node {$m(x)$};

     \draw (1,4) -- (3,4);
       \draw (1,2) -- (3,2);
         \draw (3,4) -- (8,2);
         \draw (3,4) -- (8,4); 
          \draw (3,2) -- (8,4);     
                \draw (8,4) -- (10,4);
                \draw (8,2) -- (10,2);
  \end{tikzpicture}$$
 which shows the paths 
$x, \Delta_{s}(x)$
and
$\Delta_{r}(x)$. The issue now becomes whether or not
 $\Delta_{r}\circ \Delta_{s}(x) = \Delta_{s}\circ \Delta_{r}(x)$. 
 It is possible
 but there is no reason that it must occur. At this point, 
 the reader may wish to take a 
 look at the example in section \ref{Cha}.
 
 Let us take a moment to discuss why the equation 
 $\Delta_{r}\circ \Delta_{s}(x) = \Delta_{s}\circ \Delta_{r}(x)$
 is important. If one thinks back to the example of the Cantor ternary set,
  identifying successor/predecessor pairs produces a closed interval.
 One can think of the two points which are identified as a 'left coordinate'
 and a 'right coordinate' of the point.  Passing to a bi-infinite diagram, 
 we will realize our quotient space in $\R^{2}$:  the left tail  provides the 
 $x$-coordinate and the right, the $y$-coordinate. Some points
 will have two coordinates in both $x$ and $y$ directions. What our formula is
 designed to capture is the notion that if we move
  horizontally first and then vertically
 we should get the same as moving vertically first and then horizontally. 
 If we do not (as we suggest above), then this tells us that the space is not
 'flat' at such a point. 
 
 We now develop these ideas more precisely.

\begin{defn}
\label{singular:50}
If $\mathcal{B}$ is a strongly simple bi-infinite ordered Bratteli diagram, 
we define 
$\partial X_{\mathcal{B}} = \partial_{s}X_{\mathcal{B}} \cap
 \partial_{r}X_{\mathcal{B}}$
 and 
\[
\Sigma_{\mathcal{B}} = 
\{ x \in \partial X_{\mathcal{B}} \mid \Delta_{s} \circ \Delta_{r} (x) 
\neq \Delta_{r} \circ \Delta_{s} (x)  \}.
\]
\end{defn}

\begin{prop}
\label{singular:80}
We have 
$\Delta_{s}(\partial X_{\mathcal{B}}) = 
   \partial X_{\mathcal{B}},    \,
\Delta_{r}(\partial X_{\mathcal{B}} )=
   \partial X_{\mathcal{B}} $ and 
$\Delta_{s}(\Sigma_{\mathcal{B}} ) 
 = \Sigma_{\mathcal{B}}   =
 \Delta_{r}(\Sigma_{\mathcal{B}}).$
\end{prop}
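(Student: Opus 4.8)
The plan is to establish the two inclusions $\Delta_s(\partial X_{\mathcal{B}}) \subseteq \partial X_{\mathcal{B}}$ and $\Delta_r(\partial X_{\mathcal{B}}) \subseteq \partial X_{\mathcal{B}}$; since $\Delta_s$ is an involution of $\partial_s X_{\mathcal{B}}$ and $\Delta_r$ an involution of $\partial_r X_{\mathcal{B}}$, each such inclusion upgrades automatically to an equality (apply $\Delta_s$, resp.\ $\Delta_r$, once more). The situation is moreover symmetric under reversing the diagram (interchanging $r$ with $s$ and relabelling levels by $n\mapsto -n$), so it is enough to treat $\Delta_s$. As $\Delta_s(\partial X_{\mathcal{B}})\subseteq \partial_s X_{\mathcal{B}}$ is immediate, what must be shown is that for $x\in\partial X_{\mathcal{B}}$ the path $\Delta_s(x)$ has a $\leq_r$-successor or predecessor and that both $\Delta_s(x)$ and $\Delta_r(\Delta_s(x))$ avoid $X^{ext}_{\mathcal{B}}$, i.e.\ $\Delta_s(x)\in\partial_r X_{\mathcal{B}}$.

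Fix $x\in\partial X_{\mathcal{B}}$ with associated indices $n(x),m(x)$ from Definition \ref{order:52}. Recall that $\Delta_s(x)$ agrees with $x$ in every coordinate of index $<n(x)$, and is obtained via Lemma \ref{order:50}(5) by replacing $x_{n(x)}$ with its $\leq_s$-successor or predecessor and then following the $s$-minimal or $s$-maximal path from there. The first case to dispose of is $m(x)<n(x)$ (the first picture in Section \ref{singular}): then $\Delta_s$ alters no coordinate of index $\le m(x)$, so the characterization in Lemma \ref{order:50}(6) of a point having a $\leq_r$-successor or predecessor transfers verbatim to $\Delta_s(x)$, with $m(\Delta_s(x))=m(x)$; the same bookkeeping gives $\Delta_r\circ\Delta_s(x)=\Delta_s\circ\Delta_r(x)$ and, since all the coordinates used to witness non-extremality of $x$ survive in $\Delta_s(x)$ and in $\Delta_r(\Delta_s(x))$, neither of the latter lies in $X^{ext}_{\mathcal{B}}$.

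The case $n(x)\le m(x)$ (the second picture) is the crux. Here $\Delta_s$ may rewrite the coordinate $m(x)$, so the $\leq_r$-witness of $x$ cannot be carried over directly. Assume $x$ has a $\leq_r$-successor (the predecessor case is the mirror image). Since $x\in\partial_s X_{\mathcal{B}}$ we have $\Delta_s(x)\notin X^{ext}_{\mathcal{B}}$, so in particular $\Delta_s(x)$ is not $r$-maximal; on the other hand the coordinates of $\Delta_s(x)$ of index $<n(x)$ coincide with those of $x$, which form an initial segment of $x^{r-max}_{s(x_{m(x)})}$ and so are $r$-maximal, and because the edge $\Delta_s(x)_{n(x)}$ has the same source as $x_{n(x)}$ this segment is exactly the $r$-maximal path ending at $s(\Delta_s(x)_{n(x)})$. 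Let $k\ge n(x)$ be the least index at which $\Delta_s(x)_k$ is not $r$-maximal; such a $k$ exists since $\Delta_s(x)$ is not $r$-maximal. Then $\Delta_s(x)_{(-\infty,k)}$ is made up entirely of $r$-maximal edges, hence equals $x^{r-max}_{s(\Delta_s(x)_k)}$, and Lemma \ref{order:50}(6) shows $\Delta_s(x)$ has a $\leq_r$-successor with $m(\Delta_s(x))=k$. It remains to check $\Delta_r(\Delta_s(x))\notin X^{ext}_{\mathcal{B}}$: its left tail is an $r$-minimal path, while its $k$-th entry $S_r(\Delta_s(x)_k)$ has a $\leq_r$-predecessor and so is not $r$-minimal (excluding $X^{r-min}_{\mathcal{B}}$); its left tail cannot also be $r$-maximal because an $r$-minimal path and an $r$-maximal path never coincide in a strongly simple diagram (Lemmas \ref{path:12} and \ref{path:17}); and it is not $s$-extremal because the portion of $\Delta_s(x)$ beyond coordinate $k$ — a tail of the $s$-minimal path $x^{s-min}_{r(\Delta_s(x)_{n(x)})}$ when $k>n(x)$, and controlled directly when $k=n(x)$ — cannot simultaneously be a forced $r$-extremal segment and an $s$-extremal one, again by strong simplicity. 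This last verification, and its counterpart for $\Delta_r$, is the one genuinely non-formal point, and it is where I expect the argument to be delicate; everything else is bookkeeping with Lemmas \ref{order:50}, \ref{path:12} and \ref{path:17}.

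Granting that $\Delta_s$ and $\Delta_r$ are involutions of $\partial X_{\mathcal{B}}$, the remaining assertion $\Delta_s(\Sigma_{\mathcal{B}})=\Sigma_{\mathcal{B}}=\Delta_r(\Sigma_{\mathcal{B}})$ is purely formal. For $x\in\partial X_{\mathcal{B}}$ all of $\Delta_s\Delta_r(x)$, $\Delta_r\Delta_s(x)$ and $\Delta_s\Delta_r\Delta_s(x)$ are now defined, and applying the bijection $\Delta_s$ to the defining relation $\Delta_s\Delta_r(x)\ne\Delta_r\Delta_s(x)$ yields $\Delta_r(x)\ne\Delta_s\Delta_r\Delta_s(x)$; using $\Delta_s\circ\Delta_s=\mathrm{id}$ and $\Delta_r\circ\Delta_r=\mathrm{id}$ this is precisely the statement $\Delta_s\Delta_r(\Delta_s(x))\ne\Delta_r\Delta_s(\Delta_s(x))$, i.e.\ $\Delta_s(x)\in\Sigma_{\mathcal{B}}$. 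Hence $x\in\Sigma_{\mathcal{B}}\iff\Delta_s(x)\in\Sigma_{\mathcal{B}}$, so $\Delta_s(\Sigma_{\mathcal{B}})=\Sigma_{\mathcal{B}}$, and the identical argument with $r$ in place of $s$ gives $\Delta_r(\Sigma_{\mathcal{B}})=\Sigma_{\mathcal{B}}$.
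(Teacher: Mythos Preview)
Your formal argument for $\Delta_s(\Sigma_{\mathcal{B}})=\Sigma_{\mathcal{B}}$ and $\Delta_r(\Sigma_{\mathcal{B}})=\Sigma_{\mathcal{B}}$ is correct and is essentially the paper's: the paper runs the contrapositive through the same chain of equalities using $\Delta_s^2=\Delta_r^2=\mathrm{id}$, while you apply $\Delta_s$ to the defining inequality directly, but the content is identical.

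Where you diverge from the paper is on the first pair of equalities. The paper writes only that these are ``already noted in Definition~\ref{order:52}'' and does no further work; you instead attempt a case analysis on $m(x)<n(x)$ versus $n(x)\le m(x)$. Your first case is fine, but the crux case contains a genuine gap. You assert: ``Since $x\in\partial_s X_{\mathcal{B}}$ we have $\Delta_s(x)\notin X^{ext}_{\mathcal{B}}$, so in particular $\Delta_s(x)$ is not $r$-maximal.'' This inference fails: membership of $\Delta_s(x)$ in $\partial_s X_{\mathcal{B}}$ only excludes $\Delta_s(x)$ from $X^{s\text{-}max}_{\mathcal{B}}\cup X^{s\text{-}min}_{\mathcal{B}}$; it says nothing about $r$-extremality. (You also graft onto the definition of $\partial_r X_{\mathcal{B}}$ the extra requirement that $\Delta_s(x)$ and $\Delta_r(\Delta_s(x))$ avoid $X^{ext}_{\mathcal{B}}$; by Definition~\ref{order:52} the condition is simply ``has a $\leq_r$-successor or predecessor,'' nothing more.) What you actually need is $\Delta_s(x)\notin X^{r\text{-}max}_{\mathcal{B}}\cup X^{r\text{-}min}_{\mathcal{B}}$, and strong simplicity alone does not obviously deliver this: there is no a~priori obstruction to the $s$-minimal tail out of some vertex being simultaneously $r$-maximal. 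The clean way to close the gap is precisely condition~(3) of Definition~\ref{surface:20}, $X^{ext}_{\mathcal{B}}\cap\partial_s X_{\mathcal{B}}=\emptyset$, which gives $\Delta_s(x)\notin X^{ext}_{\mathcal{B}}$ for free; but that hypothesis is introduced only in the following section, so the paper's one-line dismissal is itself somewhat optimistic here. In short: the paper takes this step as definitional, you correctly sense it is not, but your attempted proof of it does not go through.
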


\begin{proof}
The first two equalities are already noted in 
in Definition \ref{order:52}.
We prove the second equality of the last statement. 
Assume $x$ is not in $\Sigma_{\mathcal{B}}$ so that 
$ \Delta_{s} \circ \Delta_{r} (x) 
= \Delta_{r} \circ \Delta_{s} (x)$. We have 
\begin{eqnarray*}
\Delta_{s} \circ \Delta_{r} (\Delta_{r}(x)) & = & 
\Delta_{s} \circ \Delta_{r} \circ \Delta_{r}(x) \\
    & = & 
\Delta_{s}(x) \\
 & = & 
\Delta_{r} \circ \Delta_{r} \circ \Delta_{s}(x) \\
 & = & 
\Delta_{r} \circ \Delta_{s} \circ \Delta_{r}(x) \\
 & = & 
\Delta_{r} \circ \Delta_{s}(\Delta_{r}(x)) 
\end{eqnarray*}
implying that $\Delta_{r}(x)$ is also not in $ \Sigma_{\mathcal{B}}$.
\end{proof}

We now give a proper written proof of what was shown by 
our first diagram above.

\begin{lemma}
\label{singular:90}
Let $x$ be in $\partial X_{\mathcal{B}}$.
 If $m(x) < n(x)$, then $x$ is not in $ \Sigma_{\mathcal{B}}$.
\end{lemma}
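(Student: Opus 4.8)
The plan is to exploit the fact that, when $m(x)<n(x)$, the operation $\Delta_{s}$ and the operation $\Delta_{r}$ act on disjoint blocks of coordinates of $x$, so that they commute for purely local reasons. First I would recall the explicit form of $\Delta_{s}$ and $\Delta_{r}$, which is easily extracted from Lemma \ref{order:50} and Definition \ref{order:52}. Since $x\in\partial_{s}X_{\mathcal{B}}$, the integer $n(x)$ is the unique one for which either $x_{n(x)}$ is not $s$-maximal and $x_{(n(x),\infty)}=x^{s-max}_{r(x_{n(x)})}$, or $x_{n(x)}$ is not $s$-minimal and $x_{(n(x),\infty)}=x^{s-min}_{r(x_{n(x)})}$; in the first (successor) case $\Delta_{s}(x)$ agrees with $x$ in all coordinates $i<n(x)$, has $S_{s}(x_{n(x)})$ in coordinate $n(x)$, and is the $s$-minimal continuation $x^{s-min}_{r(S_{s}(x_{n(x)}))}$ thereafter, and in the second (predecessor) case one replaces $S_{s}$ by $P_{s}$ and $s$-min by $s$-max. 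Symmetrically, $\Delta_{r}(x)$ agrees with $x$ in all coordinates $i>m(x)$, modifies coordinate $m(x)$ by $S_{r}$ or $P_{r}$, and replaces the coordinates $i<m(x)$ by the corresponding $r$-extremal continuation. Thus $\Delta_{s}$ alters only coordinates $\geq n(x)$, and $\Delta_{r}$ alters only coordinates $\leq m(x)$.

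Next I would use $m(x)<n(x)$ to check that both $\Delta_{r}\circ\Delta_{s}(x)$ and $\Delta_{s}\circ\Delta_{r}(x)$ are defined and that neither operation disturbs the data the other one reads. Since $\Delta_{s}$ changes no coordinate $i\leq m(x)$, we have $\Delta_{s}(x)_{i}=x_{i}$ for all $i\leq m(x)$; in particular $\Delta_{s}(x)_{(-\infty,m(x))}=x_{(-\infty,m(x))}$ is $r$-maximal (resp.\ $r$-minimal) and $\Delta_{s}(x)_{m(x)}=x_{m(x)}$ is correspondingly not $r$-maximal (resp.\ not $r$-minimal). By the characterisation of $\partial_{r}X_{\mathcal{B}}$ in Lemma \ref{order:50} and Definition \ref{order:52}, this means $\Delta_{s}(x)\in\partial_{r}X_{\mathcal{B}}$ with $m(\Delta_{s}(x))=m(x)$, and that $\Delta_{r}$ acts on $\Delta_{s}(x)$ by exactly the same local rule as on $x$ (same successor/predecessor alternative, same new edge $S_{r}(x_{m(x)})$ or $P_{r}(x_{m(x)})$, same extremal continuation to the left). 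The mirror-image argument, using that $\Delta_{r}$ changes no coordinate $i\geq n(x)$, gives $\Delta_{r}(x)\in\partial_{s}X_{\mathcal{B}}$ with $n(\Delta_{r}(x))=n(x)$ and that $\Delta_{s}$ acts on $\Delta_{r}(x)$ by the same local rule as on $x$.

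Finally I would compare the two compositions coordinate by coordinate. For $i\leq m(x)$, the coordinate of $\Delta_{r}\circ\Delta_{s}(x)$ is the $\Delta_{r}$-modification applied to $\Delta_{s}(x)$, which by the previous step equals the $\Delta_{r}$-modification applied to $x$, namely $\Delta_{r}(x)_{i}$; and $\Delta_{s}\circ\Delta_{r}(x)_{i}=\Delta_{r}(x)_{i}$ because $i<n(x)$ puts this coordinate outside the zone altered by $\Delta_{s}$. For $m(x)<i<n(x)$, both compositions leave the coordinate equal to $x_{i}$, since it lies outside the zones altered by either operation. For $i\geq n(x)$, we have $\Delta_{r}\circ\Delta_{s}(x)_{i}=\Delta_{s}(x)_{i}$ because $i>m(x)$, while $\Delta_{s}\circ\Delta_{r}(x)_{i}$ is the $\Delta_{s}$-modification applied to $\Delta_{r}(x)$, which equals the $\Delta_{s}$-modification applied to $x$, namely $\Delta_{s}(x)_{i}$. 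Hence $\Delta_{r}\circ\Delta_{s}(x)=\Delta_{s}\circ\Delta_{r}(x)$, and therefore $x\notin\Sigma_{\mathcal{B}}$ by Definition \ref{singular:50}.

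I do not expect a genuine obstacle here: the entire content is the observation that the coordinates $\Delta_{s}$ reads and writes (positions $\geq n(x)$, together with the germ at $n(x)$) are disjoint from those of $\Delta_{r}$ (positions $\leq m(x)$) precisely when $m(x)<n(x)$. The only point requiring care is to run the case analysis uniformly over the four combinations of successor/predecessor in $\leq_{s}$ and in $\leq_{r}$; phrasing everything in terms of "the local rule determined by $n(x)$'' and "the local rule determined by $m(x)$'', rather than fixing a case, keeps the argument short and avoids repetition. This is the qualitative reason behind the first of the two pictures drawn before the lemma.
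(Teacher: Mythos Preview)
Your proposal is correct and follows essentially the same approach as the paper's proof: both exploit that when $m(x)<n(x)$ the operations $\Delta_{r}$ and $\Delta_{s}$ act on disjoint coordinate ranges, and then verify $\Delta_{r}\circ\Delta_{s}(x)=\Delta_{s}\circ\Delta_{r}(x)$ coordinate by coordinate. Your version is, if anything, slightly more explicit in checking that $\Delta_{s}(x)\in\partial_{r}X_{\mathcal{B}}$ with $m(\Delta_{s}(x))=m(x)$ (and symmetrically) before forming the compositions, which the paper leaves implicit.
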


\begin{proof}
It is clear that $\Delta_{r}(x)_{i} = x_{i}$, whenever $i>m(x)$. It follows 
that $n(\Delta_{r}(x)) = n(x)$ and that $\Delta_{s} \circ \Delta_{r}(x)_{i} = \Delta_{s}(x)_{i}$
for all $ i > m(x)$. It also follows from the definition of $\Delta_{s}$ that 
$\Delta_{s} \circ \Delta_{r}(x)_{i} = \Delta_{r}(x)_{i}$, for all $ i < n(x)$.

The same argument shows that $\Delta_{s}(x)_{i} = x_{i}$, whenever $i < n(x)$
and that 
$\Delta_{r} \circ \Delta_{s}(x)_{i} = \Delta_{r}(x)_{i}$
for all $ i < n(x)$. It also follows from the definition of $\Delta_{r}$ that 
$\Delta_{r} \circ \Delta_{s}(x)_{i} = \Delta_{s}(x)_{i}$, for all $ i > m(x)$.

Combining the first fact with the fourth,
 if $i > m(x)$, we have 
\[
\Delta_{s} \circ \Delta_{r}(x)_{i} = \Delta_{s}(x)_{i} = \Delta_{r} \circ \Delta_{s}(x)_{i}.
\]
Combining the second fact with the third,
 if $i < n(x)$, we have 
\[
\Delta_{s} \circ \Delta_{r}(x)_{i} = \Delta_{r}(x)_{i} = \Delta_{r} \circ \Delta_{s}(x)_{i}.
\]
As every $i$ satisfies either $i > m(x)$ or $i < n(x)$, we conclude
that 
\[
\Delta_{s} \circ \Delta_{r}(x) =\Delta_{r} \circ \Delta_{s}(x).
\]
\end{proof}

The set $ \Sigma_{\mathcal{B}}$ plays an important part in what follows
and it will be useful to establish some simple facts about it.

\begin{lemma}
\label{singular:100}
Define  functions
 $\epsilon_{r}, \epsilon_{s}: \partial X_{\mathcal{B}} \rightarrow E $
 by 
\begin{eqnarray*}
\epsilon_{r}(x) &  =  &  x_{m(x)}, \\ 
\epsilon_{s}(x) &  =  &  x_{n(x)}.
\end{eqnarray*}
The function $\epsilon_{r} \times \epsilon_{s}:
 \partial X_{\mathcal{B}} \rightarrow E  \times E$ is finite-to-one.
  In particular, $\partial X_{\mathcal{B}}$ is a countable
subset of $X$.

The restriction of $\epsilon_{r}$ to 
  $ \Sigma_{\mathcal{B}}$ is at most four-to-one.
 The only possible limit points of $ \Sigma_{\mathcal{B}}$ are in 
$X^{ext}_{\mathcal{B}}$.
\end{lemma}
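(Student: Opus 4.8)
The plan is to exploit the rigidity of boundary points recorded in Lemma~\ref{order:50} and Definition~\ref{order:52}: if $x\in\partial_{s}X_{\mathcal{B}}$ then the entire right half $x_{(n(x),\infty)}$ is one of the two explicit infinite paths $x^{s-max}_{r(x_{n(x)})}$, $x^{s-min}_{r(x_{n(x)})}$, and moreover, for \emph{any} $l\geq n(x)$, one has $x_{(l,\infty)}\in\{x^{s-max}_{r(x_{l})},x^{s-min}_{r(x_{l})}\}$, since a tail of an all-$s$-maximal (all-$s$-minimal) path is again all-$s$-maximal (all-$s$-minimal) by the uniqueness in Lemma~\ref{order:50}. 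Dually, if $x\in\partial_{r}X_{\mathcal{B}}$, then for any $l\leq m(x)$ one has $x_{(-\infty,l)}\in\{x^{r-max}_{s(x_{l})},x^{r-min}_{s(x_{l})}\}$. In short, once one knows a single edge $x_{l}$ together with a binary successor/predecessor choice, the corresponding half of the path is completely pinned down.

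For the first assertion, fix $(f,e)\in E_{l}\times E_{k}$ and consider $x$ with $\epsilon_{r}(x)=f$ and $\epsilon_{s}(x)=e$, so that $m(x)=l$, $x_{l}=f$, $n(x)=k$, $x_{k}=e$; then $x_{(-\infty,l)}$ is one of two paths and $x_{(k,\infty)}$ is one of two paths. If $k\leq l$, these two halves together with $x_{l},x_{k}$ already exhaust all indices, so $x$ lies in a set of at most four elements; if $k>l$, the only remaining freedom is the finite connecting path through levels $l+1,\ldots,k-1$ from $r(f)$ to $s(e)$, of which there are finitely many. Hence $\epsilon_{r}\times\epsilon_{s}$ is finite-to-one, and since $E\times E$ is countable, so is $\partial X_{\mathcal{B}}$, and \emph{a fortiori} so is $\Sigma_{\mathcal{B}}\subseteq\partial X_{\mathcal{B}}$.

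For the second assertion, $x\in\Sigma_{\mathcal{B}}$ forces $n(x)\leq m(x)$ by Lemma~\ref{singular:90}. Thus if $\epsilon_{r}(x)=f\in E_{l}$, then $m(x)=l$, $x_{l}=f$, $x_{(-\infty,l)}$ is one of two paths, and since $l=m(x)\geq n(x)$ the first paragraph gives $x_{(l,\infty)}\in\{x^{s-max}_{r(f)},x^{s-min}_{r(f)}\}$, again one of two paths; as $(-\infty,l)\cup\{l\}\cup(l,\infty)=\Z$, the point $x$ is one of at most $2\times2=4$ paths, so $\epsilon_{r}|_{\Sigma_{\mathcal{B}}}$ is at most four-to-one. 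For the limit-point claim, let $x\notin X_{\mathcal{B}}^{ext}$ and choose indices with $x_{j_{1}}$ not $s$-maximal, $x_{j_{2}}$ not $s$-minimal, $x_{i_{1}}$ not $r$-maximal, $x_{i_{2}}$ not $r$-minimal; take the basic clopen neighbourhood $U=X^{-}_{s(p)}\,p\,X^{+}_{r(p)}$ of $x$ with $p=x_{(M,N]}$, where $M<\min\{i_{1},i_{2},j_{1},j_{2}\}$ and $N\geq\max\{i_{1},i_{2},j_{1},j_{2}\}$. For $y\in\Sigma_{\mathcal{B}}\cap U$ one has $y_{i}=x_{i}$ on $(M,N]$; if $n(y)\leq M$, then $y$ coincides with an all-$s$-maximal or all-$s$-minimal tail on the whole of $(M,N]$, contradicting the presence there of both $j_{1}$ and $j_{2}$, so $n(y)>M$, and dually $m(y)\leq N$. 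Together with $n(y)\leq m(y)$, this confines the pair $(n(y),m(y))$ to a finite set, and for each admissible pair $y$ is one of at most four paths exactly as above; hence $\Sigma_{\mathcal{B}}\cap U$ is finite. Passing to a smaller basic clopen neighbourhood that omits the finitely many points of $\Sigma_{\mathcal{B}}\cap U$ distinct from $x$ shows that $x$ is not a limit point of $\Sigma_{\mathcal{B}}$.

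The step I expect to be most delicate is purely organisational: keeping the half-open interval $(M,N]$ and the inequalities $n(\cdot)\leq m(\cdot)$ consistent, confirming that the two halves exhaust $\Z$ in the borderline cases $k=l$ and $n(y)=m(y)$, and checking that the four witness indices really can be taken inside the coding block $(M,N]$. I have deliberately set up the neighbourhood argument to test each $y$ against two witnesses of opposite type, which avoids having to prove separately the (true but fussy) fact that in a strongly simple diagram no infinite tail is simultaneously $s$-maximal and $s$-minimal. Everything else is a direct unwinding of Definitions~\ref{order:52} and~\ref{singular:50} together with Lemmas~\ref{order:50} and~\ref{singular:90}.
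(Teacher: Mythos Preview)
Your argument is correct. For the first two assertions you follow essentially the paper's line: both pivot on the fact that $x_{(-\infty,m(x))}$ and $x_{(n(x),\infty)}$ are each one of two explicit infinite paths, and for $\Sigma_{\mathcal{B}}$ Lemma~\ref{singular:90} collapses the gap between them so that $\epsilon_{r}(x)$ alone leaves only the $2\times 2$ ambiguity.

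For the limit-point claim your route differs from the paper's. The paper argues sequentially: given distinct $x^{k}\in\Sigma_{\mathcal{B}}$ all of the same one of the four max/min patterns, the edges $\epsilon_{r}(x^{k})$ must be distinct, so the levels $m(x^{k})$ are eventually unbounded, and a subsequence with $m(x^{k})\to+\infty$ can only accumulate in $X^{r-min}_{\mathcal{B}}$ (with the other cases handled symmetrically). You instead prove local finiteness directly: for $x\notin X^{ext}_{\mathcal{B}}$ you build a cylinder $U$ whose defining block contains witnesses against all four extremal tail types, and show that any $y\in\Sigma_{\mathcal{B}}\cap U$ must have $n(y)$ and $m(y)$ trapped inside that block, whence $\epsilon_{r}(y)$ ranges over finitely many edges and the four-to-one bound finishes. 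Your version avoids the case-splitting over max/min patterns and the implicit passage to subsequences in the paper's argument, at the modest cost of setting up the neighbourhood with two witnesses of each kind; both proofs are short, and yours is arguably the more self-contained of the two.
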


\begin{proof}
By definition, for a given $x$ in
 $\partial X_{\mathcal{B}} =  
 \partial_{s}  X_{\mathcal{B}}\cap \partial_{r} X_{\mathcal{B}}$, 
 there are exactly four possibilities.
  One of them is 
that
for all $i < m(x)$, $x_{i}$ is $\leq_{r}$-minimal and
 for all
$i > n(x)$, $x_{i}$ is $\leq_{s}$-maximal. 
The other three are obtained by replacing one, other or both
'maximal' by 'minimal'.
It follows then by a simple induction argument that $x_{m(x)}$
uniquely determines  $x_{i}$ for all $ i \leq m(x)$. Similarly, 
$x_{n(x)}$ uniquely determines $x_{i}$ for all $ i \geq n(x)$. 
Finally, there are only finitely many paths from 
$r(x_{m(x)}) $ to $s(x_{n(x)}) $, when $m(x) < n(x)$. 

If, in addition, $x$ is in $ \Sigma_{\mathcal{B}}$, 
then  we know  from 
Lemma \ref{singular:90} that  $m(x) \geq n(x)$. 
Hence, $x$ is determined uniquely by $x_{m(x)}$.

If $x^{k}, k \geq 1$ is any sequence in $\Sigma^{S}_{\mathcal{B}}$,
let us assume each term
 satisfies the first of the four possibilities above.
If, in addition, the points are all distinct, then the values of 
$m(x^{k})$ are distinct, for $k \geq 1$. We may then assume that
they are converging to $+\infty$. It is simple to check that any 
limit point of this sequence is contained
 in $X_{\mathcal{B}}^{r-min}$.
\end{proof}

We complete this section with a very useful technical result
on how the map
 $\Delta_{s}$ preserves the order $\leq_{r}$ and the measures
$\nu^{x}_{r}$.

\begin{prop}
\label{singular:150}
Let $x \leq_{r} y$ be in
 $X_{\mathcal{B}} \cap \partial_{s}X_{\mathcal{B}}$ such that 
$[x,y]_{r} $ is disjoint from  $
\Sigma_{\mathcal{B}} \cup X^{ext}_{\mathcal{B}}$.
Then 
\[
\Delta_{s}([x,y]_{r} ) = [\Delta_{s}(x), \Delta_{s}(y)]_{r}.
\]
and the restriction of $\Delta_{s}$ to $[x,y]_{r} $
preserves $\leq_{r}$. 

Moreover, we have
$\nu^{\Delta_{s}(x)}_{r}(\Delta_{s}(E)) = \nu^{x}_{r}(E)$, 
for every Borel set $E \subseteq [x,y]_{r}$.
\end{prop}

\begin{proof}
We will assume  that $x_{n}, y_{n}$ are $s$-maximal for all
sufficiently large $n$; the other case is similar.
 Choose $ n(x), n(y) < n$ such that 
$x_{(n, \infty)} = y_{(n, \infty)}$.

Let $m < n(x)  , n(y)$ and define $P_{m}$ to be all
 paths $p$ in $E_{m,n}$ such that $r(p) = r(x_{n})= r(y_{n})$,
 $x_{[m,n]} \leq_{r} p \leq_{r} y_{[m,n]}$ and $p$ is all $s$-maximal 
 edges. Observe that if $p$ is in $P_{m}$, then $p_{[m+1,n]}$ is in
 $P_{m+1}$  (if $ m+1 < n(x), n(y)$). If $P_{m}$
 is non-empty for all $m < n(x), n(y)$, a 
 standard compactness argument shows
 that we can find $z$ in $X_{r(x_{n})}^{-}$ consisting entirely 
 of $s$-maximal edges and satisfying $x \leq_{r} zx_{(n, \infty)} \leq y$.
 This contradicts the condition that $[x,y]_{r}$ is disjoint from 
 $X^{ext}_{\mathcal{B}}$. Hence, there exists $m < n(x), n(y)$ 
 such that $P_{m}$ is empty.
 
 We fix such an $m$. The elements $p$
  of $E_{m,n}$ with $r(p) = r(x_{n})$ and 
  $x_{[m,n]} \leq_{r} p \leq_{r}  y_{[m,n]}$ are linearly ordered
 by $\leq_{r}$
 and we list them as 
 \[
 x_{[m,n]} = p^{0} <_{r} p^{1} <_{r} \cdots <_{r} p^{k}= y_{[m,n]}.
 \]
 Using our choice of $m$, we 
 let $q^{i} = S_{s}(p^{i})$ for $0 \leq i \leq k$.
 
For each $0 \leq i \leq n$, the set $ X_{s(p^{i})}^{-}p^{i}x_{(n, \infty)}$ 
is linearly ordered by $\leq_{r}$. Moreover, the order is determined 
by the entries less than $m$. On the other hand, applying $\Delta_{s}$ 
affects only the entries greater than or equal to $m$. This implies
that $\Delta_{s}$ preserves order on each of these sets. 
We also note that
\[
\Delta_{s}(X_{s(p^{i})}^{-}p^{i}x_{(n, \infty)} ) =  X_{s(p^{i})}^{-}q^{i}x_{r(q^{i})}^{s-min}.
\]
 
 For $0 \leq i \leq k$, it is clear that  
 $ x^{i} = x_{s(p^{i})}^{r-max}p^{i}x_{(n, \infty)} $ is the largest 
 element of $ X_{s(p^{i})}^{-}p^{i}x_{(n, \infty)}$ in $\leq_{r}$.
  It is also
   in $[x,y]_{r}$ as well as 
 $\partial_{s}X _{\mathcal{B}}$ and  $\partial_{r}X _{\mathcal{B}}$.

Using our hypothesis that $[x,y_{r}$ is disjoint
from $\Sigma_{\mathcal{B}}$, we can now compute, for $0 \leq i < k$,
\begin{eqnarray*}
S_{s}(x^{i}) & <_{r} & S_{r} \circ S_{s}(x^{i})  \\
  & =  &   S_{s} \circ S_{r}(x^{i})  \\
    &  =  &  S_{s}( x_{s(p^{i+1})}^{r-min}p^{i+1}x_{r(p)}^{r-max} ) \\
    & = &  x_{s(p^{i+1})}^{r-min}q^{i+1}x_{r(q^{i+1})}^{r-min}  
\end{eqnarray*}
which is the least element of 
$\Delta_{s}(X_{s(p^{i+1})}^{-}p^{i+1}x_{(n, \infty)} )$.
It follows that $\Delta_{s}$ preserves order  when applied to all of 
$[x,y]_{r}$.

For the last statement, let $z$ be any point of $[x,y]_{r}$ and $k$
be any integer less than or equal to $m$. 
 We consider the set
$E= X^{-}_{s(z_{k+1})}z_{(k, \infty)}$. This is a clopen subset
of $[x,y]_{r}$ and such subsets are a base
 for its topology, so it suffices to
prove the statement for this set. By Proposition \ref{path:140}, 
we have $\nu^{x}_{r}(E) = \nu_{r}(s(z_{k+1})$.

As $k \leq m$, we have 
$\Delta_{s}(E)= X^{-}_{s(z_{k+1})}\Delta_{s}(z)_{(k, \infty)}$
$\nu^{x}_{r}(\Delta_{s}(E)) = \nu_{r}(s(z_{k+1})$ also.
\end{proof}

\section{The surface}
\label{surface}

Having identified extremal points and singular points
in the last section, the goal of this section is to pass from the 
infinite path space of a bi-infinite ordered Bratteli
diagram, $X_{\mathcal{B}}$, to its associated
surface, which we will denote by 
$S_{\mathcal{B}}$. Moreover, if we are given a state on the Bratteli 
diagram, we will construct an explicit 
system of charts for this space which shows that it 
is a translation surface.

There are a number of intermediary steps.
First, we must remove both extremal and singular  
points from $X_{\mathcal{B}}$. Then, we must identify points 
$x$ and $\Delta_{r}(x)$ and also $x$ and $\Delta_{s}(x)$. 
These two identifications commute precisely because we have 
removed the singular points. However, if we simply do the first
identifications, we obtain an intermediate space, which we
denote by $S^{r}_{\mathcal{B}}$. Doing the 
other identification first results in 
$S^{s}_{\mathcal{B}}$.

\begin{defn}
\label{surface:10}
Let $ \mathcal{B}$ be a 
bi-infinite ordered Bratteli diagram,
We define 
\[
Y_{ \mathcal{B}} = X_{\mathcal{B}} -
 X^{ext}_{\mathcal{B}} - \Sigma_{\mathcal{B}}.
\]

For $m < n$, we define $E^{Y}_{m,n}$ to be those $p$ 
in $E_{m,n}$ which are neither s-maximal, s-minimal, r-maximal nor
r-minimal and for which $X^{-}_{s(p)}pX^{+}_{r(p)}$ is contained in 
$Y_{\mathcal{B}}$.
\end{defn}

\begin{rmk}
\label{surface:11}
If $\mathcal{B}$ is finite rank, 
then the set $X_{\mathcal{B}}^{ext}$ if finite and 
$X^{ext}_{\mathcal{B}} \cup \Sigma_{\mathcal{B}}$ is countable and closed, by 
Lemma \ref{singular:100}.
Hence, $Y_{\mathcal{B}}$ is 
an open set in $X_{\mathcal{B}}$. 

The surfaces we construct will be quotient of
$Y_{\mathcal{B}}$. Of course, we cannot use $X_{\mathcal{B}}$ since
translation 
surfaces are not generally compact. As we will see later, 
it is interesting
that the finite genus case will be done
with $\Sigma_{\mathcal{B}}$ empty. Its appearance 
is essential in the infinite genus case.
\end{rmk}

Further to this, let us observe if $p$ is in $E_{m,n}^{Y}$ and  $e$ is in 
$E_{m}$ with $r(e) = s(p)$ then $ep$ is in  $E_{m-1,n}^{Y}$; if $f$ is 
in $E_{n+1}$ with $s(f)= r(p)$, then $pf$ is in  $E_{m,n+1}^{Y}$. 
Let us also show that the sets 
$X^{-}_{s(p)}pX^{+}_{r(p)}, p \in \cup_{m < n} E_{m,n}^{Y}$ form
 an open cover of $Y_{\mathcal{B}}$.
If $x$ is in $Y_{\mathcal{B}}$, then it must have edges which are 
not $s$-maximal, not $s$-minimal, not  $r$-maximal  and  not $r$-minimal. 
Select $m'< n'$ so that the path $p =x_{[m',n']}$ contains one of each.
In addition, as $x$ is in $Y_{\mathcal{B}}$ which is open, we may find 
$m < m' < n' < n$ such that 
$X^{-}_{s(x_{m})}x_{(m,n]}X^{+}_{r(x_{n})} \subseteq Y_{\mathcal{B}}$.
It follows that $x_{(m,n]}$ is in $E^{Y}_{m,n}$.

The next result is quite easy and  will be useful later on.

\begin{prop}
\label{surface:12}
Let $\mathcal{B}$ be a finite rank, strongly simple bi-infinite ordered
Bratteli diagram. There exists $m < 0$ such that 
$r: E^{Y}_{m,0} \rightarrow V_{0}$  is surjective. In fact, 
$r: E^{Y}_{m-n,n} \rightarrow V_{n}$ is surjective for all $n \geq 0$. 
\end{prop}

\begin{proof}
Let $K$ be a bound on the size of the vertex sets.
As there is a unique $s$-maximal path and a unique $s$-minimal 
from each vertex in $V_{l}$, $l < 0$, the total number of such paths in 
$E_{l,0}$ is $2K$. As our diagram is strongly simple, we may choose $l<0$
such that there are more than $2K$ paths from $V_{l}$ to each vertex 
of $V_{0}$. Now choose $m < l$ such that there are at 
least three paths between each vertex of $V_{m}$ and each vertex in $V_{l}$.

Let $v$ be in $V_{0}$. Choose $p$ in $E_{l,0}$ with $r(p)=v$
which is neither $s$-maximal nor $s$-minimal. 
Next choose $q$ in $E_{m,l}$ with $r(q) = s(p)$ 
which is not $r$-maximal nor $r$-minimal. If 
$x$ is any path in $X_{s(q)}^{-}qpX_{v}^{+}$, it is clear that it is not
in $X_{\mathcal{B}}^{ext}$. In addition, if $x$ is 
in $\partial X_{\mathcal{B}}$, then
$n(x) \geq l$ while $m(x) < l$. By Lemma \ref{singular:90}, $x$ is
not in $\Sigma_{\mathcal{B}}$. So $qp$ is in $E_{m,0}^{Y}$ with $r(qp)=v$.
\end{proof}

There is one more property which we will require of $Y_{\mathcal{B}}$: 
it should be invariant under both
$\Delta_{r}$ and $\Delta_{s}$.

This will follow from  the assumptions that  
$X^{ext}_{\mathcal{B}} \cap \partial_{s}X_{\mathcal{B}}$ and 
$ X^{ext}_{\mathcal{B}} \cap \partial_{r}X_{\mathcal{B}}$ are empty.
In fact, the set $\partial_{s}X_{\mathcal{B}}$ is defined to be 
disjoint from $X^{s-max}_{\mathcal{B}}$ and   $X^{s-min}_{\mathcal{B}}$, but 
as the $\leq_{s}$ and $\leq_{r}$ orders are essentially independent, 
there is no reason the same should be true of 
$X^{r-max}_{\mathcal{B}}$ and   $X^{r-min}_{\mathcal{B}}$.

It will be convenient to collect the hypotheses we need 
for most of the remainder of the paper.

\begin{defn}
\label{surface:20}
We say that a bi-infinite ordered Bratteli diagram, $\mathcal{B}$,
 satisfies the
\emph{standing assumptions} if
\begin{enumerate}
\item $\mathcal{B}$ is finite rank (Definition \ref{BD:29}),
\item $\mathcal{B}$ is strongly simple (Definition \ref{path:18}),
\item  $X^{ext}_{\mathcal{B}} \cap \partial_{s}X_{\mathcal{B}}$ and 
$ X^{ext}_{\mathcal{B}} \cap \partial_{r}X_{\mathcal{B}}$ are empty.
\end{enumerate}
\end{defn}

We are going to make various quotient spaces from
$Y_{\mathcal{B}}$ by making identifications
of $x$ and $\Delta_{r}(x)$ and $y$ with $\Delta_{s}(y)$, 
for appropriate $x$ and $y$. Moreover, we will have specific homeomorphisms 
between these spaces and some locally Euclidean ones.

\begin{defn}
\label{surface:30}
Let $\mathcal{B}$ be an ordered bi-infinite Bratteli diagram.
\begin{enumerate}
\item 
We define the quotient space 
\[
S_{\mathcal{B}}^{r} = Y_{\mathcal{B}}/ x \sim \Delta_{r}(x), x \in
 \partial_{r}X_{\mathcal{B}} \cap Y_{\mathcal{B}}.
\]
We let $\pi^{r}$ denote the quotient map from $ Y_{\mathcal{B}}$ 
to  $S_{\mathcal{B}}^{r}$.
\item 
We define the quotient space 
\[
S_{\mathcal{B}}^{s} = Y_{\mathcal{B}}/ y \sim \Delta_{s}(y), y \in 
\partial_{s}X_{\mathcal{B}}\cap Y_{\mathcal{B}}.
\]
We let $\pi^{s}$ denote the quotient map from $ Y_{\mathcal{B}}$ 
to  $S_{\mathcal{B}}^{s}$.
\item 
We define the quotient space 
\[
S_{\mathcal{B}} = Y_{\mathcal{B}}/ y \sim \Delta_{s}(y),
 x \sim \Delta_{r}(x), x \in \partial_{r} X_{\mathcal{B}}\cap Y_{\mathcal{B}},
 y \in \partial_{s}X_{\mathcal{B}}\cap Y_{\mathcal{B}}.
\]
As this space is obviously a quotient of both 
$S_{\mathcal{B}}^{r} $ and 
$S_{\mathcal{B}}^{s}$, we let $\rho^{s}$ be the map from the former
and $\rho^{r}$ be the map from the latter and 
\[
\pi = \rho^{s} \circ \pi^{r} =\rho^{r} \circ \pi^{s}.
\]
That is, we have a commutative diagram
$$\xymatrix{   &   Y_{\mathcal{B}} \ar_{\pi^{r}}[dl] \ar^{\pi^{s}}[dr] &  \\
    S^{r}_{\mathcal{B}}  \ar_{\rho^{s}}[dr] &   &  
     S^{s}_{\mathcal{B}} \ar^{\rho^{r}}[dl] \\
       &   S_{\mathcal{B}} &  }$$
\end{enumerate}
\end{defn}

Our next goal is to provide local descriptions of the
spaces involved. More specifically, we need charts for the surace.
Of course, this is a crucial step if we are to show
that $S_{\mathcal{B}}$ is a translation surface.
Along the way, we will also obtain local descriptions
of  $S^{r}_{\mathcal{B}},   S^{s}_{\mathcal{B}}  $, which
 are somewhat simpler. 

Our charts will actually be defined as functions on the 
space $Y_{\mathcal{B}}$ to the plane, which are constant on 
equivalence classes. If a point of $S_{\mathcal{B}}$ is represented by a 
single point $x$ in $Y_{\mathcal{B}}$, the collection of sets
$X^{-}_{s(x_{m})}x_{(m,n]}X^{+}_{r(x_{n})}, m < n$, form a neighbourhood
base at the point $x$. In addition, the maps $\varphi^{s(x_{m})}_{r}$ and 
$\varphi^{r(x_{n})}_{s}$ of Definition \ref{order:60}
can be used to define a map to the plane.
This is not quite suitable for a chart since the image is a closed 
rectangle, rather than an open set, but eliminating the sides of this
rectangle from the image by restriction
is a simple matter and the result will be one of
our charts.

A second possibility is that a point of $S_{\mathcal{B}}$ is represented by 
a pair, $x, y =\Delta_{r}(x)$, for some $x$ in 
$\partial_{r}X_{\mathcal{B}} \cap Y_{\mathcal{B}}$. It follows
from  
Lemma \ref{order:50} that  
$x_{(i,\infty)}= y_{(i,\infty)}$, $y_{i}$ is the 
$\leq_{r}$-successor (or predecessor) of $x_{i}$ and 
$x_{(-\infty,i)} = x_{s(x_{i})}^{s-min}, y_{(-\infty,i} = x_{s(y_{i})}^{s-max}$,
for some integer $i$. In this case, we will use the \emph{pair} of paths
$(x_{(m,n]}, y_{(m,n]})$, where $m < i < n$, to 
parameterize our neighbourhoods. 

Of course ,there is a third case where the point  is represented by 
a pair, $x, y =\Delta_{s}(x)$, for some $x$ in 
$\partial_{s}X_{\mathcal{B}} \cap Y_{\mathcal{B}}$ and a fourth case
where it is represented by four points.

We formally introduce the sets which will parameterize our charts.

\begin{defn}
\label{surface:40}
Let $\mathcal{B}$ be a bi-infinite ordered Bratteli 
diagram with faithful state $\nu_{s}, \nu_{r}$. 
\begin{enumerate}
\item
For $m < n$, we let $E^{r}_{m,n}$ 
denote the set of pairs $(p^{1}, p^{2})$ with $p^{1}, p^{2}$ 
in $E^{Y}_{m,n}$ 
such that $p^{2}= S_{r}(p^{1})$; 
that is $p^{2}$ is the successor in the 
$\leq_{r}$ order. This implies $r(p^{1})= r(p^{2})$,
 which we denote by $r(p)$.

For $p=(p^{1}, p^{2})$ in $E^{r}_{m,n}$, we define
\begin{eqnarray*}
V_{1}^{r}(p) & =   (X_{s(p^{1})}^{-} - 
\{ x^{r-min}_{s(p^{1})} \} )  p^{1} X^{+}_{r(p)}, \\
V_{2}^{r}(p) & =   (X_{s(p_{2})}^{-} - 
\{ x^{r-max}_{s(p^{2})} \} )  p^{2} X^{+}_{r(p)}
\end{eqnarray*}
and $V^{r}(p) = V_{1}^{r}(p) \cup V_{2}^{r}(p)$.
\item 
For $p$ in $E^{r}_{m,n}$, we define 
$c_{r}(p) = \varphi_{r}^{r(p)}( x_{s(p^{1})}^{r-max} p^{1}) =
\varphi_{r}^{r(p)}( x_{s(p^{2})}^{r-min} p^{2})$ and  
$\psi_{r}^{p}: V^{r}(p) \rightarrow \R$ by 
\[
\psi_{r}^{p}(x) = \varphi^{r(p)}_{r}(x_{(-\infty, n]}) - c_{r}(p),
\]
for $x$ in $V^{r}(p)$.
\item
For $m < n$, we let $E^{s}_{m,n}$ 
denote the set of pairs $(p^{1}, p^{2})$ with $p^{1}, p^{2}$ 
in $E^{Y}_{m,n}$ 
such that $p^{2}= S_{s}(p^{1})$; 
that is $p^{2}$ is the successor in the $\leq_{s}$ order.
 This implies $s(p^{1})= s(p^{2})$
which we denote by $s(p)$.

For $p=(p^{1}, p^{2})$ in $E^{s}_{m,n}$, we define
\begin{eqnarray*}
V_{1}^{s}(p) & = X^{-}_{s(p)}  p^{1} (X_{r(p^{1})}^{+} - 
\{ x^{s-min}_{r(p_{1})} \} ) , \\
V_{2}^{s}(p) & =  X^{-}_{s(p)}  p^{2} (X_{r(p^{2})}^{+} - 
\{ x^{s-max}_{r(p^{2})} \} )
\end{eqnarray*}
and $V^{s}(p) = V_{1}^{s}(p) \cup V_{2}^{s}(p)$.
\item 
For $p$ in $E^{s}_{m,n}$, we define
$c_{s}(p) = \varphi^{s(p)}(p^{1}x_{r(p^{1})}^{s-max}) = 
\varphi^{s(p)}(p^{2}x_{r(p^{2})}^{s-min})$ and 
$\psi_{s}^{p}: V^{s}(p) \rightarrow \R$ by 
\[
\psi_{s}^{p}(x) = \varphi^{s(p)}_{s}(x_{[m,\infty)}) - c_{s}(p),
\]
for $x$ in $V^{s}(p)$.
   \end{enumerate}
\end{defn}

The basic properties if these sets are summarized in the following.
For brevity, we say that a subset $A \subseteq X_{\mathcal{B}}$ is 
$\Delta_{r}$-invariant (or $\Delta_{s}$-invariant)
if $\Delta_{r}(A \cap \partial_{r}X_{\mathcal{B}}) = 
A \cap \partial_{r}X_{\mathcal{B}}$ (or  
$\Delta_{s}(A \cap \partial_{s}X_{\mathcal{B}}) = 
A \cap \partial_{s}X_{\mathcal{B}}$, respectively).

\begin{lemma}
\label{surface:50}
Let $\mathcal{B}$ be a bi-infinite ordered Bratteli 
diagram satisfying the conditions of
Definition \ref{surface:20} and with faithful state $\nu_{s}, \nu_{r}$. 
Let $m < n$
and $p= (p^{1}, p^{2})$ be in $E_{m,n}^{r}$.
\begin{enumerate}
\item The set $V^{r}(p)$  
is an open subset of $Y_{\mathcal{B}}$.
\item   If $x$ is in 
$V^{r}(p) \cap \partial_{r}X_{\mathcal{B}}$ then 
exactly one of the following holds.
\begin{enumerate}
\item $m \leq m(x) \leq n$ and if $x$ is in $V_{i}(p), i=1,2$, then 
$\Delta_{r}(x)$ is in $V_{3-i}(p)$,
\item $m(x) < m$ and if $x$ is in $V_{i}(p), i=1,2$, then 
$\Delta_{r}(x)$ is in $V_{i}(p)$.
\end{enumerate}
In either case, $\Delta_{r}(x)_{(n, \infty)} = x_{(n, \infty)}$ and 
 $V^{r}(p)$ 
 is $\Delta_{r}$-invariant. 
\item 
For $x$ in $V^{r}(p)$, we have 
\[
\psi^{p}_{r}(x)  = \left\{ \begin{array}{cl} 
\varphi^{s(p^{1})}_{r}(x_{(-\infty, m)})  - \nu_{r}(s(p^{1})
 & x \in V^{r}_{1}(p), \\
\varphi^{s(p^{2})}_{r}(x_{(-\infty, m)}) 
 & x \in V^{r}_{2}(p). \end{array} \right.
\]
\item 
\begin{eqnarray*}
\psi_{r}^{p}(V_{1}^{r}(p)) & = & 
(-\nu_{r}(s(p^{1}), 0 ],\\
 \psi_{r}^{p}(V_{2}^{r}(p)) & = &
  [ 0 ,\nu_{r}(s(p^{2}) ).
\end{eqnarray*}
\item 
The map  which sends 
$x \in V^{r}(p) \rightarrow 
 \psi^{p}_{r} (x) $
is continuous 
and identifies two distinct 
points $x,y$ if and only if 
$\Delta_{r}(x)_{(-\infty,n]} = y_{(-\infty,n]}$.
\end{enumerate}
\end{lemma}

\begin{proof}
For the first part, the fact that $p^{1}, p^{2}$ are
 in $E_{m,n}^{r} \subseteq E^{Y}_{m,n}$ implies that 
 $V_{i}^{r}(p) \subseteq X^{-}_{s(p^{i})}p^{i}X^{+}_{r(p)}, i =1,2$
 are contained in $Y_{\mathcal{B}}$. The fact that 
 $V^{r}(p)$ is open is clear.

For the second, first suppose that $m(x) \geq m$ and that $x_{(-\infty,m(x))}$
is all $r$-maximal edges. From the definition of $V_{2}(p)$, $x$ cannot be 
in $V_{2}(p)$,  so $x_{(m,n]} = p^{1}$. As $p^{1}$ is not $r$-maximal, 
$m(x) \leq n$ and $\Delta_{r}(x) = x_{(-\infty,m)}p^{2}x_{(n, \infty)}$
which is in $V_{2}(p)$. Similarly, if 
$x_{(-\infty,m(x))}$
is all $r$-minimal edges, then $x$ is not in $V_{2}(p)$,
$\Delta_{r}(x)$ is in $V_{1}(p)$ and $m(x) \leq n$.
If $m < m(x)$, then $\Delta_{r}(x)_{[m,\infty)} = x_{[m, \infty)}$
so if $x$ is in $V_{i}(p)$, so is $\Delta_{r}(x)$. 
The fact that $m \leq n$ in either case implies 
$\Delta_{r}(x)_{(n, \infty)} = x_{(n, \infty)}$.

 For the third part, consider $x$ in $V_{1}(p)$. We apply Lemma 
 \ref{order:70}  with $p=p^{1}$ in the second line below:
 \begin{eqnarray*}
 \psi^{p}_{r}(x) & = & \varphi^{r(p)}_{r}(x_{(-\infty, n]}) -c_{r}(p) \\
     & = & \varphi^{r(p)}_{r}(x_{(-\infty, m)}p^{1}) - 
     \varphi^{r(p)}_{r}(x_{s(p^{1})}^{r-max}p^{1}) \\
      & = & \varphi^{s(p^{1})}_{r}(x_{(-\infty, m)})
        + \varphi^{r(p)}_{r}(x^{r-min}_{s(p^{1})}p^{1}) 
         -  \varphi^{r(p)}_{r}(x_{s(p^{1})}^{r-max}p^{1}) \\
      & = & \varphi^{s(p^{1})}_{r}(x_{(-\infty, m)}) - \nu_{r}(s(p^{1})).     
 \end{eqnarray*}
  For
 $x$ in $V_{2}^{r}(p)$, we use the same result: 
  \begin{eqnarray*}
 \psi^{p}_{r}(x) & = & \varphi^{r(p)}_{r}(x_{(-\infty, n]}) -c_{r}(p) \\
     & = & \varphi^{r(p)}_{r}(x_{(-\infty, m)}p^{2})
      - 
     \varphi^{r(p)}_{r}(x_{s(p^{2}}^{r-min}p^{2}) \\
      & = & \varphi^{s(p^{2})}_{r}(x_{(-\infty, m)})
        + \varphi^{r(p)}_{r}(x^{r-min}_{s(p^{2})}p^{2}) 
        - 
     \varphi^{r(p)}_{r}(x_{s(p^{2}}^{r-min}p^{2}).   
        \\        
 \end{eqnarray*}

 Part 4 is an immediate consequence of part 3,
   the definitions and two applications of part 4
 of Lemma \ref{order:20}.
 
 Part 5 follows from parts 2 and 3 of Lemma \ref{order:20} applied to
 $\mathcal{B}_{s(p)}^{-}$ and the fact that 
 $\varphi^{r(p)}_{r}(x^{r-max}_{s(p^{1})}p^{1}) = 
  \varphi^{r(p)}_{r}(x^{r-min}_{s(p^{2})}p^{2})$.
\end{proof}

The next result, while slightly technical, 
essentially  shows that there are enough sets $V^{r}(p)$
to cover $Y_{\mathcal{B}}$.

\begin{lemma}
\label{surface:55}
If $y$ is in $S_{\mathcal{B}}^{r}$ and $U$ is an open set 
in $Y_{\mathcal{B}}$ such that $(\pi^{r})^{-1}\{ y \} \subseteq U$, 
then there exists $m \geq 1$ and $ p= (p^{1}, p^{2}) \in E_{-m,m}^{r}$ 
such that
neither $p^{1}, p^{2}$ are $r$-maximal nor $r$-minimal in $E^{Y}_{-m,m}$
and 
 $(\pi^{r})^{-1}\{ y \} \subseteq V^{r}(p) \subseteq U$. 
 \end{lemma}

 \begin{proof}
We first consider the case when $(\pi^{r})^{-1}\{ y \} = \{ x \}$, 
which is 
not in $\partial_{r}X_{\mathcal{B}}$. As $Y_{\mathcal{B}}$ is open, 
we may find $1 \leq k$ 
such that $X_{s(x_{-k})}^{-}x_{[-k,k]}X_{r(x_{k})}^{+}$
is contained in $U$. As $x$ is 
not in $\partial_{r}X_{\mathcal{B}}$, we may 
find $m > l > l' > k$ such that 
 $x_{-m}, x_{-l}$ are not $r$-maximal and $x_{l'}$ is not $r$-minimal.
 Let $y_{-m}$ be the  $r$-successor of $x_{-m}$.
  Let $p^{1}= x_{[-m,m]}, p^{2}= y_{-m}x_{(-m,m]}$ so $p^{2}$ is the 
  $r$-successor of $p^{1}$ in $E^{Y}_{-m,m}$. 
  As $p^{1}_{[-k,k]} = p^{2}_{[-k,k]}=x_{[-k,k]}$, $V^{r}(p)$ 
 contains $x$ and  is contained in $U$. Clearly, $p^{1}$ is not $r$-maximal
 and $p^{2}$ is not $r$-minimal in $E^{Y}_{-m,n}$. Also,  $p^{1}$ 
 is not $r$-minimal
 since $x_{-l'}$ is not $r$-minimal. Similarly, 
 $p^{2}$ 
 is not $r$-maximal
 since $x_{-l}$ is not $r$-minimal.

Next, we consider the case  that   
$(\pi^{r})^{-1}\{ y \} = \{ x, S_{r}(x) \}$
which are in $\partial_{r}X_{\mathcal{B}}$.
We may choose
 $k > \vert m(x) \vert$ such that
  $X_{s(x_{-k})}^{-}x_{[-k,k]}X_{r(x_{k})}^{+}$ and 
   $X_{s(x_{-k})}^{-}S_{r}(x)_{[-k,k]}X_{r(x_{k})}^{+}$
are contained in $U$. We now choose $m > k$ such that
there are at least two paths in $E_{-m,-k}$ with range $s(x_{-k})$ and 
at least two paths in $E_{-m,-k}$ with range $s(S_{r}(x)_{-k})$.
Let $p^{1}= x_{[-m,m]}, p^{2}= S_{r}(x)_{[-m,m]}$. It
   is straightforward to check $p=(p^{1}, p^{2})$ satisfies the conclusion.
 \end{proof}
 
 There are obvious analogues of the last two results 
 for $p$ in $E^{s}_{m,n}$.

The following follows quite easily from the technical results above.

\begin{cor}
\label{surface:60}
Let $\mathcal{B}$ be an ordered bi-infinite Bratteli 
diagram satisfying the conditions of
Definition \ref{surface:20}. 
\begin{enumerate}
\item 
The space $S_{\mathcal{B}}^{r}$ is a locally compact Hausdorff space and 
$\pi^{r}: Y_{\mathcal{B}} \rightarrow S_{\mathcal{B}}^{r}$ is a continuous, 
proper surjection.
\item
The space $S_{\mathcal{B}}^{s}$ is a locally compact Hausdorff space and 
$\pi^{s}: Y_{\mathcal{B}} \rightarrow S_{\mathcal{B}}^{s}$ is a continuous, 
proper surjection.
\end{enumerate}
\end{cor}

\begin{proof}
We prove the first part only. Continuity and surjectivity follow 
from the definition of $S_{\mathcal{B}}^{r}$. We prove that the quotient map
is proper.
As $Y_{\mathcal{B}}$ is a metric space, it suffices to show that if 
$K  \subseteq S_{\mathcal{B}}^{r} $ is limit point compact, then so is 
$(\pi^{r})^{-1}(K) \subseteq Y_{\mathcal{B}}$.

Let $A$ be an infinite subset of $ (\pi^{r})^{-1}(K)$. As $\pi^{r}$ is 
at most two-to-one, $\pi^{r}(A)$ is also infinite and since $K$ is 
compact, it has a limit point, we call $z$ in $K$.

Let us first consider the case $(\pi^{r})^{-1}\{ z \} = \{ y \}$.
We will show $y$ is a limit point of $A$.
It follows that $y$ is not in $\partial_{r}X_{\mathcal{B}}$.
Let $z \in U \subseteq Y_{\mathcal{B}}$ be open. We may $n$ sufficiently
large so that, $y_{-n}$ is not $r$-maximal, $y_{(-\infty, -n)}$ are not 
all $r$-minimal and $X_{r(y_{-n})}^{-}y_{[-n+1,n-1]}X^{+}_{s(y_{n})}$ is 
contained in $U$. Let $p^{1}=y_{[-n,n]}$ and $p^{2}$ be its $r$-successor.
This means that $p^{2}_{[-n+1,n-1]} = y_{[-n+1,n-1]}$ also 
so
$y \in V^{r}(p) \subseteq U$. It follows that
 $z = \pi^{r}(y) \in \pi^{r}(V^{r}(p))$
which is an open set in $S_{\mathcal{B}}^{r}$. It follows that
$\pi^{r}(V^{r}(p))$ contains a point of  $\pi^{r}(A)$. 
Hence $A$ contains a point of $V^{r}(p) \subseteq U$.

In the second case, we assume that 
 $(\pi^{r})^{-1}\{ z \} = \{ y^{1}, y^{2}\}$,
  where $\Delta_{r}(y^{1}) =y^{2}$
 is the $r$-successor of $y^{1}$.
 We claim that either $y^{1}$ or $y^{2}$ is a limit point of $A$.
 For sufficiently large values of $n$, the pair 
 $p= (p^{1}, p^{2})=(y^{1}_{[-n,n]},y^{2}_{[-n,n]})$
 will be in $E^{r}_{-n,n}$. We also note that $y^{i}$ is 
  in $V_{i}^{r}(p)$ for
 $i=1,2$. The set $\pi^{r}(V^{r}(p))$ is open and contains $z$, hence it 
 contains a point of $\pi^{r}(A)$. It follows
 that $A$ meets either $V_{1}^{r}(p)$ or $V_{2}^{r}(p)$. This statement holds
 for each $n$ sufficiently large. It follows that there are 
 infinitely many $n$ such that $A \cap V_{1}^{r}(p)$ is not 
 empty or $A \cap V_{2}^{r}(p)$ is not empty. In the former case, $y^{1}$ 
 is a limit point of $A$, while in the latter $y^{2}$ is.
\end{proof}

The next result is a  comparison of the different 
$V^{r}(p), p \in E^{r}_{m,n}, n > m$, at least in the 
case $m=-n$. We observe that the conclusion already hints
at the condition for the charts in a translation surface.

\begin{lemma}
\label{surface:70}
Let $\mathcal{B}$ be a bi-infinite, ordered Bratteli 
diagram.

Suppose $ 1 \leq m < n$, $p=(p^{1}, p^{2})$  in 
$E^{r}_{-m,m}$  and $q = (q^{1}, q^{2})$  in $E^{r}_{-n,n}$. 
 Suppose that neither $p^{1}$ nor $p^{2}$ are $r$-minimal or $r$-maximal
 in $E^{Y}_{-m,m}$. 
  If $V^{r}(q) \cap V^{r}(p)$ is not empty, 
 then $q^{1}_{(m,n]}= q^{2}_{(m,n]}$ and we have
\[
\psi^{q}_{r}(x) = \psi^{p}_{r}(x) +c_{r}(p)  - c_{r}(q_{[-n,m]}),
\]
for each $x$ in $V^{r}(p) \cap V^{r}(q)$. Moreover, we have 
\[
c_{r}(p)  - c_{r}(q_{[-n,m]}) = \left\{ \begin{array}{cl}
 \varphi^{s(p^{1})}_{r}(x_{s(q^{1})}^{r-max}q^{1}_{[-n,-m)}) 
  - \nu_{r}(s(p^{1})), & q^{1}_{[-m,m]}= p^{1} \\
    -\nu_{r}(s(p^{1})), & q^{1}_{[-m,m]} \neq q^{2}_{[-m,m]}= p^{1} \\
  \varphi^{s(p^{2})}_{r}(x_{s(q^{2})}^{r-min}q^{2}_{[-n,-m)}), & 
  q^{2}_{[-m,m]}= p^{2} \\
  \nu_{r}(s(p^{2})), & q^{2}_{[-m,m]} \neq q^{1}_{[-m,m]}= p^{2} \\
  0, & q^{1}_{[-m,m]}= p^{1}, q^{2}_{[-m,m]}= p^{2}.
  \end{array} \right.
\]
\end{lemma}

\begin{proof}
We first show that $m(q) \leq m$. If $m(q) > m$, then 
$q_{[-m,m]}^{1}$ consists of all $r$-maximal edges
while $q_{[-m,m]}^{2}$ consists of all $r$-minimal edges. 
with the hypothesis that $p^{1}, p^{2}$ are not $r$-maximal 
or $r$-minimal implies
$V^{r}(q) \cap V^{r}(p)$ is empty. As an immediate consequence, 
we see that $q^{1}_{(m,n]}= q^{2}_{(m,n]}$.

Let $x$ be in $V^{r}(p) \cap V^{r}(q)$.
We will to apply Lemma \ref{order:70} in the third and fourth lines:
\begin{eqnarray*}
\psi^{q}_{r}(x)   & = & \varphi^{r(q)}_{r}(x_{(-\infty, n]}) -c_{r}(q)  \\
    & =  &  \varphi^{r(q)}_{r}(x_{(-\infty, m]}q^{1}_{(m,n]}) -c_{r}(q)   \\
   & = &    \varphi^{r(p)}_{r}(x_{(-\infty, m]})
  + \varphi^{r(q)}_{r}(x_{r(p)}^{r-min}q^{1}_{(m,n]}) - 
  \varphi^{r(q)}(x_{r(q)}^{r-max}q^{1}) \\
    & = & \psi^{p}_{r}(x) + c_{r}(p)  - 
    \varphi^{r(p)}_{r}(x_{r(p)}^{r-max}q^{1}_{[-n,m]}) \\
      & = & \psi^{p}_{r}(x) + c_{r}(p)  - c_{r}(q_{[-n,m]}).
\end{eqnarray*}

If we assume that $q^{1}_{[-m,m]} =p^{1}$, then we again use
Lemma \ref{order:70}
\begin{eqnarray*}
c_{r}(p)  - c_{r}(q_{[-n,m]}) & = & 
-\varphi^{r(p)}_{r}(x^{r-max}_{s(p^{1})}p^{1}) +
 \varphi^{r(q_{[-n,m]})}_{r}(x^{r-max}_{s(q^{1})}q^{1}_{[-n,-m)}p^{1}) \\
 & = & 
-\varphi^{r(p)}_{r}(x^{r-max}_{s(p^{1})}p^{1}) +
 + \varphi^{s(p^{1})}_{r}(x^{r-max}_{s(p^{1})}q^{1}_{[-n,-m)}) 
   +  \varphi^{r(p)}_{r}(x^{r-min}_{s(q^{1})}p^{1}) \\
    & = & - \nu_{r}(s(p^{1})) + 
    \varphi^{s(p^{1})}_{r}(x^{r-max}_{s(p^{1})}q^{1}_{[-n,-m)}).
\end{eqnarray*}

If we assume that $q^{1}_{[-m,m]} \neq q^{2}_{[-m,m]} =p^{1}$, then we again use
Lemma \ref{order:70}
\begin{eqnarray*}
c_{r}(p)  - c_{r}(q_{[-n,m]}) & = & 
-\varphi^{r(p)}_{r}(x^{r-max}_{s(p^{1})}p^{1}) +
 \varphi^{r(q_{[-n,m]})}_{r}(x^{r-min}_{s(q^{2})}q^{2}_{[-n,-m)}p^{1}) \\
 & = & 
-\varphi^{r(p)}_{r}(x^{r-max}_{s(p^{1})}p^{1}) +
 + \varphi^{s(p^{1})}_{r}(x^{r-min}_{s(p^{1})}q^{2}_{[-n,-m)}) 
   +  \varphi^{r(p)}_{r}(x^{r-min}_{s(q^{1})}p^{1}) \\
    & = & - \nu_{r}(p^{1}) + 
    \varphi^{s(p^{2})}_{r}(x^{r-min}_{s(p^{2})}q^{2}_{[-n,-m)}).
\end{eqnarray*}
We  claim that $q^{2}_{[-n,-m)}$ must consist of $r$-minimal edges. If
not, the predecessor of $q^{2}$ would be unchanged in 
entries $m$ and greater. This would mean that $q^{1}_{[-m,m]} = q^{2}_{[-m,m]}$, 
which is a 
contradiction. It follows that 
 $\varphi^{s(p^{2})}_{r}(x^{r-min}_{s(p^{2})}q^{2}_{[-n,-m)}) = 0$.

The third and fourth cases are
 done in a similar way and we omit the details.

Finally, we suppose that $q^{1}_{[-m,m]} =p^{1}$ and $q^{2}_{[-m,m]} =p^{2}$.
If $q^{1}$ contained an edge which was not $r$-maximal between $-n$ and 
$-m$, then it $r$-successor would be unchanged between $-m$ and $m$. 
This is not the case so $q^{1}_{[-n,-m)}$ is $r$-maximal and 
$q^{2}_{[-n,-m)}$ is $r$-minimal. It follows  
that $\varphi^{s(p^{2})}_{r}(x_{s(q^{2})}^{r-min}q^{2}_{[-n,-m)}) =0$ and 
 $c_{r}(p)  - c_{r}(q_{[-n,m]})=0$  so from the third case.
\end{proof}

The surface $S_{\mathcal{B}}$ is  more complicated. In 
 particular, our nice open cover is rather more 
 technical than the previous ones, where a point in 
 $S_{\mathcal{B}}$ has two pre-images under both 
 $\rho^{r}$ and $\rho^{s}$, or four pre-images in $Y_{\mathcal{B}}$.
 While this takes a bit of effort, we are rewarded 
 with an immediate proof that $S_{\mathcal{B}}$ is a translation surface.

\begin{defn}
\label{surface:80}
For integers $m < n$, 
we define 
$E_{m,n}^{r/s} $ to be the set of all quadruples 
$p=(p^{1,1}, p^{1,2}, p^{2,1}, p^{2,2})$ of 
distinct paths in  $E^{Y}_{m,n}$
such that 
\begin{enumerate}
\item
\begin{enumerate}
\item $p^{1,2}$ is the $s$-successor of $p^{1,1}$ in $E_{m,n}$, 
\item $p^{2,1}$ is the $r$-successor of $p^{1,1}$ in $E_{m,n}$,
\item  $p^{2,2}$ is the $s$-successor of $p^{2,1}$ and  the 
  $r$-successor of $p^{1,2}$  in $E_{m,n}$. 
  \end{enumerate} 
\item 
For $p$ be in $E_{m,n}^{r/s}$,  
 we  define
\begin{eqnarray*}
V_{1,1}(p) & =  &  \left( X_{s(p^{1,1})}^{-} - \{ x_{s(p^{1,1})}^{r-min} \} \right)
   p^{1,1} \left( X_{r(p^{1,1})}^{+} - \{ x_{r(p^{1,1})}^{s-min} \} \right) \\
V_{2,1}(p) & =  &    \left( X_{s(p^{2,1})}^{-} - \{ x_{s(p^{2,1})}^{r-max} \} \right)
 p^{2,1} \left( X_{r(p^{2,1})}^{+} - \{ x_{r(p^{2,1})}^{s-min} \} \right) \\
 V_{1,2}(p) & =  &  \left( X_{s(p^{1,2})}^{-} - \{ x_{s(p^{1,2})}^{r-min} \} \right)
  p^{1,2} \left( X_{r(p^{1,2})}^{+} - \{ x_{r(p^{1,2})}^{s-max} \} \right) \\
V_{2,2}(p) & =  &  \left( X_{s(p^{2,2})}^{-} - \{ x_{s(p^{2,2})}^{r-max} \} \right)   
  p^{2,2} \left( X_{r(p^{2,2})}^{+} - \{ x_{r(p^{2,2})}^{s-max} \} \right) 
   \end{eqnarray*}
   and 
   \[
   V(p) = V_{1,1}(p) \cup V_{1,2}(p) \cup V_{2,1}(p) \cup V_{2,2}(p).
   \]
 \item   
We also define
$\psi^{p}: V(p) \rightarrow \R^{2}$ by 
\[
\psi^{p}(x)  =   \left\{ \begin{array}{cl} 
\left( \varphi^{s(p^{1,1})}_{r}(x_{(-\infty,m)}) - \nu_{r}(s(p^{1,1})), 
 \varphi^{r(p^{1,1})}_{s}(x_{(n,\infty)}) - \nu_{s}(r(p^{1,1})))
   \right),  &  x \in V_{1,1}(p) \\
  \left( \varphi^{s(p^{1,2})}_{r}(x_{(-\infty,m)}) - \nu_{r}(s(p^{1,2})), 
 \varphi^{r(p^{1,2})}_{s}(x_{(n,\infty)}) )
   \right),  &  x \in V_{1,2}(p) \\ 
   \left( \varphi^{s(p^{2,1})}_{r}(x_{(-\infty,m)}), 
 \varphi^{r(p^{2,1})}_{s}(x_{(n,\infty)}) - \nu_{s}(r(p^{2,1})))
   \right),  &  x \in V_{2,1}(p) \\
     \left( \varphi^{s(p^{2,2})}_{r}(x_{(-\infty,m)}), 
 \varphi^{r(p^{2,2})}_{s}(x_{(n,\infty)}) 
   \right),  &  x \in V_{2,2}(p). \\
     \end{array} \right.
\]
We let $\psi^{p}(x)_{1}$ and $\psi^{p}(x)_{2}$ denote the first and second 
entries of $\psi^{p}(x)$.
\end{enumerate}
\end{defn}

Let us make some observations relating this new definition with 
the previous ones. We will not prove the following
 as it is a simple observation from the definitions.
 
\begin{lemma}
\label{surface:90}
Let $p$ be in $E^{r/s}_{m,n}$.
\begin{enumerate}
\item 
For $j=1,2$, we have $(p^{1,j}, p^{2,j})$  is in 
$E^{r}_{m,n}$ and $V_{1,j}(p) \cup V_{2,j}(p) \subseteq
 V^{r}(p^{1,j}, p^{2,j})$ and, for $x$ in $V_{1,j}(p) \cup V_{2,j}(p)$,
\[
\psi^{p}(x)_{1}  = \psi^{(p^{1,j}, p^{2,j})}_{r}(x).
\]
\item 
For $i=1,2$, we have $(p^{i,1}, p^{i,2})$  is in 
$E^{s}_{m,n}$ and $V_{i,1}(p) \cup V_{i,2}(p) \subseteq V^{s}(p^{i,1}, p^{i,2})$ and, for $x$ in $V_{i,1}(p) \cup V_{i,2}(p)$,
\[
\psi^{p}(x)_{2}  = \psi^{(p^{i,1}, p^{i,2})}_{s}(x).
\]
\end{enumerate}
\end{lemma}

We first need a version of Lemma \ref{surface:50}. Fortunately, most
of this follows quite easily from Lemmas \ref{surface:50} 
and \ref{surface:90}.

\begin{lemma}
\label{surface:100}
\begin{enumerate}
\item If $p$ is in $E^{r/s}_{m,n}, m < n$, then 
$V(p)$ is open in $Y_{\mathcal{B}}$.
\item If $p$ is in $E^{r/s}_{m,n}, m < n$, then 
$V(p)$ is invariant under $\Delta_{r}$ and $\Delta_{s}$.
\item 
\begin{eqnarray*}
\psi^{p}(V_{1,1}(p)) & =  &  
\left( - \nu_{r}(s(p^{1,1})),  0 \right] \times 
\left( - \nu_{s}(r(p^{1,1}))  ,  0 \right] \\ 
\psi^{p}(V_{2,1}(p)) & =  &  
\left[   0,  \nu_{r}(s(p_{2,1})) \right) \times 
\left( - \nu_{s}(r(p_{2,1})),  0 \right] \\ 
\psi^{p}(V_{1,2}(p)) & =  &  
\left( - \nu_{r}(s(p_{1,2})),  0 \right] \times 
\left[  0,  \nu_{s}(r(p_{1,2}))) \right) \\ 
\psi^{p}(V_{2,2}(p)) & =  &  
\left[   0,  \nu_{r}(s(p_{2,2})) \right) \times 
\left[  0,  \nu_{s}(r(p_{2,2})) \right) \\ 
\psi^{p}(V(p)) & =  &  
\left( - \nu_{r}(s(p_{1,1})),  \nu_{r}(s(p_{2,2}))  \right) \times 
\left( - \nu_{s}(r(p_{1,1})),  \nu_{s}(r(p_{2,2})) \right) 
\end{eqnarray*}
\item For $p$ in $E^{r/s}_{m,n}, m < n$, $\psi^{p}$ is continuous
and,  for $x, y$ in $V(p)$,
 $\psi^{p}(x) = \psi^{p}(y)$ if and only if $\pi(x)  = \pi(y)$ 
 in $S_{\mathcal{B}}$.
\end{enumerate}
\end{lemma}

\begin{proof}
The first part is clear from the definition.

For the second part, if we let $q= (p^{1,1}, p^{2,1})$, then 
$q$ is in $E^{r}_{m,n}$ and so $V^{r}(q)$ is $\Delta_{r}$-invariant
by part  2 of Lemma \ref{surface:50}. This $V^{r}(q)$ contains
$V_{1,1}(p) \cup V_{2,1}(p)$, although they are not 
equal. However, part 2   in \ref{surface:50} shows that if 
$x$ is in $V^{r}(q) \cap \partial_{r}X_{\mathcal{B}}$,
 then $\Delta_{r}(x)_{(n, \infty)} = x_{(n, \infty)}$
which implies that  $V_{1,1}(p) \cup V_{2,1}(p)$ 
 is $\Delta_{r}$-invariant.  
In a similar way with $q = (p^{1,2}, p^{2,2})$,
 $V_{1,2}(p) \cup V_{2,2}(p)$ is $\Delta_{r}$-invariant so 
$V(p)$ is  $\Delta_{r}$-invariant. The proof for 
$\Delta_{s}$-invariant is similar, using
 the fact that $q= (p^{i,1}, p^{i,2})$ is in $E^{s}_{m,n}$, for 
 $i=1,2$.

 The third part of the conclusion is an 
 immediate consequence of the definition and 
 Lemma \ref{order:20} applied
 to the Bratteli diagrams $\mathcal{B}_{s(p^{i,j})}^{-}$ and 
  $\mathcal{B}_{r(p^{i,j})}^{+}$. The continuity of $\psi^{p}$
  on each of the sets $V_{i,j}(p)$ also follows
  from \ref{order:20} and the observations preceding the Lemma.
  
 Let us now prove that, for any $x$ in 
 $V(p) \cap \partial_{r}X_{\mathcal{B}}$, 
 $\psi^{p}(\Delta_{r}(x)) =\psi^{p}(x)$. It is easy to see that
 $V_{1,1}(p) \cup V_{2,1}(p)$ and $V_{1,2}(p) \cup V_{2,2}(p)$ 
 are both $\Delta_{r}$-invariant and the conclusion, for
 the first coordinates, follows  by restricting to these sets and using 
 part 5 of  Lemma \ref{surface:50}. Similar
  arguments deal with the second coordinate and show that
  $\psi^{p}(\Delta_{s}(x)) =\psi^{p}(x)$, for any $x$ in 
 $V(p) \cap \partial_{s}X_{\mathcal{B}}$.

 It remains for us to  prove the converse: suppose that $x,y$ are in $V(p)$
 and $\psi^{p}(x) = \psi^{p}(y)$, we must show they are related
 by $\Delta_{r}$ and $\Delta_{s}$. For a first case, 
 suppose that  $x,y$ both lie in the same $V_{1,1} \cup V_{2,1}$, 
If we use $q=( p^{1,1}, p^{2,1})$ as before, 
 we can appeal to the results we have above and part 5 of \ref{surface:50}.
 The equality of the first coordinates tells us that 
 $y_{(-\infty, n]}= x_{(-\infty, n]} $
 or possibly that $y_{(-\infty, n]}= \Delta_{r}(x)_{(-\infty,n]}$ 
if
 $x$ is in $\partial_{r}X_{\mathcal{B}}$. In the latter case, we also know
 that $\Delta_{r}(x)_{(n, \infty )}$ by part 2 of Lemma \ref{surface:50}.

In addition, Lemma \ref{order:20} applied to $\mathcal{B}_{r(p)}^{+}$
shows that either $y_{[m,\infty)}= x_{[m,\infty)}$ or  $y_{[m,\infty)}= 
 \Delta_{s}(x)_{[m,\infty)}$
if $y$ is in $\partial_{s}X_{\mathcal{B}}$. All together, 
the four possibilities amount to
$y=x, y = \Delta_{r}(x), y = \Delta_{s}(x)$ or
$y = \Delta_{r}  \circ \Delta_{s}(x)$.

Similar arguments deal with the cases $x,y$ lie in 
$V(p)_{1,1} \cup V_{1,2}(p), V(p)_{1,2} \cup V_{2,2}(p)$
or in $V(p)_{2,1} \cup V_{2,2}(p)$. We move on to
 the case $x$ is in $V_{1,1}(p)$ while $y$ is in $V_{2,2}(p)$.
 Part 3 of the conclusion then implies that 
 $\psi^{p}(x)= \pi^{p}(y) = (0,0)$. From this it 
 follows that $x_{(-\infty, m)}$ is all $r$-maximal edges, 
  $x_{(n,\infty)}$ is all $s$-maximal edges,
 $y_{(-\infty, m)}$ is all $r$-minimal edges and 
  $y_{(n,\infty)}$ is all $s$-minimal edges.
  From this we see that $y = \Delta_{r}  \circ \Delta_{s}(x)$.
 The case $x$ is in $V_{1,2}(p)$ while $y$ is in $V_{2,1}(p)$
 is similar. This completes the proof.
 
 A similar argument using
 $q= (p^{1,1}, p^{1,2})$ is $E^{s}_{m,n}$ shows 
  $y_{[m,\infty)}= \Delta_{s}(x)_{([m,\infty)}$ if
 $x$ is in $\partial_{s}X_{\mathcal{B}}$ and 
$y_{[m,\infty)}= x_{[m,\infty)} $ otherwise. The conclusion follows.

 In addition to showing part 2, these
 arguments and part 4 of Lemma \ref{surface:50} 
 also prove that the map $\psi^{p}$ has the following form:
 for $x$ in $V_{1,1}(p), i,j = 1,2$, 
 \[
 \psi^{p}(x) =\left(  \psi_{i,j}^{r}(x_{(-\infty,m)}), 
 \psi_{i,j}^{s}(x_{(n,\infty)}) \right),
 \]
 where the functions are provided by 
 various applications of Lemma \ref{surface:50}. Letting 
 $x= x^{r-max}_{s(p^{1,1})}p^{1,1}x^{s-max}_{r(p^{1,1})}$,  we
  also have 
  \[
  \psi^{p}(x) = \psi^{p}(\Delta_{r}(x)) = \psi^{p}(\Delta_{s}(x))
  = \psi^{p}(\Delta_{r} \circ \Delta_{s}(x)) = (0,0)
  \]
 
 Parts 3, 4 and  5 of the conclusion also follow
 from these observations.
\end{proof}

We need actually need to add a rather technical condition
on our choices for $p$. Fortunately, we still have an 
ample supply of such $p$, as follows.

\begin{lemma}
\label{surface:102}
Let  $y$ be in $S_{\mathcal{B}}$ and $U$ be an open set 
in $Y_{\mathcal{B}}$ such that 
$\pi^{-1}\{ y \} \subseteq U$.
Then there exist
 $m \geq 1$ and paths $p^{i,j}, 0 \leq i,j \leq 3$
 in $E^{Y}_{-m,m}$
 such that for $i<3$, $p^{i+1,j}$ is the $r$-successor
 of $p^{i,j}$ and for $j < 3$, $p^{i,j+1}$
 is the $s$-successor of $p^{i,j}$
 in $E^{Y}_{-m,m}$ and such that
 $\pi^{-1}\{ y \} \in V(p) \subseteq U$, where 
 $p=(p^{1,1}, p^{1,2}, p^{2,1}, p^{2,2})$.
\end{lemma}

\begin{proof}
The first case to consider is when $\pi^{-1}\{ y \} = \{ x \}$, 
which  is in
 neither $\partial_{r}X_{\mathcal{B}}$ nor 
$\partial_{s}X_{\mathcal{B}}$. Then we can find $m > 1$ such that 
$X_{s(x_{-m})}^{-}x_{[-m,m]}X_{r(x_{n})}^{+}$ is contained in $U$
and there are $-m \leq i_{1} < i_{2} < i_{3} <0$, 
$x_{i_{1}}, x_{i_{2}}$ are  not
$r$-maximal and and $x_{i_{3}}$ is not $r$-minimal and there are 
$0 < j_{3} < j_{2} < j_{1} \leq m$, $x_{j_{1}}, x_{j_{2}}$ are  not
$s$-maximal and $x_{j_{3}}$ is not $s$-minimal.
 We let $p^{1,1} = x_{[-m,m]}$. Having defined $p^{i,j}$
  for some $i,j$, we set $p^{i+1,j}$ to be its $r$-successor, 
  $p^{i,j+1}$ to be its $s$-successor, $p^{i-1,j}$ to be its $r$-predecessor 
and  $p^{i,j+1}$ to be its $s$-predecessor. This defines $p^{i,j}$
for all $0 \leq i,j \leq 3$. We note that since 
$i_{1}, i_{2}, i_{3} < 0 < j_{1}, j_{2}, j_{3}$, taking the $r$-successor
followed by taking the $s$-successor is the same as performing the 
operations in the other order.

In addition, there are values of $i < -m$ for which 
$x_{i}$ is not $r$-minimal and $i > m$ for which $x_{i}$ is not $s$-minimal,
so $x$ lies in $V_{1,1}(p)$.

The second case is that $x$ lies 
in $\partial_{r}X_{\mathcal{B}}$, but not in 
$\partial_{s}X_{\mathcal{B}}$. Without loss of generality, we
 assume that $\Delta_{r}(x)$ is the $r$-successor of $x$. 
 We choose $m > \vert m(x) \vert$ such that 
 $X_{s(x_{-m})}^{-}x_{[-m,m]}X_{r(x_{m})}^{+}$ and 
  $X_{s(x_{-m})}^{-}\Delta_{r}(x)_{[-m,m]}X_{r(x_{m})}^{+}$
  contained in $U$. In addition, $m$ is chosen so that
  there are $m(x) < j_{3}< j_{2} < j_{1} < m$ such that 
  $x_{j_{1}}, x_{j_{2}}$ are not $s$-maximal and $j_{3}$ is not $s$-minimal. We also choose
  $m$ sufficiently large so that there are at least two paths
  in $E^{Y}_{-m,m(x)}$ with range equal to $s(x_{-m})$ and 
  at least three paths with range equal to $s(\Delta(x)_{-m})$.
  We let $p^{1,1} = x_{[-m,m]}$ and define the other 
  $p^{i,j}$ as before.  Arguments similar to the last 
 case show  the conclusion  holds.

 The case when  $x$ lies 
in $\partial_{s}X_{\mathcal{B}}$, but not in 
$\partial_{r}X_{\mathcal{B}}$ is similar and we omit the details.

Finally, we consider the case $x$ lies in  
in $\partial_{r}X_{\mathcal{B}}  \cap \partial_{s}X_{\mathcal{B}}$.
Without loss of generality, assume $\Delta_{r}(x) = S_{r}(x)$ 
and $\Delta_{s}(x) = S_{s}(x)$.
We choose $k \geq \vert m(x) \vert, \vert n(x) \vert$ such that
the sets $
X^{-}_{s(x_{-k})}x_{[-k,k]}X^{+}_{r(x_{m})},  
X^{-}_{s((S_{r}(x)_{-k})}x_{[-k,k]}X^{+}_{r(S_{r}(x)_{m})},
X^{-}_{s((S_{s}(x)_{-k})}x_{[-k,k]}X^{+}_{r(S_{s}(x)_{m})}$ and \newline
$ X^{-}_{s((S_{r}\circ S_{s}(x)_{-k})}x_{[-k,k]} 
X^{+}_{r(S_{r}\circ S_{s}(x)_{m})} $ are all contained in $  U$.

We then choose $m > k$ such that there are at least 
three paths from   $V_{-m}$ to each vertex $V_{-k}$ and 
at least three paths from  each vertex
of  $V_{k}$ to $V_{m}$.  We define $p^{1,1}=x_{[-m,m]}$
and the remaining $p^{i,j}$ as before. The remaining 
 details of the proof are similar to the other cases.
\end{proof}

\begin{lemma}
\label{surface:110}
Let $\mathcal{B}$ be a  bi-infinite ordered Bratteli diagram.
Let $p$ be in $E_{-m,m}^{r/s}$, $q$ in  
in $E_{-n,n}^{r/s}$ with $1 \leq m \leq n$ such that
$V(p) \cap V(q)$ is not empty. Assume that $p$ satisfies the 
condition of Lemma \ref{surface:102}. 
 There is a constant $c(p,q)$ in $\R^{2}$ such that
\[
\psi^{p}(x) = \psi^{q}(x) - c(p,q),
\]
for all $x$ in $V(p) \cap V(q)$.
\end{lemma}

\begin{proof}
We will only prove the equation above holds when considering the 
first coordinates,  $ \psi^{p}(x)_{1}$ and 
 $ \psi^{q}(x)_{1}$, respectively. The other coordinate
 is done in a similar way (by replacing all appearances of $r$ with $s$.)

Let assume that $V_{1,1}(q)$ meets $V(p)$; the other cases are similar. 
Suppose $V_{1,1}(q)$ meets $V_{i,j}(p)$, for some $ 1 \leq i,j \leq 2$
which implies that $q^{1,1}_{[-m,m]} = p^{i,j}$. From the existence
of the $p^{k,l}, 0 \leq k,l \leq 3$, we see that $q^{1,1}$ is 
not $r$-maximal in $E^{Y}_{-n,n}$.
In particular, $q^{2,1}_{(m,n]} = q^{1,1}_{(m,n]}$. Similarly, as
  $q^{1,1}$ is 
not $s$-maximal $q^{1,2}_{[-n,-m)} = q^{1,1}_{[-n,-m)}$.

We will consider four cases separate;y depending on whether
$q^{1,1}_{(m,n]}$ is $s$-maximal or not and whether 
$q^{1,1}_{[-n,-m)}$ is  $r$-maximal or not.

Let us first suppose that $q^{1,1}_{(m,n]}$ is not $s$-maximal
 and $q^{1,1}_{[-n,-m)}$ is not $r$-maximal. It follows
that taking successors in either order leaves the entries between $_m$ and 
$m$ unchanged:
$q^{1,1}_{[-m,m]} = q^{2,1}_{[-m,m]} = q^{1,2}_{[-m,m]} = q^{2,2}_{[-m,m]} 
= p^{i,j}$. 

In this case, we have $V(q) \subseteq V_{i,j}(p)$ and 
we can apply Lemma \ref{surface:70} twice. First, to the 
pair $(p^{1,j}, p^{2,j})$ and $(q^{1,1}, q^{2,1})$ and then 
to the 
pair $(p^{1,j}, p^{2,j})$ and $(q^{1,2}, q^{2,2})$.
 In the first case, we have $q^{1,1}_{[-m,m]}=p^{1,j}$ and 
 the translation involved is
 \[
 \varphi^{s(p^{1,1})}_{r}(x_{s(q^{1,1})}^{r-max}q^{1,1}_{[-n,-m)}) 
  - \nu_{r}(p^{1,1})
  \]
  and in the second, we have $q^{1,2}_{[-m,m]}=p^{1,j}$ and it is 
  \[
 \varphi^{s(p^{1,1})}_{r}(x_{s(q^{1,2})}^{r-max}q^{1,2}_{[-n,-m)}) 
  - \nu_{r}(p^{1,1})
  \]
  Since $q^{1,2}_{[-n,-m)} = q^{1,1}_{[-n,-m)}$, these are
   equal and the desired 
  conclusion follows.

Next, we continue to suppose that 
$q^{1,1}_{(m,n]}$ is not $s$-maximal and that  $q^{1,1}_{[-n,-m)}$
is $r$-maximal. Then we have $q^{2,1}_{[-m,m]} = 
q^{2,2}_{[-m,m]} = p^{i+1,j}$. 
If $i=2$,  $V_{2,1}(q) \cup V_{2,2}(q)$ is disjoint from $V(p)$ and the conclusion follows
from an application of Lemma \ref{surface:70} to
the pair $(p^{1,j}, p^{2,j})$ and $(q^{1,1}, q^{2,1})$. 

If $1=1$, then   
we can apply Lemma \ref{surface:70} twice, first with
with $(p^{1,j}, p^{2,j})$  and $(q^{1,1}, q^{2,1})$ and then 
with the pair $(p^{1,j}, p^{2,j})$  and $(q^{1,2}, q^{2,2})$. As we have
 $p^{1,j}= q^{1,1}_{[-m,m]} = q^{1,2}_{[-m,m]} $ and 
  $p^{2,j}= q^{2,1}_{[-m,m]} = q^{2,2}_{[-m,m]} $,
  both translations are trivial.

 We next consider the case when $q^{1}_{(m,n]}$ is $s$-maximal while
 $q^{1}_{[-n,-m)}$ is not $r$-maximal.  Then it
  follows that $q^{2,1}_{[-m,m]}=p^{i,j}$ while 
 $q^{1,2}_{[-m,m]}= q^{2,2}_{[-m,m]}= p^{i,j+1}$.
 If $j=2$, $V_{1,2}(q) \cup V_{2,2}(q)$ is 
 disjoint from $V(p)$  we apply Lemma \ref{surface:70} to the pair 
 $(p^{1,j}, p^{2,j})$ and $(q^{1,1}, q^{2,1})$
 If $j=1$, then we 
  make two applications of part 2 of Lemma \ref{surface:70}. The first is 
 to the pair $(p^{1,1}, p^{2,1})$ and $(q^{1,1}, q^{2,1})$ and the second to the pair $(p^{1,2}, p^{2,2})$ and $(q^{1,2}, q^{2,2})$. Since
  $q^{1,1}_{[-n,-m)} = q^{1,2}_{[-n,-m)}$, the two translations are equal.

 We finally come to the case when 
 $q^{1}_{(m,n]}$ is $s$-maximal and
 $q^{1}_{[-n,-m)}$ is  $r$-maximal. If $i=j=1$, then we have 
 $q^{i',j'}_{[-m,m]}=p^{i',j'}$ for all $i',j'$. the conclusion
 follows from two applications of part 5 of Lemma \ref{surface:70};
 first
 to the pair $(p^{1,1}, p^{2,1})$ and $(q^{1,1}, q^{2,1})$ and then 
 to the pair 
 $(p^{1,2}, p^{2,2})$ and $(q^{1,2}, q^{2,2})$. If $i=1$ and $j=2$, 
 then $V_{1,2}(q) \cup V_{2,2}(q)$ are disjoint from $V(p)$ and the result 
 follows from an application of part 4 of Lemma \ref{surface:70}
  to the pair  $(p^{1,2}, p^{2,2})$ and $(q^{1,1}, q^{2,1})$.
  If $i=2$ and $j=1$, the result follows from  applications of
  part 2 of Lemma \ref{surface:70} to  
   $(p^{1,1}, p^{2,1})$ and $(q^{1,1}, q^{2,1})$ and to
   $(p^{1,2}, p^{2,2})$ and $(q^{1,2}, q^{2,2})$. The two translations are 
   equal since $s(p^{2,1})=s(p^{2,2})$. 
 If $i=2$ and $j=2$, then $V(q) \cap V(p) = V_{1,1}(q) \cap V_{2,2}(p)$ 
 and the result follows from
 Lemma \ref{surface:70} using $(p^{1,2}, p^{2,2})$ and $(q^{1,1}, q^{2,1})$.  
\end{proof}

\begin{thm}
\label{surface:120}
Let $\mathcal{B}$ be a  bi-infinite ordered Bratteli diagram satisfying 
the conditions of \ref{surface:20}.
For each $n \geq 1$ and $p$ in $E^{r/s}_{-n,n}$, define 
$Y(p) = \pi(V(p)) \subseteq S_{\mathcal{B}}$ and let 
$\eta^{p} : Y(p) \rightarrow \R^{2}$ be the unique map satisfying
$\eta^{p} = \psi^{p} \circ \pi$. Then each $Y(p)$ is open and 
$\eta^{p}$ is a homeomorphism to its image.
The space $S_{\mathcal{B}}$ is a surface and 
the
 collection of maps $\eta^{p}$, where $p$ ranges over
$\cup_{n \geq 1} E_{-n,n}^{r/s}$, is an 
atlas for $S_{\mathcal{B}}$ 
making it a translation surface.
\end{thm}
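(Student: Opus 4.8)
The plan is to verify in turn: (a) each $\eta^{p}$ is a well-defined homeomorphism from an open set $Y(p)\subseteq S_{\mathcal{B}}$ onto an open rectangle in $\R^{2}$; (b) the sets $Y(p)$, as $p$ ranges over a suitable cofinal family of symmetric windows, cover $S_{\mathcal{B}}$, so that $S_{\mathcal{B}}$ is a surface; and (c) every transition map of this atlas is a translation.

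\emph{Step (a).} By Lemma \ref{surface:100}, $V(p)$ is open in $Y_{\mathcal{B}}$ and invariant under both $\Delta_{r}$ and $\Delta_{s}$. The equivalence relation defining $S_{\mathcal{B}}$ is the one generated by the two partial involutions $\Delta_{r},\Delta_{s}$, which commute precisely because $\Sigma_{\mathcal{B}}$ has been deleted; hence the $\pi$-class of any point of $V(p)$ is contained in $V(p)$, so $V(p)$ is $\pi$-saturated, $Y(p)=\pi(V(p))$ is open, and $\pi|_{V(p)}\colon V(p)\to Y(p)$ is a quotient map. By Lemma \ref{surface:100}(5), $\psi^{p}$ is constant exactly on the $\pi$-fibres of $V(p)$, so $\eta^{p}$ (defined by $\eta^{p}\circ\pi=\psi^{p}$) is a well-defined continuous bijection of $Y(p)$ onto the open rectangle of Lemma \ref{surface:100}(6). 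To promote $\eta^{p}$ to a homeomorphism I would show $\psi^{p}$ is proper: then $\eta^{p}$ is proper too, since $(\eta^{p})^{-1}(K)=\pi\bigl((\psi^{p})^{-1}(K)\bigr)$, and a proper continuous bijection onto a locally compact Hausdorff space is a homeomorphism. Properness of $\psi^{p}$ is checked on each of the four clopen pieces $V_{i,j}(p)$: there $\psi^{p}$ is, under the product homeomorphism $V_{i,j}(p)\cong\bigl(X^{-}_{v}\setminus\{*\}\bigr)\times\bigl(X^{+}_{w}\setminus\{*\}\bigr)$ (for appropriate vertices $v,w$), a translate of $\varphi^{v}_{r}\times\varphi^{w}_{s}$ with the order-preserving maps of Definition \ref{order:60} (Lemma \ref{order:20}); each factor extends to a continuous surjection of the compact space $X^{\pm}$ onto a closed interval, so the preimage of a compact set lying in the interior of the open rectangle is a compact subset of $X^{\pm}$ avoiding the deleted extremal path. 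This also shows every $Y(p)$ is locally compact Hausdorff.

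\emph{Step (b).} Using finite rank and strong simplicity (Lemma \ref{path:17}), iterated, I would choose an increasing sequence of positive integers $n_{k}$ such that: $E^{r/s}_{-n_{k},n_{k}}\neq\emptyset$; the nested windows $[-n_{j},n_{j}]\subseteq[-n_{k},n_{k}]$ carry the edge- and path-multiplicity required by the hypotheses invoked in the proofs of Lemmas \ref{surface:70} and \ref{surface:105}; and every $x\in Y_{\mathcal{B}}$ lies in some $V(p)$ with $p\in E^{r/s}_{-n_{k},n_{k}}$ (available from Lemma \ref{surface:100}(3): once the restriction of $x$ to $[-n,n]$ is non-extremal for both orders, one may take $p_{1,1}$ to be that restriction and its $s$- and $r$-successors to be the other three entries of $p$). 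Then $\{Y(p)\colon p\in\bigcup_{k}E^{r/s}_{-n_{k},n_{k}}\}$ is a countable open cover of $S_{\mathcal{B}}$ by sets homeomorphic to open subsets of $\R^{2}$, so $S_{\mathcal{B}}$ is locally Euclidean and second countable. For Hausdorffness, write $S_{\mathcal{B}}=Y_{\mathcal{B}}/R$ with $R$ generated by $\Delta_{r},\Delta_{s}$ and check that $R$ is closed in $Y_{\mathcal{B}}\times Y_{\mathcal{B}}$: along a convergent sequence of nontrivially $R$-related pairs the index $m(\cdot)$ or $n(\cdot)$ of Definition \ref{order:52} either stays bounded, forcing the limit pair to be $R$-related, or tends to $\pm\infty$, in which case the pair collapses onto the diagonal or a limit would lie in $X^{ext}_{\mathcal{B}}\cup\Sigma_{\mathcal{B}}$, which is disjoint from $Y_{\mathcal{B}}$. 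Since $Y_{\mathcal{B}}$ is locally compact Hausdorff, $S_{\mathcal{B}}$ is then Hausdorff, hence a surface. (One can alternatively deduce this from Corollary \ref{surface:60} via $S^{s}_{\mathcal{B}}$.)

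\emph{Step (c).} For $p\in E^{r/s}_{-n_{j},n_{j}}$, $q\in E^{r/s}_{-n_{k},n_{k}}$ with $j\le k$ and $Y(p)\cap Y(q)\neq\emptyset$, Lemma \ref{surface:110} gives $c_{p,q}\in\R^{2}$ with $\psi^{p}=\psi^{q}-c_{p,q}$ on $V(p)\cap V(q)$; since $\eta^{p}\circ\pi=\psi^{p}$, $\eta^{q}\circ\pi=\psi^{q}$ and $\pi$ maps $V(p)\cap V(q)$ onto $Y(p)\cap Y(q)$, the transition map $\eta^{q}\circ(\eta^{p})^{-1}$ is the translation $z\mapsto z+c_{p,q}$ on $\eta^{p}(Y(p)\cap Y(q))$. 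Thus $\{\eta^{p}\}$ is a translation atlas and $S_{\mathcal{B}}$ is a translation surface. The main obstacle is Step (a) — turning the continuous bijection $\eta^{p}$ into a homeomorphism, i.e., the properness analysis of $\psi^{p}$ in terms of its product structure from Definition \ref{order:60} — together with the bookkeeping in Step (b) needed to choose $n_{k}$ so that the multiplicity hypotheses behind Lemmas \ref{surface:70} and \ref{surface:105} (in particular the three-path condition, which is strictly stronger than bare strong simplicity) hold along the chosen sequence.
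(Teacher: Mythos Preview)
The paper states Theorem \ref{surface:120} without proof, treating it as an immediate consequence of Lemmas \ref{surface:100} and \ref{surface:110}; your proposal correctly supplies the omitted argument using exactly those lemmas in the intended way. Your three-step structure (well-definedness and local homeomorphism via Lemma \ref{surface:100}, covering and choice of $n_{k}$ via strong simplicity so that the multiplicity hypotheses of Lemmas \ref{surface:70} and \ref{surface:105} hold, and translation transitions via Lemma \ref{surface:110}) is precisely what the paper's lemma sequence was set up to deliver, and the properness argument for $\psi^{p}$ via the factor maps $\varphi^{v}_{r},\varphi^{w}_{s}$ of Lemma \ref{order:20} is the natural way to upgrade the continuous bijection $\eta^{p}$ to a homeomorphism.
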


\section{Groupoids}
\label{groupoids}

A \emph{groupoid}, $G$, very roughly, is
 a group whose product is only defined
on a subset  $G^{2} \subseteq G \times G$. We will not
need a complete definition, but we refer the 
reader to Renault \cite{Ren:gpd} and Williams \cite{Wil:gpd}
 for details. One important class of examples
are equivalence relations. These are also called \emph{principal} groupoids
and are the only ones we consider here. We refer the reader to 
Renault \cite{Ren:gpd}.

Let $Y$ be a set and
 $R \subseteq Y \times Y$ be an equivalence
relation. It is a groupoid with operations 
\[
(x, y) (x', y') = (x, y'),  \text{ if } y = x'
\]
and 
\[
(x,y)^{-1} = (y,x)
\]
for all $(x,y), (x', y') $ in $R$. The  space of units in  the groupoid,
 $R^{0}$, 
consists of all pairs $(y,y)$, $y \in Y$ and we find it convenient to 
identify this 
with $Y$ in the obvious way. Doing this,
 our range and source maps are $r(x,y)=x, 
s(x,y)=y$.  (See \cite{Ren:gpd} and\cite{Wil:gpd}.)
Hence, for any unit $y$, we have 
\[
R^{y} = r^{-1}\{ y \} = \{ y \} \times [y]_{R},
\]
(using the notation of Renault \cite{Ren:gpd}) 
which we identify with $[y]_{R}$. 

For us, the set $Y$ will be a topological space
and our equivalence relations, as groupoids,  must come with their
own topologies. This
is almost never the relative topology from the product space $Y \times Y$.
Let us remark that, 
in general, when we speak about
the topology on equivalences classes,
 we usually mean using the identification 
of the equivalence class with the set  $\{ y \} \times [y]_{R}$ 
(using 
the identification with $[y]_{R} \times \{ y \}$ 
yields the same topology) and the 
relative topology from the equivalence relation rather
 than the topology as a subset of $Y$.
 
 In addition to having topologies, our groupoids must come with a 
 Haar system.
 As the name suggests,  a Haar system is a generalization
of the notion of Haar measure on a group appropriate to groupoids. 
For equivalence relations, 
this amounts to having a collection of measures on the equivalence relation,
 $\nu^{y}$, indexed by 
the points of the underlying space. The support of the measure $\nu^{y}$ is 
the equivalence class of $y$, or more precisely $\{ y \} \times [y]_{R}$.
There are two  important properties for a Haar system.
 The first is a left-invariance condition which, in our case, 
is simply that 
$\nu^{x}=\nu^{y}$ when $(x,y)$ is in $R$. The second condition
is that, for any 
continuous compactly-supported 
function $f$ on $R$, the map sending $y$ in $Y$ to 
$\int f(z) d\nu^{y}(z)$
is continuous. 

Initially, we considered the bi-infinite path space of a Bratteli 
diagram $\mathcal{B}$, which we denoted $X_{\mathcal{B}}$. 
The notions of right and left tail equivalence on 
 $X_{\mathcal{B}}$, $T^{+}(X_{\mathcal{B}})$ and 
  $T^{-}(X_{\mathcal{B}})$, were introduced back in 
 Definition \ref{path:100}.  
 For the rest of the paper we will focus on
 $T^{+}(X_{\mathcal{B}})$.
Definition  \ref{path:100} even included
 the definition for our topology on $T^{+}(X_{\mathcal{B}})$.
 In addition, the collection of measures in 
 Proposition \ref{path:140} provide a Haar system.

In the last section, we 
introduced four new spaces, $Y_{\mathcal{B}},
S_{\mathcal{B}}^{s},  S_{\mathcal{B}}^{r}$ 
and 
 $S_{\mathcal{B}}$, the last being a surface, 
 along with certain maps between them. In addition, a state 
 on the diagram gave us an atlas for the surface.
 Our aim in this section is to transfer the equivalence relation
 $T^{+}(X_{\mathcal{B}})$
to the other spaces by means of our given quotient maps and to 
consider the horizontal foliation on the translation surface.  
We will meet  subtleties along the way.

Our ultimate aim will be to associate $C^{*}$-algebras
with these equivalence relation via the groupoid construction. 
We will discuss this in the next section.

 \subsection{AF-equivalence relations 
 $T^{+}(X_{\mathcal{B}}),T^{+}(Y_{\mathcal{B}} )$}
 
 \text{ }
 \vspace{1cm}

  Our first result concerns the relations of right-tail 
equivalence, $T^{+}(X_{\mathcal{B}})$, and left-tail equivalence, 
$T^{-}(X_{\mathcal{B}})$. We will focus on the former.
Our first result gives some basic information, 
including a nice basis for the topology defined 
in Definition \ref{path:100}, which we repeat in the statement for convenience.
 The result is standard and we omit the proof (see Renault \cite{Ren:gpd}).
 
 \begin{prop}
 \label{gpds:10}
 Let $\mathcal{B}$ be a bi-infinite  Bratteli diagram 
 with faithful state $\nu_{s}, \nu_{r}$. 
 For each integer $N$, we define
 \[
 T^{+}_{N}(X_{\mathcal{B}}) = \{ (x,y) \in X_{\mathcal{B}}^2 \mid 
 x_{(N, \infty)} = y_{(N, \infty}) \},
 \]
 which is endowed with the relative topology from
 $X_{\mathcal{B}} \times X_{\mathcal{B}}$.
Let 
\[
T^{+}(X_{\mathcal{B}}) = \bigcup_{N \in \Z} T^{+}_{N}(X_{\mathcal{B}}) 
\]
 be endowed with the inductive limit topology and
 let $\nu_{r}^{x}, x \in X_{\mathcal{B}}$, be the measures 
 defined in \ref{path:140}. 
 \begin{enumerate}
 \item $T^{+}(X_{\mathcal{B}})$ is a locally compact, Hausdorff
 groupoid.
 \item The collection of measures  
 $\nu_{r}^{x}, x \in X_{\mathcal{B}}$ (Proposition \ref{path:140})
   is a Haar system for 
 $T^{+}(X_{\mathcal{B}})$.
 \item For $m<n$ and $p, q$ in $E_{m,n}$ with $r(p)=r(q)$, the set 
 \[
 T^{+}(p,q) = \{ (x,y) \mid x_{(m,n]} = p, y_{(m,n]}=q, 
 x_{(n, \infty)} =  y_{(n, \infty)}  \}
 \]
 is a compact, open subset of  $T^{+}(X_{\mathcal{B}})$. The map 
 sending $(x,y)$ in $T^{+}(p,q) $ to \newline
   $(x_{(-\infty, m]}, y_{(-\infty, m]}, x_{(n, \infty)})$
 is a homeomorphism from $T^{+}(p,q) $
  to 
  $X_{s(p)}^{-}  \times X_{s(q)}^{-} \times X_{r(p)}^{+}$.
  Moreover, as $m,n, p,q$ vary these sets form a base
  for the topology of $T^{+}(X_{\mathcal{B}})$. 
  \end{enumerate}
 \end{prop}
 
 It will be helpful for
 us to keep track of the individual equivalences classes 
 as we progress. The basic description of $T^{+}(x), x \in X_{\mathcal{B}}$
 is contained in Lemma \ref{order:200} which we summarize here.
 First, $T^{+}(x)$ is linearly ordered by $\leq_{r}$ and its intersection 
 with $\partial_{r}X_{\mathcal{B}}$ is $\Delta_{r}$-invariant.
 The map $\varphi^{x}_{r}: T^{+}(x) \rightarrow \R$ is continuous, 
 order preserving and identifies two points $y,z$ if and only 
 $\Delta_{r}(t) = z$.
 If $T^{+}(x) \cap X^{r-max}_{\mathcal{B}}$ is non-empty. then it 
 is a single point, $y$, then $T^{+}(x) \cap X^{r-min}_{\mathcal{B}}$ 
 is empty and $\varphi^{x}_{r}(T^{+}(x)) = ( -\infty, \varphi^{x}_{r}(y)]$
 If $T^{+}(x) \cap X^{r-min}_{\mathcal{B}}$ is non-empty. then it 
 is a single point, $z$, then $T^{+}(x) \cap X^{r-max}_{\mathcal{B}}$ 
 is empty and $\varphi^{x}_{r}(T^{+}(x)) = [\varphi^{x}_{r}(z), \infty)$.
 In all other cases, $\varphi^{x}_{r}(T^{+}(x)) = \R$.

 Of course, we need to restrict this equivalence relation 
 to the subspace $Y_{\mathcal{B}} \subseteq X_{\mathcal{B}}$.
 
 \begin{defn}
 \label{gpds:15}
  Let $\mathcal{B}$ be a  
  bi-infinite  ordered Bratteli diagram . We define 
 \[
 T^{+}(Y_{\mathcal{B}}) = T^{+}(X_{\mathcal{B}}) 
 \cap \left(Y_{\mathcal{B}} \times Y_{\mathcal{B}}\right)
 \]
 and 
 \[
 T^{-}(Y_{\mathcal{B}}) = T^{-}(X_{\mathcal{B}}) 
 \cap \left( Y_{\mathcal{B}} \times Y_{\mathcal{B}}\right).
 \]
 \end{defn}

We remark that this creates some notational confusion. If $y$ is 
in $Y_{\mathcal{B}}$, does $T^{+}(y)$ refer to its class in 
$T^{+}(X_{\mathcal{B}})$ or in $T^{+}(Y_{\mathcal{B}})$?  
To keep things clearer, we always mean the former so that latter 
is written as $T^{+}(y) \cap Y_{\mathcal{B}}$.

We observe the following general result.

\begin{thm}
\label{gpds:18}
Let $X$ be a locally compact, Hausdorff, topological space 
with an equivalence relation $R$ and a   Haar system $\nu_{x}, x \in X$.
Suppose that $S$ is an open subequivalence relation of $R$. The set 
$Y = \{ y \in X \mid (y,y) \in S \}$ is an open subset of $X$ and 
the collection of measures $\nu^{S}_{y} = \nu_{y} | [y]_{S}$, for 
$y$ in $Y$ is a Haar system of $S$.
\end{thm}

\begin{proof}
The fact that $Y$ is open is clear. As $\nu_{x}$ is a Haar system
for $R$, the support of each measure is $\{x \} \times [x]_{R}$ and 
since $S$ is open, the measure $\nu^{S}_{y} $ will have support
$\{ y \} \times [y]_{S}$, for each $y$ in $Y$. The continuity property
of the measures is immediate.
\end{proof}
 
 We note that if $\mathcal{B}$ is finite rank and strongly
 simple, then $T^{+}(Y_{\mathcal{B}})$ is an open 
 subequivalence relation of $T^{+}(X_{\mathcal{B}})$ and 
  Proposition \ref{gpds:10} also holds for 
 $T^{+}(Y_{\mathcal{B}})$, if we replace $E_{m,n}$ 
 with $E_{m,n}^{Y}$, in the last condition and use Haar system 
 provided by the last theorem. We will not introduce a new 
 notation for these measures.

  \subsection{Equivalence relation $T^{\sharp}(Y_{\mathcal{B}})$}
 
 \text{ }
 \vspace{1cm}
   
   Ultimately, we want to move our equivalence 
   relations to our quotient spaces where we identify
   $x$ with $\Delta_{r}(x)$ and $y$ with 
   $\Delta_{s}(y)$, for $x$ in 
   $\partial_{r}(X_{\mathcal{B}}) \cap Y_{\mathcal{B}}$ 
   and $y$ in $\partial_{s}(X_{\mathcal{B}}) \cap Y_{\mathcal{B}}$.
   The first poses no real problem since $(x,\Delta_{r}(x))$ 
   lies in $T^{+}(Y_{\mathcal{B}})$. The second does, however.
   This is because if $x,y$ are in 
    $\partial_{s}(X_{\mathcal{B}}) $
   and $(x,y)$ is in $T^{+}( Y_{\mathcal{B}})$,
    $(\Delta_{s}(x), \Delta_{s}(y))$ may \emph{not}  be 
    $T^{+}( Y_{\mathcal{B}})$. We make the obvious adjustment.

\begin{defn}
\label{gpds:25}
Let $\mathcal{B}$ be a 
bi-infinite ordered Bratteli diagram satisfying the conditions of
Definition \ref{surface:20}.

We define $T^{\sharp}(Y_{\mathcal{B}})$ to be the subset
of 
$T^{+}(Y_{\mathcal{B}})$ consisting of all pairs 
$(x,y)$ in $T^{+}(Y_{\mathcal{B}})$ satisfying the additional
condition that $(\Delta_{s}(x), \Delta_{s}(y))$ is in 
$T^{+}(Y_{\mathcal{B}})$, if $x,y$ are in $\partial_{s}X_{\mathcal{B}}$.
We let $T^{\sharp}(y)$ denote the equivalence class of $y$ in 
$T^{\sharp}(Y_{\mathcal{B}})$.
\end{defn}

It is clear that $T^{\sharp}(Y_{\mathcal{B}})$ is a subset
of 
$T^{+}(Y_{\mathcal{B}})$. We want to show it is open. In fact, it will
be useful for us to have a local description.

\begin{prop}
\label{gpds:27}
Let $\mathcal{B}$ be a 
bi-infinite ordered Bratteli diagram satisfying the conditions of
Definition \ref{surface:20}. 

 Let  $m<n$ and $p, q$ be in $E_{m,n}^{s}$ (as in 
  Definition \ref{surface:40}) with 
  $r(p)=(r(p^{1}), r(p^{2})) = (r(q^{1}), r(q^{2})) =r(q)$. 
  We define 
\begin{eqnarray*}
T^{\sharp}(p,q) & =  & \left[ \left(V_{1}^{s}(p) \times V_{1}^{s}(q) \right)
 \cap T^{+}_{n}(X_{\mathcal{B}}) \right]
\cup \left[ \left( V_{2}^{s}(p) \times V_{2}^{s}(q) \right) 
\cap T^{+}_{n}( X_{\mathcal{B}}) \right] \\
 & = & \left( T^{+}(p^{1}, q^{1}) 
    -  X_{s(p)}^{-}p^{1}x_{r(p^{1})}^{s-min} \times 
    X_{s(q)}^{-}q^{1}x_{r(q^{1})}^{s-min} \right) \\
   &  &  \cup 
  \left(  T^{+}(p^{2}, q^{2}) -  X_{s(p)}^{-}p^{2}x_{r(p^{2})}^{s-max} \times 
    X_{s(q)}^{-}q^{2}x_{r(q^{2})}^{s-max} \right).
\end{eqnarray*}
We have 
$T^{\sharp}(Y_{\mathcal{B}})$ is an open
 subgroupoid of $T^{+}(Y_{\mathcal{B}})$.
\begin{enumerate}
\item 
If $(x,y)$ is  in   $T^{\sharp}(p,q)$ with $x,y$ in 
$\partial_{s}X_{\mathcal{B}}$, then 
$(\Delta_{s}(x),\Delta_{s}(y) ) $ is in
$T^{\sharp}(p,q)$.
\item $T^{\sharp}(p,q)$ is open in $T^{\sharp}(Y_{\mathcal{B}})$. 
\item 
As $m,n, p, q$ vary the sets $T^{\sharp}(p,q)$
 cover   $T^{\sharp}(Y_{\mathcal{B}})$.
 \end{enumerate}
\end{prop}

\begin{proof} 
  For the first part, let us assume that $(x,y)$ is in 
    $T^{+}(p^{1}, q^{1}) 
    -  X_{s(p)}^{-}p^{1}x_{r(p^{1})}^{s-min} \times 
    X_{s(q)}^{-}q^{1}x_{r(q^{1})}^{s-min}$; the other case is similar.
 If $n(x) > n$, then $n(y)=n(x)$ since $(x,y)$ is in 
 $T^{+}_{n}(X_{\mathcal{B}})$. It is then clear that computing 
 $\Delta_{s}(x)$ and  $\Delta_{s}(y)$ leaves the entries less than $n(x)$
 unchanged and their entries greater than or equal
  to $n(x)$ will be equal. The conclusion follows. 
  If $n(x), n(y) \leq n$, then as the entries 
 greater than $n$ are not $s$-minimal, they must all be $s$-maximal.
 This means $\Delta_{s} =S_{s}$ on $x,y$. Since $p^{1}, q^{1}$ are 
 not $s$-maximal, we have $m \leq n(x), n(y) \leq y$ and 
$\Delta_{s}(x) = x_{(-\infty, m)}p^{2}x_{r(p^{2})}^{s-min}$ . A similar
computation for $\Delta_{s}(y)$ and the fact that
 $r(p^{2})=r(q^{2})$ shows the conclusion.

   It is clear that 
$T^{\sharp}(p,q)$ is an open subset of $T^{+}(p^{1}, q^{1}) \cup 
T^{+}(p^{2}, q^{2})$ since   $X_{s(p)}^{-}px_{r(p^{1})}^{s-min} \times 
    X_{s(q)}^{-}px_{r(q^{1})}^{s-min} $ and 
 $  X_{s(p)}^{-}px_{r(p^{2})}^{s-max} \times 
    X_{s(q)}^{-}px_{r(q^{2})}^{s-max}$ are closed.
 The second part of the conclusion follows from this and the first part.

   The proof of the third part is similar to that of 
   Lemma \ref{surface:55} and we omit the details.
  
  The final statement follows from the first three parts.
\end{proof}
  
 We will now develop a better understanding of $T^{\sharp}(Y_{\mathcal{B}})$.
 The process  raises an interesting issue. A one-sided Bratteli diagrams with 
 a $\leq_{r}$-order is usually called \emph{properly} ordered if
 there is a unique infinite path of all maximal edges, and a unique 
 infinite  path of all minimal edges.
 The first condition is equivalent to the fact that any two infinite paths
 which are $\leq_{r}$-maximal for all but finitely many edges, must be tail
 equivalent. It turns out the the situation is rather different 
 for bi-infinite diagrams.

 Consider the following:
$$\begin{tikzpicture}
  \filldraw (1,1) circle (2pt);
    \filldraw (2,1) circle (2pt);
    \filldraw (3,1) circle (2pt);
      \filldraw (4,1) circle (2pt);
      \filldraw (5,1) circle (2pt);
      \filldraw (6,1) circle (2pt);
  \filldraw (7,1) circle (2pt);
   \filldraw (8,1) circle (2pt);
   
    \filldraw (1,2) circle (2pt);
    \filldraw (2,2) circle (2pt);
    \filldraw (3,2) circle (2pt);
      \filldraw (4,2) circle (2pt);
      \filldraw (5,2) circle (2pt);
      \filldraw (6,2) circle (2pt);
  \filldraw (7,2) circle (2pt);
   \filldraw (8,2) circle (2pt);

     \draw (0,2) -- (9,2);
       \draw (0,1) -- (1,2);
         \draw (1,1) -- (2,2);
         \draw (2,1) -- (3,2); 
          \draw (3,1) -- (4,2);
          \draw (4,1) -- (9,1);
          
  \end{tikzpicture}$$
 This shows only the $s$-maximal edges in some bi-infinite ordered
  Bratteli diagram. 
  Note that there is a unique infinite path of $s$-maximal edges, while there 
  are two infinite paths whose edges are all $s$-maximal, for 
  sufficiently large indices, but are not tail-equivalent.
  
    
    On the other hand if we look at:
    
$$\begin{tikzpicture}
  \filldraw (1,1) circle (2pt);
    \filldraw (2,1) circle (2pt);
    \filldraw (3,1) circle (2pt);
      \filldraw (4,1) circle (2pt);
      \filldraw (5,1) circle (2pt);
      \filldraw (6,1) circle (2pt);
  \filldraw (7,1) circle (2pt);
   \filldraw (8,1) circle (2pt);
   
    \filldraw (1,2) circle (2pt);
    \filldraw (2,2) circle (2pt);
    \filldraw (3,2) circle (2pt);
      \filldraw (4,2) circle (2pt);
      \filldraw (5,2) circle (2pt);
      \filldraw (6,2) circle (2pt);
  \filldraw (7,2) circle (2pt);
   \filldraw (8,2) circle (2pt);

     \draw (0,2) -- (9,2);
       \draw (4,1) -- (5,2);
         \draw (5,1) -- (6,2);
         \draw (6,1) -- (7,2); 
          \draw (7,1) -- (8,2);
          \draw (8,1) -- (9,2);
          \draw (0,1) -- (4,1);
  \end{tikzpicture}$$
again only showing the $s$-maximal edges,  
there are two infinite paths
of s-maximal edges, but these are tail equivalent. 

It turns out that the number of distinct tail-equivalence classes
is the important thing here, not the number of paths in 
$X_{\mathcal{B}}^{s-max}$
and this leads to the following proposition.
 
 \begin{prop}
 \label{gpds:30}
 Let $\mathcal{B}$ be a
 bi-infinite ordered Bratteli diagram satisfying the conditions of
Definition \ref{surface:20}. The set 
 $\partial_{s}X_{\mathcal{B}}$ is invariant under
 the equivalence relation $T^{+}(Y_{\mathcal{B}})$ and 
 if $\mathcal{B}$ is finite rank, then it is the union
 of a finite number of equivalence classes.
 More specifically, we may find 
 positive integers $I_{\mathcal{B}}, J_{\mathcal{B}}$,
  $x_{1}, \ldots, x_{I_{\mathcal{B}}}\in Y_{\mathcal{B}}$ 
 such that, for all $i$,  $(x_{i})_{n} $ is $s$-maximal, 
 for all but finitely many $n \geq 0$, and 
 $x_{I_{\mathcal{B}}+1}, \ldots,
  x_{I_{\mathcal{B}}+J_{\mathcal{B}}} \in Y_{\mathcal{B}}$ 
 such that, for all $j$, $(x_{j})_{n} $ is $s$-minimal, 
 for all but finitely many $n \geq 0$, and so that 
 \[
 \partial_{s}X_{\mathcal{B}} =  
 \bigcup_{i=1}^{I_{\mathcal{B}}+J_{\mathcal{B}}}
  T^{+}(x_{i}) 
 \]
 and the sets on the right are pairwise disjoint. 
 \end{prop}
 
 \begin{proof}
 Suppose that $x_{1}, \ldots, x_{I}$ are all eventually $s$-maximal and no 
 two are right-tail equivalent. 
 Then we can find $N$, such that $(x_{i})_{n}$ is $s$-maximal, 
 for all $1 \leq i \leq I$, $n \geq N$. If 
 $s((x_{i})_{N}) = s((x_{j})_{N})$, 
 for some $i, j$, it follows from this fact that $x_{i}$ and $x_{j}$
 are right-tail equivalent and so $i=j$. It follows that 
 $I \leq \# V_{N-1}$. As $\mathcal{B}$ is
  finite rank, we see that $I$ must be bounded
 by the same constant that bounds the size of the sets $V_{n}$.
 A similar argument deals with paths that are eventually $s$-minimal.
 \end{proof}

Looking back at the two examples given above, the first has
$I_{\mathcal{B}}= 2$, while the second has 
 $I_{\mathcal{B}}= 1$.

Our problem can now be summarized by noting  that while
\[
\Delta_{s}: \bigcup_{i=1}^{I_{\mathcal{B}}} T^{+}(x_{i}) \rightarrow 
   \bigcup_{j= I_{\mathcal{B}} +1 }^{I_{\mathcal{B}}+J_{\mathcal{B}}} T^{+}(x_{j}),
   \]
is a bijection,
   it does not respect the decomposition in the unions.
    This is easily remedied
   in the following way.
   
\begin{defn}
\label{gpds:35}
Let $\mathcal{B}$ be a bi-infinite ordered Bratteli diagram 
satisfying the conditions of
Definition \ref{surface:20}
and 
 $I_{\mathcal{B}}, 
J_{\mathcal{B}}, x_{1}, 
\ldots, x_{I_{\mathcal{B}}+J_{\mathcal{B}}}$ 
be as in Proposition \ref{gpds:30}.
We define 
\[
I_{\mathcal{B}} \star_{\Delta} J_{\mathcal{B}} = 
\{ (x_{i},x_{j}), 1 \leq i \leq I_{\mathcal{B}} 
< j \leq I_{\mathcal{B}}+J_{\mathcal{B}} \mid
\Delta_{s}(T^{+}(x_{i})) \cap T^{+}(x_{j}) \neq \emptyset \}.
\]
\end{defn}

 The following is an immediate consequence of the definitions.

\begin{prop}
\label{gpds:40}
Let $\mathcal{B}$ be a bi-infinite ordered Bratteli diagram
satisfying the conditions of
Definition \ref{surface:20}.
 The equivalences classes in 
 $T^{\sharp}(Y_{\mathcal{B}})$ can be listed
 as $T^{+}(x) \cap Y_{\mathcal{B}}$,
  where $T^{+}(x) \cap \partial_{s}X_{\mathcal{B}}$
  is empty and 
 $T^{+}(x_{i}) \cap \Delta_{s}(T^{+}(x_{j}))\cap Y_{\mathcal{B}}$, 
 $\Delta_{s}(T^{+}(x_{i})) \cap T^{+}(x_{j})\cap Y_{\mathcal{B}}$, 
 where $(i,j)$ is in $I_{\mathcal{B}}
 \star_{\Delta} J_{\mathcal{B}}$.
\end{prop}

 Most importantly, the groupoid is now invariant under $\Delta_{s}$ 
   and so we may pass it on to $S^{s}_{\mathcal{B}}$.
   
   We also note the following which follows immediately from  
  Proposition \ref{gpds:40}.
   
   \begin{prop}
   \label{gpds:42}
  Let $\mathcal{B}$ be a bi-infinite ordered Bratteli diagram 
  satisfying the conditions of
Definition \ref{surface:20}.
We have  $T^{\sharp}(Y_{\mathcal{B}}) = T^{+}(Y_{\mathcal{B}})$
if and only if $I_{\mathcal{B}}= J_{\mathcal{B}}$ and, 
for each  $ 1 \leq i \leq I_{\mathcal{B}} $ there is a 
unique  $ 1 \leq j \leq J_{\mathcal{B}} $ with $(i,j)$ in 
$I_{\mathcal{B}} \star_{\Delta} J_{\mathcal{B}}$. 
In particular, if $I_{\mathcal{B}}= J_{\mathcal{B}}=1$ then
$T^{\sharp}(Y_{\mathcal{B}}) = T^{+}(Y_{\mathcal{B}})$.
   \end{prop}

Recall that each equivalence class in $T^{+}(Y_{\mathcal{B}})$ 
is linearly ordered by $\leq_{r}$. Our final result for
this subsection relates this order with equivalence classes
of $T^{\sharp}(Y_{\mathcal{B}})$. The following is an immediate consequence of
Proposition \ref{singular:150}.

\begin{prop}
\label{gpds:45}
For $(x, y)$  in $T^{+}(X_{\mathcal{B}})$, if $[x,y]_{r}$ is 
contained in $ Y_{\mathcal{B}}$, then it
 is contained in a single
 $T^{\sharp}(Y_{\mathcal{B}})$ equivalence class.
\end{prop}

\vspace{.5cm}

\subsection{Equivalence relation $T^{\sharp}( S^{s}_{\mathcal{B}})$}
 
 \text{ }
 \vspace{.5cm}

We now take the quotient by the map $\pi^{s}: Y_{\mathcal{B}} 
\rightarrow S^{s}_{\mathcal{B}}$. By definition, the 
equivalence relation $T^{\sharp}(Y_{\mathcal{B}})$ is preserved under
this quotient map. 

\begin{defn}
\label{gpds:50}
We define $T^{\sharp}(S^{s}_{\mathcal{B}})$ to be 
$\pi^{s} \times \pi^{s}( T^{\sharp}(Y_{\mathcal{B}}))$ and endow it with 
the quotient topology. For each $z$ in $S^{s}_{\mathcal{B}}$, we denote
its class in $T^{\sharp}(S^{s}_{\mathcal{B}})$ by $T^{\sharp}(z)$.
\end{defn}

We first need a local description of
the quotient  analogous to Proposition \ref{gpds:28}. 
In fact, this is an immediate consequence of 
\ref{gpds:28} and the definitions and Lemma \ref{surface:70}.

\begin{prop}
\label{gpds:55}
 Let  $m<n$ and $p, q$ be in $E_{m,n}^{s}$ (as in 
  Definition \ref{surface:40}) with 
  $r(p)=(r(p^{1}), r(p^{2})) = (r(q^{1}), r(q^{2})) =r(q)$. 
\begin{enumerate}
\item $\pi^{s} \times \pi^{s}(T^{\sharp}(p,q))$ is open in 
$T^{\sharp}(S^{s}_{\mathcal{B}})$. 
\item 
As $m,n, p, q$ vary the sets $\pi^{s} \times \pi^{s}(T^{\sharp}(p,q))$
 cover   $T^{\sharp}(S^{s}_{\mathcal{B}})$.
 \item 
The map sending $(\pi^{s}(x), \pi^{s}(y))$ in 
$\pi^{s} \times \pi^{s}(T^{\sharp}(p,q))$ to 
$(x_{(-\infty, m]}, y_{(-\infty, m]}, \psi^{p,r}(x_{(n, \infty)})$
is a homeomorphism to 
$X_{s(p^{1})}^{-} \times X_{s(q^{1})}^{-}  
\times (-\nu_{s}(r(p^{1}), \nu_{s}(r(p^{2}))$.
  \item  
 The map 
 $\pi^{s} \times \pi^{s}: T^{\sharp}(Y_{\mathcal{B}})
  \rightarrow T^{\sharp}(S^{s}_{\mathcal{B}})$ is continuous  and proper.
 \end{enumerate}
\end{prop}

Finally, we list the equivalence classes for 
$T^{\sharp}(S^{s}_{\mathcal{B}})$ which is an immediate consequence of 
Proposition \ref{gpds:40} and 
the definition of $\pi^{s}$. The last statement is an immediate consequence
of Proposition \ref{singular:150}.

\begin{prop}
\label{gpds:60}
Let $\mathcal{B}$ be a bi-infinite ordered Bratteli diagram
satisfying the conditions of
Definition \ref{surface:20}.
 The equivalences classes in 
 $T^{\sharp}(S^{s}_{\mathcal{B}})$ can be listed
 as $\pi^{s}(T^{+}(x)\cap Y_{\mathcal{B}})$,
  where $T^{+}(x) \cap \partial_{s}X_{\mathcal{B}}$
  is empty and 
 \[
 \pi^{s}\left(T^{+}(x_{i}) \cap \Delta_{s}(T^{+}(x_{j})\cap Y_{\mathcal{B}})\right)
 = \pi^{s}\left( 
 \Delta_{s}(T^{+}(x_{i})) \cap T^{+}(x_{j})\cap Y_{\mathcal{B}}\right),
 \] 
 where $(i,j)$ is in $I_{\mathcal{B}}
 \star_{\Delta} J_{\mathcal{B}}$.
 The restriction of $\pi^{s}$ to  each equivalence class is 
 a homeomorphism to its image.
 
 If $E$ is a Borel subset of 
 $T^{+}(x_{i}) \cap \Delta_{s}(T^{+}(x_{j})\cap Y_{\mathcal{B}}$, then 
 $\nu_{r}^{x_{i}}(E) = \nu_{r}^{x_{j}}(\Delta_{s}(E))$.
  For each $y$ in $Y_{\mathcal{B}}$, we define 
 $\nu_{r}^{\pi^{s}(y)}(\pi^{s}(E)) = \nu_{r}^{y}(E)$, for each 
 Borel set $E$  in 
 $T^{\sharp}(y)$. Then this is a well-defined
 Haar system for $T^{\sharp}(S^{s}_{\mathcal{B}})$.
\end{prop}

\vspace{.5cm}

\subsection{Equivalence relation $T^{\sharp}( S_{\mathcal{B}})$}
 
 \text{ }
 \vspace{.5cm}

We now want to move the groupoid $T^{\sharp}(Y_{\mathcal{B}})$ to 
our surface, $S_{\mathcal{B}}$. Recall that we denote the quotient  map
by $\rho^{r}: S^{s}_{\mathcal{B}} \rightarrow S_{\mathcal{B}}$ 
and 
$\pi = \rho^{r} \circ \pi^{s} : 
Y_{\mathcal{B}} \rightarrow S_{\mathcal{B}}$ .

\begin{defn}
\label{gpds:70}
Let $\mathcal{B}$ be a 
 bi-infinite ordered Bratteli diagram satisfying the conditions of
Definition \ref{surface:20}.
 We define
 \[
 T^{\sharp}(S_{\mathcal{B}})   =  \rho^{r}\times \rho^{r}
  (T^{\sharp}(S^{s}_{\mathcal{B}})) \\
    = \pi \times  \pi (T^{\sharp}(Y_{\mathcal{B}}))
 \] 
 and endow it with the quotient topology. For each $z$ in 
 $S_{\mathcal{B}}$, we denote
its class in $T^{\sharp}(S_{\mathcal{B}})$ by $T^{\sharp}(z)$.
\end{defn}

\begin{prop}
\label{gpds:72}
Let $\mathcal{B}$ be a 
 bi-infinite ordered Bratteli diagram
 satisfying the conditions of
Definition \ref{surface:20}. 
 Let $p, q$ be in $E_{m,n}^{r/s}, m < n$ satisfy $r(p)=r(q)$.
 
 We define 
 \begin{eqnarray*}
 U^{\sharp}(p,q) & = & ( T^{\sharp}((p^{1,1},p^{1,2}), (q^{1,1},q^{1,2}) )
 - x_{s(p^{1,1})}^{r-min}X^{+}_{s(p^{1,1})} \times 
 x_{s(q^{1,1})}^{r-min}X^{+}_{s(q^{1,1})})  \\
   &   &  \cup (T^{\sharp}((p^{2,1},p^{2,2}), (q^{2,1},q^{2,2}) )
 - x_{s(p^{2,1})}^{r-max}X^{+}_{s(p^{2,1})} \times 
 x_{s(q^{2,1})}^{r-max}X^{+}_{s(q^{2,1})}). 
 \end{eqnarray*}

 \begin{enumerate}
\item   $U^{\sharp}(p,q)$ is an open subset of 
$T^{\sharp}(Y_{\mathcal{B}})$.
\item Let $(x,y)$ be in  $U^{\sharp}(p,q)$. If $x,y$ are 
in $\partial_{r}X_{\mathcal{B}}$ then 
$(\Delta_{r}(x), \Delta_{r}(y))$ is also in  $U^{\sharp}(p,q)$. 
 If $x,y$ are 
in $\partial_{s}X_{\mathcal{B}}$ then 
$(\Delta_{s}(x), \Delta_{s}(y))$ is also in  $U^{\sharp}(p,q)$. 
 \item For each $y$ in $Y_{\mathcal{B}}$ and Borel set
 $E$ in  $ T^{\sharp}(y)$,
 we define 
 $\nu_{r}^{\pi(y)}(\pi(E)) = \nu_{r}^{y}(E) = \nu_{r}^{\pi^{s}(y)}(\pi^{s}(E))$.
 The system of measures $\nu_{r}^{z}, z \in S_{\mathcal{B}}$ is a Haar
 system for 
 is a Haar system for 
 $T^{\sharp}(S_{\mathcal{B}})$. In addition, 
 for each $y$ in $Y_{\mathcal{B}}$, the map $\rho_{r}$ from 
 $(T^{\sharp}( \pi^{s}(y)), \nu_{r}^{\pi^{s}(y)})$ 
 to $(T^{\sharp}(\pi(y)), \nu_{r}^{\pi(y)})$ 
 is an isomorphism of measure spaces.
\item The map sending $(\pi(x), \pi(y))$, for
 $(x,y)$ in $U^{\sharp}(Y_{\mathcal{B}})$, to \newline
$(\psi^{p}_{r}(x), \psi^{q}_{r}(y), \psi^{p}_{s}(x))$ is 
a homeomorphism from  $\pi \times \pi(U^{\sharp}(p,q))$ to 
$(-\nu_{r}(s(p^{1,1}), \nu_{r}(s(p^{2,1}))) \times  
(-\nu_{r}(s(q^{1,1}), \nu_{r}(s(q^{2,1}))) \times 
( -\nu_{s}(r(p^{1,1}), \nu_{s}(r(p^{1,2}))$.
  \item The maps  
  $\rho^{r} \times \rho^{r}: T^{\sharp}( S^{s}_{\mathcal{B}})
  \rightarrow T^{\sharp}(S_{\mathcal{B}})$ and 
 $\pi\times \pi: T^{\sharp}(Y_{\mathcal{B}})
  \rightarrow T^{\sharp}(S_{\mathcal{B}})$ are continuous 
   and proper.
\end{enumerate}
\end{prop}

We now want a description of the equivalence classes in 
$T^{\sharp}(S_{\mathcal{B}})$. Let $x$ be in $Y_{\mathcal{B}}$.
First, recall that, if $x$ is in $\partial_{r}X_{\mathcal{B}}$, 
$\Delta_{r}(x)$ is in $T^{+}(x)$. We also recall 
Proposition \ref{order:200} which defines a function
$\varphi^{x}_{r}: T^{+}(x) \rightarrow \R$. it is continuous, proper
and identifies two points $x, y$ if and only if 
$x$ is in $\partial_{r}X_{\mathcal{B}}$ and 
$y =\Delta_{r}(x)$. In addition, the range is either a closed semi-infinite
interval or $\R$.  It remains for us to remove the points
of $X_{\mathcal{B}}^{ext} \cup \Sigma_{\mathcal{B}}$.

\begin{prop}
\label{gpds:22}
Let $x$ be in $Y_{\mathcal{B}}$. The set 
$ \varphi^{x}_{r}(T^{+}(x) \cap Y_{\mathcal{B}})$ is 
an open subset of the real numbers and hence consists of a countable 
collection of open intervals.
Let $W$ be a subset of $ T^{+}(x) \cap Y_{\mathcal{B}}$ whose
image under $\varphi^{x}_{r}$ is one of these open intervals.
Then $W$ is closed
in $Y_{\mathcal{B}}$ if and only if the interval is bounded and is dense
in $Y_{\mathcal{B}}$ otherwise. Moreover, 
$ \varphi^{x}_{r}(T^{+}(x) \cap Y_{\mathcal{B}})$ has a
 bounded interval  if and only if
$T^{+}(x) \cap \left( X^{ext}_{\mathcal{B}} 
\cup \Sigma_{\mathcal{B}} \right)$  has at least two points.
\end{prop}

\begin{proof}
From Proposition \ref{order:200}, if $T^{+}(x)$ meets
 $X^{r-max}_{\mathcal{B}}$ then it does so at a single point, say $z$.
 The range of $\varphi^{x}_{r}(T^{+}(x)) = (-\infty, \varphi^{x}_{r}(z)]$.
 We know from Proposition \ref{order:5} that
   $X_{\mathcal{B}}^{ext}$ is finite and contains $z$
    and from Lemma \ref{singular:100}
   that $\Sigma_{\mathcal{B}}$ is countable and its only limits
   points are in  $X_{\mathcal{B}}^{ext}$. The first part 
   of the conclusion  follows. The cases that  
$T^{+}(x)$ meets
 $X^{r-max}_{\mathcal{B}}$ and that 
 $T^{+}(x) \cap X_{\mathcal{B}}^{r-max}$
 and $T^{+}(x) \cap X_{\mathcal{B}}^{r-min}$ 
 are empty are done in a similar way. We omit the details.
 
For the second part, if $\varphi^{x}_{r}(W)$ is a bounded interval, then 
it equal $(\varphi^{x}_{r}(y), \varphi^{x}_{r}(z)$, where $y,z$
are in  $X_{\mathcal{B}}^{ext} \cup \Sigma_{\mathcal{B}}$. It is 
clear that $W = I_{r}(y,z)$ ( as in Proposition
 \ref{gpds:45})  is equal to 
$ \{ w \in X_{\mathcal{B}} \mid y \leq_{r} w \leq_{r} z\} 
\cap Y_{\mathcal{B}}$ which is clearly closed in $Y_{\mathcal{B}}$

To prove that $W$ is dense in $Y_{\mathcal{B}}$ when the image is 
unbounded, there are three cases to consider, depending on which 
of $T^{+}(x) \cap X_{\mathcal{B}}^{r-max}$
 and $T^{+}(x) \cap X_{\mathcal{B}}^{r-min}$ are empty.
 We consider the case the first is empty and leave the other
 to the reader. The hypothesis, along with Lemma \ref{order:200}, 
 implies that we have $y$ in $T^{+}(x)$ such that 
 \[
 \{ z \in T^{+}(x) \mid y \leq _{r} z \} \cap \left( 
 X_{\mathcal{B}}^{ext} \cup \Sigma_{\mathcal{B}} \right)
 \]
 is empty. We claim the set  $\{ z \in T^{+}(x) \mid y \leq _{r} z \}$ 
 is dense in $X_{\mathcal{B}}$, from 
 which the conclusion follows.
 
  Let $m$ be a positive integer and let $p$ be any path in $E_{-m,m}$.
  As $\mathcal{B}$ is strongly simple, we may find $n >m$ such that
  there is a path from $r(p_{m})$ to every vertex of $V_{n}$.
  If $y_{i}$ is $r$-maximal for every $i \geq n$, with $y'_{i}=y_{i}$, 
  for $i \geq n$ and inductively defining
  $y'_{n-j} $ to be the unique $r$-maximal 
  edge with range $s(y'_{n-j+1})$ for 
  all $j \geq 1$, we see that $y'$ is in
   $X_{\mathcal{B}}^{r-max} \cap T^{+}(x)$, 
   which we assumed to be empty. Hence, we can find
   $i \geq n$ with $y_{i}$ not $r$-maximal. Define $z$ as 
   follows: $z_{j} = y_{j}$, for $j > n$, $z_{i}$ to be the 
   $r$-successor of $y_{i}$, $z_{(n,i)}$ to be any path from 
   $r(p_{m})$ to $s(z_{i})$, $z_{[-m,m]}= p_{-m,m]}$ and
   $z_{(-\infty, m)}$ any path with range $s(p_{-m})$. Then 
   $z$ is in $T^{+}(x)$, $z >_{r} y$ and $z_{[-m,m]}= p_{[-m,m]}$.
   This establishes the claim.  
   
   The last statement is now trivial. 
\end{proof}

The following result follows immediately.

\begin{prop}
\label{gpds:28}
Each equivalence class in 
$ T^{\sharp}( S_{\mathcal{B}})$ consists of a countable 
collection of open intervals.
\end{prop}

 \vspace{.5cm}

\subsection{The foliation $\mathcal{F}^{+}( S_{\mathcal{B}})$}
 
 \text{ }
 \vspace{.5cm}

\begin{defn}
\label{gpds:590}
Let $\mathcal{B}$ be a 
bi-infinite ordered Bratteli diagram 
satisfying the conditions of
Definition \ref{surface:20}. We define 
$\mathcal{F}_{\mathcal{B}}^{+}$ to be the open subequivalence 
relation 
of $T^{\sharp}(S_{\mathcal{B}})$ whose equivalence classes
are the path
 connected components of the  equivalence classes of 
 $T^{\sharp}(S_{\mathcal{B}})$.
 For any $x$ in $S_{\mathcal{B}}$, we denote its equivalence  
 class in $\mathcal{F}_{\mathcal{B}}^{+}$
  by $\mathcal{F}^{+}_{\mathcal{B}}(x)$.
\end{defn}

We remark that, in consequence of the description of the 
equivalence classes of $T^{\sharp}(S_{\mathcal{B}})$ given in 
\ref{gpds:28}, there is no distinction between path connected and 
connected.

The next result shows that $\mathcal{F}_{\mathcal{B}}^{+}$
 is the horizontal foliation
for our surface $S_{\mathcal{B}}$, when equipped with the charts of 
Theorem \ref{surface:120}.

\begin{thm}
\label{gpds:600}
Let $p$ be in $E_{m,n}^{r/s}$ for $m < n$ and let 
$u, v$ be in $Y(p) \subseteq S_{\mathcal{B}}$ 
(see Theorem \ref{surface:120}).
Then $(u,v)$ is in  $\pi \times \pi(U^{\sharp}(p,p))$ if and only if 
$\eta^{p}(u)$ and $\eta^{p}(v)$ lie on the same horizontal line.
\end{thm}

\begin{proof}
First assume  $(u,v)$ is in  $\pi \times \pi(U^{\sharp}(p,p))$.
 By Definition \ref{gpds:50}, for $i=1$ or $i=2$, we have 
$u=\pi(x), v=\pi(y)$, with 
$(x,y)$ in $T^{\sharp}((p^{i,1},p^{i,2}), (p^{i,1},p^{i,2}) )$.
Again by definition (\ref{gpds:27}), we have $j=1$ or $j=2$ such that 
$(x,y)$ is in 
$V_{i,j}(p) \times V_{i,j}(p) \cap T^{+}_{n}(X_{\mathcal{B}})$.
For the moment, assume $j=2$ as the other case is similar.

According to the definition (\ref{surface:80}), considering only 
the 
$y$-coordinate
\[
\eta^{p}(u)_{2} = \eta^{p}(\pi(x))_{2} = \psi^{p}(x) = 
\varphi_{s}^{r(p^{i,2})}(x_{(n, \infty)}) = \varphi_{s}^{r(p^{i,2})}(y_{(n, \infty)}) = \eta^{p}(v)_{2}.
\]
since $(x,y)$ is in $T^{+}_{n}(X_{\mathcal{B}})$. Of course, this means they lie on the same horizontal line.

For the converse, let us  assume without loss of generality that
$\eta^{p}(u)$ and $\eta^{p}(v)$ lie in the upper half plane. 
So we can find $i=1$ or $i=2$ and $x$ in $V_{i,2}(p)$ with 
$\psi^{p}(x) = \eta^{p}(u)$. Similarly, there is $j$, $y$ 
in $V_{j,2}(p)$ with $\psi^{p}(x) = \eta^{p}(v)$. Then considering the 
$y$-coordinate, we have 
\[
\varphi^{r(p^{1,2})}_{s}(x_{(n, \infty)}) = 
 \eta^{p}(u)_{2} = \eta^{p}(v)_{2} =
 \varphi^{r(p^{j,2})}_{s}(y_{(n, \infty)}).
 \]
 Of course, $r(p^{1,2})= r(p^{j,2})$ and part 4 of Lemma \Ref{order:200}
 implies that either $x_{(n, \infty)} = y_{(n, \infty)}$ 
 or one is the $s$-successor of 
 the other, in which case $y$ is 
 in $\partial_{s}X_{\mathcal{B}}$ and we replace it by $\Delta_{s}(y)$.
The result is a pair $(x,y)$ in $U^{\sharp}(p,p)$ with 
$\pi \times \pi(x,y) = (u,v)$.
\end{proof}

\begin{thm}
\label{gpds:610}
If $\mathcal{B}$ is a bi-infinite ordered Bratteli diagram
satisfying the conditions of
Definition \ref{surface:20}, 
$\mathcal{F}_{\mathcal{B}}^{+}$ is an open subgroupoid of
$T^{\sharp}(S_{\mathcal{B}})$.
\end{thm}

\begin{proof}
If $n \geq 1$ and  $p$ is in $E_{-n,n}^{r/s}$, then 
$\eta^{p}(Y(P)) = \psi^{p}(V(p))$ is an open rectangle from
Lemma \ref{surface:100}.  It follows from  Theorem 
\ref{gpds:600} that a pair of points in the image 
are in the image $\pi \times \pi(U^{\sharp}(p,p))$ if and only if
their images under $\eta^{p}$
 line on the same horizontal line. Since the horizontal lines
in an open rectangle
are connected, we see that any pair $(x,y)$ in 
$\pi \times \pi(U^{\sharp}(p,p))$ also lies in 
$\mathcal{F}_{\mathcal{B}}^{+}$. 

Now suppose that $(x,y)$ is in $\mathcal{F}_{\mathcal{B}}^{+}$. 
We may find a continuous function $h$ from $[0,1]$ to the 
class of $x$ in $\mathcal{F}_{\mathcal{B}}^{+}$ with $h(0)=x, h(1)=y$.
The points in the image of $h$ may be covered by sets of the form 
$Y(p)$,  $p$ in $E_{-n,n}^{r/s}$, $n \geq 1$. We extract
a finite subcover corresponding to
$p^{1}, \ldots, p^{k}$ and order them so that there is $x^{i}$ in 
$Y(p^{i})$ for $1 \leq i \leq k$ with  $x^{1} = x, 
x^{k}=y$ and $(x^{i}, x^{i+1})$ on the same horizontal line in 
$\eta^{p}(Y(p))$. If we then look at the set of all 
 $(z^{1}, \ldots, z^{k})$ such that there exist $(z^{i}, z^{i+1})$ 
 is in $\pi \times \pi(U^{\sharp}(p^{i},p^{i}))$ for $1 \leq i < k$, 
 the set of pairs $(z^{1}, z^{k})$ is open 
 in $T^{\sharp}(S_{\mathcal{B}})$ and contained in 
 $\mathcal{F}_{\mathcal{B}}^{+}$.
\end{proof}

\begin{thm}
\label{gpds:510}
Let $\mathcal{B}$ be a  
bi-infinite ordered Bratteli diagram satisfying the conditions of
Definition \ref{surface:20}. The foliation 
$\mathcal{F}_{\mathcal{B}}^{+}$ is minimal if and only if
the equivalence relation $T^{+}(X_{\mathcal{B}})$ is trivial 
on $X_{\mathcal{B}}^{ext} \cup \Sigma_{\mathcal{B}}$.
\end{thm}

\begin{proof}
This is an immediate consequence of Proposition 
\ref{gpds:22}.
\end{proof}

\section{$C^{*}$-algebras}
\label{Cstar}
We now begin our investigations into the  various $C^{*}$-algebras
associated with the groupoids of the last section.

We begin with a general discuss of the construction 
of the $C^{*}$-algebra from an equivalence relation. 
We assume that all of our spaces are locally compact and Hausdorff.
Let $Y$ be a 
topological space, $R \subseteq Y \times Y$ be an
 equivalence relation on $Y$ such that the product map from $R \times R$ 
 sending $((x,y),(y,z))$ to $(x,z)$ is continuous. We also  
 suppose we have a Haar system; that is, a collection of measures
 $\nu_{x}, x \in Y$ on $R$ such that  $\nu_{x}$ is supported
 on $[x]_{R}$, $\nu_{x}=\nu_{y}$ whenever $(x,y)$ is in $R$ and, 
 for any continuous function of compact support on $R$, $f$, 
  the function 
 sending $x$ in $Y$ to 
$\int  f(x,z) d\nu_{y}(z)$ is continuous.

These measures are then used to turn the linear 
space of compactly-supported continuous complex-valued functions
on $R$, denoted $C_{c}(R)$,
 into an algebra with the product of two elements,
  $f, g$, given by the formula;
\[
(f \cdot g) (x,y) = \int_{z \in [x]_{R}}  f(x,z)g(z,y) d\nu_{x}(z),
\]
for $(x,y)$ in $R$. The hypotheses on the Haar system is
 needed  to see that the product $f \cdot g$ 
is again continuous and compactly-supported.

For the uninitiated reader,
 it is probably a good idea at this point
to think of the example where 
$Y =  \{ 1, 2, \ldots, n\}$ and $R = Y \times Y$.
The Haar system is  counting measure on each equivalence class 
and the product above is simply matrix multiplication.

We can also define an involution as follows: for $f$ 
in $C_{c}(R)$, 
\[
f^{*}(x,y) = \overline{f(y,x)},
\]
for $(x,y)$ in $R$. In the finite case above, this is simply
the conjugate transpose of the matrix.

To obtain a $C^{*}$-algebra, we need to define a norm on this algebra
and take then its completion. All of our equivalence relations are amenable
and so this norm is actually unique.  However, we
 do not give a proof of this here. Instead we consider only the 
 norm from the left regular representation and its completion 
 which is the reduced $C^{*}$-algebra. We explain as follows.
 
 For each $y$ in $Y$, we consider the Hilbert 
 space $L^{2}([y]_{R}, \nu_{y})$
 and we define a representation $\lambda_{y}$ of $C_{c}(R)$ as operators
 on this Hilbert space by setting
 \[
 (\lambda_{y}(f)\xi)(x) = \int f(x,z)\xi(z) d\nu_{y}(z),
 \]
 for $f$ in $C_{c}(R)$, $\xi$ in  $L^{2}([y]_{R}, \nu_{y})$ and $x$ in $Y$.
 This is a bounded operator and
 \[
 \Vert f \Vert_{red} = \sup \{ \Vert \lambda_{y}(f) \Vert \mid y \in Y \} 
 \]
 is finite. The completion of $C_{c}(R)$ in this
  norm is $C^{*}_{\lambda}(R)$.

Now, we turn to our equivalence relations of interest on our various spaces.
We can summarize the results of the last section with 
a simple schematic showing our equivalence relations:
\[
T^{+}(X_{\mathcal{B}}) \supseteq T^{+}(Y_{\mathcal{B}}) 
 \supseteq T^{\sharp}(Y_{\mathcal{B}}) 
  \stackrel{\pi^{s} \times \pi^{s}}{\longrightarrow} 
  T^{\sharp}(S^{s}_{\mathcal{B}}) 
   \stackrel{\rho^{r} \times \rho^{r}}{\longrightarrow} 
  T^{\sharp}(S_{\mathcal{B}}) 
   \supseteq \mathcal{F}_{\mathcal{B}}^{+}.
\]
Here, each containment is as an open subequivalence relation
and each map is a continuous proper surjection which maps
equivalence classes surjectively to equivalence classes.
The  most obviously important ones are the first and last:
 those associated with 
right  tail equivalence on the Bratteli diagram  and 
horizontal 
foliation of the surface.

We begin with a general result on the construction.

\begin{thm}
\label{Cstar:10}
Let $X$ be a locally compact, Hausdorff, topological space 
with an equivalence relation $R$ and a   Haar system $\nu_{x}, x \in X$.
Suppose that $S$ is an open subequivalence relation of $R$. The set 
$Y = \{ y \in X \mid (y,y) \in S \}$ is an open subset of $X$ and 
the collection of measures $\nu^{S}_{y} = \nu_{y} | [y]_{S}$, for 
$y$ in $Y$ is a Haar system of $S$.
Then the natural inclusion $C_{c}(S) \subseteq C_{c}(R)$ extends to
an inclusion $C^{*}_{\lambda}(S) \subseteq C^{*}_{\lambda}(R)$.
\end{thm}

\begin{proof}
It is a simple matter to check that that the inclusion
$C_{c}(S) \subseteq C_{c}(R)$ is not only linear but also
preserves the product and involution. Finally, for any $x$ in 
$X$, we can find a subset $Y_{x}$ such that 
\[
[x]_{R} \cap Y = \cup_{y \in Y_{x}} [y]_{S},
\]
and the sets on the right are pairwise disjoint.
It follows that 
$L^{2}([x]_{R}, \nu_{x}) = \oplus_{y \in Y_{x}}
L^{2}([y]_{S}, \nu_{y}^{S}) \oplus \mathcal{N}$,
where $\mathcal{N}$ is the orthogonal complement  of 
the direct sum. If $f$ is any function in $C_{c}(S)$, then 
$\lambda_{x}(f)$ is zero on $\mathcal{N}$ and leaves each
summand  $L^{2}([y]_{S}, \nu_{y}^{S}) $ invariant. Moreover, the 
restriction of $\lambda_{x}(f)$ to $L^{2}([y]_{S}, \nu_{y}^{S}) $
is simply $\lambda_{y}(f)$. Taking the supremum of the norms of
all $\lambda_{x}(f)$, we see that the inclusion is actually
isometric for the reduced norms.
\end{proof}

\begin{cor}
\label{Cstar:20}
Let $\mathcal{B}$ be a  bi-infinite ordered Bratteli diagram
satisfying the conditions of
Definition \ref{surface:20}.
We have 
$C^{*}_{\lambda}(T^{\sharp}(Y_{\mathcal{B}})) \subseteq 
  C^{*}_{\lambda}(T^{+}(Y_{\mathcal{B}})) \subseteq 
C^{*}_{\lambda}(T^{+}(X_{\mathcal{B}}))$
and 
$C^{*}_{\lambda}( \mathcal{F}_{\mathcal{B}}^{+} ) \subseteq 
C^{*}_{\lambda}(T^{\sharp}(S_{\mathcal{B}}))$.
\end{cor}

We next turn to the two factor maps. These are slightly 
different and we must
deal with each individually.
 
\begin{thm}
\label{Cstar:30}
Let $\mathcal{B}$ be a e bi-infinite ordered Bratteli diagram
satisfying the conditions of
Definition \ref{surface:20}. 

 The map sending $f$ in $C_{c}(T^{\sharp}(S^{s}_{\mathcal{B}}))$
 to $f \circ( \pi^{s} \times \pi^{s}) $
 in   $C_{c}(T^{\sharp}(Y_{\mathcal{B}})) $
 extends to an inclusion
$C^{*}_{\lambda}(T^{\sharp}(S^{s}_{\mathcal{B}})) \subseteq 
  C^{*}_{\lambda}(T^{\sharp}(Y_{\mathcal{B}}))$.
\end{thm}

\begin{proof}
If $\pi^{s}(y) = \pi^{s}(y')$ for some $y \neq y'$ in $Y_{\mathcal{B}}$, 
then $y$ is in $\partial_{s}X_{\mathcal{B}}$ and 
$\Delta_{s}(y) = y'$. It follows from Proposition 
\ref{singular:150} that  $\nu_{r}^{y}(E)= \nu_{r}^{y'}(\Delta_{s}(E))$
and so the Haar system is well-defined. In addition, $\pi_{s}$
is a homeomorphism from the equivalence class of $y$ 
in  $T^{\sharp}(Y_{\mathcal{B}})$ to the equivalence class
of $\pi_{s}(y)$ in  $T^{\sharp}(S^{s}_{\mathcal{B}})$.
sending $f$ in $C{c}(T^{\sharp}(S^{s}_{\mathcal{B}}))$
 to $f \circ( \pi^{s} \times \pi^{s}) $
 in   $C_{c}(T^{\sharp}(Y_{\mathcal{B}})) $ is a $*$-homomorphism.
 
It also induces a unitary equivalence between the 
representation $\lambda_{y}$ of $ C^{*}_{\lambda}(T^{\sharp}(Y_{\mathcal{B}}))$
and $\lambda_{\pi_{s}(y)}$ of 
$C^{*}_{\lambda}(T^{\sharp}(S^{s}_{\mathcal{B}}))$ and hence 
the map
is isometric.
\end{proof}

\begin{cor}
\label{Cstar:40}
Let $\mathcal{B}$ be a bi-infinite ordered Bratteli diagram
satisfying the conditions of
Definition \ref{surface:20}. 
 For each $y$ in $S^{s}_{\mathcal{B}}$, defining
 $\nu^{\rho^{r}(y)}_{r}(\rho^{r}(E)) = \nu^{y}_{r}(E)$, 
 for each Borel set $E$ with $\{y\} \times E$ in 
  $T^{\sharp}(S^{s}_{\mathcal{B}})$ is a Haar system
  for $T^{\sharp}(S_{\mathcal{B}})$.
  
 The map sending $f$ in $C_{c}(T^{\sharp}(S_{\mathcal{B}}))$
 to $f \circ( \rho^{r} \times \rho^{r}) $
 in   $C_{c}(T^{\sharp}(S^{s}_{\mathcal{B}})) $
 extends to an 
isomorphism $
C^{*}_{\lambda}(T^{\sharp}(S_{\mathcal{B}})) \cong
C^{*}_{\lambda}(T^{\sharp}(S^{s}_{\mathcal{B}}))$.
\end{cor}

\begin{proof}
By Proposition \ref{gpds:72},
 $\rho_{r}$, when restricted to a single equivalence 
class of $T^{\sharp}(S^{s}_{\mathcal{B}})$, maps surjectively to a single 
equivalence class of  $T^{\sharp}(S_{\mathcal{B}})$. Moreover, it 
is an isomorphism at the level of 
measure spaces with  $\nu^{\rho^{r}(y)}_{r}$ as defined.

It is a simple computation to see that the map sending $f$ in  
 $C_{c}(T^{\sharp}(S_{\mathcal{B}}))$
 to $f \circ( \rho^{r} \times \rho^{r}) $
 in   $C_{c}(T^{\sharp}(S^{s}_{\mathcal{B}})) $ is a $*$-homomorphism.
 An argument similar to that in the proof of Proposition \ref{Cstar:30}
 shows that it is injective. To show that the map
 is surjective on the completion, we must show the range 
 of $C_{c}(T^{\sharp}(S_{\mathcal{B}})) $  is dense 
 in  $C_{c}(T^{\sharp}(S^{s}_{\mathcal{B}})) $.
 
 Let $m < n$ and $p,q$ be in $E^{r/s}_{m,n}$ with $r(p)=r(q)$. 
 Let 
 \begin{eqnarray*}
 f: X^{-}_{s(p^{1,1})} & \rightarrow & \C, \\
 g: X^{-}_{s(q^{1,1})}  & \rightarrow & \C,\\
 h: (-\nu_{s}(p^{1,1}), \nu_{s}(p^{1,2})) & \rightarrow & \C
 \end{eqnarray*}
 be continuous and compactly supported. Identifying 
 $\pi^{s} \times \pi^{s}(T^{\sharp})((p^{1,1},p^{1,2}), (q^{1,1},q^{1,2}))$
 with $X^{-}_{s(p^{1,1})} \times X^{-}_{s(q^{1,1})} \times (-\nu_{s}(p^{1,1}), \nu_{s}(p^{1,2}))$
 as in part 3 of Proposition \ref{gpds:55}, the map we denote 
 $f \otimes g \otimes h$
 sending $(\pi^{s}(x), \pi^{s}(y))$ in the former to
 $f(x_{(-\infty, m]})g( y_{(\infty, m]})h(\psi^{p,r}(n, \infty))$ 
 is a continuous function of compact support on 
 $C_{c}(T^{\sharp}(S^{s}_{\mathcal{B}}))$. If we also include 
 analogous functions using $p^{2,i}, q^{2,i}$ instead of
  $p^{1,i}, q^{1,i}$, the linear span of such functions is dense
  in  $C^{*}_{\lambda}(T^{\sharp}(S^{s}_{\mathcal{B}}))$.
  
  For each $(y,i)$ in $X_{r(p^{1,i})}^{+} \times \{ 1,2 \}$, let
  $\mathcal{M}_{(y,i)}$ denote the space of $L^{2}$-functions supported on 
  $X^{-}_{s(p^{1,i})} p^{1,i} y $ and 
   $\mathcal{N}_{(y.i)}$ denote the space of $L^{2}$-functions supported on 
  $X^{-}_{s(q^{1,i})}  p q^{1,i} y  $.
  For any $z$ in $S^{s}_{\mathcal{B}}$, it follows
  from the definition that the operator $\lambda_{z}(f \otimes g \otimes h)$
  is zero except on the spaces  $\mathcal{M}_{(y,i)}$ which is mapped
  to    $\mathcal{N}_{(y,i)}$. Moreover, if $\xi$ is in $\mathcal{M}_{(y,i)}$, 
  we have 
  \[
   \lambda_{z}(f \otimes g \otimes h) \xi = h(y) f \langle \xi, \bar{g}\rangle.
   \]
   From this it follows that 
$ \Vert   \lambda_{z}(f \otimes g \otimes h)  \Vert = \Vert h \Vert_{\infty} 
 \Vert f \Vert_{2} \Vert g \Vert_{2}$.
   
   Let $\epsilon > 0$.
   The map $\varphi^{s(p^{1,1})}_{r} : X^{-}_{s(p^{1,1})} 
   \rightarrow [-\nu_{r}^{s(p^{1,1})}, 0] $ is one-to-one 
   except on a countable set
   and so induces an isomorphism of measure spaces.
   We may find a continuous function 
   $f':  [-\nu_{r}^{s(p^{1,1})}, \nu_{r}^{s(p^{2,1})}]
   \rightarrow \C$ which is zero at the left end-point and the interval
 $[0, \nu_{r}^{s(p^{2,1})}] $ and such that 
 $\Vert f - f' \circ \varphi^{s(p^{1,1})}_{r} \Vert_{2} < \epsilon$.
 Similarly, we may find $g' [-\nu_{r}^{s(q^{1,1})}, \nu_{r}^{s(q^{2,1})}]
   \rightarrow \C$ such that 
  $\Vert g - g' \circ \varphi^{s(q^{1,1})}_{r} \Vert_{2} < \epsilon$.
  We may now define $f' \otimes g' \otimes h$ in an analogous way as 
  $f \otimes g \otimes h$ and the result is a continuous function
  of compact support on the open set 
  $U^{\sharp}(p,q) \subseteq T^{\sharp}(S_{\mathcal{B}})$ as 
  described in Proposition \ref{gpds:72}.

  A simple computation now shows that 
  \[
  \Vert \lambda(f \otimes g \otimes h) - \lambda(f' \otimes g' \otimes h)\Vert 
  \leq \epsilon \Vert h \Vert_{\infty} \left( \Vert f \Vert_{2} + \Vert g \Vert_{2}+ \epsilon \right).
  \]
  This shows that the range is dense.
\end{proof}

In the following subsections, we give more precise descriptions 
of these $C^{*}$-algebras, particularly focusing on 
inductive limit structures. Following that, our objective is to
compute their K-theory.


It is probably worth noting that this does not need an \emph{ordered}
Bratteli diagram. Also, it uses a state, but it is independent of the choice.

\begin{prop}
\label{AF:30}
Let $m < n$ be integers and let $p, q$ be in $E_{m,n}$ with 
$r(p)=r(q)$. For $(x,y)$ in $T^{+}(X_{\mathcal{B}})$, define 
\[
a_{p,q}(x,y) = \nu_{r}(s(p))^{-1/2} \nu_{r}(s(q))^{-1/2} 
\]
if $(x_{m+1}, \ldots, x_{n}) = p, (y_{m+1}, \ldots, y_{n}) = q$
and $x_{i} = y_{i}$, for all $i > n$. Define 
$a_{p,q}(x,y) = 0$ otherwise.
Then $a_{p,q}$ is a continuous, compactly supported function
on $T^{+}(X_{\mathcal{B}})$ and hence lies in $C^{*}_{\lambda}(T^{+}(X_{\mathcal{B}}))$.
Moreover, we have 
\begin{enumerate}
\item 
If $p',q'$ is another pair in $E_{m,n}$ with $r(p')=r(q')$, then 
\[
a_{p,q} a_{p',q'} = \left\{ \begin{array}{cl} a_{p,q'} 
  &  \text{ if } q=p', \\ 0 & \text{ if } q \neq p'\\  \end{array} \right.
  \]
 In particular, if $r(p) \neq r(p')$ then this product is zero.
  \item 
$a_{p,q}^{*} = a_{q,p}$.
\item $a_{p,q}= \sum_{s(e)=r(p)} a_{pe,qe}$,
\item 
$a_{p,q}  = \sum 
  \nu_{r}(s(e))^{-1/2} \nu_{r}(s(f))^{-1/2}\nu_{r}(r(e))^{1/2} \nu_{r}(r(f))^{1/2}
a_{ep,fq}$, where the sum is over all $e,f$ in $E_{m-1}$ with $r(e)=s(p), r(f)=s(q)$. 
\end{enumerate}
\end{prop}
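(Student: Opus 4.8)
Proof proposal for Proposition \ref{AF:30}.

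The plan is to work directly from the definitions of the convolution product and involution on $C_c(T^{+}(X_{\mathcal{B}}))$ recorded in Section \ref{groupoids}, together with the description of the basic compact open sets $T^{+}(p,q)$ from Proposition \ref{groupoids:10}. First I would observe that $a_{p,q}$ is nothing but the (rescaled) characteristic function of $T^{+}(p,q)$: indeed $a_{p,q} = \nu_r(s(p))^{-1/2}\nu_r(s(q))^{-1/2}\,\chi_{T^{+}(p,q)}$, and since $T^{+}(p,q)$ is compact open this is automatically continuous and compactly supported, giving the first assertion. The involution formula (2) is then immediate: $a_{p,q}^{*}(x,y) = \overline{a_{p,q}(y,x)}$, and $(y,x)\in T^{+}(p,q)$ exactly when $(x,y)\in T^{+}(q,p)$, while the scalar factor is real and symmetric in $p,q$.

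For the product formula (1), I would compute $(a_{p,q}\cdot a_{p',q'})(x,y)$ from the convolution formula $(f\cdot g)(x,y)=\int f(x,z)g(z,y)\,d\nu_r^x(z)$. The integrand is nonzero only if $(x,z)\in T^{+}(p,q)$ and $(z,y)\in T^{+}(p',q')$; the first forces $x_{(m,n]}=p$, $z_{(m,n]}=q$, the second forces $z_{(m,n]}=p'$, $y_{(m,n]}=q'$, so the product vanishes identically unless $q=p'$ (which in particular forces $r(p)=r(q)=r(p')=r(q')$). When $q=p'$, the set of admissible $z$ is exactly $\{z : z_{(m,n]}=q,\ z_{(n,\infty)}=x_{(n,\infty)}\}=T^{+}_{n}(x)\cap (\text{cylinder on }q)$, which is a single $\nu_r^x$-atom-free fibre, and by the defining property of the measures $\nu_r^{x}$ in Proposition \ref{path:140}, $\nu_r^{x}\bigl(X^{-}_{s(q)}q x_{(n,\infty)}\bigr)=\nu_r(s(q))$. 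Multiplying the two scalar prefactors $\nu_r(s(p))^{-1/2}\nu_r(s(q))^{-1}\nu_r(s(q'))^{-1/2}$ by $\nu_r(s(q))$ yields exactly $\nu_r(s(p))^{-1/2}\nu_r(s(q'))^{-1/2}$, i.e.\ $a_{p,q'}$. One must also check the value is supported on the correct set, namely $(x,y)$ with $x_{(m,n]}=p$, $y_{(m,n]}=q'$, $x_{(n,\infty)}=y_{(n,\infty)}$, which is precisely $T^{+}(p,q')$; this is routine bookkeeping.

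For the refinement identities (3) and (4) I would simply decompose the clopen set $T^{+}(p,q)$ according to the behaviour of a path one level further out. For (3): $T^{+}(p,q)=\bigsqcup_{s(e)=r(p)} T^{+}(pe,qe)$ as a disjoint union, since $r(p)=r(q)$ means an admissible pair $(x,y)$ has $x_{n+1}=y_{n+1}=:e$ with $s(e)=r(p)$, and the prefactors match because $s(pe)=s(p)$, $s(qe)=s(q)$; summing characteristic functions gives the identity. For (4): decompose instead by the incoming edge at level $m$, writing $T^{+}(p,q)=\bigsqcup T^{+}(ep,fq)$ over $e,f\in E_{m-1}$ with $r(e)=s(p)$, $r(f)=s(q)$; here $s(ep)=s(e)$, $s(fq)=s(f)$, so matching the prefactor $\nu_r(s(p))^{-1/2}\nu_r(s(q))^{-1/2}$ against $\nu_r(s(e))^{-1/2}\nu_r(s(f))^{-1/2}$ requires the correction factor $\nu_r(s(e))^{1/2}\nu_r(s(f))^{1/2}\nu_r(s(p))^{-1/2}\nu_r(s(q))^{-1/2}$, and one rewrites $s(p)=r(e)$, $s(q)=r(f)$ to land on the stated constant $\nu_r(s(e))^{-1/2}\nu_r(s(f))^{-1/2}\nu_r(r(e))^{1/2}\nu_r(r(f))^{1/2}$.

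The only genuinely substantive point — and the step I expect to be the main obstacle, though a mild one — is verifying that the measures $\nu_r^{x}$ of Proposition \ref{path:140} have no atoms on the relevant fibres (so that the convolution integral really picks up $\nu_r(s(q))$ rather than some point mass), which uses the strong simplicity hypothesis via Lemma \ref{path:12}; however, since Proposition \ref{path:140} already records this, the argument is essentially a direct unwinding of definitions. Everything else is routine bookkeeping with characteristic functions of basic clopen sets and the normalisation constants.
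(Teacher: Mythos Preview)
Your argument is correct and is exactly the direct computation one expects; the paper in fact states this proposition without proof, treating it as routine, so there is nothing to compare against beyond the standard unwinding of the groupoid convolution and involution that you carry out. One small remark: your closing worry about atoms is unnecessary --- the identity $\nu_r^{x}\bigl(X^{-}_{s(q)}\,q\,x_{(n,\infty)}\bigr)=\nu_r(s(q))$ is the \emph{defining} property of $\nu_r^{x}$ in Proposition~\ref{path:140} and holds regardless of whether the measure has atoms, so strong simplicity is not actually needed for this step.
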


\begin{prop}
\label{AF:40}
For integers $m < n$, let $A_{m,n}$ denote the span 
of all elements $a_{p,q}$, where $p,q$ are in $E_{m,n}$ with 
$r(p) = r(q)$.  If $v$ is a vertex in $V_{n}$, let 
$A_{m,n,v}$ denote the span 
of all elements $a_{p,q}$, where $p,q$ are in $E_{m,n}$ with 
$r(p) = r(q) =v$.  
\begin{enumerate}
\item $A_{m,n,v}$ is isomorphic to $M_{j}(\C)$, where
$j$ is the number of paths $p$ in $E_{m,n}$ with $r(p)=v$.
\item $A_{m,n} = \oplus_{v \in V_{n}}  A_{m,n,v}$,
In particular,
each $A_{m,n}$ is a finite dimensional $C^{*}$-subalgebra
of $C^{*}_{\lambda}(T^{+}(X_{\mathcal{B}}))$.
\item For all $m,n$, $K_{0}(A_{m,n}) \cong \Z^{\# V_{n}}$.
\item For all $m,n$ we have 
$A_{m-1,n} \subseteq A_{m,n} \subseteq A_{m,n+1}$.
\item  With the identifications above, the inclusion
$A_{m-1,n} \subseteq A_{m,n}$ is the identity map on $K_{0}$
and the inclusion
$A_{m,n} \subseteq A_{m,n+1}$ is the map on $K_{0}$ given by 
the edge matrix for $E_{n+1}$.
\item 
The union of $A_{m,n}$ over all $m,n$ is dense 
in   $C^{*}_{\lambda}(T^{+}(X_{\mathcal{B}}))$.
\end{enumerate}
\end{prop}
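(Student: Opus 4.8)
\emph{Parts (1)--(3): the internal structure.} Fix $m<n$ and a vertex $v\in V_{n}$, and let $j$ be the number of paths $p\in E_{m,n}$ with $r(p)=v$. First I would note that each $a_{p,q}$ with $r(p)=r(q)=v$ is a nonzero function and that, for distinct ordered pairs $(p,q)$, these functions have pairwise disjoint supports in $T^{+}(X_{\mathcal{B}})$, so they are linearly independent; together with the relations $a_{p,q}a_{q,q'}=a_{p,q'}$, $a_{p,q}a_{p',q'}=0$ for $q\neq p'$, and $a_{p,q}^{*}=a_{q,p}$ from parts 1 and 2 of Proposition \ref{AF:30}, the linear map sending $a_{p,q}$ to the $(p,q)$ elementary matrix (after enumerating the $j$ paths terminating at $v$) extends to a $*$-isomorphism $A_{m,n,v}^{+}\cong M_{j}(\C)$. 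Part 1 of Proposition \ref{AF:30} also gives $a_{p,q}a_{p',q'}=0$ whenever $r(p)\neq r(p')$, so the $A_{m,n,v}^{+}$ are mutually orthogonal $*$-subalgebras with linear span $A_{m,n}^{+}=\bigoplus_{v\in V_{n}}A_{m,n,v}^{+}$; being a finite direct sum of matrix algebras contained in the dense $*$-subalgebra $C_{c}(T^{+}(X_{\mathcal{B}}))$ of $A_{\mathcal{B}}^{+}$, it is a finite-dimensional $C^{*}$-subalgebra, which is part 2. Part 3 is then the standard fact $K_{0}(\bigoplus_{v}M_{j_{v}}(\C))\cong\bigoplus_{v}\Z\cong\Z^{\#V_{n}}$, with the class of a minimal projection of the $v$-block corresponding to the $v$-th coordinate generator.

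\emph{Parts (4)--(5): the connecting maps.} The two inclusions in part 4 are read off directly from parts 3 and 4 of Proposition \ref{AF:30}, each of which writes a generator $a_{p,q}$ as an explicit finite linear combination of generators of the adjacent algebra. For part 5 I would compute the $K_{0}$ maps by tracking minimal projections. Using part 3 of Proposition \ref{AF:30}, $a_{p,p}=\sum_{s(e)=r(p)}a_{pe,pe}$ is a sum of mutually orthogonal minimal projections of $A_{m,n+1}^{+}$ indexed by the edges $e\in E_{n+1}$ issuing from $r(p)$; grouping by $r(e)=w\in V_{n+1}$ shows the class of a minimal projection in the $v$-block of $A_{m,n}^{+}$ is sent to $\sum_{w}(E_{n+1})_{w,v}$ times the class of a minimal projection in the $w$-block, i.e. the $K_{0}$ map is the one given by the edge matrix of $E_{n+1}$. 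For the inclusion involving the $m$-index, the scalar coefficients in part 4 of Proposition \ref{AF:30} are arranged, via the defining relation $\nu_{r}(v)=\sum_{r(e)=v}\nu_{r}(s(e))$ of the state, so that the relevant coefficient matrix is a rank-one projection; hence a minimal projection $a_{p,p}$ maps to a \emph{single} minimal projection, the inclusion respects the $V_{n}$-grading, and the induced map on $K_{0}\cong\Z^{\#V_{n}}$ is the identity.

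\emph{Part (6): density, and the main obstacle.} Since $C_{c}(T^{+}(X_{\mathcal{B}}))$ is dense in $A_{\mathcal{B}}^{+}$ by construction of the groupoid $C^{*}$-algebra, it suffices to approximate an arbitrary $f\in C_{c}(T^{+}(X_{\mathcal{B}}))$ in $C^{*}$-norm by elements of $\bigcup_{m<n}A_{m,n}^{+}$. Such an $f$ is supported in some $T^{+}_{N}(X_{\mathcal{B}})$, and by part 3 of Proposition \ref{groupoids:10} the compact-open bisections $T^{+}(p,q)$ with $p,q\in E_{m,N}$, $r(p)=r(q)$, form a base there; hence $f$ is uniformly approximated by locally constant functions, each a finite linear combination of indicators $\mathbf{1}_{T^{+}(p,q)}$, i.e. of the $a_{p,q}$, all lying in a single $A_{m,N}^{+}$ once $m$ is small enough. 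The step I expect to be the main obstacle is upgrading this \emph{uniform} approximation to a $C^{*}$-norm approximation: one bounds the groupoid $C^{*}$-norm of a function supported in $T^{+}_{N}(X_{\mathcal{B}})$ by its $I$-norm, uses that $T^{+}_{N}(X_{\mathcal{B}})$ is a countable disjoint union of compact-open bisections so that the $I$-norm of a locally constant error term is controlled by its supremum, and invokes amenability of the AF groupoid to identify the full and reduced norms. Alternatively one can short-circuit this by citing the standard fact (Renault \cite{Ren:gpd}) that for an ample groupoid the span of the normalized indicators of compact-open bisections is dense in $C^{*}(G)$, leaving only the bookkeeping that every such bisection of $T^{+}(X_{\mathcal{B}})$ is a finite disjoint union of sets $T^{+}(p,q)$.
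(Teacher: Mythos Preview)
The paper omits the proof of this proposition entirely, treating it as a standard structural fact about AF groupoid $C^{*}$-algebras (cf.\ the references to Renault in Proposition~\ref{groupoids:10}). Your proposal is correct and fills in precisely the details the paper leaves implicit: the matrix-unit relations from Proposition~\ref{AF:30} give parts~(1)--(3), the refinement formulae in parts~3 and~4 of that proposition give the inclusions and the $K_{0}$ computations, and your argument for part~(6)---compact support forces containment in some $T^{+}_{N}(X_{\mathcal{B}})$, uniform approximation by locally constant functions on that compact totally disconnected set, then the $I$-norm bound $\Vert g\Vert_{I}\leq(\max_{v\in V_{N}}\nu_{r}(v))\Vert g\Vert_{\infty}$ to pass to the $C^{*}$-norm---is the standard route and is correctly executed.

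Two minor remarks on typos you have implicitly navigated. First, the inclusion in part~(4) is printed as $A_{m-1,n}^{+}\subseteq A_{m,n}^{+}$, but dimension counting and Proposition~\ref{AF:30}(4) both show the correct direction is $A_{m,n}^{+}\subseteq A_{m-1,n}^{+}$; your computation (tracking a minimal projection $a_{p,p}$ forward under this inclusion) proves the latter. Second, the exponents on the scalar coefficients in Proposition~\ref{AF:30}(4) appear to be inverted in the paper, but the state relation $\nu_{r}(v)=\sum_{r(e)=v}\nu_{r}(s(e))$ you invoke is exactly what is needed to see that the image of $a_{p,p}$ is a rank-one projection, so your conclusion that the induced $K_{0}$-map is the identity on $\Z^{\#V_{n}}$ is correct.
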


\begin{prop}
\label{AF:42}
Let $\mathcal{B}$ be a finite rank strongly simple bi-infinite
ordered Bratteli diagram. Assume that $m < n$ are such that 
$r:E^{Y}_{m,n} \rightarrow V_{n}$ is surjective. Define 
$A^{Y}_{m,n,v}$ and $A^{Y}_{m,n}$ as in Proposition 
\ref{AF:40} using $p,q$ in $E^{Y}_{m,n}$. The conclusion of
Proposition 
\ref{AF:40} holds when replacing 
$C^{*}_{\lambda}(T^{+}(X_{\mathcal{B}}))$ by 
$C^{*}_{\lambda}(T^{+}(Y_{\mathcal{B}}))$.
\end{prop}

Let $m< 0$ be chosen as in \ref{surface:12}. We consider the sequence
of subalgebras $A_{m-n,n}, n \geq 0$ in 
$C^{*}_{\lambda}(T^{+}(X_{\mathcal{B}}))$  and 
$A_{m-n,n}^{Y}, n \geq 0$ in 
$C^{*}_{\lambda}(T^{+}(Y_{\mathcal{B}}))$.

Recall that if $B$ is a $C^{*}$-subalgebra of a $C^{*}$-algebra $A$, we 
say that $B$ is \emph{full} if every closed two-sided ideal
of $A$ has non-trivial intersection with $B$ \cite{rieffel:morita}. We also say 
that $B$ is hereditary if $a$ is in $A$ and $b$ is 
in $B$ with $0 \leq a \leq b$, then $b$ is in $B$ also \cite{rieffel:morita}. 
 These two conditions imply that $A$ and $B$ are Morita equivalent and
 the inclusion $B \subseteq A$ induces an isomorphism on $K$-theory.

\begin{thm}
\label{AF:45}
Let $\mathcal{B}$ be a finite rank strongly simple bi-infinite
ordered Bratteli diagram. The $C^{*}$-algebras 
$C^{*}_{\lambda}(T^{+}(Y_{\mathcal{B}})) 
\subseteq C^{*}_{\lambda}(T^{+}(X_{\mathcal{B}}))$ are both AF-algebras
and both have  Bratteli diagram  $(V_{n}, E_{n+1}), n \geq 0$.
Both are simple and the former is a full
hereditary subalgebra of the latter.
\end{thm}

\begin{proof}
The first statement follows from our earlier results and the choice
of inductive systems given above. The fact that our diagram
 is strongly simple implies the 
 $C^{*}$-algebras are simple was shown by Bratteli \cite{Bra:AF}.
 The shortest proof that the subalgebra is hereditary is to consider the 
 function $f(x) = dist(x, X_{\mathcal{X}} -Y_{\mathcal{X}})$, using
 any metric on $X_{\mathcal{B}}$ which yields the usual topology. This
 can be viewed as an element of the multiplier 
 algebra 
 for the larger (see \cite{Put:Kexc}) and 
 $f C^{*}_{\lambda}(T^{+}(X_{\mathcal{B}})) f 
 = C^{*}_{\lambda}(T^{+}(Y_{\mathcal{B}}))$ is an
  easy computation which implies the conclusion.
\end{proof}

We remark that $C^{*}(T^{\sharp}(Y_{B}))$ is also 
an AF-algebra, but we will not give a proof. It can be
done in a similar way to what we have above and what
follows below and in the next subsection.

It will be useful for us to identify another sequence
of approximating subalgebras, although these are not finite-dimensional.

Let us explain some notation we will use. It involves tensor
products, but for our case, no knowledge of
tensor products is needed. If $A$ is any $C^{*}$-algebra and 
$X$ is a compact Hausdorff, we can view the elements 
of $A \otimes C(X)$ as functions from $X$ to $A$ which are 
continuous in the norm topology of $A$. Specifically, for $a$ 
in $A$ and $f$ in $C(X)$, we identify $a \otimes f (x) = f(x) a, x \in X$, 
which takes values in a one-dimensional subspace of $A$. 

In our case, let $p,q$ be in $E_{m,n}$ with $m < n$ and 
let $f: X_{r(p)}^{+} \rightarrow \C$ be continuous. We denote 
$a_{p,q} \otimes f$ the function on $T^{+}(X_{\mathcal{B}})$ defined by
\[
(a_{ p,q} \otimes f )(x,y) = 
\left\{ \begin{array}{cl} f(x_{(n,\infty)}) a_{p,q}, & 
x_{[m,n]}=p, y_{[m,n]}=q, x_{(n, \infty) }= y_{(n, \infty)}  \\
  0,  & \text{ otherwise } \end{array} \right.
  \]
  It is immediate that $a_{p,q} \otimes f$ is in 
  $C_{c}(T^{+}(X_{\mathcal{B}})$ and, if $p,q$ are in $E^{Y}_{m,n}$, 
  then it is in 
  $C_{c}(T^{+}(Y_{\mathcal{B}})$.
  
  For $v$ in $V_{n}$, we
   then identify $A_{m,n,v} \otimes C(X_{v}^{+})$ as a subalgebra of
  $C_{c}(T^{+}(X_{\mathcal{B}}))$. Every element may be written uniquely as 
  a sum over $p,q$ in $E_{m,n}$ with $r(p)=r(q)=v$ of terms 
 $a_{ p,q} \otimes f_{p,q}$.  We may also identify $A_{m,n,v}$
 as a subalgebra with each $f_{p,q}$ being a constant  
 function.
 Finally, we define
  $AC_{m,n} = \oplus_{v \in V_{n}} A_{m,n,v} \otimes C(X_{v}^{+})$.

\begin{prop}
\label{AF:50}
Let $p,q$ in $E_{m,n}$ with $m < n$ and $r(p)=r(q)=v$ and 
$f$ in $C(X_{v}^{+})$. For any $n < n'$ and $p'$ in $E_{n,n'}$,
 let $e_{p'} : X_{r(p')}^{+} \rightarrow  X_{s(p')}^{+}$ be 
 defined  by $e_{p'}(x) = p'x$, for $x$ in  $X_{r(p')}^{+}$.
 
 \begin{enumerate}
 \item 
 We have 
\[
(a_{p,q} \otimes f) = \sum a_{pp', qp'} \otimes (f \circ e_{p'}),
\]
where the sum is over $p'$ in $E_{n,n'}$ with $s(p')= r(p)$.
In particular, $AC_{m,n}$ is contained in $AC_{m,n'}$.
\item Identifying $a_{p,q}$ in $A_{m,n}$  with $a_{p,q}\otimes 1$ in 
 $AC_{m,n}$, we have $A_{m,n}$ is a subalgebra of  $AC_{m,n}$.
 \item 
The union of all $AC_{m,n}$ is dense in 
$C^{*}_{\lambda}(T^{+}(X_{\mathcal{B}}))$.
\end{enumerate}
\end{prop}


Our next aim is to analyze the $C^{*}$-algebra
of the equivalence relation $T^{\sharp}(S_{\mathcal{B}})$ 
on the space $S_{\mathcal{B}}$. Our main result 
is to establish an inductive limit
structure on this algebra,

 Recall from Corollaries
\ref{Cstar:20} and \ref{Cstar:40} we have 
$C^{*}(T^{\sharp}(S_{\mathcal{B}})) 
= C^{*}(T^{\sharp}(S^{s}_{\mathcal{B}}))
= C^{*}(T^{\sharp}(Y_{\mathcal{B}}))
= C^{*}(T^{+}(Y_{\mathcal{B}}))$. We will actually study the second
algebra in this list as it is more convenient and we pass over the third.

Our main tolls are the local description of 
$T^{\sharp}(S^{s}_{\mathcal{B}})$ given in Proposition
\ref{gpds:55} and the inductive limit for 
$C^{*}(T^{+}(X_{\mathcal{B}}))$ given in Proposition 
\ref{AF:50}.

Recall that $E^{s}_{m,n}, m < n$ 
consists of pairs $p= (p^{1}, p^{2})$ such that 
$p^{1}, p^{2} $ are in $E_{m,n}^{Y}$ and $p_{2}$ 
is the $s$-successor of $p_{1}$.
For $p= (p^{1}, p^{2})$ in $E^{s}_{m,n}$,
 we define 
$r(p)= (r(p^{1}),r(p^{2}))$. We also define
\[
G_{m,n}: \{ (p,q) \mid p, q \in E_{m,n}^{s}, r(p) = r(q) \},
\]
 which 
is a finite  equivalence relation on 
$E_{m,n}^{s}$
and hence also a groupoid. For 
$i=1,2$, we define 
$\alpha_{i}:G_{m,n} \rightarrow E^{Y}_{m,n} \times E^{Y}_{m,n}$
by $\alpha_{i}((p^{1},p^{2}), (q^{1}, q^{2})) = (p^{i}, q^{i})$. 

\begin{prop}
\label{IntCstar:60}
Let $\mathcal{B}$ be an ordered bi-infinite Bratteli
 diagram satisfying the conditions 
of \ref{surface:20}. 
Let $m < n$.  Suppose
$a =
\sum_{p,q \in E^{Y}_{m,n}} a_{p,q} \otimes f_{p,q}$, with 
$f_{p,q}$ in $C(X_{r(p)}^{+})$ for each $p,q$, 
in $E^{Y}_{m,n}$. Then $a$ 
is in $ C^{*}(T^{\sharp}(S^{s}_{\mathcal{B}}))$
 if and only if the following hold: 
\begin{enumerate}
\item for any $p, q$ in $E_{m,n}^{Y}$ with $r(p)=r(q)$, 
$f_{p,q} = g_{p,q} \circ \varphi_{s}^{r(p)}$, where
$g_{p,q} : [0, \nu_{s}(r(p))] \rightarrow \C$ is continuous
and $\varphi^{r(p)}_{s}$ is as in \ref{order:60},
\item for every $(p,q)$ in $E^{s}_{m,n}$ with $r(p)=r(q)$, we have 
$f_{p^{1},q^{1}}(x_{r(p^{1})}^{s-max}) = 
f_{p^{2},q^{2}}(x_{r(p^{2})}^{s-min})$.
\item if  $(p,q)$ is not in $\alpha_{1}(G_{m,n})$, then 
$f_{p,q}(x_{r(p)}^{s-max})  =0$,
\item if  $(p,q)$ is not in $\alpha_{2}(G_{m,n})$,  then 
$f_{p,q}(x_{r(p)}^{s-min})  =0$,
\end{enumerate}
We define $B_{m,n}$ to be the set of all elements, $a$,
satisfying these
conditions.
\end{prop}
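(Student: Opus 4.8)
The plan is first to pin down $B_{\mathcal{B}}^{+}$ inside $A^{Y+}_{\mathcal{B}}$ abstractly, and then to read off the four conditions by evaluating $\tilde a$ on well-chosen groupoid elements. Recall from the discussion preceding Subsection \ref{AF} that
\[
B_{\mathcal{B}}^{+}=C^{*}(T^{\sharp}(S^{s}_{\mathcal{B}}))\ \subseteq\ C^{*}(T^{\sharp}(Y_{\mathcal{B}}))\ \subseteq\ C^{*}(T^{+}(Y_{\mathcal{B}}))=A^{Y+}_{\mathcal{B}},
\]
where the second inclusion is extension by zero along the open subgroupoid $T^{\sharp}(Y_{\mathcal{B}})\subseteq T^{+}(Y_{\mathcal{B}})$ of Proposition \ref{groupoids:40}, and the first is pull-back along the proper continuous surjection $\pi^{s}\times\pi^{s}\colon T^{\sharp}(Y_{\mathcal{B}})\to T^{\sharp}(S^{s}_{\mathcal{B}})$ of Proposition \ref{groupoids:45}. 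By standard facts for proper surjections and open subgroupoids (see Renault \cite{Ren:gpd}, Williams \cite{Wil:gpd}), a continuous compactly supported function $f$ on $T^{+}(Y_{\mathcal{B}})$ lies in $B_{\mathcal{B}}^{+}$ if and only if \emph{(i)} the support of $f$ lies in $T^{\sharp}(Y_{\mathcal{B}})$, and \emph{(ii)} $f(x,y)=f(\Delta_{s}(x),\Delta_{s}(y))$ for every $(x,y)\in T^{\sharp}(Y_{\mathcal{B}})$ with $x,y\in\partial_{s}X_{\mathcal{B}}$ (using that $\partial_{s}X_{\mathcal{B}}$ is invariant under $T^{+}(Y_{\mathcal{B}})$ by Proposition \ref{groupoids:30}, so that the $\pi^{s}\times\pi^{s}$-fibre of a point of $T^{\sharp}(S^{s}_{\mathcal{B}})$ consists of at most the two pairs $(x,y)$ and $(\Delta_{s}(x),\Delta_{s}(y))$, the mixed pairs failing to be tail equivalent). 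Writing this out carefully is the first task.

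Next I would specialise \emph{(i)}--\emph{(ii)} to $\tilde a=\sum_{p,q}\tilde a_{p,q}$. The basic observation is that $\tilde a$ takes the value $\nu_{r}(s(p))^{-1/2}\nu_{r}(s(q))^{-1/2}\,\tilde a_{p,q}(z)$ on the pair $(u\,p\,z,\;u'\,q\,z)$ with $u\in X_{s(p)}^{-}$, $u'\in X_{s(q)}^{-}$, $z\in X_{r(p)}^{+}=X_{r(q)}^{+}$, and is supported on such pairs; since $Y_{\mathcal{B}}$ is open and dense one may moreover choose $u,u'$ so that both coordinates lie in $Y_{\mathcal{B}}$. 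One then analyses how $\Delta_{s}$ moves such a pair; this depends only on $z$ and on the finite paths $p,q$, since every $p\in E_{m,n}^{Y}$ is neither $s$-maximal nor $s$-minimal. When $z$ is not a $\leq_{s}$-successor or predecessor endpoint of $X_{r(p)}^{+}$, the $\Delta_{s}$-move, if any, occurs at an index $>n$, fixes the window $(m,n]$, and acts on $z$ by the one-sided successor map of $\mathcal{B}_{r(p)}^{+}$; thus \emph{(ii)} is equivalent to $\tilde a_{p,q}$ being constant on $\leq_{s}$-successor/predecessor pairs, and since $\varphi_{s}^{r(p)}$ is a continuous surjection onto $[0,\nu_{s}(r(p))]$ that identifies precisely such pairs (Definition \ref{order:60} and the identification property of Lemma \ref{order:20} applied to $\mathcal{B}_{r(p)}^{+}$, which is simple with infinite path space by strong simplicity and Lemma \ref{path:17}), and a continuous surjection from a compact space is a quotient map, this is exactly condition (1), with $\tilde b_{p,q}$ automatically continuous.

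The remaining two regimes produce conditions (2)--(4). If $z=x_{r(p)}^{s-max}$, then the $\Delta_{s}$-move occurs at the $\leq_{s}$-successor index of $p$, which lies inside $(m,n]$, so $\Delta_{s}$ sends $(u\,p\,x_{r(p)}^{s-max},\,u'\,q\,x_{r(q)}^{s-max})$ to $(u\,S_{s}(p)\,x_{r(S_{s}(p))}^{s-min},\,u'\,S_{s}(q)\,x_{r(S_{s}(q))}^{s-min})$; this target pair is again in $T^{+}(Y_{\mathcal{B}})$ exactly when $S_{s}(p),S_{s}(q)\in E_{m,n}^{Y}$ and $r(S_{s}(p))=r(S_{s}(q))$, i.e.\ when $(p,q)\in\alpha_{1}(G_{m,n})$, in which case \emph{(ii)} yields condition (2), whereas otherwise the image leaves $T^{+}$, so the original pair is not in $T^{\sharp}(Y_{\mathcal{B}})$ and \emph{(i)} forces condition (3). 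The case $z=x_{r(p)}^{s-min}$ is symmetric, using predecessors and $\alpha_{2}(G_{m,n})$: it gives condition (4) together with condition (2) again, the latter now read off the pair with windows $(P_{s}(p),p)$ and $(P_{s}(q),q)$. Conversely, given (1)--(4) one checks these three regimes are mutually compatible, so $\tilde a$ descends to a well-defined function on $\pi^{s}\times\pi^{s}(T^{\sharp}(Y_{\mathcal{B}}))$ which is continuous (using the openness of the basic sets in Proposition \ref{groupoids:45}(3)) and compactly supported (since $\pi^{s}\times\pi^{s}$ is proper); hence $\tilde a\in C_{c}(T^{\sharp}(S^{s}_{\mathcal{B}}))\subseteq B_{\mathcal{B}}^{+}$, and one sets $B_{m,n}^{+}$ to be the set of all such $\tilde a$.

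I expect the main obstacle to be the combinatorial bookkeeping in the middle two steps: for a given pair $(u\,p\,z,\,u'\,q\,z)$ one must identify exactly at which index $\Delta_{s}$ acts on each coordinate, hence decide whether the $\Delta_{s}$-image stays in $T^{+}(Y_{\mathcal{B}})$ and indeed in $T^{\sharp}(Y_{\mathcal{B}})$ — in particular reconciling ``$S_{s}(p)$ exists in $E_{m,n}$'' with ``$S_{s}(p)\in E_{m,n}^{Y}$'' — and one must verify that the three regimes above are exhaustive, so that no additional hidden constraint on $\tilde a$ is overlooked.
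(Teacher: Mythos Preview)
Your approach is essentially the same as the paper's. Both arguments rest on the same abstract characterisation of $B_{\mathcal{B}}^{+}$ inside $A^{Y+}_{\mathcal{B}}$ --- a compactly supported function belongs to $B_{\mathcal{B}}^{+}$ iff it is supported on $T^{\sharp}(Y_{\mathcal{B}})$ and is $\Delta_{s}\times\Delta_{s}$-invariant on the boundary --- and both then perform a case analysis according to where the $\Delta_{s}$-move acts relative to the window $(m,n]$; the paper indexes the cases by $n(x)$ (is it $\le m$, in $(m,n]$, or $>n$?) while you index them by the position of $z$ in $X_{r(p)}^{+}$, which amounts to the same thing.

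One small correction to your case analysis: the claim that the target pair lies in $T^{+}(Y_{\mathcal{B}})$ \emph{exactly} when $S_{s}(p),S_{s}(q)\in E_{m,n}^{Y}$ and $r(S_{s}(p))=r(S_{s}(q))$ is not quite right --- membership in $E_{m,n}^{Y}$ is not needed for the target to lie in $T^{+}(Y_{\mathcal{B}})$, since $Y_{\mathcal{B}}$ is $\Delta_{s}$-invariant. When $S_{s}(p)\notin E_{m,n}^{Y}$ but the ranges agree, the pair does stay in $T^{\sharp}$, so condition (3) comes not from (i) but from (ii): one has $\tilde a_{p,q}(x^{s\text{-}max})=\tilde a_{S_{s}(p),S_{s}(q)}(x^{s\text{-}min})=0$ because $\tilde a\in AC^{Y+}_{m,n}$ is supported on $E_{m,n}^{Y}$-indexed blocks. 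You already flag this distinction as the main bookkeeping obstacle, and the paper handles it in exactly this way, so the gap is cosmetic rather than substantive.
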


\begin{proof}
We will first show that any element satisfying the conditions lies in 
$ C^{*}(T^{\sharp}(S^{s}_{\mathcal{B}}) )$.

It suffices to show that, for any $(x,y)$ in $T^{+}(Y_{\mathcal{B}})$,
 $a(x,y)$ is zero if $(x,y)$ is not in $T^{\sharp}(Y_{\mathcal{B}})$
and that $a(x,y) =a(\Delta_{s}(x),\Delta_{s}(y))$, 
if $x,y$ are in $\partial^{s}X_{\mathcal{B}}$. It is clear that 
$T^{+}(Y_{\mathcal{B}})$ and $T^{\sharp}(Y_{\mathcal{B}})$
agree except on $\partial^{s}X_{\mathcal{B}}$ and so for 
both conditions 
 we need only
consider the cases when $x,y$ are in $\partial^{s}X_{\mathcal{B}}$.
Without loss of generality, assume that $x_{n}$ is s-maximal, for 
all $n$ sufficiently large. Hence, $y_{n}$ is also.

We first observe that if $a(x,y)$ is non-zero, then both 
$p=x_{(m,n]}$ and $q=y_{(m,n]}$ are in $E^{Y}_{m,n}$, which implies
they are both in $Y_{\mathcal{B}}$. This implies that $n(x), n(y) \geq m$.
In addition, we must have
$(x,y)$ in $T^{+}_{n}(X_{\mathcal{B}})$.

If $n(x) > n$, then $n(y)=n(x) > n$ also and 
$\Delta_{s}(x)_{(m,n]}= p, \Delta_{s}(y)_{(m,n]} =q$
and from the first hypothesis
 $f(x_{(n, \infty)}) = f(\Delta_{s}(x)_{(n, \infty)}$.

The case which remains is $m< n(x), n(y) \leq n$. 
If either $S_{s}(x_{(m,n]})$
or $S_{s}(y_{(m,n]})$ is not in $E^{Y}_{m,n}$ then $a(x,y) = 0$ 
by the third condition. It is also clear that  
$a(\Delta_{s}(x), \Delta_{s}(y))  =0$ in this case.

Next, we suppose that $S_{s}(x_{(m,n]})$ 
and $S_{s}(y_{(m,n]})$ are in $E^{Y}_{m,n}$, but  \newline
$r(S_{s}(x_{(m,n]})) \neq r(S_{s}(y_{(m,n]}))$. Again by the third
condition $a(x,y) = a(\Delta_{s}(x), \Delta_{s}(y))  =0$ 
since $(\Delta_{s}(x), \Delta_{s}(y))$
 is not in $T^{+}_{n}(Y_{\mathcal{B}})$.

We are left with the case that $S_{s}(x_{(m,n]})$ 
and $S_{s}(y_{(m,n]})$ are in $E^{Y}_{m,n}$ and 
$r(S_{s}(x_{(m,n]})) = r(S_{s}(y_{(m,n]}))$. Here, the second  condition, 
using $p^{1}=p, p^{2}= S_{s}(p), q^{1}=q, q^{2}= S_{s}(q)$ clearly implies
$a(\Delta_{s}(x), \Delta_{s}(y)) = a(x,y)$.

The converse direction is relatively simple and we omit the details.
\end{proof}

As we noted above, $G_{m,n}$ is an equivalence relation on a finite set, namely
 pairs $(p,q)$ in $E_{m,n}^{Y}$ with $r(p)=r(q)$.

It should cause
no confusion if we also define
$\alpha_{i}: C^{*}_{\lambda}(G_{m,n}) \rightarrow A^{Y}_{m,n}$ by 
$\alpha_{i}(g) = \sum_{(p,q) \in G_{m,n}} g(p,q) a_{\alpha_{i}(p,q)}$, 
for any function $g:G_{m,n} \rightarrow \C$. 
It is a simple matter to verify that $\alpha_{1}, \alpha_{2}$ are 
$*$-homomorphisms.

We now want to consider, for $m < n$, the $C^{*}$-algebra 
$\oplus_{v \in V_{n}} A_{p,q, v} \otimes C[0, \nu_{s}^{v}]$.
Following Corollary \ref{Cstar:40}, we
 can regard this as a subalgebra of $AC_{m,n}$
by mapping $a_{p,q} \otimes f$ to $a_{p,q} \otimes f \circ \varphi_{s}^{v}$.
In fact, this is exactly the subalgebra of  $AC_{m,n}$
satisfying the first condition of Proposition 
\ref{IntCstar:60}.

We have two homomorphisms $ev_{0}, ev_{1}: AC_{m,n} \rightarrow A_{m,n}$
defined by 
\begin{eqnarray*}
ev_{0}\left(\sum_{p,q} a_{p,q} \right)
 & = &  \sum_{p,q}f_{p,q}(x_{r(p)}^{-}) a_{p,q} \\
ev_{1}\left( \sum_{p,q} a_{p,q} \right) 
& = &  \sum_{p,q}f_{p,q}(x_{r(p)}^{+}) a_{p,q}.
 \end{eqnarray*}

\begin{thm}
\label{IntCstar:70}
\begin{enumerate}
\item For all $m < n$, $B_{m,n}$ is a 
$C^{*}$-subalgebra of 
$C^{*}_{\lambda}(T^{\sharp}(S^{s}_{\mathcal{B}}))$.
\item 
For all $m < n$, $B_{m,n} \subseteq B_{m-1,n+1}$.
\item $\bigcup_{n=1}^{\infty} B_{-n,n}$ is dense in 
$C^{*}_{\lambda}(T^{\sharp}(S^{s}_{\mathcal{B}}))$.
\item 
For all $m < n$, we have 
\begin{eqnarray*}
B_{m,n} & \cong & \{ \left( a , h \right) \in 
\left( \oplus_{v \in V_{n}} A_{m,n,v} \otimes C[0, \nu_{s}^{v}] \right) 
 \oplus C^{*}(G_{m,n})  \mid  \\
  &   &  
ev_{0}(a) = \alpha_{2}(h), 
 ev_{1}(a)  = \alpha_{1}(h) \}.
\end{eqnarray*}
\end{enumerate}
\end{thm}

\begin{proof}
The first three parts are immediate. For the last,
for $a_{p,q} \otimes g$ in 
$\oplus_{v \in V_{n}} A_{p,q, v} \otimes C[0, \nu_{s}^{v}]$, note that 
$ev_{0}(a_{p,q} \otimes g) = g(0)a_{p,q}$ while 
$ev_{1}(a_{p,q} \otimes g) = g(\nu_{r(p)}^{s})a_{p,q}$. Then the 
conditions $ev_{0}(a) = \alpha_{2}(g), 
 ev_{1}(a)  = \alpha_{1}(g) $ are just a restatement of the 
 last three conditions of Proposition \ref{IntCstar:60}.
\end{proof}

While our $C^{*}$-algebras $B_{m,n}$ are not unital, the reader should 
compare the result in part 4 with the definition of 
\emph{recursive subhomogeneous}
 $C^{*}$-algebras given in \cite{Phi:RSHC*}.

\begin{cor}
\label{IntCstar:90}
For $m < n$, we have a short exact sequence

$$\xymatrix{ 0 \ar[r] &
 \oplus_{v \in V_{n}}  A_{m,n,v} \otimes C_{0}(0, \nu_{s}(v))  
\ar[r]  &
B_{m,n} \ar[r]  &   C^{*}(G_{m,n}) \ar[r]  &  0.}$$
\end{cor}


We now turn our attention to the $C^{*}$-algebra 
of the horizontal foliation, $C^{*}(\mathcal{F}_{\mathcal{B}}^{+})$.
When it is convenient, we will also denote 
$C^{*}(\mathcal{F}_{\mathcal{B}}^{+})$ by $C_{\mathcal{B}}^{+}$.
We want to show that $C^{*}(\mathcal{F}_{\mathcal{B}}^{+})$ has 
an inductive limit structure analogous to that of 
 $C^{*}(T^{\sharp}(S_{\mathcal{B}}))$ appearing in 
 Theorem \ref{IntCstar:70} and Corollary 
\ref{IntCstar:90}. 

If $m < n$ and $p$ is any path in $E_{m,n}^{Y}$ and $x$ 
is in $X_{r(p)}^{+}$, the set $X_{s(p)}^{-}px$ 
lies in $Y_{\mathcal{B}}$. Moreover, it is also equal to
 $[x_{s(p)}^{r-min}px, x_{s(p)}^{r-max}px]_{r}$
and its image under $\pi^{r}$ is homeomorphic to a closed interval.
Hence, its image under $\pi$ is contained in a single equivalence 
class of $\mathcal{F}_{\mathcal{B}}^{+}$ (Definition \ref{gpds:70}).

Let $p, q$ be in $G_{m,n}$; that is, they are in 
$E_{m,n}^{s}$  such that $r(p)=r(q)$. Observe that if there
are $x,y$ in $T^{+}(Y_{ \mathcal{B}})$ such that 
\[
X_{s(p^{1})}^{-}p^{1}x_{r(p^{1})}^{s-max}, 
X_{s(q^{1})}^{-}q^{1}x_{r(q^{1})}^{s-max} \subseteq [x,y]_{r} 
\subseteq Y_{\mathcal{B}}
\]
then it follows from 
Proposition \ref{singular:150} that
 \[
X_{s(p^{2})}^{-}p^{2}x_{r(p^{2})}^{s-min}, 
X_{s(q^{2})}^{-}q^{2}x_{r(q^{2})}^{s-min} \subseteq 
[\Delta_{s}(x),\Delta_{s}(y)]_{r} 
\subseteq Y_{\mathcal{B}}
\]

We define, for each $m < n$,
$H_{m,n}$ to be the set of all   $ (p,q)$ 
 in $G_{m,n}$ satisfying this condition.
This is a subgroupoid of $G_{m,n}$.

We remark that an analogue of Proposition \ref{IntCstar:60} holds: 
 we simply 
change $C^{*}_{\lambda}(T^{\sharp}(S_{\mathcal{B}}))$ 
to $C^{*}_{\lambda}(\mathcal{F}^{+}_{\mathcal{B}})) $ and
 replace $G_{m,n}$ 
in conditions 3 and 4 by $H_{m,n}$. This is an immediate consequence of 
Proposition \ref{IntCstar:60} and  Definition \ref{gpds:70}.
We let $C_{m,n}$ be the set of all elements satisfying these conditions; that is, 
$C_{m,n} = AC^{Y}_{m,n} \cap 
C^{*}_{\lambda}(\mathcal{F}^{+}_{\mathcal{B}}))  $.

We then obtain analogues of Theorem \ref{IntCstar:70} and Corollary 
\ref{IntCstar:90} which we state precisely for the record.

\begin{thm}
\label{FolAlg:20}
\begin{enumerate}
\item For all $m < n$, $C_{m,n}$ is a 
$C^{*}$-subalgebra of $ C^{*}_{\lambda}(\mathcal{F}^{+}_{\mathcal{B}})) $.
\item 
For all $m < n$, $C_{m,n} \subseteq C_{m-1,n+1}$.
\item $\bigcup_{n=1}^{\infty} C_{-n,n}$ is dense in 
$ C^{*}_{\lambda}(\mathcal{F}^{+}_{\mathcal{B}})) $.
\item 
For all $m < n$, we have 
\begin{eqnarray*}
C_{m,n}  & \cong & \{ \left( a , h \right) \in 
\left( \oplus_{v \in V_{n}} A_{m,n,v} \otimes C[0, \nu_{s}^{v}] \right) 
 \oplus C^{*}(H_{m,n})  \mid  \\
  &   &  
ev_{0}(a) = \alpha_{2}(h), 
 ev_{1}(a)  = \alpha_{1}(h) \}.
\end{eqnarray*}
\end{enumerate}
\end{thm}

\begin{cor}
\label{FolAlg:30}
For $m < n$, we have a short exact sequence
$$\xymatrix{ 0 \ar[r] & \displaystyle\bigoplus_{v \in V_{n}}  A_{m,n,v} \otimes C_{0}(0, \nu_{s}(v))  
\ar[r]  &
C_{m,n} \ar[r]  &   C^{*}(H_{m,n}) \ar[r]  &  0.}$$
\end{cor}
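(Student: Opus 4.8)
The plan is to read the short exact sequence directly off the pullback description of $C_{m,n}^{+}$ supplied by part 4 of Theorem \ref{FolAlg:20}, in exact parallel with Corollary \ref{IntCstar:90}. Using that identification, I would define $\beta \colon C_{m,n}^{+} \to C^{*}(H_{m,n})$ by $\beta((f_{v})_{v}, h) = h$. This is visibly a $*$-homomorphism, being the restriction to the pullback of the coordinate projection $\left( \bigoplus_{v \in V_{n}} A_{m,n,v}^{+} \otimes C[0,\nu_{s}(v)] \right) \oplus C^{*}(H_{m,n}) \to C^{*}(H_{m,n})$.

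First I would check surjectivity of $\beta$. Given $h$ in $C^{*}(H_{m,n})$, the elements $\alpha_{1}(h), \alpha_{2}(h) \in A^{Y+}_{m,n} \subseteq \bigoplus_{v \in V_{n}} A_{m,n,v}^{+}$ are well-defined; here one uses that $H_{m,n}$ is a subgroupoid of $G_{m,n}$, so that $\alpha_{1}, \alpha_{2}$ restrict to $*$-homomorphisms on $C^{*}(H_{m,n})$. Writing $\alpha_{i}(h) = \sum_{v} \alpha_{i}(h)_{v}$ with $\alpha_{i}(h)_{v} \in A_{m,n,v}^{+}$, choose for each $v$ the affine path $f_{v} \in A_{m,n,v}^{+} \otimes C[0,\nu_{s}(v)]$ running from $\alpha_{2}(h)_{v}$ at parameter $0$ to $\alpha_{1}(h)_{v}$ at parameter $\nu_{s}(v)$ (any continuous interpolation would do). Then $((f_{v})_{v}, h)$ lies in the pullback and $\beta((f_{v})_{v},h) = h$, so $\beta$ is onto.

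Next I would identify $\ker\beta$. An element $((f_{v})_{v}, 0)$ lies in $C_{m,n}^{+}$ iff $\sum_{v} f_{v}(x_{v}^{s-max}) = \alpha_{1}(0) = 0$ and $\sum_{v} f_{v}(x_{v}^{s-min}) = 0$ in the direct sum $\bigoplus_{v} A_{m,n,v}^{+}$; since $f_{v}(x_{v}^{s-max})$ and $f_{v}(x_{v}^{s-min})$ are precisely the $v$-components, this is equivalent to $f_{v}(x_{v}^{s-max}) = f_{v}(x_{v}^{s-min}) = 0$ for every $v$. Recalling that $\varphi_{s}^{v}$ sends $x_{v}^{s-min}$ to $0$ and $x_{v}^{s-max}$ to $\nu_{s}(v)$ (Lemma \ref{order:20}, via the analogue of Proposition \ref{IntCstar:60}), this says exactly that each $f_{v}$ vanishes at both endpoints of $[0,\nu_{s}(v)]$, i.e. $f_{v} \in A_{m,n,v}^{+} \otimes C_{0}(0,\nu_{s}(v))$ (using that $A_{m,n,v}^{+}$ is finite dimensional, so $A_{m,n,v}^{+} \otimes C(K) = C(K, A_{m,n,v}^{+})$). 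Hence $\ker\beta = \bigoplus_{v \in V_{n}} A_{m,n,v}^{+} \otimes C_{0}(0,\nu_{s}(v))$ and the sequence is exact.

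I do not expect a genuine obstacle here: the real work is in Theorem \ref{FolAlg:20}, and what remains is bookkeeping parallel to Corollary \ref{IntCstar:90}. The only points requiring a word of care are that $\alpha_{1}, \alpha_{2}$ remain well-defined $*$-homomorphisms after passing from $C^{*}(G_{m,n})$ to the subalgebra $C^{*}(H_{m,n})$, which is immediate since $H_{m,n}$ is a subgroupoid, and the existence of a continuous lift $f_{v}$ interpolating the two prescribed endpoint values, for which the affine path suffices.
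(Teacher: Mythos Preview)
Your proposal is correct and is exactly the argument the paper intends: the corollary is stated without proof, as an immediate consequence of the pullback description in part~4 of Theorem~\ref{FolAlg:20}, in direct parallel with Corollary~\ref{IntCstar:90}. Your explicit identification of the quotient map as projection onto the $h$-coordinate, the affine lift for surjectivity, and the endpoint-vanishing characterization of the kernel simply spell out what the paper leaves implicit.
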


\section{A Fredholm module}
\label{Fred}
The aim of this section is to produce a Fredholm module for our 
$C^{*}$-algebras. This will be crucial  in the K-theory computations
of the next section.

The books by Blackadar \cite{Bla:K}, Higson and Roe \cite{HR:KHom}
 and Connes \cite{Con:NCG} are all good references for Fredholm modules.
We remind readers that, for any $C^{*}$-algebra $A$, a Fredholm
module for $A$ consists
of a Hilbert space $\mathcal{H}$, a representation
$\pi$ of $A$ on $\mathcal{H}$ and a bounded 
operator $F$ on $\mathcal{H}$
such that $(F^{2}-1)\pi(a), (F-F^{*})\pi(a)$ and
$[\pi(a), F] = \pi(a) F - F \pi(a)$ are all compact 
operators, for each $a$ in $A$.
In our case, we will give the Hilbert space and representation
of the AF-algebra, $C^{*}_{\lambda}(T^{+}(X_{\mathcal{B}}))$.
 The operator $F$ will 
actually satisfy $F^{2}=1, F=F^{*}$, but the crucial condition
that $[\pi(a), F]$ is compact, for each $a$, holds 
if we consider $a$ in the $C^{*}$-subalgebra 
$C^{*}_{\lambda}(T^{+}(Y_{\mathcal{B}})) $.
In fact, our Hilbert space comes with a natural $\Z_{2}$-grading, 
the representation $\pi$ is by even operators, while $F$ is odd.
In other words, we will have an even Fredholm module. 

The last discussion will probably not
 be very helpful to non-operator theorists.
Let us give a simple example where these properties will be clear.
At the same time, what is happening in the example is really 
exactly what is going on in our situation to follow
and so this should provide some intuition.

Let $X \subseteq [0, 1]$ be the standard Cantor ternary set. Let us 
list the open intervals in its complement (in $[0,1]$) as 
$(x_{n}, y_{n}), n \geq 1$ (the order is not important here).
Let $\mathcal{H}$ be a Hilbert space with a canonical basis
indexed by the endpoints, $\delta_{x_{n}}, \delta_{y_{n}}$. 
(One view is to put an infinite measure on $X$ with point mass
at each $x_{n}$ and $y_{n}$ and consider the space of square-integrable 
functions. The $C^{*}$-algebra of continuous functions on $X$, 
$C(X)$ can be represented as operators on this Hilbert space
by simple evaluation of the functions: we supress the 
representation and simply write 
$f \delta_{x_{n}} = f(x_{n}) \delta_{x_{n}}, 
f \delta_{y_{n}} = f(y_{n}) \delta_{y_{n}}$, for all $n \geq 1$.

Define an operator $F$ on this space Hilbert space
by specifying 
$F\delta_{x_{n}} = \delta_{y_{n}}, F\delta_{y_{n}} = \delta_{x_{n}}$, 
for all $n \geq 1$. It is trivial to see $F^{2}=I, F^{*}=F$.
It is a simple matter to check, if $f$ is locally constant, then 
$f(x_{n}) = f(y_{n})$, for all but finitely many $n$ and 
the operator $[F, f] = Ff - fF$ is finite rank. Only slightly
more subtle is that, for any $f$ in $C(X)$, 
$[F, f] $ is compact. Finally, if $\pi: X \rightarrow [0,1]$ 
denotes the devil's staircase, then $f$ in $C(X)$ has the form 
$f = g \circ \pi$, for some $g$ in $C[0,1]$ if and only if
$[F, f] =0$.

\begin{defn}
\label{Fred:10}
Let $\mathcal{B}$ be a 
bi-infinite ordered Bratteli diagram
satisfying the conditions of
Definition \ref{surface:20}.
Let $I_{\mathcal{B}}, J_{\mathcal{B}}  $ 
and $ x_{i}, 1 \leq i 
\leq I_{\mathcal{B}} + J_{\mathcal{B}},$ be as in 
Proposition \ref{gpds:30}. 
For $ 1 \leq i \leq I_{\mathcal{B}} + J_{\mathcal{B}} $, 
we define $\mathcal{H}_{i}= L^{2}(T^{+}(x_{i}), \nu^{x_{i}}_{r}) )$,  
\[
\mathcal{H}^{max}_{\mathcal{B}} =          
\bigoplus_{1 \leq i \leq {I_{\mathcal{B}}}} \mathcal{H}_{i},\hspace{1in}
\mathcal{H}^{min}_{\mathcal{B}}  =  \bigoplus_{I_{\mathcal{B}} < i 
\leq I_{\mathcal{B}} + J_{\mathcal{B}}} \mathcal{H}_{i},
\]
 and $\mathcal{H}_{\mathcal{B}}  =  \mathcal{H}^{max}_{\mathcal{B}}
 \oplus \mathcal{H}^{min}_{\mathcal{B}}$.
We define  $\pi_{\mathcal{B}} = 
\displaystyle\bigoplus_{1 \leq i \leq I_{\mathcal{B}} + J_{\mathcal{B}}} \lambda_{x_{i}}$.

Finally, we define $F_{\mathcal{B}}:
\mathcal{H}_{\mathcal{B}} \rightarrow \mathcal{H}_{\mathcal{B}}$ 
to be the operator  $(F_{\mathcal{B}} \xi)(x) = \xi( \Delta_{s}(x))$, for 
any $\xi$ in $\mathcal{H}_{\mathcal{B}}$ and $x$ in 
$\bigcup_{i} T^{+}(x_{i})$.
 \end{defn}

 We make several observations. It would probably be 
 more accurate to replace 
 $T^{+}(x_{i})$ by $T^{+}(x_{i}) \cap Y_{\mathcal{B}}$, but
 as the difference is a set of measure zero, it has
 no effect on the $L^{2}$-space. Secondly, notice that
 $\mathcal{H}_{\mathcal{B}}$ comes with a natural $\Z_{2}$-grading. The 
 associated grading operator is the identity on 
 $\mathcal{H}^{max}_{\mathcal{B}} $ and minus the 
 identity on $\mathcal{H}^{min}_{\mathcal{B}} $.
 Finally, it is a consequence 
 of Proposition \ref{singular:150} that 
 \begin{eqnarray*}
 \Delta_{s}: \bigcup_{1 \leq i \leq I_{\mathcal{B}}^{+}} T^{+}(x_{i})
  &  \rightarrow  & \bigcup_{I_{\mathcal{B}}^{+} <  j 
  \leq I_{\mathcal{B}}^{+} + J_{\mathcal{B}}^{+}} T^{+}(x_{j}) \\ 
   \Delta_{s}: 
   \bigcup_{I_{\mathcal{B}}^{+} <  j \leq I_{\mathcal{B}}^{+}
    + J_{\mathcal{B}}^{+}} T^{+}(x_{j})
  &  \rightarrow & \bigcup_{1 \leq i \leq I_{\mathcal{B}}^{+}} T^{+}(x_{i})
  \end{eqnarray*}
  are measure preserving bijections and hence induce unitary operators on the
  associated $L^{2}$-spaces. In addition, $\Delta_{s} \circ \Delta_{s}$ is 
  the identity so $F_{\mathcal{B}}$ is odd, $F_{\mathcal{B}}^{2}=1$ and 
  $F_{\mathcal{B}}= F_{\mathcal{B}}^{*}$.

We need to set out some notation.   
  If $p$ is any element of $E_{m,n}$, we define   
 \begin{eqnarray*}
 \xi_{ p}^{max} & =   & \nu_{r}(s(p))^{-1/2} \chi_{X_{s(p)}^{-}px_{v}^{s-max}},\\
 \xi_{ p}^{min} & =  & \nu_{r}(s(p))^{-1/2} \chi_{X_{s(p)}^{-}px_{v}^{s-min}}.
 \end{eqnarray*}
 Each is a unit vector in $\mathcal{H}^{max}_{\mathcal{B}}$ and 
 $\mathcal{H}^{min}_{\mathcal{B}}$, 
 respectively.
 Observe that if $e$ is the $s$-maximal ($s$-minimal)
  edge with $s(e) = r(p)$, then 
 $\xi_{p}^{max} = \xi_{pe}^{max}$ ($\xi_{p}^{min} = \xi_{pe}^{min}$, 
 respectively).
 It is an easy exercise to check that  the linear span
 of all
 such vectors is dense in $\mathcal{H}_{\mathcal{B}}$.
 
 The following is an immediate consequence of the definitions
 and the fact that 
 \[
 \Delta_{s}(X_{s(p)}^{-}px_{r(p)}^{max}) = 
 X_{s(p)}^{-}S_{s}(p)x_{r(S_{s}(p))}^{min}
 \]
 if $p$ is not $s$-maximal.
 
 \begin{lemma}
 \label{Fred:20}
 Let $p$ be in $E_{m,n}$. If $p$ is not $s$-maximal, then
 $F_{\mathcal{B}}\xi_{p}^{max} = \xi_{S_{s}(p)}^{min}$. 
 If $p$ is not $s$-minimal, then
 $F_{\mathcal{B}}\xi_{p}^{min} = \xi_{P_{s}(p)}^{max}$. 
 \end{lemma}
 
 If $\xi, \eta$ are any vectors in a Hilbert space $\mathcal{H}$, we
 define $\xi \otimes \eta^{*}$ to be the rank one operator
 defined by $(\xi \otimes \eta^{*}) \zeta = \langle \zeta, \eta\rangle \xi$, 
 for $\zeta$ in $\mathcal{H}$. If $T$ is any other operator, we have 
 $T(\xi \otimes \eta^{*}) = (T\xi) \otimes \eta^{*}$ and 
 $(\xi \otimes \eta^{*}) T = \xi \otimes (T^{*}\eta)^{*}$.
 
 It is worth noting that it is a straightforward computation from 
 the definitions that, for any $m < n \leq n'$, $p,q$ in $E_{m,n}$
 $q'$ in $E_{m,n'}$, if we let $\xi^{max}_{q'}$ be as above and 
 $a_{p,q}$ be as in \ref{AF:30}, then  
 $\pi_{\mathcal{B}}(a_{p,q}) \xi_{q'} =0$ if
 $(q')_{(m,n]} \neq q$ and 
 $\pi_{\mathcal{B}}(a_{p,q}) \xi^{max}_{q'} = \xi^{max}_{p'}$ if
 $(q')_{(m,n]} = q$, where $p' = p(q')_{(n,n']}$. An analogous 
 statement holds for $ \xi^{min}_{q'}$. In particular, the representation
 respects the grading on $\mathcal{H}_{\mathcal{B}}$. 
 In addition, it will be useful
 to have the following which is slightly less routine.

 \begin{lemma}
 \label{Fred:30}
 Let $m < n$, $p,q$ in $E_{m,n}$ with $r(p)=r(q)=v$ be in $V_{n}$ and 
 $f: X_{v}^{+} \rightarrow \C $ be continuous. For
 any $n' > n$ and $q'$ in $E_{m,n'}$, we have
 \[
 \pi_{\mathcal{B}}(a_{p,q} \otimes f )\xi_{q'}^{max} = 
 f(q'x_{r(q')}^{s-max})\xi_{p(q')_{(n,n']}}^{max}
 \]
 if $(q')_{(m,n]} = q$ and is zero otherwise, while
  \[
 \pi_{\mathcal{B}}(a_{p,q} \otimes f)\xi_{q'}^{min} = 
 f(q'x_{r(q')}^{s-min})\xi_{p(q')_{(n,n']}}^{min}
 \]
 if $(q')_{(m,n]} = q$ and is zero otherwise.
 \end{lemma}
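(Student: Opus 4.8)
The plan is to unwind the definition of $\pi_{\mathcal{B}}$ as the direct sum of the regular representations $\lambda^{x_{i}}$ and evaluate it against the given vectors. First I would note that $\xi_{q'}^{max}$ lies in a single summand $\mathcal{H}_{i}$ with $1 \leq i \leq I_{\mathcal{B}}^{+}$: every point $y q' x_{r(q')}^{s-max}$, $y \in X_{s(q')}^{-}$, is $s$-maximal in all entries of index $> n'$, and any two such points agree in all entries of index $> m$, hence are right-tail equivalent, so they all lie in one class $T^{+}(x_{i})$ with $x_{i}$ eventually $s$-maximal. Thus it suffices to compute $\lambda^{x_{i}}(\tilde{a}_{p,q}) \xi_{q'}^{max}$ from $\bigl( \lambda^{x_{i}}(\tilde{a}_{p,q}) \xi \bigr)(w) = \int_{T^{+}(x_{i})} \tilde{a}_{p,q}(w,z)\, \xi(z)\, d\nu_{r}^{x_{i}}(z)$, and symmetrically for $\xi_{q'}^{min}$, which sits in an $\mathcal{H}_{j}$ with $I_{\mathcal{B}}^{+} < j \leq I_{\mathcal{B}}^{+} + J_{\mathcal{B}}^{+}$.

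Next I would carry out the support analysis. Write $\tilde{a}_{p,q} = g\, a_{p,q}$ for a continuous scalar $g$ on $X_{r(p)}^{+}$; as recalled in Subsection~\ref{AF}, viewed as a function on $T^{+}(X_{\mathcal{B}})$ the element $\tilde{a}_{p,q}$ vanishes at $(w,z)$ unless $w_{(m,n]} = p$, $z_{(m,n]} = q$, $w_{(n, \infty)} = z_{(n, \infty)}$, where it equals $g(w_{(n, \infty)})\, \nu_{r}(s(p))^{-1/2} \nu_{r}(s(q))^{-1/2}$, while $\xi_{q'}^{max}$ equals $\nu_{r}(s(q'))^{-1/2}$ on $X_{s(q')}^{-} q' x_{r(q')}^{s-max}$ and $0$ elsewhere. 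For $z$ in the support of $\xi_{q'}^{max}$ one has $z_{(m,n]} = (q')_{(m,n]}$, so if $(q')_{(m,n]} \neq q$ the integrand vanishes identically and $\pi_{\mathcal{B}}(\tilde{a}_{p,q}) \xi_{q'}^{max} = 0$. If $(q')_{(m,n]} = q$, set $p' = p (q')_{(n,n']}$; then the support conditions force $w_{(m,n']} = p'$ and $w_{(n', \infty)} = x_{r(q')}^{s-max} = x_{r(p')}^{s-max}$, so $\lambda^{x_{i}}(\tilde{a}_{p,q}) \xi_{q'}^{max}$ is supported precisely on $X_{s(p')}^{-} p' x_{r(p')}^{s-max}$, a subset of the same class $T^{+}(x_{i})$ since $p'$ is also eventually $s$-maximal.

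Finally I would compute the constant value on that support. For fixed $w \in X_{s(p')}^{-} p' x_{r(p')}^{s-max}$ the admissible $z$ fill the cylinder $X_{s(q)}^{-} q\, w_{(n,\infty)}$, on which $\tilde{a}_{p,q}(w,z)$ and $\xi_{q'}^{max}(z)$ are both constant, and by Proposition~\ref{path:140} that cylinder has $\nu_{r}^{x_{i}}$-measure $\nu_{r}(s(q))$. Using $s(q') = s(q)$ and $s(p') = s(p)$, the three normalization factors $\nu_{r}(s(p))^{-1/2}$, $\nu_{r}(s(q))^{-1/2}$, $\nu_{r}(s(q'))^{-1/2}$ together with the measure $\nu_{r}(s(q))$ collapse to $\nu_{r}(s(p))^{-1/2}$, and $w_{(n,\infty)} = (q')_{(n,n']} x_{r(q')}^{s-max}$, the element of $X_{r(q)}^{+}$ written $q' x_{r(q')}^{s-max}$ in the statement. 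Hence the output equals $g\bigl( q' x_{r(q')}^{s-max} \bigr)\, \nu_{r}(s(p))^{-1/2} \chi_{X_{s(p')}^{-} p' x_{r(p')}^{s-max}} = \tilde{a}_{p,q}\bigl( q' x_{r(q')}^{s-max} \bigr)\, \xi_{p(q')_{(n,n']}}^{max}$, identifying $\tilde{a}_{p,q}(\cdot)$ with the scalar $g(\cdot)$. The $\xi^{min}$ identity is obtained verbatim, replacing $s$-maximal edges and paths by $s$-minimal ones and $\mathcal{H}^{max}_{\mathcal{B}}$ by $\mathcal{H}^{min}_{\mathcal{B}}$. I expect the main obstacle to be the bookkeeping in this last step — tracking exactly which cylinder sets occur and verifying that the constants from $a_{p,q}$, from $\xi_{q'}^{max}$, and from the Haar measure of the cylinder combine to precisely the coefficient of $\xi_{p(q')_{(n,n']}}^{max}$; the rest is routine once the support structure is identified.
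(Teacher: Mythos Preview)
Your proof is correct. You compute the regular representation integral directly: identify which summand $\mathcal{H}_{i}$ carries $\xi_{q'}^{max}$, determine the support of the integrand from the groupoid description of $\tilde{a}_{p,q}$, evaluate the $\nu_{r}^{x_{i}}$-measure of the relevant cylinder via Proposition~\ref{path:140}, and check that the three normalization constants combine with that measure to give exactly the coefficient of $\xi_{p(q')_{(n,n']}}^{max}$. All of this is sound, including your handling of the slightly abusive notation $\tilde{a}_{p,q}(q' x_{r(q')}^{s-max})$.

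The paper takes a different route. Rather than integrate directly, it approximates $\tilde{a}_{p,q}$ by finite sums of the elementary functions $a_{pp'',qp''} = a_{p,q}\,\chi_{p''X_{r(p'')}^{+}}$ with $p'' \in E_{n,n''}$, $n'' > n'$, and then invokes the action formula for the $a_{p,q}$'s on the vectors $\xi_{q'}^{max}$ recorded just before the lemma; the general case follows by density and continuity. That approach is shorter once the action of the $a_{p,q}$ is in hand, but it relies on an approximation step. Your direct computation is self-contained and makes the constants explicit, at the cost of the bookkeeping you anticipated. Either argument is perfectly adequate here.
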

 
 \begin{proof}
 We prove the first statement only.
 Let us consider all paths $p''$ in $E_{n,n''}$ with $s(p'') = r(p)$.
 We may identify
  $a_{pp'', qp''} = a_{p,q} \otimes \chi_{p''X_{r(p'')}^{+}}$,
  regarding $\chi_{p''X_{r(p'')}^{+}}:X_{v}^{+} \rightarrow  \C$.
 Without loss of generality, we may assume that $n'' > n'$. 
 The  continuous function $f$ 
 may be approximated by sums of such functions
 and so it suffices for us to prove the result for these functions.
 We have $\pi(a_{pp'', qp''}) \xi_{q'}^{max}$ is zero
 unless $q= (q')_{(m,n]}, p'' = (q')_{(n,n']}$ and 
 $p''_{(n',n'']}$ is s-maximal. In this case, the result
 is  $\xi_{p(q')_{(n,n']}}^{max}$. In either case, this agrees with 
 $\chi_{p''X_{r(p'')}^{+}}(x_{r(q')}^{s-max}) \xi_{p(q')_{(n,n']}}^{max}$.
 \end{proof}
 
 \begin{prop}
 \label{Fred:40}
 Let $m < n$ and assume that $p,q$ are in 
 $E_{m,n}^{Y}$ with $r(p)=r(q)=v$. 
\begin{enumerate} 
\item 
 We have 
 \[
 [\pi(a_{p,q}), F_{\mathcal{B}}] =  
 \xi_{p}^{max} \otimes (\xi^{min}_{S_{s}(q)})^{*}
 + \xi_{p}^{min} \otimes (\xi^{max}_{P_{s}(q)})^{*} 
 - \xi_{P_{s}(p)}^{max} \otimes (\xi^{min}_{q})^{*} 
    -\xi_{S_{s}(p)}^{min} \otimes (\xi^{max}_{q})^{*}. 
\]
       \item 
     Consider the function $f(x)
      = \nu_{r}(v)^{-1} \varphi^{v}_{s}(x)$,
     for $x$ in   $X_{v}^{+}$.
     We have
\[
 [\pi_{\mathcal{B}}(a_{p,q}) \otimes f,  F_{\mathcal{B}}] = 
 \xi_{p}^{max} \otimes (\xi^{min}_{S_{s}(q)})^{*} 
    -\xi_{S_{s}(p)}^{min} \otimes (\xi^{max}_{q})^{*}.
\]
  \item 
  If $g$ is any continuous $\C$-valued function on  
  $[0, 1]$,  then
     \begin{eqnarray*}
 [\pi_{\mathcal{B}}(a_{p,q} \otimes g \circ f), F_{\mathcal{B}}] 
    &  =  &   g \circ f(x_{v}^{s-max}) 
        \left( \xi_{p}^{max} \otimes (\xi^{min}_{S_{s}(q)})^{*} 
    -\xi_{S_{s}(p)}^{min} \otimes (\xi^{max}_{q})^{*} \right) 
   \\
     &  &
       +     g \circ f(x_{v}^{s-min}) 
   \left( \xi_{p}^{min} \otimes (\xi^{max}_{P_{s}(q)})^{*}  
 - \xi_{P_{s}(p)}^{max} \otimes (\xi^{min}_{q})^{*} \right)        
       \end{eqnarray*}
       \end{enumerate}    
 \end{prop}
 
 \begin{proof}
 Let $\mathcal{H}_{m}$ denote the closed linear span 
 of all vectors $\xi^{max}_{p}, \xi^{min}_{p}$, 
 where $p$ is in $E_{m,n'}$ and $n' > m$. It is clear 
 that this space is invariant 
 under $F_{\mathcal{B}}$ and a direct computation
 shows that $\pi(a_{p,q})\vert_{\mathcal{H}_{m}} =0$. It follows
 that $[\pi(a_{p,q}), F_{\mathcal{B}}]\vert_{\mathcal{H}_{m}} =0$.
 
 Next, let us consider $n' > n$ and $q'$ in $E_{m,n'}$ such that 
 $(q')_{(n,n']}$ is not s-maximal. It follows that 
 $(S_{s}(q'))_{(m,n]} = (q')_{(m,n]}$ and in consequence \newline
$ \pi_{\mathcal{B}}(a_{p,q})F_{\mathcal{B}} \xi_{q'}^{max}
 = \pi_{\mathcal{B}}(a_{p,q})\xi^{min}_{S_{s}(q')}$. 
If  $q \neq (S_{s}(q'))_{(m,n]}= (q')_{(m,n]} $, this is zero.
If  $q =(S_{s}(q'))_{(m,n]}= (q')_{(m,n]} $, this equals $\xi^{min}_{p'}$
where $p' = p (S_{s}(q')_{(n,n']})$.
On the other hand, $ F_{\mathcal{B}} 
\pi_{\mathcal{B}}(a_{p,q})\xi_{q'}^{max} $ is also zero
if $q \neq  (q')_{(m,n]} $, and if $q = (q')_{(m,n]} $, it equals
$F_{\mathcal{B}} \xi^{max}_{p''} = \xi^{min}_{S_{s}(p'')}$, where 
$p'' = p (q')_{(n,n']}$. As $(q')_{(n,n']}$ is not s-maximal, we have 
$S_{s}(p'') = p S_{s}((q')_{(n,n']}) = p'$. We conclude that
$[\pi_{\mathcal{B}}(a_{p,q}), F_{\mathcal{B}}] \xi^{max}_{q'} = 0$.
 A similar argument for
$\xi^{min}_{q'}$ shows the same conclusion.

As we noted above if $q'$ is in $E_{m,n'}, n'>n$ and 
$(q')_{(n,n']}$ is s-maximal, then 
$\xi^{max}_{q'} = \xi^{max}_{(q')_{(n,n']}}$ and so
it remains to consider the case $q'$ is in $E_{m,n}$. We need to consider
$[\pi_{\mathcal{B}}(a_{p,q}), F_{\mathcal{B}}]$ on the two 
types of vectors, $\xi^{max}_{q'}$
and $\xi^{min}_{q'}$. Using the fact that $p,q$ are in $E^{Y}_{m,n}$,
we may summarize the only situations where the result is 
non-zero as follows:
\[
\begin{array}{rlrr}
\pi_{\mathcal{B}}(a_{p,q})F_{\mathcal{B}} \xi_{q'}^{max} 
& =  & \xi^{min}_{p},  & S_{s}(q')=q \\
\pi_{\mathcal{B}}(a_{p,q})F_{\mathcal{B}} \xi_{q'}^{min}
 & =  & \xi^{max}_{p},  & P_{s}(q')=q \\
F_{\mathcal{B}}\pi_{\mathcal{B}}(a_{p,q}) \xi_{q'}^{max} 
& =  & \xi^{min}_{S_{s}(p)},  & q'=q \\
F_{\mathcal{B}}\pi_{\mathcal{B}}(a_{p,q}) \xi_{q'}^{min} 
& =  & \xi^{max}_{P_{s}(p)},  & q'=q. 
\end{array}
\]
The result follows from this, Lemmas \ref{Fred:20} and \ref{Fred:30}.

  The proof for the second part is almost the same. In view of 
  Lemma \ref{Fred:30}, the operators $\pi(a_{p,q})$ and 
  $\pi_{\mathcal{B}}(a_{p,q} \otimes f)$ are equal except that 
  \[
  \pi_{\mathcal{B}}( a_{p,q} \otimes f) \xi_{q}^{max} = \xi_{q}^{max}, 
  \pi_{\mathcal{B}}(a_{p,q} \otimes f)\xi_{q}^{min} = 0.
  \]
  We omit the remaining details.
  
  For the last part, the property is clearly linear in the function 
  $g$ and we know it is satisfied by constant functions from  part 
 1 and $g(t) = t$ by part 2. We then show it holds
   for $g(t) = t^{k}, k \geq 1$, 
   by induction on $k$ by noting that 
   \begin{eqnarray*}
   [\pi_{\mathcal{B}}(a_{p,q} \otimes f(x)^{k+1}),
    F_{\mathcal{B}}] 
     &  =  &  
    [\pi_{\mathcal{B}}(a_{p,q} \otimes f(x)^{k})
  \pi_{\mathcal{B}}( a_{q,q} \otimes f(x)),
   F_{\mathcal{B}}] \\
   &  =  &  
    [\pi_{\mathcal{B}}(a_{p,q} \otimes f(x)^{k}), F_{\mathcal{B}}]  \pi_{\mathcal{B}}(a_{q,q} \otimes f(x)) \\
      &  &  +  \pi_{\mathcal{B}}(a_{p,q} \otimes f(x)^{k})
 [  \pi_{\mathcal{B}}( a_{q,q} \otimes f(x)),
  F_{\mathcal{B}}] \\
   &  =  &    
  \left( \xi_{p}^{max} \otimes (\xi^{min}_{S_{s}(q)})^{*} 
    -\xi_{S_{s}(p)}^{min} \otimes (\xi^{max}_{q})^{*}  \right)
    \pi_{\mathcal{B}}( a_{q,q} \otimes f(x)) \\
     &  &  +  \pi_{\mathcal{B}}(a_{p,q} \otimes f(x)^{k})
     \left(  \xi_{q}^{max} \otimes (\xi^{min}_{S_{s}(q)})^{*} 
    -\xi_{S_{s}(q)}^{min} \otimes (\xi^{max}_{q})^{*}
     \right) \\
        &  =  &  0   -   \xi^{min}_{S_{s}(p)} \otimes 
        (\xi^{max}_{q})^{*}    +   
     \xi^{max}_{p} \otimes (\xi^{min}_{S_{s}(q)})^{*} 
         - 0.
\end{eqnarray*}
It follows that the result holds for all polynomial functions $g$,
 and hence
for all continuous functions by continuity.
\end{proof}
 
 \begin{cor}
 \label{Fred:45}
 The triple 
 $(\mathcal{H}_{\mathcal{B}}, \pi_{\mathcal{B}}, F_{\mathcal{B}})$ 
 is an even Fredholm module
 for $C^{*}_{\lambda}(T^{+}(Y_{\mathcal{B}}))$. 
 \end{cor}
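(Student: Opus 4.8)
The plan is to verify the three defining conditions of a Fredholm module for the algebra $A^{Y+}_{\mathcal{B}}$, namely that $(F_{\mathcal{B}}^2-1)\pi_{\mathcal{B}}(a)$, $(F_{\mathcal{B}}-F_{\mathcal{B}}^*)\pi_{\mathcal{B}}(a)$ and $[\pi_{\mathcal{B}}(a),F_{\mathcal{B}}]$ are compact for every $a$ in $A^{Y+}_{\mathcal{B}}$, together with the gradedness statement. The first two conditions are immediate: as recorded in the remarks following Definition \ref{Fred:10}, the map $\Delta_s$ is a measure-preserving involution on the relevant union of tail-equivalence classes, so $F_{\mathcal{B}}^2=1$ and $F_{\mathcal{B}}=F_{\mathcal{B}}^*$ exactly, making the first two products identically zero. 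That $\pi_{\mathcal{B}}$ is even and $F_{\mathcal{B}}$ is odd with respect to the $\Z_2$-grading of $\mathcal{H}_{\mathcal{B}}$ is also noted there: $\pi_{\mathcal{B}}(a_{p,q})$ preserves the max/min summands (by the computation $\pi_{\mathcal{B}}(a_{p,q})\xi^{max}_{q'}=\xi^{max}_{p(q')_{(n,n']}}$ and its min-analogue), while $\Delta_s$ interchanges them.

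So the real content is compactness of $[\pi_{\mathcal{B}}(a),F_{\mathcal{B}}]$ for $a\in A^{Y+}_{\mathcal{B}}$. First I would handle the dense subalgebra: by Proposition \ref{IntCstar:40} (and \ref{AF:40}), $A^{Y+}_{\mathcal{B}}$ is the closed union of the finite-dimensional algebras $A^{Y+}_{m,n}$, each spanned by the $a_{p,q}$ with $p,q\in E^Y_{m,n}$, $r(p)=r(q)$. For such a generator, part 1 of Proposition \ref{Fred:40} gives the explicit formula
\[
[\pi_{\mathcal{B}}(a_{p,q}),F_{\mathcal{B}}]
= [\xi^{max}_p\otimes(\xi^{max}_q)^*,F_{\mathcal{B}}]
+ [\xi^{min}_p\otimes(\xi^{min}_q)^*,F_{\mathcal{B}}],
\]
which is manifestly a finite-rank operator (a sum of four rank-one operators, since $F_{\mathcal{B}}\xi^{max}_q$, $F_{\mathcal{B}}\xi^{min}_q$ are single basis vectors). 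Hence $[\pi_{\mathcal{B}}(a),F_{\mathcal{B}}]$ is finite-rank for every $a$ in the dense $*$-subalgebra $\bigcup_{m<n}A^{Y+}_{m,n}$.

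Finally I would pass to the closure by a standard continuity argument: the map $a\mapsto[\pi_{\mathcal{B}}(a),F_{\mathcal{B}}]$ is a bounded derivation (norm $\le 2\|F_{\mathcal{B}}\|=2$) into $B(\mathcal{H}_{\mathcal{B}})$, it carries the dense subalgebra into the finite-rank operators, and the compact operators form a norm-closed subspace; therefore it carries all of $A^{Y+}_{\mathcal{B}}$ into the compacts. This completes the verification, and the triple $(\mathcal{H}_{\mathcal{B}},\pi_{\mathcal{B}},F_{\mathcal{B}})$ is an even Fredholm module for $A^{Y+}_{\mathcal{B}}$. I do not expect a genuine obstacle here; the only point requiring slight care is making sure the generators of $A^{Y+}_{m,n}$ are exactly the $a_{p,q}$ with $p,q\in E^Y_{m,n}$ so that Proposition \ref{Fred:40}(1)—whose hypothesis is precisely $p,q\in E^Y_{m,n}$—applies, and that the finitely many rank-one terms genuinely land in $\mathcal{H}_{\mathcal{B}}=\mathcal{H}^{max}_{\mathcal{B}}\oplus\mathcal{H}^{min}_{\mathcal{B}}$, which is the role of the $E^Y$-hypothesis (it guarantees the paths involved stay in $Y_{\mathcal{B}}$ and the successor/predecessor edges exist).
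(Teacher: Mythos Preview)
Your proposal is correct and matches the paper's approach: the paper states Corollary \ref{Fred:45} without proof, treating it as an immediate consequence of the remarks after Definition \ref{Fred:10} (giving $F_{\mathcal{B}}^2=1$, $F_{\mathcal{B}}=F_{\mathcal{B}}^*$, and the grading) together with Proposition \ref{Fred:40}(1) (finite-rank commutators on the dense subalgebra $\bigcup_{m<n}A^{Y+}_{m,n}$) and a density/continuity argument. You have simply made explicit the routine verification the authors left implicit.
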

 
\begin{lemma}
\label{Fred:55}
Let $m < n$ and for each $p,q$ in $E^{Y}_{m,n}$ with $r(p)=r(q)$, let
$\alpha_{p,q}$ be a complex number. We have 
\begin{eqnarray*}
  &  & \left\Vert \sum_{(p,q) \notin \alpha_{1}(G_{m,n})} 
    \alpha_{p,q}  \xi_{p}^{max} \otimes (\xi_{q}^{max})^{*} 
       +   \sum_{(p,q) \in G_{m,n}} 
    \frac{\alpha_{p_{1},q_{1}} - \alpha_{p_{2},q_{2}}}{2}  \xi_{p}^{max} 
    \otimes (\xi_{q}^{max})^{*} \right\Vert  \\
  &    \leq   & \frac{3}{2} \left\Vert \left[\sum_{r(p)=r(q)}  \alpha_{p,q}\xi_{p}^{max} 
  \otimes (\xi_{q}^{max})^{*}, F_{\mathcal{B}}\right] \right\Vert
    \end{eqnarray*}
    and 
    \begin{eqnarray*}
  &  & \left\Vert \sum_{(p,q) \notin \alpha_{2}(G_{m,n})} 
    \alpha_{p,q}  \xi_{p}^{min} \otimes (\xi_{q}^{min})^{*} 
       +   \sum_{(p,q) \in G_{m,n}} 
    \frac{\alpha_{p_{2},q_{2}} - \alpha_{p_{1},q_{1}}}{2}  \xi_{p}^{min} 
    \otimes (\xi_{q}^{min})^{*} \right\Vert  \\
  &    \leq   & \frac{3}{2}
   \left\Vert \left[\sum_{r(p)=r(q)}  \alpha_{p,q}\xi_{p}^{min}
    \otimes (\xi_{q}^{min})^{*}, F_{\mathcal{B}}\right] \right\Vert.
    \end{eqnarray*}
\end{lemma}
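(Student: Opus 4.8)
The plan is to turn the two inequalities (which are interchanged by swapping $s$-maximal with $s$-minimal paths and $S_{s}$ with $P_{s}$) into a single norm estimate for finite block matrices; I describe only the first one.

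First I would pin down the action of $F_{\mathcal{B}}$ on the spanning vectors. Extending Lemma \ref{groupoids:415}(1) from single edges to finite paths in $E_{m,n}$, the involution $\Delta_{s}$ restricts to a measure-preserving bijection $X^{-}_{s(p)}p\,x^{s-max}_{r(p)}\to X^{-}_{s(S_{s}(p))}S_{s}(p)\,x^{s-min}_{r(S_{s}(p))}$ for every $p\in E^{Y}_{m,n}$, so that $F_{\mathcal{B}}\xi_{p}^{max}=\xi_{S_{s}(p)}^{min}$ (and dually $F_{\mathcal{B}}\xi_{q}^{min}=\xi_{P_{s}(q)}^{max}$). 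Setting $T=\sum_{r(p)=r(q)}\alpha_{p,q}\,\xi_{p}^{max}\otimes\xi_{q}^{max*}$, a direct computation of $TF_{\mathcal{B}}-F_{\mathcal{B}}T$ gives
\[
[T,F_{\mathcal{B}}]=\sum_{r(p)=r(q)}\alpha_{p,q}\Bigl(\xi_{p}^{max}\otimes\xi_{S_{s}(q)}^{min*}-\xi_{S_{s}(p)}^{min}\otimes\xi_{q}^{max*}\Bigr).
\]
The first summand maps $\mathcal{H}^{min}_{\mathcal{B}}$ into $\mathcal{H}^{max}_{\mathcal{B}}$ and the second maps $\mathcal{H}^{max}_{\mathcal{B}}$ into $\mathcal{H}^{min}_{\mathcal{B}}$, so they have orthogonal domains and orthogonal ranges; since $S_{s}$ is injective and, for fixed $v\in V_{n}$, the families $\{\xi_{p}^{max}\}_{p\in E^{Y}_{m,n},\,r(p)=v}$ and $\{\xi_{S_{s}(p)}^{min}\}_{p\in E^{Y}_{m,n},\,r(p)=v}$ are orthonormal, each summand is, after relabelling, the orthogonal direct sum over $v\in V_{n}$ of the finite matrix $\mathbf{A}_{v}:=(\alpha_{p,q})_{r(p)=r(q)=v}$. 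Hence the right-hand side equals $\tfrac{3}{2}\,\|[T,F_{\mathcal{B}}]\|=\tfrac{3}{2}\,\max_{v\in V_{n}}\|\mathbf{A}_{v}\|=\tfrac{3}{2}\|T\|$.

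Next I would rewrite the left-hand operator. Reading $\xi_{(p_{1},p_{2})}^{max}$ as $\xi_{p_{1}}^{max}$ and noting $\alpha_{1}(G_{m,n})=\bigsqcup_{v}\bigsqcup_{w}I^{w}_{v}\times I^{w}_{v}$ with $I^{w}_{v}=\{p\in E^{Y}_{m,n}\mid r(p)=v,\ S_{s}(p)\in E^{Y}_{m,n},\ r(S_{s}(p))=w\}$, a short algebraic manipulation shows the operator on the left of the inequality is $T-\tfrac12(S_{1}+S_{2})$, where
\[
S_{1}=\!\!\sum_{(p_{1},q_{1})\in\alpha_{1}(G_{m,n})}\!\!\alpha_{p_{1},q_{1}}\,\xi_{p_{1}}^{max}\otimes\xi_{q_{1}}^{max*},\qquad S_{2}=\!\!\sum_{(p_{1},q_{1})\in\alpha_{1}(G_{m,n})}\!\!\alpha_{S_{s}(p_{1}),S_{s}(q_{1})}\,\xi_{p_{1}}^{max}\otimes\xi_{q_{1}}^{max*}.
\]
With $P_{v,w}$ the orthogonal projection onto $\mathrm{span}\{\xi_{p}^{max}\mid p\in I^{w}_{v}\}$ (these being mutually orthogonal), $S_{1}=\sum_{v,w}P_{v,w}TP_{v,w}$ is a block-diagonal compression of $T$, while $S_{2}$ is block diagonal with respect to the $P_{v,w}$ and its $(v,w)$-block is, via the injection $p\mapsto S_{s}(p)$ of $I^{w}_{v}$ into the index set of $\mathbf{A}_{w}$, a compression of $\mathbf{A}_{w}$; hence $\|S_{2}\|=\max_{v,w}\|P_{v,w}S_{2}P_{v,w}\|\le\max_{w}\|\mathbf{A}_{w}\|=\|T\|$.

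What remains is to bound $\|T-\tfrac12(S_{1}+S_{2})\|$ by $\tfrac32\|T\|$, and this is the step I expect to be the main obstacle. The naive split $\|T-\tfrac12(S_{1}+S_{2})\|\le\|T\|+\tfrac12\|S_{1}\|+\tfrac12\|S_{2}\|$ only gives $2\|T\|$, because a compression onto a partial set of diagonal blocks can raise the operator norm; the point is to use the precise shape of $T-\tfrac12 S_{1}$ rather than the triangle inequality alone. For two blocks this is easy: if $M_{0}=Q_{1}MQ_{1}+Q_{2}MQ_{2}$ with $Q_{1}+Q_{2}=I$, conjugating by $Q_{1}-Q_{2}$ fixes $M_{0}$ and negates the off-diagonal part, so the convex function $t\mapsto\|tM_{0}+(M-M_{0})\|$ takes the value $\|M\|$ at $t=\pm1$ and is therefore $\le\|M\|$ on $[-1,1]$, giving $\|M-\tfrac12 M_{0}\|\le\|M\|$. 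In the general situation here one has to run such a convexity/averaging argument over the $\pm$-conjugations attached to the $P_{v,w}$, and carry along both the contribution of the ``bad'' blocks $I^{\mathrm{bad}}_{v}$ and that of $S_{2}$, tracking the constant; the outcome should be exactly $\|T-\tfrac12(S_{1}+S_{2})\|\le\tfrac32\|T\|=\tfrac32\|[T,F_{\mathcal{B}}]\|$. The second inequality of the lemma is obtained verbatim by replacing $\mathcal{H}^{max}_{\mathcal{B}}$, $\xi^{max}$, $S_{s}$, $\alpha_{1}(G_{m,n})$ throughout by $\mathcal{H}^{min}_{\mathcal{B}}$, $\xi^{min}$, $P_{s}$, $\alpha_{2}(G_{m,n})$.
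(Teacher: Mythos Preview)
Your decomposition of the left-hand side as $T-\tfrac12(S_{1}+S_{2})$ is correct, and the identity $\|[T,F_{\mathcal{B}}]\|=\|T\|$ is a pleasant observation. But the final step---bounding $\|T-\tfrac12(S_{1}+S_{2})\|$ by $\tfrac32\|T\|$---is a real gap, not just a routine constant-tracking exercise. Your two-block convexity argument uses $Q_{1}+Q_{2}=I$ in an essential way; here the projections $P_{v,w}$ need not cover the index set (the ``bad'' paths $p$ with $S_{s}(p)\notin E^{Y}_{m,n}$ are left out), so $S_{1}$ is not the full block-diagonal part of $T$ with respect to any orthogonal decomposition. Averaging over $\pm$-conjugations by $\sum_{v,w}\epsilon_{v,w}P_{v,w}+(I-\sum P_{v,w})$ produces $S_{1}+(I-Q)T(I-Q)$, not $S_{1}$, and the extra term spoils the constant. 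One would then also need to fold in $S_{2}$, whose blocks live on a \emph{different} index set (the ranges $r(S_{s}(p))$), so the two averagings do not interact cleanly. I do not see how to push your convexity scheme through to the sharp constant $\tfrac32$ without essentially reintroducing the commutator.

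The paper's route avoids this entirely by working with $a'=F_{\mathcal{B}}[T,F_{\mathcal{B}}]$ rather than with $T$. One defines two genuine conditional expectations on the finite-dimensional algebra $\mathcal{F}^{max}$: $\varepsilon(a)=\sum_{v\in V_{n}}P_{v}aP_{v}$ with $P_{v}=\sum_{r(p)=v}\xi_{p}^{max}\otimes\xi_{p}^{max*}$, and $\varepsilon'(a)=\sum_{v\in r(E^{s}_{m,n})}Q_{v}aQ_{v}$ with $Q_{v}$ the projection onto $\mathrm{span}\{\xi_{p_{1}}^{max}:(p_{1},p_{2})\in E^{s}_{m,n},\ r(p_{1},p_{2})=v\}$. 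A direct computation shows that the left-hand operator equals $\varepsilon(a')-\tfrac12\,\varepsilon'(\varepsilon(a'))$. Since $\varepsilon,\varepsilon'$ are contractions and $\|a'\|=\|[T,F_{\mathcal{B}}]\|$, the triangle inequality gives the bound $\tfrac32\|[T,F_{\mathcal{B}}]\|$ in one line. The point is that passing through $a'$ automatically produces an expression with only \emph{two} pieces, each already bounded by $\|[T,F_{\mathcal{B}}]\|$; your three-term decomposition starts from $T$ and then has to fight to recombine terms.
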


\begin{proof} We will prove the first statement only.
Let $\mathcal{F}^{max} = span
   \{ \xi_{p}^{max} \otimes(\xi^{max}_{q})^{*}
    \mid p, q, \in E_{m,n}^{Y} \}$ 
  which is a finite dimensional $C^{*}$-algebra.
  
  For each $v$ in $V_{n}$, let $P_{v} = \sum 
  \xi_{p}^{max} \otimes(\xi^{max}_{p})^{*}$,
   where the sum is taken over all $p$ 
  in $E^{Y}_{m,n}$ with $r(p)=v$. Then the map 
  $\varepsilon: \mathcal{F}^{max}  \rightarrow \mathcal{F}^{max} $ defined 
  by $\varepsilon(a) = \sum_{v \in V_{n}} P_{v}aP_{v}$ is 
  a conditional expectation from $\mathcal{F}^{max}$ onto 
  $span \{ \xi_{p}^{max} \otimes (\xi_{q}^{max})^{*} \mid r(p)=r(q) \}$.
   In particular, $\varepsilon$ is a 
  contraction. Furthermore, for each 
  $v=(v_{1}, v_{2})$ in $r(E^{s}_{m,n})$, we let  
  $Q_{v} = \sum \xi_{p_{1}}^{max} \otimes(\xi^{max}_{p_{1}})^{*}$,
  where the sum is over all $(p_{1}, p_{2})$  in $ E_{m,n}^{s}$
  with $r(p_{1}, p_{2}) = v$. 
  Then the map 
  $\varepsilon': \mathcal{F}^{max}  \rightarrow \mathcal{F}^{max} $ defined 
  by $\varepsilon'(a) = \sum_{v \in r(E^{s}_{m,n})} Q_{v}aQ_{v}$ is 
  a conditional expectation from $\mathcal{F}^{max}$ onto 
  $span \{ \xi_{p_{1}}^{max} \otimes \xi_{q_{1}}^{max*} \mid (p, q) \in G_{m,n} \}$.
   In particular, $\varepsilon'$ is a 
  contraction.

  Lemma \ref{Fred:20} shows that
  \begin{eqnarray*}
   F_{\mathcal{B}}\left[\sum_{r(p)=r(q)}  
  \alpha_{p,q}\xi_{p}^{max} \otimes
   (\xi_{q}^{max})^{*}, F_{\mathcal{B}}\right] 
  & = & 
  \sum \alpha_{p,q} \left[ - \xi_{p}^{max} \otimes (\xi_{q}^{max})^{*} 
    +  \xi_{P^{s}(p)}^{max} \otimes (\xi_{P^{s}(q)}^{max})^{*}  \right. \\
  &  &  +  \left.  \xi_{p}^{min} \otimes (\xi_{q}^{min})^{*} 
    -  \xi_{S_{s}(p)}^{min} \otimes (\xi_{S_{s}(q)}^{min})^{*}  \right]
     \end{eqnarray*}
     where the sum is over $p,q$ in $E^{Y}_{m,n}$ with $r(p)=r(q)$.
     We denote this operator by $a$.
    Next, we compute $\varepsilon(a)$. The effect on the last two terms
    in the sum is to make them zero, 
    as the vectors do not lie in $\mathcal{F}^{max}$.
    The first term is unchanged and the second becomes zero 
    if $r(P^{s}(p)) \neq r(P^{s}(q))$ and is unchanged if 
    $(p,q) = \alpha_{2}((P^{s}(p)),P^{s}(q)),  (p,q))$ where 
    $\alpha_{2}$ is as described just before 
    Proposition \ref{IntCstar:60}. Hence, by 
    simply re-indexing the terms, we have 
   \[
  \varepsilon(a) = \sum_{(p,q) \notin \alpha_{1}(G_{m,n})} 
    \alpha_{p,q}  \xi_{p}^{max} \otimes (\xi_{q}^{max})^{*} 
       +   \sum_{(p,q) \in G_{m,n}} 
    (\alpha_{p_{1},q_{1}} - \alpha_{p_{2},q_{2}})
     \xi_{p}^{max} \otimes ( \xi_{q}^{max})^{*}. 
    \]
    Applying $\varepsilon'$ simply removes the first term, so we can write
 \[
  \varepsilon(a) - 2^{-1} \varepsilon'(\varepsilon(a))
  = \sum_{(p,q) \notin \alpha_{1}(G_{m,n})} 
    \alpha_{p,q}  \xi_{p}^{max} \otimes \xi_{q}^{max*} 
       +   \sum_{(p,q) \in G_{m,n}} 
    \frac{\alpha_{p_{1},q_{1}} - \alpha_{p_{2},q_{2}}}{2}  
    \xi_{p}^{max} \otimes \xi_{q}^{max*}.   
    \]
      The conclusion follows from the 
      facts that $\varepsilon, \varepsilon'$ are 
      contractions.
    \end{proof}

 \begin{thm}
 \label{Fred:60}
 An element $a$ in $C^{*}_{\lambda}(T^{+}(Y_{\mathcal{B}}))$ is in 
 $C^{*}_{\lambda}(T^{\sharp}(S^{s}_{\mathcal{B}}))$   if and only if 
 $[\pi_{\mathcal{B}}(a), F_{\mathcal{B}}] = 0$.
 \end{thm}
 
 \begin{proof}
 Let us begin by proving that if 
 $a$ is in  $C^{*}_{\lambda}(T^{\sharp}(S^{s}_{\mathcal{B}})$ , then 
 $[\pi_{\mathcal{B}}(a), F_{\mathcal{B}}] = 0$.
 To do so, we first assume that 
 $a = \sum_{p,q} a_{p,q, v} \otimes f_{p,q}$ is in 
 $AC_{m,n} = \oplus_{v} A_{m,n, v} \otimes C(X_{r(p)}^{+})$,
  for
 some $m<n$, where the sum is over 
   $p,q$ in $E_{m,n}^{Y}$ with $r(p)=r(q)$ and 
   satisfies the conditions of
   Proposition \ref{IntCstar:60}. 
 The general case then follows from 
 part 3 of Theorem \ref{IntCstar:70} and continuity.

 From the first condition of Proposition
 \ref{IntCstar:60}, we see that each $f_{p,q} = g_{p,q} \circ f_{r(p)}$,
 where $f_{v}(x) = \nu_{s}(v)^{-1} \varphi^{v}_{s}(x)$, for
 $x$ in $X_{v}^{+}$, and $g_{p,q}:[0,1] \rightarrow \C$ is continuous.
 
  In addition, we know from 
 conditions 3 and 4 that $g_{p,q}=0$ if $(p,q)$ is 
 not
  in 
 $\alpha_{1}(G_{m,n}) \cup \alpha_{1}(G_{m,n})$. 
 Applying part 3 of Proposition \ref{Fred:40}, 
 we have 
 \begin{eqnarray*}
[\pi_{\mathcal{B}}(a), F_{\mathcal{B}}] & = \sum_{(p,q) \in E^{s}_{m,n} } &
 g_{p^{1}, q^{1}}(1)   
 \left( \xi_{p^{1}}^{max} \otimes (\xi^{min}_{S_{s}(q^{1})})^{*} 
    -\xi_{S_{s}(p^{1})}^{min} \otimes (\xi^{max}_{q^{1}})^{*} \right) \\ 
       &    & + g_{p^{2}, q^{2}}(0)  
      \left( \xi_{p^{2}}^{min} \otimes (\xi^{max}_{P_{s}(q^{2})})^{*}  
 - \xi_{P_{s}(p^{2})}^{max} \otimes (\xi^{min}_{q^{2}})^{*} \right)   
 \end{eqnarray*}
 
 From  condition 2 of Proposition 
 \ref{IntCstar:60},  We also know that 
 $ g_{p^{1}, q^{1}}(1) = g_{p^{2}, q^{2}}(0)$. The definition
 of $G_{m,n}$ implies that $S_{s}(q^{1}) = q^{2}, S_{s}(p^{1})= p^{2},
 P_{s}(q^{2})=q^{1} $ and $ P_{s}(p^{2})=p^{1}$ and so the result is
 zero, as desired.

  For the converse direction, from the facts that the union of
  the $A^{Y}_{m,n}$ are dense in 
  $C^{*}_{\lambda}(T^{+}({Y}_{\mathcal{B}}))$ and the function
  sending 
  $a$ in $C^{*}_{\lambda}(T^{+}({Y}_{\mathcal{B}}))$  to 
  $\Vert  [\pi_{\mathcal{B}}(a), F_{\mathcal{B}}] \Vert$
   is continuous, it suffices for us to prove that, for any 
   $m < n$ and $a$ in $A^{Y}_{m,n}$, there is $b$ in $AC^{Y}_{m,n}$
   with $a -b $ in $B_{m,n}$ and 
    $\Vert b \Vert \leq 
    2 \Vert  [\pi_{\mathcal{B}}(a), F_{\mathcal{B}}] \Vert$.

   Let $a = \sum_{p,q} \alpha_{p,q} a_{p,q}$ be in $A_{m,n}^{Y}$, 
   where the sum is over $p,q$ in $E_{m,n}^{Y}$ with $r(p)=r(q)$.
  For each  $p,q$ in $E_{m,n}^{Y}$ with $r(p)=r(q)$, define
 $ c_{p,q} = \alpha_{p,q} $ if $ (p,q) \notin 
    \alpha_{1}(G_{m,n}) $ and 
    \[
    c_{p^{1},q^{1}} = \alpha_{p^{1},q^{1}}  +  
     \frac{\alpha_{p^{1},q^{1}} - \alpha_{p^{2},q^{2}}}{2}
     \]
     for $(p,q)$ in $G_{m,n}$. We also define 
     $ d_{p,q} = \alpha_{p,q} $ if $ (p,q) \notin 
    \alpha_{2}(G_{m,n}) $    
   and 
    \[
    d_{p^{2},q^{2}} = \alpha_{p^{1},q^{1}}  +  
     \frac{\alpha_{p^{2},q^{2}} - \alpha_{p^{1},q^{1}}}{2}
     \]
      for $(p,q)$ in $G_{m,n}$. Let 
      $b_{p,q}(t) = c_{p,q}t +  d_{p,q}(1-t)$,
      for all $t$ in $[0,1]$.
 Finally, we define 
 \[
 b = \sum_{p,q} a_{p,q} \otimes b_{p,q} \circ f_{r(p)}, 
 \]
 where $f_{r(p)} : X_{r(p)}^{+}\rightarrow [0,1]$ is as before.
 So $b$ is in $AC^{Y}_{m,n}$.
     
     It is a simple computation, using 
     the results of Propositions \ref{Fred:40} 
     and \ref{IntCstar:60} to verify that 
     $ a - b$ is in $B_{m,n}$. It remains for us to
     prove that  $\Vert b \Vert \leq 
    2 \Vert  [\pi_{\mathcal{B}}(a), F_{\mathcal{B}}] \Vert$.
 
 The map sending $a_{p,q}$ to 
   $\xi_{p}^{max} \otimes(\xi^{max}_{q})^{*}$, for $p,q$ in 
   $E^{Y}_{m,n}$ with $r(p)=r(q)$, extends linearly to 
   an injective $*$-homomorphism from  $A^{Y}_{m,n}$ to 
   $\mathcal{F}^{max} $ which is necessarily isometric.
   The desired inequality  follows from this and an application of
   Lemma \ref{Fred:55}.
 \end{proof}

\section{$K$-theory}
\label{K}
The purpose of this section is to compute the $K$-theory
of the $C^{*}$-algebras considered in the section \ref{Cstar}.
It is probably more accurate to say that we shall investigate 
the relations between the $K$-theory of the $C^{*}$-algebras.
We remark that elements of the $K_{1}$-group of any 
$C^{*}$-algebra, $A$,
 are given by equivalence classes over matrix algebras over the
 unitization of $A$, which we denote by $A^{\sim}$.

Given a bi-infinite
Bratteli diagram $\mathcal{B}$, the $K$-theory of the
 AF-algebra $ C^{*}_{\lambda}\left(T^{+}(X_{\mathcal{B}})\right)$ is readily computable 
 from the data given and the results of Proposition \ref{AF:40}.
 It is worth noting at this point that it  does not depend
 on the order structure, nor the half of the diagram indexed by the 
 negative integers.

 \begin{thm}
 \label{K:5}
 Let $\mathcal{B}$ be a bi-infinite Bratteli diagram.
For each integer $n$, we consider $E_{n}$ to be the 
$\# V_{n} \times \# V_{n-1}$ positive integer matrix which describes
the edge set $E_{n}$. We have 
\[
K_{0}\left(C^{*}_{\lambda}(T^{+}(X_{\mathcal{B}}))\right)
\cong \lim_{n \rightarrow +\infty}
 \Z^{\# V_{0}} \stackrel{E_{1}}{\rightarrow} 
 \Z^{\# V_{1}} \stackrel{E_{2}}{\rightarrow}  \cdots
\]
and  $K_{1}(C^{*}_{\lambda}(T^{+}(X_{\mathcal{B}}) ) =0$.
 \end{thm}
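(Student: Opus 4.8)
The plan is to exhibit $A_{\mathcal{B}}^{+}$ as a countable inductive limit of the finite-dimensional subalgebras supplied by Proposition \ref{AF:40}, and then to apply continuity of $K$-theory. For each integer $n$, let $\mathcal{A}_{n} = \overline{\bigcup_{m} A_{m,n}^{+}}$ be the closure of the union of the algebras $A_{m,n}^{+}$ over all admissible $m$; by part 4 of Proposition \ref{AF:40} this union is a chain of $C^{*}$-algebras, so $\mathcal{A}_{n}$ is the inductive limit of the $A_{m,n}^{+}$ in the $m$-direction. The same part gives $A_{m,n}^{+} \subseteq A_{m,n+1}^{+}$, hence $\mathcal{A}_{n} \subseteq \mathcal{A}_{n+1}$, and part 6 gives that $\bigcup_{m,n} A_{m,n}^{+}$, equivalently $\bigcup_{n \in \Z} \mathcal{A}_{n}$, is dense in $A_{\mathcal{B}}^{+}$. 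Thus $A_{\mathcal{B}}^{+} = \overline{\bigcup_{n} \mathcal{A}_{n}}$ is the inductive limit of the nested sequence $\cdots \subseteq \mathcal{A}_{n} \subseteq \mathcal{A}_{n+1} \subseteq \cdots$.

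Since $K_{0}$ and $K_{1}$ are continuous functors on $C^{*}$-algebras, $K_{i}(A_{\mathcal{B}}^{+}) \cong \varinjlim_{n} K_{i}(\mathcal{A}_{n})$ and $K_{i}(\mathcal{A}_{n}) \cong \varinjlim_{m} K_{i}(A_{m,n}^{+})$ for $i = 0,1$. Each $A_{m,n}^{+}$ is finite dimensional (part 2 of \ref{AF:40}), so $K_{1}(A_{m,n}^{+}) = 0$; hence $K_{1}(\mathcal{A}_{n}) = 0$ and $K_{1}(A_{\mathcal{B}}^{+}) = 0$. For $K_{0}$, parts 3 and 5 of \ref{AF:40} give $K_{0}(A_{m,n}^{+}) \cong \Z^{\# V_{n}}$ with every connecting map in the $m$-direction equal to the identity, so $K_{0}(\mathcal{A}_{n}) \cong \Z^{\# V_{n}}$; and tracking the inclusion $A_{m,n}^{+} \subseteq A_{m,n+1}^{+}$ (same $m$) through part 5 shows the induced map $K_{0}(\mathcal{A}_{n}) \to K_{0}(\mathcal{A}_{n+1})$ is multiplication by the edge matrix $E_{n+1}$. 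Hence
\[
K_{0}(A_{\mathcal{B}}^{+}) \cong \varinjlim_{n \in \Z}\Bigl( \cdots \stackrel{E_{n}}{\longrightarrow} \Z^{\# V_{n}} \stackrel{E_{n+1}}{\longrightarrow} \Z^{\# V_{n+1}} \stackrel{E_{n+2}}{\longrightarrow} \cdots \Bigr).
\]

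To recover the one-sided formula in the statement, observe that $\{ n \in \Z : n \geq 0 \}$ is cofinal in $(\Z, \leq)$, so truncating this system at level $0$ does not change its colimit: $K_{0}(A_{\mathcal{B}}^{+}) \cong \varinjlim_{n \geq 0}\bigl( \Z^{\# V_{0}} \stackrel{E_{1}}{\longrightarrow} \Z^{\# V_{1}} \stackrel{E_{2}}{\longrightarrow} \cdots \bigr)$, with starting group $\Z^{\# V_{0}} \cong \Z$ by the convention $\# V_{0} = 1$; in particular the negative half of the diagram, and the order structure, are irrelevant to $K$-theory. I expect the only point needing care is the bookkeeping that fuses the two index directions of the family $\{A_{m,n}^{+}\}$ into the single telescope $\mathcal{A}_{0} \subseteq \mathcal{A}_{1} \subseteq \cdots$ with the stated $K_{0}$-maps: this amounts to checking that the square of inclusions relating $A_{m,n}^{+}$, $A_{m-1,n}^{+}$, $A_{m,n+1}^{+}$, $A_{m-1,n+1}^{+}$ commutes and then invoking functoriality of $K_{0}$. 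Everything else is standard ($K_{1}$ of a finite-dimensional $C^{*}$-algebra vanishes; $K_{0}$ of $\bigoplus_{v\in V_{n}} M_{j_{v}}(\C)$ is free abelian of rank $\# V_{n}$; $K$-theory commutes with inductive limits).
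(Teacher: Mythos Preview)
Your approach is correct and is exactly what the paper intends: the theorem is stated without proof, with the remark that it is ``readily computable from the data given and the results of Proposition \ref{AF:40}'', and your argument carries out precisely that computation via continuity of $K$-theory over the double-indexed family $\{A_{m,n}^{+}\}$.

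One slip to fix: you write ``with starting group $\Z^{\#V_{0}} \cong \Z$ by the convention $\#V_{0}=1$''. That convention holds only for one-sided Bratteli diagrams (Definition \ref{BD:5}); for bi-infinite diagrams Definition \ref{BD:10} explicitly drops the requirement that $\#V_{0}=1$, and indeed in the finite-genus application (\S\ref{subsec:typicalBrat}) one has $\#V_{0}=|\mathcal{A}|\geq 4$. This does not affect your argument---the theorem statement correctly has $\Z^{\#V_{0}}$ and your cofinality step already delivers that---but the parenthetical should be deleted.
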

 
 As we noted in  Theorem \ref{AF:45},
  $C^{*}_{\lambda}(T^{+}(Y_{\mathcal{B}}))$ is a full hereditary subalgebra
  of $C^{*}_{\lambda}(T^{+}(X_{\mathcal{B}}))$ and hence they
  are Morita equivalent \cite{Ex:MorK}. The following 
  is an immediate consequence.
  
 \begin{thm}
 \label{K:10} 
 Let $\mathcal{B}$ be an ordered  bi-infinite Bratteli diagram
 satisfying the conditions of
Definition \ref{surface:20}.
 Then 
the inclusion \newline 
$ C^{*}_{\lambda}(T^{+}(Y_{\mathcal{B}}))  
\subseteq C^{*}_{\lambda}(T^{+}(X_{\mathcal{B}})) $
induces an 
order isomorphism  \newline 
$K_{0}( C^{*}_{\lambda}(T^{+}(Y_{\mathcal{B}})) )
 \cong K_{0}(C^{*}_{\lambda}(T^{+}(X_{\mathcal{B}})) )$
and $K_{1}( C^{*}_{\lambda}(T^{+}(Y_{\mathcal{B}})) ) =0$.
 \end{thm}
 
We now turn to the $C^{*}$-algebra 
$ C^{*}_{\lambda}(T^{\sharp}(S^{s}_{\mathcal{B}})) $, first considering
its $K_{1}$-group.

\begin{prop}
\label{K:20}
Let $\mathcal{B}$ be a 
bi-infinite  ordered Bratteli diagram satisfying the conditions of
Definition \ref{surface:20}.
Let $m < n$, $v$ be any vertex in $V_{n}$, $p$ be in $E_{m,n}^{Y}$
with $r(p)=v$
 and  $f_{v}:[0, \nu_{r}(v)] \rightarrow [0,1]$
be any continuous function with $f_{v}(0)=0, f(\nu_{r}(v))=1$. Then 
$K_{1}(C^{*}_{\lambda}(T^{\sharp}(S^{s}_{\mathcal{B}}))) \cong \Z$ and 
is generated by the unitary 
$u = exp( 2 \pi i f_{v} \circ \varphi_{r}^{v}(x)) a_{p,p} + (1 - a_{p,p})$ 
considered as an
 element of 
$B_{m,n}^{\sim} \subseteq 
C^{*}_{\lambda}(T^{\sharp}(S^{s}_{\mathcal{B}}))^{\sim}  $.
\end{prop}

\begin{proof}
We use the fact that $C^{*}_{\lambda}(T^{\sharp}(S^{s}_{\mathcal{B}})) $ 
is the closure
of the  union of the $B_{m,n} , m < n$, so 
\[
K_{1} (C^{*}_{\lambda}(T^{\sharp}(S^{s}_{\mathcal{B}})) )= \lim_{n \rightarrow \infty} K_{1}(B_{-n,n} ).
\]
We will first compute $K_{1}(B_{-n,n})$ and then the 
inductive limit.

We use  with the short exact sequence found 
in  Corollary \ref{IntCstar:90}
$$\xymatrix{ 0 \ar[r] & \displaystyle \bigoplus_{v \in V_{n}}  
A_{m,n,v} \otimes C_{0}(0, \nu_{s}(v))  
\ar[r]  &
B_{m,n} \ar[r]  &   C^{*}(G_{m,n}) \ar[r]  &  0.}$$
For simplicity, we denote $A_{m,n,v} \otimes C_{0}(0, \nu_{s}(v)) $
by $\mathcal{I}_{v}$.
We have the associated six-term exact sequence 
for $K$-groups
$$\xymatrix{ K_{0}\left(\displaystyle\bigoplus_{v \in V_{n}}  \mathcal{I}_{v} \right)\ar[r] & 
K_{0}(B_{m,n})  \ar[r]  &  K_{0}( C^{*}(G_{m,n})) \ar[d]  \\
 K_{1}( C^{*}(G_{m,n})) \ar[u] & K_{1}(B_{m,n})  \ar[l] 
   & K_{1}\left(\displaystyle\bigoplus_{v \in V_{n}}  
\mathcal{I}_{v} \right) \ar[l] 
}$$

Let us start with 
$K_{*}(\oplus_{v} \mathcal{I}_{v}) \cong 
\bigoplus_{v} K_{*}(\mathcal{I}_{v})$. As $A_{m,n,v}$ is a 
full matrix algebra, we have 
$K_{0}(\mathcal{I}_{v}) \cong 
K_{0}(C_{0}(0,\nu_{v}(r))) \cong K_{1}(\C) =0$
while $K_{1}(\mathcal{I}_{v}) \cong 
K_{1}(C_{0}(0,\nu_{v}(r))) \cong K_{0}(\C) \cong \Z$. 
Moreover, if $p, f_{v}$ are as above, then 
$u_{v} = exp( 2 \pi i f \circ \varphi_{r}^{v}(x)) a_{p,p} + (1 - a_{p,p})$
 is a generator of this group.
 
 We now turn to $K_{*}(C^{*}(G_{-n,n}))$. The groupoid 
 $G_{-n,n}$ is finite and its $C^{*}$-algebra is finite-dimensional. 
 Hence its $K_{1}$-group is trivial. On the other hand, 
 it is a direct sum of full matrix algebras, indexed by the 
 elements of $r(p), p \in E^{s}_{-n,n}$. It follows that 
 $K_{*}(C^{*}(G_{-n,n})) \cong \bigoplus_{r(E^{s}_{-n,n})} \Z$, 
 with generators $[ \chi_{(p,p)} ]_{0}$, where  $p$ is 
 chosen to be any path in $E^{s}_{-n,n}$, as $r(p)$ takes all 
 possible values.
 
 Our six-term exact sequence now looks like
$$
\xymatrix{ 0 \ar[r] & 
K_{0}(B_{-n,n})  \ar[r]  &  \displaystyle\bigoplus_{r(E^{s}_{-n,n})} \Z \ar^{exp}[d]  \\
 0 \ar[u] & K_{1}(B_{-n,n})  \ar[l] 
   & \displaystyle\bigoplus_{v \in V_{n}} \Z 
 \ar[l] 
}$$

It is a fairly standard argument to check that the 
exponential map takes $[ \chi_{(p,p)} ]_{0}$
in $K_{0}(C^{*}(G_{-n,n})) $, where  $p$ is 
  any path in $E^{s}_{-n,n}$, to 
 $[u_{p_{1}}]_{1} - [u_{p_{2}}]$ in 
 $\bigoplus_{v} K_{1}(\mathcal{I}_{v})$.
 
 From this we can see that the exponential map is not surjective; indeed
 for any fixed $v$, the elements  $m [u_{v}]_{1}$ 
 are all
 distinct in $K_{1}(B_{-n,n})$.
 
 To compute the inductive limit, it suffices to show that, for $v$ in 
 $V_{n}$ and $v'$ in $V_{n'}$ with $n' > n$, we have 
 $[u_{v}]_{1} = [u_{v'}]_{1}$, as elements of 
 $K_{1}(B_{m,n'})$,   provided that there is at least one 
 path $p'$ from $v$ to $v'$. Let $p$ be any element of 
 $E_{m,n}$ with $r(p) =v$ and let $f_{v'}$ be any function as above. 
 Then define $f_{v}$ as follows 
 \[
 f_{v}(x) = \left\{ \begin{array}{cl} 
     0 &  x_{(n,n']} <_{s} p' \\
     f_{v'}(x_{(n', \infty)}) &  x_{(n,n']} = p' \\
    1 &      x_{(n,n']} <_{s} p' \end{array} \right.
    \]
 It is easy to see that $f_{v}$ satisfies
  the desired conditions and that, with these choices, 
 $u_{v}=u_{v'}$.
\end{proof}

To describe the $K$-zero group, we need to establish some notation.

For any finite set $A$, let $\Z A$ denote the free abelian group on $A$. 
Recalling the definition of $I_{\mathcal{B}}
 \star_{\Delta} J_{\mathcal{B}}$
from Definition \ref{gpds:25}, we define 
\begin{eqnarray*}
\theta_{1} : \Z (I_{\mathcal{B} } \star_{\Delta}
 J_{\mathcal{B}} ) & \rightarrow & 
\Z \{ x_{1}, \ldots, x_{I_{\mathcal{B} }} \}, \\
\theta_{2} : \Z (I_{\mathcal{B} } \star_{\Delta} J_{\mathcal{B} }
 ) & \rightarrow & 
\Z \{ x_{I_{\mathcal{B} }+1}, \ldots, x_{J_{\mathcal{B} }} \}, \\
\theta : \Z (I_{\mathcal{B} } \star_{\Delta} J_{\mathcal{B} } ) 
& \rightarrow & \Z 
\{ x_{1}, \ldots, x_{I_{\mathcal{B} } + J_{\mathcal{B} }} \} \\
\sigma : \Z \{ x_{1}, \ldots, x_{I_{\mathcal{B} }+J_{\mathcal{B} }} \}
 & \rightarrow & \Z
\end{eqnarray*}
by $\theta_{1}(x_{i}, x_{j}) = x_{i}, \theta_{2}(x_{i}, x_{j})  = x_{j}$,
$\theta(x_{i}, x_{j})  = x_{i} + x_{j}$ and $\sigma(x_{i}) = 1, 
1 \leq i \leq I_{\mathcal{B}}, \sigma(x_{j}) = -1, I_{\mathcal{B}} < 
j \leq I_{\mathcal{B}} +
J_{\mathcal{B}}$. Observe that $\sigma \circ \theta =0$, 
so $\sigma$ also defines a homomorphism from $coker( \theta)$ to $\Z$.

 \begin{thm}
 \label{K:30}
 Let $\mathcal{B}$ be a 
bi-infinite  ordered Bratteli diagram satisfying the conditions of
Definition \ref{surface:20}.
There is a short exact sequence
\[
\xymatrix{ 0  \ar[r] & ker(\theta) \ar[r] & 
K_{0}( C^{*}_{\lambda}(T^{\sharp}(S^{s}_{\mathcal{B}}))) \ar^{i_{*}}[r]  &
K_{0}(C^{*}_{\lambda}(T^{+}(Y_{\mathcal{B}})) ) \ar[r] & coker(\theta) \ar^{\sigma}[r] & \Z \ar[r] & 0}
\]
where $i:  C^{*}_{\lambda}(T^{\sharp}(S^{s}_{\mathcal{B}})) \rightarrow C^{*}_{\lambda}(T^{+}(Y_{\mathcal{B}}))$ 
is the inclusion map. 
In particular, $K_{0}(C^{*}_{\lambda}(T^{\sharp}(S^{s}_{\mathcal{B}})) )$ is finite rank and is finitely generated
if and only if $K_{0}( C^{*}_{\lambda}(T^{+}(Y_{\mathcal{B}})))$ is.
If either $I_{\mathcal{B}}=1$ or 
$J_{\mathcal{B}}=1$, then $i_{*}$ is an isomorphism. 
 \end{thm}

 \begin{proof}
 We make use of the notion of the relative $K$-theory for $C^{*}$-algebras 
 along with an excision result of the second author
 \cite{Put:Kexc}.  Relative $K$-theory was 
 introduced by Karoubi \cite{Ka:K}, but we also refer the reader to
 \cite{Put:Kexc} or Haslehurst \cite{Has:RelK} for a more extensive treatment. 
 To any $C^{*}$-algebra, $A$,
 and $C^{*}$-subalgebra, $A' \subseteq A$, there  are 
 relative $K$-groups, $K_{i}(A';A), i = 0,1$ which fit into a 
 six-term exact sequence
 $$\xymatrix{ K_{0}(A';A)   \ar[r] & K_{0}(A') \ar[r]^{i_{*}} & 
K_{0}(A) \ar[d]  \\
K_{1}(A) \ar[u] &  K_{1}(A')  \ar[l]^{i_{*}} &  K_{1}(A';A) \ar[l] }$$
where $i:A' \rightarrow A$ denote the inclusion map.

In Theorems 3.2 and 3.4 of 
\cite{Put:Kexc}, the situation is described of $C^{*}$-algebras $A, B, E$
along with a bounded $*$-derivation $\delta: A + B \rightarrow E$ 
such that there is a natural isomorphism 
$K_{*}(\ker(\delta) \cap A; A) \cong K_{*}( \ker(\delta) \cap B; B)$. 
Referring back to notation established in Definition \ref{Fred:10}, 
we use $A = \bigoplus_{i=1}^{I_{\mathcal{B}}+J_{\mathcal{B}}}
 \mathcal{K}(\mathcal{H}_{i})$, 
where $\mathcal{K}$ denotes the $C^{*}$-algebra of compact operators, 
$B= C^{*}_{\lambda}(T^{+}(Y_{\mathcal{B}}))$ or more accurately, 
$B= \pi_{\mathcal{B}}(C^{*}_{\lambda}(T^{+}(Y_{\mathcal{B}})) )$. 
As we noted earlier, the 
representation is faithful under our hypotheses, so this 
amounts to a notational difference only.
We use $E = \mathcal{B}(\mathcal{H}_{\mathcal{B}})$, 
the algebra of bounded linear operators on $\mathcal{H}_{\mathcal{B}}$ and 
$\delta(x) = i \left[ x, F_{\mathcal{B}} \right]$, for any operator $x$.
(The use of $\mathcal{B}$ for the bounded linear operators and for 
the Bratteli diagram, is unfortunate, but
 should not cause any confusion.)
 
 We  need to verify the hypotheses of \cite{Put:Kexc} hold. The first is that 
 $AB \subseteq A$ and this follows from the facts that, for all
 $i$, $\mathcal{H}_{i}$ is invariant for the representation 
 $\pi_{\mathcal{B}}$  and that $A$ consists entirely of compact 
 operators on this space. 
 
 The hypotheses of Theorem 3.4 of \cite{Put:Kexc} involve the choice
 of a dense $*$-subalgebra, $\mathcal{A} \subseteq A$. For this, we
 use the linear span of all rank one operators of the form 
 $\xi_{p}^{max} \otimes (\xi_{q}^{max})^{*}$ and 
 $\xi_{p}^{min} \otimes (\xi_{q}^{min})^{*}$, where $p,q$ vary over
 $E^{Y}_{m,n}$ with $r(p)=r(q)$ and $m < n$ vary over all integers.
 
 We now verify property C1 from Theorem 3.4 of \cite{Put:Kexc}: let 
 \[
  a =\sum \alpha^{max}_{p,q} \xi_{p}^{max} \otimes (\xi_{q}^{max})^{*} 
 + \alpha^{min}_{p,q} \xi_{p}^{min} \otimes (\xi_{q}^{min})^{*},
 \]
  where 
 the sum is over  $p,q$ in
 $E^{Y}_{m,n}$ with $r(p)=r(q)$,
 be in $\mathcal{A}$.  Let 
 \[
 a' = \sum \frac{\alpha^{max}_{p_{1}, q_{1}} +
  \alpha^{min}_{p_{2}, q_{2}}}{2}
 \left(  \xi_{p_{1}}^{max} \otimes (\xi_{q_{1}}^{max})^{*}
   + \xi_{p_{2}}^{min} \otimes (\xi_{q_{2}}^{min})^{*} \right)
   \]
   where the sum is over all 
   $((p_{1}, q_{1}), (p_{2}, q_{2}))$ in $G_{m,n}$.
   It is an easy calculation that 
   $\delta(a') = i [a', F_{\mathcal{B}}] =0$ and 
   $\Vert a - a' \Vert \leq \frac{3}{2} \Vert \delta(a) \Vert$ follows
   immediately from the first part of Proposition \ref{Fred:40} 
   and Lemma \ref{Fred:55}.
   
   Using the dense $*$-subalgebra $\bigcup_{n} A^{Y}_{-n,n}$ of  
$ C^{*}_{\lambda}(T^{+}(Y_{\mathcal{B}}))$  ,
   and Lemma 4.2 of \cite{Put:Kexc},
    we see that $\delta(B) \subseteq \delta(A)$.
   
   It remains to see that condition C2 of Theorem 3.4 of
    \cite{Put:Kexc} holds.
   For that, we can assume that the $a_{1}, \ldots, a_{I}$ all lie
   in some $span\{ \xi^{max}_{p} \otimes (\xi^{max}_{q})^{*}, 
   \xi^{min}_{p} \otimes (\xi^{min}_{q})^{*} \}$, 
   as $p,q $ range over $E^{Y}_{m,n}$, 
   for some $m,n$. We let $e$ be the unit of this algebra, 
   \[
   e= \sum_{p \in E^{Y}_{m,n}} \xi_{p}^{max} \otimes (\xi_{p}^{max})^{*} 
   +\xi_{p}^{min} \otimes (\xi_{p}^{min})^{*} 
   \]
   and for
   \[
  a_{i} =\sum \alpha^{max}_{p,q} \xi_{p}^{max} \otimes (\xi_{q}^{max})^{*} 
 + \alpha^{min}_{p,q} \xi_{p}^{min} \otimes (\xi_{q}^{min})^{*},
 \]
 we use $b_{i}$ in $AC^{Y+}_{m,n}$ defined by 
 \[
 b_{i} = \sum a_{p,q} \otimes \left( \alpha^{max}_{p,q}  f_{r(p)}
 +  \alpha^{min}_{p,q}   (1 -f_{r(p)}) \right)
 \]
 where $f_{v}(x) = (\nu_{s}^{v})^{-1}\varphi^{v}_{s}(x)$,
  for $x$ in $\bigcup_{v} X_{v}^{+}$, is as in the last section.
 The desired properties follow from Proposition
  \ref{Fred:40}; we omit the details.
 We have verified the conditions of Theorem 3.4 of \cite{Put:Kexc}. In addition, 
 Theorem \ref{Fred:60} shows that 
 $ C^{*}_{\lambda}(T^{\sharp}(S^{s}_{\mathcal{B}})) = \ker(\delta) \cap C^{*}_{\lambda}(T^{+}(Y_{\mathcal{B}})) $.
 We conclude that
 conclude that $K_{*}(\ker(\delta) \cap A; A) 
 \cong K_{*}(C^{*}_{\lambda}(T^{\sharp}(S^{s}_{\mathcal{B}}));
   C^{*}_{\lambda}(T^{\sharp}(Y_{\mathcal{B}})))$.

 We now turn to the computation of 
 $K_{*}( \ker(\delta) \cap A; A)$. Recall that
 $A = \bigoplus_{i=1}^{I_{\mathcal{B}}+J_{\mathcal{B}}}
  \mathcal{K}(\mathcal{H}_{i})$. For 
 $(i,j)$ in $I_{\mathcal{B}} \star_{\Delta} J_{\mathcal{B}}$, we define 
 \[
 \mathcal{H}_{i,j} = \mathcal{H}_{i}  \cap F_{\mathcal{B}}\mathcal{H}_{j}  = 
 L^{2}(T^{+}(x_{i}) \cap \Delta_{s}(T^{+}(x_{j})))
 \]
 and observe that 
 $F_{\mathcal{B}}H_{i,j} = 
 L^{2}(\Delta_{s}(T^{+}(x_{i})) \cap T^{+}(x_{j})).$
 It is a simple matter to check that the map sending 
 $(k_{i,j})_{(i,j)} $  to 
 $\sum_{(i,j)} k_{i,j} + F_{\mathcal{B}} k_{i,j} F_{\mathcal{B}}$ 
 is an isomorphism 
 between 
 $ \oplus_{I_{\mathcal{B}} \star_{\Delta} J_{\mathcal{B}}} 
 \mathcal{K}(H_{i,j})$
 and  $\ker(\delta) \cap A $.
 
 For any Hilbert space $\mathcal{H}$, there is a canonical isomorphism 
 from $K_{0}(\mathcal{K}(\mathcal{H}))$ to $\Z$ induced by 
 the trace. In addition, 
 we have $K_{1}(\mathcal{K}(\mathcal{H})) \cong 0$ 
 (see \cite{Ex:MorK}). Hence, we have 
 $K_{1}(A) \cong K_{1}(  \ker(\delta) \cap A) \cong 0$, 
 $K_{0}(A) \cong \Z\{ x_{1}, \ldots, x_{I}\}
  \oplus \Z\{ x_{I+1}, \ldots, x_{J}\}$
 and $K_{0}( \ker(\delta) \cap A) \cong \Z I_{\mathcal{B}}
  \star_{\Delta} J_{\mathcal{B}}$. Moreover, the 
 map induced by the inclusion  $\ker(\delta) \cap A \subseteq A$ is simply
 $\theta$. In summary, the six-term exact sequence for the relative groups
 of the inclusion becomes
 \[
 \xymatrix{ K_{0}(\ker(\delta) \cap A;A) \ar[r] & 
  \Z I_{\mathcal{B}} \star_{\Delta} J_{\mathcal{B}}
   \ar[r]^{\theta} & 
 \Z\{ x_{1}, \ldots, x_{I_{\mathcal{B}}
 +J_{\mathcal{B}}}\} \ar[d] \\
  0 \ar[u] &  0 \ar[l] & K_{1}(\ker(\delta) \cap A;A) \ar[l]}
  \]
 and so $K_{0}(\ker(\delta) \cap A;A) \cong ker(\theta)$ and 
 $K_{1}(\ker(\delta) \cap A;A) \cong coker(\theta)$. Combining this with the 
 computation of the relative groups already done above and the results
 of Theorem \ref{K:10} and Proposition \ref{K:20} completes the proof.
 
 The remaining statements are straightforward. In particular, it is a simple
 matter to check that  if 
  $I_{\mathcal{B}}=1$, then 
  $I_{\mathcal{B}} \star_{\Delta} J_{\mathcal{B}} = 
  \{ 1 \} \times \{2, \ldots , J_{\mathcal{B}} \}$ 
  and $\theta(1,j) = x_{1} + x_{j}$, 
  for $ 1 < j \leq J_{\mathcal{B}}$, which is clearly injective
  and has $coker(\theta) \cong \Z$.
 \end{proof}
 
A crucial part of K-theory (at least $K_{0}$) for a $C^{*}$-algebra is
its natural order structure. As a simple example, if $\alpha, \beta$
are any two irrational numbers, then the subgroups of the real numbers 
$\Z + \alpha \Z $ and $\Z + \beta \Z$ are isomorphic as 
abstract groups, but with the relative orders from the real numbers,
they are not isomorphic in general as ordered groups. One 
of the difficulties in operator algebra K-theory is that many 
computational tools do not respect the order structure. As an 
example here, while we may easily check in some specific situation
that the map $i_{*}$ of Theorem \ref{K:30} is an isomorphism, 
it does not follow at once that it is an isomorphism of ordered
groups. Part of that is easily dealt with: the fact that
it is induced by a $*$-homomorphism of $ C^{*}_{\lambda}(T^{\sharp}(S^{s}_{\mathcal{B} }))$ in 
$ C^{*}_{\lambda}(T^{\sharp}(Y_{\mathcal{B} }))$ means that it is a positive homomorphism 
in the sense it maps the positive cone in the former into the 
positive cone in the latter.

 \begin{thm}
 \label{K:38}
Let $\mathcal{B}$ be an ordered Bratteli diagram satisfying the conditions of
Definition \ref{surface:20}. 
If the following  sequence is exact
$$\xymatrix{ 0  \ar[r] & 
\Z I_{\mathcal{B}} \star_{\Delta} J_{\mathcal{B} } 
\ar^{\hspace{-.3in}\theta}[r] & \Z \{ x_{1}, \ldots, x_{I_{\mathcal{B} }+J_{\mathcal{B} }} \}
   \ar^{\hspace{.5in}\sigma}[r] &  \Z \ar[r] & 0}$$
and the equivalence classes of the relation $T^{\sharp}(Y_{\mathcal{B}})$
are all dense, then 
\[
 i_{*}: K_{0}(C^{*}_{\lambda}(T^{\sharp}(S^{s}_{\mathcal{B} }))) 
 \rightarrow K_{0}( C^{*}_{\lambda}(T^{\sharp}(Y_{\mathcal{B} })))
 \] 
 is an isomorphism of ordered abelian groups.
 In particular, if 
 $I_{\mathcal{B}} = 1$ or $J_{\mathcal{B}}=1$,
  then the same conclusion holds.
 \end{thm}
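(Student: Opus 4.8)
The plan is to upgrade the abstract-group isomorphism $i_{*}$ of Theorem \ref{K:30} to an order isomorphism by showing two things: that $i_{*}$ carries the positive cone onto the positive cone, and that under the stated density hypothesis the positive cone of $K_{0}(B_{\mathcal{B}}^{+})$ is "large enough" to see everything the positive cone of $K_{0}(A_{\mathcal{B}}^{+})$ sees. Since $i_{*}$ is induced by the $*$-homomorphism $i:B_{\mathcal{B}}^{+}\hookrightarrow A_{\mathcal{B}}^{+}$, it is automatically positive, i.e. $i_{*}(K_{0}(B_{\mathcal{B}}^{+})^{+})\subseteq K_{0}(A_{\mathcal{B}}^{+})^{+}$; the exactness of the displayed sequence makes $i_{*}$ bijective by Theorem \ref{K:30}. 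So the whole content is the reverse inclusion: every $x\in K_{0}(B_{\mathcal{B}}^{+})$ with $i_{*}(x)\in K_{0}(A_{\mathcal{B}}^{+})^{+}$ already satisfies $x\in K_{0}(B_{\mathcal{B}}^{+})^{+}$.

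First I would reduce to the level of the finite approximants. Using part 3 of Theorem \ref{IntCstar:70}, write $B_{\mathcal{B}}^{+}$ as the closure of $\bigcup_{n}B_{-n,n}^{+}$ and correspondingly $A_{\mathcal{B}}^{+}$ as the closure of $\bigcup_{n}A_{-n,n}^{+}$ (using Proposition \ref{AF:40}, and Theorem \ref{K:10} to pass between $A^{Y+}$ and $A^{+}$), so that any $x\in K_{0}(B_{\mathcal{B}}^{+})$ is represented by a projection $e$ over some $B_{-n,n}^{+}$, and $i_{*}(x)$ is positive means $e$ becomes Murray--von Neumann subequivalent, in $A_{-n',n'}^{+}$ for some large $n'$, to a projection of the required trace profile. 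The key point is then the description of $B_{-n,n}^{+}$ from Corollary \ref{IntCstar:90} as an extension of $C^{*}(G_{-n,n})$ by $\bigoplus_{v\in V_{n}}A_{-n,n,v}^{+}\otimes C_{0}(0,\nu_{s}(v))$, together with the computation of its $K$-theory in Proposition \ref{K:20}: $K_{0}(B_{-n,n}^{+})$ injects into $K_{0}(A_{-n,n}^{+})\cong\Z^{\#V_{n}}$ and a class is a projection class precisely when the corresponding "evaluation at each boundary path" vector has non-negative entries, which is exactly the positivity condition in $A_{-n,n}^{+}$; the only subtlety is at the glued fibers indexed by $r(E^{s}_{-n,n})$, where positivity of a class in $C^{*}(G_{-n,n})$ must be reconciled with positivity of its images under $\alpha_{1},\alpha_{2}$. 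Here is where density enters: if all $T^{\sharp}(Y_{\mathcal{B}})$-classes are dense, the traces on $A_{\mathcal{B}}^{+}$ (which detect the order on $K_{0}$, since $A_{\mathcal{B}}^{+}$ is a simple AF-algebra under the strong-simplicity standing hypothesis, or more carefully by the Effros--Handelman--Shen realization of $K_{0}$ as an ordered group from the Bratteli diagram) restrict faithfully, and an element of $K_{0}(B_{\mathcal{B}}^{+})$ is positive iff it pairs non-negatively (strictly, off zero) with every trace. Concretely, I would argue: the density forces the gluing relations in Theorem \ref{IntCstar:70}(4) to be "compatible with order" in the sense that a class is represented by a genuine projection in $B_{-n,n}^{+}$ as soon as its image in $A_{-n,n}^{+}$ is, after possibly telescoping to a larger $n'$; telescoping is legitimate because passing from $B_{-n,n}^{+}$ to $B_{-n',n'}^{+}$ only subdivides the interval fibers and the finite groupoid $G$, and the inclusion is unital-like on the relevant corners.

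The main obstacle I expect is precisely this last compatibility at the glued fibers: a priori a class $x$ with $i_{*}(x)$ positive could fail to be representable by a projection in any single $B_{-n,n}^{+}$ because the constraint $\sum_{v}f(x_{v}^{s-max})=\alpha_{1}(g)$, $\sum_{v}f(x_{v}^{s-min})=\alpha_{2}(g)$ from Theorem \ref{IntCstar:70}(4) is an equality of \emph{elements} of the finite-dimensional algebras, not merely of $K$-classes, so lifting a positive $K_{0}$-class to an honest projection requires choosing the interval-valued function $f$ to interpolate between two projections with the same trace but possibly different "shapes". This is a standard but nontrivial interpolation inside $M_{k}(C[0,1])$, and the density hypothesis is exactly what guarantees that the two boundary projections have equal trace (so the interpolation exists) in the limit. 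I would handle it by first telescoping so that the edge connections are large enough that any two projections in $A_{-n,n,v}^{+}$ of equal trace are unitarily equivalent within $A_{-n',n'}^{+}$, then building the connecting homotopy fiberwise, and finally invoking that the resulting element of $B_{-n',n'}^{+}$ has the prescribed $K_{0}$-class because $i_{*}$ is injective. Once this is in place, the special cases $I_{\mathcal{B}}^{+}=1$ or $J_{\mathcal{B}}^{+}=1$ follow immediately, since the last sentence of Theorem \ref{K:30} already gives exactness of the relevant sequence in those cases, so the hypothesis of the present theorem is automatically satisfied and the order-isomorphism conclusion applies.
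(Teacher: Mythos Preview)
Your reformulation of the task is correct and matches the paper: since $i_{*}$ is induced by an inclusion it is automatically positive, and exactness plus Theorem \ref{K:30} give bijectivity, so the only issue is that every positive class in $K_{0}(A_{\mathcal{B}}^{+})$ is the $i_{*}$-image of a positive class in $K_{0}(B_{\mathcal{B}}^{+})$. But the paper's route to this is quite different from yours, and your sketch has two genuine gaps.

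The paper does \emph{not} try to lift an arbitrary positive class to a projection inside some $B_{-n,n}^{+}$, nor does it invoke traces. Instead it fixes a standard generator $[a_{p,p}]$ with $p\in E_{m,n}^{Y}$ and exhibits an explicit partial isometry $w$ in $AC_{m,N}^{Y+}$ (a cosine/sine interpolation between two matrix units $a_{p',p'}$ along an interval in $X_{r(p)}^{+}$) such that $w^{*}w = a_{p,p}\otimes\chi_{[x,y]}$ and $ww^{*}\in B_{\mathcal{B}}^{+}$. This works directly only in a ``special case'' determined by which $T^{+}(x_{i})$ contains the $\Delta_{s}$-image of a boundary slice. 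The density hypothesis is used in a very concrete way: it guarantees that for any pair $(i,j)\in I_{\mathcal{B}}^{+}\star_{\Delta}J_{\mathcal{B}}^{+}$ one can find a point of $T^{+}(x_{i})\cap\Delta_{s}(T^{+}(x_{j}))$ inside any prescribed sub-cylinder of $X_{r(p)}^{+}$. Combined with the exactness hypothesis (which lets one write $x_{i'}-x_{j'}\in\ker\sigma$ as a $\theta$-image and hence produce a finite chain $(i_{l},j_{l})$ in $I_{\mathcal{B}}^{+}\star_{\Delta}J_{\mathcal{B}}^{+}$), this allows one to choose points $x<_{s}y_{1}<_{s}\cdots<_{s}y_{L}<_{s}y$ so that each $a_{p,p}\otimes\chi_{[y_{l},y_{l+1}]}$ falls under the special case; summing gives $[a_{p,p}]$ in the image of $i_{*}$ on positive elements. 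Your claim that ``the density hypothesis is exactly what guarantees that the two boundary projections have equal trace'' misidentifies the mechanism: density is used to place points in prescribed $T^{\sharp}$-classes, not to compare traces, and there is no appeal to trace-detection of positivity in $B_{\mathcal{B}}^{+}$ (which would require structural facts about $B_{\mathcal{B}}^{+}$ you have not established).

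Second, your treatment of the ``In particular'' clause is incomplete. You note that Theorem \ref{K:30} gives exactness when $I_{\mathcal{B}}^{+}=1$ or $J_{\mathcal{B}}^{+}=1$, but the present theorem has \emph{two} hypotheses: exactness of the $\theta$--$\sigma$ sequence \emph{and} density of all $T^{\sharp}(Y_{\mathcal{B}})$-classes. The paper checks the second explicitly: when $I_{\mathcal{B}}^{+}=1$ one has $\Delta_{s}(T^{+}(x_{1}))\cap T^{+}(x_{j}) = T^{+}(x_{j})$, which is dense by strong simplicity, so every $T^{\sharp}$-class is dense. Without this verification the special case does not follow.
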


\begin{proof}
We know already from the last theorem  and the hypothesis
on the exact sequence that $i_{*}$ is
an isomorphism and since it is induced by a $*$-homomorphism 
at the level of $C^{*}$-algebras, it maps positive elements
to positive elements. It remains for us to show that 
every positive element
of $K_{0}(C^{*}_{\lambda}(T^{+}(Y_{\mathcal{B}})))$ 
is the image of a positive
element of $K_{0}(C^{*}_{\lambda}(T^{+}(S^{s}_{\mathcal{B}})) )$. 
In view of Theorems \ref{K:5}
and \ref{K:10},  it suffices to 
consider a projection in 
$ C^{*}_{\lambda}(T^{+}(Y_{\mathcal{B}}))$ of the form 
$a_{p,p}$, where $p$ is in $E_{m,n}^{Y}$, for some $m < n$, and show 
it is Murray-von Neumann
equivalent to one in $C^{*}_{\lambda}(T^{+}(S^{s}_{\mathcal{B}})) $.

Consider two points $x, y$ in $X_{r(p)}$ satisfying the following:
$ x \leq_{s} y$, $X_{r(p)}^{-}x \subseteq T^{+}(x_{i}) $ and  
$X_{r(p)}^{-}y \subseteq T^{+}(x_{j}) $, for some 
$ 1 \leq i \leq I_{\mathcal{B}} < j \leq I_{\mathcal{B}}+ J_{\mathcal{B}}$.
It follows that $a_{p,p} \otimes \chi_{[x, y]}$ is in
$ A^{Y}_{m,n} \otimes C(X_{r(p)}^{+}) = AC_{m,n}$ and so it determines
a class in $K_{0}( C^{*}_{\lambda}(T^{+}(S^{s}_{\mathcal{B}})) )$. 

Observe that as $p$ is not
 $s$-maximal or $s$-minimal,
  $\Delta_{s}(X_{s(p)}^{-}px)_{(m, \infty)}$ is a single path, as is 
 $\Delta_{s}(X_{s(p)}^{-}py)_{(m, \infty)}$. 
 In particular,    $\Delta_{s}(X_{s(p)}^{-}px)$ is contained 
 in  $T^{+}(x_{j'})$, for some $j'$, while 
 $\Delta_{s}(X_{s(p)}^{-}py)$ is contained 
 in  $T^{+}(x_{i'})$, for some $i'$. 

We first consider the special case that $j'=j$. 
(The case $i'=i$ can be done in a similar way.) This 
means we can find $N > n$ such that 
$ \Delta_{s}(X_{s(p)}^{-}px)_{( N, \infty)} = y_{(N, \infty)}$. 
Let $\bar{p} = \Delta_{s}(X_{s(p)}^{-}px)_{( m, N] }$.

We define 
\begin{eqnarray*}
w  & =  &  \sum_{p'} a_{p',p'} +  \cos \left( \frac{\pi}{2} \nu_{r}(r(y_{N}))^{-1}
\varphi^{r(y_{N})}_{r}(z) \right) a_{y_{(m,N]}, y_{(m,N]}} \\
  &  & 
+ \sin \left( \frac{\pi}{2} \nu_{r}(r(y_{N}))^{-1}
\varphi^{r(y_{N})}_{r}(z) \right) a_{y_{(m,N]}, \bar{p}}
\end{eqnarray*}
where the sum is over all $p'$ in $E_{m,N}$ with 
 $ px \leq_{s} p' <_{s} py$ and the variable $z$
lies in $X_{r(y_{N})}^{+}$. It is a simple matter to check that
$w^{*}w =  a_{p,p} \otimes \chi_{[x, y]}$ while 
$ww^{*}$ lies in $AC_{m,N}$. We conclude that the class of
 $a_{p,p} \otimes \chi_{[x, y]}$  lies in the image of $i_{*}$.
 
 We now consider the general case, dropping the hypothesis that $j'=i$.
 It is clear that $x_{i'} - x_{j'}$ lies in the kernel of
 $\sigma$. It follows that we may find a finite sequence
 $(i_{l}, j_{l}), 1 \leq l \leq L$ in 
 $I_{\mathcal{B}} \star_{\Delta}J_{\mathcal{B}}$ such that
 $j_{1} = j', j_{l+1} = i_{l}, 1 \leq l < L$ and 
 $i_{L}=i'$. By the minimality of
  $T^{+}(x_{i_{1}}) \cap \Delta_{s}(T^{+}(x_{j_{1}}) )$, we may find 
  $y_{1}$ in $X^{+}_{r(p)}$ with $ x <_{s} y_{1} <_{s} y$
  with 
  \[
  X_{r(p)}^{-}y_{1} \subseteq T^{+}(x_{i_{1}})
   \cap \Delta_{s}(T^{+}(x_{j_{1}}) ).
  \]
  By application of the special case above, the class of 
  $a_{p,p} \otimes \chi_{[x, y_{1}]}$ lies in the image of $i_{*}$.
  Continuing in this way, we may construct
  $ x <_{s} y_{1} <_{s}  y_{2} <_{s} \cdots <_{s} y_{L} <_{s} y$
  such that $y_{l}$ is in $T^{+}(x_{i_{l}}) \cap \Delta_{s}(T^{+}(x_{j_{l}}) )$ and 
 the class of 
  $a_{p,p} \otimes \chi_{[y_{l}, y_{l+1}]}$ and also
 $a_{p,p} \otimes \chi_{[y_{L}, y]}$  lie in the image of $i_{*}$. 
 We conclude that the class of 
 \[
   a_{p,p} \otimes \chi_{[x, y]} = a_{p,p} \otimes \chi_{[x, y_{1}]}
   + \sum_{l=1}^{L-1} a_{p,p} \otimes \chi_{[y_{l}, y_{l+1}]}
    + a_{p,p} \otimes \chi_{[y_{L}, y]}
    \]
    also lies in the image of $i_{*}$.
    Finally, we note that if we choose $x= x_{r(p)}^{s-min} $ 
    and $y = x_{r(p)}^{s-max}$,
     then $a_{p,p} = a_{p,p} \otimes \chi_{[x,y]}$.

We finish by considering the case $I_{\mathcal{B}} =1$ ( with 
$J_{\mathcal{B}}=1$ being similar). For any
 $ 2 \leq j  \leq 1+ J_{\mathcal{B}}$,
  we know
 that $\Delta_{s}(T^{+}(x_{j}))$ must be contained in $T^{+}(x_{1})$, so
 $I_{\mathcal{B}} \star_{\Delta} J _{\mathcal{B}}= \{ 1 \} \times 
 \{ 2, \ldots, 1+ J_{\mathcal{B}} \}$
 and it is a simple matter to verify the given sequence is exact.
 Next, we also have 
 \newline $\Delta_{s}(T^{+}(x_{1})) \cap T^{+}(x_{j}) =T^{+}(x_{j}) $
 which is dense by our hypotheses on $\mathcal{B}$. It follows 
 that every equivalence class in $T^{\sharp}(Y_{\mathcal{B}})$ is dense. 
\end{proof}

 We finally turn to the K-theory of the foliation algebra 
 of $(S_{\mathcal{B}}, \mathcal{F}_{\mathcal{B}}^{+})$.

\begin{rmk}
\label{K:39}
As the foliation $\mathcal{F}_{\mathcal{B}}^{+}$ arises from an action 
of $\R$ on the space $S_{\mathcal{B}}$, Connes' analogue of the Thom 
isomorphism Theorem (see 10.2.2 of \cite{Bla:K}) asserts that 
\[
K_{i}(C^{*}( \mathcal{F}_{\mathcal{B}}^{+})) \cong K^{i+1}(S_{\mathcal{B}}).
\]
On the other hand, this is not terribly useful at the moment, since
we don't know the $K$-theory (or cohomology) of the space
$S_{\mathcal{B}}$, nor does it seem particularly likely that it can 
be computed directly, given our construction. In any event, 
Connes' result does not reveal anything about the order
structure on the $K_{0}$ group of the foliation algebra. 
Instead, we will compute its $K$-theory as it relates
to our AF-algebra. Having done this, we can then use Connes' result
to compute the $K$-theory of our surface.
\end{rmk}

  We begin 
 by recalling some notation. We let $\mathcal{I}_{\mathcal{B}}$
 be the collection of connected subsets of the union of
 $\pi(T^{+}(x_{i}) \cap \Delta_{s}(T^{+}(x_{j})))$ over all 
 $(i,j)$ in $I_{\mathcal{B}} \star_{\Delta} J_{\mathcal{B}}$.
 We also recall that each such subset is homeomorphic to $\R$.
 Now, for each $I$ in $\mathcal{I}_{\mathcal{B}}$, we define 
 $\iota(I) = (i,j)$, if 
 $I \subseteq \pi(T^{+}(x_{i}) \cap \Delta_{s}(T^{+}(x_{j})))$.
 It is clearly surjective.
We also let $\iota$ be the map induced  from 
$\Z \mathcal{I}_{\mathcal{B}}$ to 
$\Z ( I_{\mathcal{B}} \star_{\Delta} J_{\mathcal{B}} )$.
 
We are going to construct a sequence of groupoids and
$C^{*}$-algebras interpolating between  $\mathcal{F}_{\mathcal{B}}^{+}$
and
$T^{\sharp}(S_{\mathcal{B}})$. 
 Let us begin by selecting 
 $\mathcal{I}_{0} \subseteq \mathcal{I}_{\mathcal{B}}$
  which contains exactly one interval from each set
  $\pi(T^{+}(x_{i}) \cap \Delta_{s}(T^{+}(x_{j}))$. That is, 
  $\iota: \mathcal{I}_{0} \rightarrow  
  I_{\mathcal{B}} \star_{\Delta} J_{\mathcal{B}}$
   is 
  a bijection. We then enumerate the remaining intervals of 
  $\mathcal{I}_{\mathcal{B}} - \mathcal{I}_{0}$ as $I_{1}, I_{2}, \ldots $.
   Although 
  this may be finite, we will ignore that in our notation.
   Observe that, for each $l \geq 1$, there is a unique $I'_{l}$ 
  in $\mathcal{I}_{0}$ such that $\iota(I_{l}) = \iota(I'_{l})$ and  the collection
   $I_{l} - I'_{l}$, $l \geq 1$ is a set of generators for 
   $ker(\iota)$ having no relations.
   
   We define a sequence of groupoids, beginning with 
   $\mathcal{F}_{0}^{+} = \mathcal{F}_{\mathcal{B}}^{+}$. Then for 
   $l \geq 1$, set $\mathcal{F}_{l}^{+}$ to be the union 
   of $\mathcal{F}_{l-1}^{+}$ with all sets $I_{l} \times I$ and 
   $I \times I_{l}$, where $I$ is in  $\mathcal{F}_{l-1}^{+}$ and 
   satisfies 
   $\iota(I) = \iota(I_{l})$. That is, on the set $\cup_{j \leq l} I_{j}$,
   $\mathcal{F}_{l}^{+}$ agrees with $T^{\sharp}(S_{\mathcal{B}})$, while
   on $\cup_{j >l} I_{j}$, it agrees with $\mathcal{F}_{\mathcal{B}}$.
   We leave it as a simple exercise to check that $\mathcal{F}_{l}^{+}$
   is an open subgroupoid of $T^{\sharp}(S_{\mathcal{B}})$, 
   $\mathcal{F}_{l}^{+}$
   is an open subgroupoid of $\mathcal{F}_{l+1}^{+}$
    and the union
   over all $l$ is $T^{\sharp}(S_{\mathcal{B}})$.
   
   We let  $j_{l}$ to denote the inclusion of 
   $C^{*}(\mathcal{F}_{l}^{+})$ in $C^{*}(T^{\sharp}(S_{\mathcal{B}}))$ 
   and $i_{l,k}$ to denote the inclusion of 
   $C^{*}(\mathcal{F}_{k}^{+})$ in $C^{*}(\mathcal{F}_{l}^{+})$, for $k \leq l$.

 \begin{thm}
 \label{K:40}
 Let $l \geq 0$ and let $j$ denotes the inclusion of 
  $C^{*}(\mathcal{F}_{l}^{+})$ in 
 $C^{*}_{r}(T^{\sharp}(S_{\mathcal{B}}))$, then 
 \[
 (j_{l})_{*}: K_{1}( C^{*}(\mathcal{F}_{l}^{+}) ) \rightarrow
 K_{1}(C^{*}_{r}(T^{\sharp}(S_{\mathcal{B}}))) \cong \Z
 \]
 is an isomorphism. 
 \end{thm}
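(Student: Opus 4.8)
The plan is to deduce the statement from two facts: that $(j_{l})_{*}$ is surjective for every $l$, and that $K_{1}(C^{*}(\mathcal{F}_{l}^{+}))$ is a cyclic group. Since a cyclic group admitting a surjection onto $\Z$ must be isomorphic to $\Z$ with the surjection an isomorphism, these two facts give the theorem at once.

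First I would fix the target group. By part (2) of Theorem \ref{IntCstar:150}, $\rho^{r}\times\rho^{r}$ induces an isomorphism $C^{*}_{r}(T^{\sharp}(S_{\mathcal{B}}))\cong C^{*}_{r}(T^{\sharp}(S^{s}_{\mathcal{B}}))=B_{\mathcal{B}}^{+}$, so Proposition \ref{K:20} gives $K_{1}(C^{*}_{r}(T^{\sharp}(S_{\mathcal{B}})))\cong\Z$, generated by the class of the unitary $u$ of Proposition \ref{K:20}, a diagonal element of the form $\exp(2\pi i\, f_{v}\circ\varphi^{v}_{s})\,a_{p,p}+(1-a_{p,p})$ in a unitization of some $B^{+}_{m,n}$, with $p$ in $E^{Y}_{m,n}$, $r(p)=v$, and $f_{v}$ continuous and integer-valued at the two endpoints of $[0,\nu_{s}(v)]$. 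The key observation is that this same $u$ already lies in $\widetilde{C^{+}_{m,n}}\subseteq\widetilde{C^{*}(\mathcal{F}_{0}^{+})}\subseteq\widetilde{C^{*}(\mathcal{F}_{l}^{+})}$: indeed $u-1$ is supported on the single block $a_{p,p}$, depends on the path variable only through $\varphi^{v}_{s}$, and takes the value $1$ at $x_{v}^{s-max}$ and $x_{v}^{s-min}$, so it meets all the conditions cutting out $C^{+}_{m,n}$ (the analogue of Proposition \ref{IntCstar:60} with $G_{m,n}$ replaced by $H_{m,n}$). As $C^{*}(\mathcal{F}_{0}^{+})\subseteq C^{*}(\mathcal{F}_{l}^{+})\subseteq C^{*}_{r}(T^{\sharp}(S_{\mathcal{B}}))$, the class $[u]_{1}\in K_{1}(C^{*}(\mathcal{F}_{l}^{+}))$ is well defined and $(j_{l})_{*}[u]_{1}$ is the chosen generator of $\Z$; hence $(j_{l})_{*}$ is surjective.

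It remains to bound $K_{1}(C^{*}(\mathcal{F}_{l}^{+}))$ from above. Rerunning the arguments that prove Theorem \ref{FolAlg:20} and Corollary \ref{FolAlg:30} with $\mathcal{F}_{l}^{+}$ in place of $\mathcal{F}_{\mathcal{B}}^{+}$, one gets $C^{*}(\mathcal{F}_{l}^{+})=\overline{\bigcup_{n}C^{+}_{-n,n}(l)}$, where $C^{+}_{m,n}(l)$ is built exactly as $C^{+}_{m,n}$ but with $H_{m,n}$ replaced by the finite groupoid $H^{(l)}_{m,n}$ obtained from it by amalgamating the finitely many equivalence classes corresponding to the intervals $I_{1},\dots,I_{l}$ (so $H_{m,n}\subseteq H^{(l)}_{m,n}\subseteq G_{m,n}$), and $C^{+}_{m,n}(l)$ sits in a short exact sequence
\[
0\longrightarrow\bigoplus_{v\in V_{n}}A^{+}_{m,n,v}\otimes C_{0}(0,\nu_{s}(v))\longrightarrow C^{+}_{m,n}(l)\longrightarrow C^{*}(H^{(l)}_{m,n})\longrightarrow 0 .
\]
Since $C^{*}(H^{(l)}_{-n,n})$ is finite dimensional (so $K_{1}=0$ and $K_{0}$ is free abelian), while each $A^{+}_{-n,n,v}\otimes C_{0}(0,\nu_{s}(v))$ is stably isomorphic to $C_{0}(\R)$ (so $K_{0}=0$ and $K_{1}\cong\Z$), the six-term exact sequence collapses to an isomorphism $K_{1}(C^{+}_{-n,n}(l))\cong\mathrm{coker}\bigl(\partial^{(l)}_{-n,n}\colon K_{0}(C^{*}(H^{(l)}_{-n,n}))\to\textstyle\bigoplus_{v\in V_{n}}\Z\bigr)$, and therefore $K_{1}(C^{*}(\mathcal{F}_{l}^{+}))\cong\lim_{n\rightarrow\infty}\mathrm{coker}(\partial^{(l)}_{-n,n})$. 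Finally, the computation in the proof of Proposition \ref{K:20} showing that $[u_{v}]_{1}=[u_{v'}]_{1}$ in $K_{1}(C^{+}_{-n,n'}(l))$ whenever there is a path from $v\in V_{n}$ to $v'\in V_{n'}$ uses only the ideal part of the sequence and carries over verbatim; as $\mathcal{B}$ is strongly simple, this forces the inductive limit to be cyclic, generated by the image of $[u]_{1}$. Combined with the surjectivity established above, $K_{1}(C^{*}(\mathcal{F}_{l}^{+}))\cong\Z$ and $(j_{l})_{*}$ is an isomorphism.

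The main obstacle is the third paragraph: one must set up, for each $\mathcal{F}_{l}^{+}$, the counterparts of Proposition \ref{IntCstar:60}, Theorem \ref{FolAlg:20} and Corollary \ref{FolAlg:30} — that is, identify the finite groupoids $H^{(l)}_{m,n}$ and the resulting recursive-subhomogeneous presentation of $C^{*}(\mathcal{F}_{l}^{+})$ — since the excerpt records these only at the two ends $l=0$ (the foliation algebra) and $l=\infty$ (the algebra $B_{\mathcal{B}}^{+}$). Once this uniform structure is available, all of the $K$-theory in the argument is bookkeeping with finite abelian groups; a minor secondary point is verifying that the generator $u$ lies in $C^{+}_{m,n}$, which amounts to evaluating the relevant $\varphi$-functions at the $s$-extremal paths.
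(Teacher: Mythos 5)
Your proposal is correct, and it takes a different route from the paper's proof, though it builds on the same structural scaffolding. Both you and the paper introduce the finite groupoids lying between $H_{m,n}$ and $G_{m,n}$ (your $H^{(l)}_{m,n}$, the paper's $H_{l,m,n}$) and the corresponding pullback subalgebras of $B^{+}_{m,n}$, and both reduce the statement to $K$-theory of the resulting inductive systems. After that the strategies diverge. The paper argues \emph{directly} that, at each finite stage, the inclusion $C_{l,m,n}\hookrightarrow B^{+}_{m,n}$ induces an isomorphism on $K_{1}$, reading this off from the two short exact sequences with the same ideal and finite-dimensional quotients with $K_{1}=0$; it then takes the inductive limit. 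You instead prove surjectivity of $(j_{l})_{*}$ outright by exhibiting the generating unitary $u$ inside $\widetilde{C^{+}_{m,n}}\subseteq\widetilde{C^{*}(\mathcal{F}_{0}^{+})}\subseteq\widetilde{C^{*}(\mathcal{F}_{l}^{+})}$, and separately prove $K_{1}(C^{*}(\mathcal{F}_{l}^{+}))$ is cyclic by rerunning the inductive-limit computation from Proposition \ref{K:20} for the $l$-version; cyclic plus surjection onto $\Z$ then forces the isomorphism. Your route is arguably a bit safer: the paper's sentence that finite-dimensionality and $K_{1}=0$ of the quotients ``is sufficient to conclude'' the finite-stage isomorphism tacitly also uses that the map $K_{0}(C^{*}(H_{l,m,n}))\to K_{0}(C^{*}(G_{m,n}))$ is surjective (true, since every $G_{m,n}$-class contains an $H_{l,m,n}$-class, so the cokernels of the two index maps agree), a small point the paper does not spell out; your surjectivity-plus-cyclicity argument sidesteps it. What the paper's approach buys is a uniform statement about the finite-stage inclusions themselves, which fits naturally with the way the rest of section \ref{K} (e.g.\ Theorem \ref{K:50}) manipulates the same filtration. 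One minor item worth pinning down in your write-up, which you flag at the end: the generator $u$ in Proposition \ref{K:20} is printed with $\varphi^{v}_{r}$ and $\nu_{r}(v)$, but consistency with the ideal $A^{+}_{m,n,v}\otimes C_{0}(0,\nu_{s}(v))$ and with the fact that elements of $AC^{Y+}_{m,n}$ are functions on $X^{+}_{v}$ requires $\varphi^{v}_{s}$ and $\nu_{s}(v)$, as you correctly use; your evaluation of $u-1$ at $x^{s\text{-}\mathrm{max}}_{v}$ and $x^{s\text{-}\mathrm{min}}_{v}$ then verifies the conditions of (the $H$-version of) Proposition \ref{IntCstar:60}.
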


 \begin{proof}
For $m < n$, we define the groupoid  
$H_{l,m,n} $ to be all $(p,q)$   in $G_{m,n}$ such that 
$I(p) = I(q)$ if either equals $I_{j}$, for some $j > l$. 
Recall the short exact sequence of Proposition \ref{IntCstar:90}:
$$\xymatrix{ 0 \ar[r] & \displaystyle\bigoplus_{v \in V_{n}}  A_{m,n,v} \otimes C_{0}(0, \nu_{s}(v))  
\ar[r]  &
B_{m,n} \ar^{\hspace{-.2in}q}[r]  &   C^{*}(G_{m,n}) \ar[r]  &  0.}$$
where we have used $q$ to denote the quotient map.
As $H_{l,m,n}$ is a subgroupoid of $G_{m,n}$, there is a natural inclusion
of their $C^{*}$-algebras, which we also denote $j_{l}$. 
We  define a subalgebra $C_{l,m,n}$ of
 $B_{m,n} \cap C^{*}(\mathcal{F}_{l}^{+}) $
 as the pull-back of these two maps, $q, j_{l}$.
  The inclusion coincides with
 our definition of $j_{l}$.
 That is, we have  short exact sequences
$$\xymatrix{ 0 \ar[r] & \displaystyle\bigoplus_{v \in V_{n}} 
 A_{m,n,v} \otimes C_{0}(0, \nu_{s}(v))  
\ar[r]  &
B_{m,n} \ar^{\hspace{-.2in}q}[r]  &   C^{*}(G_{m,n}) \ar[r]  &  0 \\
0 \ar[r] & \displaystyle\bigoplus_{v \in V_{n}}  
A_{m,n,v} \otimes C_{0}(0, \nu_{s}(v))  
\ar[r] \ar^{=}[u] &
C_{l,m,n} \ar^{\hspace{-.2in}q}[r] \ar^{j_{l}}[u] &   
C^{*}(H_{l,m,n}) \ar[r] \ar^{j_{l}}[u] &  0 }$$ 
It is easy to check that 
such that $C^{*}(\mathcal{F}_{l}^{+}) $ is the closure of the union
of the $C_{l,-n,n}$, over $n \geq 1$. 
While the terms involving $C^{*}(H_{l,m,n})$ and $C^{*}(G_{m,n})$ 
are different,
these $C^{*}$-algebras are both finite 
dimensional and have trivial $K_{1}$-groups
and this is sufficient to conclude the inclusion of
$C_{l,m,n}$ in $B_{m,n}$ induces an isomorphism on $K_{1}$. The conclusion
follows as $C^{*}(\mathcal{F}_{l}^{+}) $ 
and $C^{*}_{r}(T^{\sharp}(S_{\mathcal{B}}))$
are inductive limits of these sequences.
 \end{proof}
 
 Let us continue to develop the ideas of this last proof. 
 It is clear from the definitions that for fixed $m,n,l$, 
 $H_{l,m,n}$ is a subgroupoid of $H_{l+1,m,n}$. It is also a simple
 matter to check that $(p,q)$ is in $H_{l+1,m,n}$, but not in 
 $H_{l,m,n}$ if and only if $I(p)=I_{l+1}, I(q)=I_{l'}, l'\leq l$
 or vice verse. If 
 $I(p)= l+1$, its equivalence class in $H_{l+1,m,n}$ 
  consists of $q$ with $(p,q)$ in $G_{m,n}$ and 
 $I(q) = I_{l'}, l' \leq l+1$ and in $H_{l,m,n}$ this becomes
 two equivalence  classes, those $q$ with $I(q) =I_{l+1}$ and those
 $q$ with $I(q) = I_{l'}, l' \leq l$. Provided that such a pair $(p,q)$
 exists, the map from $K_{0}(C^{*}(H_{l,m,n}))$ to 
 $K_{0}(C^{*}(H_{l+1,m,n})$  is surjective and has kernel generated by
 $[\delta_{(p,p)}]_{0} - [\delta_{(q,q)}]_{0}$. If we consider the 
 exact sequences on $K$-groups associated with the commutative diagram
 $$\xymatrix{ 0 \ar[r] & \displaystyle\bigoplus_{v \in V_{n}}  
 A_{m,n,v} \otimes C_{0}(0, \nu_{s}(v))  
\ar[r]  &
C_{l+1,m,n} \ar^{\hspace{-.2in} q}[r]  &   C^{*}(H_{l+1,m,n}) \ar[r] 
 &  0 \\
0 \ar[r] & \displaystyle\bigoplus_{v \in V_{n}}
  A_{m,n,v} \otimes C_{0}(0, \nu_{s}(v))  
\ar[r] \ar^{=}[u] &
C_{l,m,n} \ar^{\hspace{-.2in}q}[r] \ar^{i_{l+1,l}}[u] &   C^{*}(H_{l,m,n}) \ar[r] \ar^{i_{l+1,l}}[u] &  0 }$$
 the $[\delta_{(p,p)}]_{0} - [\delta_{(q,q)}]_{0}$ is also in the 
 kernel of the index map
 and hence lifts to a non-zero class we denote by $\alpha_{l}$ in 
 $K_{0}(C_{l,m,n})$. As the inductive limit over $m,n$ of 
 $C_{l,m,n}$ is $C^{*}(\mathcal{F}_{l}^{+})$, $\alpha_{l}$ also
 represents a non-zero class in $K_{0}(C^{*}(\mathcal{F}_{l}^{+}))$
 which freely generates the kernel of the map to 
 $K_{0}(C^{*}(\mathcal{F}_{l+1}^{+}))$ induces by the inclusion.
 
 As for the existence of the pair $(p,q)$, we know that 
 $i(I_{l+1}) = i(I_{l'})$, for some $l' \in \mathcal{I}_{0}$.
 We may find $(i,j)$ in 
 $I_{\mathcal{B}} \star_{\Delta} J_{\mathcal{B}}$
 and $x, y $ in $T^{+}(x_{i}) \cap \Delta_{s}(T^{+}(x_{j})$ with 
 $\pi(x)$ in $I_{l+1}$ and $\pi(y)$ in $I_{l'}$.
 The fact that $(x,y)$ is in $T^{+}$, there exists $n \geq 1$ such that 
 $x_{(n,\infty)} = y_{(n, \infty)}$, As $I_{l}$ and $I_{l'}$ are open, 
 we may find $m < 0$ such that 
 $\pi(X^{-}_{s(x_{m})}x_{(m,\infty)}) \subseteq I_{l+1}$
 while $\pi(X^{-}_{s(y_{m})}x_{(m,\infty)}) \subseteq I_{l'}$. 
 Letting $p= [x_{m}, x_{n}]$ and $q = [y_{m}, y_{n}]$, this 
 pair satisfies the hypotheses for this particular $m,n$. 
 The same argument works for all lesser $m$ and greater $n$. 
  We have proved the following.

 \begin{lemma}
 \label{K:45}
 For each $l \geq 1$, with $\alpha_{l}$ as above, 
 there is a short exact sequence
 \[
 0 \rightarrow \Z\alpha_{l} \rightarrow K_{0}(C^{*}(\mathcal{F}_{l}^{+}))
 \stackrel{(i_{l+1,l})_{*}}{\longrightarrow} K_{0}(C^{*}(\mathcal{F}_{l+1}^{+})) \rightarrow 0.
 \]
 \end{lemma}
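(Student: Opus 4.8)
Fix $l\ge 1$. I would prove the statement first at each finite ``level'' $(-n,n)$ and then pass to the inductive limit. Recall from the proof of Theorem~\ref{K:40} that $C^{*}(\mathcal{F}_{l}^{+})$ is the closure of the increasing union $\bigcup_{n\ge1}C_{l,-n,n}^{+}$; since $K$-theory is continuous for such limits, $K_{0}(C^{*}(\mathcal{F}_{l}^{+}))=\varinjlim_{n}K_{0}(C_{l,-n,n}^{+})$, and the inclusion $C^{*}(\mathcal{F}_{l}^{+})\hookrightarrow C^{*}(\mathcal{F}_{l+1}^{+})$ restricts to $C_{l,-n,n}^{+}\hookrightarrow C_{l+1,-n,n}^{+}$, so $(i_{l+1,l})_{*}$ is the inductive limit of the maps $(i_{l+1,l})_{*}\colon K_{0}(C_{l,-n,n}^{+})\to K_{0}(C_{l+1,-n,n}^{+})$. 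As $\varinjlim$ is exact, it suffices to show: for all sufficiently large $n$ this level-$n$ map is surjective with kernel an infinite cyclic group generated by a class $\alpha_{l}^{(n)}$; the connecting maps of the limit send $\alpha_{l}^{(n)}$ to $\alpha_{l}^{(n+1)}$; and each $\alpha_{l}^{(n)}$ remains nonzero.

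Next I would compute $K_{*}(C_{l,m,n}^{+})$ from the short exact sequence
\[
0\to\bigoplus_{v\in V_{n}}A_{m,n,v}^{+}\otimes C_{0}(0,\nu_{s}(v))\to C_{l,m,n}^{+}\xrightarrow{\ q\ }C^{*}(H_{l,m,n})\to 0
\]
appearing in the proof of Theorem~\ref{K:40}. Each ideal summand is a matrix algebra over a suspension, so it has vanishing $K_{0}$ and $K_{1}\cong\Z$ (exactly as in the proof of Proposition~\ref{K:20}); and $C^{*}(H_{l,m,n})$ is finite dimensional, so $K_{1}(C^{*}(H_{l,m,n}))=0$ and $K_{0}(C^{*}(H_{l,m,n}))$ is free abelian on the equivalence classes of $H_{l,m,n}$. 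The six-term sequence then collapses to
\[
0\to K_{0}(C_{l,m,n}^{+})\xrightarrow{\ q_{*}\ }K_{0}(C^{*}(H_{l,m,n}))\xrightarrow{\ \partial\ }\Z^{\#V_{n}}\to K_{1}(C_{l,m,n}^{+})\to 0,
\]
so $q_{*}$ identifies $K_{0}(C_{l,m,n}^{+})$ with $\ker\partial$ inside the free abelian group $K_{0}(C^{*}(H_{l,m,n}))$. By naturality of the index map the same identification at level $l+1$ fits into a commuting ladder, whence $(i_{l+1,l})_{*}$ is nothing but the restriction to $\ker\partial$ of the homomorphism $K_{0}(C^{*}(H_{l,m,n}))\to K_{0}(C^{*}(H_{l+1,m,n}))$ induced by the inclusion $H_{l,m,n}\subseteq H_{l+1,m,n}$.

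Now I would use the combinatorial analysis carried out just before the statement: passing from $H_{l,m,n}$ to $H_{l+1,m,n}$ merges exactly the two equivalence classes consisting of the $q$ with $I(q)=I_{l+1}$ and the $q$ with $I(q)=I_{l'}$, $l'\le l$, $\iota(I_{l'})=\iota(I_{l+1})$, \emph{provided} a witnessing pair $(p,q)\in H_{l+1,m,n}\setminus H_{l,m,n}$ with $I(p)=I_{l+1}$, $I(q)=I_{l'}$ exists; such a pair exists once $n$ is large and $-n$ small enough, by fixing right-tail-equivalent $x,y\in T^{+}(x_{i})\cap\Delta_{s}(T^{+}(x_{j}))$ with $\pi(x)\in I_{l+1}$, $\pi(y)\in I_{l'}$ and truncating. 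Consequently the map on $K_{0}$ of the $H$-algebras is surjective with kernel $\Z\cdot\gamma$, $\gamma:=[\delta_{(p,p)}]_{0}-[\delta_{(q,q)}]_{0}$, and $\gamma$ has infinite order since $(p,p)$ and $(q,q)$ represent distinct free generators. The one step that needs an actual argument is that $\gamma\in\ker\partial=q_{*}K_{0}(C_{l,m,n}^{+})$: because $(p,q)\in G_{m,n}$, the constituent paths satisfy $r(p_{1})=r(q_{1})$ and $r(p_{2})=r(q_{2})$, and the computation of $\partial$ (the same computation as for the exponential map in the proof of Proposition~\ref{K:20}) shows $\partial[\delta_{(p,p)}]_{0}$ depends only on the pair $(r(p_{1}),r(p_{2}))$; hence $\partial\gamma=0$. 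Set $\alpha_{l}^{(n)}\in K_{0}(C_{l,-n,n}^{+})$ to be the class with $q_{*}\alpha_{l}^{(n)}=\gamma$.

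Finally I would run a short diagram chase at level $n$: given a class in $K_{0}(C_{l+1,-n,n}^{+})=\ker\partial'$, lift it along the surjection $K_{0}(C^{*}(H_{l,m,n}))\twoheadrightarrow K_{0}(C^{*}(H_{l+1,m,n}))$; commutativity of the ladder forces the lift into $\ker\partial=K_{0}(C_{l,-n,n}^{+})$, giving surjectivity; and the kernel of $(i_{l+1,l})_{*}$ at level $n$ is $\ker\partial\cap\Z\gamma=\Z\gamma=\Z\alpha_{l}^{(n)}$. Choosing $p,q$ from the single fixed pair $x,y$ for all large $n$ makes the $\alpha_{l}^{(n)}$ compatible under the connecting maps, so $\varinjlim\Z\alpha_{l}^{(n)}\cong\Z$, and applying exactness of $\varinjlim$ to the level-$n$ sequences $0\to\Z\alpha_{l}^{(n)}\to K_{0}(C_{l,-n,n}^{+})\to K_{0}(C_{l+1,-n,n}^{+})\to 0$ yields the asserted sequence with $\Z\alpha_{l}=\varinjlim\Z\alpha_{l}^{(n)}$. \emph{The main obstacle} is the pair of verifications that (i) the index map $\partial$ annihilates $\gamma$ --- this is exactly where the constraint ``$(p,q)\in G_{m,n}$ forces equal ranges'' is essential, and it is what keeps $(i_{l+1,l})_{*}$ from being injective --- and (ii) the bookkeeping showing that, level by level, this merging involves a single class (so the kernel is genuinely rank one) and that the generators $\alpha_{l}^{(n)}$ are carried to one another by the connecting maps, so that $\Z\alpha_{l}$ is infinite cyclic rather than zero or of higher rank.
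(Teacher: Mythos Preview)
Your proposal is correct and follows essentially the same approach as the paper: the paper's proof is the discussion immediately preceding the lemma, and it proceeds exactly by comparing the six-term sequences for $C_{l,m,n}^{+}$ and $C_{l+1,m,n}^{+}$, using the combinatorial fact that passing from $H_{l,m,n}$ to $H_{l+1,m,n}$ merges exactly two equivalence classes, lifting $\gamma=[\delta_{(p,p)}]_{0}-[\delta_{(q,q)}]_{0}$ through the index map to get $\alpha_{l}$, and then passing to the inductive limit. Your write-up is in fact more detailed than the paper's; one small remark is that the paper's ``also in the kernel of the index map'' is most naturally read as coming directly from the commuting ladder (since $(i_{l+1,l})_{*}\gamma=0$ and the vertical map on $\bigoplus_{v}\Z$ is the identity, $\partial_{l}\gamma=\partial_{l+1}(i_{l+1,l})_{*}\gamma=0$), whereas you compute $\partial\gamma$ explicitly via the formula from Proposition~\ref{K:20}---both are valid.
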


 \begin{thm}
 \label{K:50}
 There is a short exact sequence
$$\xymatrix{ 0 \ar[r] & ker(\iota)  \ar^{\hspace{-.3in}\beta}[r] &
 K_{0}( C^{*}(\mathcal{F}_{\mathcal{B}}^{+}) ) \ar^{\hspace{-.2in}j_{*}}[r] &
 K_{0}(C^{*}_{r}(T^{\sharp}(S_{\mathcal{B}}))) \ar[r] & 0}$$
 where $j$ denotes the inclusion of 
  $C^{*}(\mathcal{F}_{\mathcal{B}}^{+})$ in 
 $C^{*}_{r}(T^{\sharp}(S_{\mathcal{B}}))$.
 \end{thm}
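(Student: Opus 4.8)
The plan is to read the exact sequence off the filtration $\mathcal{F}_{0}^{+} = \mathcal{F}_{\mathcal{B}}^{+} \subseteq \mathcal{F}_{1}^{+} \subseteq \mathcal{F}_{2}^{+} \subseteq \cdots$ of $T^{\sharp}(S_{\mathcal{B}})$ by open subgroupoids constructed above, together with the short exact sequences of Lemma \ref{K:45}. Since $T^{\sharp}(S_{\mathcal{B}})$ is the increasing union of the $\mathcal{F}_{l}^{+}$, the algebra $C^{*}_{r}(T^{\sharp}(S_{\mathcal{B}}))$ is the inductive limit of the $C^{*}(\mathcal{F}_{l}^{+})$ along the inclusions $i_{l+1,l}$, so $K_{0}(C^{*}_{r}(T^{\sharp}(S_{\mathcal{B}}))) = \varinjlim_{l} K_{0}(C^{*}(\mathcal{F}_{l}^{+}))$ with structure maps $(i_{l+1,l})_{*}$, and $j_{*}$ is the canonical map of $K_{0}(C^{*}(\mathcal{F}_{\mathcal{B}}^{+}))$ into this limit. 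First I would observe that Lemma \ref{K:45} makes each $(i_{l+1,l})_{*}$ surjective; hence each $(i_{l,0})_{*}$, and hence $j_{*}$, is surjective, which gives exactness at $K_{0}(C^{*}_{r}(T^{\sharp}(S_{\mathcal{B}})))$.

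For the kernel I would set $N_{l} = \ker\bigl((i_{l,0})_{*}\bigr) \subseteq K_{0}(C^{*}(\mathcal{F}_{\mathcal{B}}^{+}))$; these form an increasing chain with $\ker(j_{*}) = \bigcup_{l} N_{l}$. Using surjectivity of the maps $(i_{k,0})_{*}$, choose for each $k$ a class $\tilde{\alpha}_{k} \in K_{0}(C^{*}(\mathcal{F}_{\mathcal{B}}^{+}))$ with $(i_{k,0})_{*}(\tilde{\alpha}_{k}) = \alpha_{k}$, where $\alpha_{k}$ generates $\ker\bigl((i_{k+1,k})_{*}\bigr) \cong \Z$ as in Lemma \ref{K:45}. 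Then I would prove by induction on $l$, via a routine diagram chase in those exact sequences, that $N_{l}$ is free abelian with basis $\{\tilde{\alpha}_{k} : k < l\}$: given $x \in N_{l+1}$, its image $(i_{l,0})_{*}(x)$ lies in $\Z\alpha_{l}$, say equals $n\alpha_{l}$, so $x - n\tilde{\alpha}_{l} \in N_{l}$; and applying $(i_{l,0})_{*}$ to a relation $\sum_{k \le l} n_{k}\tilde{\alpha}_{k}=0$ kills $\tilde{\alpha}_{0},\ldots,\tilde{\alpha}_{l-1}$ and sends $\tilde{\alpha}_{l}$ to $\alpha_{l}$, forcing $n_{l}=0$ and then inductively all $n_{k}=0$. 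Passing to the union gives $\ker(j_{*}) = \bigoplus_{l} \Z\tilde{\alpha}_{l}$, free abelian with one generator for each interval in $\mathcal{I}_{\mathcal{B}} - \mathcal{I}_{0}$.

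Finally I would identify this kernel with $\ker(\iota)$. As recorded above, $\ker(\iota)$ is free abelian on the classes $I_{l} - I'_{l}$, $l \geq 1$, indexed by the same set $\mathcal{I}_{\mathcal{B}} - \mathcal{I}_{0}$, so the assignment $I_{l} - I'_{l} \mapsto \tilde{\alpha}_{l}$ defines an isomorphism $\beta$ of $\ker(\iota)$ onto $\ker(j_{*})$ and completes the sequence. The hard part will be the inductive bookkeeping for the $N_{l}$: keeping the lifts $\tilde{\alpha}_{k}$ coherent and checking that the successive kernels stack up freely rather than collapsing, and reconciling the indexing of Lemma \ref{K:45} (stated for the steps $l \to l+1$ with $l \geq 1$, whose proof however applies verbatim to the initial step $\mathcal{F}_{0}^{+} \subseteq \mathcal{F}_{1}^{+}$ as well) with the filtration, which adds the interval $I_{l}$ at stage $l$. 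I would also remark that $\beta$ is canonical only up to the choice of the lifts $\tilde{\alpha}_{l}$, which is all that the stated exact sequence requires.
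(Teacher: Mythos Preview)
Your approach is essentially the paper's own: use the filtration $\mathcal{F}_0^+ \subseteq \mathcal{F}_1^+ \subseteq \cdots$ with $C^{*}_{r}(T^{\sharp}(S_{\mathcal{B}})) = \varinjlim C^{*}(\mathcal{F}_l^+)$, read off surjectivity of $j_*$ from the surjectivity in Lemma~\ref{K:45}, lift each kernel generator $\alpha_k$ to $K_0(C^{*}(\mathcal{F}_{\mathcal{B}}^+))$, and show these lifts form a free basis of $\ker(j_*)$ matching the free generators $I_l - I'_l$ of $\ker(\iota)$. Your organization via the chain $N_l = \ker((i_{l,0})_*)$ and the explicit induction is a clean way to package exactly the backward descent the paper carries out, and you are right that the argument for Lemma~\ref{K:45} goes through for the initial step $\mathcal{F}_0^+ \subseteq \mathcal{F}_1^+$ as well.

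One small bookkeeping slip: with your conventions, $\alpha_k$ generates $\ker((i_{k+1,k})_*)$ and $N_l$ has basis $\{\tilde{\alpha}_k : 0 \le k < l\}$, so $\ker(j_*)$ is free on $\tilde{\alpha}_0,\tilde{\alpha}_1,\ldots$; since adding $I_l$ is the passage $\mathcal{F}_{l-1}^+ \to \mathcal{F}_l^+$, the correct assignment is $I_l - I'_l \mapsto \tilde{\alpha}_{l-1}$, not $\tilde{\alpha}_l$. You already anticipated this shift when you spoke of ``reconciling the indexing,'' and once it is made the isomorphism $\beta$ is exactly the paper's.
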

 
 \begin{proof}
 As we noted above, we can list a free set of generators 
 for $\ker(\iota)$ as follows.
 For each $l \geq 1$, let $(i,j) = \iota(I_{l})$.
 There is a unique  $I_{l}'$ in $\mathcal{I}_{0}$ with 
 $\iota(I_{l}) = \iota(I_{l}')$ and $I_{l} - I_{l}'$, 
 as an element of $\Z \mathcal{I}_{\mathcal{B}}$ and in 
 $\ker(\iota)$. As $l \geq 1$ varies, these form a 
 free set of generators.
 
 We define the inclusion $\beta$ of $\ker(\iota)$ in 
  $K_{0}( C^{*}(\mathcal{F}_{\mathcal{B}}^{+}) ) $ as follows.
 Since each map $(i_{l+1,l})_{*}$ is surjective, so are their
 compositions. So for each $l \geq 1$, we may find 
 $\beta_{l}$ in 
 $K_{0}(C^{*}(\mathcal{F}_{0}^+)) = 
 K_{0}(C^{*}(\mathcal{F}_{\mathcal{B}}^{+}))$ 
 such that $(i_{l,0})_{*}(\beta_{l}) = \alpha_{l}$, as in Lemma 
 \ref{K:45}. For any integers
 $k_{l}, 1 \leq l \leq L$,  define
 \[
 \beta \left( \sum_{l=1}^{L}  k_{l}(I_{l} - I_{l}')  \right)
  = \sum_{l=1}^{L} k_{l} \beta_{l}.
  \]
  
  Let us first observe that, if $m > l$, then
  \[
  (i_{m,0})_{*}(\beta_{l}) =  (i_{m,l+1})_{*} \circ 
   (i_{l+1,l})_{*} \circ  (i_{l,0})_{*}(\beta_{l}) = 
  (i_{m,l+1})_{*} \circ 
   (i_{l+1,l})_{*}(\alpha_{l}) = 0.
   \]
  
  The fact that the image of $\beta$ is precisely 
  the kernel of $j_{*}$  can be seen as follows. 
As the union of the $C^{*}(\mathcal{F}_{l}^{+}), l \geq 0,$ is dense 
in $C^{*}(T^{\sharp}_{\mathcal{B}}, S_{\mathcal{B}})$,  
$K_{0}(C^{*}(T^{\sharp}_{\mathcal{B}}, S_{\mathcal{B}}))$
the inductive limit of  
\[
K_{0}(C^{*}(\mathcal{F}_{B}^{+})) = K_{0}(C^{*}(\mathcal{F}_{0}^{+}))
\stackrel{(i_{1,0})_{*}}{\rightarrow} K_{0}(C^{*}(\mathcal{F}_{1}^{+}))
\stackrel{(i_{2,1})_{*}}{\rightarrow} \cdots 
\]
First, as each  $(i_{l+1,l})_{*}$ is surjective, so is $j_{*}$.
It also follows that $a$ in $K_{0}(C^{*}(\mathcal{F}_{B}^{+}))$ is in 
the kernel of $j_{*}$ if and only if $(i_{m,0})_{*}(a) = 0$, for 
some $m \geq 1$. If $a = \beta\left( \sum_{l=1}^{L}  k_{l}(I_{l} - I_{l}') \right)$,
 then this 
holds for any $m > L$ from the definition of $\beta$ and our observation
above.

Conversely, suppose 
$0 = (i_{m,0})_{*}(a)$,
  for some $m $.
  This means that $(i_{m-1,0})_{*}(a)$ is 
  in the kernel of $(i_{m,m-1})_{*}$ and hence there is an integer $k_{m-1}$
  such that $(i_{m-1,0})_{*}(a) = k_{m-1} \alpha_{m-1}$.
   It then follows that
  \[
 (i_{m-1,m-2})_{*} \circ  (i_{m-2, 0})_{*}(a - k_{m-1} \beta_{m-1})
  = (i_{m-1, 0})_{*}(a - k_{m-1} \beta_{m-1})  = 0,
  \] 
  so we may find 
  $k_{m-2}$ such that
   $(i_{m-2, 0})_{*}(a - k_{m-1} \beta_{m-1}) = k_{m-2}\alpha_{m-2}$.
  Continuing in this way ends by seeing that 
  $a = \sum_{l=1}^{m-1} k_{l} \beta_{l}$ as desired.

 Let us finally show that $\beta$ is injective. Suppose that 
 $\beta \left( \sum_{l=1}^{L}  k_{l}(I_{l} - I_{l}')  \right) = 0$.
 It follows that 
 \[
(i_{L,0})_{*}  \left(\beta\left( \sum_{l=1}^{L}
  k_{l}(I_{l} - I_{l}')  \right) \right )  
   = \sum_{l=1}^{L} k_{l} (i_{L,0})_{*}(\beta_{l}) = k_{L}\alpha_{L},
 \]
 using again the observation above that 
 $(i_{l,0})_{*}(\beta_{l}) = 0$ if $L > l$.
 As $\alpha_{L}$ has infinite order, it follows that $k_{L}=0$.
  Continuing
    in this way shows that $k_{l}=0$, for all $1 \leq l \leq L$.
    \end{proof}
 
As we indicated earlier, knowing the $K$-theory of the 
foliation algebra allows the computation of the $K$-theory
 of the surface
$S_{\mathcal{B}}$ as an immediate consequence of Connes' analogue of
the Thom isomorpism Theorem: 10.2.2 of \cite{Bla:K}.

\begin{thm}
\label{K:59}
 Let $\mathcal{B}$ be a 
  bi-infinite ordered Bratteli diagram satisfying the conditions of
Definition \ref{surface:20}.
We have 
$ K^{i+1}(S_{\mathcal{B}}) \cong 
K_{i}(C^{*}( \mathcal{F}_{\mathcal{B}}^{+}))$.

 \end{thm}
 
 \begin{cor}
 \label{K:60}
 Let $\mathcal{B}$ be a 
  bi-infinite ordered Bratteli diagram satisfying the conditions of
Definition \ref{surface:20}. 
 If $K_{0}(A_{\mathcal{B}}^{+})$ is not finitely generated then the surface
 $S_{\mathcal{B}}$ has infinite genus.
 \end{cor}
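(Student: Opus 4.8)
I would argue by contraposition: assuming $S_{\mathcal{B}}$ has finite genus, I will show that $K_{0}(A_{\mathcal{B}}^{+})$ is finitely generated. The point is that all the hard work has been done, and the argument just chains together the three $K$-theoretic results already established, read in the order $S_{\mathcal{B}} \rightsquigarrow C^{*}(\mathcal{F}_{\mathcal{B}}^{+}) \rightsquigarrow B_{\mathcal{B}}^{+} \rightsquigarrow A_{\mathcal{B}}^{+}$.

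First I would invoke Theorem \ref{K:59}, which gives $K_{0}(C^{*}(\mathcal{F}_{\mathcal{B}}^{+})) \cong K^{1}(S_{\mathcal{B}})$. If $S_{\mathcal{B}}$ has finite genus then it is a translation surface of finite topological type (see the discussion below), so $K^{1}(S_{\mathcal{B}})$, and hence $K_{0}(C^{*}(\mathcal{F}_{\mathcal{B}}^{+}))$, is a finitely generated abelian group. Next, recalling from Theorem \ref{IntCstar:150}(2) the isomorphism $C^{*}_{r}(T^{\sharp}(S_{\mathcal{B}})) \cong C^{*}_{r}(T^{\sharp}(S^{s}_{\mathcal{B}})) = B_{\mathcal{B}}^{+}$, I feed this into the short exact sequence of Theorem \ref{K:50}: the surjection $j_{*}\colon K_{0}(C^{*}(\mathcal{F}_{\mathcal{B}}^{+})) \to K_{0}(B_{\mathcal{B}}^{+})$ exhibits $K_{0}(B_{\mathcal{B}}^{+})$ as a quotient of a finitely generated group, hence finitely generated. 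Finally, Theorem \ref{K:30} says precisely that $K_{0}(B_{\mathcal{B}}^{+})$ is finitely generated if and only if $K_{0}(A_{\mathcal{B}}^{+})$ is, which completes the contrapositive.

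The one step requiring genuine care — and the main obstacle — is the passage from ``$S_{\mathcal{B}}$ has finite genus'' to ``$K^{1}(S_{\mathcal{B}})$ is finitely generated'', the subtlety being that $S_{\mathcal{B}}$ is non-compact: it is the quotient of $Y_{\mathcal{B}} = X_{\mathcal{B}} - X^{ext}_{\mathcal{B}} - \Sigma_{\mathcal{B}}$, so one must control the contribution of the removed singular set. I would split into two cases. If $\Sigma_{\mathcal{B}}$ is infinite then, by the analysis of the connected components of $T^{\sharp}(S_{\mathcal{B}})$ (Theorem \ref{groupoids:420}) together with the fact that $\ker(\iota)$ in Theorem \ref{K:50} is then free of infinite rank, $K^{1}(S_{\mathcal{B}})$ is not finitely generated and the surface is not of finite topological type — so the hypothesis already fails and there is nothing to prove. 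If $\Sigma_{\mathcal{B}}$ is finite then, since $\mathcal{B}$ is finite rank (Definition \ref{surface:20}), $X^{ext}_{\mathcal{B}}$ is finite (Proposition \ref{order:5}), so $Y_{\mathcal{B}}$ differs from $X_{\mathcal{B}}$ only in a finite set; consequently $S_{\mathcal{B}}$ is the interior of a compact translation surface with finitely many (cone) points removed, and if its genus is finite this compact model is a closed surface of finite genus $g$, so $S_{\mathcal{B}}$ has the homotopy type of a finite CW complex and $K^{*}(S_{\mathcal{B}})$ is finitely generated. I would write out this last reduction carefully, as it is the only place the geometric meaning of ``genus'' enters; everything else is bookkeeping with the exact sequences of Theorems \ref{K:30}, \ref{K:50} and \ref{K:59}.
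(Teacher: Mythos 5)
Your overall strategy — contraposition followed by the chain of $K$-theory isomorphisms in the order
\[
K^{1}(S_{\mathcal{B}}) \cong K_{0}(C^{*}(\mathcal{F}_{\mathcal{B}}^{+})) \twoheadrightarrow K_{0}(B_{\mathcal{B}}^{+}), \qquad K_{0}(B_{\mathcal{B}}^{+}) \text{ f.g.} \iff K_{0}(A_{\mathcal{B}}^{+}) \text{ f.g.},
\]
using Theorems \ref{K:59}, \ref{K:50} (via \ref{IntCstar:150}(2)), and \ref{K:30} — is exactly the chain the paper sets up, and each of those links is sound. You correctly isolate the one nontrivial geometric input, namely passing from ``$S_{\mathcal{B}}$ has finite genus'' to ``$K^{1}(S_{\mathcal{B}})$ is finitely generated,'' and your Case 2 ($\Sigma_{\mathcal{B}}$ finite) handles that input correctly: $Y_{\mathcal{B}}$ is compact minus a finite set, $\pi$ is proper, so $S_{\mathcal{B}}$ is of finite topological type, and a finite-genus finite-type surface has finitely generated $K^{*}$.

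However, Case 1 has a genuine gap. When $\Sigma_{\mathcal{B}}$ is infinite you correctly deduce, via Theorem \ref{groupoids:420} and the injection $\ker(\iota) \hookrightarrow K_{0}(C^{*}(\mathcal{F}_{\mathcal{B}}^{+}))$ from Theorem \ref{K:50}, that $K^{1}(S_{\mathcal{B}})$ is not finitely generated and hence $S_{\mathcal{B}}$ is not of finite topological type. But you then write ``so the hypothesis already fails,'' i.e.\ you conclude $S_{\mathcal{B}}$ has infinite genus. That implication is false for noncompact surfaces: a plane, or a sphere, with an infinite discrete set of punctures has genus $0$ yet has infinitely generated $H^{1}$ and $K^{1}$ and is certainly not of finite topological type. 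So ``not finitely generated $K^{1}$'' is strictly weaker than ``infinite genus,'' and you cannot dismiss Case 1 this way. To close the gap you would need a separate argument that $\Sigma_{\mathcal{B}}$ infinite actually forces infinite genus (e.g.\ a Gauss--Bonnet style count showing each point of $\Sigma_{\mathcal{B}}$ contributes a cone excess that cumulatively makes the genus infinite, under the standing hypotheses of Definition \ref{surface:20}), or else show that under these hypotheses a finite-genus $S_{\mathcal{B}}$ can never have infinitely many punctures; neither statement is established in the paper, and neither is supplied in your proposal. This is the only place the proposal falls short — the $K$-theoretic bookkeeping is otherwise complete.
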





\section{Chamanara's surface}
\label{Cha}
\begin{figure}[t]
  \includegraphics[width =6.5in]{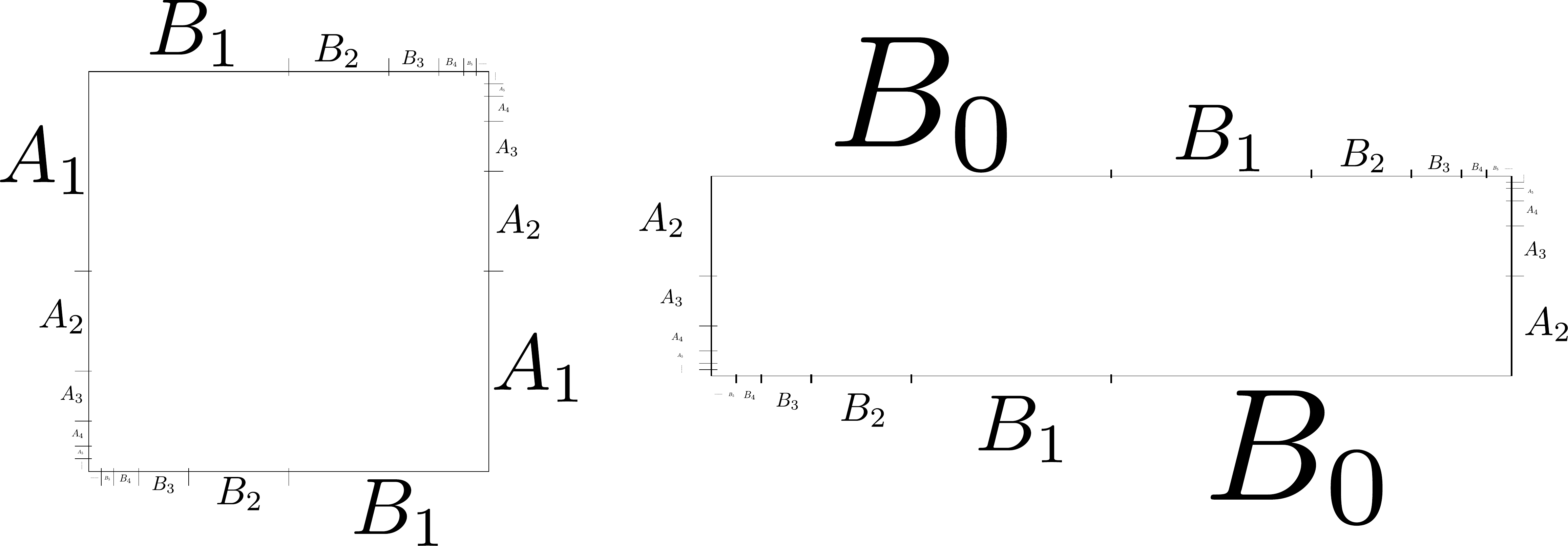}
  \caption{Two presentations of the Chamanara surface: the interiors of the edges with the same label are identified by a translation. The point at the boundary of such edges are not part of the surface and the surface has infinite genus. The presentation on the left is the standard presentation.}
  \label{fig:Chamanara}
\end{figure}
There is a family of surfaces of infinite genus introduced by Chamanara \cite{chamanara} which kicked off the study of flat geometry and dynamics of surfaces of infinite genus. The simplest of them has become known as \emph{the} Chamanara surface (see Figure \ref{fig:Chamanara}). Later, in \cite{LT}, the connection was made between this surface, the Bratteli diagram of the $2^\infty$ UHF $C^*$-algebra, and the diadic odometer. In this section we apply our machinery to study the different algebras and their $K$-theory.

The bi-infinite, ordered Bratteli diagram which is relevant here has the properties
\begin{eqnarray*}
V_{n} & = & \{ v_{n} \}, \\
E_{n} & = & \{ 0_{n}, 1_{n} \}, \\
0_{n} & \leq_{r} & 1_{n}, \\
0_{n} & \leq_{s} & 1_{n}
\end{eqnarray*}
for all $n$ in $\Z$. 
This diagram has a state which is unique, up to scaling: 
\[
\nu_{r}(v_{n})  = 2^{n}, \nu_{s}(v_{n})  = 2^{-n}, n \in \Z.
\]
It is easy to see that 
\[
X^{ext}_{\mathcal{B}} = \{( \cdots \, 1 \, 1 \, 1 \, \cdots ), 
( \cdots \, 0 \, 0 \, 0 \, \cdots ) \}.
\]
It is also clear that in Proposition \ref{gpds:30}
that we have $I_{\mathcal{B}}= J_{\mathcal{B}}$ and we can use
$x_{1} = 1^{\infty} = ( \cdots \, 1 \, 1 \, 1 \, \cdots )$ and 
$x_{2} = 0^{\infty} = ( \cdots \, 0 \, 0 \, 0 \, \cdots )$.
It is also easy to see that $\partial^{s}X_{\mathcal{B}}$ consists
of sequences that have a last $0$, or a last $1$, while 
 $\partial^{r}X_{\mathcal{B}}$ consists
of sequences that have a first $0$, or a first $1$.
Among these, for each integer $n$, we define  four special points:
\begin{itemize}
\item $w^{n}$: has a $1$ in entry $n$ and $0$'s elsewhere,
\item $x^{n}$: has $0$ in all entries $\leq n$ and $1$'s elsewhere, 
\item $y^{n}$: has $1$ in all entries $\leq n$ and $0$'s elsewhere, 
\item $z^{n}$: has a $0$ in entry $n$ and $1$'s elsewhere.
\end{itemize}

It is easy to check that 
\[
\begin{array}{rclrcl}
\Delta_{s}(w^{n})  & = & x^{n}, &
\Delta_{r}(w^{n})  & = & y^{n-1} \\
\Delta_{s}(z^{n})  & = & y^{n}, &
\Delta_{r}(z^{n})  & = & x^{n-1} 
\end{array}
\]
It follows that 
\[
\Delta_{r} \circ \Delta_{s}(w^{n}) = z^{n+1} \neq 
z^{n-1} = \Delta_{s} \circ \Delta_{r}(w^{n})
\]
and 
\[
\{  w^{n}, x^{n}, y^{n}, z^{n}
 \mid n \in \Z \} \subseteq  \Sigma_{\mathcal{B}}. 
 \]
The reverse containment is quite easy. 

It is fairly easy to check that the functions
 $\varphi^{1^{\infty}}_{r}, \varphi^{0^{\infty}}_{r}$
can be written quite explicitly as 
\begin{eqnarray*}
\varphi^{1^{\infty}}_{r}(x) & = & - \sum_{n \in \Z} 2^{n}(1 - x_{n}), \\
\varphi^{0^{\infty}}_{r}(y) & =  & \sum_{n \in \Z} 2^{n}y_{n},
\end{eqnarray*}
for any $x$ in $T^{+}(1^{\infty})$ and $y$ in $T^{+}(0^{\infty})$, respectively.
The ranges are
\begin{eqnarray*}
\varphi^{1^{\infty}}_{r}(T^{+}(1^{\infty}) \cap Y_{\mathcal{B}} ) & = & \cup_{n \in \Z} (-2^{n}, -2^{n+1}),\\
\varphi^{0^{\infty}}_{r}(T^{+}(0^{\infty}) \cap Y_{\mathcal{B}} ) & = & \cup_{n \in \Z} (2^{n}, 2^{n+1}).
\end{eqnarray*}
The quotient map then identifies each interval of the former with the corresponding interval
in the latter having the same length.

Moving on to K-theory, we have $K_{0}(C^{*}_{\lambda}(T^{+}(X_{\mathcal{B}}))) \cong \Z[1/2]$, 
as ordered abelian groups, with 
the latter having the usual order from the real numbers. 
In fact, the map sends the class of a projection $a_{p,p}, p \in E_{m,n}$ to
$2^{-n} =  \nu_{s}(r(p))$. 

Theorem \ref{K:38} then tells us that 
$K_{0}( C^{*}_{\lambda}(T^{\sharp}(S^{s}_{\mathcal{B}})))
 \cong K_{0}( C^{*}_{\lambda}(T^{+}(X_{\mathcal{B}}))) \cong \Z[1/2]$, 
as ordered abelian groups. The collection of connected
subsets of $T^{\sharp}(1^{\infty})$ is indexed by the integers: interval
$n$ having length $2^{n}$.That is, we have a canonical identification 
of $\mathcal{I}_{\mathcal{B}} = \{ I_{n} \mid n \in \Z \}$, where $I_{n}$ has length $2^{n}$.
 The map $\iota$ of Theorem \ref{K:50} is 
induced by sending each generator $I_{n}$ to the same thing, so $\ker(\iota)$ is the 
free abelian group with generators $I_{n}- I_{n-1}$.

We claim that $K_{0}(C^{*}(\mathcal{F}_{\mathcal{B}}))$ is the free abelian
group on a countably infinite set, which we will index by the integers. 
We will now explicitly write a set of generators.

Fix an integer $m$ and consider $p_{m}=01011$ and 
$q_{m} = 01010$ in $E_{m-5,m}^{Y}$. 
 (The presence of two $0$'s and two $1$'s guarantees that 
 we avoid $\Sigma_{\mathcal{B}}$.) Define 
 \begin{equation}
   \label{eqn:wm}
 w_{m} = a_{p_{m},p_{m}} \otimes \cos \left( \frac{\pi}{2} 2^{-m} \varphi^{v_{m}}_{s}(z) \right) 
   +  a_{q_{m},p_{m}} \otimes \sin \left( \frac{\pi}{2} 2^{-m}\varphi^{v_{m}}_{s}(z) \right),
 \end{equation}
   for $z$ in $X_{v_{m}}^{+}$, which lies in $AC_{m-5,m}$.
   A simple computation shows $w_{m}^{*} w_{m} = a_{p_{m},p_{m}}$ while 
   $w_{m}w_{m}^{*}$ equals $a_{p_{m},p_{m}}$ when evaluated at $z = x_{v_{m}}^{s-min}$ and equals
   $a_{q_{m},q_{m}}$ when evaluated at $z = x_{v_{m}}^{s-max}$. (This is the same $w_{m}$ 
   appearing in the proof of Theorem \ref{K:38}.)    Just as in Theorem \ref{K:38}, 
   this shows that $w_{m}w^{*}_{m}$ lies in $ C^{*}_{\lambda}(T^{\sharp}(S^{s}_{\mathcal{B}}))$
    and is Murray-von Neumann  equivalent to $a_{p_{m},p_{m}}$ in 
    $C^{*}_{\lambda}(T^{+}(Y_{\mathcal{B}}))) $. In particular, 
    identifying $K_{0}( C^{*}_{\lambda}(T^{+}(Y_{\mathcal{B}}))))
     \cong \Z[1/2]$, $j_{*}([w_{m}w_{m}^{*}]) = 2^{-m}$.

   There is a geometric way to visualize the functions $w_m$ in
    (\ref{eqn:wm}). First, since $|V_m| = 1$ for all $m$, we have that 
   $X_{v_m}^{-}X_{v_m}^+ = X_{\mathcal{B}}$ for all $m$. 
   What the different presentations of $X_\mathcal{B}$ as $X_{v_m}^{-}X^+_{v_m}$ 
   highlight are the special types of paths, e.g. $x_{v_m}^{s-max/min}$. 
   This type of different presentation is analogous to the different presentations
    of Chamanara's, e.g. the two presentations in Figure \ref{fig:Chamanara}.
   \begin{figure}[t]
     \includegraphics[width =6.5in]{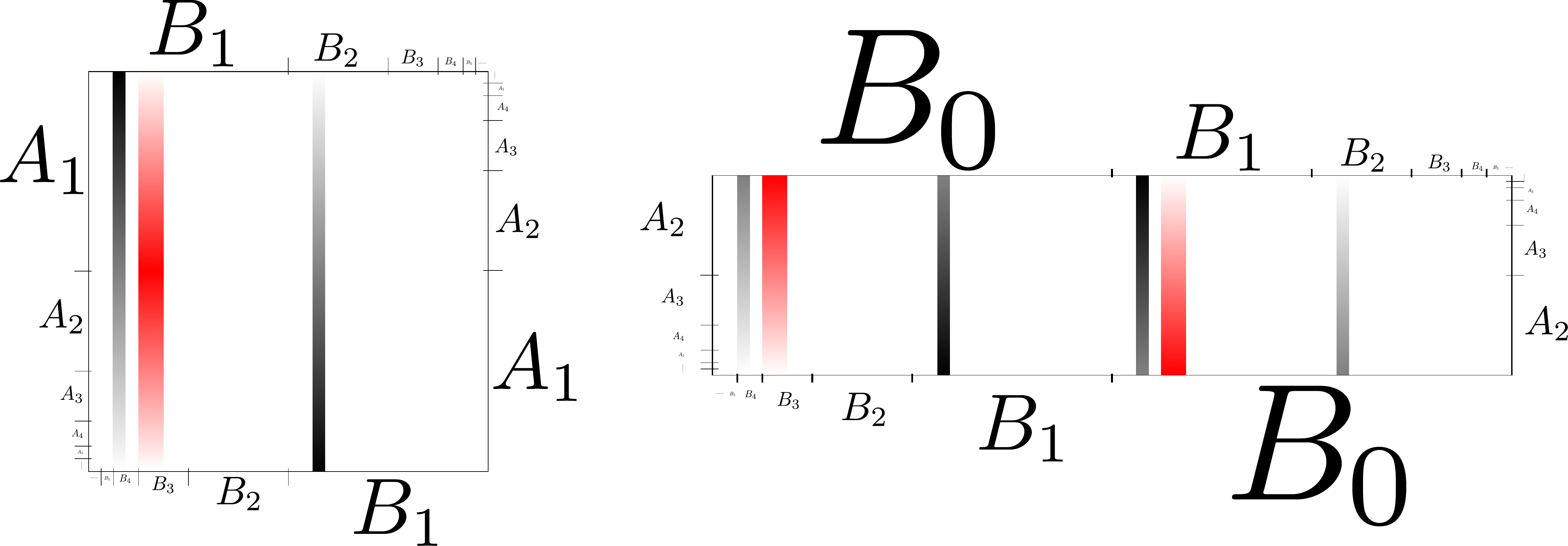}
     \caption{The functions $\bar{w}_0$ (in black) and $\bar{w}_1$ (in red) on the two presentations of Chamanara's surface from Figure \ref{fig:Chamanara}. For $\bar{w}_0$, white is 0, black is 1, and grey is in between, whereas for $\bar{w}_1$ white is 0, red is 1, and the other shades of red are in between.}
  \label{fig:wm}
   \end{figure}
   
   Let $\pi:Y_\mathcal{B}\rightarrow S_\mathcal{B}$ be the map from the 
   (nonsingular) path space to the surface. The paths $p_m,q_m\in E^Y_{m-5,m}$ 
   define cylinder sets $[p_m],[q_m]\subset Y_\mathcal{B}$ and the image 
   of these cylinder sets under $\pi$ is denoted by $U_m, V_m\subset S_\mathcal{B}$. 
   The functions $w_m$ in (\ref{eqn:wm}) are in fact the pullback of functions 
   $\bar{w}_m$ on $S_\mathcal{B}$ which are supported on $U_m\cup V_m$: $w_m = \pi^* \bar{w}_m$, see Figure \ref{fig:wm}.
   
   It follows then that $[w_{m}w_{m}^{*}] - 2 [w_{m+1}w_{m+1}^{*}]$ is in $\ker( j_{*})$. 
   Finally, one can show that under the identification of $\ker( j_{*})$ with $\ker(\iota)$
   given in Theorem \ref{K:50}, this element corresponds to $I_{m} - I_{m+1}$. 
   This computation is rather long and involves a lot of technical details
   from the main results of \cite{Put:Kexc} that we do not provide. However, given this, 
   it is a fairly simple matter to show that the  collection 
   $[w_{m}w_{m}^{*}], m \in \Z,$ generates
   all of  $K_{0}(C^{*}(\mathcal{F}_{\mathcal{B}}^{+}))$ and has no relations, completing 
   the proof of our claim above that the group is free abelian with a generating
   set indexed by the integers.

\section{Translation surfaces of finite genus}
\label{finitegenus}

The general goal of this section is to relate our constructions to 
the well-established study of translation surfaces in the finite genus case. 
More specifically, we aim to show that all  finite genus translation 
surfaces whose vertical and horizontal foliations are minimal arise
via our construction or, to be more precise, to see how 
standard techniques may be used to produce ordered Bratteli
 diagrams for finite genus surfaces.

      There are several equivalent ways to define a compact translation surface. Here we give two and refer the reader to \cite{viana:notes, zorich:intro, FM:notes} for thorough introductions to flat surfaces.
      
      Let $S$ be a compact Riemann surface of genus $g>1$ and $\alpha$ a 1-form on $S$ which is holomorphic with respect to the complex structure on $S$. The pair $(S,\alpha)$ defines a flat surface and a pair of transverse foliations, the horizontal and vertical foliations, $\mathcal{F}^\pm$. These are the foliations defined by the integrable distributions of the real and imaginary parts of $\alpha$:
      $$\mathcal{F}^+ = \langle\ker \Im \alpha\rangle\hspace{1in}\mbox{ and } \hspace{1in} \mathcal{F}^- = \langle\ker \Re \alpha\rangle.$$
The unit-time parametrization of these foliations are respectively the horizontal and vertical flows $\phi_t^+$ and $\phi_t^-$.
      
      By the Poincar\'e-Hopf index theorems, since $g>1$, these foliations (and the corresponding flows) are singular; the singular points are the zeros of $\alpha$ and these are called the singularities of $\alpha$, which are denoted by $\Sigma$. The 1-form $\alpha$ gives $S$ a flat metric on $S\setminus \Sigma$ as follows. Let $p\in S\setminus \Sigma$ and $p'$ in a neighborhood of $p$. The map $p'\mapsto \int_p^{p'}\alpha\in \mathbb{C}$ defines a chart around $p$ such that the pullback of $dz$ is $\alpha$. This gives $S\setminus \Sigma$ a flat geometry, and the reader can verify that maps which are change of coordinates between these types of charts are of the form $z\mapsto z +c$, justifying the use of the name translation surface. A saddle connection is a geodesic $\gamma\subseteq S$ with respect to the flat metric which starts and ends in $\Sigma$. More specifically, it satisfies the property that $\partial \gamma\subseteq \Sigma$. 

      The geometry fails to be flat at the singular points in $\Sigma$. At these points the local coordinate is of the form $d\left(\frac{z^{p+1}}{p+1}\right) =dz^p $ for some $p\in\mathbb{N}$, called the degree of the singularity. At a point $z\in \Sigma$ of degree $p$, the conical angle around $z$ is $2\pi (p+1)$. If $\Sigma = \{z_1,\dots, z_k\}$, and the degree at $z_i$ is $\kappa_i$, then by the Gauss-Bonnet theorem we have that $\sum_{i=1}^k \kappa_i = 2g-2$. Since the holomorphic 1-form $\alpha$ determines the geometry of the flat surface $(S,\alpha)$, it defines its area by $\mbox{Area}(S) = \frac{i}{2}\int_S \alpha\wedge \bar\alpha$.

      Another way to define a flat surface is as follows: start with a $2n$-gon $\bar{P}\subseteq\mathbb{C}$ with the property that edges come in parallel pairs of the same length. That is, $\bar{P}$ has edges $\zeta_1^+,\dots, \zeta_n^+,\zeta_1^-,\dots, \zeta_n^-$, where $\zeta_i^+$ and $\zeta_i^-$ are parallel and of the same length. Let $S = \bar{P}/\sim$ be the object obtained by the identifying pairs of edges which are parallel and of the same length: $\zeta_i^+\sim \zeta_i^-$. The holomorphic 1-form on $S$ is the unique one which pulls back as $dz$ on $\mathbb{C}$, although it may be singular at points where different edges meet. The points on $S$ where this happens is the singularity set $\Sigma$. The horizontal and vertical foliations on $S$ are now seen as the horizontal and vertical lines in $\bar{P}$. That this definition is equivalent to the one given above is left as an exercise for the reader who has not seen this before.

      Translation surfaces come in families: all translation surfaces of genus $g$ are elements of the moduli space $\mathcal{M}_g$ of translation surfaces of genus $g$. The space $\mathcal{M}_g$ is finite dimensional and it is stratified into strata $\mathcal{H}(\bar{\kappa})$, where $\bar{\kappa}$ describes how many and which types of singularities the surfaces in $\mathcal{H}(\bar{\kappa})$ are allowed to have. The stratum $\mathcal{H}(\bar{\kappa})$ is locally modeled by $H^1(S,\Sigma;\mathbb{C})$. By the remarks above, $\mathcal{H}(\kappa_1,\dots, \kappa_d)\subseteq \mathcal{M}_g$ if and only if $\bar{\kappa}$ satisfies $\sum\kappa_i = 2g-2$. The Teichm\"uller flow is the 1-parameter family of homeomorphisms of $\mathcal{M}_g$, taking $(S,\alpha)\mapsto (S,\alpha_t) =g_t(S,\alpha)$, 
      where $\Re\alpha_t = e^{-t}\Re\alpha$ and $\Im\alpha_t = e^t \Im\alpha$.

      In the rest of this section, we establish a way of defining an ordered, bi-infinite Bratteli diagram $\mathcal{B}(S,\alpha)$ for a typical choice of compact flat surface $(S,\alpha)$. 
      \subsection{Veech's zippered rectangles}
      \label{subsec:zip}
      Veech \cite{veech:gauss} introduced a way of presenting flat surfaces as the union of rectangles which are ``zippered'' on their sides. Here we review the construction. We will follow the conventions of Viana \cite{viana:notes}.

      Let $\mathcal{A}$ be an alphabet of size $d\geq 4$, whose elements
      are usually written as $\alpha$, 
      and $\pi_0,\pi_1:\mathcal{A}\rightarrow \{1,\dots, d\}$ two bijections. 
      We will consider examples with 
      $\mathcal{A} = \{ A, B, C, D \}$. We will use $\alpha$ to denote the 
      inverses of these functions, but instead of writing 
      $\alpha_{\varepsilon}(i) = \pi_{\varepsilon}^{-1}(i)$, for $\varepsilon =0,1, 
      1 \leq i \leq d$, we write $\alpha_{i}^{\varepsilon}$.
       These bijections may be written conveniently as 
      $$\pi = \left( \begin{array}{cccc} \alpha_1^0 & \alpha_2^0 &\cdots & \alpha_d^0 \\ \alpha_1^1 & \alpha_2^1 &\cdots & \alpha_d^1 \end{array}\right),$$
     the top and bottom rows being  ordered lists of the elements 
     of $\mathcal{A}$. It will always be assumed here that $\pi$ 
     defines an irreducible permutation, in the sense that  there is no $k<d$ such 
     that $\pi_1\circ \pi_0^{-1}\{1,\dots, k\} = \{1,\dots, k\}$.

      \begin{figure}[t]
        \centering
        \includegraphics[width = 3in]{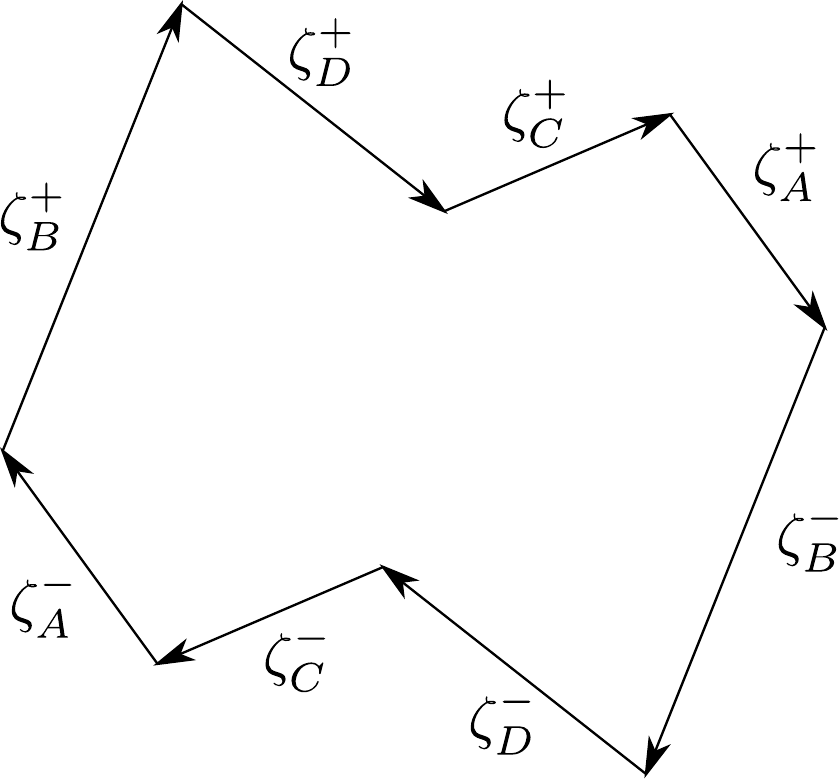}
        \caption{The flat surface defined by the vectors $\zeta_i = (\lambda,\tau)\in \mathbb{R}^\mathcal{A}_+\times T^+_\pi$.}
        \label{fig:1}
        \end{figure}
     
     We will now define vectors, $(\lambda_{\alpha}, \tau_{\alpha})$ 
     in the plane, indexed by   
     $\alpha$ in $\mathcal{A}$. Each 
     $\lambda_{\alpha}$ will be required to be positive while $\tau$ 
     satisfies 
      \begin{equation}
        \label{eqn:Tplus}
        \sum_{\pi_0(\alpha)\leq k}\tau_\alpha >0 \mbox{ and } \sum_{\pi_1(\alpha)\leq k}\tau_\alpha <0,
      \end{equation} 
      for all $ k < d$. We let $\R_{+}^{\mathcal{A}}$ to denote positive vectors
      and $T^{+}_{\pi}$ denote the set of all 
      $\tau$ in $\R^{\mathcal{A}}$ satisfying inequalities (\ref{eqn:Tplus}).
       Given $\zeta = (\lambda,\tau)$ in$ \mathbb{R}^\mathcal{A}_+\times T^+_\pi$,
        let $\Gamma = \Gamma(\pi,\lambda, \tau)\subseteq \mathbb{R}^2$ be the curve bounded by the concatenation of the vectors defined by $\zeta$:
        $$\zeta_{\alpha_1^0},\zeta_{\alpha_2^0},\cdots, \zeta_{\alpha_d^0},-\zeta_{\alpha_d^1}^1,-\zeta_{\alpha_{d-1}^1}^1,\cdots, -\zeta_{\alpha_1^1}^1.$$
        
      The constraints which define $T^+_\pi$ imply that about half of the vertices of $\Gamma$ are on the upper half plane, and the other rough half on the lower half plane. Assuming $\Gamma$ has no self-intersections\footnote{If there are self-intersections, there is a quick fix for it.}, the vector $\zeta$ defines a flat surface by first defining $\zeta_i^+$ and $\zeta^-_i$ to be the corresponding edges in the concatenation above in the upper and lower half of the plane, respectively, and then considering the interior of $\Gamma$ and making the identifications $\zeta_i^+\sim \zeta_i^-$ on the boundary edges (see Figure \ref{fig:1}).

      Given the data $(\pi,\lambda, \tau)$ as above, we now define the 
      vector $h$ in $\mathbb{R}^\mathcal{A}$ by
      $$h_\alpha = -\sum_{\pi_1(\beta)<\pi_1(\alpha)}\tau_\beta + \sum_{\pi_0(\beta)<\pi_0(\alpha)}\tau_\beta.$$
      This is more concretely expressed as $h = -\Omega_\pi(\tau)$, where $\Omega_\pi:\mathbb{R}^\mathcal{A}\rightarrow \mathbb{R}^\mathcal{A}$ is the matrix defined by
      $$\Omega_{\alpha\beta} = \left\{ \begin{array}{ll} +1 & \mbox{ if $\pi_1(\alpha)>\pi_1(\beta)$ and $\pi_0(\alpha)<\pi_0(\beta)$,} \\ \
        -1 & \mbox{ if $\pi_1(\alpha)<\pi_1(\beta)$ and $\pi_0(\alpha)>\pi_0(\beta)$,} \\
        0&\mbox{ otherwise.}\end{array}\right.$$
      Note that, the assumption that $\tau$ is in $
       T^+_\pi$ implies that $h_\alpha>0$ for all $\alpha$ in 
       $ \mathcal{A}$. We define the image of the positive cone $T^+_\pi$ under $-\Omega_\pi$ by $H^+_\pi = -\Omega_\pi(T^+_\pi)$.

      We now define rectangles $R_\alpha^\varepsilon$ of width $\lambda_\alpha$ and height $h_\alpha$ by
      \begin{equation}
      \label{eqn:rectangles}
        \begin{split}
          R_\alpha^0 & = \left(\sum_{\pi_0(\beta)<\pi_0(\alpha)}\lambda_\beta, \sum_{\pi_0(\beta)\leq \pi_0(\alpha)}\lambda_\beta \right) \times [0,h_\alpha] \\
          R_\alpha^1 & = \left(\sum_{\pi_1(\beta)<\pi_1(\alpha)}\lambda_\beta, \sum_{\pi_1(\beta)\leq \pi_1(\alpha)}\lambda_\beta \right) \times [-h_\alpha,0].
        \end{split}
      \end{equation}
      along with the ``zippers''
      \begin{equation*}
        \begin{split}
          Z_\alpha^0 & = \left\{ \sum_{\pi_0(\beta)\leq \pi_0(\alpha)}\lambda_\beta \right\} \times \left[0, \sum_{\pi_0(\beta)\leq\pi_0(\alpha)}\tau_\beta \right] \\
          Z_\alpha^1 & = \left\{ \sum_{\pi_1(\beta)\leq \pi_1(\alpha)}\lambda_\beta \right\} \times \left[ \sum_{\pi_1(\beta)\leq\pi_1(\alpha)}\tau_\beta,0 \right] \\
        \end{split}
      \end{equation*}
      \begin{figure}[t]
        \centering
        \includegraphics[width = 3in]{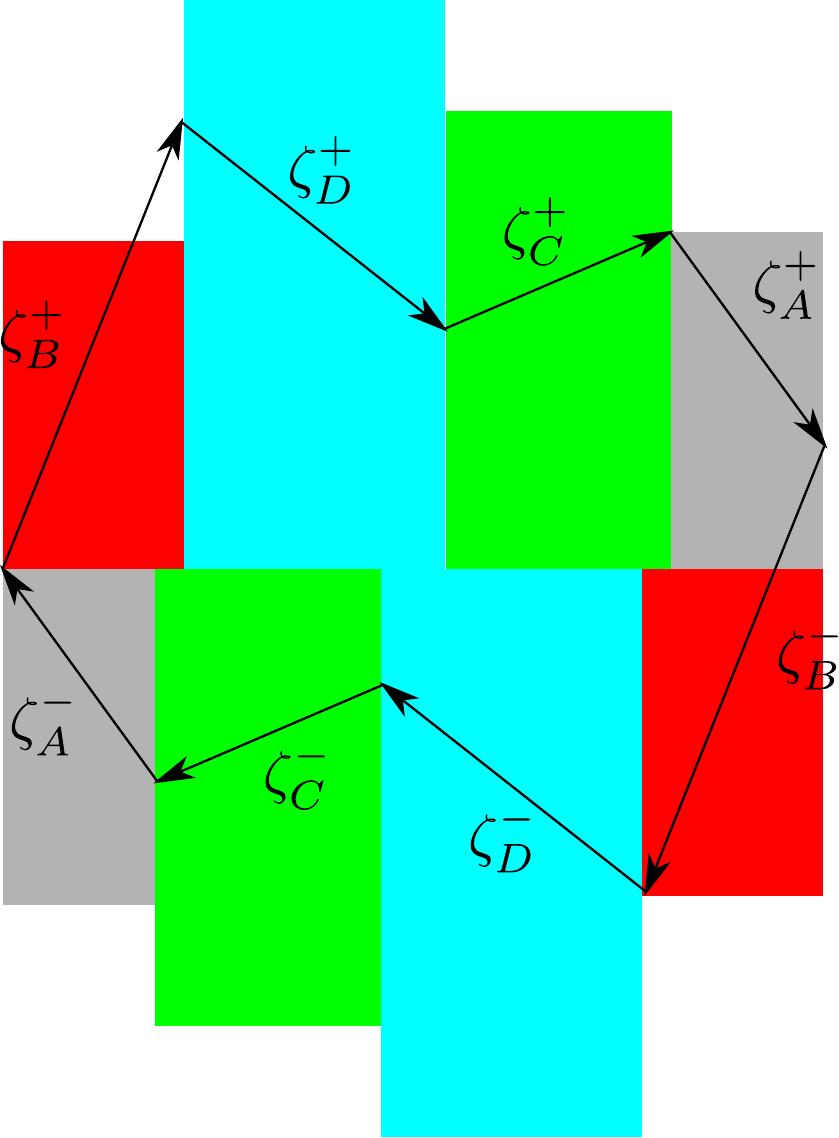}
        \caption{The zippered rectangles for the surface in Figure \ref{fig:1}.}
        \label{fig:2}
      \end{figure}
\noindent which are vertical segments ending at the points of concatenation of the curve $\Gamma$. As such, the flat surface $S(\pi, \lambda,\tau)$ can be presented as the quotient of the closure of the union of the rectangles $\{R_\alpha^0\}_{\alpha\in\mathbb{R}^\mathcal{A}}$ and zippers under a relation defined on the edges of the rectangles. The genus $g$ of this surface satisfies $2g = \mathrm{dim}\,\Omega_\pi(\mathbb{R}^\mathcal{A})$. The area of the surface is $\mathrm{Area}(S(\pi,\lambda,\tau)) = \lambda\cdot h$. Moreover, the horizontal and vertical foliations are the obvious choices. See Figure \ref{fig:2}.
      \subsection{Rauzy-Veech Induction}
      Given a triple $(\pi,\lambda,\tau)$, where $\pi = \{\pi_0,\pi_1\}$ is an irreducible permutation, 
      $\lambda$ in $\mathbb{R}^\mathcal{A}_+$ and $\tau $ in 
      $ T^+_\pi$, we will define an operation which produces a new triple $(\pi',\lambda',\tau')$ with the same properties. This procedure is known as Rauzy-Veech induction, or RV induction.

First, let us describe what this procedure is meant to do geometrically, and then we will give the details as to how it is done. Recall that from the triple $(\pi,\lambda, \tau)$ the flat surface it defines can be presented in zippered recangles form. The map $\mathcal{R}(\pi,\lambda,\tau) = (\pi',\lambda',\tau')$ gives new data from which the same surface can be presented in zippered rectangle form, except that the base one of the rectangles will be shorter and the height of one of the rectangles will be longer. This is done by cutting one of the rectangles $R_{\alpha}^\varepsilon$ into two and stacking one of the subrectangles above or below another one of the rectangles. The choices of the rectangles picked for this operation are determined by $(\pi,\lambda)$.

\begin{rmk}
\label{fg:10}
  It will be important to keep in mind one of the benefits of using Rauzy-Veech induction: it allows us to understand the behavior of the leaf of the vertical foliation on $S(\pi,\lambda,\tau)$ which emanates from the point on this surface coming from the origin in $\mathbb{R}^2$. An analogous procedure for the horizontal foliation will be described in \S \ref{subsec:H-ind}.
\end{rmk}

      First, we define $\pi'$ and $\lambda'$. Let $\alpha(\varepsilon)= \pi^{-1}_\varepsilon(d) = \alpha_d^\varepsilon$. That is, $\alpha(0)$ and 
      $\alpha(1)$ are the last entries in the top and bottom rows
      of $\pi$.
      \begin{defn}
      We say that $(\pi,\lambda)$ has
      $$\mbox{\textbf{type 0} if $\lambda_{\alpha(0)} > \lambda_{\alpha(1)}$\hspace{.7in} or\hspace{.7in} \textbf{type 1} if $\lambda_{\alpha(0)} < \lambda_{\alpha(1)}$}.$$
      If $(\pi,\lambda)$ is of type $\varepsilon\in\{0,1\}$ then the \emph{winner} is the symbol $\alpha(\varepsilon)$ and the \emph{loser} is $\alpha(1-\varepsilon)$.
      \end{defn}        
      This makes sense as long as $\lambda_{\alpha(0)}\neq \lambda_{\alpha(1)}$, so we will make the following assumption, to which we will return later.
      \begin{hyp}
      \label{fg:20}
        \label{hyp:1}
        The pair $(\pi,\lambda)$ satisfies $\lambda_{\alpha(0)}\neq \lambda_{\alpha(1)}$.
      \end{hyp}
      If $(\pi,\lambda)$ has type 0, then $\pi'$ is defined by
      \begin{equation}
      \label{eqn:type0perm}
        \pi' = \left( \begin{array}{cccccccc} \alpha_1^0 &\cdots & \alpha_{k-1}^0&\alpha_k^0&\alpha_{k+1}^0&\cdots&\cdots &\alpha(0) \\\alpha_1^1 &\cdots & \alpha_{k-1}^1&\alpha(0)&\alpha(1)&\alpha^1_{k+1}&\cdots &\alpha^1_{d-1}   \end{array}\right),
      \end{equation}
        that is,
      $$\alpha_i^{0'} = \alpha_i^0 \hspace{.4in}\mbox{ and }\hspace{.4in} \alpha^{1'}_i = \left\{\begin{array}{ll}\alpha_i^1&\mbox{ if $i\leq \pi_1(\alpha(0))$}\\ \alpha(1)&\mbox{ if $i=\pi_1(\alpha(0))+1$}\\ \alpha_{i-1}^1&\mbox{ if $i>\pi_1(\alpha(0))+1$} \end{array}\right. .$$
      The vector $\lambda'$ is now defined by
      \begin{equation}
      \label{eqn:type0lambda}
        \lambda_\alpha' = \lambda_\alpha\mbox{ if $\alpha\neq \alpha(0)$\;\;\; and \;\;\; $\lambda_{\alpha(0)}' = \lambda_{\alpha(0)}-\lambda_{\alpha(1)}$},
      \end{equation}
      whereas $\tau'$ is defined by
      \begin{equation}
      \label{eqn:type0tau}
        \tau_\alpha' = \tau_\alpha \mbox{ if $\alpha\neq \alpha(0)$\;\;\;and \;\;\; $\tau_{\alpha(0)}' = \tau_{\alpha(0)}-\tau_{\alpha(1)}$}.
      \end{equation}
      
      If $(\pi,\lambda)$ has type 1, then $\pi'$ is defined by
      \begin{equation}
      \label{eqn:type1perm}
        \pi' = \left( \begin{array}{cccccccc} \alpha_1^0 &\cdots & \alpha_{k-1}^0&\alpha(1)&\alpha(0)&\alpha^0_{k+1}&\cdots &\alpha^0_{d-1} \\ \alpha_1^1 &\cdots & \alpha_{k-1}^1&\alpha_k^1&\alpha_{k+1}^1&\cdots&\cdots &\alpha(1)    \end{array}\right),
      \end{equation}
      that is,
      $$\alpha_i^{1'} = \alpha_i^1\hspace{.4in}\mbox{ and } \hspace{.4in} \alpha^{0'}_i = \left\{\begin{array}{ll}\alpha_i^0&\mbox{ if $i\leq \pi_0(\alpha(1))$}\\ \alpha(0)&\mbox{ if $i=\pi_0(\alpha(1))+1$}\\ \alpha_{i-1}^1&\mbox{ if $i>\pi_0(\alpha(1))+1$} \end{array}\right. .$$
      The vector $\lambda'$ is now defined by
      \begin{equation}
      \label{eqn:type1lambda}
        \lambda_\alpha' = \lambda_\alpha\mbox{ if $\alpha\neq \alpha(1)$\;\;\; and \;\;\; $\lambda_{\alpha(1)}' = \lambda_{\alpha(1)}-\lambda_{\alpha(0)}$},
        \end{equation}
      whereas $\tau'$ is defined by
      \begin{equation}
      \label{type1tau}
        \tau_\alpha' = \tau_\alpha \mbox{ if $\alpha\neq \alpha(1)$\;\;\;and \;\;\; $\tau_{\alpha(1)}' = \tau_{\alpha(1)}-\tau_{\alpha(0)}$}.
      \end{equation}
      
      Let $\Theta = \Theta_{\pi,\lambda}:\mathbb{R}^\mathcal{A}\rightarrow \mathbb{R}^\mathcal{A}$ be the matrix defined, when $(\pi,\lambda)$ has type 0,as
      $$\Theta_{\alpha\gamma} = \left\{\begin{array}{ll}1&\mbox{ if }\alpha =\gamma\\1&\mbox{ if $\alpha = \alpha(1)$ and $\gamma = \alpha(0)$}\\ 0 & \mbox{ otherwise} \end{array}\right. $$
      whose inverse is
      $$\Theta_{\alpha\gamma}^{-1} = \left\{\begin{array}{ll}1&\mbox{ if }\alpha =\gamma\\-1&\mbox{ if $\alpha = \alpha(1)$ and $\gamma = \alpha(0)$}\\ 0 & \mbox{ otherwise.} \end{array}\right.$$
      When $(\pi,\lambda)$ has type 1,
      $$\Theta_{\alpha\gamma} = \left\{\begin{array}{ll}1&\mbox{ if }\alpha =\gamma\\1&\mbox{ if $\alpha = \alpha(0)$ and $\gamma = \alpha(1)$}\\ 0 & \mbox{ otherwise} \end{array}\right. $$
      whose inverse is
      $$\Theta_{\alpha\gamma}^{-1} = \left\{\begin{array}{ll}1&\mbox{ if }\alpha =\gamma\\-1&\mbox{ if $\alpha = \alpha(0)$ and $\gamma = \alpha(1)$}\\ 0 & \mbox{ otherwise.} \end{array}\right.$$
      This matrix satisfies \cite[Lemma 10.2]{viana:notes} the relation
      \begin{equation}
        \Theta\Omega_\pi\Theta^* = \Omega_{\pi'}.
      \end{equation}
      
      As such, the relations between $\lambda$ and $\lambda'$ 
      and between $\tau$ and $\tau'$, are expressed by
      $$\lambda' = \Theta^{-1*}\lambda\hspace{.2in}\mbox{ or }
      \hspace{.2in}\lambda = \Theta^*\lambda',\hspace{.3in}
      \mbox{ and }\hspace{.3in}\tau' = \Theta^{-1*}\tau,$$
      and so Rauzy-Veech induction is the map
      $$\mathcal{R}:(\pi,\lambda,\tau)\mapsto 
      (\pi', \Theta^{-1*}\lambda, \Theta^{-1*}\tau).$$
      
      In terms of zippered rectangles, RV induction has an 
      explicit expression in terms of the height vector 
      $h = -\Omega_\pi(\tau)$ in $ H^+_\pi$. Indeed, we 
      have that $\Theta\Omega_{\pi} = \Omega_{\pi'}\Theta^{-1*}$
       and so denoting $h' = -\Omega_{\pi'}(\tau')$ the corresponding 
       height vector for $\tau'$, we have that 
       $h' = \Theta h$. It is straight forward to verify that if $\tau$ in 
      $ T^+_\pi$ then $\tau' $ in $T^+_{\pi'}$.
       As such, the surface $S(\pi',\lambda',\tau')$ has area
      \begin{equation*}
          \begin{split}
            \mathrm{Area}(S(\pi',\lambda',\tau')) &
            =\lambda'\cdot (-\Omega_{\pi'}(\tau'))
             = -\Theta^{-1*}\lambda \cdot \Theta\Omega_{\pi}\Theta^*(\tau') 
             =  -\Theta^{-1*}\lambda\cdot \Theta\Omega_\pi(\tau) \\
            &= \Theta^{-1*}\lambda\cdot \Theta h
             = \lambda\cdot h = \mathrm{Area}(S(\pi,\lambda, \tau)).
          \end{split}
      \end{equation*}      

      \begin{figure}[t]
        \includegraphics[width =6.5in]{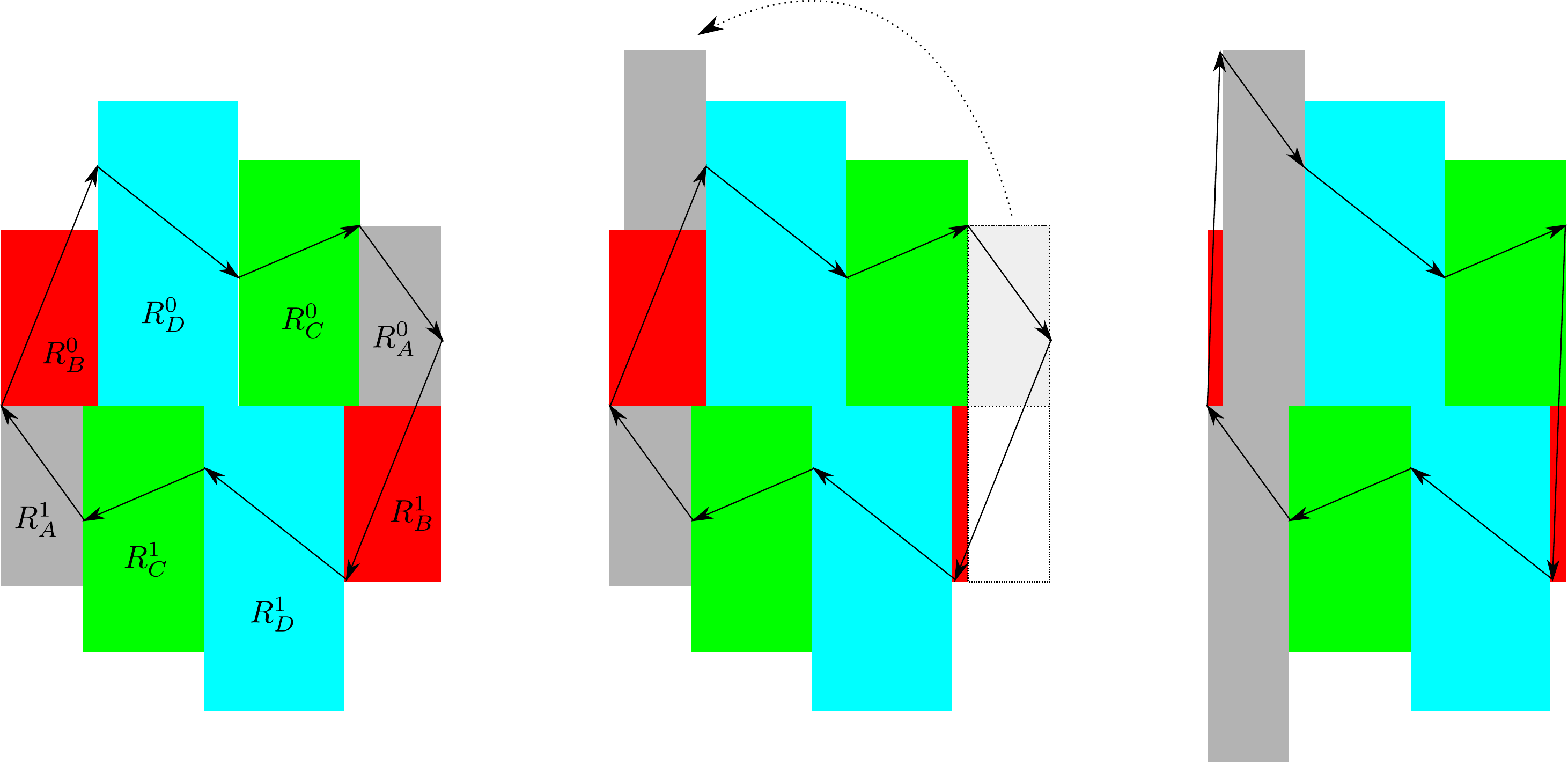}
        \caption{Geometric illustration of Rauzy-Veech induction. 
        Since $\lambda_{\alpha(1)}>\lambda_{\alpha(0)}$, this corresponds to type 1.}
        \label{fig:D3}
      \end{figure}

      Geometrically, Rauzy-Veech induction makes a vertical 
      cut through the widest rectangle at the end, takes the 
      right subrectangle, and stacks it above or below the 
      rectangle according to the rules described above. 
      Figure \ref{fig:D3} illustrates an example of what 
      Rauzy-Veech induction does to the zippered rectangles
       and the surface it represents from Figure \ref{fig:2}.
     
     Note that in the definition of RV induction, 
     whenever it was defined (Hypothesis \ref{hyp:1}),
      we may have that $\pi'\neq \pi$. Thus we can consider
       all possible permutations that can be obtained from $\pi$ under RV induction.
      \begin{defn}
      \label{fg:30}
        The \emph{Rauzy graph} $\mathcal{G}_d$ of permutations
         on $d$ elements is the directed graph which has as
          vertices equivalence classes of permutations 
          $\pi = \{\pi_0,\pi_1\}$, where $\pi\sim \pi'$ 
          whenever $\pi_1\circ\pi_0^{-1} = \pi_1'\circ \pi_0^{-1'}$,
           and there is an edge from $[\pi]$ to $[\pi']$ if 
           there are representatives $\pi$, $\pi'$ and vector
            $\lambda$ in $\mathbb{R}^\mathcal{A}_+$ such 
            that $\pi'$ is the permutation obtained from $(\pi,\lambda)$
             through Rauzy-Veech induction. A \emph{Rauzy class} is 
              by connected components of the Rauzy graph.
      \end{defn}
      There are two outgoing edges from each class $[\pi]$, 
      one for each type, as well as two incoming edges. 
      See Figures \ref{fig:hyper2} and \ref{fig:nonhyper2} for examples in genus 2.
     
      Let $(\pi,\lambda, \tau)$ in $ 
       \mathcal{C}\times \mathbb{R}^\mathcal{A}\times T^+_\pi$ 
       satisfying Hypothesis \ref{hyp:1}. Then the map
        $\mathcal{R}$ is well defined, and we obtain 
        $(\pi',\lambda',\tau') = 
        \mathcal{R}(\pi,\lambda,\tau) $ in 
        $ \mathcal{C}\times\mathbb{R}^\mathcal{A}\times T^+_\pi$. 
        We would like to once again apply $\mathcal{R}$ 
        to this new data, but we do not know a-priori 
        whether $(\pi',\lambda',\tau')$ satisfies Hypothesis \ref{hyp:1}.
      
      To establish conditions for which all iterates of RV induction are
       defined, we first need to define the 
       interval exchange transformation 
       (IET) defined by $(\pi,\lambda)$. For $\alpha$ in $\mathcal{A}$, let
      $$I_\alpha = \left[ \sum_{\pi_0(\gamma)<
      \pi_0(\alpha)}\lambda_\gamma, \sum_{\pi_0(\gamma)\leq \pi_0(\alpha)}
      \lambda_\gamma\right)$$
        and with $|I| = \|\lambda\|_1$, 
        the IET $f:[0,|I|)\rightarrow [0,|I|)$ defined by $(\pi,\lambda)$ is
            $$f(x) = x + \Omega_\pi(\lambda)_\alpha \mbox{ for $x\in I_\alpha$}.$$
      The reader is encouraged now to verify that the zippered 
      rectangle surfaces in \S \ref{subsec:zip} are 
      suspensions over the IET $f$ with roof functions 
      given by the height vector $h$. Denote by 
      $\partial I_\alpha$ the left endpoint of the interval $I_\alpha$.
            \begin{defn}
            \label{fg:40}
   A pair $(\pi,\lambda)$ satisfies the \emph{Keane condition}
    if $f^m(\partial I_\alpha)\neq \partial I_\gamma$ 
    for all $m$ in 
    $\mathbb{N}$ and $\alpha,\gamma\in\mathcal{A}$ with $\pi_0(\gamma)\neq 1$.
            \end{defn}
            \begin{rmk}
            \label{fg:50}
       \begin{enumerate}
     \item This condition guarantees that the orbits of the left 
     endpoints of the intervals are as disjoint as possible. 
     This surely guarantees Hypothesis \ref{fg:20}. 
     Below we will see that this characterizes the good data
      for which RV induction is defined for all iterates.
   \item It is known that the Keane condition implies 
   the minimality of the interval exchange transformation 
   $f$, that is, that every orbit is dense. This in turn 
   implies that the vertical foliation on $S(\pi,\lambda,\tau)$
    has no closed leaves and every leaf is dense in the surface.
              \end{enumerate}
            \end{rmk}
            \begin{thm}
            \label{fg:60}
            The following are equivalent:
            \begin{enumerate}
      \item $(\pi,\lambda)$ satisfies the Keane condition.
  \item All iterates $\mathcal{R}^n(\pi,\lambda)$
   of Rauzy-Veech induction are defined for $n>0$.
    \item For each $\alpha$ in $\mathcal{A}$, there is a
     subsequence $n_i^\alpha\rightarrow \infty$ such that $\alpha$ is the winner for $\mathcal{R}^{n_i^\alpha}(\pi,\lambda)$ for every $i$.
            \item For each $\alpha$ in 
            $\mathcal{A}$, there is a subsequence $n_i^\alpha\rightarrow \infty$ such that $\alpha$ is the loser for $\mathcal{R}^{n_i^\alpha}(\pi,\lambda)$ for every $i$.
            \end{enumerate}
            Moreover, these equivalent conditions are satisfied on a full measure subset of the space of parameters.
            \end{thm}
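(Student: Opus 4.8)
The plan is to prove the cycle of equivalences (1) $\Rightarrow$ (2) $\Rightarrow$ (3) $\Rightarrow$ (1), then (2) $\Leftrightarrow$ (4) by a symmetric argument, and finally to establish the full-measure statement. For (1) $\Rightarrow$ (2), I would argue by contradiction: if some iterate $\mathcal{R}^{n}(\pi,\lambda)$ fails to be defined, that means Hypothesis \ref{hyp:1} fails at step $n$, i.e. $\lambda_{\alpha(0)}^{(n)} = \lambda_{\alpha(1)}^{(n)}$. Unwinding the definitions of $\lambda^{(n)} = \Theta^{-1*}\cdots$ and translating back through the dictionary between RV induction and the first-return (Rauzy) renormalization of the IET $f$, an equality of the two lengths at level $n$ forces a coincidence $f^{m}(\partial I_\alpha) = \partial I_\gamma$ for suitable $m,\alpha,\gamma$ with $\pi_0(\gamma)\neq 1$; this is precisely a violation of the Keane condition. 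The key technical point to check carefully here is the bookkeeping identifying the endpoints of the renormalized intervals $I_\alpha^{(n)}$ with images of the original endpoints under iterates of $f$ — this is where one must be careful that the ``$\pi_0(\gamma)\neq 1$'' exclusion matches the fixed left endpoint $0$, which never participates.

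For (2) $\Rightarrow$ (3): assuming all iterates are defined, I would show that no symbol $\alpha$ can be the winner only finitely often. If $\alpha$ were the winner only finitely often, then past some stage $N$ its length $\lambda_\alpha^{(n)}$ is never decremented, while the losing symbol's length strictly decreases at each step by the winner's length. Using the fact that the total length $\|\lambda^{(n)}\|_1$ is non-increasing and that only finitely many symbols can act as winner infinitely often, one derives that some length must go negative or that the induction halts — either way a contradiction. Here I would invoke the standard combinatorial fact about irreducible permutations that the Rauzy class is strongly connected (every symbol appears as a winner along some path), so that ``$\alpha$ wins only finitely often'' is incompatible with the induction continuing forever; the irreducibility hypothesis on $\pi$ is exactly what is needed. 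For (3) $\Rightarrow$ (1): if the Keane condition fails, pick the smallest $m$ with $f^{m}(\partial I_\alpha) = \partial I_\gamma$; then after finitely many steps of RV induction the left endpoint of the relevant interval becomes a ``frozen'' endpoint, and from that point on some fixed symbol can never again be the winner, contradicting (3). The equivalence (2) $\Leftrightarrow$ (4) follows by the same arguments with the roles of winner and loser (equivalently, of the top and bottom rows, or of $f$ and $f^{-1}$) interchanged; no new idea is required.

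For the final ``full measure'' clause, the plan is to fix the irreducible permutation $\pi$ and show that the set of $\lambda \in \mathbb{R}_+^{\mathcal{A}}$ for which the Keane condition fails has Lebesgue measure zero in the positive cone. This set is a countable union over $(m,\alpha,\gamma)$ of the sets $\{\lambda : f_{\pi,\lambda}^{m}(\partial I_\alpha) = \partial I_\gamma\}$. Each such set is cut out by a single nontrivial affine equation in the coordinates $\lambda_\beta$ — nontrivial precisely because $\pi$ is irreducible and $\pi_0(\gamma)\neq 1$, so $\partial I_\gamma$ is not identically equal to the (affine in $\lambda$) value $f^m(\partial I_\alpha)$ — hence it is a hyperplane slice and has measure zero; a countable union of measure-zero sets is measure zero. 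Summing (or rather taking the union) over the finitely many permutations $\pi$ in all Rauzy classes gives the claim.

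I expect the main obstacle to be the implication (2) $\Rightarrow$ (3) (and its twin (2) $\Rightarrow$ (4)): establishing that the winner sequence hits \emph{every} symbol infinitely often genuinely uses the strong connectivity of the Rauzy graph together with a careful monotonicity/finiteness argument on the length vectors, and it is the place where the irreducibility hypothesis on $\pi$ is indispensable. The other implications are comparatively direct once the translation between RV induction on $(\pi,\lambda,\tau)$ and the Rauzy renormalization of the IET $f$ — together with the suspension picture already noted in \S\ref{subsec:zip} — is set up cleanly.
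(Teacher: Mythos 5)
The paper does not actually prove Theorem~\ref{fg:60}: it states the result and then says ``For a proof, see \cite[\S 5]{viana:notes}.'' So there is no internal proof to compare against. Your sketch is essentially the standard argument from the literature (Keane, Veech, and the exposition in Viana's notes), and the overall structure --- the cycle (1)$\Rightarrow$(2)$\Rightarrow$(3)$\Rightarrow$(1), the symmetric treatment of (4), and the countable-union-of-hyperplanes argument for the full-measure statement --- is the right one.

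Two details in your write-up need correction before it would be a safe substitute for the citation. In your argument for (2)$\Rightarrow$(3) you write ``the losing symbol's length strictly decreases at each step by the winner's length''; this is backwards. Under RV induction it is the \emph{winner's} length that is decremented, by the \emph{loser's} length (see (\ref{eqn:type0lambda}) and (\ref{eqn:type1lambda}) in the paper), while the loser's length and every other length are unchanged. The correct shape of the argument is: if $\alpha$ wins only finitely often then $\lambda_{\alpha}^{(n)}$ is eventually constant; meanwhile the set $W$ of eventual winners must be $\leq_r$-closed in a suitable sense, so if $W\neq\mathcal{A}$ one shows (using irreducibility) that some eventual winner's length is forced to drop below a fixed positive threshold, making a non-$W$ symbol win --- a contradiction. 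Second, your (3)$\Rightarrow$(1) step speaks of ``after finitely many steps of RV induction the left endpoint of the relevant interval becomes a `frozen' endpoint,'' as if the induction runs forever; but if Keane fails the real content is that $\lambda^{(n)}_{\alpha(0)}=\lambda^{(n)}_{\alpha(1)}$ occurs at some finite $n$, so RV induction \emph{halts}, i.e.\ $\neg(1)\Rightarrow\neg(2)$. Then (3)$\Rightarrow$(2) is immediate because the hypothesis of (3) already presupposes that $\mathcal{R}^n$ is defined for all $n$. It is cleaner to package the hard content as (1)$\Leftrightarrow$(2) plus (2)$\Rightarrow$(3) than as the cycle you wrote. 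With those two adjustments, your sketch matches the standard proof that the paper is pointing to.
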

            For a proof, see \cite[\S 5]{viana:notes}. Thus,
             it is better to replace Hypothesis \ref{hyp:1} with the Keane condition.

            \subsection{RH Induction}
            \label{subsec:H-ind}
            The previous section reviewed a procedure which, 
            starting with some data $(\pi,\lambda,\tau)$ 
            and depending only on $\pi$ and $\lambda$,
            produced a new triple $\mathcal{R}(\pi',\lambda',\tau')$.
             Moreover, there is a precise condition that
              characterizes all data for which all iterates 
              of this procedure are defined. Denoting by 
    $(\pi^{(n)},\lambda^{(n)},\tau^{(n)})= \mathcal{R}^n(\pi,\lambda,\tau)$, 
    the surfaces $S(\pi^{(n)},\lambda^{(n)},\tau^{(n)})$ 
    are different presentations of $S(\pi,\lambda,\tau)$ 
    which allow us to keep track of longer and longer
     segments of the vertical leaf emanating from the origin.

            In this section, we define a different procedure,
             $\mathcal{P}:(\pi,\lambda,\tau)\mapsto (\pi',\lambda',\tau')$,
    with the aim of doing the same for the trajectory of the horizontal 
    foliation emanating from the origin, that is,
     we will get presentations 
     $S(\pi^{(n)},\lambda^{(n)},\tau^{(n)})$ of $S(\pi,\lambda,\tau)$ 
     through $(\pi^{(n)},\lambda^{(n)},\tau^{(n)})
     = \mathcal{P}^n(\pi,\lambda,\tau)$ which will allow us to 
     capture longer and longer segments of the horizontal 
     leaf emanating from the origin. We will then relate 
     this procedure to RV induction. This exposition is our
      own, but the recent work \cite{berk2021backward} 
      captures most of the aspects presented here.
                        
            Let us first describe and illustrate how this procedure
             is meant to work and then we will give the details. 
             Let $(\pi,\lambda,\tau)$ 
             in $\mathcal{C}\times \mathbb{R}^\mathcal{A}_+\times T^+_\pi$ and 
             consider the zippered rectangles presentation of it 
             in (\ref{eqn:rectangles}). Our goal is to extend 
             $[0,|I|)$ to $[0,|I'|)$, where
              $|I'| = \|\lambda'\|_1$. Given $\pi = (\pi_0,\pi_1)$, define
  \begin{equation}
    \beta(\varepsilon) = \pi_\varepsilon^{-1}(\pi_\varepsilon(\alpha(1-\varepsilon))+1),
                \end{equation}
                for $\varepsilon = \{0,1\}$. 
   In words, $\beta(\varepsilon)$ is the symbol immediately to 
   the right of $\alpha_d^{1-\varepsilon}$ on $\pi_\varepsilon$.            

  \begin{figure}[t]
     \includegraphics[width =4.5in]{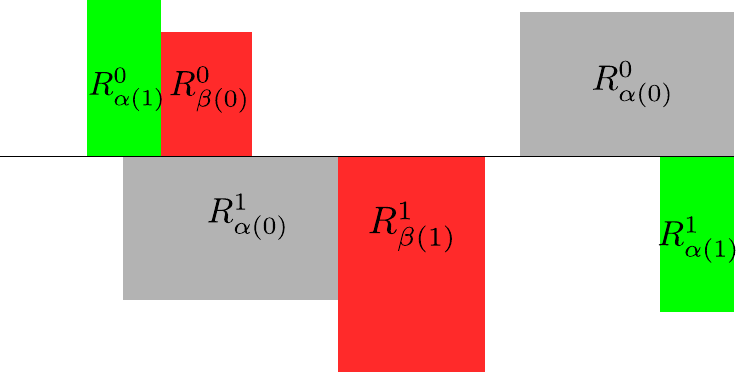}
 \caption{The case $h_{\alpha(1)}>h_{\beta(0)}$ and $h_{\alpha(0)}<h_{\beta(1)}$.}
                  \label{fig:H1}
                \end{figure}                
Suppose for the moment that $h_{\alpha(1)}>h_{\beta(0)}$ 
   and $h_{\alpha(0)}<h_{\beta(1)}$ 
     (see Figure \ref{fig:H1}). In order to extend the 
  horizontal leaf starting at the origin, 
      it must come out of the bottom edge of 
     the rectangle $R^1_{\alpha(0)}$ cut
 through $R^1_{\beta(1)}$, subdividing it into two subrectangles.
  The top rectangle will be absorbed into a larger rectangle 
  $R^1_{\alpha'(0)} = R^1_{\alpha(0)}$, while the bottom rectangle
   will be moved to the right and become $R^1_{\alpha'(1)}$. 
   Thus, we extend $[0,|I|)$ by $\lambda_{\beta(1)}$ and
    rearrange the rectangles as in Figure \ref{fig:H2}.

     If $h_{\alpha(1)}<h_{\beta(0)}$ and 
     $h_{\alpha(0)}>h_{\beta(1)}$ then an analogous procedure 
    is defined by cutting through the rectangle $R^0_{\beta(0)}$ 
  and moving the top rectangle to the right-most place on the top set of rectangles.

    It may be unclear how to proceed if $h_{\alpha(1)}<h_{\beta(0)}$
 and $h_{\alpha(0)}<h_{\beta(1)}$, as in Figure \ref{fig:2}. What really 
 determines which rectangle to cut has to do with the $\tau\in T^+_\pi$ 
 which defines $h = -\Omega_\pi(\tau)$. Indeed, in the case 
 $h_{\alpha(1)}>h_{\beta(0)}$ and $h_{\alpha(0)}<h_{\beta(1)}$
  as in Figure \ref{fig:H1} the zipper between $R^0_{\alpha(1)}$ 
  and $R^0_{\beta(0)}$ is somewhere in the interior of the right 
  edge of $R^0_{\alpha(1)}$, meaning that it is on the right edge 
  of $R^1_{\alpha(1)}$, meaning that $\sum_{\alpha}\tau_\alpha < 0$.
   Likewise, $h_{\alpha(1)}<h_{\beta(0)}$ and 
   $h_{\alpha(0)}>h_{\beta(1)}$ imply that $\sum_\alpha \tau_\alpha > 0$.

Let us remark that the case $h_{\alpha(1)}>h_{\beta(0)}$ and 
$ h_{\alpha(0)}>h_{\beta(1)}$ is impossible. Indeed, consider 
the zipper between $R^0_{\alpha(1)}$ and $R^0_{\beta(0)}$. There is a 
singularity of the flat surface somewhere between these two rectangles.
 But this singularity is to the right of $R^0_{\alpha(1)}$, which 
 means that there is a singularity on the right edge of $R^1_{\alpha(1)}$,
  which has to have height $\sum_{\alpha}\tau_\alpha < 0$. The same
   argument for the rectangles $R^1_{\alpha(0)}$ and $R^1_{\beta(1)}$ 
   implies that $\sum_\alpha\tau_\alpha >0$. Since $\tau\in T^+_\pi$,
    it satisfies one of the two conditions of (\ref{eqn:Tplus}), so
     it is impossible to have $h_{\alpha(1)}>h_{\beta(0)}$ and
      $ h_{\alpha(0)}>h_{\beta(1)}$.

 \begin{hyp}
 \label{fg:70}
    The pair $(\pi,\tau)$ with $\tau$ in $T^+_\pi$ 
    satisfies $\sum_{\alpha}\tau_\alpha \neq 0$.
                  \end{hyp}
                  
    Motivated by this discussion and following the
    terminology \cite[\S 12]{viana:notes}, we have the following definition.
   \begin{defn}
   \label{fg:80}
  If the pair $(\pi,\tau)$ satisfies Hypothesis \ref{fg:70}, 
  it will be called
  $$\mbox{\textbf{Type $0$} if $\displaystyle\sum_{\alpha}\tau_\alpha>0$\
  \hspace{.7in} or \hspace{.7in} \textbf{Type $1$} 
  if $\displaystyle\sum_{\alpha}\tau_\alpha<0$}.$$
  If $(\pi,\tau)$ is of type $\varepsilon\in\{0,1\}$ then the $\tau$-\emph{winner} is the symbol $\alpha(1-\varepsilon)$.
   \end{defn}
                  
  Thus if $(\pi,\tau)$ is type $0$, then the rectangle 
  $R^0_{\beta(0)}$ will be subdivided into two rectangles,
   the bottom part will be absorbed into $R^0_{\alpha(1)}$ 
   while the top part will be moved to the right to become $R_d^{0'}$.
    If $(\pi,\tau)$ is type $1$, the rectangle $R^1_{\beta(1)}$ will
     be subdivided into two rectangles, the top part will be absorbed 
     into $R^1_{\alpha(0)}$ while the top part will be moved to the 
     right to become $R_d^{1'}$, as depicted in Figure \ref{fig:H2}.
                  
   \begin{figure}[t]
    \includegraphics[width =6.5in]{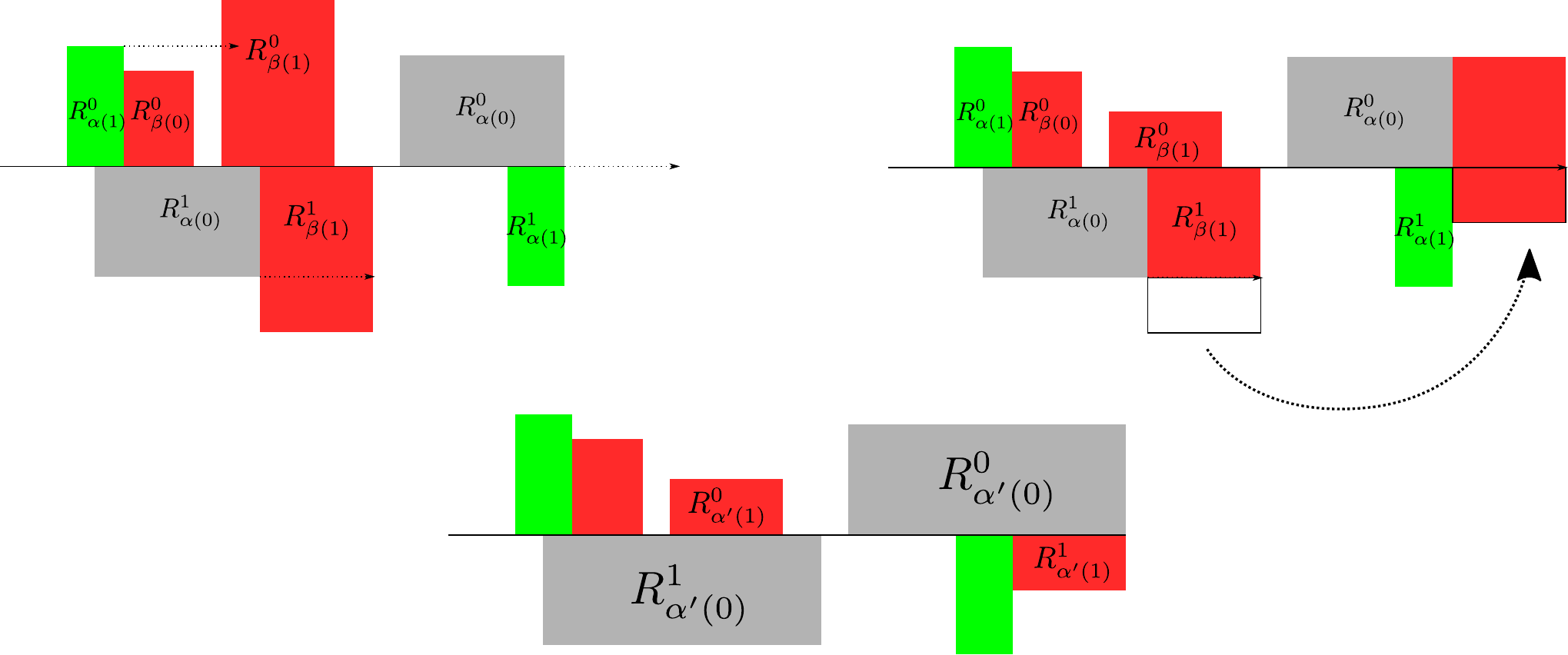}
     \caption{Starting from Figure \ref{fig:H1}, the procedure producing 
     a new zippered rectangles presentation implies that 
     $\alpha'(0)=\alpha(0)$ and $\alpha'(1) = \beta(1)$. 
     Compare with Figure \ref{fig:D3}.}
                    \label{fig:H2}
                  \end{figure}
                  
 These operations are formally defined as follows.                 
   Let $\tau$ in $T^+_\pi$ be of type $0$. Starting with $(\pi,\lambda,\tau)$
    and based on the description in the previous paragraph,
     the new data 
     $(\pi',\lambda',h') = \mathcal{P}(\pi,\lambda,h)= 
     \mathcal{P}(\pi,\lambda,-\Omega_\pi(\tau))$ is defined,
      first, by letting
   \begin{equation}
   \label{typeH0perm}
    \pi' = \left( \begin{array}{cccccccc} \alpha_1^0 &\cdots & 
    \alpha_{k-1}^0&\alpha(1)&\alpha^0_{k+2}&\cdots &\alpha(0)&\beta(0) \\
     \alpha_1^1 &\cdots & \alpha_{k-1}^1&\alpha_k^1&
     \alpha_{k+1}^1&\cdots&\cdots &\alpha(1)    \end{array}\right),
                  \end{equation}
  that is,
  $$ (\alpha^{'})^1_i = \alpha_i^1\hspace{.4in}\mbox{ and } \hspace{.4in} 
  (\alpha^{'})^0_i = \left\{\begin{array}{ll}\alpha_i^0&
  \mbox{ if $i\leq \pi_0(\alpha(1))$}\\ \alpha^0_{i+1}&
  \mbox{ if $\pi_0(\alpha(1))<i<d$}\\ \beta(0)&\mbox{ if $i=d$}
   \end{array}\right. .$$
                                    
  The vector $\lambda'$ is now defined by
 \begin{equation}
 \label{eqn:typeH0lambda}
     \lambda_\alpha' = \lambda_\alpha\mbox{ if $\alpha\neq \alpha(1)$\;\;\; 
     and \;\;\; $\lambda_{\alpha(1)}' = \lambda_{\alpha(1)}+\lambda_{\beta(0)}$},
                  \end{equation}
    whereas $h'$ is defined by
     \begin{equation}
     h_\alpha' = h_\alpha \mbox{ if 
     $\alpha\neq \beta(0)$\;\;\;and \;\;\; $h_{\beta(0)}'
      = h_{\beta(0)}-h_{\alpha(1)}$}.
                  \end{equation}
   The definition of $\tau'$ will follow from Proposition \ref{fg:80}.

     Let $\tau$ in $T^+_\pi$ be of type $1$. Starting 
     from $(\pi,\lambda,\tau)$ we now define $(\pi',\lambda',h')$ by
                  \begin{equation}
                  \label{fg:300}
      \pi' = \left( \begin{array}{cccccccc} \alpha_1^0 &\cdots
       & \alpha_{k-1}^0&\alpha_k^0&\alpha_{k+1}^0&\cdots&\cdots &\alpha(0) 
       \\\alpha_1^1 &\cdots & \alpha_{k-1}^1&\alpha(0)&\alpha^1_{k+2}&
       \cdots &\alpha(1)&\beta(1)   \end{array}\right),
                  \end{equation}
   that is,
   $$\alpha_i^{0'} = \alpha_i^0 \hspace{.4in}\mbox{ and }\hspace{.4in} \alpha^{1'}
   _i = \left\{\begin{array}{ll}\alpha_i^1&\mbox{ if 
   $i\leq \pi_1(\alpha(0))$}\\ \alpha^1_{i+1}&
   \mbox{ if $\pi_1(\alpha(0))<i<d$}\\ \beta(1)&\mbox{ if $i=d$} 
   \end{array}\right. .$$
    The vector $\lambda'$ is now defined by
    \begin{equation} 
    \label{eqn:typeH1lambda}             
         \lambda_\alpha' = \lambda_\alpha\mbox{ if 
         $\alpha\neq \alpha(0)$\;\;\; and \;\;\; 
         $\lambda_{\alpha(0)}' = \lambda_{\alpha(0)}+\lambda_{\beta(1)}$},
                  \end{equation}
     whereas $h'$ is defined by
  \begin{equation}  
  \label{eqn:typeH1h}
    h_\alpha' = h_\alpha \mbox{ if $\alpha\neq \beta(1)$\;\;\;and 
    \;\;\; $h_{\beta(1)}' = h_{\beta(1)}-h_{\alpha(0)}$}.
                  \end{equation}
   The definition of $\tau'$ will follow from Proposition \ref{fg:80}.
                  
    Let $\Psi = \Psi_{\pi,h}:\mathbb{R}^\mathcal{A}\rightarrow 
    \mathbb{R}^\mathcal{A}$ be the matrix defined, when $(\pi,h)$ has type $0$, as
                  \begin{equation}
                  \label{eqn:Psi0}
          \Psi_{\alpha\gamma} = \left\{\begin{array}{ll}1&\mbox{ if }
          \alpha =\gamma\\1&\mbox{ if $\alpha = \alpha(1)$
           and $\gamma = \beta(0)$}\\ 0 & \mbox{ otherwise} \end{array}\right.
                  \end{equation}
     whose inverse is
     $$\Psi_{\alpha\gamma}^{-1} = \left\{\begin{array}{ll}1&
     \mbox{ if }\alpha =\gamma\\-1&\mbox{ if
      $\alpha = \alpha(1)$ and $\gamma = \beta(0)$}\\
       0 & \mbox{ otherwise.} \end{array}\right.$$

  When $(\pi,h)$ has type $1$, as
                  \begin{equation}
                  \label{eqn:Psi1}
     \Psi_{\alpha\gamma} = \left\{\begin{array}{ll}1&\mbox{ if
      }\alpha =\gamma\\1&\mbox{ if $\alpha = \alpha(0)$ and
       $\gamma = \beta(1)$}\\ 0 & \mbox{ otherwise} \end{array}\right.
                  \end{equation}
                  whose inverse is
  $$\Psi_{\alpha\gamma}^{-1} = \left\{\begin{array}{ll}1&\mbox{ 
  if }\alpha =\gamma\\-1&\mbox{ if $\alpha = \alpha(0)$ and 
  $\gamma = \beta(1)$}\\ 0 & \mbox{ otherwise.} 
  \end{array}\right.$$
       Thus, the map $\mathcal{P}$ acts on data 
       as $\mathcal{P} : (\pi,\lambda,h)\mapsto (\pi',\Psi\lambda, \Psi^{-1*}h)$.

   Here we want to pick out a condition, analogous to the Keane condition
    in  Theorem \ref{fg:60}, which characterizes the data for
     which $\mathcal{P}^n(\pi,\lambda,t)$ is defined for all $n>0$.
      First observe that if we restrict ourselves to all $h$ with 
      rationally independent entries, that is, to $h\in H^+_\pi$ so that
    \begin{equation}
    \sum_{\alpha}n_\alpha h_\alpha\neq 0 \mbox{ for all }n\in\mathbb{Z}^\mathcal{A},
      \end{equation}
 then $\mathcal{P}^n(\pi,\lambda,h)$ is defined for all $n>0$. Moreover,
 the collection of all such vectors has full measure in $H^+_\pi$.

 \begin{defn}
 \label{fg:82}
   The triple $(\pi,\lambda,\tau)$ is RH-complete if for
   every $\alpha$ in $\mathcal{A}$ there is a subsequence 
   $n_i^\alpha\rightarrow \infty$ such that $\alpha$ is 
   the $\tau$-winner of $\mathcal{P}^{n_i^\alpha}(\pi,\lambda,\tau)$ 
   for all $i$.
 \end{defn}
  There is an analogous way to characterize when $\mathcal{P}^n$ is 
  defined for all $n>0$ recently proved by Berk 
  (see \cite{berk2021backward}). Compare the following with Theorem \ref{fg:60}.
  \begin{thm}[\cite{berk2021backward}]
  \label{fg:84}
    The following are equivalent:
    \begin{enumerate}
    \item All iterates $\mathcal{P}^n(\pi,\lambda,\tau)$ of RH-induction are defined.
    \item $(\pi,\lambda,\tau)$ is RH-complete.
    \item The horizontal leaf emanating from the singularity
     associated to the origin has infinite length.
    \end{enumerate}
  \end{thm}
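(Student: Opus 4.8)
The plan is to establish the equivalence of (1), (2), and (3) in Theorem \ref{fg:84} by mirroring the proof of Theorem \ref{fg:60} for RV induction, transporting each implication through the formal duality between RV and RH induction. The cleanest route is to prove the cycle $(1) \Rightarrow (2) \Rightarrow (3) \Rightarrow (1)$, though in practice $(1) \Leftrightarrow (2)$ is nearly definitional once we set things up carefully, so the real content is in connecting these combinatorial conditions to the geometric statement (3) about the horizontal leaf through the origin.

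First I would make precise the sense in which RH induction is "formally the inverse" of RV induction. Observe that the matrix $\Psi = \Psi_{\pi,h}$ of (\ref{eqn:Psi0})--(\ref{eqn:Psi1}) has exactly the transpose-inverse shape of the matrix $\Theta = \Theta_{\pi,\lambda}$ from RV induction, and that the roles of $\lambda$ and $h$ (equivalently $\tau$) are swapped: RV induction is driven by comparing $\lambda_{\alpha(0)}$ with $\lambda_{\alpha(1)}$, while RH induction is driven by the sign of $\sum_\alpha \tau_\alpha$, which by the discussion preceding Hypothesis \ref{fg:70} is the comparison controlling which of the ``beta'' rectangles gets cut. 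The key algebraic fact to record is that if $(\pi',\lambda',\tau') = \mathcal{P}(\pi,\lambda,\tau)$, then $(\pi,\lambda,\tau)$ is obtained from $(\pi',\lambda',\tau')$ by one step of RV induction (after the appropriate identification of permutations via the Rauzy graph of Definition \ref{fg:30}), and conversely. Granting this, an infinite forward orbit under $\mathcal{P}$ corresponds exactly to an infinite backward orbit under $\mathcal{R}$, and the $\tau$-winner of an $\mathcal{P}$-step is the RV-loser of the corresponding inverse $\mathcal{R}$-step. Then $(1) \Leftrightarrow (2)$ follows because ``RH-complete'' (Definition \ref{fg:82}) is precisely condition (4) of Theorem \ref{fg:60} read along the backward orbit, and the equivalence $(2) \Leftrightarrow (4)$ there is symmetric in forward/backward direction once one checks the elementary combinatorial lemma that every symbol must eventually win (or lose) infinitely often for the induction to stay defined — this is where one reuses the argument from \cite[\S 5]{viana:notes} verbatim, with orientations reversed.

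For $(2) \Rightarrow (3)$ and $(3) \Rightarrow (1)$, I would argue geometrically using the zippered rectangle picture. Recall from Remark \ref{fg:10} that each step of RV induction lets us track a longer initial segment of the \emph{vertical} leaf from the origin; by the duality above, each step of RH induction lets us track a longer initial segment of the \emph{horizontal} leaf from the origin, since RH induction is designed (see the discussion around Figures \ref{fig:H1} and \ref{fig:H2}) precisely so that the cut is made along the horizontal trajectory emerging from the relevant edge. Concretely, after $n$ steps the surface $S(\pi^{(n)},\lambda^{(n)},\tau^{(n)}) = \mathcal{P}^n(\pi,\lambda,\tau)$ is a presentation of the same surface in which the base interval has grown by $\sum_{k<n}\lambda_{\beta(\varepsilon_k)}^{(k)}$, and this growth measures exactly how much of the horizontal leaf from the origin has been ``unfolded.'' Hence if all iterates are defined (condition (1)/(2)), the total length captured is $\sup_n \sum_{k<n}\lambda^{(k)}_{\beta(\varepsilon_k)}$; one then shows this supremum is infinite using RH-completeness, because each symbol $\alpha$ being a $\tau$-winner along a subsequence forces the corresponding width contributions not to be summable — the area is preserved (the $\mathcal{P}$ analogue of the area computation displayed for $\mathcal{R}$ in \S 6.2, using $\Psi\Omega_\pi\Psi^* = \Omega_{\pi'}$), so $\lambda^{(n)}\cdot h^{(n)}$ is constant while $h^{(n)}$ stays bounded below in the winning coordinate, forcing the widths to grow without bound. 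Conversely, if the horizontal leaf has infinite length, it cannot be periodic and cannot hit a singularity, so the ``bad'' coincidence that would make some $\mathcal{P}^n$ undefined (namely $h_{\alpha(1)} = h_{\beta(0)}$ or $h_{\alpha(0)} = h_{\beta(1)}$, i.e. a singularity landing on the horizontal trajectory) never occurs; this is the contrapositive of (1), giving $(3)\Rightarrow(1)$.

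The main obstacle I anticipate is pinning down the precise correspondence between RH and RV induction at the level of permutations — the permutation combinatorics in (\ref{typeH0perm}) and (\ref{fg:300}) are genuinely delicate, and one must verify that applying $\mathcal{R}$ to $\pi'$ really does recover $\pi$ (not merely a Rauzy-equivalent permutation), tracking the symbols $\alpha(0), \alpha(1), \beta(0), \beta(1)$ through both operations. Since the paper states RH induction is ``formally the inverse'' of RV induction but develops it independently, I would need to prove this inversion as a lemma, probably by a careful case analysis on the four type-combinations, checking the action on $\lambda$, $\tau$, and $\pi$ separately. Once that lemma is in hand, the rest of Theorem \ref{fg:84} transfers from Theorem \ref{fg:60} and the area identity with only notational changes, and the geometric statement (3) is the mirror of the well-known fact that the Keane condition is equivalent to the vertical leaf from the origin having infinite length. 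I would also cite \cite{berk2021backward} at the point where the fully detailed combinatorial verification is invoked, since that reference treats the backward induction in depth.
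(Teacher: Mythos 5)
The paper does not prove this theorem; it cites \cite{berk2021backward} and states it as an external result. So there is no proof of the paper's to compare against. Assessing your sketch on its own merits, the overall strategy (transport Theorem~\ref{fg:60} through the inversion $\mathcal{P} = \mathcal{R}^{-1}$, which the paper does establish as Proposition~\ref{fg:88}, then close the loop geometrically via the zippered-rectangle picture) is a sensible one. But two of the three arrows have genuine problems.

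The most concrete error is in the argument for $(2)\Rightarrow(3)$. You write that ``$\lambda^{(n)}\cdot h^{(n)}$ is constant while $h^{(n)}$ stays bounded below in the winning coordinate, forcing the widths to grow without bound.'' This is backwards. The heights $h^{(n)}_\alpha$ are non-increasing under $\mathcal{P}$ (they get decremented whenever $\alpha = \beta(\varepsilon_n)$, otherwise stay fixed), while the widths $\lambda^{(n)}_\alpha$ are non-decreasing. If $h^{(n)}_\alpha \geq c > 0$ for some fixed $\alpha$ along the whole orbit, then $c\,\lambda^{(n)}_\alpha \leq \lambda^{(n)}_\alpha h^{(n)}_\alpha \leq \lambda^{(n)}\cdot h^{(n)} = \mathrm{Area}$, so $\lambda^{(n)}_\alpha$ is \emph{bounded above}, not forced to infinity. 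The argument you need runs the opposite way: RH-completeness should be used to show that $h^{(n)}_\alpha \to 0$ for every $\alpha$ (via the analogue of the winner/loser equivalence $(3)\Leftrightarrow(4)$ in Theorem~\ref{fg:60}, showing each symbol's height gets decremented infinitely often, and then a shrinking argument in the spirit of Lemma~\ref{path:12}); only then does area preservation force $\|\lambda^{(n)}\|_\infty \to \infty$, which is what gives $(3)$. As written, your lemma concludes the wrong thing.

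The $(1)\Leftrightarrow(2)$ step is also understated. Calling it ``nearly definitional'' and saying the combinatorial lemma can be transported from \cite[\S 5]{viana:notes} ``verbatim, with orientations reversed'' glosses over a real asymmetry: the forward $\mathcal{R}$-orbit sees the $\lambda$-simplex shrink, and the classical Keane argument leans on this (e.g.\ to show the base interval lengths of the inducing sequence decay), whereas the forward $\mathcal{P}$-orbit sees $\lambda$ grow and $h$ shrink. The estimates are not the same sentences with arrows reversed; they are dual statements whose proofs must be re-derived with the roles of $\lambda$ and $h$ interchanged, and this is precisely the content of \cite{berk2021backward}. Finally, a minor slip in $(3)\Rightarrow(1)$: the obstruction to defining $\mathcal{P}$ is $\sum_\alpha\tau_\alpha = 0$ (Hypothesis~\ref{fg:70}), which is geometrically ``the rightmost zipper lies on the $x$-axis, i.e.\ a singularity on the tracked horizontal segment''; the conditions $h_{\alpha(1)} = h_{\beta(0)}$ or $h_{\alpha(0)} = h_{\beta(1)}$ that you name are not the correct articulation of this degeneracy.
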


  \subsection{Relations between RV and RH inductions}
  Here, we prove that RH induction is the inverse of RV induction.
  \begin{prop}
    \label{fg:88}
    Let $(\pi,\lambda,h) = (\pi,\lambda,-\Omega_\pi(\tau))$
     with $\tau$ in $T^+_\pi$. If $\lambda$ in $\mathbb{R}^\mathcal{A}_+$ 
     satisfies Hypothesis \ref{fg:20}, then 
     $\mathcal{P}\circ \mathcal{R}=\mathrm{Id}$. 
     If $\tau$ in $T^+_\pi$ satisfies Hypothesis 
     \ref{fg:70}, then $\mathcal{R}\circ \mathcal{P}=\mathrm{Id}$.
        \end{prop}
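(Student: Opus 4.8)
The plan is to prove both identities by a direct computation, reducing in each case to the two possible types of the induction step and invoking the top/bottom symmetry of the construction to write out only one of them in detail. Throughout, $\mathcal{R}$ is governed by the RV-type of $(\pi,\lambda)$ via the matrix $\Theta=\Theta_{\pi,\lambda}$, acting on the coordinates by $(\lambda,\tau)\mapsto(\Theta^{-1*}\lambda,\Theta^{-1*}\tau)$, while $\mathcal{P}$ is governed by the RH-type of $(\pi,\tau)$ via $\Psi=\Psi_{\pi,h}$, acting by $(\lambda,\tau)\mapsto(\Psi\lambda,\Psi\tau)$ (this last being the reading of the definition of $\tau'$ deferred after (\ref{eqn:typeH0lambda}) and (\ref{fg:300}): under $\mathcal{P}$ the coordinate $\tau$ transforms by the same matrix as $\lambda$, which one checks keeps $\tau'$ in $T^+_{\pi'}$).

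First I would record why each composition is defined and identify the type correspondence. Assume $(\pi,\lambda)$ has RV-type $0$ (winner $\alpha(0)$, loser $\alpha(1)$; type $1$ is symmetric) and put $(\pi',\lambda',\tau')=\mathcal{R}(\pi,\lambda,\tau)$. By (\ref{eqn:type0tau}) one has $\tau'_{\alpha(0)}=\tau_{\alpha(0)}-\tau_{\alpha(1)}$ and $\tau'_\alpha=\tau_\alpha$ otherwise, so $\sum_\alpha\tau'_\alpha=\sum_\alpha\tau_\alpha-\tau_{\alpha(1)}=\sum_{\pi_1(\alpha)\le d-1}\tau_\alpha$, since $\alpha(1)$ is the unique symbol with $\pi_1(\alpha(1))=d$; by the inequality (\ref{eqn:Tplus}) at $k=d-1$ this is strictly negative, so $(\pi',\tau')$ satisfies Hypothesis \ref{fg:70} with RH-type $1$. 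Hence $\mathcal{P}$ is defined on $\mathcal{R}(\pi,\lambda,\tau)$ whenever Hypothesis \ref{fg:20} holds, and one must show the RH-type-$1$ step inverts the RV-type-$0$ step. Symmetrically, if $(\pi,\tau)$ has RH-type $0$ and $(\pi',\lambda',\tau')=\mathcal{P}(\pi,\lambda,\tau)$, then by (\ref{typeH0perm}) the symbol $\beta(0)$ is moved into the last slot of the top row and $\alpha(1)$ stays last on the bottom, so $\alpha'(0)=\beta(0)$ and $\alpha'(1)=\alpha(1)$, and (\ref{eqn:typeH0lambda}) gives $\lambda'_{\alpha'(1)}-\lambda'_{\alpha'(0)}=\lambda_{\alpha(1)}>0$; thus $(\pi',\lambda')$ satisfies Hypothesis \ref{fg:20} with RV-type $1$, $\mathcal{R}$ is defined on $\mathcal{P}(\pi,\lambda,\tau)$, and one must show the RV-type-$1$ step inverts the RH-type-$0$ step.

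The core of the proof is then the permutation bookkeeping together with a short matrix identity. For $\mathcal{P}\circ\mathcal{R}=\mathrm{Id}$ in the RV-type-$0$ case: reading $\pi'$ from (\ref{eqn:type0perm}), its top row is unchanged so $\alpha'(0)=\alpha(0)$, and on the bottom row $\alpha(1)$ is removed from the last slot and reinserted immediately after $\alpha(0)$, so the symbol just to the right of $\alpha'(0)$ on the bottom row of $\pi'$ is $\beta'(1)=\alpha(1)$; substituting $\pi'$ and $\beta'(1)=\alpha(1)$ into the RH-type-$1$ rule (\ref{fg:300}) and simplifying returns $\pi''=\pi$ (a finite positional check). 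Writing $E_{\beta\gamma}$ for the matrix with a single $1$ in row $\beta$ and column $\gamma$, the RV-type-$0$ matrix is $\Theta=\mathrm{Id}+E_{\alpha(1)\,\alpha(0)}$, and since the $\tau$-winner of the ensuing RH-type-$1$ step is $\alpha'(0)=\alpha(0)$ with $\beta'(1)=\alpha(1)$, the matrix (\ref{eqn:Psi1}) is $\Psi=\mathrm{Id}+E_{\alpha(0)\,\alpha(1)}$. Because $\alpha(0)\neq\alpha(1)$ (forced by irreducibility of $\pi$), $E_{\alpha(0)\,\alpha(1)}^2=0$, so $\Psi=\Theta^*$ and $\Psi\,\Theta^{-1*}=\mathrm{Id}$; hence $\mathcal{P}\circ\mathcal{R}$ fixes $(\lambda,\tau)$, and with $\pi''=\pi$ this gives $\mathcal{P}\circ\mathcal{R}=\mathrm{Id}$. (If one prefers to carry $h$ instead of $\tau$, the same follows from $h'=\Theta h$ and $h''=\Psi^{-1*}h'=\Theta^{-1}h'=h$, using the identity $\Theta\Omega_\pi\Theta^*=\Omega_{\pi'}$ and $\pi''=\pi$.) For $\mathcal{R}\circ\mathcal{P}=\mathrm{Id}$ the argument is identical with the roles of $\Theta$ and $\Psi$ interchanged: from (\ref{eqn:Psi0}) the RH-type-$0$ matrix is $\Psi=\mathrm{Id}+E_{\alpha(1)\,\beta(0)}$, the ensuing RV-type-$1$ matrix is $\Theta'=\mathrm{Id}+E_{\beta(0)\,\alpha(1)}=\Psi^*$, and $\Theta'^{-1*}\Psi=\mathrm{Id}$, while (\ref{eqn:type1perm}) applied to $\pi'$ returns $\pi$.

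The main obstacle is the permutation bookkeeping of the previous paragraph: one has to track, through the insertion and deletion of the winner/loser symbols in the two rows, that the positions line up so that the two surgeries are literally inverse to one another. There is nothing conceptually delicate here, but it is the step that cannot be shortcut; I would carry it out by recording $\alpha(\varepsilon)$, $\beta(\varepsilon)$ and their images $\alpha'(\varepsilon)$, $\beta'(\varepsilon)$ explicitly in each of the two cases rather than manipulating the permutation matrices wholesale, and then observe that the type-$1$ cases follow from the type-$0$ cases by exchanging the top and bottom rows.
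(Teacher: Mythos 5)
Your proof is correct and follows essentially the same route as the paper's: check the type correspondence (RV-type $\varepsilon$ begets RH-type $1-\varepsilon$ and vice versa), track $\alpha(\cdot),\beta(\cdot)$ through the permutation surgery to see $\pi''=\pi$, and verify the coordinates are restored, which the paper also summarizes via the identity $\Psi=\Theta^*$. Your packaging through the rank-one matrices $E_{\beta\gamma}$ and the nilpotency remark is slightly tidier than the paper's coordinate-by-coordinate comparison, but the content of the computation is identical.
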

   \begin{proof}
   Let $(\pi',\lambda',h') = \mathcal{P}(\pi,\lambda,h)$. 
  Now suppose $\tau$ in $T^+_\pi$ is of type $0$. Then by (\ref{eqn:typeH0lambda}):
       $$\lambda'_{\alpha'(0)} = \lambda_{\beta(0)}<\lambda_{\beta(0)}
       +\lambda_{\alpha(1)} = \lambda'_{\alpha'(1)},$$
     which means that $(\pi',\lambda')$ is of type 1. Comparing 
     (\ref{eqn:type1lambda}) and (\ref{eqn:typeH0lambda}), 
     we get that $\mathcal{R}(\lambda')_\alpha = \lambda'_\alpha 
     = \lambda_\alpha $ whenever $\alpha\neq \alpha'(1) = \alpha(1)$ and
       $$\mathcal{R}(\lambda')_{\alpha'(1)} = 
       \lambda'_{\alpha'(1)}-\lambda'_{\alpha'(0)} =
        (\lambda_{\alpha(1)}+\lambda_{\beta(0)})-\lambda_{\beta(0)} 
        = \lambda_{\alpha(1)}.$$
    Finally, comparing (\ref{eqn:type1perm}) and (\ref{typeH0perm}), 
    we get that $\mathcal{R}\circ \mathcal{P}(\pi,\lambda) = 
    (\pi,\Theta^{-1*}\Psi\lambda) = (\pi,\lambda)$, so 
    $\Psi = \Theta^*$. If $\alpha'(\varepsilon)$ are the last symbols 
    of the permutation $\pi'_\varepsilon$, then $\alpha'(0) = \beta(0)$.
     So $\mathcal{R}(h')_\alpha = h_\alpha$ if $\alpha\neq \alpha'(0) = 
     \beta(0)$, and
   $$\mathcal{R}(h')_{\alpha'(0)} = h'_{\alpha'(0)}+h'_{\alpha'(1)} = 
   h'_{\beta(0)} + h'_{\alpha(1)} = (h_{\beta(0)}-h_{\alpha(1)}) + 
   h_{\alpha(1)} = h_{\beta(0)} = h_{\alpha'(0)},$$
       so $\mathcal{R}\circ \mathcal{P}(\pi,\lambda,h) = 
       (\pi, \Theta^{-1*}\Psi \lambda, \Theta \Psi^{-1*} h) =
        (\pi, \lambda,h)$. So  $\mathcal{R}\circ\mathcal{P}(\pi,\lambda,h)
         = (\pi,\lambda,h)$.

Suppose now $\tau$ in $ T^+_\pi$ is of type $1$.
 Then by (\ref{eqn:typeH1lambda}):
      $$\lambda'_{\alpha'(1)} = 
     \lambda_{\beta(1)}<\lambda_{\beta(1)}+\lambda_{\alpha(0)} 
     = \lambda'_{\alpha'(0)},$$
       which means that $(\pi',\lambda')$ is of type 0. 
       Comparing (\ref{eqn:type0lambda}) and 
        (\ref{eqn:typeH1lambda}), we get that 
    $\mathcal{R}(\lambda')_\alpha = \lambda'_\alpha 
      = \lambda_\alpha $ whenever $\alpha\neq \alpha'(0) $ and
      $$\mathcal{R}(\lambda')_{\alpha'(0)} =
      \lambda'_{\alpha'(0)}-\lambda'_{\alpha(1)} =
       (\lambda_{\alpha(0)}+\lambda_{\beta(1)})-\lambda_{\beta(1)}
        = \lambda_{\alpha(0)}.$$
     Finally, comparing (\ref{eqn:type0perm}) and
      (\ref{fg:300}), we get that 
      $\mathcal{R}\circ \mathcal{P}(\pi,\lambda)
       = (\pi,\Theta^{-1*}\Psi\lambda) = (\pi,\lambda)$, so 
       $\Psi = \Theta^*$ in this case too.

     Note that if $\alpha'(\varepsilon)$ are the last symbols of 
     the permutation $\pi'$, then $\alpha'(1) = \beta(1)$. 
     So $\mathcal{R}(h')_\alpha = h_\alpha$ if $\alpha\neq \alpha'(1) = \beta(1)$, and
     $$\mathcal{R}(h')_{\alpha'(1)} = h'_{\alpha'(1)}+h'_{\alpha'(0)} 
     = h'_{\beta(1)} + h'_{\alpha(0)} = (h_{\beta(1)}-h_{\alpha(0)}) 
     + h_{\alpha(0)} = h_{\beta(1)} = h_{\alpha'(1)},$$
       so $\mathcal{R}\circ \mathcal{P}(\pi,\lambda,h) = 
       (\pi, \Theta^{-1*}\Psi \lambda, \Theta \Psi^{-1*} h) = (\pi, \lambda,h)$.
                    

      Let $(\pi',\lambda',\tau') = \mathcal{R}(\pi,\lambda,\tau)$ and 
       $(\pi,\lambda)$ is of type $0$. Then by (\ref{eqn:type0tau})
        we have that
     $$\sum_{\alpha}\tau'_\alpha = \sum_{\alpha\neq \alpha(1)}\tau_\alpha < 0$$
    and so by (\ref{eqn:Tplus}) we have that $(\pi',\tau')$ is of type 1 
    (as in Figure \ref{fig:H2}). Let $\lambda' = \Theta^{-1*}\lambda$,
     where $\Theta$ is the type 0 matrix.  Comparing (\ref{eqn:type0perm}) 
     and (\ref{fg:300}) it also follows that $\beta'(1) = \alpha(1)$.
      In addition, comparing (\ref{eqn:type1lambda}) and (\ref{eqn:typeH1lambda}),
       we get that $\mathcal{P}(\lambda')_\alpha =
        \lambda'_\alpha = \lambda_\alpha $ whenever
         $\alpha\neq \alpha'(0)=\alpha(0)$ and
     $$\mathcal{P}(\lambda')_{\alpha'(0)} = 
     \lambda'_{\alpha'(0)}+\lambda'_{\beta'(1)} = \lambda'_{\alpha(0)}
     +\lambda'_{\alpha(1)} = (\lambda_{\alpha(0)}-\lambda_{\alpha(1)})
     +\lambda_{\alpha(1)} = \lambda_{\alpha(0)},$$
 so $\mathcal{P}\circ\mathcal{R}$ acts as the identity on $\lambda$.
                    
      It follows from (\ref{eqn:typeH1h}) that
       $\mathcal{P}(h')_\alpha = h'_\alpha = h_\alpha$ 
       if $\alpha\neq \beta'(1)=\alpha(1)$ and
    $$\mathcal{P}(h')_{\beta'(1)} = 
    h'_{\beta'(1)}-h'_{\alpha'(0)} = 
    (h_{\alpha(0)}+h_{\alpha(1)})-h_{\alpha(0)} =h_{\beta'(1)}
     = h_{\alpha(1)}= \mathcal{P}(h')_{\alpha(1)}.$$
       So it follows that $\mathcal{P}\circ \mathcal{R}(\pi,\lambda,h) 
       = (\pi,  \Psi \Theta^{-1*}\lambda , \Psi^{-1*} \Theta h) 
       = (\pi,\lambda,h)$.
  The last case is similarly proved.
              \end{proof}
              
 It follows that the map $\mathcal{P}$ changes
  the $\tau$ coordinate by $\tau\mapsto \Psi\tau$.
  
   \begin{prop}
   \label{fg:90}
   The map $\mathcal{P}$ preserves the cones $T^+_\pi$.
   \end{prop}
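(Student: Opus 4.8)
The plan is to proceed exactly as in Proposition \ref{fg:88}: recall that RH induction is $\mathcal{P}:(\pi,\lambda,h)\mapsto(\pi',\Psi\lambda,\Psi^{-1*}h)$, and that the definition of $\tau'$ is declared (in both the type $0$ and type $1$ descriptions) to ``follow from Proposition \ref{fg:80}''. By Proposition \ref{fg:88} we know $\mathcal{R}\circ\mathcal{P}=\mathrm{Id}$ and $\mathcal{P}\circ\mathcal{R}=\mathrm{Id}$ on the $(\pi,\lambda,h)$-coordinates, and that $\Psi=\Theta^*$ (where $\Theta$ is the RV matrix for $\mathcal{R}(\pi',\lambda',\tau')$). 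Since $h=-\Omega_\pi(\tau)$ and $\Theta\Omega_\pi\Theta^*=\Omega_{\pi'}$, one gets $\Psi\tau = \Theta^*\tau$ as the correct transformation rule for $\tau$, i.e. $\tau' = \Psi\tau$; indeed $-\Omega_{\pi'}(\Psi\tau) = -\Theta\Omega_\pi\Theta^*\Theta^{-1*}\tau\cdot(\text{sign bookkeeping}) $ matches $h'=\Psi^{-1*}h$ after tracking the transposes. So the statement to prove reduces to: if $\tau\in T^+_\pi$, then $\Psi\tau\in T^+_{\pi'}$.

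Concretely, I would split into the two types. Suppose $(\pi,\tau)$ has type $0$, so $\sum_\alpha\tau_\alpha>0$ and $\Psi$ is given by (\ref{eqn:Psi0}): $\Psi$ adds the $\beta(0)$-coordinate into the $\alpha(1)$-coordinate, so $(\Psi\tau)_{\alpha(1)} = \tau_{\alpha(1)}+\tau_{\beta(0)}$ and $(\Psi\tau)_\alpha=\tau_\alpha$ otherwise. The new permutation $\pi'$ is given by (\ref{typeH0perm}); its bottom row $\pi_1'$ equals $\pi_1$ except that $\alpha(1)$ has been moved to the end, and its top row $\pi_0'$ is obtained from $\pi_0$ by deleting $\alpha(1)$ (which sat just left of $\beta(0)$) and appending $\beta(0)$ at the end. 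I would then verify the two families of inequalities defining $T^+_{\pi'}$, namely $\sum_{\pi_0'(\alpha)\le k}(\Psi\tau)_\alpha>0$ and $\sum_{\pi_1'(\alpha)\le k}(\Psi\tau)_\alpha<0$ for $k<d$, by matching each partial sum over $\pi'$ to a partial sum over $\pi$. For the bottom-row inequalities this is essentially bookkeeping: removing $\alpha(1)$ from its old slot and appending it, combined with the coordinate change $\tau_{\alpha(1)}+\tau_{\beta(0)}$, converts a partial sum $\sum_{\pi_1'(\alpha)\le k}$ into some $\sum_{\pi_1(\alpha)\le k'}$ for the original $\pi$, which is $<0$ by hypothesis; the key new partial sum (the one up to but not including the final $\alpha(1)$) equals $\sum_\alpha\tau_\alpha - (\tau_{\alpha(1)}+\tau_{\beta(0)})$ plus old terms, and one checks it stays negative using $\sum_\alpha\tau_\alpha>0$ is \emph{not} what kills it — rather, the relevant comparison is to the old sum $\sum_{\pi_1(\alpha)<\pi_1(\alpha(1))}\tau_\alpha$, adjusted by $\tau_{\beta(0)}$. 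For the top-row inequalities the analysis is symmetric, and one uses (\ref{eqn:Tplus}) for $\pi$ together with $\sum_\alpha\tau_\alpha>0$ where needed. The type $1$ case is handled by the identical argument with the roles of top and bottom rows, and of $\alpha(0)/\alpha(1)$, $\beta(0)/\beta(1)$, interchanged, using (\ref{eqn:Psi1}) and (\ref{fg:300}).

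Alternatively — and this is probably the cleanest route — I would simply invoke Proposition \ref{fg:88} to reduce to the already-known statement for $\mathcal{R}$. The paper notes after Proposition \ref{fg:88} that ``$\mathcal{P}$ changes the $\tau$ coordinate by $\tau\mapsto\Psi\tau$'', and recall from the RV section that ``it is straightforward to verify that if $\tau\in T^+_\pi$ then $\tau'\in T^+_{\pi'}$'' under $\mathcal{R}$. Given $\tau\in T^+_\pi$ of, say, type $0$, the pair $(\pi',\lambda')=\mathcal{P}(\pi,\lambda)$ is of RV-type $1$ by the computation in the proof of Proposition \ref{fg:88}, and $\mathcal{R}(\pi',\lambda',\Psi\tau)=(\pi,\lambda,\tau)$ with $\Psi=\Theta^*$. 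Thus $\tau = \Theta^{-1*}(\Psi\tau)$, i.e. $\Psi\tau$ is the RV-preimage of $\tau\in T^+_\pi$ under a type-$1$ step; but the RV $\tau$-update (\ref{type1tau}) is $\tau_{\alpha(1)}\mapsto\tau_{\alpha(1)}-\tau_{\alpha(0)}$ and one can read the defining inequalities of $T^+_{\pi'}$ directly off those of $T^+_\pi$ by inverting that substitution. In other words, membership $\Psi\tau\in T^+_{\pi'}$ is literally the statement that the RV-type-$1$ update sends $T^+_{\pi'}$ into $T^+_\pi$ bijectively, which is part of the standard RV picture (and the paper already asserts $h'=\Theta h$ preserves the relevant positivity). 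So the proof is: apply $\mathcal{R}^{-1}=\mathcal{P}$ reasoning together with the known invariance of the cones under $\mathcal{R}$.

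The main obstacle is purely combinatorial: one must be careful about which row of $\pi$ gets rearranged and exactly how the index shifts in (\ref{typeH0perm})/(\ref{fg:300}) interact with the coordinate change in $\Psi$, so that the partial sums over $\pi'$ genuinely correspond to partial sums over $\pi$. Once that dictionary between partial sums is written out, both the inequalities $\sum_{\pi_0'(\alpha)\le k}(\Psi\tau)_\alpha>0$ and $\sum_{\pi_1'(\alpha)\le k}(\Psi\tau)_\alpha<0$ follow term-by-term from (\ref{eqn:Tplus}) for $(\pi,\tau)$ and the type hypothesis $\sum_\alpha\tau_\alpha\gtrless 0$. No genuinely new idea beyond Proposition \ref{fg:88} is required; it is a verification that the formulas defining $\mathcal{P}$ are consistent with the cone constraints, which is why the expected proof is short.
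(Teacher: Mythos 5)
Your primary approach --- direct partial-sum matching, split by $\tau$-type --- is the same as the paper's, but your sketch of the type-$0$ case has the combinatorics reversed. In (\ref{typeH0perm}) the \emph{bottom} row is unchanged, $\pi_1'=\pi_1$, and it is the \emph{top} row that changes: $\beta(0)$ (not $\alpha(1)$) is deleted from position $\pi_0(\alpha(1))+1$ and appended at the end. Your description, ``deleting $\alpha(1)$ \ldots\ and appending $\beta(0)$,'' removes one symbol while duplicating another and is not even a permutation. As a consequence the work lies on the opposite row from the one you emphasize. Since $\alpha(1)$ sits last in $\pi_1=\pi_1'$, the only coordinate that $\Psi$ alters never enters a bottom-row partial sum with $k<d$, so those inequalities are inherited verbatim and require no bookkeeping. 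It is the \emph{top}-row sums that need the index shift $\sum_{\pi_0'(\alpha)\le k}(\Psi\tau)_\alpha=\sum_{\pi_0(\alpha)\le k+1}\tau_\alpha$ for $k\ge\pi_0(\alpha(1))$, and it is exactly at $k=d-1$ on the top row that the type hypothesis $\sum_\alpha\tau_\alpha>0$ is used. Your aside that ``$\sum_\alpha\tau_\alpha>0$ is \emph{not} what kills it'' has this backwards.

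The proposed ``cleaner alternative'' does not close the argument either. The RV assertion the paper makes (and never proves) is a forward containment: $\tau'\in T^+_{\pi'}$ implies $\Theta^{-1*}\tau'\in T^+_\pi$. What Proposition~\ref{fg:90} requires is the reverse containment $\tau\in T^+_\pi$ implies $\Theta^*\tau\in T^+_{\pi'}$, i.e.\ that the RV $\tau$-update carries $T^+_{\pi'}$ \emph{onto} $T^+_\pi$, not merely into it. An invertible linear map sending one open cone into another does not automatically give equality of the cones, and nothing else in the paper supplies the missing surjectivity. Appealing to bijectivity ``from the standard RV picture'' is therefore circular within the paper: Proposition~\ref{fg:90} \emph{is} that verification.
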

   \begin{proof}
   Suppose $(\pi,\tau)$ is of type 0 with $\tau$ in $T^+_\pi$ 
   and let $\mathcal{P}(\pi,\tau) = (\pi',\Psi\tau)$. Then
     $$\sum_{\pi'_0(\alpha)\leq k}\left(\Psi\tau\right)_\alpha 
     = \left\{\begin{array}{ll}\displaystyle\sum_{\pi_0(\alpha)\leq k}\tau_\alpha 
     > 0&\mbox{ if $k<\pi_0(\alpha(1)) = \pi_0'(\alpha(1))$} \\
      \displaystyle\sum_{\pi_0(\alpha)\leq k+1}
      \tau_\alpha > 0&\mbox{ if $\pi_0(\alpha(1))
       = \pi_0'(\alpha(1))\leq k < d$,} 
        \end{array}\right. $$
     where the case for $k=d-1$ follows because $(\pi,\tau)$ 
     is of type 0. We also have for any $k<d$
  $$\sum_{\pi'_1(\alpha)\leq k}\left( \Psi\tau\right)_\alpha 
  = \sum_{\pi_1(\alpha)\leq k}\tau_\alpha <0,$$
   and so it follows that $\Psi\tau\in T^+_{\pi'}$. 
   Likewise if $(\pi,\tau)$ is of type 1 then
    $$\sum_{\pi'_1(\alpha)\leq k}\left(\Psi\tau\right)_\alpha 
    = \left\{\begin{array}{ll}\displaystyle\sum_{\pi_1(\alpha)\leq k}\tau_\alpha 
    < 0&\mbox{ if $k<\pi_1(\alpha(0)) 
    = \pi_1'(\alpha(0))$} \\ \displaystyle\sum_{\pi_1(\alpha)\leq k+1}\tau_
    \alpha < 0&\mbox{ if $\pi_1(\alpha(0)) = \pi_1'(\alpha(0))\leq k < d$,}  
    \end{array}\right. $$
     where the case for $k=d-1$ follows because $(\pi,\tau)$ is of type 1. 
     We also have for any $k<d$
   $$\sum_{\pi'_0(\alpha)\leq k}\left( \Psi\tau\right)_\alpha
    = \sum_{\pi_0(\alpha)\leq k}\tau_\alpha >0,$$
  and so the defining conditions of the cones (\ref{eqn:Tplus}) are preserved. 
                  \end{proof}
Thus the map $\mathcal{P}$ is 
   the inverse of the Rauzy-Veech induction map $\mathcal{R}$ and 
   it is sometimes called ``backwards Rauzy-Veech induction''. 
   As such, the action on data triples is of the 
   form $\mathcal{P}:(\pi,\lambda,\tau)\mapsto (\pi',\Psi\lambda,\Psi\tau)$.

 \subsection{Dynamics on the space of zippered rectangles}

   Since a flat surface can be constructed from data 
   $(\pi,\lambda,\tau)$ in $\mathcal{C}\times \mathbb{R}^\mathcal{A}_+\times T^+_\pi$,
    it is natural to ask how the set of all zippered rectangles
    relates to the set of all flat surfaces. This was
     described by Veech \cite{veech:gauss}.
    \begin{defn}
    \label{fg:100}
   The \emph{space of zippered rectangles}
    corresponding to a Rauzy class $\mathcal{C}$ is the set
         $$\bar{\mathcal{V}}_\mathcal{C}=\left\{ (\pi,\lambda,\tau):
  \pi\in\mathcal{C},\lambda\in\mathbb{R}^\mathcal{A},\tau\in T^+_\pi\right\}.$$
                  \end{defn}
   There is a natural volume measure $m_\mathcal{C}$ in 
   $\bar{\mathcal{V}}_\mathcal{C}$ locally given by 
   $m_\mathcal{C} = d\pi\, d\lambda\, d\tau$, where
    $d\pi$ is the counting measure, while $d\lambda,d\tau$ 
    are restrictions of Lebesgue measure on $\mathbb{R}^\mathcal{A}$.
     The Teichm\"uller flow on $\bar{\mathcal{V}}_\mathcal{C}$
     is the one-parameter group of diffeomorphisms of 
     $\bar{\mathcal{V}}_\mathcal{C}$ defined by 
     $\Phi_t(\pi,\lambda,\tau) = (\pi, e^{-t}\lambda, e^t \tau)$.
      We emphasize here that our convention for Teichm\"uller 
      flow here is backwards Teichm\"uller flow in the general
       literature. The reason for this is that our focus here 
       is on the horizontal flow, which is renormalized by
        the Teichm\"uller flow as we have defined it.

    The Teichm\"uller flow preserves the measure $m_\mathcal{C}$. Note that
  \newline
     $\mathrm{Area}(S(\Phi_t(\pi,\lambda,\tau))) =
      \mathrm{Area}(S(\pi,\lambda,\tau))$ for any $t$ in 
      $\mathbb{R}$. Any $a>0$ defines 
      two independent global cross-sections
       $\bar{\mathcal{V}}^{\pm a}_\mathcal{C}$, defined by
   \begin{equation}
  \label{eqn:sections}
  \begin{split}
     \bar{\mathcal{V}}_\mathcal{C}^{+a}& 
     =\{(\pi,\lambda,\tau)\in\bar{\mathcal{V}}_\mathcal{C}: 
     |\Omega_\pi(\tau)|_1= |h|_1 = a\}, \\
    \bar{\mathcal{V}}_\mathcal{C}^{-a}& 
    =\{(\pi,\lambda,\tau)\in\bar{\mathcal{V}}_\mathcal{C}: |\lambda|_1 = a\}.
                    \end{split}
      \end{equation}
    The renormalization times of $(\pi,\lambda,\tau)$ are defined by
  \begin{equation}
   \label{eqn:rTime}
    t_R^+(\pi,\lambda,\tau) =
    -\log\left(1-\frac{h_{\alpha(1-\varepsilon_\tau)}}{|h|_1} \right)\hspace{.3in}
    \mbox{ and }\hspace{.3in} t_R^-(\pi,\lambda,\tau) 
    = -\log\left(1-\frac{\lambda_{\alpha(1-\varepsilon_\lambda)}}{|\lambda|_1} \right),
                  \end{equation}
     where $\varepsilon_*$ is the $*$-type of the 
     triple, for $*\in\{\lambda,\tau\}$, and it is
      immediate to check that the composition
  \begin{equation}
   \label{eqn:renormMap}
   \hat{\mathcal{P}}^{\pm} = \mathcal{P}^{\pm 1}\circ \Phi_{t_R^\pm}:
   (\pi,\lambda,\tau)\mapsto \mathcal{P}^{\pm 1}(\pi,e^{\mp t_R^\pm}
   \lambda,e^{\pm t_R^\pm}\tau)
       \end{equation}
 maps each cross section $\bar{\mathcal{V}}_\mathcal{C}^{\pm a}$ 
  to itself (assuming $\mathcal{P}^{\pm 1}$ is defined on the triple). 
  In fact, the transformation $\hat{\mathcal{P}}^\pm:
  \bar{\mathcal{V}}^{\pm a}_\mathcal{C}\rightarrow 
  \bar{\mathcal{V}}^{\pm a}_\mathcal{C}$ is an almost everywhere 
  invertible Markov map (see \cite[Corollary 20.1]{viana:notes}). 
  Let $\Pi^\pm$ be the maps defined by
   $$\Pi^+(\pi,\lambda,\tau) =  (\pi,h) = 
   (\pi,-\Omega_\pi(\tau))\hspace{.4in}\mbox{ and }\hspace{.4in} 
   \Pi^-(\pi,\lambda,\tau) =  (\pi,\lambda)$$
 for all $(\pi,\lambda, \tau)$ in $ \bar{\mathcal{V}}_\mathcal{C}$, and 
 let $m^\pm_\mathcal{C} = \Pi_*^\pm m_\mathcal{C}$ be the 
 pushforward of the volume measure and 
 $m_1^\pm$ be their restriction to the simplices
   \begin{equation*}
   \begin{split}
   \mathbb{V}^+_{\mathcal{C}}&=\Pi^+(\bar{\mathcal{V}}^1_\mathcal{C}) 
   =\left\{  (\pi,h)\in\bigsqcup_{\pi\in \mathcal{C}} \{\pi\}\times H^+_\pi: 
   |h|_1 = 1\right\}, \\
    \mathbb{V}^-_{\mathcal{C}}&=\Pi^-(\bar{\mathcal{V}}^{-1}_\mathcal{C})
     =\left\{  (\pi,\lambda)\in\bigsqcup_{\pi\in \mathcal{C}} \{\pi\}\times
      \mathbb{R}^\mathcal{A}_+: |\lambda|_1 = 1\right\}.
                    \end{split}
                  \end{equation*}
There are unique maps $\mathbb{P}^\pm: 
\mathbb{V}^\pm_\mathcal{C} \rightarrow \mathbb{V}^\pm_\mathcal{C}$ 
satisfying $\mathbb{P}^\pm \circ \Pi^\pm = \Pi^\pm \circ \hat{\mathcal{P}}^\pm$,
 for all triples $(\pi,\lambda,\tau)$ where $\mathcal{P}^{\pm 1}$ 
 is defined, which we respectively call the RH/RV renormalization maps.
  \begin{prop}
   \label{fg:110}
    The measure on $\mathbb{V}^+_\mathcal{C}$ defined by
     $$\prod_{\alpha\in\mathcal{A}}h_\alpha^{-1}\, dh$$
     is invariant under the RH renormalization map $\mathbb{P}^+$.
     \end{prop}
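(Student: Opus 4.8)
The plan is to exhibit the measure $\mu^+ := \prod_{\alpha}h_\alpha^{-1}\,dh$ on $\mathbb{V}^+_\mathcal{C}$ as a pushforward of a $\hat{\mathcal{P}}^+$-invariant measure on the cross-section $\bar{\mathcal{V}}^{+1}_\mathcal{C}$, and then use the fact that $\mathbb{P}^+$ is the factor of $\hat{\mathcal{P}}^+$ through $\Pi^+$. Concretely, since the Teichm\"uller flow $\Phi_t$ preserves $m_\mathcal{C} = d\pi\,d\lambda\,d\tau$, the cross-section $\bar{\mathcal{V}}^{+1}_\mathcal{C} = \{|h|_1 = 1\}$ inherits a natural transverse measure $\hat m^+$: writing $m_\mathcal{C}$ in flow coordinates $(t, \text{point on }\bar{\mathcal{V}}^{+1}_\mathcal{C})$, one has $dm_\mathcal{C} = c\,dt\,d\hat m^+$ for the transverse measure $\hat m^+$ (this is standard for a smooth flow with a smooth global section transverse to it; here the section is cut out by the smooth function $(\pi,\lambda,\tau)\mapsto |h|_1 = |\Omega_\pi(\tau)|_1$ on which $\Phi_t$ acts by $e^t$). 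Because $\hat{\mathcal{P}}^+ = \mathcal{P}\circ\Phi_{t_R^+}$ is the return map of $\Phi_t$ to this section (for those triples on which $\mathcal{P}$ is defined, which is a full-measure set by Theorem \ref{fg:84} / the rational-independence condition), and $\mathcal{P}$ preserves $m_\mathcal{C}$ up to the coordinate change $(\pi,\lambda,\tau)\mapsto(\pi',\Psi\lambda,\Psi\tau)$ with $\Psi$ unipotent hence of determinant $1$, the return map $\hat{\mathcal{P}}^+$ preserves $\hat m^+$.

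The next step is to push $\hat m^+$ forward under $\Pi^+(\pi,\lambda,\tau) = (\pi,-\Omega_\pi(\tau))$ and identify the result with $\mu^+$ up to a constant. Since the fibers of $\Pi^+$ over $\mathbb{V}^+_\mathcal{C}$ are the $\lambda$-directions (for fixed $\pi$ and $\tau$, hence fixed $h$) intersected with $\bar{\mathcal{V}}^{+1}_\mathcal{C}$, and $m_\mathcal{C}$ is a product with Lebesgue in $\lambda$, the pushforward $m^+_\mathcal{C} = \Pi^+_* m_\mathcal{C}$ is (up to normalization) the image of $d\tau$ under $h = -\Omega_\pi(\tau)$, which on each Rauzy-class component is Lebesgue measure $dh$ on the cone $H^+_\pi$ (because $\Omega_\pi$ restricted to an appropriate coordinate subspace is volume-preserving, or one simply absorbs the constant $|\det|$). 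To get from $dh$ on the cone to the invariant measure on the unit simplex $\mathbb{V}^+_\mathcal{C} = \{|h|_1 = 1\}$, one disintegrates $dh = r^{d-1}\,dr\,d\omega$ in polar-type coordinates adapted to the dilation $h\mapsto \lambda h$; the renormalization $\mathbb{P}^+$ is precisely $\mathcal{P}^+$ followed by rescaling to return $|h|_1$ to $1$, and a direct computation shows that the conditional measure on $\{|h|_1=1\}$ which is invariant under this rescaled-projective action is $\prod_\alpha h_\alpha^{-1}\,dh$ (the factor $\prod h_\alpha^{-1}$ is the standard density making Lebesgue-class cone measures project to a dilation-equivariant family; this is the same mechanism by which $\prod x_i^{-1}dx$ is the invariant density for the Gauss map / Rauzy--Veech cocycle on the $\lambda$-simplex, cf. \cite[\S 20]{viana:notes}).

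I expect the main obstacle to be the bookkeeping in the last step: verifying that the particular density $\prod_\alpha h_\alpha^{-1}$, rather than some other homogeneous weight, is the one that makes $d\tau$ (equivalently $dh$) disintegrate consistently under the $\mathbb{P}^+$-action. The cleanest route is to check directly that $\mathbb{P}^+$ preserves $\prod_\alpha h_\alpha^{-1}\,dh$ by computing its Jacobian. On each branch of $\mathbb{P}^+$ (indexed by the type $\varepsilon$ and hence by the unipotent matrix $\Psi = \Psi_{\pi,h}$ of \eqref{eqn:Psi0}--\eqref{eqn:Psi1}), we have $h \mapsto c(h)\,\Psi^{-1*}h$ where $c(h) = |\Psi^{-1*}h|_1^{-1}$ is the renormalizing scalar; since $\det\Psi^{-1*} = 1$ and on the hyperplane $\{|h|_1 = 1\}$ the radial Jacobian of $h\mapsto c(h)h'$ contributes exactly $c(h)^{d-1}$ while the change of variables $h'=\Psi^{-1*}h$ changes $\prod_\alpha h_\alpha$ by the factor $\prod_\alpha h'_\alpha/\prod_\alpha h_\alpha = h'_{\beta(\varepsilon)}/h_{\beta(\varepsilon)}$ (only one coordinate changes: $h_{\beta(\varepsilon)}' = h_{\beta(\varepsilon)} - h_{\alpha(1-\varepsilon)}$ by \eqref{eqn:typeH1h} and its type-$0$ analogue), one matches the powers of $c(h)$ against the density weight and sees the two contributions cancel. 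Assembling: the branch Jacobian of $\mathbb{P}^+$ with respect to $\prod_\alpha h_\alpha^{-1}\,dh$ is $1$ on each branch, and since the branches partition $\mathbb{V}^+_\mathcal{C}$ (mod measure zero) and $\mathbb{P}^+$ is a.e. invertible by \cite[Corollary 20.1]{viana:notes}, the measure is invariant. This concludes the proof.
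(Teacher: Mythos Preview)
Your first, conceptual strategy is sound in outline but, as you yourself note, never actually identifies the pushforward measure with $\prod_\alpha h_\alpha^{-1}\,dh$; that identification is the whole content of the proposition, so this part remains a sketch.

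The direct computation you fall back on contains a genuine error. You claim that $\mathbb{P}^+$ is a.e.\ invertible, citing \cite[Corollary 20.1]{viana:notes}, but that result concerns $\hat{\mathcal{P}}^\pm$ on the full section $\bar{\mathcal{V}}^{\pm a}_\mathcal{C}$ (which retains the $\lambda$-coordinate), not the projected map $\mathbb{P}^+$ on $\mathbb{V}^+_\mathcal{C}$. Once $\lambda$ is forgotten, the inverse of $\mathbb{P}^+$ would require knowing the $\lambda$-type of the image, which is no longer available; consequently $\mathbb{P}^+$ is genuinely $2$-to-$1$: every $(\pi,h)\in\mathbb{V}^+_\mathcal{C}$ has two preimages $(\pi^\varepsilon,h^\varepsilon)$, one for each incoming edge in the Rauzy graph. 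For a $2$-to-$1$ map, the invariance condition is not that the density-weighted Jacobian equals $1$ on each forward branch (which would force the pushforward to be $2\mu$, not $\mu$), but rather the transfer-operator identity
\[
\sum_{\varepsilon\in\{0,1\}} \mathcal{D}\bigl(\mathcal{F}_\varepsilon(h)\bigr)\,\bigl|\mathcal{J}_\varepsilon(h)\bigr| \;=\; \mathcal{D}(h),
\]
where $\mathcal{F}_\varepsilon$ are the inverse branches and $\mathcal{J}_\varepsilon$ their Jacobians. This is exactly what the paper verifies: one computes $|\mathcal{J}_\varepsilon| = (1+h_{\alpha(1-\varepsilon)})^{-|\mathcal{A}|}$ and finds that each branch contributes $\dfrac{h_{\alpha(\varepsilon)}}{h_{\alpha(0)}+h_{\alpha(1)}}\,\mathcal{D}(h)$, so the two contributions sum to $\mathcal{D}(h)$. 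In particular, neither branch contribution equals $\mathcal{D}(h)$ on its own, so your claim that ``the two contributions cancel'' to give Jacobian $1$ is also numerically incorrect, not merely conceptually misframed.
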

  This measure is a counterpart to the Gauss measures 
  $m_1^-$ on $\mathbb{V}^-_\mathcal{C}$ of Veech \cite{veech:gauss}.
   Veech proved that $m^-_1$ has an invariant density which is a 
   homogeneous rational function of $\lambda$ of degree 
   $-|\mathcal{A}|$ bounded away from zero (see \cite[\S 21]{viana:notes}). 
   The measure above is also a homogeneous rational function of 
   degree $-|\mathcal{A}|$. That a measure of this form was invariant 
   was claimed in \cite[\S 4]{putnam:iet}.
    \begin{proof}                    
   Recall that every $(\pi,h) $ in $ \mathbb{V}^+_\mathcal{C}$ has 
   two preimages $(\pi^\varepsilon,h^\varepsilon)$, 
   that is, $\mathbb{P}^+(\pi^\varepsilon,h^\varepsilon) = (\pi,h)$,
    where $\varepsilon $ in $ \{ 0,1\}$ is the $\tau$-type. 
    Let $\pi$ be represented as 
   $$\left(\begin{array}{lr}\cdots & \alpha(0) \\\cdots & \alpha(1)
   \end{array}\right).$$

 In terms of $h$, the two preimages $h^\varepsilon$ are given by
  \begin{equation}
    \label{eqn:preimages}
   h_\alpha^\varepsilon = \frac{h_\alpha}{1+h_{\alpha(1-\varepsilon)}} 
    \hspace{.2in}\mbox{ if }\hspace{.2in}\alpha \neq
     \alpha(\varepsilon)\hspace{.3in}\mbox{ and }\hspace{.3in}
      h_{\alpha(\varepsilon)}^\varepsilon = 
      \frac{h_{\alpha(\varepsilon)} + 
      h_{\alpha(1-\varepsilon)}}{1+h_{\alpha(1-\varepsilon)}}
   \end{equation}
  from which we get
  \begin{equation}
     \label{eqn:partials}
     \frac{\partial h^\varepsilon_\alpha}{\partial h_\beta} =
     \left\{ \begin{array}{cl} \displaystyle
     \frac{(1+h_{\alpha(1-\varepsilon)})\delta_{\alpha,\beta} 
     - h_\alpha\delta_{\beta,\alpha(1-\varepsilon)}}{(1+h_{\alpha(1-\varepsilon)})^2} 
     & \mbox{ if }\alpha\neq \alpha(\varepsilon) \\ 
     (1+h_{\alpha(1-\varepsilon)})^{-1} & \mbox{ if } \alpha = \beta 
     = \alpha(\varepsilon)\\ \displaystyle 
     \frac{1+h_{\alpha(\varepsilon)}}{(1+h_{\alpha(1-\varepsilon)})^2}& 
     \mbox{ if } \alpha = \alpha(\varepsilon)\mbox{ and } 
     \beta = \alpha(1-\varepsilon). 
     \end{array}\right.
     \end{equation}
  We denote by $\mathcal{F}_\varepsilon$ the map satisfying
   $\mathcal{F}_\varepsilon(h) = h^\varepsilon$ and by 
   $\mathcal{J}_\varepsilon$ its Jacobian. Note that the
    only nonzero entries of $\mathcal{J}_\varepsilon$ are
     along the diagonal, which are mostly $(1+h_{\alpha(1-\varepsilon)})^{-1}$
      except in the $\alpha(1-\varepsilon)$ entry, in which
       case it is $(1+h_{\alpha(1-\varepsilon)})^{-2}$, and in the 
       column for index $\alpha(1-\varepsilon)$, where the entry 
       for index $\alpha \neq \alpha(\varepsilon)$ is 
       $-h_\alpha/(1+h_{\alpha(1-\varepsilon)})^{-2}$.
        Thus, we can compute the determinant 
       of $\mathcal{J}_\varepsilon$ by expanding along 
       the row with index $\alpha(1-\varepsilon)$, and 
       we get that
   $$|\mathcal{J}_\varepsilon| = (1+h_{\alpha(1-\varepsilon)})^{-|\mathcal{A}|}.$$
  Let $\mathcal{D}(h) = \prod_\alpha h^{-1}_\alpha$. We would like to 
  verify that $\mathcal{D}\circ \mathcal{F}_\varepsilon|\mathcal{J}_\varepsilon|
   + \mathcal{D}\circ \mathcal{F}_{1-\varepsilon}|\mathcal{J}_{1-\varepsilon}|
    = \mathcal{D}$. First:
  \begin{equation}
    \label{eqn:comp}
 \mathcal{D}\circ \mathcal{F}_\varepsilon(h)
  = \prod_{\alpha}(h^\varepsilon_\alpha)^{-1} = \frac{(1+ h_{\alpha(1-\varepsilon)})^{|\mathcal{A}|}}{\displaystyle(h_{\alpha(\varepsilon)} + 
   h_{\alpha(1-\varepsilon)})\prod_{\alpha\neq \alpha(\varepsilon)}h_\alpha}
   = \frac{(1+h_{\alpha(1-\varepsilon)})^
   {|\mathcal{A}|}
   h_{\alpha(\varepsilon)}\mathcal{D}(h)}{h_{\alpha(\varepsilon)}
   +h_{\alpha(1-\varepsilon)}}.
                    \end{equation}
     Now putting everything together:
     $$\mathcal{D}\circ \mathcal{F}_\varepsilon|\mathcal{J}_\varepsilon| + \mathcal{D}\circ \mathcal{F}_{1-\varepsilon}|\mathcal{J}_{1-\varepsilon}| = \frac{h_{\alpha(\varepsilon)}\mathcal{D}(h)}{h_{\alpha(\varepsilon)}+h_{\alpha(1-\varepsilon)}} + \frac{h_{\alpha(1-\varepsilon)}\mathcal{D}(h)}{h_{\alpha(1-\varepsilon)}+h_{\alpha(\varepsilon)}} = \mathcal{D}(h).$$
     \end{proof}
  Let $\hat{\mathcal{V}}_\mathcal{C}\subseteq \bar{ \mathcal{V}}_\mathcal{C}$ 
  be the space of data $(\pi,\lambda,\tau)$ which satisfies the Keane
   condition and is RH-complete. It is invariant under both $\mathcal{P}$ and 
   $\Phi_t$. Define the spaces $\mathcal{V}_\mathcal{C}^\pm = 
   \hat{\mathcal{V}}_\mathcal{C}/\sim_\pm$, where $\sim_\pm$ is the relation
      \begin{equation}
  \label{eqn:preStratum}
   \mathcal{P}^{\pm 1}(\pi,\lambda,\tau)\sim \Phi_{\pm t_R^\pm}(\pi,\lambda,\tau),
     \end{equation}
  called the \emph{pre-strata} of the Rauzy class $\mathcal{C}$. The 
  Teichm\"uller flow $\Phi_t$ descends to flows $\Phi^\pm_t$ on 
  $\mathcal{V}_\mathcal{C}^\pm$, and the image of 
  $\bar{\mathcal{V}}^{^\pm 1}_\mathcal{C}\cap \hat{\mathcal{V}}_\mathcal{C}$ 
  are Poincar\'e sections for the flows. These now serve as 
  combinatorial models for the Teichm\"uller flow in the moduli 
  space of flat surfaces.

  The Teichm\"uller flows on $\mathcal{V}_\mathcal{C}^\pm$ further project
   to suspension flows over $\mathbb{P}^\pm:\mathbb{V}^\pm_\mathcal{C}\rightarrow \mathbb{V}^\pm_\mathcal{C}$ with roof
    functions $t_R^\pm$. More precisely, let
  \begin{equation}
      \label{eqn:coordinates}
    \begin{split}
    \bar{\mathbb{V}}^+_\mathcal{C}& 
 = \left\{(h,s)\in \mathbb{V}^+_\mathcal{C}\times \mathbb{R}: s\in[0,t^+_R) \right\},\\
      \bar{\mathbb{V}}^-_\mathcal{C}& = \left\{(\lambda,s)
           \in \mathbb{V}^-_\mathcal{C}\times \mathbb{R}: s\in[0,t^-_R) \right\}
                    \end{split}
                  \end{equation}
   be the set of coordinates for the suspension flows:
    $(h,s)\mapsto e^sh$ and $(\lambda,s)\mapsto e^s\lambda$.

    \begin{thm}[\cite{veech:gauss}]
    \label{fg:120}
 The RV renormalization map $\mathbb{P}^-:
 \mathbb{V}^-_\mathcal{C}\rightarrow \mathbb{V}^-_\mathcal{C}$ is ergodic
  with respect to $m_1^-$, and thus so is the Teichm\"uller flow 
  on $\bar{\mathbb{V}}^-_\mathcal{C}$ with respect to 
  $m^-_\mathcal{C} =e^{s|\mathcal{A}|}dm^-_1\, ds$. 
  Moreover, given a Rauzy class $\mathcal{C}$, there 
  exists a vector $\bar{\kappa}$ and a finite-to-one, 
  measurable map $\Pi_\mathcal{C}:
  \mathcal{V}_\mathcal{C}^-\rightarrow \mathcal{H}(\bar{\kappa}),$
   where $\mathcal{H}(\bar{\kappa})$ is stratum of flat surfaces 
   such that $\Pi_\mathcal{C}\circ \Phi_t^- = g_t\circ \Pi_\mathcal{C}$,
    and this flow is ergodic when restricted to the subset 
    of surfaces of area 1.
                  \end{thm}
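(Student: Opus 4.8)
The plan is to follow Veech's original argument \cite{veech:gauss} (see also \cite[\S 13--\S 24]{viana:notes}), adapted to the orientation conventions used here; the proof splits into three parts: ergodicity of the base map $\mathbb{P}^-$, the passage to the suspension flow, and the construction and properties of the map $\Pi_\mathcal{C}$.

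First I would establish ergodicity of $\mathbb{P}^-$ on $(\mathbb{V}^-_\mathcal{C}, m_1^-)$. By Theorem \ref{fg:60}, all iterates of RV induction are defined on a full-measure set (equivalently, the Keane condition holds a.e.), and on that set $\mathbb{P}^-$ is an almost-everywhere invertible Markov map with respect to the countable partition whose atoms record the sequence of winners; each inverse branch of $\mathbb{P}^{-n}$ is the projectivization of a non-negative integer matrix, namely a product of the elementary matrices $\Theta$. The invariant measure $m_1^-$ is, by Veech's computation, absolutely continuous with density a homogeneous rational function of $\lambda$ of degree $-|\mathcal{A}|$ bounded above and below on $\mathbb{V}^-_\mathcal{C}$. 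Ergodicity (indeed exactness) then follows from the standard machinery for expanding Markov maps: one checks (i) the full-branch / big-image property --- by the equivalence of conditions in Theorem \ref{fg:60}, for a.e. point infinitely many renormalization iterates have each $\alpha\in\mathcal{A}$ as winner, so suitable compositions of inverse branches send a fixed subsimplex onto all of $\mathbb{V}^-_\mathcal{C}$ --- and (ii) bounded distortion for those branches, coming from the projective structure of the branches of $\mathbb{P}^-$ together with the two-sided bounds on the density; a Hopf/Rokhlin-type argument using a density point of an invariant set then gives ergodicity.

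Second, with the base map ergodic, ergodicity of the Teichm\"uller flow $\Phi^-_t$ is the standard fact that a suspension flow over an ergodic base map with an integrable roof function is ergodic; here the roof is $t_R^-$ from (\ref{eqn:rTime}) --- whose $m_1^-$-integrability again follows from Veech's bounds on the density and on the combinatorics of $\mathcal{C}$ --- and the flow-invariant measure is $m^-_\mathcal{C}=e^{s|\mathcal{A}|}\,dm_1^-\,ds$, the exponential weight being exactly the Jacobian of the $|\mathcal{A}|$-dimensional dilation in the fibres, so that $m^-_\mathcal{C}$ is $\Phi^-_t$-invariant and projects to the natural volume $m_\mathcal{C}$. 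Transferring back from $\bar{\mathbb{V}}^-_\mathcal{C}$ to $\mathcal{V}^-_\mathcal{C}$ via the identification (\ref{eqn:preStratum}) gives ergodicity of $\Phi^-_t$ on $\mathcal{V}^-_\mathcal{C}$.

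Third, I would construct $\Pi_\mathcal{C}:\mathcal{V}^-_\mathcal{C}\to\mathcal{H}(\bar\kappa)$ by sending the class of $(\pi,\lambda,\tau)$ to the translation surface $S(\pi,\lambda,\tau)$ built from the zippered rectangles of \S\ref{subsec:zip}; relation (\ref{eqn:preStratum}) is precisely the identification needed for this to descend to $\mathcal{V}^-_\mathcal{C}$. Since the singularity data $\bar\kappa$ is determined by $\mathcal{C}$ alone (through $2g=\dim\Omega_\pi(\mathbb{R}^\mathcal{A})$ and the cycle structure of $\pi_1\circ\pi_0^{-1}$), the image lies in a single stratum, and the equivariance $\Pi_\mathcal{C}\circ\Phi^-_t=g_t\circ\Pi_\mathcal{C}$ holds because $\Phi_t(\pi,\lambda,\tau)=(\pi,e^{-t}\lambda,e^{t}\tau)$ scales the real and imaginary parts of the holomorphic $1$-form by $e^{-t}$ and $e^{t}$, which is the defining action of $g_t$. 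Finiteness-to-one amounts to the geometric fact that a given flat surface has only finitely many zippered-rectangle presentations with data in $\bar{\mathcal{V}}_\mathcal{C}$ (bounded in terms of $g$ and $|\mathcal{A}|$, via the finitely many choices of an outgoing horizontal separatrix and a transversal), and measurability is routine. Ergodicity of $g_t$ on the area-one locus then follows by pushing $m^-_\mathcal{C}$ forward: since $\Pi_\mathcal{C}$ is finite-to-one and, by Veech's theorem, its image is a full-measure subset of a connected component of $\mathcal{H}(\bar\kappa)$, the pushforward is equivalent to the Masur--Veech measure restricted to area one, and a $g_t$-invariant set downstairs pulls back to a $\Phi^-_t$-invariant set of the same relative measure. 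I expect this third part --- proving that $\Pi_\mathcal{C}$ is finite-to-one with full-measure image in a component of the stratum --- to be the main obstacle, as it is where the genuinely geometric input enters; the ergodicity of $\mathbb{P}^-$, by contrast, is a fairly mechanical application of distortion estimates once Veech's invariant density is in hand.
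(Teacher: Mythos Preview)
Your outline is a reasonable sketch of Veech's argument as presented in \cite{veech:gauss} and \cite{viana:notes}, but you should be aware that the paper does not actually prove this theorem: it is stated with the attribution \cite{veech:gauss} and no proof is given. The authors simply quote it as a known result and move on to use it (for instance, in the proof of Theorem~\ref{fg:290} on uniqueness of normalized states). So there is nothing to compare your proposal against; the paper's ``proof'' is the citation itself.

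That said, a brief comment on your sketch: the three-part structure is the right one, and your identification of the third part as the main obstacle is accurate. One small correction: in the conventions of this paper the Teichm\"uller flow is taken \emph{backwards} relative to the usual literature (the authors say so explicitly just after defining $\Phi_t$), so when you write $\Pi_\mathcal{C}\circ\Phi^-_t = g_t\circ\Pi_\mathcal{C}$ you should double-check the sign of $t$ matches the paper's convention for $g_t$; the paper defines $g_t$ with $\Re\alpha_t = e^{-t}\Re\alpha$ and $\Im\alpha_t = e^t\Im\alpha$, which is consistent with your equivariance claim, but it is worth being careful here since the paper itself flags the reversal.
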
                  
   Using the coordinates (\ref{eqn:coordinates}), define the measure
    on $\bar{\mathbb{V}}^+_\mathcal{C}$
     $$\hat{m}^h_\mathcal{C}= 
     \sum_{\pi\in\mathcal{C}}e^{s|\mathcal{A}|}\mathcal{D}(h)d_1^\pi h\,ds,$$
    where $\mathcal{D}(h) = \prod_\alpha h_\alpha^{-1}$ and $d_1^\pi h$ is 
    the Lebesgue volume in the simplex $\Delta^+_\pi\subseteq H^+_\pi$
     of vectors $h$ with $|h|_1=1$.

   \begin{prop}
    \label{fg:130}
    The measure $\hat{m}^h_\mathcal{C}$ is $\Phi_t$-invariant.
     \end{prop}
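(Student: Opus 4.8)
The plan is to present $\Phi_t$ on $\bar{\mathbb{V}}^+_\mathcal{C}$ as a suspension flow and to reduce the invariance to two facts already in hand: the $\mathbb{P}^+$-invariance of $\sum_\pi\mathcal{D}(h)\,d_1^\pi h$ (Proposition \ref{fg:110}) and the homogeneity of $\mathcal{D}$. First I would set up the coordinates of (\ref{eqn:coordinates}): a point of $\bar{\mathbb{V}}^+_\mathcal{C}$ is a pair $(h,s)$ with $h\in\mathbb{V}^+_\mathcal{C}$ and $0\le s<t^+_R(h)$, representing the cone vector $g=e^s h\in H^+_\pi$. Under the identification (\ref{eqn:preStratum}) the Teichm\"uller flow $\Phi_u$ acts by $(h,s)\mapsto(h,s+u)$ until $s+u$ reaches $t^+_R(h)$, at which point $(h,t^+_R(h))$ is identified with $(\mathbb{P}^+h,0)$; thus $\Phi_u$ is the special flow over $(\mathbb{V}^+_\mathcal{C},\mathbb{P}^+)$ with roof $t^+_R$. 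Equivalently, $\Phi_u$ is the scaling $g\mapsto e^u g$ on the cone $\bigsqcup_\pi\{\pi\}\times H^+_\pi$ modulo the $\mathcal{P}$-identification coming from (\ref{eqn:preStratum}); this is the viewpoint I would adopt for the computation.

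With this in hand the argument runs as follows. On the cone, the measure $\mu_{\mathrm{cone}}:=\sum_\pi\prod_{\alpha}g_\alpha^{-1}\,dg$ is invariant under the scaling flow $g\mapsto e^u g$, since $\prod_\alpha g_\alpha^{-1}$ is homogeneous of degree $-|\mathcal{A}|$ while $dg$ has degree $+|\mathcal{A}|$. It is also invariant under $\mathcal{P}$: writing $\mu_{\mathrm{cone}}$ in polar form $g=rh$ with $r=|g|_1$ and $h\in\Delta^+_\pi$, one obtains $\mu_{\mathrm{cone}}=\mathrm{const}\cdot\sum_\pi\mathcal{D}(h)\,d_1^\pi h\,\tfrac{dr}{r}$, and since the radial factor $dr/r$ is preserved by any positive scaling while $\mathcal{P}$ followed by radial renormalization is exactly $\mathbb{P}^+$, the $\mathcal{P}$-invariance of $\mu_{\mathrm{cone}}$ reduces to Proposition \ref{fg:110}. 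A measure on the cone invariant under both the scaling flow and $\mathcal{P}$ descends to a $\Phi_t$-invariant measure on the quotient pre-stratum, so it remains only to identify $\hat m^h_\mathcal{C}$ with the image of $\mu_{\mathrm{cone}}$ under $(h,s)\mapsto e^s h$. This is the bookkeeping step: with $g=e^s h$ and $h$ on the unit simplex one has $dg=c\,e^{|\mathcal{A}|s}\,ds\,d_1^\pi h$ by the usual polar-coordinate Jacobian on $\mathbb{R}^{\mathcal{A}}$, and combining this with $\prod_\alpha g_\alpha^{-1}=e^{-|\mathcal{A}|s}\prod_\alpha h_\alpha^{-1}$ in $\mu_{\mathrm{cone}}$ recovers $\sum_\pi e^{s|\mathcal{A}|}\mathcal{D}(h)\,d_1^\pi h\,ds$ on the fundamental domain $\{0\le s<t^+_R(h)\}$, i.e. $\hat m^h_\mathcal{C}$.

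An alternative and probably shorter route is to transcribe Veech's proof of the analogous statement for RV induction (Theorem \ref{fg:120}), replacing $\mathbb{P}^-$ by $\mathbb{P}^+$, $m_1^-$ by the RH-invariant measure of Proposition \ref{fg:110}, and $t_R^-$ by $t_R^+$; the only structural input that proof uses is the invariance of the base measure under the renormalization map, which we now possess. In either approach the main obstacle is the careful reconciliation of normalization conventions: keeping straight which $h$ is $\ell^1$-normalized and which is the honest cone vector, matching the suspension weight $e^{s|\mathcal{A}|}$ against the degree of homogeneity of $\mathcal{D}$, and --- if one wants a fully self-contained argument rather than citing Veech --- verifying the cocycle identity relating the Jacobians $|\mathcal{J}_\varepsilon|=(1+h_{\alpha(1-\varepsilon)})^{-|\mathcal{A}|}$ of the inverse branches in (\ref{eqn:partials}), (\ref{eqn:preimages}) to the roof via $e^{t^+_R\circ\mathcal{F}_\varepsilon}=1+h_{\alpha(1-\varepsilon)}$, so that the weighted measure glues consistently across the return-map faces. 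Once these conventions are fixed the remaining verification is a routine Fubini computation.
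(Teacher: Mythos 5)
Your proposal is correct, and the mathematical content coincides with the paper's proof: both arguments reduce to the fact that $\mathcal{D}$ is homogeneous of degree $-|\mathcal{A}|$ (so the $e^{s|\mathcal{A}|}$ Jacobian cancels, leaving an $s$-independent density) together with Proposition \ref{fg:130}$\,$'s companion, the $\mathbb{P}^+$-invariance of $\mathcal{D}(h)\,d_1^\pi h$ from Proposition \ref{fg:110}. The presentational difference is that you lift to the cone measure $\prod_\alpha g_\alpha^{-1}\,dg$ and observe its invariance under both scaling and $\mathcal{P}$ before descending, whereas the paper simply picks a small flowbox $B_\delta\times[0,\epsilon]$ below the roof and computes $\hat m^h_\mathcal{C}(\Phi_t(\bar B))=\epsilon\int_{B_\delta}\mathcal{D}(h)\,d_1^\pi h=\hat m^h_\mathcal{C}(\bar B)$ directly, then cites \ref{fg:110} for returns. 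Your cone picture makes it a bit more transparent \emph{why} the density $\mathcal{D}$ is the right one (it is the pullback of a scale-invariant cone volume), and it cleanly separates the scaling-invariance from the return-map invariance; the price is the extra bookkeeping you flag. One small caution on that bookkeeping: your own polar-coordinate computation gives $\mu_{\mathrm{cone}}=c\,\mathcal{D}(h)\,d_1^\pi h\,ds$ (the factors $e^{\pm|\mathcal{A}|s}$ cancel), which is the density actually used in the paper's integral $\int e^{s|\mathcal{A}|}\mathcal{D}(e^s h)\,ds\,d_1^\pi h$; the displayed form $e^{s|\mathcal{A}|}\mathcal{D}(h)\,d_1^\pi h\,ds$ that you quote as the target should be read with $\mathcal{D}$ evaluated at the cone vector $e^s h$, i.e.\ it is the same $s$-independent density in disguise, and your ``recovers'' sentence should not literally reproduce an $e^{s|\mathcal{A}|}$ that survives.
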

     \begin{proof}
 Using the coordinates $(h,s)$ as above, we pick 
  a small flowbox of the form $\bar{B} = B_\delta \times [0,\epsilon]$, 
  where $B_\delta\subseteq \Delta^+_\pi$ is a small ball 
  for some $\pi\in\mathcal{C}$, where $\epsilon<\max_{h\in B_\delta} 
  t_R(\pi,h)$. For any $t$ small enough,
   \begin{equation}
    \label{eqn:measure}
    \begin{split}
     \hat{m}^h_\mathcal{C}(\Phi_t(\bar{B}))&=\int_{B_\delta}\int_
     t^{\epsilon+t}e^{s|\mathcal{A}|}\mathcal{D}(e^sh)\, ds\, d_1^\pi  
     = \int_{B_\delta}\int_t^{\epsilon+t}\frac{e^{s|\mathcal{A}|}}{e^{s|\mathcal{A}|}\prod_\alpha h_\alpha}\, 
     ds\, d_1^\pi h \\
   & = \epsilon \int_{B_\delta}\mathcal{D}(h)\, d_1^\pi h 
   = \int_{B_\delta}\int_0^{\epsilon}\frac{e^{s|\mathcal{A}|}}{e^{s|\mathcal{A}|}\prod_\alpha h_\alpha}\, 
   ds\, d_1^\pi h \\
    &=\int_{B_\delta}\int_0^{\epsilon}e^{s|\mathcal{A}|}\mathcal{D}(e^sh)\, 
    ds\, d_1^\pi = \hat{m}^h_\mathcal{C}(\bar{B}).
                      \end{split}
     \end{equation}
    This, combined with Proposition \ref{fg:110} shows the 
    $\Phi_t$-invariance of $\hat{m}^h_\mathcal{C}$.
                  \end{proof}
            
 \subsection{Bratteli diagrams for finite genus}
 \label{subsec:typicalBrat}
  Given $(\pi,\lambda,\tau)$ in $\mathcal{V}^{(1)}_\mathcal{C}$, we want 
  to produce a bi-infinite ordered  Bratteli diagram,
   $\mathcal{B}_{\pi,\lambda,\tau}$, so that the resulting 
   surface $S(\mathcal{B}_{\pi,\lambda,\tau})$ 
   is $S(\pi,\lambda,\tau)$. 
   
   We make a couple of remarks. The first is that, as we noted earlier, while
   the space 
       $S_{\mathcal{B}}$ depends only on the 
       bi-infinite ordered Bratteli diagram, the atlas for it also depends on the 
       given state $\nu_{r}, \nu_{s}$. In fact, the state here will
       be given in a rather simple fashion from $\lambda$ and $\tau$.
       
    The second comment is that we will only construct the Bratteli diagram for    
   $(\pi,\lambda,\tau)$ which are RH-complete and satisfy the 
   Keane condition. This isn't unreasonable as our foliations
   $\mathcal{F}_{\mathcal{B}}^{\pm}$ tend to be minimal 
   under rather mild restrictions.

  Let $(\pi,\lambda,\tau)$ in $\mathcal{V}_\mathcal{C}$. In order to define a
   bi-infinite  ordered Bratteli diagram $\mathcal{B}_{\pi,\lambda,\tau}$,
    it suffices to describe 
    the vertex set $V_{n}$ and the edge set $E_n$, for all integers 
    $n$, along with the partial orders $\leq_r,\leq_s$ at every vertex.
      For all $n\in\mathbb{Z}$, we define $V_{n} = \mathcal{A}$. 
      This presents a minor notational problem: if we write
      $r^{-1}(\alpha) \subseteq E_{n}$, we are considering
      $\alpha$ as an element of $V_{n}$, but this does not appear 
      explicitly in the notation. To solve this, we use
       $r_{n}:E_{n} \rightarrow \mathcal{A}$ and 
       $s_{n}:E_{n} \rightarrow \mathcal{A}$ for the range and source maps.
      Note that the set $\mathcal{A}$ is that of symbols and 
      not of their positions zippered rectangles. As such, in order
       to describe $E_n$, it suffices to provide a 
       $\mathcal{A}\times \mathcal{A}$ matrix $M_n$ which describes
        the connections between $V_{n-1}$ and $V_{n}$.

   \begin{figure}[t]
   \includegraphics[width=5.25in]{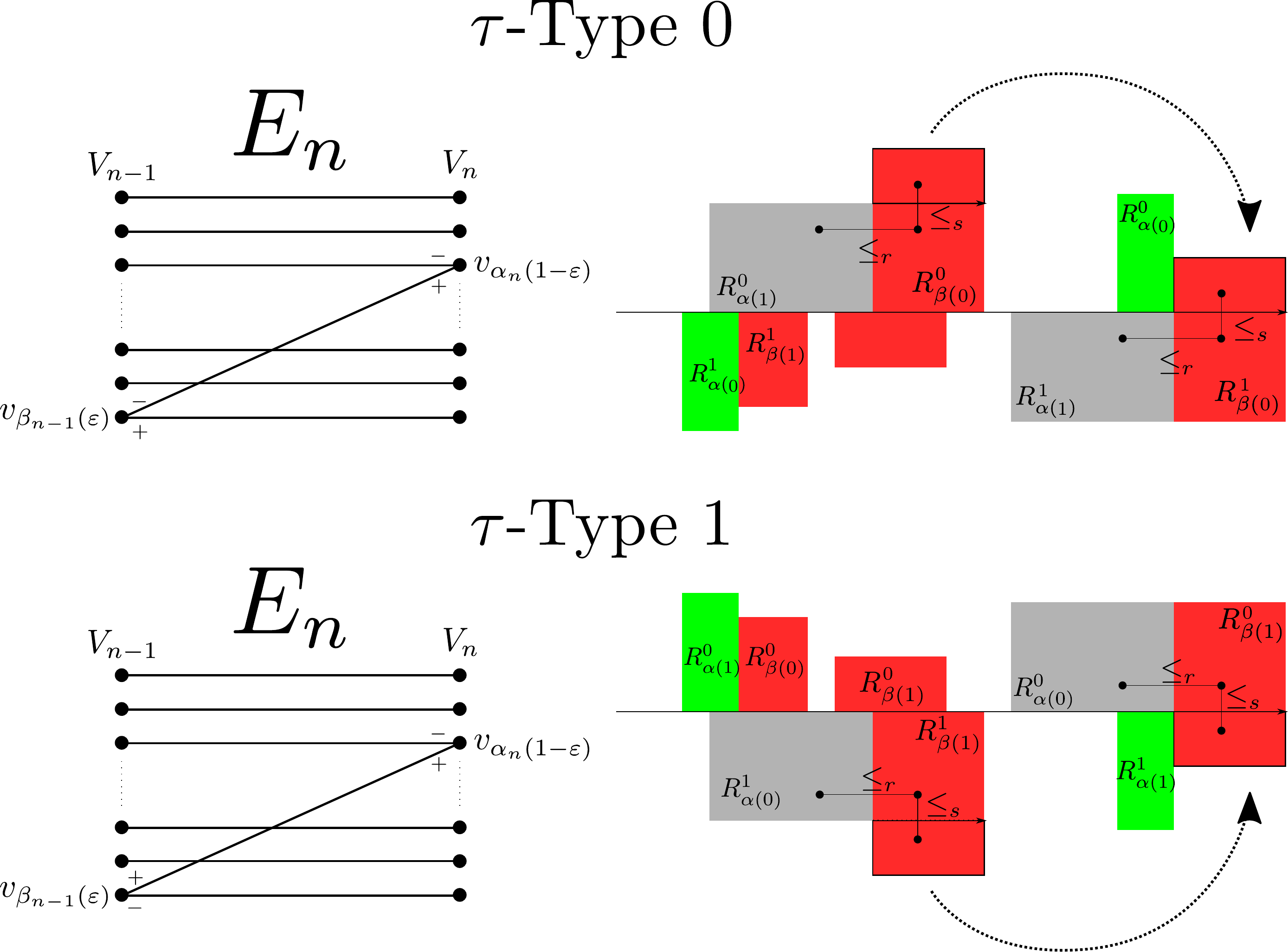}
              \caption{{\tiny The edge set $E_n$ in the case that
    $\mathcal{P}^{n-1}(\pi,\lambda,\tau)$ is of 
 $\tau$-type $\varepsilon$ in $\{0,1\}$. The $-$ and $+$ symbols indicate the orders at
the vertices where there are more than one incoming or outgoing edges. 
When $\mathcal{P}^{n-1}(\pi,\lambda,\tau)$ is of $\tau$-type 0,
 the three dots in the rectangles $R^0_{\alpha(1)}$ and $R^0_{\beta(0)}$ 
 used to define the orders $\leq_r,\leq_s$. Indeed, the two dots sharing 
 a $y$-coordinate sit on the same leaf of the horizontal leaf, making 
 them right-tail-equivalent in $\mathcal{B}$. That $e_{\alpha_n(1)} <_r e_n$
  in this case is dictated from the order on the leaf of the foliation 
  containing those two points. This same order on the horizontal 
  foliation happens in the case of $\tau$-type 0. The choice for
   the $\leq_s$ order comes from comparing two points on the same
    vertical leaf, and these take different forms depending on 
    the type. This is evident from the two figures.}}
              \label{fig:edgeset}                    
            \end{figure}
            
            For $n> 0$, let
            \begin{equation}
              \label{eqn:matrices}
              M_n = \left\{\begin{array}{ll}
               (\ref{eqn:Psi0})&\mbox{ if 
               $\mathcal{P}^{n-1} (\pi,\lambda,\tau)$ is of
                $\tau$-type 0}  \\ (\ref{eqn:Psi1})&
                \mbox{ if $\mathcal{P}^{n-1} (\pi,\lambda,\tau)$ 
                is of $\tau$-type 1,}  \end{array} \right.
            \end{equation}
  and let $E_n$ be the edge set defined by $M_n$. In other words, there is an
   edge $e_\alpha $ in $E_n$ with $s_{n}(e_\alpha)= \alpha$ in $V_{n-1}$ and 
   $ r_{n}(e_\alpha)= \alpha$ in $V_n$, for each $\alpha$ in $\mathcal{A}$. 
   We refer 
   to such edges as horizontal.  
   In addition, there is
    an edge $e_n$ in $E_n$ with $s_{n}(e_n) = \beta_{n-1}(\varepsilon)$ 
   and $r_{n}(e_n) = \alpha_n(1-\varepsilon)$, if $\mathcal{P}^n(\pi,\lambda, \tau)$ 
   is of type $\varepsilon$ in $\{0,1\}$, where $\alpha_n(\varepsilon)$ 
   and $\beta_n(\varepsilon)$ are the corresponding symbols in the permutation
    in $\mathcal{P}^n(\pi, \lambda, \tau)$. Note that 
    $\beta_n(\varepsilon) = \alpha_{n+1}(\varepsilon)$ depending
     on the type of $\mathcal{P}^n(\pi,\lambda, \tau)$.
            
    We now move to define the orders $\leq_r,\leq_s$ on $\mathcal{B}$.
     These will also depend on the $\tau$-type of 
     $\mathcal{P}^{n-1}(\pi,\lambda,\tau)$. Since $|r_{n}^{-1}(\alpha)| = 1$
      for all $\alpha \neq \alpha_n(1-\varepsilon)$ and $n>0$, it suffices 
      to define the order $\leq_r$ on 
      $\{e_n, e_{\alpha_n(1-\varepsilon)}\}=
      r^{-1}_{n}(\alpha_n(1-\varepsilon)) $, depending 
      of the type of $\mathcal{P}^{n-1}(\pi,\tau)$. We let
            \begin{equation}
              \label{eqn:orderR}
              e_{\alpha_n(1-\varepsilon)} <_r e_n
            \end{equation}
            at each $r^{-1}_{n}(\alpha_n(1-\varepsilon))$, depending on
             the type.
            Since $|s_{n}^{-1}(\alpha)| = 1$ for all 
            $\alpha\neq \beta_{n-1}(\varepsilon) = \alpha_n(\varepsilon)$ 
            and $n>0$ (here $\varepsilon$ is the type of
             $\mathcal{P}^{n-1}(\pi,\lambda, \tau)$), it suffices to define the 
             order $\leq_s$ on 
             $\{e_n, e_{\beta_{n-1}(\varepsilon)}\}=
             s_{n}^{-1}(\beta_{n-1}(\varepsilon)) $, 
             depending of the type of $\mathcal{P}^{n-1}(\pi,\lambda, \tau)$. 
             We define the orders
            \begin{equation}
              \label{eqn:orderS}
        \begin{array}{rl}
        e_n <_s e_{\alpha_n(1)}=
         e_{\beta_{n-1}(0)}&\mbox{ if $\mathcal{P}^{n-1}(\pi,\lambda,\tau)$ 
         is of $\tau$-type 0,} \\
 e_{\alpha_n(0)}= e_{\beta_{n-1}(1)} <_s e_n \hspace{.94in}& 
  \mbox{ if $\mathcal{P}^{n-1}(\pi,\lambda,\tau)$ is of $\tau$-type 1,} 
              \end{array}
            \end{equation}
  at $s_{n}^{-1}(\beta_{n-1}(\varepsilon))$. These choices define the
   positive half of $\mathcal{B}_{\pi,\lambda, \tau}$, see 
   Figure \ref{fig:edgeset} for a geometric justification 
   for these choices.
            
    The definition for the negative part will essentially be the 
    same form as (\ref{eqn:matrices}), if we use Proposition 
    \ref{fg:80}. Recall from the proof of Proposition 
    \ref{fg:80} that if $(\pi,\lambda,\tau)
     = \mathcal{P}(\pi',\lambda',\tau')$ is of $\lambda$-type 
     $\varepsilon$, then $(\pi',\lambda', \tau') = \mathcal{R}(\pi,\lambda,\tau)$ 
     is of $\tau$-type $1-\varepsilon$. Thus, going by (\ref{eqn:matrices})
      for $n=0$ we can define $M_0$ as (\ref{eqn:Psi1}) if 
      $(\pi,\lambda,\tau)$ is of $\lambda$ type $0$, and as
       (\ref{eqn:Psi0}) if $(\pi,\lambda,\tau)$ is of $\lambda$ type $1$. 
       Extending for higher powers of $\mathcal{R} = \mathcal{P}^{-1}$,
        we get, for $n\leq 0$:
            \begin{equation}
              \label{eqn:matrices2}
  M_n  = \left\{\begin{array}{ll} (\ref{eqn:Psi1})&\mbox{ 
  if $\mathcal{R}^{n} (\pi,\lambda,\tau)$ is of $\lambda$-type 0}  \\
   (\ref{eqn:Psi0})&\mbox{ if $\mathcal{R}^{n} 
   (\pi,\lambda,\tau)$ is of $\lambda$-type 1} . \end{array} \right.
            \end{equation}
            The orders are now similarly defined for the negative half: 
            we extend the definitions using (\ref{eqn:orderR}) 
            and (\ref{eqn:orderS}) depending on the $\tau$-type 
            of $\mathcal{P}^{n-1}(\pi,\lambda,\tau)$, that is,
             depending on the $\lambda$-type of $\mathcal{R}^n(\pi,\lambda,\tau)$.                  
            \begin{rmk}
              \label{fg:140}
       \begin{enumerate}
     \item Note that there are $2(|\mathcal{A}|-1)$ possible matrices 
     that can appear as $\Psi$ in (\ref{eqn:matrices}) and (\ref{eqn:matrices2}),
      all of which are invertible and of determinant 1. As such, we have 
      for the AF algebras $C^{*}_{\lambda}(T^{+}(X_{\mathcal{B}_{\pi,\lambda,\pi}}))$ 
      that the diagrams define,
   \begin{equation}
     \label{eqn:K-homology}
   K_0(C^{*}_{\lambda}(T^{+}(X_{\mathcal{B}_{\pi,\lambda,\pi}})) )\cong
   \mathbb{Z}^{|\mathcal{A}|} \cong H_1(S(\pi,\lambda,\tau),\Sigma;\mathbb{Z}),
                \end{equation}
                which had already been proved in \cite{putnam:iet}.
  This does not, however, address the subtler   issue of the natural order
  structure.           
      \item Given the definition of the Bratteli diagram
       $\mathcal{B} = \mathcal{B}_{\pi,\lambda,\tau}$ above, it is easy
        to identify some extreme elements at once: for 
        any $\alpha$ in $\mathcal{A}$ the path 
        $p_\alpha = (\dots, p_\alpha^{n-1},p_\alpha^n,p_\alpha^{n+1},\dots)$ 
        in $X_\mathcal{B}$ with $s_{n}(p_\alpha^n) = \alpha$ 
        and $r_{n}(p_\alpha^n) = \alpha$ is in $X_\mathcal{B}^{r-min}$ 
        and so $X_\mathcal{B}^{r-min}$ has exactly $|\mathcal{A}|$ elements, 
        the horizontal paths in Figure \ref{fig:edgeset}.
              \end{enumerate}
            \end{rmk}
      
      Our next task is to define a state on the Bratteli diagram which we have 
      just constructed. In fact, this is fairly simple: we
      let $(\pi_{n}, \lambda_{n}, \tau_{n})$ be $ (\pi, \lambda, \tau) $ 
      for $n=0$, $\mathcal{P}^{n}(\pi, \lambda, \tau)$ for $n >0$ and 
      $\mathcal{R}^{-n}(\pi, \lambda, \tau)$ for $n < 0$. We again let 
      $h_{n} = \Omega_{\pi_{n}}(\tau_{n})$, which lies in 
      $\R^{\mathcal{A}}$.  For $ \alpha $ in $\mathcal{A}=V_{n}$, we define
      $\nu_{r}(\alpha) = (\lambda_{n})_{\alpha}  $ and 
      $\nu_{s}(\alpha) = (h_{n})_{\alpha}  $. It is a trivial matter to see that
      this is a state on $\mathcal{B}$.
            \begin{figure}[t]
              \includegraphics[width=5.75in]{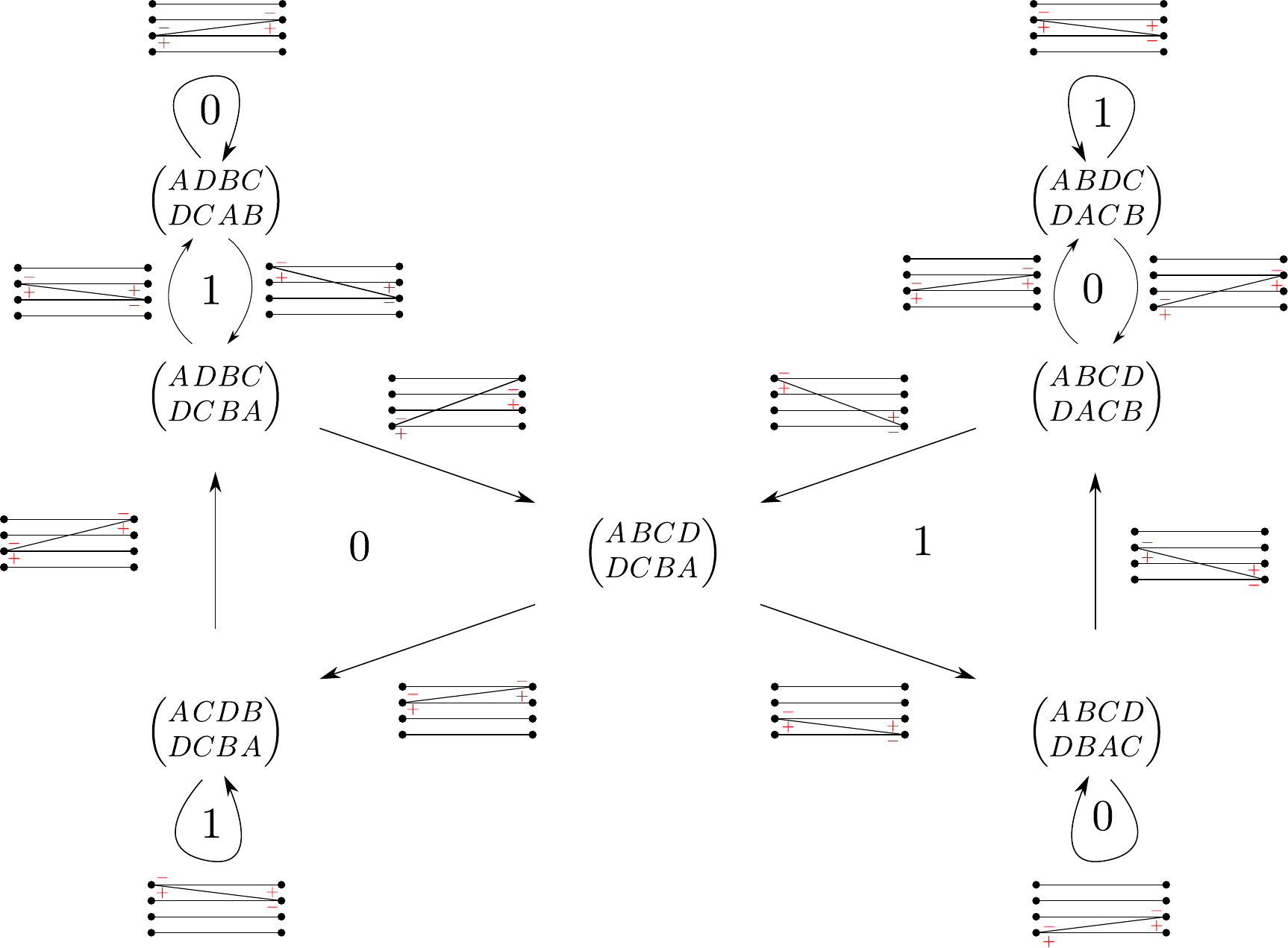}
              \caption{{\tiny The Rauzy graph for surfaces in the hyperelliptic component of $\mathcal{H}(2)$. The arrows represent a step of RH induction depending on the $\tau$-type in $\{0,1\}$. Next to every arrow is the edge set associated to the Bratteli diagram: if $(\pi,\lambda,\tau)$ is RH-complete, then it defines an infinite walk on this graph, and the edge set $E_n$ is defined by the edge set corresponding to the arrow above in the $n^{th}$ step. If $(\pi,\lambda,\tau)$ satisfies the Keane condition, then it defines an infinite backwards walk on this graph and the edge sets of the Bratteli diagram $\mathcal{B}_{\pi,\lambda,\tau}$ are defined accordingly.
              }}
              \label{fig:hyper2}
            \end{figure}

       It is a simple matter to see that these definitions mean that, 
       for any $n > 0$,  
       a symbol $\alpha$ in $\mathcal{A}$ is the $\tau$-winner 
       in RH induction, $\mathcal{P}^{n}$, if and only if the non-horizontal 
       edge of $E_{n}$ has range equal to $\alpha$. Similarly,
        a symbol $\alpha$ is the $\lambda$-winner 
       in Rauzy-Veech  induction, $\mathcal{R}^{n}$, if and only if 
       the non-horizontal 
       edge of $E_{-n}$ has range equal to $\alpha$. This proves the following.
       
            \begin{prop}
            \label{fg:144}
              The Bratteli diagram $\mathcal{B}_{\pi,\lambda,\tau}$ satisfies
               the Keane condition if, for every $\alpha$ in $\mathcal{A}$,
  $|r_{n}^{-1} \{ \alpha \}|>1$ for infinitely many negative 
  integers $n$, and is RH-complete 
                if and only if
     $|r_{n}^{-1} \{ \alpha \}|>1$ for infinitely many positive integers $n$.            
                \end{prop}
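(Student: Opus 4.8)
\textbf{Proof strategy for Proposition \ref{fg:144}.}

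The plan is to translate the combinatorial conditions on the Bratteli diagram $\mathcal{B}_{\pi,\lambda,\tau}$ back into statements about RH and RV induction, using the explicit description of the edge sets given in \eqref{eqn:matrices} and \eqref{eqn:matrices2}, and then invoke the characterizations of the Keane condition (Theorem \ref{fg:60}) and RH-completeness (Theorem \ref{fg:84}). The key observation, already stated in the sentence preceding the proposition, is the following dictionary: for $n>0$, the matrix $M_n$ is of the form \eqref{eqn:Psi0} or \eqref{eqn:Psi1} according to the $\tau$-type of $\mathcal{P}^{n-1}(\pi,\lambda,\tau)$, and in either case $E_n$ consists of $|\mathcal{A}|$ horizontal edges $e_\alpha$ together with exactly one non-horizontal edge $e_n$ whose range is $\alpha_n(1-\varepsilon)$ — precisely the $\tau$-winner of $\mathcal{P}^{n-1}$. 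Hence $|r_n^{-1}\{\alpha\}| > 1$ if and only if $\alpha = \alpha_n(1-\varepsilon)$, i.e.\ if and only if $\alpha$ is the $\tau$-winner of $\mathcal{P}^{n-1}(\pi,\lambda,\tau)$. The same analysis applied to \eqref{eqn:matrices2} shows that for $n\le 0$, $|r_n^{-1}\{\alpha\}|>1$ if and only if $\alpha$ is the $\lambda$-winner of $\mathcal{R}^{-n}(\pi,\lambda,\tau)$ (using, via Proposition \ref{fg:88} and its proof, that the $\lambda$-type of $\mathcal{R}^m$ is the complement of the $\tau$-type of $\mathcal{P}^{m-1}=\mathcal{R}\circ\mathcal{P}^{m-1}$ evaluated appropriately, so that the winner bookkeeping lines up).

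With this dictionary in hand, the two equivalences are immediate. For RH-completeness: by Definition \ref{fg:82}, $(\pi,\lambda,\tau)$ is RH-complete precisely when, for every $\alpha\in\mathcal{A}$, there is a subsequence $n_i^\alpha\to\infty$ with $\alpha$ the $\tau$-winner of $\mathcal{P}^{n_i^\alpha}(\pi,\lambda,\tau)$. By the dictionary this says exactly that $|r_n^{-1}\{\alpha\}|>1$ for infinitely many positive integers $n$; and Theorem \ref{fg:84} tells us RH-completeness is equivalent to all iterates of $\mathcal{P}$ being defined, which is the condition that makes the construction of $\mathcal{B}_{\pi,\lambda,\tau}$ meaningful in the first place. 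For the Keane condition: by Theorem \ref{fg:60}, $(\pi,\lambda)$ satisfies the Keane condition if and only if for each $\alpha\in\mathcal{A}$ there is a subsequence $n_i^\alpha\to\infty$ with $\alpha$ the ($\lambda$-)winner of $\mathcal{R}^{n_i^\alpha}(\pi,\lambda)$, which by the dictionary for the negative part translates to $|r_n^{-1}\{\alpha\}|>1$ for infinitely many negative $n$. One should be slightly careful here that the convention in the statement — ``satisfies the Keane condition if, for every $\alpha$, $|r_n^{-1}\{\alpha\}|>1$ for infinitely many negative $n$'' — matches condition (3) of Theorem \ref{fg:60}, which it does once one notes $\mathcal{R}^n(\pi,\lambda,\tau)$ for $n>0$ corresponds to $M_{-n}$; it is worth writing this index match out explicitly.

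The main obstacle, such as it is, is not conceptual but bookkeeping: one must be scrupulous about the index shift and the type-complementation when passing between $\mathcal{P}^{n-1}$ and the matrix $M_n$, and between RV induction on the negative side and the data $\mathcal{R}^n(\pi,\lambda,\tau)$ — in particular verifying that ``$\tau$-winner of $\mathcal{P}^{n-1}$'' and ``range of the unique non-horizontal edge of $E_n$'' genuinely coincide for \emph{both} types $\varepsilon=0$ and $\varepsilon=1$, by unwinding \eqref{eqn:Psi0}, \eqref{eqn:Psi1}, \eqref{typeH0perm} and \eqref{fg:300}. This is a finite check using the definitions of $\alpha_n(\varepsilon)$, $\beta_n(\varepsilon)$ and the $\tau$-winner from Definition \ref{fg:80}. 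Once that identification is nailed down, the proposition follows by directly quoting Theorems \ref{fg:60} and \ref{fg:84} together with Definition \ref{fg:82}. I would therefore organize the write-up as: (i) recall the edge-set description and identify the non-horizontal edge's range with the relevant winner, handling both types; (ii) deduce the RH-complete equivalence from Definition \ref{fg:82} and Theorem \ref{fg:84}; (iii) deduce the Keane equivalence from Theorem \ref{fg:60}, being explicit about which iterate index on $\mathcal{R}$ corresponds to which negative level $n$.
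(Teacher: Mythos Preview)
Your proposal is correct and follows essentially the same route as the paper: the paper's entire argument is the paragraph immediately preceding the proposition, which identifies the range of the unique non-horizontal edge in $E_n$ with the $\tau$-winner (for $n>0$) or $\lambda$-winner (for $n\le 0$) of the corresponding induction step, and then implicitly invokes Theorem~\ref{fg:60}(3) and Definition~\ref{fg:82}/Theorem~\ref{fg:84}. Your write-up is somewhat more careful about the index shifts than the paper itself, which is appropriate.
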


            The next fact follows from \cite[\S 1.2.4]{MMY:Roth}
             (see also \cite[Corollary 10]{berk2021backward}).
            \begin{prop}
            \label{fg:146}
              If $(\pi,\lambda,\tau)$ satisfies the Keane condition
               and is RH-complete, then $\mathcal{B}_{\pi,\lambda,\tau}$
                is strongly simple.
            \end{prop}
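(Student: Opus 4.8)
The plan is to verify the three conditions in the definition of strong simplicity (Lemma \ref{path:17}(3)) directly from the structure of the Bratteli diagram $\mathcal{B} = \mathcal{B}_{\pi,\lambda,\tau}$ built in \S \ref{subsec:typicalBrat}. Fix an integer $m$. We need integers $l < m < n$ such that between $V_l$ and $V_m$ (resp.\ between $V_m$ and $V_n$) every vertex connects to every vertex, and in fact by at least two distinct paths. Since every $V_k$ is a copy of $\mathcal{A}$ and every edge matrix $M_k$ is one of the elementary matrices in (\ref{eqn:Psi0}) or (\ref{eqn:Psi1})—each of which is the identity plus a single off-diagonal $1$—the product $M_{m+1} M_{m+2} \cdots M_n$ is a nonnegative integer matrix whose positivity is governed by exactly how the ``winning'' symbols are distributed among the steps $\mathcal{P}^{m}, \ldots, \mathcal{P}^{n-1}$. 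The key combinatorial input is Proposition \ref{fg:144} together with the RH-completeness hypothesis: for every $\alpha \in \mathcal{A}$ there are infinitely many positive indices $k$ at which $|r_k^{-1}\{\alpha\}| > 1$, i.e.\ $\alpha$ is the $\tau$-winner at step $k$; symmetrically the Keane condition gives infinitely many negative indices where each symbol wins in RV induction.

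The main work is the following claim, which I would state as a lemma: if an interval of consecutive RH steps is long enough that \emph{every} symbol of $\mathcal{A}$ wins at least once in a suitable arrangement (concretely, if the word of winners realized on $(m, n]$ contains, in order, every symbol of $\mathcal{A}$ as the winner of some step, and then contains a full second pass), then the corresponding product of $M_k$'s is strictly positive with all entries $\geq 2$. This is the analogue, in the combinatorics of Rauzy diagrams, of the standard fact that an irreducible IET becomes ``fully connected'' after finitely many Rauzy moves; the reference \cite[\S 1.2.4]{MMY:Roth} (and \cite[Corollary 10]{berk2021backward}) is invoked precisely to supply the finiteness and the strict positivity. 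I would first recall why a single pass through all symbols already forces primitivity of the product matrix: each RH move adds one row of another (in the type-$0$ case, row $\alpha(1)$ absorbs row $\beta(0)$; type $1$ dually), so after every symbol has been a winner the support of the matrix has spread to a full block because $\pi$ is irreducible. Then a second pass multiplies every entry by at least the relevant diagonal contribution plus the off-diagonal one, yielding entries $\geq 2$; this is exactly the ``two-path'' requirement.

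Given that lemma, the proof concludes quickly. For the positive side: by Proposition \ref{fg:144} and RH-completeness, choose $n > m$ so that on the steps $m+1, \ldots, n$ every symbol of $\mathcal{A}$ is a $\tau$-winner at least twice, in an order that realizes the hypothesis of the lemma (possible because each symbol wins infinitely often, so we may wait as long as needed for each in turn). Then $M_{m+1}\cdots M_n$ has all entries $\geq 2$, which says exactly that between $V_m$ and $V_n$ there are at least two paths joining every pair of vertices. For the negative side, run the identical argument with $\mathcal{R} = \mathcal{P}^{-1}$ and the Keane condition (Proposition \ref{fg:144} again, now for negative indices), producing $l < m$ with $M_{l+1}\cdots M_m$ having all entries $\geq 2$. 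This establishes condition (3) of Lemma \ref{path:17}, hence $\mathcal{B}_{\pi,\lambda,\tau}$ is strongly simple.

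\textbf{Anticipated obstacle.} The delicate point is not the AF/groupoid machinery but the purely combinatorial assertion that finitely many RH (resp.\ RV) steps, arranged so every symbol wins, give a matrix with all entries $\geq 2$—in particular that irreducibility of $\pi$ really does propagate through the specific elementary moves (\ref{eqn:Psi0})–(\ref{eqn:Psi1}) rather than getting ``stuck'' in a proper sub-block. This is where one leans on the cited literature: \cite[\S 1.2.4]{MMY:Roth} handles exactly the positivity of Rauzy–Veech products for irreducible data, and Berk's \cite[Corollary 10]{berk2021backward} gives the backward (RH) version; so the honest content of the proof is checking that our edge matrices $M_k$ coincide with the Rauzy–Veech matrices there (up to transpose and the $\mathcal{P} \leftrightarrow \mathcal{R}$ duality established in Propositions \ref{fg:88} and \ref{fg:90}), and then quoting their positivity statement. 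I would make that identification explicit and keep the rest brief.
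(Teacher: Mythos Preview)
Your proposal is correct and matches the paper's approach: the paper's proof is a single sentence deferring to \cite[\S 1.2.4]{MMY:Roth} and \cite[Corollary 10]{berk2021backward}, and you have correctly identified that the content lies in the positivity of the Rauzy--Veech matrix products under the Keane/RH-complete hypotheses, with the edge matrices $M_k$ being exactly those elementary matrices. Your write-up is considerably more detailed than the paper's (which supplies no argument at all beyond the citation), but the route is the same.
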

            
The Keane condition allows us to describe the elements of 
 $X^{ext}_{\mathcal{B}_{\pi,\lambda, \tau}}$    and even more, paths which
 are tail equivalent to these.    To do so, we introduce
 some notation.     Consider compatible representatives of the vertices 
 of the Rauzy graph of $\pi$. That is, pick a representative 
 $\pi = (\pi_0,\pi_1)$ of a vertex and consider the representatives 
 of other classes which can be reached under finitely many steps 
 of induction. Let $A_\varepsilon = \pi_{\varepsilon}(1)$, 
  the first symbol of $\pi_\varepsilon$, and note that they are 
 the first symbols in each representative in the Rauzy graph, 
 that is, they are preserved under induction. Recall that there
  is an edge $e$ defined by the $\tau$-type of $(\pi,\lambda,\tau)$,
   which satisfies $s(e) = \beta(\varepsilon)$ and
    $r(e) = \alpha(1-\varepsilon)$ whenever the $\tau$-type 
    is $\varepsilon$ (Figure \ref{fig:edgeset}).
    
    \begin{prop}
      \label{fg:150}
      Suppose that 
      $(\pi,\lambda, \tau)$ satisfies the Keane condition, is RH-complete and that 
      $x$ is an infinite path in $\mathcal{B}_{\pi,\lambda, \tau}$.   
      \begin{enumerate}
      \item Suppose  there is $n_{0}$ such that $x_{n}$ is $r$-minimal, for all 
        $n \leq n_{0}$. Then $x_{n}$ is horizontal, for all $n \leq n_{0}$.
        In particular, $X^{r-min}_{\mathcal{B}_{\pi,\lambda, \tau}}$
        consists of the $\vert \mathcal{A} \vert$ 
        infinite horizontal paths.
\item  If $x$ is in $X^{r-max}_{\mathcal{B}_{\pi,\lambda, \tau}}$, then 
  $x_{n}$ is not horizontal, 
  for infinitely many $n > 0$. 
\item Suppose that there is an integer $n_{0}$ such that
  $x_{n}$ is $s$-maximal for all $n \geq n_{0}$. Then there exists
           $m_{0} \geq n_{0}$ such that 
           $r(x_{n}) = s(x_{n}) = A_{0}$, for all $n \geq m_{0}$.  
           \item Suppose that there is an integer $n_{0}$ such that
           $x_{n}$ is $s$-minimal for all $n \geq n_{0}$. Then there exists
           $m_{0} \geq n_{0}$ such that 
           $r(x_{n}) = s(x_{n}) = A_{1}$, for all $n \geq m_{0}$.             
\end{enumerate}
            \end{prop}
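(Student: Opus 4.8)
The proof rests on an explicit, level-by-level description of which edges of $\mathcal{B}_{\pi,\lambda,\tau}$ are $r$-minimal, $r$-maximal, $s$-minimal and $s$-maximal. Since each $M_n$ is the identity matrix plus one off-diagonal $1$, at each level $n$ there is exactly one vertex with two incoming edges and exactly one with two outgoing edges; reading off (\ref{eqn:orderR}) and (\ref{eqn:orderS}), I would record: (i) an edge is $r$-minimal if and only if it is horizontal — every horizontal loop $e_\alpha$ is minimal in $\leq_r$ at its range vertex (trivially when that vertex has a single incoming edge, and by (\ref{eqn:orderR}) otherwise), while the non-horizontal edge $e_n$ is $r$-maximal at its range vertex; and (ii) at the unique vertex with two outgoing edges, the $\leq_s$-largest edge is the horizontal loop when $\mathcal{P}^{n-1}(\pi,\lambda,\tau)$ has $\tau$-type $0$ and is $e_n$ when it has $\tau$-type $1$, the $\leq_s$-smallest being the other one; dually for $s$-minimal and the two types. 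The one further elementary fact used throughout is that a horizontal edge $e_\alpha$ has $s_n(e_\alpha)=r_n(e_\alpha)=\alpha$, so two \emph{consecutive} horizontal edges of a path must be the same loop $e_\alpha$.

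Part (1) is then immediate from (i): if $x_n$ is $r$-minimal for all $n\le n_0$ it is horizontal there, hence equal to a single loop $e_\alpha$ for all $n\le n_0$ by the ``same loop'' remark; letting $n_0\to\infty$ identifies $X^{r-min}_{\mathcal{B}}$ with the set of constant horizontal paths $\{p_\alpha:\alpha\in\mathcal{A}\}$, which are pairwise distinct, so $\#X^{r-min}_{\mathcal{B}}=|\mathcal{A}|$ (recovering Remark \ref{fg:140}(2)). For part (2), suppose $x\in X^{r-max}_{\mathcal{B}}$ had $x_n$ horizontal for all $n\ge n_1$ with $n_1>0$; then $x_n=e_\alpha$ for a fixed $\alpha$ and all $n\ge n_1$, and $r$-maximality of $e_\alpha$ at level $n$ forces the range of the non-horizontal edge $e_n$ to differ from $\alpha$ for all $n\ge n_1$. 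But by the identification of $\tau$-winners with ranges of non-horizontal edges (the paragraph preceding Proposition \ref{fg:144}) together with RH-completeness (Proposition \ref{fg:144}), every symbol is the range of $e_n$ for infinitely many $n>0$ — a contradiction.

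Parts (3) and (4) carry the real content; I describe (3), part (4) being obtained by interchanging the two orders and replacing $A_0$ by $A_1$. Assume $x_n$ is $s$-maximal for all $n\ge n_0$. Note first that following $s$-maximal edges forward is deterministic: from any vertex there is a unique outgoing edge that is $\leq_s$-maximal. Step one is to show $x$ is eventually horizontal. By (ii), a non-horizontal $s$-maximal edge $x_n=e_n$ occurs only at a $\tau$-type-$1$ step, and then $r(x_n)$ is the $\tau$-winner of that step, namely the last symbol of the top row of $\pi^{(n)}$; in particular $r(x_n)\ne A_0$ since $A_0$ (preserved by RH induction) is always the \emph{first} symbol of the top row. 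Between two such edges $x$ runs along a single horizontal loop, and its source vertex changes only at these non-horizontal steps. Using that the sequence $\nu_s(s(x_n))$ is non-increasing and strictly decreases at each step where $s(x_n)$ is the two-outgoing vertex, combined with RH-completeness forcing infinitely many such steps, one shows that the source vertex must eventually stabilise, so $x_n=e_\alpha$ for one fixed $\alpha$ and all $n\ge m$ for some $m\ge n_0$. Step two identifies $\alpha$: $e_\alpha$ is $s$-maximal at level $n$ exactly when $\alpha$ is not the vertex at which (by (ii)) $e_n$ beats the loop, and at a $\tau$-type-$1$ step that vertex is the last symbol of the top row of $\pi^{(n)}$. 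Since $A_0$ is never a last symbol, $e_{A_0}$ is $s$-maximal at every level, so $\alpha=A_0$ is consistent; conversely one must show that for every $\beta\ne A_0$ there are infinitely many $\tau$-type-$1$ levels $n$ with $\beta$ equal to the last symbol of the top row of $\pi^{(n)}$, which rules out $\alpha=\beta$. This yields $r(x_n)=s(x_n)=A_0$ for all $n\ge m_0:=m$.

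The main obstacle is the converse half of step two: proving that $A_0$ (resp.\ $A_1$) is the \emph{only} symbol at which a horizontal loop can remain $s$-maximal (resp.\ $s$-minimal) for all large $n$. This is a statement about how the winners of RH induction are split between the two $\tau$-types, and proving it purely combinatorially requires unwinding the permutation recursions (\ref{typeH0perm}) and (\ref{fg:300}) to see that every non-first top-row symbol is, infinitely often, the last top-row symbol at a type-$1$ step (and here one may need the Keane condition in the form of Theorem \ref{fg:60}, transported to RH induction via Proposition \ref{fg:88}). An alternative and possibly shorter route is geometric: the $s$-maximal (resp.\ $s$-minimal) path corresponds under the quotient map $\pi$ to the vertical separatrix issuing from the singularity on the origin side of $R^0_{A_0}$ (resp.\ $R^1_{A_1}$), and RH-induction renormalises longer and longer initial segments of that separatrix into the rectangle indexed by $A_0$ (resp.\ $A_1$); I would adopt whichever argument keeps the bookkeeping lightest.
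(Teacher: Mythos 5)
Your arguments for parts (1) and (2) are correct and essentially what the paper does. The trouble is parts (3) and (4). You correctly see that the crux is showing $A_0$ (resp.\ $A_1$) is the \emph{only} symbol whose horizontal loop can stay $s$-extremal for all large $n$, but you do not prove it, and the two alternatives you sketch do not close the gap. In the combinatorial sketch your bookkeeping is off: at a $\tau$-type-$1$ step the vertex with two outgoing edges, where $e_n$ beats the loop in $\leq_s$, is $\beta_{n-1}(1)$, the symbol immediately to the right of $\alpha_{n-1}(0)$ in the \emph{bottom} row of $\pi^{(n-1)}$ --- not ``the last symbol of the top row.'' The paper's elementary observation is only that $\beta(\varepsilon)\ne A_\varepsilon$ for the \emph{same} $\varepsilon$; it is perfectly possible that $\beta_{n-1}(1)=A_0$, so the loop at $A_0$ is not automatically $s$-maximal at every level (indeed the paper's proof concludes the constant $A_0$ path is $s$-minimal, not $s$-maximal, so be wary of which extremum goes with which $A_\varepsilon$). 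Consequently, the claim you say the argument would need (``every non-first top-row symbol is, infinitely often, the last top-row symbol at a type-$1$ step'') is neither the right claim nor something RH-completeness gives you, since RH-completeness controls \emph{which} symbols win infinitely often but says nothing about the $\tau$-types at which they win. Your ``step one'' measure argument is also unsound: $\nu_s(s(x_n))$ is non-increasing and tends to $0$ whether or not the source vertex stabilises, so that computation does not establish eventual horizontality.

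The device the paper actually uses, and which is absent from your proposal, is the set-valued map $Q(V)=\{s(e):e\ \text{is $s$-minimal},\ r(e)\in V\}$ for $V\subseteq V_n$. Lemma~\ref{fg:155} shows $Q$ carries an initial segment of the top row of $\pi^{(n)}$ to another initial segment, strictly larger whenever the step is of $\tau$-type~$0$ with $\alpha(1)\in V$; the level-condition $|r_n^{-1}\{A_0\}|>1$ forces $\tau$-type~$0$ with $\alpha_n(1)=A_0$, which always lies in every initial segment of the top row, and RH-completeness (via Proposition~\ref{fg:144}) supplies infinitely many such levels. Hence $Q^m(\{A_0\})$ fills out the whole vertex set after finitely many steps, which is exactly the ``converse'' you needed: every $s$-minimal path from \emph{any} vertex at a late enough level funnels into the horizontal loop at $A_0$ (this is Proposition~\ref{fg:160}). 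That global, backward-tracking argument over subsets of $V_n$ is the missing idea; a forward argument following a single $s$-extremal path, as in your step one, does not by itself rule out the other constant loops.
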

 \begin{proof}
 The first part follows easily (even without the Keane condition) 
 from the fact that 
  if $x_{n}$ is $r$-minimal, then it 
 is horizontal, by the  definition of $\leq_{r}$.
 
 For the second part, suppose that $x_{n}$ is horizontal for all $n > 0$. 
 From the BK condition, there $n > 1$ with $|r^{-1}(r(x_{n})|>1$. 
 The definition of $\leq_{r}$ implies that $x_{n}$ is not $r$-maximal.
 
 The last two parts are more subtle.
   
   Observe that $A_\varepsilon \neq \beta(\varepsilon)$, since by 
   definition $\beta(\varepsilon)$ is the symbol to the right 
   of another symbol, and $A_\varepsilon$ is never to the 
   right of another symbol. Thus there is no non-horizontal 
   edge $e$ defined 
   by the graph with the property that $s(e) = A_\varepsilon$ 
   when the data is of type $\varepsilon$. This means that 
   whenever there is a non-horizontal
    edge $e$ with $s(e) = A_0$, then this 
   corresponds to type $1$, and so it is $s$-max, and likewise
    if there is a non-horizontal  edge $e$ with $s(e) = A_1$, then this corresponds
     to type $0$, and so it is $s$-min. It follows that the constant
      path $\{A_0\}$ is $s$-min while the constant path $\{A_1\}$ 
      is $s$-max, and both of these paths are also $r$-min.

     For a set $V\subseteq V_{n}$, we define 
              $$Q(V) = \{s(e)  \mid 
              e\in E_n \mbox{ is $s$-minimal and }r(e)\in V\}.$$
              For $V\subseteq V_n$ we will denote $Q^m(V)\subseteq V_{n-m}$ 
              the image of the composition of $Q$ $m$ times. We
               first observe that if $ \alpha$ is in 
               $Q^m(\{A_0 \}) \subseteq V_{n-m}$, 
           then there is an $s$-min path in $E_{n-m,n}$ with  
             $s(p)= \alpha$ and  $r(p) = A_0$.
              \begin{lemma}
              \label{fg:155}
  For any $ 1 \leq i  < d$, if 
   $V = \{ \alpha^{0}_{1}, \ldots, \alpha^{0}_{i} \} $, then 
   $Q(V)$ 
    equals one of
        $ \{ (\alpha')^{0}_{1}, \ldots, (\alpha')^{0}_{i} \} $
        or $\{ (\alpha')^{0}_{1}, \ldots, (\alpha')^{0}_{i+1}   \}$ 
    Moreover, if $(\pi, \lambda, \tau)$ is $\tau$-type 0 and 
    $\alpha(1 )$ is in 
   $V$, then 
   the latter holds.     
              \end{lemma}
              \begin{proof}
 The first case to consider is when $(\pi, \lambda, \tau)$ is $\tau$-type 
   $1$. In this case, every $s$-minimal edge in $E_{n}$ is 
   horizontal and so $Q(V) = V$, for any set $V$. On the other
    hand, $(\alpha')^{0}= \alpha^{0}$ and so the conclusion
   holds, with the first of the two cases.

   We now assume    $(\pi, \lambda, \tau)$ is $\tau$-type  $0$.
   Suppose $\alpha(1) = \alpha^{0}_{k}$, for some $1 \leq k \leq d$.
There are two
cases to consider. The first is that $\alpha(1)$ is not in $V$. In other words, 
$\alpha(1) = \alpha_{j}^{0}$, for some $j > i$. In this case, the horizontal edge to
each element  of $V$ is also $s$-minimal
so $Q(V)= V$. Moreover, the change in $\beta^{0}$ from $\alpha^{0}$ only occurs
in entries greater than $k$. In other words,  we have 
  $ \{ (\alpha')^{0}_{1}, \ldots, (\alpha')^{0}_{i} \} = 
\{ \alpha^{0}_{1}, \ldots, \alpha^{0}_{j} \} = V$
and $Q(V) = V$. 

Now, we suppose that $\alpha(1)$ is in $V$.
 In other words, 
$\alpha(1) = \alpha_{j}^{0}$, for some $j \leq i$. In this case, 
we have the non-horizontal $s$-min edge goes from $\alpha(0)$ to 
$\alpha(1)$.  Observe that 
because $i < d$, $\alpha(0)$ is not in 
$V $. It follows that $Q(V) = V \cup \{ \alpha(0) \}$. 
On the other hand, $(\alpha')^{0}$ is obtained from $\alpha^{0}$  by 
inserting 
$\alpha(0)$ to the right of $\alpha(1)$ and moving the entries 
to the right one more space to the right.
In other words, we have $\{ (\alpha')^{0}_{1}, \ldots, (\alpha')^{0}_{i+1} \}
=  V \cup \{ \alpha(0) \}$ and we are done.
\end{proof}
             
            \end{proof}

            \begin{prop}
            \label{fg:160}
    If  $(\pi, \lambda, \tau)$ is such that, 
   $|r_{n}^{-1} \{ A_{0}\} | > 1$ for 
    infinitely many $n >  0$, then any $x$ in $X_{\mathcal{B}_{\pi,\lambda, \tau}}$
    such that there is some $n_{0}$ such 
     $x_{n}$ is $s$-minimal, for all $n \geq n_{0}$,  
   there is $n_{1} $ such that $x_{n}$ is the horizontal edge
   from $A_{0}$ to itself, for all $n \geq n_{1}$.
    In particular,  $I_{\mathcal{B}_{\pi,\lambda, \tau}}^{+}=1$ with 
    $x_{1}$ being the infinite horizontal path through $A_{0}$.
   Similarly, if  $(\pi, \lambda, \tau)$ is such that, 
   $|r_{n}^{-1} \{ A_{1}\} | > 1$ for 
    infinitely many $n >  0$, then any $x$ in $X_{\mathcal{B}_{\pi,\lambda, \tau}}$
    such that there is some $n_{0}$ such 
     $x_{n}$ is $s$-maximal, for all $n \geq n_{0}$,  
   there is $n_{1} $ such that $x_{n}$ is the horizontal edge
   from $A_{1}$ to itself, for all $n \geq n_{1}$.
    In particular,  $J_{\mathcal{B}_{\pi,\lambda, \tau}}^{+}=1$ with 
    $x_{2}$ being the infinite horizontal path through $A_{0}$.
            \end{prop}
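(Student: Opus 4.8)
\textbf{Proof proposal for Proposition \ref{fg:160}.}

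The plan is to exploit Lemma \ref{fg:155} together with the hypothesis on how often $A_0$ wins. First I would reduce to the following claim: if $x$ is an infinite path with $x_n$ $s$-minimal for all $n\ge n_0$, then for every $m\ge n_0$ the symbol $r(x_m)$ lies in the set $Q^{\,k}(\{A_0\})\subseteq V_m$ for appropriately large $k$, where the sets $Q^k$ are the ones defined in the proof of Proposition \ref{fg:150}. More precisely, fix $N\ge n_0$ and note that the edge $x_N$ being $s$-minimal with source $r(x_{N-1})$ means $r(x_{N-1})\in Q(\{r(x_N)\})$; iterating this downward is not quite what I want — instead I want to track things \emph{upward}. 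So I would argue contrapositively on an initial window: pick $N$ large and consider the path $x_{(n_0,N]}$, all of whose edges are $s$-minimal. By repeated application of Lemma \ref{fg:155} starting from whichever $V\subseteq V_N$ contains $r(x_N)$, the backward images $Q^k$ of a singleton or a small "initial segment" set $\{\alpha^0_1,\dots,\alpha^0_i\}$ either stay the same size or grow by one, and the size grows by one precisely at the levels where the current data is of $\tau$-type $0$ and $\alpha(1)$ lies in the current set. Since the size is bounded by $d=\#\mathcal{A}$, after finitely many such growth steps the set becomes all of $\mathcal{A}$ or stabilizes.

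Next I would use the hypothesis that $|r_n^{-1}\{A_0\}|>1$ for infinitely many $n>0$: this says that at infinitely many levels $n$, the non-horizontal edge of $E_n$ has range $A_0$, i.e.\ $\mathcal{P}^{\,n-1}(\pi,\lambda,\tau)$ is of $\tau$-type $0$ with $\alpha_n(1)=A_0$. The key point is that $A_0$ is always the first symbol $\pi_0(1)$ (as noted in the discussion preceding Proposition \ref{fg:150}), hence $A_0=\alpha^0_1$ sits at position $1$ in every representative; combined with Lemma \ref{fg:155}, once $A_0$ has been "swept into" the relevant set the set is of the form $\{\alpha^0_1,\dots,\alpha^0_i\}$ with $i\ge 1$, and each subsequent level where $A_0$ wins forces $\alpha(1)=A_0\in V$, triggering the growth case of the lemma. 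Therefore, reading the infinitely many $A_0$-winning levels in increasing order, the set $Q^k(\cdot)$ must reach size $d$, i.e.\ all of $\mathcal{A}$, after finitely many of them. Pulling this back, I would conclude: there is $n_1\ge n_0$ such that for all $m\ge n_1$, every $s$-minimal path from $V_m$ into a fixed later level lands on an $s$-minimal path whose ranges are forced; in particular the unique $s$-minimal path $x^{s-\min}_v$ out of any vertex $v\in V_m$ for $m\ge n_1$ has all of its edges horizontal through $A_0$. Since by Part (4) of Proposition \ref{fg:150} we already know $r(x_n)=s(x_n)=A_1$ eventually for $s$-minimal tails — wait, I should be careful: Proposition \ref{fg:150}(4) gives $A_1$ for $s$-minimal tails in the \emph{generic} situation, but the present hypothesis about $A_0$ winning often is what actually pins $x$ down; I would combine the two by observing that the constant horizontal path through $A_0$ is the \emph{unique} path that is simultaneously $s$-minimal and $r$-minimal and stays at a vertex whose only outgoing $s$-minimal edge is horizontal, and the $A_0$-winning hypothesis removes every competitor.

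Finally, for the statement about $I^+_{\mathcal{B}_{\pi,\lambda,\tau}}$: by Proposition \ref{groupoids:30}, $\partial_s X_{\mathcal{B}}$ decomposes into finitely many $T^+$-classes, $I^+$ of them consisting of eventually-$s$-maximal paths and $J^+$ of eventually-$s$-minimal paths; here the eventually-$s$-minimal case is the relevant one for $I^+$ in the convention used (the indices in the proposition are a bookkeeping matter I would match carefully to the statement). The argument above shows that every eventually-$s$-minimal path is, beyond some level, equal to the infinite horizontal path through $A_0$, hence they are all right-tail equivalent to one another, giving exactly one class, i.e.\ $I^+_{\mathcal{B}_{\pi,\lambda,\tau}}=1$ with representative the infinite horizontal path through $A_0$. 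The second half of the proposition (the $A_1$, $s$-maximal, $J^+=1$ statement) follows by the identical argument with the roles of $0$ and $1$, max and min, interchanged, using Lemma \ref{fg:155}'s mirror version (obtained by reversing one of the two orders, exactly as in the proof of Lemma \ref{surface:105}). I expect the main obstacle to be the careful bookkeeping in the upward/backward tracking of the sets $Q^k$ and verifying that the $A_0$-winning levels genuinely force the size increment — i.e.\ ruling out the degenerate possibility that $A_0$ wins but $\alpha(1)=A_0$ was already in the set and the set had already stabilized at something smaller than $\mathcal{A}$; this is where the positional fact $A_0=\alpha^0_1$ and irreducibility of $\pi$ must be used decisively.
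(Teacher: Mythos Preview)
Your approach is the paper's approach --- Lemma \ref{fg:155} plus the $A_0$-winning hypothesis to force the $Q$-sets to exhaust $\mathcal{A}$ --- but your setup of the $Q$-tracking is misconfigured and this is the actual gap, not the ``degenerate stabilization'' you flag at the end.

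You propose to start the $Q$-iteration from ``whichever $V\subseteq V_N$ contains $r(x_N)$''. Lemma \ref{fg:155} does not apply to such a set: it only applies to initial segments $\{\alpha^0_1,\dots,\alpha^0_i\}$ of the top row. The correct move (and what the paper does) is to fix $n_1>n_0$ so large that at least $|\mathcal{A}|$ of the levels $n$ with $n_0<n\le n_1$ satisfy $|r_n^{-1}\{A_0\}|>1$, and then start from $V=\{A_0\}=\{\alpha^0_1\}$ \emph{at level $n_1$}. This is already an initial segment, so Lemma \ref{fg:155} applies at every step. At each $A_0$-winning level the data is of $\tau$-type $0$ with $\alpha(1)=A_0=\alpha^0_1\in V$, so the growth alternative of the lemma is forced; after at most $|\mathcal{A}|-1$ such levels one has $Q^{\,n_1-n}(\{A_0\})=V_n$ for some $n\ge n_0$. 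That equality says every $s$-minimal path from $V_n$ to $V_{n_1}$ terminates at $A_0$; since $x_{(n,n_1]}$ is $s$-minimal, $r(x_{n_1})=A_0$, and from $A_0$ the unique outgoing $s$-minimal edge is horizontal (as observed in the proof of Proposition \ref{fg:150}), finishing the argument.

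With the tracking set up this way, the ``main obstacle'' you worry about evaporates: $V$ is always an initial segment containing $A_0=\alpha^0_1$, so $\alpha(1)=A_0\in V$ automatically, and Lemma \ref{fg:155} then \emph{forces} growth whenever $i<d$. No stabilization below $\mathcal{A}$ can survive an $A_0$-winning level. Your confusion with Proposition \ref{fg:150}(3)/(4) is due to a swap of $A_0$ and $A_1$ in the statement there; the proof of \ref{fg:150} correctly identifies the constant-$A_0$ path as $s$-minimal, consistent with the present proposition.
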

            
            \begin{proof}
   We prove the first statement only.  Choose $n_{1} > n_{0}$ so that
   $|r_{n}^{-1} \{ A_{0}\} | > 1$, for at least $| \mathcal{A} |$ values of
   $n $ between $n_{0}$ and $n_{1}$. If we then consider $A_{0}$ as 
   a vertex in $V_{n_{1}}$, and apply 
   $Q$ successively to $V = \{ A_{0} \}$, there will be
   at least $| \mathcal{A} |$ times when $Q^{m}(V)$ is strictly larger then
   $V$. For some some $n_{0} \leq n \leq n_{1}$, we have
   $Q^{n_{1}-n}(V) = V_{n}$. It follows that every $s$-minimal 
   starting in $V_{n}$ will have range equal to $A_{0}$. Also, any 
   $s$-minimal path starting at $A_{0}$ will be horizontal. As 
   $n \geq n_{0}$, $x$ satisfies both properties and the conclusion follows.
 \end{proof}

 \begin{thm}
 \label{fg:165}
  If $(\pi,\lambda, \tau)$ in $\mathcal{V}_\mathcal{C}$ 
    satisfies the Keane condition and is RH-complete, then
    $\mathcal{B}_{\pi,\lambda, \tau}$ satisfies the 
    standard conditions  of Definition \ref{surface:20}.
 \end{thm}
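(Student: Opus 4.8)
The goal is to verify the three conditions of Definition \ref{surface:20} for $\mathcal{B} = \mathcal{B}_{\pi,\lambda,\tau}$: finite rank, strong simplicity, and disjointness of $X^{ext}_{\mathcal{B}}$ from $\partial_r X_{\mathcal{B}}$ and $\partial_s X_{\mathcal{B}}$. The first two are immediate citations: finite rank holds because $\#V_n = \#\mathcal{A} = d$ for every $n \in \Z$ (by construction in \S\ref{subsec:typicalBrat}), and strong simplicity is precisely Proposition \ref{fg:146}, which applies since $(\pi,\lambda,\tau)$ satisfies the Keane condition and is RH-complete. So the real content is the third condition, and the plan is to establish it using the structural description of $X^{ext}_{\mathcal{B}}$ afforded by Propositions \ref{fg:150} and \ref{fg:160}.

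\textbf{Verifying $X^{ext}_{\mathcal{B}} \cap \partial_r X_{\mathcal{B}} = \emptyset$.} Recall $X^{ext}_{\mathcal{B}} = X^{s-max}_{\mathcal{B}} \cup X^{s-min}_{\mathcal{B}} \cup X^{r-max}_{\mathcal{B}} \cup X^{r-min}_{\mathcal{B}}$, and that a point of $\partial_r X_{\mathcal{B}}$ is one having a $\leq_r$-successor or $\leq_r$-predecessor (Definition \ref{order:52}). Since $X^{r-max}_{\mathcal{B}} \cup X^{r-min}_{\mathcal{B}}$ is automatically disjoint from $\partial_r X_{\mathcal{B}}$ (noted just after Definition \ref{order:52}), I only need to handle $X^{s-max}_{\mathcal{B}}$ and $X^{s-min}_{\mathcal{B}}$. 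By parts (3) and (4) of Proposition \ref{fg:150}, any $x \in X^{s-max}_{\mathcal{B}}$ is eventually the horizontal loop at $A_0$ (to the right), and any $x \in X^{s-min}_{\mathcal{B}}$ is eventually the horizontal loop at $A_1$; moreover by part (1) horizontal edges are $r$-minimal, so such an $x$ has $x_n$ $r$-minimal for all large $n$. By part (6) of Lemma \ref{order:50}, for $x$ to lie in $\partial_r X_{\mathcal{B}}$ we would need some index $n$ with $x_n$ not $r$-maximal (fine) but with $x_{(-\infty,n)} = x^{r-max}_{s(x_n)}$ or $x^{r-min}_{s(x_n)}$ — that is, $x$ must be $r$-maximal or $r$-minimal on a whole left tail. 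Combined with its being $r$-minimal on a right tail, I would argue via a minor variant of Lemma \ref{path:12} (strong simplicity forces at least two paths between suitably separated vertex sets, hence $x$ cannot be simultaneously $r$-extremal on both tails unless it is fully $r$-extremal) that $x \in X^{r-max}_{\mathcal{B}} \cup X^{r-min}_{\mathcal{B}}$, which contradicts it also lying in $X^{s-max}_{\mathcal{B}}$ or $X^{s-min}_{\mathcal{B}}$ (a path cannot be eventually the horizontal loop at $A_0$ and globally $r$-maximal, since by part (2) of Proposition \ref{fg:150} an $r$-maximal path has non-horizontal edges infinitely often to the right). This rules out the intersection.

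\textbf{Verifying $X^{ext}_{\mathcal{B}} \cap \partial_s X_{\mathcal{B}} = \emptyset$.} Here $X^{s-max}_{\mathcal{B}} \cup X^{s-min}_{\mathcal{B}}$ is automatically disjoint from $\partial_s X_{\mathcal{B}}$, so I must show $X^{r-max}_{\mathcal{B}} \cup X^{r-min}_{\mathcal{B}}$ is too. By Proposition \ref{fg:150}(1), $X^{r-min}_{\mathcal{B}}$ consists exactly of the $d$ infinite horizontal paths $\{p_\alpha\}_{\alpha \in \mathcal{A}}$; each such path is horizontal on \emph{every} edge, hence $s$-extremal behavior can be read off directly — I claim each $p_\alpha$ is $s$-maximal and $s$-minimal at no relevant index in the way needed to produce an $s$-successor, because to lie in $\partial_s X_{\mathcal{B}}$, $p_\alpha$ would need (Lemma \ref{order:50}(5)) an index $n$ with $(p_\alpha)_n$ not $s$-maximal and $(p_\alpha)_{(n,\infty)} = x^{s-max}_{r((p_\alpha)_n)}$, forcing a whole right tail of $s$-maximal edges; by Propositions \ref{fg:150}(3) and \ref{fg:160} this means the right tail is eventually the horizontal loop at $A_0$, so $r((p_\alpha)_n) = \alpha = A_0$, and then I check from the explicit order definitions (\ref{eqn:orderS}) that the horizontal edge at $A_0$ is $s$-minimal (as observed in the proof of Proposition \ref{fg:150}: $\{A_0\}$ is $s$-min), so $(p_\alpha)_n$ being horizontal at $A_0$ is $s$-minimal, contradicting "not $s$-maximal but on an $s$-maximal tail" unless the edge is simultaneously $s$-min and $s$-max, impossible by Proposition \ref{fg:146} (strong simplicity gives at least two edges at the relevant vertices, so no edge is both). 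The analogous argument with $A_1$ and the $s$-minimal/$s$-maximal roles swapped handles the predecessor case. For $X^{r-max}_{\mathcal{B}}$, Proposition \ref{fg:150}(2) says such a path has non-horizontal edges infinitely often on the right; an $s$-extremality-on-a-tail condition (needed for membership in $\partial_s X_{\mathcal{B}}$) would by Propositions \ref{fg:150}(3)–(4) and \ref{fg:160} force the right tail to be eventually the horizontal loop at $A_0$ or $A_1$, directly contradicting the infinitely-many-non-horizontal-edges property. Hence $X^{ext}_{\mathcal{B}} \cap \partial_s X_{\mathcal{B}} = \emptyset$ as well.

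\textbf{Main obstacle.} The delicate point is the bookkeeping in the previous two paragraphs: I need to be careful that the "eventually horizontal loop at $A_0$/$A_1$" conclusions of Propositions \ref{fg:150} and \ref{fg:160} are genuinely available — Proposition \ref{fg:160} requires the hypotheses $|r_n^{-1}\{A_0\}| > 1$ (resp. $\{A_1\}$) for infinitely many $n > 0$, and I must check these follow from RH-completeness via Proposition \ref{fg:144} (RH-complete $\iff$ for every $\alpha$, $|r_n^{-1}\{\alpha\}| > 1$ infinitely often for positive $n$), taking $\alpha = A_0$ and $\alpha = A_1$. Granting that, the remaining work is the routine-but-careful combination of Lemma \ref{order:50}(5)–(6) with the order definitions (\ref{eqn:orderR})–(\ref{eqn:orderS}) and the observation (from the proof of Proposition \ref{fg:150}) that the horizontal loops at $A_0$ and $A_1$ are $s$-min and $s$-max respectively while also being $r$-min; I expect the argument to be short once these ingredients are assembled.
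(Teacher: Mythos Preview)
Your handling of finite rank and strong simplicity is correct and matches the paper. Your treatment of $X^{ext}_{\mathcal{B}} \cap \partial_s X_{\mathcal{B}}$ is also essentially the paper's argument, though more elaborately unpacked: the paper notes that Proposition~\ref{fg:160} gives $I^{+}_{\mathcal{B}} = J^{+}_{\mathcal{B}} = 1$, so $\partial_s X_{\mathcal{B}} \subseteq T^{+}(x_1) \cup T^{+}(x_2)$, and then checks that neither $X^{r\text{-}min}_{\mathcal{B}}$ nor $X^{r\text{-}max}_{\mathcal{B}}$ meets these tail classes nontrivially, using RH-completeness for the $r$-max case exactly as you do.

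The gap is in your $X^{ext}_{\mathcal{B}} \cap \partial_r X_{\mathcal{B}}$ paragraph. From Proposition~\ref{fg:150}(3)--(4) you extract only that an $s$-extremal path is \emph{eventually} horizontal on the right, hence $r$-minimal for large $n$, and then try to combine this with the left-tail $r$-extremality forced by membership in $\partial_r X_{\mathcal{B}}$. Your claimed bridge---``via a minor variant of Lemma~\ref{path:12}, $x$ cannot be $r$-extremal on both tails unless it is fully $r$-extremal''---does not hold: a path can be $r$-maximal on $(-\infty,m)$ and $r$-minimal on $[N,\infty)$ with an unconstrained middle, and strong simplicity does not prevent this. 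There is no version of Lemma~\ref{path:12} that yields such a conclusion.

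The paper sidesteps this entirely. Its one-line argument is that $X^{s\text{-}min}_{\mathcal{B}}$ and $X^{s\text{-}max}_{\mathcal{B}}$ each consist of a \emph{single, fully horizontal} path (namely $x_1$ and $x_2$), so they sit inside $X^{r\text{-}min}_{\mathcal{B}}$, which is disjoint from $\partial_r X_{\mathcal{B}}$ by definition. To get ``fully horizontal'' rather than merely ``eventually horizontal on the right,'' one applies the argument of Proposition~\ref{fg:160} with $n_0$ taken arbitrarily negative; the needed hypothesis $|r_n^{-1}\{A_\varepsilon\}| > 1$ for infinitely many negative $n$ is supplied by the Keane condition via Proposition~\ref{fg:144}. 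You already identified in your ``Main obstacle'' that \ref{fg:144} feeds the hypotheses of \ref{fg:160}; you just stopped at the weaker eventual conclusion when pushing to the global one gives the result immediately.
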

 
 \begin{proof}
 We have already seen that $\mathcal{B}_{\pi,\lambda, \tau}$
 is strongly simple in Proposition \ref{fg:146}. It is 
 clearly finite rank since $\#V_{n} = \# \mathcal{A}$, 
 for all integers $n$.
 
 We finally verify the third condition, starting with considering 
 $(X_{\mathcal{B}}^{s-min} \cup X_{\mathcal{B}}^{s-max})
  \cap \partial_{r}X_{\mathcal{B}}$. We know from Proposition 
  \ref{fg:160} that  $(X_{\mathcal{B}}^{s-min}$  and 
  $ X_{\mathcal{B}}^{s-max})$ consist of $x_{1}$ and $x_{2}$, the horizontal
   paths through $A_{0}$ and $A_{1}$, respectively, and hence are both in 
   $X_{\mathcal{B}}^{r-min}$ which is excluded from 
   $\partial_{r}X_{\mathcal{B}}$, by definition.
   
   We now consider  $(X_{\mathcal{B}}^{r-min} \cup X_{\mathcal{B}}^{r-max})
  \cap \partial_{s}X_{\mathcal{B}}$. Again Proposition
  \ref{fg:160} implies that $\partial_{s}X_{\mathcal{B}}$ is 
  contained in $T^{+}(x_{1})$ and $T^{+}(x_{2})$. The only paths which also
  lie in $X^{r-min}_{\mathcal{B}}$ are the horizontal paths $x_{1}$ and $x_{2}$, 
  which are excluded from 
   $\partial_{s}X_{\mathcal{B}}$, by definition. If $x$ is in 
 $T^{+}(x_{1})$ and $X^{r-max}_{\mathcal{B}}$, then $x_{n}$ is the horizontal 
 edge from $A_{0}$ to itself for all $n \geq n_{0}$. By RH-completeness, 
 there is some $n \geq n_{0}$ with $|r_{n}^{-1}\{ A_{0} \}| > 1$, which means that
 $x_{n}$ is not $r$-maximal, a contradiction. The same argument shows 
 $T^{+}(x_{2}) \cap X^{r-max}_{\mathcal{B}}$ is empty.
 \end{proof}
            \begin{figure}[t]
              \includegraphics[width=5.25in]{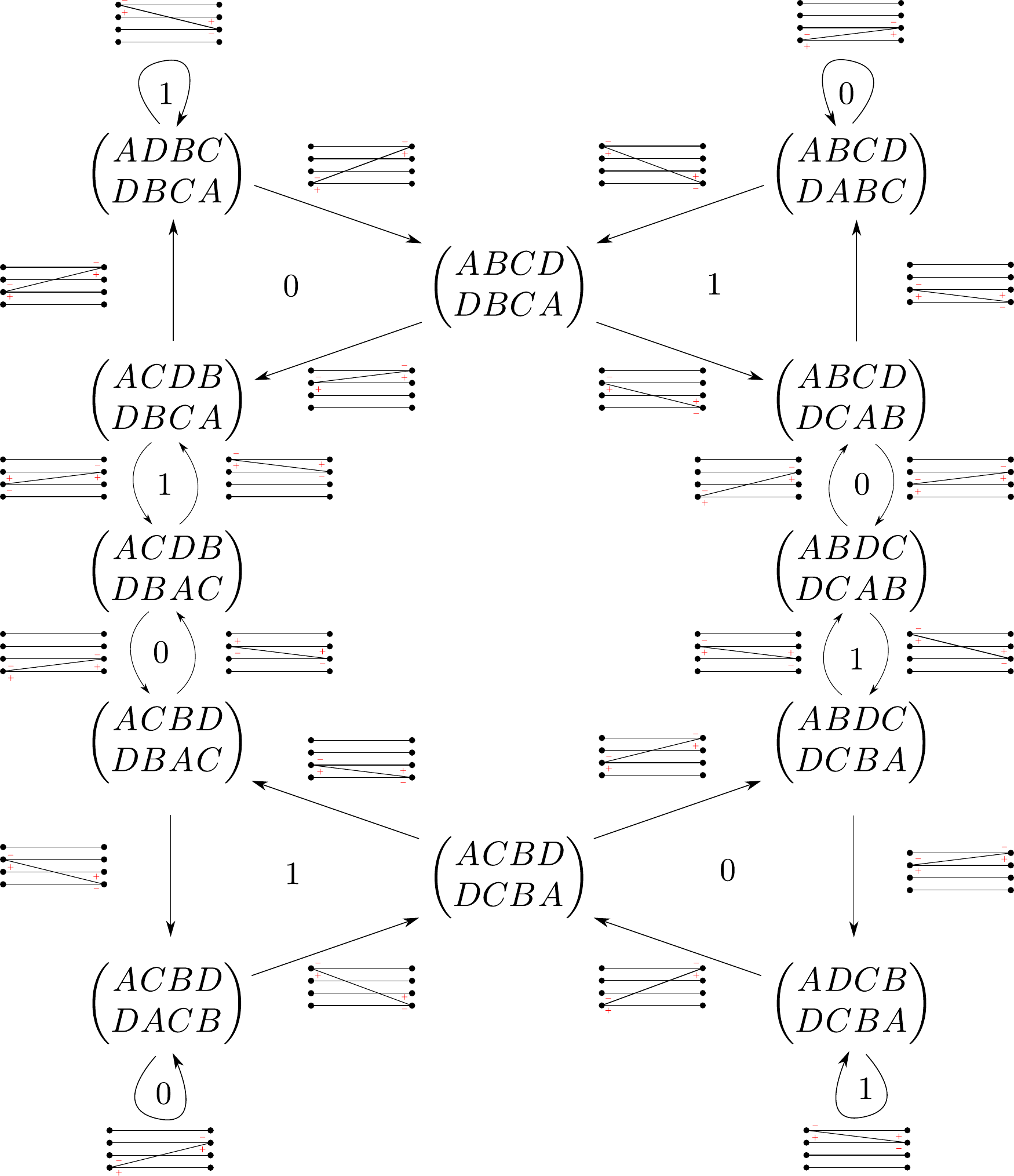}
              \caption{Rauzy graph for surfaces in the
               non-hyperelliptic component of $\mathcal{H}(2)$
                along with corresponding edge sets for the Bratteli diagrams.}
              \label{fig:nonhyper2}
            \end{figure}

     \subsection{Flatness of $\mathcal{B}_{\pi,\lambda, \tau}$}
        \label{subsec:flatness}
            In this section, we will prove the following
             flatness property of $\mathcal{B}_{\pi,\lambda, \tau}$.
            \begin{thm}
              \label{fg:200}
   If $(\pi,\lambda, \tau)$ in $\mathcal{V}_\mathcal{C}$ 
    satisfies the Keane condition and is RH-complete, then
     $\Sigma_{\mathcal{B}_{\pi,\lambda, \tau}} = \varnothing$.
            \end{thm}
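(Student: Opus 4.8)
The plan is to show that no point $x \in \partial X_{\mathcal{B}_{\pi,\lambda,\tau}}$ can be singular, i.e. that whenever both $\Delta_s \circ \Delta_r(x)$ and $\Delta_r \circ \Delta_s(x)$ are defined, they agree. By Lemma \ref{singular:90}, the only points that could possibly be singular are those with $n(x) \leq m(x)$; so it suffices to rule out the configuration of the second picture in Section \ref{singular}. Recall that for such an $x$, the integer $n(x)$ is characterized by: $x_k$ is $s$-maximal or $s$-minimal for all $k > n(x)$, and $x_{n(x)}$ is not correspondingly $s$-extremal; dually $x_k$ is $r$-maximal or $r$-minimal for all $k < m(x)$. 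The key structural inputs I would assemble are: first, Proposition \ref{fg:160} (with Theorem \ref{fg:165}), which tells us that any path which is eventually $s$-minimal is eventually the horizontal edge through $A_0$, and any eventually $s$-maximal path is eventually the horizontal edge through $A_1$; second, Proposition \ref{fg:150}, which gives the analogous description of eventually $r$-minimal/$r$-maximal tails. Since the only eventually-$s$-extremal paths on the right and the only eventually-$r$-extremal paths on the left are very rigidly prescribed (horizontal through $A_0$ or $A_1$), the set $\partial X_{\mathcal{B}}$ is itself extremely small, and I expect the proof to proceed by direct case analysis.

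The main step is to localize the point $x \in \partial X_{\mathcal{B}}$. On the right side: since $x_k$ is $s$-maximal for all $k > n(x)$ or $s$-minimal for all $k > n(x)$, Proposition \ref{fg:160} forces there to exist $m_0 > n(x)$ such that $x_k$ is the horizontal edge through $A_0$ (in the $s$-minimal case) or through $A_1$ (in the $s$-maximal case), for all $k \geq m_0$. But horizontal edges are also $r$-minimal by Remark \ref{fg:140}(2) and part (1) of Proposition \ref{fg:150}. On the left side, $x_k$ is $r$-maximal for all $k < m(x)$ or $r$-minimal for all $k < m(x)$. By part (1) of Proposition \ref{fg:150}, the $r$-minimal case means $x_k$ is horizontal for all $k < m(x)$; by part (2) of Proposition \ref{fg:150}, the $r$-maximal case is impossible precisely for the tails of the form that occur here because RH-completeness forces a non-horizontal edge infinitely often on the positive side --- but one has to be careful, since here we need the \emph{negative}-side statement; the Keane condition gives the analogous non-horizontality infinitely often for negative indices, so an all-$r$-maximal left tail is likewise impossible. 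Hence the left tail of $x$ is all horizontal (hence all $r$-minimal), which by Lemma \ref{order:50}(6) means $x$ has \emph{no} $\leq_r$-predecessor; combined with $x_{m(x)}$ not being $r$-minimal, this means $\Delta_r(x)$ is the $\leq_r$-successor.

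With $x$ thus pinned down --- its left tail all horizontal, its right tail all horizontal through $A_0$ or $A_1$ --- I would simply compute $\Delta_s(x)$ and $\Delta_r(x)$ explicitly and check the commutation. The point is that, because the left tail and right tail are so constrained, the "middle" of $x$ (between $m(x)$ and $n(x)$, or between $n(x)$ and $m(x)$) is a finite path from a horizontal vertex to a horizontal vertex, and applying $\Delta_s$ changes only finitely many entries near $n(x)$ while applying $\Delta_r$ changes only finitely many entries near $m(x)$. The obstruction one needs to overcome is exactly the potential overlap when $n(x) \leq m(x)$: in that regime $\Delta_s$ and $\Delta_r$ could interfere. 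But I claim this regime cannot occur under our hypotheses: if the left tail of $x$ is entirely horizontal, then $x$ has no $r$-predecessor and we are looking at $x$ having a $\leq_r$-successor; tracing through the definitions of $m(x)$ in Definition \ref{order:52} and using that horizontal edges are $r$-minimal, one finds that $m(x)$ must be the last index where $x$ fails to be "locally horizontal," and a parallel analysis of $n(x)$ using Proposition \ref{fg:160} shows the two necessarily separate as $m(x) < n(x)$. The hard part will be making this last dichotomy airtight: one must verify that for every $x \in \partial_s X_{\mathcal{B}} \cap \partial_r X_{\mathcal{B}}$, the geometry forced by Propositions \ref{fg:150} and \ref{fg:160} is incompatible with $n(x) \leq m(x)$, which I expect to require a short but careful argument comparing where the $s$-extremal and $r$-extremal behavior of $x$ begins. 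Once $m(x) < n(x)$ is established for all of $\partial X_{\mathcal{B}}$, Lemma \ref{singular:90} immediately gives $\Sigma_{\mathcal{B}_{\pi,\lambda,\tau}} = \varnothing$, completing the proof.
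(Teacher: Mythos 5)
Your proposal rests on the claim that every $x \in \partial X_{\mathcal{B}_{\pi,\lambda,\tau}}$ satisfies $m(x) < n(x)$, so that Lemma \ref{singular:90} applies directly. This claim is false, and the two places where your argument tries to establish it both contain errors.

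First, you assert that the all-$r$-maximal left tail case is impossible, citing part 2 of Proposition \ref{fg:150}. But that proposition only rules out \emph{bi-infinite} paths that are everywhere $r$-maximal (i.e.\ elements of $X^{r-max}_{\mathcal{B}}$, which are already excluded from $\partial_r X_{\mathcal{B}}$ by definition); it says nothing against a path whose \emph{left} tail is all $r$-maximal while the right tail is not. Such paths exist: for any vertex $v$ one has $x^{r-max}_{v}$, and any finite path can be prepended by it. So your conclusion that the left tail is always horizontal does not follow, and in fact it fails. Second, even restricting to the honest $r$-minimal case, the dichotomy $m(x) < n(x)$ still does not hold. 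A concrete counterexample appears in the paper's own construction: take $n$ with $E_n$ of $\tau$-type $0$. Then $y_n = e_n$ is the non-horizontal, not-$s$-maximal edge, and by (\ref{eqn:orderR}) it is also $r$-maximal at its range. The path $w = x^{r-max}_{s(y_n)}\, y_n\, x^{s-max}_{r(y_n)}$ has $n(w) = n$ (since $y_n$ is not $s$-max and the right tail is all $s$-max), while its left tail through index $n$ is all $r$-maximal; by RH-completeness the first non-$r$-max index of $w$ occurs strictly to the right of $n$, so $m(w) > n(w) = n$. And $w$ is not all-$r$-max (again by RH-completeness), so $w \in \partial_r X_{\mathcal{B}}$, hence $w \in \partial X_{\mathcal{B}}$ with $m(w) > n(w)$. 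So the regime you are trying to exclude is unavoidable.

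The paper deals with this regime head-on rather than excluding it: Lemma \ref{fg:210} shows that if $\Delta_s$ preserves the $\leq_r$ order on the relevant tail classes $T^+(x_0) \setminus \{x_0\} \to T^+(x_1) \setminus \{x_1\}$, then $\Delta_s \circ \Delta_r = \Delta_r \circ \Delta_s$ on $\partial X_{\mathcal{B}}$, regardless of whether $m(x) < n(x)$. The real work (Propositions \ref{fg:220} and \ref{fg:260}, via Lemmas \ref{fg:230} through \ref{fg:270}) is a case analysis over RH-induction types showing $Y_n \leq_r Y_{n+1}$ and $Z_n \leq_r Z_{n+1}$, which together establish the order-preservation. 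This is a genuinely global commutativity argument; the purely local separation $m(x) < n(x)$ you propose to rely on simply isn't available here.
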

      Denote by $x_\varepsilon = \{A_\varepsilon\} $ 
      the corresponding $s$-min/max paths from Proposition 
       \ref{fg:150}. Then $T^+(x_\varepsilon)$ is linearly 
       ordered by $\leq_r$ and 
       $\Delta_s:T^+(x_\varepsilon)\setminus 
       \{x_\varepsilon\}\rightarrow T^+(x_{1-\varepsilon})
       \setminus \{x_{1-\varepsilon}\}$ is a bijection.
            \begin{lemma}
              \label{fg:210}
  If $\Delta_s$ preserves $\leq_r$, then $\Sigma_\mathcal{B} = \varnothing$.
            \end{lemma}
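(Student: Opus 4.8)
The plan is to show directly that every $x \in \partial X_{\mathcal{B}}$ satisfies $\Delta_s \circ \Delta_r(x) = \Delta_r \circ \Delta_s(x)$, which is exactly the assertion $\Sigma_{\mathcal{B}} = \varnothing$. Fix such an $x$. By Propositions \ref{groupoids:30} and \ref{fg:160} we have $I_{\mathcal{B}}^{+} = J_{\mathcal{B}}^{+} = 1$ and $\partial_{s}X_{\mathcal{B}} = T^{+}(x_{0}) \sqcup T^{+}(x_{1})$, so since $\partial X_{\mathcal{B}} \subseteq \partial_{s}X_{\mathcal{B}}$, the point $x$ lies in exactly one class $T^{+}(x_{\varepsilon})$; I would assume $\varepsilon = 0$, the other case being symmetric under interchanging the two classes, which the involution $\Delta_s$ does. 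Because $x_{0} \in X^{s-min}_{\mathcal{B}}$ and $x_{1} \in X^{s-max}_{\mathcal{B}}$ are excluded from $\partial_{s}X_{\mathcal{B}}$ (Definition \ref{order:52}), we have $x \notin \{x_{0}, x_{1}\}$. The point $\Delta_r(x)$ is right-tail equivalent to $x$, hence lies in $T^{+}(x) = T^{+}(x_{0})$, and by Proposition \ref{singular:80} it again lies in $\partial X_{\mathcal{B}}$, so $\Delta_r(x) \notin \{x_{0}, x_{1}\}$ and $\Delta_s$ is defined at both $x$ and $\Delta_r(x)$; similarly $\Delta_r$ is defined at both $x$ and $\Delta_s(x)$ since $\Delta_s(x) \in \partial X_{\mathcal{B}} \subseteq \partial_r X_{\mathcal{B}}$ by Proposition \ref{singular:80}.

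Next I would run the adjacency argument. In the linearly $\leq_{r}$-ordered set $T^{+}(x_{0})$ the point $\Delta_r(x)$ is, by construction, the immediate $\leq_{r}$-successor or the immediate $\leq_{r}$-predecessor of $x$ (and, by Definition \ref{order:52}, not both). The restriction $\Delta_s : T^{+}(x_{0}) \setminus \{x_{0}\} \to T^{+}(x_{1}) \setminus \{x_{1}\}$ is a bijection, and by hypothesis it preserves $\leq_{r}$; an order-preserving bijection between totally ordered sets is an order isomorphism and therefore transports the immediate-successor/predecessor relation. Moreover $x_{0}$ and $x_{1}$ are $r$-minimal (Proposition \ref{fg:150}(1)), hence are the $\leq_{r}$-minima of $T^{+}(x_{0})$ and $T^{+}(x_{1})$ respectively; since $x$ and $\Delta_r(x)$ both strictly exceed $x_{0}$, and likewise their images strictly exceed $x_{1}$, deleting these minima does not affect consecutiveness of the pairs in question. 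Consequently $\Delta_s(x)$ and $\Delta_s(\Delta_r(x))$ are consecutive in $T^{+}(x_{1})$, lying on the same side (successor, resp.\ predecessor) as $x$ and $\Delta_r(x)$.

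To finish, I would note that $\Delta_r(\Delta_s(x))$, which is defined by the previous paragraph, equals by Definition \ref{order:52} the immediate $\leq_{r}$-successor of $\Delta_s(x)$ if one exists and otherwise its immediate $\leq_{r}$-predecessor. By the preceding step $\Delta_s(\Delta_r(x))$ is precisely this unique consecutive element on the relevant side, whence $\Delta_r \circ \Delta_s(x) = \Delta_s \circ \Delta_r(x)$. Thus $x \notin \Sigma_{\mathcal{B}}$, and since $x$ was arbitrary, $\Sigma_{\mathcal{B}} = \varnothing$.

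The conceptual content is the single observation that the hypothesis ``$\Delta_s$ preserves $\leq_{r}$'' upgrades, thanks to the bijectivity supplied by Proposition \ref{fg:150}, to ``$\Delta_s$ is an order isomorphism of the two tail classes,'' which immediately forces $\Delta_s$ and $\Delta_r$ to commute. The points needing care — none deep — are that order-preservation together with surjectivity is what lets one move the consecutiveness relation across $\Delta_s$, that excising only the $r$-minimal endpoints $x_{0}, x_{1}$ (which lie below $x$ and $\Delta_r(x)$) leaves consecutiveness untouched, and the bookkeeping that $\Delta_r(y)$ selects the successor or the predecessor of $y$ but never both. I expect that last bookkeeping — keeping track of which side of $y$ the partner sits on as one pushes through $\Delta_s$ — to be the step most prone to slips when writing the argument out in full.
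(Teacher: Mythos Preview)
Your proof is correct and follows essentially the same approach as the paper: the core observation in both is that an order-preserving bijection transports the immediate $\leq_r$-successor/predecessor relation, so $\Delta_s(\Delta_r(x))$ is consecutive to $\Delta_s(x)$ and must therefore equal $\Delta_r(\Delta_s(x))$. The paper's proof is a one-liner that implicitly assumes (without loss of generality) the successor case and glosses over the bijectivity and endpoint-removal details you carefully spell out; your added care is correct but not strictly needed, since $x_0, x_1 \in X^{r-\min}_{\mathcal B}$ lie outside $\partial_r X_{\mathcal B}$ and hence are never adjacent to any point.
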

            \begin{proof}
              Let $x\in\partial X_\mathcal{B}$ and
               suppose $\Delta_r(x)$ is the $r$-successor
                of $x$. Then $\Delta_s\circ \Delta_r(x)$
                is the $r$-successor of $\Delta_s(x)$, 
                that is $\Delta_r\circ \Delta_s(x) = \Delta_s\circ \Delta_r(x)$.
            \end{proof}
    For every $n$, $E_n$ has an edge which is not $s$-max, call it
     $y_n$ and an edge $z_n$ which is not $s$-min. These are the
      edges $\{y_n,z_n\}=s^{-1}(v_{\beta_{n-1}(\varepsilon)})$.
       Define
  \begin{equation}
 \begin{split}
   Y_n&= \left\{ x\in X_\mathcal{B} \mid
    x_n = y_n\mbox{ and }x_i\mbox{ is $s$-max for all }i>n\right\} \\
                Z_n&= \left\{ x\in X_\mathcal{B} \mid
                 x_n = z_n\mbox{ and }x_i\mbox{ is $s$-min for all }i>n\right\} .
              \end{split}
            \end{equation}
            Note that $\Delta_s:Y_n\rightarrow Z_n$ 
            is a bijection for every $n$. Moreover, by definition,
             we also have that
            $$T^+(x_0)\setminus \{x_0\} = 
            \bigsqcup_n Y_n\hspace{.4in}\mbox{
             and }\hspace{.4in} T^+(x_{1})\setminus \{x_{1}\} = \bigsqcup_n Z_n$$
            so if $Y_n\leq_r Y_{n+1}$ and $Z_n\leq_r Z_{n+1}$,
             for all $n$, then $\Delta_s: T^+(x_\varepsilon)\setminus
              \{x_\varepsilon\}\rightarrow T^+(x_{1-\varepsilon})\setminus
               \{x_{1-\varepsilon}\}$ preserves $\leq_r$.
            \begin{prop}
              \label{fg:220}
              $Y_n\leq_r Y_{n+1}$
            \end{prop}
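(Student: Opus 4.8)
The plan is to unwind the definitions of $Y_n$ and $Y_{n+1}$ and compare paths in the linearly ordered set $T^+(x_0)$. Recall that an element of $Y_n$ is an infinite path $x$ with $x_n = y_n$ (the non-$s$-maximal edge in $E_n$) and $x_i$ $s$-maximal for all $i>n$, while an element of $Y_{n+1}$ has $x_{n+1}=y_{n+1}$ and $x_i$ $s$-maximal for all $i>n+1$. Since both $Y_n$ and $Y_{n+1}$ are subsets of $T^+(x_0)\setminus\{x_0\}$, any two elements $x\in Y_n$ and $x'\in Y_{n+1}$ are comparable in $\leq_r$. First I would observe that, because all of $x_i$ for $i>n$ and all of $x'_i$ for $i>n+1$ are $s$-maximal, and paths in $Y_n, Y_{n+1}$ agree far to the right (all being right-tail equivalent to $x_0$ whose edges are horizontal through $A_0$), the comparison $x <_r x'$ will be decided at some finite index. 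I would show that, up to replacing $x$ and $x'$ by their restrictions, the first index at which they can disagree is $n$ or $n+1$, and that $x_n = y_n$ is forced to be $\le_r$ the corresponding edge of $x'$ at that index.

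The key structural input is the explicit description of the edge sets and orders in Figure~\ref{fig:edgeset} together with \eqref{eqn:orderS}: at the vertex $\beta_{n-1}(\varepsilon)$ (the unique vertex of $V_{n-1}$ with two outgoing edges), the two edges are $y_n$ and $z_n$, and the order $\leq_s$ on $s_n^{-1}(\beta_{n-1}(\varepsilon))$ is determined by the $\tau$-type of $\mathcal{P}^{n-1}(\pi,\lambda,\tau)$. I would split into the two cases of the $\tau$-type at level $n+1$. The point is that the edge $x'_{n+1}=y_{n+1}$ of an element of $Y_{n+1}$ forces, by the definition of $\beta_n(\varepsilon)=\alpha_{n+1}(\varepsilon)$ and the matrices \eqref{eqn:Psi0}, \eqref{eqn:Psi1}, a specific value of $r_n(x'_n)$, hence constrains what $x'_n$ can be; meanwhile $x_n = y_n$ is not $s$-maximal, so $x_{(n,\infty)} = x^{s\text{-}max}_{r(x_n)}$ (by the characterization in part~5 of Lemma~\ref{order:50}, since $x\in\partial_s X_{\mathcal B}$). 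Comparing the two at the last index where they differ and invoking $\leq_r$ being computed right-to-left (as noted after Definition~\ref{BD:40}), the ordering $e_{\alpha_n(1-\varepsilon)} <_r e_n$ of \eqref{eqn:orderR} gives the desired inequality $Y_n \le_r Y_{n+1}$.

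I expect the main obstacle to be bookkeeping: carefully tracking, through the type-dependent formulas \eqref{eqn:orderR}--\eqref{eqn:orderS} and the shape of $M_n$, exactly which index $i\ge n$ is the last place where a given $x\in Y_n$ and $x'\in Y_{n+1}$ differ, and confirming that at that index the relevant edges are ordered the right way. In particular one must handle the subcase where $x$ and $x'$ already differ at index $n$ (so the comparison is immediate from the edge order at $r_n^{-1}$ of the appropriate vertex) separately from the subcase where they agree at $n$ but differ further right, where one uses that $x$'s tail is $s$-maximal while $x'$'s entry at $n+1$ is the non-$s$-maximal $y_{n+1}$, so $x'_{n+1} <_s$ the $s$-maximal edge, and then the $\leq_r$ order inherited along the horizontal structure of the diagram (Figure~\ref{fig:edgeset}) propagates this to an inequality $x<_r x'$. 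The analogous statement $Z_n \le_r Z_{n+1}$, needed to complete the proof of Theorem~\ref{fg:200} via Lemma~\ref{fg:210}, follows by the same argument with the roles of $s$-min and $s$-max, and of the two $\tau$-types, interchanged, so I would state it as a remark rather than repeat the computation.
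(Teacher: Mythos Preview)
Your proposal has a genuine gap. The central claim that ``the first index at which they can disagree is $n$ or $n+1$'' is false in half of the cases, and this is precisely where the real content of the argument lies.

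The correct setup is as in the paper: for $x=(e_i)\in Y_n$ and $x'=(e'_i)\in Y_{n+1}$, let $\ell>n$ be the least index with $r(e_\ell)=r(e'_\ell)$; since both tails are $s$-maximal beyond $n+1$ and the $s$-maximal outgoing edge from any vertex is unique, $e_i=e'_i$ for all $i>\ell$, so the $\leq_r$ comparison is decided at $\ell$. One then splits on the $\tau$-types of \emph{both} $E_n$ and $E_{n+1}$. When the pair of types is $(0,0)$ or $(1,0)$ one indeed gets $\ell=n+1$ and the horizontal edge $e_{n+1}$ is $r$-minimal, so $e_{n+1}\leq_r e'_{n+1}$ follows directly from \eqref{eqn:orderR}. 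But when the types are $(0,1)$ or $(1,1)$, one has $\ell>n+1$, and $\ell$ can be arbitrarily large: the two paths follow distinct $s$-maximal trajectories through many levels before their ranges coincide. The key idea you are missing is the invariant that carries you from level $n+1$ to level $\ell$: if $r(e_i)=\gamma$ and $r(e'_i)=\gamma'$, then $\gamma$ sits \emph{immediately to the left of} $\gamma'$ on the bottom row of the permutation of $\mathcal{P}^i(\pi,\lambda,\tau)$, and this relation is preserved under each application of $\mathcal{P}$ until the step where the non-horizontal edge goes from $\gamma'$ to $\gamma$, at which point $e_\ell$ is horizontal (hence $r$-minimal) and $e'_\ell$ is not. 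This combinatorial tracking through the Rauzy induction is the substance of the proof; it is not a bookkeeping detail.

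Separately, your attempt to pass from ``$x'_{n+1}<_s$ the $s$-maximal edge'' to ``$x<_r x'$'' conflates the two orders: $<_s$ compares edges with the same source, $<_r$ compares edges with the same range, and there is no propagation from one to the other along horizontal structure. The $\leq_r$ comparison must be made at a single index where both edges share their range, and that index is $\ell$, not $n$ or $n+1$ in general.
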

            Let $x = \{e_i\} $ be in $Y_n$ and $x' = \{e'_i\} $ be 
            in $Y_{n+1}$. Let  $\ell>n$ be the smallest
             integer where $r(e_i) = r(e'_i)$. We will show that 
             $e_\ell\leq_r e_\ell'$. We first treat two simple cases.
    \begin{lemma}
    \label{fg:230}
    If $E_n,E_{n+1}$ are respectively of $\tau$-type 0,0
     or 1,0, then $\ell=n+1$ and $e_{n+1}\leq_r e_{n+1}'$.
            \end{lemma}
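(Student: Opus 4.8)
\textbf{Proof plan for Lemma \ref{fg:230}.}

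The plan is to unwind the definitions of the two relevant RH-induction steps explicitly and read off the edge sets $E_n$ and $E_{n+1}$ from Figure \ref{fig:edgeset} (equivalently, from the matrices (\ref{eqn:Psi0}) and (\ref{eqn:Psi1})). Recall that $x\in Y_n$ means $x_n = y_n$, the unique $s$-non-maximal edge of $E_n$ at the vertex $s^{-1}(v_{\beta_{n-1}(\varepsilon)})$, and $x_i$ is $s$-maximal for all $i>n$; similarly $x'\in Y_{n+1}$ means $x'_{n+1}=y_{n+1}$ and $x'_i$ is $s$-maximal for $i>n+1$, while $x'_n$ (being in $Y_{n+1}$) may be any edge of $E_n$. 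The claim $\ell=n+1$ amounts to showing $r(x_n)\neq r(x'_n)$ can fail only at level $n+1$, i.e. that the first level at which the two paths land on a common vertex is $n+1$; in fact it is cleaner to observe that $x_{n+1}$ and $x'_{n+1}$ have ranges $r(x_{n+1})=r(x'_{n+1})$ automatically when $E_{n+1}$ is of $\tau$-type $0$, because then the non-horizontal edge $e_{n+1}$ is $s$-maximal, so $x_{n+1}$ (which is $s$-maximal) and $x'_{n+1}=y_{n+1}$ (the $s$-non-maximal edge, hence horizontal) — wait, one must be careful here, so I would instead argue directly that both $e_{n+1}$ and the horizontal edge through the relevant vertex exist in $E_{n+1}$ and pin down $\ell=n+1$ by checking the source/range bookkeeping of $\beta_{n-1}(\varepsilon)$ versus $\beta_n(\varepsilon)$.

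The two cases to treat are: (i) $E_n$ of $\tau$-type $0$ and $E_{n+1}$ of $\tau$-type $0$; (ii) $E_n$ of $\tau$-type $1$ and $E_{n+1}$ of $\tau$-type $0$. In each case I would write down which vertex of $V_n$ carries two outgoing edges, which vertex of $V_{n+1}$ carries two outgoing edges, and which vertex of $V_{n+1}$ carries two incoming edges (namely $r^{-1}_{n+1}(\alpha_{n+1}(1-\varepsilon_{n+1}))$ with $\varepsilon_{n+1}=0$). The point is that since $E_{n+1}$ is of $\tau$-type $0$, the order $\leq_r$ on $r^{-1}_{n+1}(\alpha_{n+1}(1))$ is $e_{\alpha_{n+1}(1)}<_r e_{n+1}$ by (\ref{eqn:orderR}). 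Now $x\in Y_n$ forces $x_{n+1}$ to be $s$-maximal; one checks from the $\tau$-type-$0$ structure of $E_{n+1}$ that the $s$-maximal edge with the correct source is precisely the horizontal edge $e_{\alpha_{n+1}(1)}$ (this uses that $\alpha_{n+1}(1)=\beta_n(0)$ and that at that vertex the $s$-order puts the horizontal edge on top, cf. (\ref{eqn:orderS}), first line). Meanwhile $x'\in Y_{n+1}$ forces $x'_{n+1}=y_{n+1}$, and $y_{n+1}$ is the $s$-non-maximal edge at its vertex, which one identifies as $e_{n+1}$. Hence $e_\ell = e_{n+1} = x_{n+1}$ equals the horizontal edge $e_{\alpha_{n+1}(1)}$ for $x$ and equals $e_{n+1}$ for $x'$; since these share range $\alpha_{n+1}(1)$ this is exactly the level $\ell = n+1$, and $e_{n+1}^{(x)} = e_{\alpha_{n+1}(1)} <_r e_{n+1} = e_{n+1}^{(x')}$ by (\ref{eqn:orderR}), giving $e_{n+1}\leq_r e_{n+1}'$.

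I expect the main obstacle to be the purely bookkeeping task of correctly tracking the symbols $\alpha_n(\varepsilon)$, $\beta_n(\varepsilon)$ across two consecutive induction steps under the two combinations of types, and in particular confirming that for $x\in Y_n$ the edge $x_{n+1}$ — which is only required to be $s$-maximal — is forced to be the specific horizontal edge $e_{\alpha_{n+1}(1)}$ rather than some other $s$-maximal edge with a different source; this requires knowing the source vertex of $x_{n+1}$ is $r(x_n) = r(y_n)$ and chasing which vertex that is. Once the symbol dictionary is set up, each of the two cases is a short finite check against Figure \ref{fig:edgeset}, and the inequality $e_\ell\leq_r e_\ell'$ is immediate from (\ref{eqn:orderR}). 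The lemma will then be invoked, together with a parallel lemma for the $1,1$ and $0,1$ cases and an inductive argument handling $\ell>n+1$, to establish Proposition \ref{fg:220} and ultimately Theorem \ref{fg:200}.
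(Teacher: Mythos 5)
Your plan follows the same line as the paper's proof: for both type combinations you identify $x_{n+1}$ as a horizontal edge (hence $r$\nobreakdash-minimal at its range) and $x'_{n+1}$ as the non-horizontal edge $e_{n+1}$ (the $s$-non-maximal edge at level $n+1$, which is $r$-maximal), check that both have range $\alpha_{n+1}(1)$, conclude $\ell=n+1$, and apply (\ref{eqn:orderR}). That is exactly what the paper does, phrased in terms of the concatenations $(e_n,e_{n+1})$.

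One thing you should correct in the bookkeeping. You claim $\alpha_{n+1}(1)=\beta_n(0)$ and appeal to the $s$-order at that vertex to force $x_{n+1}$ to be horizontal. In fact these two symbols are always distinct: after a $\tau$-type-$0$ step, inspection of (\ref{typeH0perm}) gives $\alpha_{n+1}(0)=\beta_n(0)$ and $\alpha_{n+1}(1)=\alpha_n(1)$, and $\alpha_n(1)\neq\beta_n(0)$ since $\beta_n(0)$ is by definition the symbol immediately to the right of $\alpha_n(1)$ on the top row. (The identifications displayed in (\ref{eqn:orderS}) appear to have the $\alpha_n(0)$ and $\alpha_n(1)$ labels swapped relative to this, which is likely what led you astray.) The correct --- and simpler --- reason that $x_{n+1}$ is horizontal is precisely that $r(x_n)=\alpha_n(1)\neq\beta_n(0)$, so $r(x_n)$ is the source of exactly one edge of $E_{n+1}$, the horizontal one, and no appeal to the $s$-order is needed. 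The same observation handles the $1,0$ case (there $r(x_n)=\beta_{n-1}(1)=\alpha_n(1)$ and again $\alpha_n(1)\neq\beta_n(0)$). With that fix your argument matches the paper's and is complete.
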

            \begin{proof}
      If they are of type 0,0, it is immediate to check that 
      $(e_n,e_{n+1})$ is the concatenation of the $s$-min path 
      from $\beta(0)$ to $\alpha(0)$ followed by the
       horizontal edge $e_{\alpha(0)}$, whereas $e_{n+1}'$ is
        the $s$-min path from some vertex to $v_{\alpha(0)}$, 
        meaning that $r(e_{n+1}) = r(e_{n+1}')$. Since horizontal 
        paths are always $r$-min, it follows that $e_{n+1}\leq_r e_{n+1}'$.
              
   If they are of type 1,0, then it is immediate to check 
   that $(e_n,e_{n+1})$ is the horizontal path associated 
   with symbol $\beta(1)$, whereas $e_{n+1}'$ is the $s$-min 
   path from some vertex to $\beta(1) = r(e_{n+1})$. Again, 
   since horizontal paths are always $r$-min, it follows 
   that $e_{n+1}\leq_r e_{n+1}'$.
            \end{proof}
            Thus we are left to inspect the cases where $E_n,E_{n+1}$ 
            are respectively of types 0,1 or 1,1.
            \begin{lemma}
            \label{fg:240}
  Suppose $E_n,E_{n+1}$ are respectively of types 0,1 or 1,1. 
  For $n+1\leq i<\ell-1$, if $r(e_i) = \gamma$ and $r(e_i') = \gamma'$, 
  then the symbol $\gamma$ is immediately to the left of $\gamma'$ 
  on the bottom row of the permutation defined by 
  $\mathcal{P}^i(\pi,\lambda,\tau)$.
            \end{lemma}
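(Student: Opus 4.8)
The plan is to prove Lemma \ref{fg:240} by induction on $i$, starting from the base case $i = n+1$ and showing that the relative position (``immediately to the left'') of $r(e_i)$ and $r(e_i')$ on the bottom row is propagated one step at a time by RH induction, as long as we have not yet reached the index $\ell$ where the two ranges coincide. First I would unwind the definitions of $Y_n$ and $Y_{n+1}$ together with the $\tau$-type bookkeeping: since $x \in Y_n$, the edges $e_i$ for $i > n$ are all $s$-maximal, and since $x' \in Y_{n+1}$, the edges $e_i'$ for $i > n+1$ are all $s$-maximal, while $e_{n+1}' = y_{n+1}$ is the distinguished non-$s$-maximal edge. Using the explicit description of the edge set in Figure \ref{fig:edgeset} and equations (\ref{eqn:Psi0}), (\ref{eqn:Psi1}), in the type $0,1$ (resp. $1,1$) case one reads off exactly which symbol $r(e_{n+1})$ is and which symbol $r(e_{n+1}')$ is, and checks by inspection of the permutation $\mathcal{P}^{n+1}(\pi,\lambda,\tau)$ that the former sits immediately to the left of the latter on $\pi_1$. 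This is the base case.

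For the inductive step, suppose $n+1 \leq i < \ell - 1$ and that $r(e_i) = \gamma$ lies immediately to the left of $r(e_i') = \gamma'$ on the bottom row of $\pi^{(i)} := \mathcal{P}^i(\pi,\lambda,\tau)$. Since $i+1 < \ell$, we have $r(e_{i+1}) \neq r(e_{i+1}')$, so in particular the two edges $e_{i+1}, e_{i+1}'$ are genuinely different; moreover $e_{i+1}$ is $s$-maximal (because $i+1 > n$) and $e_{i+1}'$ is $s$-maximal (because $i+1 > n+1$). Now I would argue that the source of an $s$-maximal edge in $E_{i+1}$ whose source is some symbol $\delta$ has range determined entirely by whether $\delta$ equals the special symbol $\beta^{(i)}(\varepsilon_i)$ appearing in the $\tau$-type $\varepsilon_i$ of $\pi^{(i)}$: if $\delta \neq \beta^{(i)}(\varepsilon_i)$ then the edge is the horizontal edge and $r = \delta$; if $\delta = \beta^{(i)}(\varepsilon_i)$ then (in the type consistent with $s$-maximality) $r$ is $\alpha^{(i)}(1-\varepsilon_i)$ or $\alpha^{(i)}(\varepsilon_i)$ depending on the type. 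The key point is the compatibility of the range maps: $s(e_{i+1}) = r(e_i) = \gamma$ and $s(e_{i+1}') = r(e_i') = \gamma'$, so we must track how the pair $(\gamma,\gamma')$ — adjacent on the bottom row of $\pi^{(i)}$ — gets mapped to the pair $(r(e_{i+1}), r(e_{i+1}'))$ and then compare with how the bottom row of $\pi^{(i)}$ transforms into the bottom row of $\pi^{(i+1)}$ under (\ref{typeH0perm}) or (\ref{fg:300}). Because RH induction, when of type $\varepsilon$, only moves one symbol ($\beta^{(i)}(\varepsilon)$) to the far end of row $1-\varepsilon$ and otherwise preserves the left-to-right order, adjacency on the bottom row is preserved except in the one spot where a symbol is extracted; the hypothesis that $\gamma, \gamma'$ are adjacent and the hypothesis $i < \ell - 1$ (so the ``extraction spot'' does not collapse $\gamma$ and $\gamma'$ into the same position, which would force their ranges to be equal, contradicting $i + 1 < \ell$) together guarantee that $r(e_{i+1})$ stays immediately to the left of $r(e_{i+1}')$ on the bottom row of $\pi^{(i+1)}$.

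The main obstacle I anticipate is the case analysis in this inductive step: there are several subcases according to (a) whether $\pi^{(i)}$ is of $\tau$-type $0$ or $1$, and (b) whether $\gamma$ or $\gamma'$ coincides with one of the distinguished symbols $\alpha^{(i)}(0), \alpha^{(i)}(1), \beta^{(i)}(0), \beta^{(i)}(1)$ that are affected by the induction step. Each subcase is a finite bookkeeping computation comparing (\ref{typeH0perm})/(\ref{fg:300}) with the edge-set description, but they must be checked exhaustively. I would organize the argument so that the ``generic'' subcase (neither $\gamma$ nor $\gamma'$ is special, both edges horizontal) is disposed of immediately by the order-preservation of the permutation rewriting, and then handle the finitely many special subcases individually, in each one using the constraint $i < \ell - 1$ to rule out the configuration that would make the two ranges agree prematurely. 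Throughout, Lemma \ref{fg:230} handles the complementary types $0,0$ and $1,0$ in a single step, so the present lemma only needs the types $0,1$ and $1,1$, which simplifies the analysis and matches the phrasing of the statement. Once this lemma is in hand, the proof of Proposition \ref{fg:220} will follow by running the inductive description down to $i = \ell - 1$ and checking the single remaining step directly, which in turn gives Theorem \ref{fg:200} via Lemma \ref{fg:210}.
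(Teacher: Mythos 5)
Your proposal takes essentially the same route as the paper: the base case is exactly Lemma \ref{fg:250}, and the inductive step, as in the paper's proof, rests on the observation that a $\tau$-type-$0$ step leaves the bottom row untouched while a $\tau$-type-$1$ step moves only the single symbol $\beta^{(i)}(1)$ to the end of the bottom row, with $i<\ell-1$ invoked to exclude the one bad configuration. Two small slips to fix before writing this up: for type $\varepsilon$ the symbol $\beta(\varepsilon)$ is moved to the end of row $\varepsilon$, not row $1-\varepsilon$ (so it is type $1$, not type $0$, that disturbs the bottom row); and the bad configuration is not that $\gamma$ and $\gamma'$ ``collapse to the same position'' on the bottom row but rather that, when $\gamma'=\beta^{(i)}(1)$ is the extracted symbol, the non-horizontal $s$-maximal edge out of $\gamma'$ has range $\alpha^{(i+1)}(0)=\alpha^{(i)}(0)$, which equals $\gamma$ because $\beta(1)$ sits immediately to the right of $\alpha(0)$ by definition, so that $r(e_{i+1})=r(e_{i+1}')$ and hence $i+1=\ell$.
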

            Before proving this lemma, let us prove the proposition
             assuming the lemma.
            \begin{proof}[Proof of Proposition \ref{fg:220} 
            assuming Lemma \ref{fg:240}]
    First note that if $r(e_\ell) = r(e_\ell')$, 
    then \newline
    $\mathcal{P}^{\ell-1}(\pi,\lambda,\tau)$ is of 
    $\tau$-type 1, as this is the only way that we can have a 
    non-horizontal $s$-max edge in $E_\ell$. If $r(e_{\ell-1})= \gamma$ 
    and $r(e'_{\ell-1}) = \gamma'$ and $\gamma$ is immediately
     to the left of of $\gamma'$, then by the definition (
     \ref{eqn:matrices}) of $M_\ell$, the non-horizontal
      edge goes from $\gamma'$ in $V_{\ell-1}$ to $\gamma$ in $V_\ell$, 
      and so $e_\ell\leq_r e_\ell'$, showing that $Y_n\leq_r Y_{n+1}$.
            \end{proof}                    
      We now move to prove Lemma \ref{fg:240}. 
      To get us started, we have the following.
            \begin{lemma}
     \label{fg:250}
    If $E_n,E_{n+1}$ are respectively of $\tau$-type 0,1 or 1,1, 
    then $\ell>n+1$ and the permutation associated to
     $\mathcal{P}^{n+1}(\pi,\lambda,\tau)$ is of the form
              \begin{equation}
    \label{eqn:bottomGammas}
   \left(\begin{array}{lr} \cdots&\cdots \\
    \cdots &\gamma \gamma'\end{array} \right)
              \end{equation}
     for some $\gamma,\gamma'$ in $\mathcal{A}$,
      then $r(e_{n+1}) = \gamma$ and $r(e_{n+1}') = \gamma'$.
            \end{lemma}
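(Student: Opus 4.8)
The plan is to prove Lemma \ref{fg:250} by unwinding the definitions of RH induction on permutations (\ref{typeH0perm}) and (\ref{fg:300}) together with the definition of the matrices $M_n$ in (\ref{eqn:matrices}) and the orders (\ref{eqn:orderR})--(\ref{eqn:orderS}). First I would recall that $x \in Y_n$ means $x_n = y_n$ is the unique non-$s$-max edge in $E_n$ and $x_i$ is $s$-max for all $i > n$; by the order definition (\ref{eqn:orderS}), $x_n$ being not $s$-max forces $\mathcal{P}^{n-1}(\pi,\lambda,\tau)$ to be of $\tau$-type $0$ if $n$ is the index where the last non-$s$-max edge occurs, while $x_{n+1}$ being $s$-max combined with $E_{n+1}$ being of $\tau$-type $1$ (by hypothesis) means $x_{n+1}$ is the non-horizontal edge of $E_{n+1}$, whose source is $\beta_n(1) = \alpha_{n+1}(0)$ and whose range is $\alpha_{n+1}(1-1) = \alpha_{n+1}(0)$. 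Wait --- more carefully, when $E_{n+1}$ is type $1$, the $s$-max edge in $E_{n+1}$ is precisely the non-horizontal one (by (\ref{eqn:orderS}), $e_{\alpha_{n+1}(0)} = e_{\beta_n(1)} <_s e_{n+1}$, so $e_{n+1}$ is $s$-max), and it has $s(e_{n+1}) = \beta_n(1)$, $r(e_{n+1}) = \alpha_{n+1}(0)$. So I would extract that $r(e_{n+1}) = \alpha_{n+1}(0)$ and similarly $r(e'_{n+1})$ for $x' \in Y_{n+1}$.

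Next I would determine $r(e'_{n+1})$ for $x' \in Y_{n+1}$. Since $x' \in Y_{n+1}$, its edge $x'_{n+1} = y_{n+1}$ is the unique non-$s$-max edge in $E_{n+1}$, and since $E_{n+1}$ is of $\tau$-type $1$, the non-$s$-max edges are exactly the horizontal edges; the one that is $s$-comparable to $e_{n+1}$ at the relevant vertex is $e_{\beta_n(1)} = e_{\alpha_{n+1}(0)}$, which is horizontal, so $r(e'_{n+1}) = \alpha_{n+1}(0)$ as well, and therefore $s(e'_{n+1}) = \alpha_{n+1}(0)$. Hmm, but then $r(e_{n+1}) = r(e'_{n+1})$ would give $\ell = n+1$, contradicting the claim $\ell > n+1$. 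I need to be more careful: the point is that $y_{n+1}$ is the specific non-$s$-max edge, and there is a choice among horizontal edges only at the one vertex $\beta_n(1)$; at all other vertices there is a unique edge. So actually $r(e'_{n+1})$ need not equal $\alpha_{n+1}(0)$; rather $x'_{n+1}$ ranges over all $s$-max paths... no. The resolution is that $Y_{n+1}$ is defined with $x'_{n+1} = y_{n+1}$ a \emph{fixed} edge, so $r(e'_{n+1})$ is a fixed vertex, and I would compute it directly from the combinatorics: when $E_{n+1}$ is type $1$ and we want the non-$s$-max edge, $y_{n+1} = e_{\alpha_{n+1}(0)}$ which is horizontal with range $\alpha_{n+1}(0)$. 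So I would instead argue that $r(e_{n+1}) = \alpha_{n+1}(1-\varepsilon_{n+1})$ where we must track that for $E_n$ of type $0$, $E_{n+1}$ of type $1$, the winners alternate, and that $\alpha_{n+1}(0) \ne$ the range of the $s$-max non-horizontal edge --- so in fact $\ell > n+1$ because the two ranges genuinely differ, and reading off (\ref{eqn:bottomGammas}) I would identify $\gamma = r(e_{n+1})$ and $\gamma' = r(e'_{n+1})$ as the last two entries of the bottom row of $\mathcal{P}^{n+1}(\pi,\lambda,\tau)$, using that $\mathcal{P}^{n+1}$ is reached from $\mathcal{P}^n$ (type $0$) via one more step.

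The main obstacle, which is exactly the bookkeeping I stumbled over above, is carefully tracking \emph{which} edge $y_{n+1}$ is and \emph{which} vertices are the ranges of the relevant edges in $E_{n+1}$ and $E_{n+2}$ as a function of the $\tau$-types of consecutive steps; this is a finite but delicate case analysis using the explicit permutation update rules (\ref{typeH0perm}), (\ref{fg:300}) and the matrices (\ref{eqn:Psi0}), (\ref{eqn:Psi1}). The strategy is to reduce everything to the statement that after a type-$0$ step followed by a type-$1$ step, the new bottom row of the permutation ends in a pair $\gamma\gamma'$ where $\gamma$ is the range of the $s$-max path-continuation of $y_n$ and $\gamma'$ is the range of the $s$-max path-continuation of $y_{n+1}$, which is precisely what is needed to feed into Lemma \ref{fg:240}'s induction. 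I would close by noting that the symmetric case (types $1,1$) follows by the same argument with the roles of the two rows interchanged, exactly as in the pattern established in \ref{fg:230}. The key inputs are: the order definitions (\ref{eqn:orderR})--(\ref{eqn:orderS}), the permutation update formulas, and the observation from Proposition \ref{fg:150} that $A_\varepsilon \ne \beta(\varepsilon)$ always, which guarantees the genuine inequality of ranges forcing $\ell > n+1$.
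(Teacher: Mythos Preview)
Your proposal identifies the correct overall strategy---a direct case analysis using the permutation update rules (\ref{typeH0perm}), (\ref{fg:300}) and the order definitions (\ref{eqn:orderS})---but the execution contains two concrete errors that leave a genuine gap.

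First, and most seriously, you conflate ``the $s$-max edge'' with ``the non-horizontal edge.'' When you write that ``$x_{n+1}$ being $s$-max combined with $E_{n+1}$ being of $\tau$-type $1$ means $x_{n+1}$ is the non-horizontal edge of $E_{n+1}$,'' this is wrong: the non-horizontal edge in $E_{n+1}$ is $s$-max \emph{only at its own source vertex} $\beta_n(1)$; at every other vertex $\alpha\in V_n$ the unique (horizontal) edge is trivially both $s$-max and $s$-min. So the continuation $e_{n+1}$ of $y_n$ is the non-horizontal edge only if $r(y_n)=\beta_n(1)$, which in the generic $(0,1)$ case fails (one has $r(y_n)=\alpha_{n-1}(1)=\alpha_n(1)\ne\beta_n(1)$). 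This is exactly why you reached the (false) conclusion $r(e_{n+1})=r(e'_{n+1})$ and then stalled. The paper avoids this by writing out the two-step permutation evolution explicitly in each case and simply reading off $r(e_{n+1})=\alpha_n(1)=\gamma$ and $r(e'_{n+1})=\beta_n(1)=\gamma'$, which are the last two bottom-row symbols of $\mathcal{P}^{n+1}(\pi,\lambda,\tau)$; it then handles the degenerate sub-cases (e.g.\ $\beta'(1)=\alpha(1)$) separately.

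Second, your closing claim that the $(1,1)$ case ``follows by the same argument with the roles of the two rows interchanged'' is incorrect: that row-swap symmetry relates $(0,1)$ to $(1,0)$, not to $(1,1)$. Both $(0,1)$ and $(1,1)$ concern the bottom row of the permutation, and the paper treats $(1,1)$ by a separate (though structurally parallel) explicit computation, again with degenerate sub-cases. To complete your argument you must actually carry out both computations, tracking whether $r(y_n)$ coincides with the source of the non-horizontal edge at the next level.
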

            \begin{proof}
    Suppose they are respectively of type 0,1. Then the sequence of
     permutations are of the form
              $$\left(\begin{array}{r} \cdots \alpha(1)\beta(0)\cdots
               \alpha(0)\\ \cdots \alpha(0)\beta(1)\cdots \alpha(1) 
                \end{array}\right) \mapsto 
              \left(\begin{array}{r} \cdots \alpha(1)\cdots 
              \alpha(0)\beta(0)\\ \cdots \beta(0)\beta'(1)\cdots
               \alpha(1)   \end{array}\right)  \mapsto 
               \left(\begin{array}{r} \cdots \alpha(1)\cdots 
               \alpha(0)\beta(0)\\ \cdots \beta(0)\cdots \alpha(1) 
               \beta'(1) \end{array}\right),$$
              assuming $\beta(0) \neq \alpha(0)$ and $\beta'(1) \neq \alpha(1)$.
               Now, by definition, $(e_n,e_{n+1})$ is the concatenation of
    the edge from $\beta(0)$ to the vertex $\alpha(1)$ followed by
     the horizontal edge associated to the symbol $\alpha(1)$,
      and so $\gamma = \alpha(1)$, whereas $e_{n+1}'$ is the horizontal
       edge associated to the symbol $\beta'(1)$ and $\gamma' = \beta'(1)$. 
              
      Note that it cannot be the case that both $\beta(0) = \alpha(0)$
       and $\beta'(1)= \alpha(1)$ as this would make the permutation 
       irreducible. Now, if $\beta(0) = \alpha(0)$ and $\beta'(1)\neq \alpha(1)$,
        then the sequence of permutations is of the form
              $$\left(\begin{array}{r} \cdots \alpha(1) \alpha(0)\\
               \cdots \alpha(0)\beta(1)\cdots \alpha(1)  \end{array}\right) 
               \mapsto \left(\begin{array}{r} \cdots \alpha(1) \alpha(0)\\ 
               \cdots \alpha(0)\beta(1)\cdots \alpha(1)   \end{array}\right)
                 \mapsto \left(\begin{array}{r} \cdots \cdots \cdots \alpha(1) 
                 \alpha(0) \\ \cdots \alpha(0)\cdots \alpha(1) \beta(1)
                  \end{array}\right).$$
              Here, $(e_n,e_{n+1})$ is the concatenation of the 
              edge from $\alpha(0)$ to $\alpha(1)$ followed
               by the horizontal edge associated to the symbol
                $\alpha(1)$, and so $\gamma = \alpha(1)$, whereas 
                $e_{n+1}'$ is the horizontal edge associated to the 
                symbol $\beta(1)$ and $\gamma' = \beta(1)$, and the
                 result also holds here.
              
    If $\beta(0) \neq \alpha(0)$ and $\beta'(1)= \alpha(1)$,
     then the sequence of permutations is of the form
              $$\left(\begin{array}{r} \cdots \alpha(1)\beta(0)\cdots
               \alpha(0)\\ \cdots \alpha(0)\beta(1)\cdots \alpha(1) 
                \end{array}\right) \mapsto \left(\begin{array}{r}
                 \cdots \alpha(1)\cdots \alpha(0)\beta(0)\\ 
                 \cdots \beta(0) \alpha(1)   \end{array}\right)
                   \mapsto \left(\begin{array}{r} \cdots \alpha(1)\cdots 
                   \alpha(0)\beta(0)\\ \cdots \beta(0) \alpha(1) \end{array}\right),$$
     Here, $(e_n,e_{n+1})$ is the concatenation of the edge from $\beta(0)$ to 
     $\alpha(1)$ followed by the (non-horizontal) path from
      $\beta'(1) = \alpha(1)$ to $\beta(0)$, whereas $e_{n+1}'$ 
      is the horizontal edge associated to the symbol $\alpha(1)$,
       and so $(\gamma,\gamma') = (\beta(0),\alpha(1))$ and the
        case of types 0,1 is proved.
              
    Now suppose they are respectively of type 1,1. Then the sequence 
    of permutations are of the form
              $$\left(\begin{array}{r} \cdots \alpha(1)\beta(0)\cdots 
              \alpha(0)\\ \cdots \alpha(0)\beta(1)\cdots \alpha(1)  
              \end{array}\right) \mapsto \left(\begin{array}{r}  \cdots
               \beta(1)\beta'(0) \cdots \alpha(0) \\ \cdots \alpha(0)\beta'(1) 
               \cdots \alpha(1)\beta(1)   \end{array}\right)  \mapsto 
               \left(\begin{array}{r} \cdots \cdots  \cdots \cdots
               \alpha(0) \\ \cdots \alpha(0)\cdots \alpha(1) \beta(1) 
               \beta'(1) \end{array}\right),$$
   assuming $\beta(1) \neq \alpha(1)$ (note that $\beta'(1) \neq \beta(1)$ as
    equality would imply that $\alpha(0) = \alpha(1)$ making the original permutation 
   reducible). Now, by definition, $e_{n+1}'$ is the horizontal edge 
   associated to the symbol $\beta'(1)\neq \beta(1)$, whereas $(e_n,e_{n+1})$
    is the concatenation of the horizontal edge associated to the symbol
     $\beta(1)$ followed by the horizontal edge associated to the 
     same symbol, $\beta(1)$, and so $\gamma = \beta(1)$ and $\gamma' = \beta'(1)$.
              
  If $\alpha(1) = \beta(1)$, then the starting permutation is fixed under 
  $\mathcal{P}$ and $\mathcal{P}^2$ (under $\tau$-type 1) and it is
   of the form
              $$\left(\begin{array}{r} \cdots \alpha(1)\beta(0)\cdots \alpha(0)\\
               \cdots \alpha(0) \alpha(1)  \end{array}\right) .$$
              In this case, $(e_n,e_{n+1})$ is the concatenation of the 
              horizontal 
              edge with symbol $\beta(1) = \alpha(1)$ followed by
               the (non-horizontal) edge from $\beta(1)$ to $\alpha(0)$,
                whereas $e_{n+1}'$ is the horizontal edge with symbol 
                $\beta(1) = \alpha(1)$. So $(\gamma,\gamma') = 
                (\alpha(0),\alpha(1))$ in this case and the lemma is proved.
            \end{proof}
            
     \begin{proof}[Proof of Lemma \ref{fg:240}]
   Given Lemma \ref{fg:250}, we only need to prove that this 
   property does not change when applying $\mathcal{P}$. Now, if $E_{n+2}$
    is of $\tau$-type $0$ then $e_{n+2},e_{n+2}'$ are both horizontal 
    edges and the condition in the permutation in Lemma \ref{fg:250} 
    does not change. In general, going through an edge set of $\tau$-type 0 does 
    not change anything: if $e_i,e_i'$ are horizontal edges and have
     symbols $\gamma,\gamma'$, respectively, and $\gamma$ sits to the 
     left of $\gamma'$ in the bottom row, and $E_i$ is of $\tau$-type 0,
      then the new permutation will have $\gamma$ and $\gamma'$ in the 
      same relative positions in the bottom row. Thus it is only when 
      we get to an edge set $E_i$ of $\tau$-type 1 that things may change.
              
     Let $e_{i},e_{i}'\in E_{i}$ have $r(e_{i}) = v_\gamma$ and
      $r(e_{i}') = v_{\gamma'}$ and such that the permutation of 
      $\mathcal{P}^{i}(\pi,\lambda,\tau)$ is of $\tau$-type 1 and 
      has $\gamma$ immediately to the left of $\gamma'$ on the bottom row. 
      Then either
              \begin{enumerate}
              \item $\mathcal{P}^{i+1}(\pi,\lambda,\tau)$ has
               $\gamma,\gamma'$ in the same positions on the bottom row,
              \item $\mathcal{P}^{i+1}(\pi,\lambda,\tau)$ has 
              $\gamma,\gamma'$ shifted on spot to the left on 
              the bottom row, 
              \item $\mathcal{P}^{i+1}(\pi,\lambda,\tau)$ has
               $\gamma$ at the end of the bottom row, or
              \item $\mathcal{P}^{i+1}(\pi,\lambda,\tau)$ has 
              $\gamma'$ at the end of the bottom row.
              \end{enumerate}
     We now treat each case. In case (i), then the bottom row of the 
     permutation in $\mathcal{P}^{i+1}(\pi,\lambda,\tau)$ differs from 
     the bottom row of that of $\mathcal{P}^{i}(\pi,\lambda,\tau)$ on 
     some symbols to the right of $\gamma'$. This means that 
     $r(e_{i+1}) = v_{\gamma}$ and $r(e_{i+1}) = v_{\gamma'}$ and 
     so the condition is preserved. If case (ii) holds, then that 
     means that a symbol to the left of $\gamma$ got sent to the 
     end of the bottom row when going from $\mathcal{P}^{i}(\pi,\lambda,\tau)$ 
     to $\mathcal{P}^{i+1}(\pi,\lambda,\tau)$. This again implies that 
     $r(e_{i+1}) = v_{\gamma}$ and $r(e_{i+1}) = v_{\gamma'}$ and so 
     the condition is also preserved.
              
              Now suppose that case (iii) holds. Then the non-horizontal 
              edge $e_*\in E_{i+1}$ goes from $\beta_i(1) = \gamma$ 
              to $\alpha_i(0)\neq \gamma'$. This edge is $s$-max
              and so since $s(e_*) = r(e_i)$ we have that $e_* = e_{i+1}$.
               Since $\gamma$ got moved to the end of the row, we have 
               that $\gamma'$ sits immediately to the right of $\alpha_i(0)$. 
               Since $r(e_{i+1}) = \alpha_i(0)$, the condition is preserved.
              
        Finally, in case (iv), since $\gamma'$ gets moved to the end of 
        the bottom row this means that the non-horizontal edge in $E_{i+1}$
         goes from $\gamma'$ to $\gamma$ and so $r(e_{i+1}) = r(e_{i+1}')$.
          This can only happen if $i+1 = \ell$ by the definition of $\ell$, 
          but we are assuming $i<\ell-1$, so this cannot happen. We
           have proved that the condition is preserved under every
            case.
            \end{proof}
            We now move to prove that $Z_n\leq_r Z_{n+1}$. It is
             done through the same arguments used to show that 
             $Y_n\leq_r Y_{n+1}$ (Proposition \ref{fg:150}).
            \begin{prop}
              \label{fg:260}
              $Z_n\leq_r Z_{n+1}$
            \end{prop}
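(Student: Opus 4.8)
The plan is to mirror, step by step, the proof of Proposition \ref{fg:220}, replacing every occurrence of ``$s$-max'' by ``$s$-min'', every ``$r$-min'' by the appropriate statement about the order $\leq_r$, and swapping the roles of the two rows of the permutations, the two $\tau$-types, and the symbols $\alpha(0)\leftrightarrow\alpha(1)$, $\beta(0)\leftrightarrow\beta(1)$ throughout. Concretely, fix $x=\{e_i\}\in Z_n$ and $x'=\{e_i'\}\in Z_{n+1}$, let $\ell>n$ be the smallest index where $r(e_\ell)=r(e_\ell')$, and aim to show $e_\ell<_r e_\ell'$ (or more precisely that at the first coincidence of ranges the edge of $x$ is $\leq_r$ the edge of $x'$), which by the lexicographic definition of $\leq_r$ (which reads right-to-left) gives $x\leq_r x'$, hence $Z_n\leq_r Z_{n+1}$.

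First I would establish the analogue of Lemma \ref{fg:230}: if $E_n,E_{n+1}$ are respectively of $\tau$-type $1,1$ or $0,1$, then $\ell=n+1$ and $e_{n+1}\leq_r e_{n+1}'$. The argument is the dual of the one given: in the $1,1$ case $(e_n,e_{n+1})$ is the concatenation of the $s$-max path from $\beta(1)$ to $\alpha(1)$ followed by the horizontal edge $e_{\alpha(1)}$, while $e_{n+1}'$ is the $s$-max path from some vertex to $\alpha(1)$, so $r(e_{n+1})=r(e_{n+1}')$; since horizontal edges are always $r$-min the inequality follows. The $0,1$ case is handled identically using the dual description of the edge sets in (\ref{eqn:matrices}) under $\tau$-type $0$. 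That leaves the cases where $E_n,E_{n+1}$ are respectively of types $1,0$ or $0,0$.

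For those remaining cases I would prove the dual of Lemma \ref{fg:250} and then the dual of Lemma \ref{fg:240}: namely, if $E_n,E_{n+1}$ are of type $1,0$ or $0,0$ then $\ell>n+1$ and the permutation of $\mathcal{P}^{n+1}(\pi,\lambda,\tau)$ has a pair $\gamma,\gamma'$ on its \emph{top} row with $\gamma$ immediately to the left of $\gamma'$, and $r(e_{n+1})=\gamma$, $r(e_{n+1}')=\gamma'$; moreover this ``adjacency on the top row'' property is preserved under each application of $\mathcal{P}$ up to index $\ell-1$, breaking only when a non-horizontal $s$-min edge in $E_\ell$ merges the two ranges, at which point (\ref{eqn:matrices}) forces $e_\ell<_r e_\ell'$. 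The verification that the property propagates is, just as in Lemma \ref{fg:240}, a case analysis on whether the intervening $E_i$ is of $\tau$-type $0$ (horizontal edges, nothing changes on the relevant row) or $\tau$-type $1$ versus $0$; by the symmetry $0\leftrightarrow 1$ and top-row $\leftrightarrow$ bottom-row the four subcases (i)–(iv) go through verbatim with the relabelling.

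The main obstacle is purely bookkeeping: one must be careful that the duality is applied consistently, since $\leq_s$ reads left-to-right while $\leq_r$ reads right-to-left, so swapping ``min'' and ``max'' is \emph{not} simply reversing an order but also requires swapping which row of the permutation one tracks and which $\tau$-type triggers the non-horizontal edge. I expect no genuinely new idea is needed beyond Propositions \ref{fg:220} and \ref{fg:240}–\ref{fg:250}; once both $Y_n\leq_r Y_{n+1}$ and $Z_n\leq_r Z_{n+1}$ are in hand, the decomposition $T^+(x_0)\setminus\{x_0\}=\bigsqcup_n Y_n$ and $T^+(x_1)\setminus\{x_1\}=\bigsqcup_n Z_n$ together with $\Delta_s:Y_n\to Z_n$ being an order-preserving bijection shows $\Delta_s$ preserves $\leq_r$, and then Lemma \ref{fg:210} yields $\Sigma_{\mathcal{B}_{\pi,\lambda,\tau}}=\varnothing$, completing Theorem \ref{fg:200}.
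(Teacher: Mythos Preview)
Your approach is correct and coincides with the paper's own proof: the paper also sets up $x\in Z_n$, $x'\in Z_{n+1}$, lets $\ell$ be the first index with $r(e_\ell)=r(e_\ell')$, disposes of the cases where $E_{n+1}$ is of $\tau$-type $1$ directly (its Lemma~\ref{fg:262}), and for the remaining types $0,0$ and $1,0$ tracks adjacency on the \emph{top} row of the permutation (Lemmas~\ref{fg:264} and~\ref{fg:270}), exactly as you propose. One small slip: in your description of the $1,1$ case the non-horizontal edge $e_n$ goes from $\beta(1)$ to $\alpha(0)$, not to $\alpha(1)$, and correspondingly $r(e_{n+1})=r(e_{n+1}')=\alpha(0)$; this is the only place where the mechanical $0\leftrightarrow 1$ swap has to be applied with care, since the target of the non-horizontal edge is governed by the Rauzy move itself rather than by the $s$-order.
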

            Let $x = \{e_i\} $ be  in $Z_n$ and $x' = \{e'_i\} $ be in $Z_{n+1}$.
       Let $\ell>n$ be the smallest integer where
        $r(e_i) = r(e'_i)$. We will show that $e_\ell\leq_r e_\ell'$. 
            \begin{lemma}
            \label{fg:262}
   If $E_n,E_{n+1}$ are respectively of $\tau$-type 0,1 or 1,1, 
   then $\ell=n+1$ and $e_{n+1}\leq_r e_{n+1}'$.
            \end{lemma}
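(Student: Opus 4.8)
The plan is to argue along the lines of the proof of Lemma~\ref{fg:230}, which settled the companion statement (with $Z_n$ replaced by $Y_n$ and the opposite pair of $\tau$-types), by identifying the edges $e_n$, $e_{n+1}$ and $e_{n+1}'$ explicitly from the definitions of the edge sets (\ref{eqn:matrices}), the orders (\ref{eqn:orderR})--(\ref{eqn:orderS}), and the RH induction formulas.

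The structural fact I would isolate first is that, in any edge set $E_m$ of $\tau$-type~$1$, the unique non-horizontal edge is the $s$-maximal edge out of its doubled source vertex --- this is precisely what (\ref{eqn:orderS}) records --- so \emph{every} $s$-minimal edge of a type-$1$ edge set is horizontal. Since $E_{n+1}$ has $\tau$-type~$1$ in both cases of the lemma, and since $x\in Z_n$ forces $e_{n+1}$ to be $s$-minimal, it follows that $e_{n+1}$ is the horizontal edge whose symbol equals $r(e_n)$; hence $r(e_{n+1})=r(e_n)$ and, by (\ref{eqn:orderR}), $e_{n+1}$ is $r$-minimal. So it suffices to check that $r(e_n)=r(e_{n+1}')$: this gives $\ell=n+1$, and then $e_{n+1}\leq_r e_{n+1}'$ is automatic from $r$-minimality of horizontal edges.

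To verify $r(e_n)=r(e_{n+1}')$ I would split into the two type patterns. For $x\in Z_n$ the edge $e_n=z_n$ is, by (\ref{eqn:orderS}), the $s$-maximal of the two edges leaving the doubled source vertex of $E_n$: when $E_n$ has type~$0$ this is a horizontal edge and $r(e_n)$ is its symbol; when $E_n$ has type~$1$ it is the non-horizontal edge, whose range is read off from (\ref{eqn:matrices}) and the range convention for non-horizontal edges. For $x'\in Z_{n+1}$ the edge $e_{n+1}'=z_{n+1}$ is the non-horizontal (hence $s$-maximal) edge of the type-$1$ set $E_{n+1}$, whose range is likewise a prescribed last symbol of an iterate of RH induction. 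In the pattern $(0,1)$ the two symbols are matched using the type-$0$ renaming rule (\ref{typeH0perm}), which carries $\beta(0)$ to the end of the top row; in the pattern $(1,1)$ they are matched using the type-$1$ rule (\ref{fg:300}), which leaves the last top-row symbol fixed. In both cases the ranges coincide, so $\ell=n+1$ and the lemma follows.

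The main --- essentially the only --- obstacle is the index bookkeeping: one must keep careful track of which iterate $\mathcal{P}^{k}(\pi,\lambda,\tau)$ each of the symbols $\alpha(\varepsilon)$, $\beta(\varepsilon)$ refers to, and invoke the precise permutation formulas (\ref{typeH0perm}) and (\ref{fg:300}) for how those symbols propagate. Once this is pinned down there is no residual geometry, in contrast to the harder patterns (where a single symbol must be tracked through arbitrarily many induction steps, as in the proofs of Lemmas~\ref{fg:240} and~\ref{fg:250}), so this lemma should occupy only a short paragraph in the final write-up.
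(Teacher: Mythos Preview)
Your proposal is correct and follows essentially the same approach as the paper's proof: both arguments observe that, because $E_{n+1}$ has $\tau$-type~$1$ and $e_{n+1}$ must be $s$-minimal, the edge $e_{n+1}$ is horizontal (hence $r$-minimal with $r(e_{n+1})=r(e_n)$), and then verify by a direct case analysis on the type of $E_n$ that the non-horizontal $s$-maximal edge $e_{n+1}'$ has the same range. The paper phrases the two cases slightly more concretely---naming the common range as $\beta(0)$ in the $(0,1)$ case and $\alpha(0)$ in the $(1,1)$ case---but the content is identical to your bookkeeping via (\ref{typeH0perm}) and (\ref{fg:300}).
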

            \begin{proof}
              If they are of type 0,1, it is immediate to check
               that $(e_n,e_{n+1})$ is the concatenation of the 
               $s$-max path from $\beta(0)$ to $\beta(0)$
    followed by the horizontal edge $e_{\beta(0)}$, whereas $e_{n+1}'$ is 
    the (non-horizontal) $s$-max path from some vertex to 
    $\beta(0)$, meaning that $r(e_{n+1}) = r(e_{n+1}')$.
     Since horizontal paths are always $r$-min, it 
     follows that $e_{n+1}\leq_r e_{n+1}'$.
              
     If they are of type 1,1, then it is immediate to check that
      $(e_n,e_{n+1})$ is the non-horizontal horizontal path from
       $\beta(1)$ to $\alpha(0)$ followed by the horizontal path associated
        to the symbol $\alpha(0)$, whereas $e_{n+1}'$ is the $s$-max 
        path from some vertex to $v_{\alpha(0)} = r(e_{n+1})$. Again, 
        since horizontal paths are always $r$-min, it follows that
         $e_{n+1}\leq_r e_{n+1}'$.
            \end{proof}
            We now inspect the cases where $E_n,E_{n+1}$ are
             respectively of types 0,0 or 1,0.
            \begin{lemma}
              \label{fg:264}
              Suppose $E_n,E_{n+1}$ are respectively of types 0,0 or 1,0.
               For $n+1\leq i<\ell-1$, if $r(e_i) = v_\gamma$ and 
               $r(e_i') = \gamma'$, then the symbol $\gamma$ is immediately 
               to the left of $\gamma'$ on the top row of the 
               permutation defined by $\mathcal{P}^i(\pi,\lambda,\tau)$.
            \end{lemma}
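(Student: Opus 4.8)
The statement to prove, Lemma \ref{fg:264}, is the mirror image of Lemma \ref{fg:240}: it asserts that for the paths $x\in Z_n$, $x'\in Z_{n+1}$, and for all indices $i$ strictly before the merging index $\ell$, the range vertex $\gamma$ of $e_i$ sits immediately to the left of the range vertex $\gamma'$ of $e_i'$, but now \emph{on the top row} of the permutation $\mathcal{P}^i(\pi,\lambda,\tau)$ rather than the bottom row. The plan is to follow the exact same two-step architecture used for $Y_n\le_r Y_{n+1}$: first establish the base case at $i=n+1$ (an analogue of Lemma \ref{fg:250}), then show the property is preserved under one step of RH-induction.

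For the base case, I would start from the hypothesis that $E_n, E_{n+1}$ are of $\tau$-types $(0,0)$ or $(1,0)$. Here one uses the fact that $Z_n$ consists of paths whose $n$th edge is $z_n$ (the edge which is not $s$-min) and which are $s$-min above level $n$; so $e_{n+1}',\dots$ follow the $s$-min combinatorics, while $(e_n,e_{n+1})$ is forced by the structure of $z_n$. Writing out the two or three relevant permutation transitions (type $(0,0)$ and type $(1,0)$, with the sub-cases $\beta(\varepsilon)=\alpha(\varepsilon)$ or not, just as in the proof of Lemma \ref{fg:250}), one reads off $\gamma=r(e_{n+1})$ and $\gamma'=r(e_{n+1}')$ and checks that $\gamma$ is immediately left of $\gamma'$ on the top row. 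The key structural input is that RH-induction of $\tau$-type $0$ modifies the \emph{top} row (inserting $\beta(0)$ to the right of $\alpha(1)$ and shifting), which is precisely why the ``immediately to the left'' relation lands in the top row here, dual to the type-$1$ / bottom-row situation of Lemma \ref{fg:240}.

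For the inductive step, I would argue exactly as in the proof of Lemma \ref{fg:240} but with the roles of the two rows interchanged. Passing through an edge set $E_i$ of $\tau$-type $1$ leaves the \emph{top} row unchanged on the relevant symbols (it only moves bottom-row symbols), so the ``$\gamma$ immediately left of $\gamma'$ on the top row'' condition is untouched. The only place the condition can be threatened is when $E_i$ is of $\tau$-type $0$, and then there are four cases: $\gamma,\gamma'$ stay in place; they shift one spot left; $\gamma$ goes to the end of the top row; or $\gamma'$ goes to the end of the top row. In the first two cases the relative adjacency on the top row is preserved and one still has $r(e_{i+1})=\gamma$, $r(e_{i+1}')=\gamma'$. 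In the third case, the non-horizontal $s$-min edge of $E_{i+1}$ goes from $\beta_i(0)=\gamma$ to $\alpha_i(1)$, this edge equals $e_{i+1}$ since $s(e_{i+1})=r(e_i)=\gamma$, and $\gamma'$ ends up immediately right of $\alpha_i(1)$, so the condition persists. The fourth case, $\gamma'$ at the end of the top row, would force the non-horizontal edge in $E_{i+1}$ to run from $\gamma'$ to $\gamma$, giving $r(e_{i+1})=r(e_{i+1}')$, i.e. $i+1=\ell$, contradicting $i<\ell-1$.

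I expect the main obstacle to be purely bookkeeping: correctly enumerating the permutation transitions for $Z_n$ in the $(0,0)$ and $(1,0)$ cases (including the degenerate sub-cases where some $\beta(\varepsilon)$ coincides with $\alpha(\varepsilon)$, where irreducibility of $\pi$ must be invoked to rule out the simultaneous degeneracy), and making sure the $s$-min combinatorics of the ``primed'' path are transcribed without sign or side errors. The conceptual content is already fully present in the $Y_n$ argument; the proof here is obtained by the formal duality exchanging $s$-min$\leftrightarrow s$-max, $\tau$-type $0\leftrightarrow\tau$-type $1$, and top row $\leftrightarrow$ bottom row, and once Lemma \ref{fg:264} is in hand the proof of Proposition \ref{fg:260} ($Z_n\le_r Z_{n+1}$) follows verbatim from the proof of Proposition \ref{fg:220}, exactly as indicated in the text. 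Combining $Y_n\le_r Y_{n+1}$ and $Z_n\le_r Z_{n+1}$ shows $\Delta_s$ preserves $\le_r$, and then Lemma \ref{fg:210} yields $\Sigma_{\mathcal{B}_{\pi,\lambda,\tau}}=\varnothing$, completing Theorem \ref{fg:200}.
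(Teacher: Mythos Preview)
Your proposal is correct and matches the paper's own approach exactly: the paper's proof of Lemma~\ref{fg:264} simply states that it follows the argument of Lemma~\ref{fg:240} with $\tau$-type $\varepsilon$ replaced by $1-\varepsilon$ and bottom rows by top rows, invoking Lemma~\ref{fg:270} for the base case, and leaves the details to the reader. Your write-up supplies precisely those details, including the correct identification that type-$1$ steps leave the top row (and the $s$-min edges) unchanged while type-$0$ steps require the four-case analysis, so there is nothing to add.
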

            Before proving this lemma, let us prove the proposition assuming the lemma.
            \begin{proof}[Proof of Proposition \ref{fg:260} assuming
             Lemma \ref{fg:264}]
              First note that if $r(e_\ell) = r(e_\ell')$ then
               $\mathcal{P}^{\ell-1}(\pi,\lambda,\tau)$ is of 
               $\tau$-type 0, as this is the only way that we 
               can have a non-horizontal $s$-min edge in $E_\ell$. 
               If $r(e_{\ell-1})= \gamma$ and 
               $r(e'_{\ell-1}) = \gamma'$ and $\gamma$ is immediately
                to the left of of $\gamma'$, then by the definition 
                (\ref{eqn:matrices}) of $M_\ell$, the non-horizontal 
                edge goes from $\gamma'$ in $V_{\ell-1}$ to 
                $\gamma$ in $V_\ell$, and so $e_\ell\leq_r e_\ell'$, 
                showing that $Z_n\leq_r Z_{n+1}$.
            \end{proof}                    
            To prove Lemma \ref{fg:264}, we begin by proving 
            the analog of Lemma \ref{fg:250}.
            \begin{lemma}
              \label{fg:270}
              If $E_n,E_{n+1}$ are respectively of $\tau$-type 0,0 or 1,0,
               then $\ell>n+1$ and the permutation associated to
                $\mathcal{P}^{n+1}(\pi,\lambda,\tau)$ is of the form
              \begin{equation}
                \label{eqn:topGammas}
                \left(\begin{array}{lr} \cdots& \gamma \gamma' \\ 
                \cdots  &\cdots\end{array} \right),
              \end{equation}
              for some $\gamma,\gamma'$ in $\mathcal{A}$, then 
              $r(e_{n+1}) = \gamma$ and $r(e_{n+1}') = \gamma'$.
            \end{lemma}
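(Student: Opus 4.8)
The plan is to prove Lemma \ref{fg:270} by the same kind of explicit case analysis on how the permutations $\mathcal{P}^{n}(\pi,\lambda,\tau)$, $\mathcal{P}^{n+1}(\pi,\lambda,\tau)$, $\mathcal{P}^{n+2}(\pi,\lambda,\tau)$ evolve that was used in the proof of Lemma \ref{fg:250}, except that here we track $s$-\emph{minimal} edges rather than $s$-maximal ones and, correspondingly, the \emph{top} row of the permutation rather than the bottom row. The reason for this switch is the top/bottom duality built into RH induction: a $\tau$-type $0$ step acts on the top row of $\pi$ (equation (\ref{typeH0perm})) --- it appends the symbol $\beta(0)$ sitting just to the right of $\alpha(1)=\alpha^1_d$ to the end of the top row and shifts the intervening symbols left by one --- while a $\tau$-type $1$ step acts on the bottom row (equation (\ref{fg:300})); and, by the orders (\ref{eqn:orderS}), in a $\tau$-type $0$ edge set the unique non-horizontal edge is exactly the $s$-minimal one. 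Since a path $x=\{e_i\}$ in $Z_n$ is $s$-minimal for all $i>n$, every $e_i$ with $i>n$ is either a horizontal edge or that non-horizontal $s$-minimal edge, and the last rearrangement that decides the relative position of $r(e_{n+1})$ and $r(e_{n+1}')$ is the one carried out by the type-$0$ step $E_{n+1}$, which operates on the top row --- hence the conclusion refers to the final two symbols of the top row of $\mathcal{P}^{n+1}(\pi,\lambda,\tau)$. (Heuristically, Lemma \ref{fg:270} is Lemma \ref{fg:250} transported by the involution that transposes the two rows of $\pi$ and sends $\tau\mapsto -\tau$, interchanging $\tau$-types $0$ and $1$, $s$-maximal and $s$-minimal edges, $Y_n$ and $Z_n$, and the two rows; one could try to make this rigorous, but redoing the computation is cleaner.)

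First I would treat the pair of types $1,0$: the orders (\ref{eqn:orderS}) and the edge-set description (Figure \ref{fig:edgeset}, equation (\ref{eqn:matrices})) pin down $(e_n,e_{n+1})$ as the concatenation of the non-horizontal $s$-minimal step at level $n$ with a horizontal edge at level $n+1$, and $e_{n+1}'$ as the horizontal $s$-minimal edge at level $n+1$ whose symbol is the last entry of the top row of $\mathcal{P}^{n+1}(\pi,\lambda,\tau)$ and sits immediately to the right of $r(e_{n+1})$ there; this identifies $\gamma=r(e_{n+1})$ and $\gamma'=r(e_{n+1}')$ and shows $\ell>n+1$ because $\gamma\neq\gamma'$. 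Next I would treat the pair $0,0$, where both $e_n$ and $e_{n+1}$ are forced by $s$-minimality and the type-$0$ structure at levels $n$ and $n+1$; here one splits according to whether the relevant $\beta$-symbol coincides with the relevant $\alpha$-symbol at level $n$ --- and symmetrically for the primed path --- discarding the combination that would make $\pi$ reducible, exactly as in the proof of Lemma \ref{fg:250}. In each surviving subcase one writes out the short chain $\mathcal{P}^{n}(\pi,\lambda,\tau)\to\mathcal{P}^{n+1}(\pi,\lambda,\tau)\to\mathcal{P}^{n+2}(\pi,\lambda,\tau)$, reads off $r(e_{n+1})$ and $r(e_{n+1}')$ from where the appended and shifted symbols land on the top row, and checks they occupy consecutive slots in the required order; the degenerate subcases yield genuinely different chains, so each needs its own short diagram.

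The main obstacle is the uniform bookkeeping over these degenerate subcases: one must check in every one of them that $r(e_{n+1})\neq r(e_{n+1}')$ --- so that $\ell>n+1$ and the statement is non-vacuous --- and that $r(e_{n+1})$ and $r(e_{n+1}')$ are adjacent on the top row of $\mathcal{P}^{n+1}(\pi,\lambda,\tau)$ with $r(e_{n+1})$ on the left. Both reduce to the explicit rule (\ref{typeH0perm}) for how a type-$0$ step permutes the top row, combined with the fact (Propositions \ref{fg:150} and \ref{fg:160}) that the only $s$-minimal path fixing a single vertex is the horizontal loop at $A_0$; the latter rules out the pathology that $x$ or $x'$ stabilizes before level $\ell$ and so forces the chain of permutations to have the computed shape. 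Once the chain is pinned down in each subcase, identifying $\gamma$ and $\gamma'$ and verifying the left-adjacency is a one-line check, just as at the end of the proof of Lemma \ref{fg:250}.
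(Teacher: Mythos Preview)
Your plan is essentially the paper's own proof: a direct case analysis on the permutation chain for the type pairs $0,0$ and $1,0$ (with their degenerate subcases where a $\beta$-symbol coincides with an $\alpha$-symbol), obtained from the proof of Lemma~\ref{fg:250} by the top/bottom, type-$0$/type-$1$, $s$-max/$s$-min duality you describe. One slip to correct when you execute: in the $1,0$ case you call $e_n$ the ``non-horizontal $s$-minimal'' edge and $e_{n+1}'$ the ``horizontal $s$-minimal'' edge, but by definition $e_n=z_n$ and $e_{n+1}'=z_{n+1}$ are the \emph{non}-$s$-minimal edges at their levels --- in type $1$ the non-horizontal edge is $s$-max by (\ref{eqn:orderS}), and in type $0$ the horizontal edge at $\beta_n(0)$ is the $s$-max one --- so once you swap ``$s$-minimal'' for ``$s$-maximal'' in those two places your identifications of $\gamma,\gamma'$ match the paper's exactly (and the appeal to Propositions~\ref{fg:150}--\ref{fg:160} is unnecessary; the argument is purely combinatorial on the permutation data).
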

            \begin{proof}
              Suppose they are respectively of type 0,0. Then the 
              sequence of permutations are of the form
              $$\left(\begin{array}{r} \cdots \alpha(1)\beta(0)\cdots \alpha(0)\\
               \cdots \alpha(0)\beta(1)\cdots \alpha(1)  \end{array}\right) 
               \mapsto \left(\begin{array}{r}  \cdots \alpha(1)\beta'(0) 
               \cdots \alpha(0)\beta(0)   \\ \cdots \alpha(0)\beta(1) 
               \cdots \alpha(1)   \end{array}\right)  \mapsto 
               \left(\begin{array}{r}  \cdots \alpha(1)\cdots 
               \alpha(0) \beta(0) \beta'(0)  \\ \cdots \cdots 
                \cdots \cdots \alpha(1) \end{array}\right),$$
              assuming $\beta(0) \neq \alpha(0)$ (note that $\beta'(0) \neq \beta(0)$ 
              as equality would imply that $\alpha(0) = \alpha(1)$ making the 
              original permutation reducible). Now, by definition, $e_{n+1}'$ 
              is the horizontal edge associated to the symbol 
              $\beta'(0)\neq \beta(0)$, whereas $(e_n,e_{n+1})$
               is the concatenation of the horizontal edge 
              associated to the symbol $\beta(0)$ followed by the 
              horizontal edge associated to the same symbol, $\beta(0)$, 
              and so $\gamma = \beta(0)$ and $\gamma' = \beta'(0)$.
              
              If $\alpha(0) = \beta(0)$, then the starting permutation 
              is fixed under $\mathcal{P}$ and $\mathcal{P}^2$
               (under $\tau$-type 0) and it is of the form
              $$\left(\begin{array}{r}  \cdots \alpha(1) \alpha(0) \\ 
              \cdots \alpha(0)\beta(1)\cdots \alpha(1)  \end{array}\right) .$$
              In this case $(e_n,e_{n+1})$ is the concatenation of
               horizontal edge with symbol $\beta(0) = \alpha(0)$ 
               followed by the (non-horizontal) edge from $\beta(0)$ 
               to $\alpha(1)$, whereas $e_{n+1}'$ is the horizontal 
               edge with symbol $\beta(0) = \alpha(0)$. So
                $(\gamma,\gamma') = (\alpha(1),\alpha(0))$ in this case 
                and the lemma is proved for 
               type 0,0.
              
              
              If we have type 1,0, then the sequence of permutations are of 
              the form
              $$\left(\begin{array}{r} \cdots \alpha(1)\beta(0)\cdots \alpha(0)\\ 
              \cdots \alpha(0)\beta(1)\cdots \alpha(1)  \end{array}\right)
               \mapsto \left(\begin{array}{r}  \cdots \beta(1)\beta'(0)\cdots
                \alpha(0) \\ \cdots \alpha(0)\cdots \alpha(1)\beta(1)   
                \end{array}\right)  \mapsto \left(\begin{array}{r}  \cdots
                 \beta(1)\cdots \alpha(0) \beta'(0) \\ \cdots \alpha(0)\cdots
                  \alpha(1)\beta(1) \end{array}\right),$$
              assuming $\beta(1) \neq \alpha(1)$ and $\beta'(0) \neq \alpha(0)$. 
              Now, by definition, $(e_n,e_{n+1})$ is the concatenation of the 
              edge from $\beta(1)$ to the vertex $\alpha(0)$ 
              followed by the horizontal edge associataed to the symbol 
              $\alpha(0)$, and so $\gamma = \alpha(0)$, whereas $e_{n+1}'$
               is the horizontal edge associated to the symbol 
               $\beta'(0)$ and $\gamma' = \beta'(0)$. 
              
              Note that it cannot be the case that both $\beta(1) = \alpha(1)$ 
              and $\beta'(0)= \alpha(0)$ as this would make the permutation
               irreducible. Now, if $\beta(1) = \alpha(1)$ and
                $\beta'(0)\neq \alpha(0)$, then the sequence of permutations 
                is of the form
              $$\left(\begin{array}{r}  \cdots \alpha(1)\beta(0)\cdots \alpha(0)\\ 
               \cdots \alpha(0) \alpha(1)  \end{array}\right) \mapsto 
               \left(\begin{array}{r}  \cdots \alpha(1)\beta(0)\cdots 
               \alpha(0)  \\  \cdots \alpha(0) \alpha(1)   \end{array}\right) 
                \mapsto \left(\begin{array}{r}  \cdots \alpha(1)\cdots 
                \alpha(0) \beta(0) \\ \cdots \cdots \cdots \alpha(0) 
                \alpha(1) \end{array}\right).$$
              Here ,$(e_n,e_{n+1})$ is the concatenation of the edge 
              from $\alpha(1)$ to $\alpha(0)$ followed by the horizontal 
              edge associated to the symbol $\alpha(0)$, and so 
              $\gamma = \alpha(0)$, whereas $e_{n+1}'$ is the 
              horizontal edge associated to the symbol 
              $\beta(0)$ and $\gamma' = \beta(0)$, and the result also holds here.
              
              Finally, if $\beta(1) \neq \alpha(1)$ and 
              $\beta'(0)= \alpha(0)$, then the sequence of permutations 
              is of the form
              $$\left(\begin{array}{r} \cdots \alpha(1)\beta(0)\cdots 
              \alpha(0)\\ \cdots \alpha(0)\beta(1)\cdots \alpha(1)  
              \end{array}\right) \mapsto \left(\begin{array}{r}  
              \cdots \beta(1) \alpha(0) \\ \cdots \alpha(0)\cdots 
              \alpha(1)\beta(1)   \end{array}\right)  \mapsto 
              \left(\begin{array}{r}  \cdots \beta(1) \alpha(0) \\ 
              \cdots \alpha(0)\cdots \alpha(1)\beta(1) \end{array}\right),$$
              Here $(e_n,e_{n+1})$ is the concatenation of the edge from
               $\beta(1)$ to $\alpha(0)$ followed by the (non-horizontal) 
               path from $\beta'(0) = \alpha(0)$ to $\beta(1)$, whereas 
               $e_{n+1}'$ is the horizontal edge associated to 
               the symbol $\alpha(0)$, and so 
               $(\gamma,\gamma') = (\beta(0),\alpha(1))$ and the case of 
               types 0,1 is proved.
            \end{proof}
            
            \begin{proof}[Proof of Lemma \ref{fg:264}]
              The proof of this Lemma follows the same argument as the
               proof of Lemma \ref{fg:240} except $\tau$-type 
               $\varepsilon$ has to be replaced with type $1-\varepsilon$
                and bottom rows with top rows due to Lemma \ref{fg:270}.
                 We leave the details to the reader.
            \end{proof}
            \begin{proof}[Proof of Theorem \ref{fg:200}]
              By Propositions \ref{fg:220} and \ref{fg:260},
               $\Delta_s$ preserves the $\leq_r$ order. The 
               result then follows from Lemma \ref{fg:210}.
            \end{proof}

            Now that the bi-infinite ordered Bratteli 
            diagram has been defined for a typical $(\pi,\lambda,\tau)$, 
            we move on to define the states. Define the negative and positive
             cones of $\mathcal{B}_{\pi,\lambda,\tau}$ as
            $$\mathcal{C}^-_{\pi,\lambda,\tau}= \bigcap_{n>0}
             M_{\pi,\lambda,\tau}^{(-n)}\left( \mathbb{R}^\mathcal{A}_
             +\right)\hspace{.5in}\mbox{ and }
             \hspace{.5in}\mathcal{C}^+_{\pi,\lambda,\tau}= 
             \bigcap_{n>0} M_{\pi,\lambda,\tau}^{(n)*}\left( 
             \mathbb{R}^\mathcal{A}_+\right).$$
            Recalling Proposition \ref{BD:115}, we have the following.
            \begin{lemma}
            \label{fg:280}
              The set of states for $\mathcal{B}_{\pi,\lambda,\tau}$
               is parametrized by 
              $ \mathcal{C}^-_{\pi,\lambda,\tau}\times
                \mathcal{C}^+_{\pi,\lambda,\tau}$.
            \end{lemma}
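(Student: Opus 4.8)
The plan is to read the statement off the general description of states in Proposition \ref{BD:115}, specialised to the fact that every transition matrix $M_n$ of $\mathcal{B}_{\pi,\lambda,\tau}$ is invertible with determinant $1$ (Remark \ref{fg:140}(1)). First recall that, by Definition \ref{BD:75}(2), a state on $\mathcal{B}_{\pi,\lambda,\tau}$ is a pair $(\nu_r,\nu_s)$ of nonnegative functions on $V=\bigsqcup_{n\in\Z}\mathcal{A}$ satisfying the two \emph{independent} families of equations $\nu_r(v)=\sum_{r(e)=v}\nu_r(s(e))$ and $\nu_s(v)=\sum_{s(e)=v}\nu_s(r(e))$; no equation couples $\nu_r$ to $\nu_s$, so $S(\mathcal{B}_{\pi,\lambda,\tau})$ is literally the Cartesian product of the solution set of the $r$-equations and that of the $s$-equations, and it suffices to identify each factor. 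In matrix form (with $V_n=\mathcal{A}$ for all $n$ and $M_n$ the edge matrix from $V_{n-1}$ to $V_n$), the $r$-equations say $\nu_r|_{V_n}=M_n\,\nu_r|_{V_{n-1}}$ and the $s$-equations say $\nu_s|_{V_{n-1}}=M_n^{*}\,\nu_s|_{V_n}$, where $*$ denotes transpose.

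The next step uses invertibility of the $M_n$: each of these recursions has a unique two-sided solution once its value at level $0$ is prescribed, so $\nu_r$ is determined by $u:=\nu_r|_{V_0}\in\mathbb{R}^{\mathcal{A}}$ and $\nu_s$ by $w:=\nu_s|_{V_0}\in\mathbb{R}^{\mathcal{A}}$, and the map $(\nu_r,\nu_s)\mapsto(u,w)$ is injective on states. It remains to describe its image. Since the $M_n$ are nonnegative, applying them in the forward direction preserves nonnegativity, so for $n>0$ the vectors $\nu_r|_{V_n}=M_n\cdots M_1 u$ are automatically nonnegative; the only constraint on $u$ comes from the backward direction, and it reads precisely $u\in M_0M_{-1}\cdots M_{-n+1}(\mathbb{R}^{\mathcal{A}}_{+})$ for every $n>0$, i.e.\ $u\in\mathcal{C}^-_{\pi,\lambda,\tau}$. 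Symmetrically, $w$ gives a nonnegative $\nu_s$ iff every forward iterate of the $s$-recursion is nonnegative, which says exactly $w\in (M_n\cdots M_1)^{*}(\mathbb{R}^{\mathcal{A}}_{+})$ for all $n>0$, i.e.\ $w\in\mathcal{C}^+_{\pi,\lambda,\tau}$; matching these ordered products with the intersections $\bigcap_{m>n}\mathbb{R}^{+d_m}E_m\cdots E_{n+1}$ and $\bigcap_{m<n}\mathbb{R}^{+d_m}E_{m+1}^{*}\cdots E_n^{*}$ of Proposition \ref{BD:115} is pure bookkeeping with the row/column and transpose conventions. Note that, unlike the normalised version in Proposition \ref{BD:115}, no intersection with a simplex is imposed, since the lemma concerns all states.

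Conversely, given any $(u,w)\in\mathcal{C}^-_{\pi,\lambda,\tau}\times\mathcal{C}^+_{\pi,\lambda,\tau}$, I would define $\nu_r$ and $\nu_s$ by the two recursions, observe that they are nonnegative everywhere (trivially in the directions in which the $M_n$ act, and by the defining property of the cones in the other directions) and that they satisfy the equations of Definition \ref{BD:75}(2) by construction; hence $(\nu_r,\nu_s)\in S(\mathcal{B}_{\pi,\lambda,\tau})$ and it maps to $(u,w)$. Non-emptiness of the cones needs no separate argument: the state built just before the lemma, with $\nu_r(\alpha)=(\lambda_n)_\alpha$ and $\nu_s(\alpha)=(h_n)_\alpha$, already shows $\lambda\in\mathcal{C}^-_{\pi,\lambda,\tau}$ and $-\Omega_\pi(\tau)\in\mathcal{C}^+_{\pi,\lambda,\tau}$. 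I do not expect a real obstacle here; the one point demanding care is simply reconciling the index and transpose conventions in Proposition \ref{BD:115} with the definitions of $M^{(\pm n)}_{\pi,\lambda,\tau}$ and of $\mathcal{C}^{\pm}_{\pi,\lambda,\tau}$, together with the (essential but immediate) use of $\det M_n=1$ to know the recursions are globally solvable in a unique way.
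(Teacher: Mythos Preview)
Your argument is correct and is essentially the same approach as the paper, which simply cites Proposition~\ref{BD:115} without further proof; you have merely spelled out the details, making explicit the role of the invertibility of the $M_n$ (Remark~\ref{fg:140}) in turning the description of Proposition~\ref{BD:115} into a genuine bijection. The only delicate point, as you note, is bookkeeping with the index and transpose conventions relating $M^{(\pm n)}_{\pi,\lambda,\tau}$ to the products in Proposition~\ref{BD:115}, and you have handled this correctly.
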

            It follows from Veech's theorem on the ergodicity of the
             Teichm\"uller flow (Theorem \ref{fg:120} above) 
             that the set of normalized states of a typical 
             triple $(\pi,\lambda,\tau)$ is in a sense unique.
            \begin{thm}
              \label{fg:290}
              For almost every $(\pi,\lambda,\tau)$, there exists a 
              normalized state $\nu = (\nu_r,\nu_s)$ for 
              $\mathcal{B}_{\pi,\lambda,\tau}$ which is unique
              in the sense that 
              any other normalized state $\nu' = (\nu_r',\nu_s')$ 
               $\mathcal{B}_{\pi,\lambda,\tau}$ satisfies 
               $(\nu_r',\nu_s') = (e^{-t}\nu_r,e^t\nu_s)$,
                for some $t$ in $\mathbb{R}$.
            \end{thm}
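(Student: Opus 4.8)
\textbf{Proof proposal for Theorem \ref{fg:290}.}

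The plan is to reduce the statement about uniqueness of normalized states to Veech's ergodicity theorem (Theorem \ref{fg:120}) together with the parametrization of states given in Lemma \ref{fg:280} and Proposition \ref{BD:115}. By Lemma \ref{fg:280}, the cone of states of $\mathcal{B}_{\pi,\lambda,\tau}$ is $\mathcal{C}^-_{\pi,\lambda,\tau} \times \mathcal{C}^+_{\pi,\lambda,\tau}$, where $\mathcal{C}^+_{\pi,\lambda,\tau} = \bigcap_{n>0} M^{(n)*}_{\pi,\lambda,\tau}(\mathbb{R}^\mathcal{A}_+)$ and similarly for the minus cone. The key observation is that the cocycle of matrices $M^{(n)}_{\pi,\lambda,\tau}$ is, up to the identifications coming from RH/RV induction, precisely the Rauzy--Veech cocycle whose transpose-inverse action governs the $\tau$-coordinates and whose direct action governs the $\lambda$-coordinates. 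Thus $\mathcal{C}^+_{\pi,\lambda,\tau}$ is the ``stable'' (forward) cone for the RH cocycle and $\mathcal{C}^-_{\pi,\lambda,\tau}$ is the stable cone for the RV cocycle. The content of the theorem is that each of these cones is almost surely a single ray.

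First I would set up the correspondence carefully: using the definitions (\ref{eqn:matrices}) and (\ref{eqn:matrices2}) of the $M_n$, identify the infinite forward product of the $M_n^*$ with the sequence of Rauzy--Veech matrices $\Theta$ along the RV-induction orbit of $(\pi,\lambda,\tau)$ (recalling from Proposition \ref{fg:88} that $\mathcal{P}$ is the inverse of $\mathcal{R}$, so the $\tau$-type of $\mathcal{P}^{n-1}$ equals the $\lambda$-type of $\mathcal{R}^{-n+1}$). Then $\mathcal{C}^+_{\pi,\lambda,\tau}$ is the set of $h$-vectors (equivalently $\tau$-vectors via $-\Omega_\pi$) compatible with the entire backward RH orbit, and by Theorem \ref{fg:84} (RH-completeness) every symbol wins infinitely often, which is exactly the positivity/contraction hypothesis needed for a Birkhoff-type argument. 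The standard fact, due to Veech and refined by many authors, is that under the Keane condition the forward RV cocycle has a one-dimensional limit cone almost surely — this is precisely the statement that the invariant measure $m_1^-$ on $\mathbb{V}^-_\mathcal{C}$ is ergodic (Theorem \ref{fg:120}) and that the associated ``Perron'' direction is unique a.e. I would invoke this for both $\mathcal{C}^-$ (directly, as the RV stable cone) and $\mathcal{C}^+$ (as the RH stable cone, using the RH renormalization map $\mathbb{P}^+$ and its invariant measure from Proposition \ref{fg:110}, which plays the role of the Gauss measure on the $h$-side). Each cone being a ray means $\nu_r$ is determined up to a positive scalar $c^-$ and $\nu_s$ up to a positive scalar $c^+$; the normalization condition $\sum_{v \in V_0} \nu_r(v)\nu_s(v) = 1$ from Definition \ref{BD:75} fixes the product $c^- c^+ = 1$, leaving the one-parameter family $(\nu_r',\nu_s') = (e^{-t}\nu_r, e^t\nu_s)$, which is exactly the Teichm\"uller-flow direction $\Phi_t$.

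The main obstacle will be the $\mathcal{C}^+$ side: while the uniqueness of the RV limit cone under the Keane condition is completely classical, the corresponding statement for the \emph{backward} RV cocycle (i.e. the forward RH cocycle) is less standard, and one needs RH-completeness rather than the Keane condition to guarantee that every symbol is a $\tau$-winner infinitely often. I would handle this by appealing to Theorem \ref{fg:120} in combination with the measure of Proposition \ref{fg:110}: the RH renormalization map $\mathbb{P}^+$ on $\mathbb{V}^+_\mathcal{C}$ preserves $\prod_\alpha h_\alpha^{-1}\,dh$, and one checks this measure is equivalent to Lebesgue on the simplex and that $\mathbb{P}^+$ is, like $\mathbb{P}^-$, an a.e.\ invertible Markov map whose symbolic dynamics is the same Rauzy graph; ergodicity then transfers, and the Oseledets/Birkhoff argument giving one-dimensionality of the contracted cone goes through verbatim. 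A secondary point requiring care is verifying that the full-measure set on which the conclusion holds is the same for both cones — this follows because RH-completeness and the Keane condition are each full measure (Theorems \ref{fg:60} and \ref{fg:84}, and the remark after Theorem \ref{fg:60}), so their intersection is full measure, and on it $(\pi,\lambda,\tau)$ defines both a bi-infinite RV/RH orbit and the Bratteli diagram $\mathcal{B}_{\pi,\lambda,\tau}$, to which the cone-uniqueness applies on both sides.
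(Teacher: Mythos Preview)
Your approach is essentially the same as the paper's: the paper does not give a proof block for this theorem but simply attributes it (in the sentence preceding the statement) to Veech's ergodicity theorem (Theorem~\ref{fg:120}), together with the cone parametrization of Lemma~\ref{fg:280}. Your proposal correctly fleshes out this attribution, identifying $\mathcal{C}^-$ with the RV limit cone (collapsing to a ray by Masur--Veech unique ergodicity) and $\mathcal{C}^+$ with the RH analogue, and then using the normalization of Definition~\ref{BD:75} to reduce the remaining ambiguity to the Teichm\"uller direction $(e^{-t}\nu_r,e^t\nu_s)$.
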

 \subsection{Dynamics of Bratteli diagrams}
    Since we have determined how to build a Bratteli diagram 
    $\mathcal{B}_{\pi,\lambda,\tau}$ from the
     triple $(\pi,\lambda,\tau)\in \mathcal{V}_\mathcal{C}$, 
     we point out that there is an obvious 
     relationship between the diagram for $(\pi,\lambda,\tau)$
      and that of $\mathcal{P}(\pi,\lambda,\tau)$.
            \begin{defn}
            \label{fg:190}
              Let $\mathcal{B}$ be a
               bi-infinite  ordered Bratteli diagram. The 
               \emph{shift} of $\mathcal{B}$ is the  bi-infinite ordered
    Bratteli diagram $\mathcal{B}',\leq_{r,s}'$ 
     such that $E'_n = E_{n+1}$, $V_n' = V_{n+1}$ with
      the property that $r'=r, s'=s, \leq_{r'} = \leq_{r}, \leq_{s'} = \leq_{s}$.
      We also denote the shift by $\mathcal{B}' = \sigma(\mathcal{B})$.
            \end{defn}
            In short, $ \sigma(\mathcal{B})$ 
            shifts all the indices of $\mathcal{B}$ while preserving
             the structure. It follows from the construction
              in the previous section that we have
            $$\mathcal{B}_{\mathcal{P}^n(\pi,\lambda,\tau)}
             = \sigma^n(\mathcal{B}_{\pi,\lambda,\tau}).$$
            We now make some remarks about how these ideas 
            carry over to the algebras constructed.

            First, it is straight-forward that the AF algebras
             defined by $\mathcal{B}_{\pi,\lambda,\tau}$ and 
             $\mathcal{B}_{\mathcal{P}^n(\pi,\lambda,\tau)} =
              \sigma^n(\mathcal{B}_{\pi,\lambda,\tau})$ are
               the same for every $n$. That is, they are
                independent of where one chooses the ``origin''
                 on $\mathcal{B}_{\pi,\lambda,\tau}$ to be. 
                 This is true for \emph{any} bi-infinite 
                 Bratteli diagram and not just for those 
                 $\mathcal{B}_{\pi,\lambda,\tau}$ being built
                  from zippered rectangles data.
            
            Second, if $\nu = (\nu_r,\nu_s)$ form a 
            state for $\mathcal{B}_{\pi,\lambda,\tau}$, then
             $\nu_t = (e^{-t}\nu_r,e^t\nu_s)$ is a one-parameter
              family of states for $\mathcal{B}_{\pi,\lambda,\tau}$ 
              (deforming states like this also does not depend on
               $\mathcal{B}_{\pi,\lambda,\tau}$ being built from
                zippered rectangles data). While the AF algebras 
                defined by $\mathcal{B}_{\pi,\lambda,\tau}$ do 
                not depend on the state $\nu$, the various 
                algebras associated to our foliated spaces 
                do depend on a choice of state. Thus $\nu_t$
                 gives several one-parameter families of algebras.

            In addition, given the definition of a pre-stratum 
            in (\ref{eqn:preStratum}) it is tempting to make the
             identification of the form
            $$ (\mathcal{B}_{\pi,\lambda,\tau},e^{-t_R^+}\nu_r, 
            e^{t_R^+}\nu_s)\sim 
            (\mathcal{B}_{\mathcal{P}(\pi,\lambda,\tau)},
            \sigma_* \nu_r,\sigma_* \nu_s)= 
            (\sigma(\mathcal{B}_{\pi,\lambda,\tau}),\sigma_* 
            \nu_r,\sigma_* \nu_s).$$
            Thus the Teichm\"uller flow $g_t$ (or $\Phi_t$)
             is manifested as a continuous deformation of
             the algebras by deforming the states $\nu\mapsto\nu_t$ 
             up to some time before shifting the Bratteli diagram.
 
            \subsection{The $K$-theory}
            We are at a point where we can compute the $K$-theory of the foliation algebras of the typical flat surface in any stratum $\mathcal{H}(\bar{\kappa})$. Let us summarize how we got here: through Veech's construction of zippered-rectangles, we can represent almost every flat surface $(S,\alpha)\in\mathcal{H}(\bar{\kappa})$ by a triple $(\pi,\lambda,\tau)\in\mathcal{V}_\mathcal{C}$ in the space of zippered rectangles $\mathcal{V}_\mathcal{C}$. In fact, the subset $\mathcal{V}_\mathcal{C}$ is made up exclusively of triples which satisfy the Keane condition and is RH-complete, meaning that we can assign to them a strongly simple bi-infinite Bratteli 
    diagram $\mathcal{B}_{\pi,\lambda,\tau}$. We saw in Propositions \ref{fg:150}
            and \ref{fg:160}
             that these diagrams have the property that $|X^{s-max}|=|X^{s-min}|=1$ and $X^{s-max}\cup X^{s-min}\subseteq X^{r-min}$. Moreover, in  Theorem \ref{fg:200}
              we saw that they also satisfy 
            $\Sigma_{\mathcal{B}_{\pi,\lambda,\tau}}=\varnothing$. This sets the stage to compute their $K$-theory.

            \begin{thm}
              \label{fg:400}
              For $m^-_\mathcal{C}$-almost every $(\pi,\lambda,\tau)$, we have
              $$K_0(C^*(\mathcal{F}^+_{\mathcal{B}_{\pi,\lambda,\tau}}))
               \cong K_0( C^{*}_{\lambda}(T^{+}(X_{\mathcal{B}_{\pi,\lambda,\tau}}))) \cong \mathbb{Z}^{\mathcal{A}}
              \hspace{.6in}
              \mbox{ and } \hspace{.6in}K_1( C^*(\mathcal{F}^+_{\mathcal{B}_{\pi,\lambda,\tau}}))
              \cong \mathbb{Z}.$$
            \end{thm}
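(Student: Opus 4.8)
The plan is to combine the structural results established for $\mathcal{B}_{\pi,\lambda,\tau}$ in Sections~\ref{fg:150}--\ref{fg:200} with the general $K$-theory machinery of Section~\ref{K}. First I would record that, by Propositions~\ref{fg:146}, \ref{fg:160} and Theorems~\ref{fg:165}, \ref{fg:200}, the diagram $\mathcal{B}_{\pi,\lambda,\tau}$ satisfies the standing assumptions of Definition~\ref{surface:20}, has $\Sigma_{\mathcal{B}_{\pi,\lambda,\tau}} = \varnothing$, has $I^+ = J^+ = 1$ (with $x_1$, $x_2$ the horizontal paths through $A_0$ and $A_1$), and satisfies $X^{s-\min}_{\mathcal{B}} \cup X^{s-\max}_{\mathcal{B}} \subseteq X^{r-\min}_{\mathcal{B}}$. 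These are exactly the hypotheses of Theorems~\ref{groupoids:500} and~\ref{groupoids:510}: the first gives $T^{\sharp}(S_{\mathcal{B}_{\pi,\lambda,\tau}}) = \mathcal{F}^+_{\mathcal{B}_{\pi,\lambda,\tau}}$, so that the foliation algebra already coincides with $C^*(T^{\sharp}(S_{\mathcal{B}_{\pi,\lambda,\tau}}))$, and, via Theorem~\ref{IntCstar:150}, with $B^+_{\mathcal{B}_{\pi,\lambda,\tau}} = C^*(T^{\sharp}(S^s_{\mathcal{B}_{\pi,\lambda,\tau}}))$. In particular, when $\Sigma_{\mathcal{B}} = \varnothing$ the collection $\mathcal{I}_{\mathcal{B}}$ is in bijection with $I^+ \star_\Delta J^+$ under $\iota$, so $\ker(\iota) = 0$; hence Theorem~\ref{K:50} forces $j_*: K_0(C^*(\mathcal{F}^+_{\mathcal{B}_{\pi,\lambda,\tau}})) \to K_0(C^*_r(T^{\sharp}(S_{\mathcal{B}_{\pi,\lambda,\tau}})))$ to be an isomorphism. (Alternatively, this drops out directly from $T^{\sharp}(S_{\mathcal{B}}) = \mathcal{F}^+_{\mathcal{B}}$.)

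Next I would invoke Theorem~\ref{K:38}, which applies precisely because $I^+_{\mathcal{B}_{\pi,\lambda,\tau}} = 1$: this gives that $i_*: K_0(B^+_{\mathcal{B}_{\pi,\lambda,\tau}}) \to K_0(A^+_{\mathcal{B}_{\pi,\lambda,\tau}})$ is an isomorphism of ordered abelian groups. Composing with the identification of the foliation algebra with $B^+_{\mathcal{B}_{\pi,\lambda,\tau}}$ yields $K_0(C^*(\mathcal{F}^+_{\mathcal{B}_{\pi,\lambda,\tau}})) \cong K_0(A^+_{\mathcal{B}_{\pi,\lambda,\tau}})$. For the final identification with $\mathbb{Z}^{\mathcal{A}}$, I would apply Theorem~\ref{K:5}: $K_0(A^+_{\mathcal{B}_{\pi,\lambda,\tau}})$ is the inductive limit of $\mathbb{Z}^{\mathcal{A}} \xrightarrow{M_n} \mathbb{Z}^{\mathcal{A}} \to \cdots$ over positive $n$, where each $M_n$ is one of the matrices in~(\ref{eqn:Psi0}), (\ref{eqn:Psi1}). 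As noted in Remark~\ref{fg:140}, each such matrix is invertible over $\mathbb{Z}$ with determinant $1$, so every connecting map is a group isomorphism and the inductive limit is just $\mathbb{Z}^{\mathcal{A}}$; this is the statement already recorded as~(\ref{eqn:K-homology}). For $K_1$, Theorem~\ref{K:5} gives $K_1(A^+_{\mathcal{B}_{\pi,\lambda,\tau}}) = 0$, and then Proposition~\ref{K:20} (again using that $B^+_{\mathcal{B}_{\pi,\lambda,\tau}}$ is the closure of the union of the $B^+_{-n,n}$) gives $K_1(B^+_{\mathcal{B}_{\pi,\lambda,\tau}}) \cong \mathbb{Z}$; combined with the isomorphism $K_1(C^*(\mathcal{F}^+_{\mathcal{B}_{\pi,\lambda,\tau}})) \cong K_1(C^*_r(T^{\sharp}(S_{\mathcal{B}_{\pi,\lambda,\tau}}))) = K_1(B^+_{\mathcal{B}_{\pi,\lambda,\tau}})$ furnished by Theorem~\ref{K:40} (for $l = 0$, the foliation algebra is $\mathcal{F}^+_0$), this completes the $K_1$ computation.

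The genuinely substantive point — and the one I expect to be the main obstacle to write cleanly — is verifying that the structural hypotheses hold \emph{on a set of full $m^-_{\mathcal{C}}$ measure}, not merely for a nominal "typical" triple. Here I would argue as follows: the Keane condition holds on a full measure subset of the space of parameters by Theorem~\ref{fg:60}, and RH-completeness holds on a full measure subset by Theorem~\ref{fg:84} together with the measure-theoretic remarks preceding Definition~\ref{fg:82}; the set $\mathcal{V}_{\mathcal{C}}$ on which our construction is performed is by definition the set of triples satisfying both. One then needs that $\mathcal{V}_{\mathcal{C}}$, viewed inside the cross-section $\bar{\mathcal{V}}^{-1}_{\mathcal{C}}$ and pushed to $\mathbb{V}^-_{\mathcal{C}}$, carries full $m^-_1$ (equivalently full Lebesgue, since $m^-_1$ has a density bounded away from zero by Veech, see Theorem~\ref{fg:120}); this is where I would have to be careful to reconcile the two full-measure statements (one for RV/Keane on the $(\pi,\lambda)$ side, one for RH-completeness which is naturally a condition on $(\pi,h)$, i.e. on the $\tau$ side) and to check that their intersection, intersected with $\mathcal{V}_{\mathcal{C}}$, still has full measure on the appropriate section. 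Modulo that bookkeeping, every other step is a direct citation of a theorem proved earlier in the paper. I would also remark that the order structure on $K_0$ is recovered automatically: Theorem~\ref{K:38} gives an ordered-group isomorphism, so the final section's promise that the order on $K_0$ is the Schwartzman asymptotic cycle reduces to the corresponding statement for the AF-algebra, which is elementary from Theorem~\ref{K:5}.
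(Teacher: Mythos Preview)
Your proof is correct and follows essentially the same route as the paper: establish $I^+=J^+=1$ (the paper cites parts 3 and 4 of Proposition~\ref{fg:150}, you cite Proposition~\ref{fg:160}, which is the same content), then use Proposition~\ref{K:20} and Theorem~\ref{K:30}/\ref{K:38} for $K_*(B^+)$, and finally Theorem~\ref{K:50} with $\ker(\iota)=0$ to pass to the foliation algebra. Your write-up is actually more complete than the paper's: you explicitly invoke Theorem~\ref{groupoids:500} to get $T^\sharp(S_\mathcal{B})=\mathcal{F}^+_\mathcal{B}$ (which the paper only sets up in the preamble to the theorem), you close the $K_1$ computation via Theorem~\ref{K:40} (the paper's proof stops at $K_1(B^+)\cong\mathbb{Z}$ without explicitly transferring this to $C^*(\mathcal{F}^+)$), and you use Theorem~\ref{K:38} rather than~\ref{K:30}, which already gives you the ordered-group isomorphism needed in the next subsection. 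Your measure-theoretic caveat is reasonable but not something the paper addresses in the proof; it simply takes ``$m^-_\mathcal{C}$-almost every'' to mean the full-measure set of triples that are both Keane and RH-complete, as set up in \S\ref{subsec:typicalBrat}.
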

            \begin{proof}
      The third and fourth parts of    Proposition \ref{fg:150} imply 
               that $I_{\mathcal{B}_{\pi,\lambda,\tau}} 
                J_{\mathcal{B}_{\pi,\lambda,\tau}} = 1$, 
                so by Proposition \ref{K:20} and Theorem \ref{K:30},
                 we have that
              $$K_0(C^{*}_{\lambda}(T^{\sharp}(S^{s}_{\mathcal{B}_{\pi,\lambda,\tau}})) ) 
               \cong K_0( C^{*}_{\lambda}(T^{+}(X_{\mathcal{B}_{\pi,\lambda,\tau}})))$$
              and $K_1(B_{\mathcal{B}_{\pi,\lambda,\tau}})\cong \mathbb{Z}$.
              Turning to Theorem \ref{K:50}, $\ker(\iota)$ is trivial, 
              and so by exactness we obtain that 
              $K_0(C^*(\mathcal{F}^+_{\mathcal{B}_{\pi,\lambda,\tau,\leq_{r,s}}}))
               \cong K_0( C^{*}_{\lambda}(T^{\sharp}(S^{s}_{\mathcal{B}_{\pi,\lambda,\tau}})))$.
            \end{proof}

            \subsection{Ordered $K$-theory and asymptotic cycles}
            In this subsection we connect the structure of the topological 
            invariants of the surface with that of the algebras constructed.

            First we recall the \emph{Schwartzman asymptotic cycle} \cite{schwartzman:cycle}.
             Let $\phi^+_t$ be the horizontal flow on a flat surface $S$ of finite
              genus, which we assume for the moment to be minimal and uniquely 
              ergodic, and $p\in S$ a point with an infinite trajectory. For 
              any $T$ let $\gamma_T(p)\subseteq S$ be a closed curve which 
              contains the orbit segment $\{\phi^+_t(p)\}_{t=0}^T$ and is 
              closed by a segment $\gamma^*_T(p)$ of diameter at most $\mathrm{diam}\,(S)$.
               Define $c_T(p) =[\gamma_T(p)]\in H_1(S,\Sigma;\mathbb{Z})$ to be 
               its integer homology class. This class is not uniquely defined,
                but the error is bounded independently of $T$ as the closing 
                segments $\gamma^*_T(p)$ have bounded length. The (Schwartzman) asymptotic cycle is defined as
            \begin{equation}
              \label{eqn:Schwartzman}
              c = \lim_{T\rightarrow \infty} \frac{c_T(p)}{T}\in H_1(S,\Sigma;\mathbb{R}).
            \end{equation}
            That this limit does not depend on $p$ is a consequence
             of unique ergodicity.

            Recall the map $\hat{\mathcal{P}}^+$ in (\ref{eqn:renormMap}) and consider its induced action $\hat{\mathcal{P}}^+_*:H_1(S,\Sigma;\mathbb{R})\rightarrow H_1(S,\Sigma;\mathbb{R})$. There is a natural choice of basis of $H_1(S,\Sigma;\mathbb{R})$, indexed by $\mathcal{A}$, such that $\hat{\mathcal{P}}^+_*$ is given in coordinates by $\Theta^{-1}$. This is the (backwards) \emph{Rauzy-Veech cocycle} over the space of zippered rectangles $\bar{\mathcal{V}}_\mathcal{C}$. We denote by $\hat{\mathcal{P}}_*^{(n)}= \hat{\mathcal{P}}_*^{+n}$ the linear map on homology obtained from the composition of this cocycle $n$ times. This cocycle is not integrable with respect to the measure $m^-_1$. However, Zorich \cite{zorich:gauss} found an acceleration of this cocycle, called the Zorich cocycle, which is integrable and thuse yields an Oseledets splitting of the homology space. More specifically, there exist real numbers $\nu_1>\nu_2>\dots> \nu_{k_{m_1^-}}$ (the Lyapunov spectrum) such that for $m_1^-$-almost every $(\pi,\lambda, \tau)$, there exists cycles $c_1,\dots, c_{k_{m_1^-}}\in H_1(S(\pi,\lambda,\tau),\Sigma;\mathbb{R})$ (called \emph{Zorich cycles}) and a $\hat{\mathcal{P}}^+_*$-invariant splitting of $H_1(S(\pi,\lambda,\tau),\Sigma;\mathbb{R})$
            \begin{equation}
              \label{eqn:decomp}
              H_1(S(\pi,\lambda,\tau),\Sigma;\mathbb{R}) = 
              \bigoplus_{i=1}^{k_{m_1^-}} E_i
            \end{equation}
            with $E_i = \mathrm{span}\, \{c_i\}$, such that for any
             non-zero $c\in E_i$
            $$\lim_{n\rightarrow \infty}
             \frac{\log \|\hat{\mathcal{P}}_*^{(n)}c \|}{n} = \nu_i.$$
            The Zorich cocycle preserves a symplectic form, and therefore the Lyapunov spectrum is symmetric around zero, that is, if $\nu_i$ is in the Lyapunov spectrum, then so is $-\nu_i$. Forni \cite{forni:deviation} proved that there are exactly $g$ positive and $g$ negative exponents, and Avila-Viana showed \cite{AvilaViana} that each Oseledets subspace corresponding to a non-zero exponent has dimension 1, that is, the Lyapunov spectrum is of the form $\nu_1>\nu_2>\cdots>\nu_g>0>\nu_{g+1} = -\nu_g>\cdots >-\nu_1 = \nu_g$. The top Zorich cycle, $c_1$ coincides the the Schwartzman asymptotic cycle for the horizontal flow. There is a dual cocycle to the Rauzy-Veech cocycle acting on cohomology, called the \emph{Kontsevich-Zorich cocycle}, and dual cocycles $c_1^*,\dots, c_{2g}^*\in H^1(S(\pi,\lambda,\tau),\Sigma;\mathbb{R})$ called \emph{Forni cocycles} with the same properties. In addition, $c_1^* = [i_Y \omega]\in H^1(S(\pi,\lambda,\tau),\Sigma;\mathbb{R})$, where $\omega$ is the area form on $S(\pi,\lambda,\tau)$ and $Y$ is the vector field generating the vertical foliation.

              To make the connection between the cocycles above with their Oseledets decomposition and the invariants of our algebras, we need to define the trace space of an AF algebra.
              \begin{defn}
              \label{fg:410}
        A \emph{trace} on a $*$-algebra $A$ is a linear functional 
        $\tau: A \rightarrow \C$ satisfying $\tau(ab)= \tau(ba)$, for 
        all $a, b$ in $A$. A trace $\tau$ is called 
        \emph{positive} if $\tau(a^{*}a) \geq 0$, for all $a$ in $A$.
        We let $\tr(A)$ denote the set of all traces on $A$, which is 
        a complex vector space.
              \end{defn}
              \begin{rmk}
              \label{fg:420}
                Some remarks:
                \begin{enumerate}
    \item It is a fairly easy exercise to see that, for any $n \geq1$, 
    the $*$-algebra of $n \times n$-matrices, $M_{n}(\C)$, has a 
    trace which simply sums the diagonal entries and this is 
    unique, up to a scaling factor. It follows that the set
    of traces on any finite-dimensional $C^{*}$-algebra,
    $\bigoplus_{k=1}^{K} M_{n_{k}}(\C)$, is in bijection with $\R^{K}$. 
    
    If we consider an inductive system of such $*$-algebras as we
    have in Proposition \ref{AF:40}, 
    \[
    A_{m,m} \subseteq A_{m,m+1} \subseteq \cdots
    \]
    with inclusions described by matrices $E_{m+1}, E_{m+2}, \ldots$, then the 
    set of traces on the union can be identified with 
    \[
    \lim_{\leftarrow} \R^{V_{m}} \stackrel{E_{m+1}^T}{\longleftarrow}  
    \R^{V_{m+1}} \stackrel{E_{m+2}^T}{\longleftarrow}     \cdots
    \]
   It is important to note that these traces are defined only on the union
   of the finite-dimensional algebras; most do not extend to the AF-algebra
   which is the completion.  On the other hand, it is well-known that 
   the inclusion of the  locally finite-dimensional algebra which is the union 
   in  the AF-algebra which is its completion induces an order isomorphism 
   on $K$-theory.
   
    In our situation, where   we construct these algebras from groupoids, 
    the traces correspond to finitely additive measures defined 
    on clopen transversals to the equivalence relation $T^{+}(Y_{\mathcal{B}})$.
    This idea first appeared in the work of Bowen and Franks  
    \cite{BowenFranks:homology}. This relates some of our point of 
    view with that of Bufetov's \cite{bufetov:limit, bufetov:limitVershik}.
                \item The trace space $\tr(A)$ serves as a dual to $K_0(A)$:
    if $p$ and $q$ are projections in $A$ which determine the same $K$-theory
    class, and if $\tau$ is any trace, then $\tau(p) = \tau(q)$ is a consequence
    of the trace property. Hence, there is pairing 
    $\tr(A) \times K_{0}(A)  \rightarrow \C$. 
                \end{enumerate}
              \end{rmk}
              Note that by Remark \ref{fg:140} (i), we obtain isomorphisms
              \begin{equation}
                \label{eqn:isomoprhisms}
                  \mathfrak{i}_{\pi,\lambda,\tau}:K_0(A^+_{\mathcal{B}_{\pi,\lambda,\pi}})\rightarrow  H_1(S(\pi,\lambda,\tau),\Sigma;\mathbb{Z}) \hspace{.1in} \mbox{ and }  \hspace{.1in} \mathfrak{i}^*_{\pi,\lambda,\tau}: H^1(S(\pi,\lambda,\tau),\Sigma;\mathbb{C})\rightarrow  \tr(A^+_{\mathcal{B}_{\pi,\lambda,\tau}}).
              \end{equation}
              Through these identifications, and through the 
              identifications of $K_0(A^+_{\mathcal{B}_{\pi,\lambda,\tau}})$ 
              with \newline
              $K_0(C^*(\mathcal{F}^+_{\mathcal{B}_{\pi,\lambda,\tau},\leq_{r,s}}))$ 
              from Theorem \ref{fg:400},
               the map $\hat{\mathcal{P}}^+$ also induces
                isomorphisms which we also denote as
              \begin{equation}
                \label{eqn:K0maps}
                \begin{split}
                  \hat{\mathcal{P}}^+_* &: K_0(A^+_{\mathcal{B}_{\pi,\lambda,\pi}})\longrightarrow K_0(A^+_{\sigma(\mathcal{B}_{\pi,\lambda,\pi})})\;\;\;\mbox{ and }\\
                  \hat{\mathcal{P}}^+_* &: K_0(C^*(\mathcal{F}^+_{\mathcal{B}_{\pi,\lambda,\tau},\leq_{r,s}}))\rightarrow K_0(C^*(\mathcal{F}^+_{\sigma(\mathcal{B}_{\pi,\lambda,\tau},\leq_{r,s})}))
                \end{split}
              \end{equation}
              and a maps at the level of traces. Moreover, the maps are order-preserving.
              \begin{thm}
                \label{fg:430}
                For $m_\mathcal{C}^-$-almost every $(\pi,\lambda,\tau)$, the order structure
                 on $K_0( C^{*}_{\lambda}(T^{+}(X_{\mathcal{B}_{\pi,\lambda,\tau}})))$
                  and $K_0(C^*(\mathcal{F}^+_{\mathcal{B}_{\pi,\lambda,\tau},\leq_{r,s}}))$ are determined by the first Zorich cocycle, that is, the Schwartzman asymptotic cycle, and the maps (\ref{eqn:K0maps}) are order-preserving.
              \end{thm}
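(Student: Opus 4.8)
The plan is to prove the statement for the AF-algebra $A^{+}_{\mathcal{B}}$, where I abbreviate $\mathcal{B}:=\mathcal{B}_{\pi,\lambda,\tau}$, and then transport it to the foliation algebra. First I would record that the triples $(\pi,\lambda,\tau)$ for which $\mathcal{B}$ satisfies the standing hypotheses of Definition \ref{surface:20} (Theorems \ref{fg:60}, \ref{fg:84}, \ref{fg:165}), is strongly simple (Proposition \ref{fg:146}), has $\Sigma_{\mathcal{B}}=\varnothing$ (Theorem \ref{fg:200}), has $I^{+}_{\mathcal{B}}=J^{+}_{\mathcal{B}}=1$ with $X^{s-min}_{\mathcal{B}}\cup X^{s-max}_{\mathcal{B}}\subseteq X^{r-min}_{\mathcal{B}}$ (Propositions \ref{fg:150}, \ref{fg:160}), and on which $A^{+}_{\mathcal{B}}$ carries a unique normalized trace $\tau$ up to scaling (Theorem \ref{fg:290}), form a set of full $m^{-}_{\mathcal{C}}$-measure. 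On this set Theorem \ref{groupoids:500} gives $T^{\sharp}(S_{\mathcal{B}})=\mathcal{F}^{+}_{\mathcal{B}}$, hence $C^{*}(\mathcal{F}^{+}_{\mathcal{B}})=C^{*}(T^{\sharp}(S_{\mathcal{B}}))\cong B^{+}_{\mathcal{B}}$ by Theorem \ref{IntCstar:150}(2); and the inclusion $B^{+}_{\mathcal{B}}\subseteq A^{Y+}_{\mathcal{B}}$ together with Proposition \ref{IntCstar:20} induces an \emph{order} isomorphism $K_{0}(C^{*}(\mathcal{F}^{+}_{\mathcal{B}}))\cong K_{0}(A^{+}_{\mathcal{B}})$ by Theorem \ref{K:38} (applicable because $I^{+}_{\mathcal{B}}=1$). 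Since $A^{+}_{\mathcal{B}}$ is a simple AF-algebra, $G:=K_{0}(A^{+}_{\mathcal{B}})$ is a simple dimension group, non-cyclic (as a group it is $\mathbb{Z}^{\mathcal{A}}$ with $|\mathcal{A}|\geq 4$), so by a standard fact about non-cyclic simple dimension groups with a unique trace state (see Goodearl, \emph{Partially ordered abelian groups with interpolation}, or Effros, \emph{Dimension groups and $C^{*}$-algebras}) we have $G^{+}\setminus\{0\}=\{x\in G:\tau(x)>0\}$. It thus remains to identify $\tau$ and to analyze the maps (\ref{eqn:K0maps}) at the level of $A^{+}_{\mathcal{B}}$.

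\textbf{Identifying the trace with the top Forni cocycle.} Under the isomorphism $\mathfrak{i}^{*}_{\pi,\lambda,\tau}$ of (\ref{eqn:isomoprhisms}), the cone of positive traces of $A^{+}_{\mathcal{B}}$ corresponds to the nested intersection $\bigcap_{n>0}\Phi^{(n)}\big(\mathbb{R}^{\mathcal{A}}_{\geq 0}\big)\subseteq H^{1}(S(\pi,\lambda,\tau),\Sigma;\mathbb{R})$, where $\Phi^{(n)}$ is the $n$-th iterate of the Kontsevich--Zorich (Forni) cocycle, dual to the Rauzy--Veech cocycle realizing the matrices $M^{(n)}_{\pi,\lambda,\tau}$ on $H_{1}(S,\Sigma;\mathbb{Z})\cong G$ via $\mathfrak{i}_{\pi,\lambda,\tau}$. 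This intersection is nested because every $M_{n}$ is a $\{0,1\}$-matrix, and it is nonempty because it contains the normalized trace of Theorem \ref{fg:290}. By Veech and Forni \cite{veech:gauss, forni:deviation} the top Lyapunov exponent $\nu_{1}$ of the Zorich, equivalently the KZ, cocycle is strictly positive, and by Avila--Viana \cite{AvilaViana} it is simple; the remaining exponents on $H^{1}(S,\Sigma;\mathbb{R})$, namely $\nu_{2}>\cdots>\nu_{g}>0=\cdots=0>-\nu_{g}>\cdots>-\nu_{1}$, are all strictly smaller. A standard rigidity argument then applies: the $\Phi^{(n)}$-image of a top-dimensional convex cone is eventually contained in an arbitrarily narrow convex cone about the top Oseledets line $\mathbb{R}c_{1}^{*}$ (the $\nu_{1}$-component of $\Phi^{(n)}v$ dominates), so the intersection is a single ray, and comparing with the nonzero positive trace already located in $\mathbb{R}^{\mathcal{A}}_{\geq 0}$ shows it equals $\mathbb{R}_{\geq 0}c_{1}^{*}$. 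Hence $\tau$ is a positive multiple of the top Forni cocycle $c_{1}^{*}$, so $G^{+}\setminus\{0\}=\{x:\langle c_{1}^{*},\mathfrak{i}_{\pi,\lambda,\tau}(x)\rangle>0\}$; since $c_{1}^{*}$ is dual to the top Zorich cycle $c_{1}$, which is the Schwartzman asymptotic cycle, this is precisely the assertion that the order is determined by that cycle. By the Reduction, the same description transports to $K_{0}(C^{*}(\mathcal{F}^{+}_{\mathcal{B}}))$.

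\textbf{Equivariance of the maps (\ref{eqn:K0maps}).} Because $\mathcal{B}_{\mathcal{P}(\pi,\lambda,\tau)}=\sigma(\mathcal{B}_{\pi,\lambda,\tau})$, the algebras $A^{+}_{\mathcal{B}_{\pi,\lambda,\tau}}$ and $A^{+}_{\sigma(\mathcal{B}_{\pi,\lambda,\tau})}$ agree and, in the coordinates $G\cong\mathbb{Z}^{\mathcal{A}}$ furnished by $\mathfrak{i}$, the map $\hat{\mathcal{P}}^{+}_{*}$ is left multiplication by the nonnegative invertible matrix $M^{(1)}_{\pi,\lambda,\tau}$; this sends the generating cone at one level of the dimension group onto the generating cone at the next, so it is an order isomorphism, and it intertwines $\tau$ with the normalized trace of $A^{+}_{\sigma(\mathcal{B}_{\pi,\lambda,\tau})}$, consistent with the equivariance of the Rauzy--Veech cocycle. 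For $C^{*}(\mathcal{F}^{+}_{\mathcal{B}})$ this carries over since $K$-theory is unaffected by the one-parameter deformation $\nu\mapsto\nu_{t}$ of the state.

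\textbf{Main obstacle.} The delicate step is the middle one: correctly matching the trace conventions of $A^{+}_{\mathcal{B}}$ (the roles of $\nu_{r}$ versus $\nu_{s}$, and of the cones $\mathcal{C}^{\pm}_{\pi,\lambda,\tau}$ of Lemma \ref{fg:280}) with the \emph{unstable} direction of the KZ cocycle, and then running the Oseledets/Perron--Frobenius contraction argument with the correct orientation; this is where simplicity of the top exponent and positivity of the top Oseledets vector are essential. A secondary point needing care is that $C^{*}(\mathcal{F}^{+}_{\mathcal{B}})$ is not AF (its $K_{1}$ is $\mathbb{Z}$), so the order statement for it must be obtained by transport along the order isomorphisms of the Reduction rather than by a direct dimension-group argument.
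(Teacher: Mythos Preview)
Your argument is correct and, in fact, considerably more thorough than the paper's own proof. The paper's proof is essentially a two-line sketch: it chooses an Oseledets-regular triple, \emph{defines} $K_0^+(A^+_{\mathcal{B}_{\pi,\lambda,\tau}})$ to be the set of classes on which $c_1^*$ is positive (via the isomorphism $\mathfrak{i}^*$), invokes Theorem~\ref{K:38} to transport the order to the foliation algebra, and appeals to invariance of the Oseledets decomposition for the order-preservation of $\hat{\mathcal{P}}^+_*$. It does not explicitly argue why this definition coincides with the intrinsic order on $K_0$ of the AF algebra.

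You take the more principled route: first invoke the structure theory of simple non-cyclic dimension groups with unique state to conclude $G^+\setminus\{0\}=\tau^{-1}(0,\infty)$, and then identify $\tau$ with $c_1^*$ via an Oseledets/Perron--Frobenius contraction argument on the nested cones $\bigcap_n\Phi^{(n)}(\mathbb{R}^{\mathcal{A}}_{\geq 0})$, using simplicity of the top exponent (Avila--Viana) together with the known unique ergodicity input (Masur, Veech). This supplies exactly the link the paper leaves implicit. Your reduction to the foliation algebra via Theorems~\ref{groupoids:500}, \ref{IntCstar:150} and~\ref{K:38} matches the paper's, as does the final observation on $\hat{\mathcal{P}}^+_*$. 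The caveat you flag about convention-matching (which cone of Lemma~\ref{fg:280} corresponds to positive traces, and in which direction the cocycle acts) is precisely the bookkeeping the paper glosses over; it is not a mathematical obstruction, only a matter of tracking transposes consistently with Proposition~\ref{BD:115} and Remark~\ref{fg:420}.
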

              \begin{proof}
                Let $(\pi,\lambda, \tau)$ be an Oseledets-regular point for the Zorich cocycle, that is a triple so that an Oseledets decomposition of the form (\ref{eqn:decomp}) holds. We define the order structure on 
                $K_0( C^{*}_{\lambda}(T^{+}(X_{\mathcal{B}_{\pi,\lambda,\tau}})))$; the
                 structure $K_0(C^*(\mathcal{F}^+_{\mathcal{B}_{\pi,\lambda,\tau},\leq_{r,s}}))$ is obtained from the order-preserving isomorphism in Theorem \ref{K:38}.

                Define the positive cone
                $$K_0^+(  C^{*}_{\lambda}(T^{+}(X_{\mathcal{B}_{\pi,\lambda,\tau}}))) = \left\{[p]\in 
                K_0(  C^{*}_{\lambda}(T^{+}(X_{\mathcal{B}_{\pi,\lambda,\tau}}))):\mathfrak{i}^*_{\pi,\lambda,\tau}c_1^*([p])=c_1^*(\mathfrak{i}_{\pi,\lambda,\tau}([p])) >0 \right\},$$
                where $c_1^*$ is the dual of the Schwartzman cycle. 
                By the invariance of the Oseledets decomposition, 
                $\hat{\mathcal{P}}_*^+$ is an order-preserving isomorphism.
              \end{proof}
              We can now argue that there is \emph{some} appeal
               to our approach to translation flows on flat surfaces.
                It goes like this: the Schwartzman 
              asymptotic cycle is defined for flows on compact manifolds 
              or those whose homology spaces are finite dimensional. If 
              we were to pick at random, the random bi-infinite Bratteli
               $\mathcal{B}$ diagram (of finite rank, supposing for a 
               moment that there is a unique normalized state on 
               $\mathcal{B}$ in the sense of Theorem \ref{fg:290})
                and random order $\leq_{r,s}$ with a choice of normalized
                 state $\nu$ will yield a flat surface of infinite genus 
                 $S_{\mathcal{B},\leq_{r,s}}$. If we were to try to define 
                 the asymptotic cycle using (\ref{eqn:Schwartzman}) as a 
                 definition, then it is not necessarily clear it is 
                 well-defined as the topology of $H_1(S;\mathbb{R})$ is not 
                 automatically defined. However, what Theorem 
                 \ref{fg:430} suggests is that what is
                  relevant to capture the asymptotic topological 
                  information is the order structure of
                   $K_0(C^*(\mathcal{F}^+_{\mathcal{B},\leq_{r,s}}))$ 
                   especially when its inclusion into
                   $K_0(  C^{*}_{\lambda}(T^{+}(X_{\mathcal{B}_{\pi,\lambda,\tau}})))$ yields an order isomorphism, 
                   and when the shift induces an order
                    isomorphism 
                    $K_0(  C^{*}_{\lambda}(T^{+}(X_{\mathcal{B}_{\pi,\lambda,\tau}})))
                    \rightarrow K_0(  C^{*}_{\lambda}(T^{+}(X_{\sigma(\mathcal{B}_{\pi,\lambda,\tau})})))$.

\bibliographystyle{amsalpha}
\bibliography{bib.bib}
\end{document}